\renewcommand\thesubsection{\thesection.\Alph{subsection}}
\newcommand{\sbnal}{0.3em}   
\titleformat{\subsection}[runin]{\bfseries}{\thesubsection}{\sbnal}{}[]
\theoremstyle{theorem}
\newtheorem{mainthm}{Theorem}
\newtheorem{thm}{Theorem}[section]
\newtheorem{prop}[thm]{Proposition}
\newtheorem{lem}[thm]{Lemma}
\newtheorem{cor}[thm]{Corollary}
\theoremstyle{definition}
\newtheorem{defn}[thm]{Definition}
\newtheorem{rem}[thm]{Remark}
\newtheorem{notation}[thm]{Notation}
\newtheorem*{cau}{Caution}
\newtheorem*{rmk}{Remark}
\newtheorem*{notation*}{Notation}
\newcommand{\GL}{\operatorname{GL}}
\newcommand{\GU}{\operatorname{GU}}
\newcommand{\U}{\operatorname{U}}
\newcommand{\SL}{\operatorname{SL}}
\newcommand{\SU}{\operatorname{SU}}
\newcommand{\PSL}{\operatorname{PSL}}
\newcommand{\PSU}{\operatorname{PSU}}
\newcommand{\Sp}{\operatorname{Sp}}
\newcommand{\PSp}{\operatorname{PSp}}
\newcommand{\GO}{\operatorname{GO}}
\newcommand{\Aut}{\operatorname{Aut}}
\newcommand{\Inn}{\operatorname{Inn}}
\newcommand{\Out}{\operatorname{Out}}
\newcommand{\Ind}{\operatorname{Ind}}
\newcommand{\Res}{\operatorname{Res}}
\newcommand{\Irr}{\operatorname{Irr}}
\newcommand{\IBr}{\operatorname{IBr}}
\newcommand{\Rad}{\operatorname{Rad}}
\newcommand{\dz}{\operatorname{dz}}
\newcommand{\Alp}{\operatorname{Alp}}
\newcommand{\diag}{\operatorname{diag}}
\newcommand{\mrO}{\mathrm{O}}
\newcommand{\J}{\operatorname{J}}
\newcommand{\Lin}{\operatorname{Lin}}
\newcommand{\grp}[1]{\langle#1\rangle}
\newcommand{\Grp}[1]{\left\langle#1\right\rangle}
\newcommand{\set}[1]{\{#1\}}
\newcommand{\F}{\mathbb{F}}
\newcommand{\barF}{\overline{\mathbb{F}}}
\newcommand{\Z}{\mathbb{Z}}
\newcommand{\bG}{\mathbf{G}}
\newcommand{\bL}{\mathbf{L}}
\newcommand{\tG}{\tilde{G}}
\newcommand{\tH}{\tilde{H}}
\newcommand{\tbG}{\tilde{\mathbf{G}}}
\newcommand{\hG}{\hat{G}}
\newcommand{\tbL}{\tilde{\mathbf{L}}}
\newcommand{\tR}{\tilde{R}}
\newcommand{\tS}{\tilde{S}}
\newcommand{\hR}{\hat{R}}
\newcommand{\tC}{\tilde{C}}
\newcommand{\hC}{\hat{C}}
\newcommand{\tN}{\tilde{N}}
\newcommand{\tM}{\tilde{M}}
\newcommand{\hN}{\hat{N}}
\newcommand{\tZ}{\tilde{Z}}
\newcommand{\hZ}{\hat{Z}}
\newcommand{\tB}{\tilde{B}}
\newcommand{\tD}{\tilde{D}}
\newcommand{\tW}{\tilde{W}}
\newcommand{\zero}{\mathbf{0}}
\newcommand{\one}{\mathbf{1}}
\newcommand{\two}{\mathbf{2}}
\newcommand{\bc}{\mathbf{c}}
\newcommand{\bbeta}{\bar{\beta}}
\newcommand{\tfb}{\tilde{\mathfrak{b}}}
\newcommand{\cF}{\mathcal{F}}
\newcommand{\cM}{\mathcal{M}}
\newcommand{\cE}{\mathcal{E}}
\newcommand{\cR}{\mathcal{R}}
\newcommand{\cN}{\mathcal{N}}
\newcommand{\tcR}{\tilde{\mathcal{R}}}
\newcommand{\fZ}{\mathfrak{Z}}
\newcommand{\sC}{\mathscr{C}}
\newcommand{\tsC}{\tilde{\mathscr{C}}}
\newcommand{\fA}{\mathfrak{A}}
\newcommand{\fS}{\mathfrak{S}}
\newcommand{\tg}{\tilde{g}}
\newcommand{\tn}{\tilde{n}}
\newcommand{\ttheta}{\tilde{\theta}}
\newcommand{\tDelta}{\tilde{\Delta}}
\newcommand{\tvarphi}{\tilde{\varphi}}
\newcommand{\tchi}{\tilde{\chi}}
\newcommand{\tpsi}{\tilde{\psi}}
\newcommand{\hs}{\hat{s}}
\newcommand{\hz}{\hat{z}}
\newcommand{\tOmega}{\tilde{\Omega}}
\begin{document}

\title{Equivariant correspondences and the inductive Alperin weight condition for type $\mathsf A$
\footnote{Supported by the NSFC (No. 11631001, No. 11901028, No. 11901478).}}

\author{Zhicheng Feng
\footnote{School of Mathematics and Physics, University of Science and Technology Beijing, Beijing 100083, China. E-mail address: zfeng@pku.edu.cn} \quad Conghui Li
\footnote{School of Mathematics, Southwest Jiaotong University, Chengdu 611756, China. E-mail address: liconghui@swjtu.edu.cn}
\quad Jiping Zhang
\footnote{School of Mathematical Sciences, Peking University, Beijing 100871, China. E-mail address: jzhang@pku.edu.cn}
}

\maketitle

\begin{abstract}
In this paper, we establish the inductive Alperin weight condition for the finite simple groups of Lie type $\mathsf A$, contributing to the program to prove the Alperin weight conjecture by checking the inductive condition for all finite simple groups.
\newline \emph{2010 Mathematics Subject Classification:} 20C20, 20C33.
\newline \emph{Keyword:} Alperin weight conjecture, inductive Alperin weight condition, special linear and unitary groups.
\end{abstract}

\tableofcontents


\section{Introduction}\label{sect:intro}

One of the most important local-global conjectures in representations of finite groups is the Alperin weight conjecture \cite{Al87}, claiming that
$$|\Alp(G)|=|\IBr(G)|$$
for every finite group $G$ and every prime number $\ell$,
where $\Alp(G)$ denotes the set of all $G$-conjugacy classes of $\ell$-weights  of $G$.

This conjecture has a blockwise refinement,
asserting that when partitioned into blocks, the numbers of irreducible Brauer characters and weights are also equal.
These conjectures have been verified for symmetric groups, sporadic simple groups and many finite groups of Lie type;
see \cite{AF90,An92,An93b,An93,An94}.

The Alperin weight conjecture and its blockwise version seem extremely difficult to prove in general,
so efforts are made to reduce it to simple groups,
especially after the McKay conjecture had been reduced to simple groups successfully by Isaacs, Malle and Navarro \cite{IMN07}.
First, Navarro and Tiep reduced the Alperin weight conjecture to simple groups in \cite{NT11},
and then Sp\"ath achieved the reduction for the blockwise version in \cite{Sp13}.
Thus it is an important task to verify for all simple groups the \emph{inductive Alperin weight (AW) condition} defined in \cite{NT11}
and the \emph{inductive blockwise Alperin weight (BAW) condition} defined in \cite{Sp13}. 
In the  paper \cite{NT11}, the authors verified their inductive (AW) condition for groups of Lie type in defining characteristic,
as well as for all simple groups with abelian Sylow $2$-subgroups,
while An and Dietrich \cite{AD12} dealt with sporadic groups.
The inductive (BAW) condition has been verified for several families of simple groups so far:
groups of Lie type in their defining characteristic (Sp\"ath \cite{Sp13});
simple alternating groups, Suzuki groups and Ree groups (Malle \cite{Ma14});
many of the $26$ sporadic groups (Breuer \cite{Br});
blocks with cyclic defect groups (Koshitani--Sp\"ath \cite{KS16a,KS16b});
groups of Lie type $G_2$ and $^3D_4$ (Schulte \cite{Sch16});
a special case of type $\mathsf A$ (C. Li--Zhang \cite{LZ18,LZ19});
unipotent blocks of type $\mathsf A$ (Feng \cite{Feng19});
blocks  of type $\mathsf A$ with abelian defect (Brough--Sp\"ath \cite{BS19b});
certain cases of classical groups (Feng--Z. Li--Zhang \cite{FLZ19a});
groups of type $\mathsf C$ (C. Li \cite{Li19,Li19b}, Feng--Malle \cite{FM20});
some more particular simple classical groups of small rank (\cite{BSF19,Du19,Du20,FLL17a,LL19,SF14}).
We mention that the inductive (AW) condition holds for all of the above cases since the inductive (BAW) condition implies the inductive (AW) condition.

The inductive (AW) condition for a simple group  is much stronger than just the validity of the  Alperin weight conjecture,
and in particular also involves the covering groups and the automorphism group of the simple group in question.
These conditions are highly complicated and are essentially two-fold.
The first half requires for each finite quasi-simple group $G$ a bijection $\IBr(G)\to\Alp(G)$
that should be equivariant for all automorphisms of $G$.
The second requirement is of a cohomological nature and relates to cohomology elements on subgroups of $\Out(G)$ (see \cite[\S 3]{NT11}).
Special linear and special unitary groups are therefore seemingly the most difficult cases with respect to this cohomological criterion,
and the classes of cohomology elements involved seem to be of unbounded order.

The main goal of this paper is to show the following theorem.

\begin{mainthm}\label{mainthm-1}
The simple groups $\PSL_n(q)$ and  $\PSU_n(q)$ satisfy the inductive Alperin weight condition.
\end{mainthm}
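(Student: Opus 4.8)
The plan is to verify the inductive Alperin weight (AW) condition from \cite{NT11} for each simple group $S=\PSL_n(q)$ or $\PSU_n(q)$, at every prime $\ell$. Since the inductive (BAW) condition is known in the defining-characteristic case (Sp\"ath \cite{Sp13}) and implies the inductive (AW) condition, I will focus on $\ell \nmid q$. The standard strategy is to work with $\bG$ a simple simply connected algebraic group of type $\mathsf A_{n-1}$ over $\overline{\F}_q$ and a Frobenius endomorphism $F$, so that $G=\bG^F=\SL_n(q)$ or $\SU_n(q)$ is the universal covering group of $S$ (away from the short list of exceptional Schur multipliers, which must be handled separately by ad hoc arguments, typically invoking \cite{Br} or direct computation). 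One passes to the regular embedding $\bG\hookrightarrow\tbG$ with $\tbG^F=\tG=\GL_n(q)$ or $\GU_n(q)$, so that $\tG/G$ is abelian and $\tG\rtimes E$ (with $E$ the relevant graph-field automorphisms) realizes $\Aut(S)$. The first and main task is to produce an $\Aut(S)_\chi$-equivariant bijection $\Omega\colon\IBr(G)\to\Alp(G)$ that moreover lifts to $\tG$ and extends suitably, i.e.\ one must establish the list of conditions in \cite[\S 3]{NT11} (or the reformulations of Sp\"ath in \cite{Sp13}), which amount to: (i) a $\tG\rtimes E$-equivariant bijection between $\IBr(\tG)$ and $\Alp(\tG)$ compatible with the $\tG/G\cong\Irr(\tG/G)$-action on both sides, and (ii) a compatibility of the Clifford theory of $\chi$ over $G$ with that of its weight counterpart over $G$, controlled by cohomology classes in $H^2$ of subgroups of $\Out(S)$.

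To build the bijection I would reduce, via the $\ell$-block theory of finite reductive groups, to the level of the characters of $\tG$: using Jordan decomposition of Brauer characters (Bonnaf\'e--Rouquier and the work on basic sets) one parametrizes $\IBr(\tG)$ by pairs $(s,\mu)$ with $s$ a semisimple $\ell'$-element of the dual group $\tG^*=\GL_n(q)$ or $\GU_n(q)$ and $\mu$ a unipotent Brauer character of the (possibly twisted) centralizer $C_{\tG^*}(s)$, which is itself a product of smaller general linear and unitary groups. On the weight side I would use the known classification of $\ell$-weights of $\GL_n(q)$ and $\GU_n(q)$ (An \cite{An92,An94}, and the refinements used by Feng \cite{Feng19}, Li--Zhang \cite{LZ18,LZ19}), organized again by the same semisimple data $s$ together with, for each $e$-Jordan-cuspidal piece, a weight of a symmetric-group wreath product that matches the unipotent Brauer character $\mu$ through the combinatorics of cores and quotients of partitions. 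The key input here is that for \emph{unipotent} Brauer characters of products of $\GL$'s and $\GU$'s the inductive (BAW) bijection has essentially already been constructed and shown equivariant (Feng \cite{Feng19}); one then "twists" this by $s$ and assembles over the Jordan decomposition, checking that the construction is compatible with the action of $\tG$ (acting on $(s,\mu)$ only through conjugacy of $s$, which is trivial inside $\tG$) and with field and graph automorphisms (which act on $s$ by $q$-power maps and transpose-inverse, and on the unipotent parts through the corresponding automorphisms of the symmetric groups, where equivariance is again known).

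The decisive and most delicate step is descending from $\tG$ to $G=\SL_n(q)$ or $\SU_n(q)$ and verifying the cohomological part of the inductive condition. Here the subtlety, already flagged in the introduction, is that $Z(\bG)$ is large (cyclic of order $n$, resp.\ $\gcd(n,q\pm1)$) so the Clifford theory of $G$ inside $\tG$ is genuinely nontrivial, the relevant stabilizers in $\tG/G$ and in $E$ can be complicated, and the obstruction classes in $H^2$ of subgroups of $\Out(S)$ need not vanish and appear to have unbounded order. I would handle this by the now-standard machinery of Sp\"ath's "criterion via relative projectivity" / the $\tG$-block and the notion of the bijection being "$\mathcal{Z}$-compatible", reducing the cohomological statement to the equality of two explicit $2$-cocycles --- one read off from the ordinary character $\chi$ and its extension to the stabilizer in $\tG\rtimes E$, the other from the corresponding weight $(R,\varphi)$ and its extension --- and then showing these cocycles agree because both weight and character are governed by the \emph{same} underlying data over the same stabilizer, essentially the Clifford theory of a character of a wreath product $\GL_d(q)\wr\mathfrak{S}_m$ (or twisted analogue). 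Concretely this requires: matching the stabilizers $(\tG\rtimes E)_\chi$ and $(\tG\rtimes E)_{(R,\varphi)}$ (up to the expected correction by an element of $\tG$), exhibiting simultaneous extensions to these stabilizers with the same character values on the normal abelian quotient, and invoking \cite{NT11,Sp13} (and the Butterfly/"star" argument on central extensions) to conclude. I expect this last verification --- making the character-theoretic and weight-theoretic $2$-cocycles literally coincide, uniformly in $n$, $q$ and $\ell$, including the finitely many small cases with exceptional Schur multiplier --- to be the main obstacle, and the bulk of the paper will be devoted to setting up the parametrizations precisely enough that this coincidence becomes transparent.
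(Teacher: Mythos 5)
Your broad strategy (reduce to non-defining characteristic, parametrize $\IBr(\tG)$ and $\Alp(\tG)$ by the same combinatorial data $(s,\mu)$, then descend to $G=\SL_n(\eta q)$ by Clifford theory) is the right starting point and matches what the paper does at a coarse level, but you are missing the single decisive idea that makes the descent tractable: the Brough--Sp\"ath criterion \cite[Thm.~3.3]{BS19}, reproduced in the paper as Theorem~\ref{thm:criterion}. You correctly identify that the cohomological part of the Navarro--Tiep inductive condition is the crux for type $\mathsf A$, and you correctly observe that the classes in $H^2$ of subgroups of $\Out(S)$ have unbounded order --- this is exactly the sentence from the introduction of the paper. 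But your plan is to attack this head-on by "showing these cocycles agree," which is precisely the approach the paper deliberately avoids. Instead, the paper invokes a criterion that replaces the $H^2$-comparison with four conditions that can be checked separately: (i) extendibility of $\IBr(G)$ and weight characters inside $\tG$ (automatic since $\tG/G$ is cyclic); (ii) a $\Lin_{\ell'}(\tG/G)\rtimes D$-equivariant bijection $\IBr(\tG)\to\Alp(\tG)$ compatible with central characters and with the "$\J_G$" restriction data; (iii) a good representative in each $\IBr(G)$-orbit with a split stabilizer in $\tG D$ and extendibility to $GD_{\psi_0}$; and (iv) the analogous local statement for weights. With this criterion the cocycles never appear explicitly. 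Without it, your plan of making the character-theoretic and weight-theoretic 2-cocycles "literally coincide, uniformly in $n$, $q$ and $\ell$" is, as far as anyone knows, not a path to a proof; it is the difficulty, not the solution.

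A second gap: you never explain how weights of $G$ will actually be classified or compared with weights of $\tG$. The paper introduces the relation of one weight \emph{covering} another (from \cite{BS19}, via a Dade--Glauberman--Nagao correspondence), introduces "special" radical subgroups of $\tG$ relative to $G$ (Section~\ref{subsect:srs}) in order to control $N_G(R)$ via $N_{\tG}(\tR)$, and then transfers everything to twisted basic subgroups (following Cabanes--Sp\"ath) so that the local stabilizer/extension condition (iv) can be verified by a parametrization of characters of $N_G(R)$ analogous to the one used in the inductive McKay condition (Section~\ref{sec-Stab-ext-wei}). None of this machinery appears in your outline. On the global side you are in better shape: the unitriangular basic set results of Kleshchev--Tiep and Denoncin that you would need are indeed the ones used in Theorem~\ref{split-brauer}, and the equivariance of the Alperin--Fong/An bijection from \cite{LZ18} is the right ingredient for condition (ii); the paper additionally reworks the combinatorics of $\ell$-core towers (Lemma~\ref{forthebijexction}) to make the bijection compatible with the $\J_G$-data, a refinement you would still need and have not anticipated. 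In short: right big picture, but the two key technical inputs --- the BS19 criterion to dodge the cocycles, and the cover/special-radical/twist machinery for the local statement --- are both absent, and without the first one the plan stalls at exactly the step you yourself flag as the "main obstacle."
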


Let $p$ be a prime, $q=p^f$ and $n$ a positive integer.
Using the convention of putting $\SU_n(q)=\SL_n(-q)$ and $\GU_n(q)=\GL_n(-q)$
allows us to write $\SL_n(\eta q)$ and $\GL_n(\eta q)$ for $\eta=\pm 1$.

Our main statements are proven by applying a recent criterion given by Brough and Sp\"ath  \cite{BS19},
which allows for a slightly bigger group to be considered in place of the universal covering group of a simple group;
see Theorem \ref{thm:criterion}.
This criterion can be divided into three parts:
a global statement about the stabilizers and extendibility of irreducible Brauer characters of $\SL_n(\eta q)$,
a local statement analogous to the first about weights,
and an equivariant bijection between the irreducible Brauer characters and weights of $\GL_n(\eta q)$.
The global statement is easy here
by a result of Cabanes and Sp\"ath \cite{CS17} about the stabilizers and extendibility of irreducible characters of $\SL_n(\eta q)$
and the unitriangular shape of the decomposition matrix of $\SL_n(\eta q)$ given by Kleshchev--Tiep \cite{KT09} and Denoncin \cite{De17}.
The  bijection for $\GL_n(\eta q)$ given by Alperin--Fong \cite{AF90} and An \cite{An92,An93, An94}
is already verified in \cite{LZ18} to be equivariant
but we strengthen the results there in order to describe in addition the irreducible Brauer characters and the weights of $\SL_n(\eta q)$.

The main ingredient for the local statement
is to classify the weights of $\SL_n(\eta q)$ using the construction of the weights of $\GL_n(\eta q)$ in \cite{AF90,An92,An93,An94}.
Although weights of many classical groups have been classified by works of several authors,
the case of $\SL_n(\eta q)$ has been left unsolved for long time.
In \cite{BS19},
Brough and Sp\"ath give a relationship ``cover" between weights of a finite group and its normal subgroups.
The weights of $\SL_n(\eta q)$ were classified in \cite{Feng19} if $\ell\nmid \gcd(n,q-\eta)$;
we improve the methods and generalize the results there
and determine the normalizers and centralizers of radical subgroups
in order to obtain the weights of  $\SL_n(\eta q)$ covered by a given weight of  $\GL_n(\eta q)$.
Moreover, based on the methods for the stabilizers and extendibility of characters in the local case of \cite{CS17,MS16}
and transferring to the twisted radical subgroups,
we give a parametrization of certain characters of local subgroups and
finally establish the stabilizers and extendibility  properties of weight characters. 

\vspace{2ex}

This paper is structured in the following way.
After introducing some notation in \S \ref{sect:notation-preliminaries},
we recall in \S \ref{sect:weights-general-gp} the weights of general linear and unitary groups using the twisted version
and give slight generalizations for some results in \cite{AF90,An92,An93,An94}.
Then in \S \ref{subsect:srs} we determine which radical subgroups of general linear and unitary groups are ``special".
The weights of special linear and unitary groups are classified in \S \ref{subsect:wos},
while in \S \ref{An-equivariant-bijection} we construct a bijection,
coming from \cite{AF90,An92,An93,An94} but with additional equivariance properties, see Theorem \ref{thm-equ-bijection}.
Then we consider the local situation in \S\ref{sec-Stab-ext-wei} and
check the required condition on the stabilizers and extendibility properties of weight characters.
Finally we complete the proof of our Main Theorem \ref{mainthm-1} in \S \ref{proof-main-thm}.


\section{Notation and preliminaries }\label{sect:notation-preliminaries}

The notation for representations of finite groups we use can be found in \cite{NT89},
except that $\Ind$ and $\Res$ are used for induction and restriction.
Throughout we consider the modular representations with respect to a fixed prime $\ell$.
For a finite group $G$ and $\chi \in \Irr(G)$, $\chi^0$ is used for the restriction to $\ell$-regular elements of $G$.
We will abbreviate $\ell$-Brauer characters, $\ell$-blocks, etc. as Brauer characters, blocks, etc.
For an integer $m$, we write $m_\ell$ and $m_{\ell'}$ for the $\ell$-, $\ell'$-part of $m$ respectively.

For $K\unlhd G$ and $\theta\in\Irr(K)\cup\IBr(K)$, we denote by $G_\theta$ the stabilizer of $\theta$ in $G$,
and for $H\le G$ and $\theta\in\Irr(H)\cup\IBr(H)$, we denote by $G_{H,\theta}$ the  stabilizer of $\theta$ in $N_G(H)$.
Additionally, $G_\theta$ is also denoted by $G(\theta)$ in this paper since the notation for $G$ and $\theta$ in the following is often cumbersome.
For $K\unlhd G$ we sometimes identify the characters of $G/K$ with the chracters of $G$ whose kernel contains $K$.
For $K\unlhd G$ and $\chi\in\Irr(G)\cup\IBr(G)$,
let $\kappa_K^G(\chi)$ be the number of irreducible constituents of $\Res^{G}_{K}(\chi)$ forgetting multiplicities.

For $K\le G$, $\mathcal K\subseteq \Irr(K)$ and $\mathcal G\subseteq \Irr(G)$,
we define $\Irr(G\mid \mathcal K):=\bigcup\limits_{\theta\in\mathcal K}\Irr(G\mid \theta)$ and
$\Irr(K\mid \mathcal G):=\bigcup\limits_{\chi\in\mathcal G}\Irr(K\mid \chi)$.

If $G$ is abelian, we also write $\Lin(G)=\Irr(G)$ since all irreducible characters of $G$ are linear.
Let $\Lin_{\ell'}(G)$ denote the subgroup of $\Lin(G)$ of all irreducible characters of $G$ of $\ell'$-order.
Then the map $\Lin_{\ell'}(G)\to \IBr(G)$, $\chi\mapsto \chi^0$ is bijective.
From this, we always identify $\IBr(G)$ with $\Lin_{\ell'}(G)$ when $G$ is abelian.

We denote by $\nu$ the exponential valuation associated to the prime $\ell$,
normalised so that  $\nu(\ell)=1$.  \label{def-valuation}
For a finite group $G$, we  abbreviate $\nu(|G|)$ as $\nu(G)$.
For a positive integer $n$, we denote by $\fS_n$ (or $\fS(n)$) the symmetric group on $n$ symbols, while $\fA_n$ denotes the alternating group on $n$ symbols.

\subsection{Clifford theory between weights}\label{subsec:general-criterion}

For a finite group $G$, we denote by $\dz(G)$ the set of all irreducible defect zero characters of $G$.
Set $\dz(G \mid \theta) = \dz(G) \cap \Irr(G \mid \theta)$ for $K \unlhd G$ and $\theta\in\Irr(K)$.
A \emph{weight} of $G$ means a pair $(R,\varphi)$ where $R$ is an $\ell$-subgroup of $G$ and $\varphi\in\dz(N_G(R)/R)$.
In this case, $R$ is necessarily a radical subgroup of $G$, \emph{i.e.} $R=\mrO_\ell(N_G(R))$, and $\varphi$ is called a \emph{weight character}.
We denote by $\Rad(G)$ the set of all radical subgroups of $G$.
We often identify characters in $\dz(N_G(R)/R)$ with their inflations to $N_G(R)$.
Denote the set of all $G$-conjugacy classes of  weights of $G$ by $\Alp(G)$.
For a weight $(R,\varphi)$ of $G$, denote by $\overline{(R,\varphi)}$ the $G$-conjugacy class containing $(R,\varphi)$.
Sometimes we also write $\overline{(R,\varphi)}$ simply as $(R,\varphi)$ when no confusion can arise.

In order to introduce the relationship ``cover" and Clifford theory for weights from \cite{BS19},
we recall the Dade--Glauberman--Nagao correspondence.
Let $K\unlhd M$ be such that $M/K$ is an $\ell$-group.
Let $D_0$ be a normal $\ell$-subgroup of $K$ contained in $Z(M)$.
Suppose that $b$ is an $M$-invariant block of $K$
and has defect group $D_0$.
Assume that $B$, the only block of $M$ covering $b$, has defect group $D$.
Let $L=N_K(D)$ and $b'$ the only block of $L$ covered by $B'$, the Brauer  correspondent of $B$.
Then by \cite[Thm.~5.2]{NS14}, there is a natural bijection $\pi_D:\Irr_D(b)\to \Irr_D(b')$,
called the \emph{Dade--Glauberman--Nagao correspondence}.
Here, $\Irr_D(b)$ denotes the set of $D$-invariant characters in $b$.

Let $G\unlhd \tG$ and let $(R,\varphi)$ be a weight of $G$.
Set $N=N_G(R)$.
Take $N\le M\le N_{\tG}(R)_\varphi$ such that $M/N$ is an $\ell$-group.
Fix a defect group $\tR/R$ of the unique block of $M/R$ which covers the block of $N/R$ containing the character $\varphi$.
Then $\tR\cap N=R$, $M=N\tR$ and 
 $N_{M/R}(\tR/R)=(\tR/R)C_{N/R}(\tR/R)$.
Note that $\pi_{\tR/R}(\varphi)\in\dz(C_{N/R}(\tR/R))$
and denote by $\bar\pi_{\tR/R}(\varphi)$ the associated character in $\Irr(N_{M/R}(\tR/R)/(\tR/R))$ which lifts to $\pi_{\tR/R}(\varphi)\times 1_{\tR/R}\in\Irr(N_{M/R}(\tR/R))$.
On the other hand,  $N_{M/R}(\tR/R)/(\tR/R)\cong N_G(\tR)\tR/\tR$ is normal in $N_{\tG}(\tR)/\tR$.
Following \cite{BS19}, a weight $(\tR,\tvarphi)$ of $\tG$ with $\tvarphi\in\dz(N_{\tG}(\tR)/\tR\mid \bar\pi_{\tR/R}(\varphi))$ is said to \emph{cover} $(R,\varphi)$.
Using this, some Clifford-like properties for weights can be established;
see \cite[\S 2]{BS19}.

\begin{lem}\label{spec-rad-cover}
Let $(R,\varphi)$ be a weight of $G$ and 
$(\tR,\tilde\varphi)$ be a weight of $\tG$ covering $(R,\varphi)$ such that $R=\tR\cap G$.
If $N_{\tG}(R)=N_{\tG}(\tR)$, then $\varphi\in\Irr(N_G(R)\mid \tilde\varphi)$.
\end{lem}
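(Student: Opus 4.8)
The plan is to unravel the definition of ``cover'' under the strong hypothesis $N_{\tG}(R)=N_{\tG}(\tR)$, under which it collapses to a transparent restriction statement. Fix data witnessing the cover: a subgroup $M$ with $N:=N_G(R)\le M\le N_{\tG}(R)_\varphi$ and $M/N$ an $\ell$-group, such that $\tR/R$ is a defect group of the unique block of $M/R$ covering the block $b$ of $N/R$ containing $\varphi$, and $\tvarphi\in\dz(N_{\tG}(\tR)/\tR\mid\bar\pi_{\tR/R}(\varphi))$; recall also $M=N\tR$ and $\tR\cap N=R$. The hypothesis yields the following at once. First, $\tR\le M\le N_{\tG}(R)=N_{\tG}(\tR)$ forces $\tR\unlhd M$, so $\tR/R\unlhd M/R$ and $N_{M/R}(\tR/R)=M/R$. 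Second, $N=G\cap M\unlhd M$, and likewise $N_G(\tR)=G\cap N_{\tG}(\tR)=G\cap N_{\tG}(R)=N$. Third, since $M/R=(N/R)(\tR/R)$ with $(N/R)\cap(\tR/R)=1$ and both factors normal in $M/R$, the product $M/R=(N/R)\times(\tR/R)$ is direct, whence $C_{N/R}(\tR/R)=N/R$.

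Next I would identify $\bar\pi_{\tR/R}(\varphi)$. As $\varphi\in\dz(N/R)$, the block $b$ has defect zero, so $\Irr(b)=\{\varphi\}$, and $b$ is $\tR/R$-invariant because $M$, hence $\tR$, fixes $\varphi$; thus $\Irr_{\tR/R}(b)=\{\varphi\}$. In the Dade--Glauberman--Nagao correspondence entering the cover construction, the subgroup $L=N_{N/R}(\tR/R)$ equals $N/R$ by the third point above, so its relevant block $b'$ equals $b$, and $\pi_{\tR/R}\colon\Irr_{\tR/R}(b)\to\Irr_{\tR/R}(b')$ is a map between two copies of the singleton $\{\varphi\}$; therefore $\pi_{\tR/R}(\varphi)=\varphi$. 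Since $M=N\tR$ and $M/R=(N/R)\times(\tR/R)$ is direct, the character $\bar\pi_{\tR/R}(\varphi)$, which lives on $N_{M/R}(\tR/R)/(\tR/R)=(M/R)/(\tR/R)=M/\tR\cong N/R$ (the last isomorphism from $\tR\cap N=R$), is just $\varphi$; concretely $\bar\pi_{\tR/R}(\varphi)(n\tR)=\varphi(nR)$ for $n\in N$, so $\Res^{N\tR}_N\bar\pi_{\tR/R}(\varphi)=\varphi$.

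Finally I would conclude by transitivity of restriction. Regarding $\tvarphi$ and $\bar\pi_{\tR/R}(\varphi)$ as characters of $N_{\tG}(\tR)$ and of $N\tR$ via inflation (the standing convention), the hypothesis $\tvarphi\in\dz(N_{\tG}(\tR)/\tR\mid\bar\pi_{\tR/R}(\varphi))$ says that $\bar\pi_{\tR/R}(\varphi)$ is an irreducible constituent of $\Res^{N_{\tG}(\tR)}_{N\tR}\tvarphi$. Restricting further to $N$ and using $\Res^{N\tR}_N\bar\pi_{\tR/R}(\varphi)=\varphi$, we see that $\varphi$ is an irreducible constituent of $\Res^{N_{\tG}(\tR)}_{N}\tvarphi$; that is, $\varphi\in\Irr(N\mid\tvarphi)=\Irr(N_G(R)\mid\tvarphi)$, since $N=N_G(R)=N_G(\tR)$, which is the claim. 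The genuinely delicate points are only the bookkeeping with the inflations across $R\le\tR$ and the observation that the Dade--Glauberman--Nagao correspondence degenerates in this configuration; the rest is elementary subgroup arithmetic and transitivity of restriction.
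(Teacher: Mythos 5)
Your proof is correct and follows the same route as the paper's (observe that $N_{M/R}(\tR/R)=M/R$, hence $\pi_{\tR/R}$ degenerates to the identity, and the conclusion then comes out by unravelling the definition of cover and transitivity of restriction). The paper dispatches this in two lines; you supply the subgroup arithmetic ($G\cap M=N$, $N_G(\tR)=N$, the direct-product decomposition $M/R=(N/R)\times(\tR/R)$), the defect-zero/singleton observation that makes $\pi_{\tR/R}$ trivially the identity without appeal to the general naturality of the Dade--Glauberman--Nagao correspondence, and the careful tracking of inflations. Nothing missing, nothing wrong; just a fully expanded version of the same argument.
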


\begin{proof}
In the argument above,  if $N_{\tG}(R)=N_{\tG}(\tR)$, then $N_{M/R}(\tR/R)=M/R$.
Thus $\pi_{\tR/R}$ is the identity map and the lemma follows.
\end{proof}

\subsection{The inductive Alperin weight condition.}

Recently, Brough and Sp\"ath gave a new criterion for the inductive (AW) condition.
We first give some notation.

Let $G\unlhd \tG$ such that $\tG/G$ is abelian.
We denote by $\tG_{\ell'}$ the normal subgroup of $\tG$ with $G\le \tG_{\ell'}$
such that $\tG/\tG_{\ell'}$ is isomorphic to a Sylow $\ell$-subgroup of $\tG/G$.
For $\tilde\psi\in\IBr(\tG)$, 
we define $\J_{G}(\tpsi)=\tG_{\ell'}\tG_{\psi}$ for some $\psi\in\IBr(G\mid\tilde\psi)$.
For a weight $(\tR,\tvarphi)$, we denote by $\J_{G}(\overline{(\tR,\tvarphi)})=\tG_{\ell'}\tG_{\overline{(R,\varphi)}}$,
where $(R,\varphi)$ is a weight of $G$ covered by $(\tR,\tvarphi)$ and
$\tG_{\overline{(R,\varphi)}}$ is the stabilizer of $\overline{(R,\varphi)}$ in $\tG$.
Note that $\J_{G}(\overline{(\tR,\tvarphi)})=\tG_{\ell'} N_{\tG}(R)_\varphi$.

For $\nu \in \Lin_{\ell'}(Z(\tG))$,
we denote by $\Alp(\tG\mid\nu)$ the subset of $\Alp(\tG)$ consisting of $\overline{(\tR,\tvarphi)}$ such that $\nu\in\Irr(Z(\tG)\mid \tvarphi)$.

\begin{thm}[{\cite[Thm.~3.3]{BS19}}] \label{thm:criterion}
	Let $S$ be a finite non-abelian simple group and $\ell$ a prime dividing $|S|$.
	Let $G$ be the universal covering group of $S$ and
	assume there are finite groups $\tG$, $D$ such that $G \unlhd \tG D=\tG \rtimes D$ and the following hold:
	\begin{enumerate}[\rm(i)]\setlength{\itemsep}{0pt}
		\item \begin{enumerate}[\rm(a)]\setlength{\itemsep}{0pt}
			\item $G=[\tG,\tG]$ and $D$ is abelian,
			\item $C_{\tG D}(G)=Z(\tG)$ and $\tG D/Z(\tG) \cong \Aut(G)$,
			\item any element of $\IBr(G)$ extends to its stabilizer in $\tG$,
			\item for any $R \in \Rad(G)$, any element of $\dz(N_G(R)/R)$ extends to its stabilizer in $N_{\tG}(R)/R$.
			\end{enumerate}
		\item There exists a $\Lin_{\ell'}(\tG/G) \rtimes D$-equivariant bijection $\tOmega:\ \IBr(\tG) \to \Alp(\tG)$ such that
			\begin{enumerate}[\rm(a)]\setlength{\itemsep}{0pt}
			\item $\tOmega(\IBr(\tG\mid\nu)) = \Alp(\tG\mid\nu)$ for every $\nu \in \Lin_{\ell'}(Z(\tG))$,
			\item $\J_G(\tpsi) = \J_G(\tOmega(\tpsi))$ for every $\tpsi \in \IBr(\tG)$.
			\end{enumerate}
		\item For every $\tpsi\in\IBr(\tG)$, there exists some $\psi_0\in\IBr(G\mid\tpsi)$ such that
			\begin{enumerate}[\rm(a)]\setlength{\itemsep}{0pt}
			\item $(\tG\rtimes D)_{\psi_0}=\tG_{\psi_0}\rtimes D_{\psi_0}$,
			\item $\psi_0$ extends to $G\rtimes D_{\psi_0}$.
			\end{enumerate}
		\item In every $\tG$-orbit on $\Alp(G)$, there is an $(R,\varphi_0)$ such that
			\begin{enumerate}[\rm(a)]\setlength{\itemsep}{0pt}
			\item $(\tG D)_{R,\varphi_0} = \tG_{R,\varphi_0} (GD)_{R,\varphi_0}$,
			\item $\varphi_0$ extends to $(G\rtimes D)_{R,\varphi_0}$.
			\end{enumerate}
	\end{enumerate}
	Then the inductive Alperin weight (AW) condition from \cite[\S 3]{NT11} holds for $S$ and $\ell$.
\end{thm}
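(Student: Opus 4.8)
The plan is to deduce the inductive AW condition of \cite[\S 3]{NT11} for $S$ and $\ell$ from the data (i)--(iv) attached to the group $\tG$ and the abelian complement $D$. Recall that this condition asks for an $\Aut(G)$-equivariant bijection $\IBr(G)\to\Alp(G)$ preserving central characters, together with, for every $\psi\in\IBr(G)$ with image $\overline{(R,\mu)}$, the appropriate projective-representation compatibility between the character triples $(\Aut(G)_\psi,G,\psi)$ and $(\Aut(G)_{\overline{(R,\mu)}},N_G(R),\mu)$. I would proceed in three steps: run Clifford theory over $G\unlhd\tG$ simultaneously on Brauer characters and on weights; transport the bijection $\tOmega$ of (ii) down from $\tG$ to $G$ and promote its equivariance; and then extract the triple compatibility from (iii) and (iv).

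For the Clifford-theoretic setup I would use (i)(c): since every $\psi\in\IBr(G)$ extends to its stabilizer $\tG_\psi$, Clifford theory and Gallagher's theorem identify the $\tG$-orbits on $\IBr(G)$ with the $\Lin_{\ell'}(\tG/G)$-orbits on $\IBr(\tG)$, the ``stabilizer datum'' of an orbit being recorded by $\J_G(\tpsi)=\tG_{\ell'}\tG_\psi$. Dually, by (i)(d) every weight character of $G$ extends to its stabilizer in $N_{\tG}(R)/R$; inserting this into the ``cover'' relation and the Dade--Glauberman--Nagao correspondence recalled in \S\ref{subsec:general-criterion} (Lemma \ref{spec-rad-cover} being a typical piece of the bookkeeping) identifies the $\tG$-orbits on $\Alp(G)$ with the $\Lin_{\ell'}(\tG/G)$-orbits on $\Alp(\tG)$, with orbit datum $\J_G(\overline{(\tR,\tvarphi)})=\tG_{\ell'}N_{\tG}(R)_\varphi$. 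Since $\tOmega$ is $\Lin_{\ell'}(\tG/G)\rtimes D$-equivariant it matches these two orbit structures, and (ii)(b) forces matched orbits to carry the same $\J_G$-datum; that is exactly what is needed to lift the induced bijection of $\tG$-orbits to a genuine bijection $\Phi\colon\IBr(G)\to\Alp(G)$ (choosing compatible $D$-transversals and, using the extensions supplied by (i)(c), (i)(d), compatible extensions of the characters involved). The $D$-equivariance of $\Phi$ becomes $\Aut(G)$-equivariance because $C_{\tG D}(G)=Z(\tG)$ and $\tG D/Z(\tG)\cong\Aut(G)$ by (i)(b), and (ii)(a) restricted along $Z(G)\le Z(\tG)$ gives the preservation of central characters.

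For the cohomological part I would fix $\tpsi\in\IBr(\tG)$, take $\psi_0\in\IBr(G\mid\tpsi)$ as in (iii), and --- after adjusting $\psi_0$ within its $\tG$-orbit --- match it under $\Phi$ with a weight $\overline{(R,\varphi_0)}$ whose representative $(R,\varphi_0)$ is the one provided by (iv). Then $\psi_0$ extends to $\tG_{\psi_0}$ by (i)(c) and to $G\rtimes D_{\psi_0}$ by (iii)(b), while by (iii)(a) the stabilizer $(\tG D)_{\psi_0}$ equals $\tG_{\psi_0}\rtimes D_{\psi_0}=\tG_{\psi_0}\,(G\rtimes D_{\psi_0})$, an internal product with intersection $G$ and abelian ``$D$-part''; symmetrically $\varphi_0$ extends to $N_{\tG}(R)_{\varphi_0}$ by (i)(d) and to $(G\rtimes D)_{R,\varphi_0}$ by (iv)(b), with $(\tG D)_{R,\varphi_0}=\tG_{R,\varphi_0}\,(G\rtimes D)_{R,\varphi_0}$ by (iv)(a). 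This is precisely the configuration in which the character-triple machinery of \cite{NT11,BS19} attaches, on each of the two sides, a projective representation of the common group $\Aut(G)_{\psi_0}=\Aut(G)_{\overline{(R,\varphi_0)}}$ (the equality of stabilizers coming from the equivariance of $\Phi$) whose factor set is trivialised in the ``$\tG$-direction'' by (i)(c)/(i)(d) and in the ``$D$-direction'' by (iii)(b)/(iv)(b). That the two factor sets then agree is exactly the cohomological requirement of the inductive AW condition, so together with the bijection from the second step the argument is complete.

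The hard part, I expect, will be this last step's passage from ``$\psi_0$ extends over $\tG$'' plus ``$\psi_0$ extends over $D$ with a split stabilizer'' to a single coherent extension, i.e. a single factor set, over the whole automorphism group: both of those restrictions can vanish while the factor set on $(\tG_{\psi_0}/G)\rtimes D_{\psi_0}$ still carries a nonzero cross term in $H^1\!\bigl(D_{\psi_0},H^1(\tG_{\psi_0}/G,\mathbb{C}^\times)\bigr)$, and it is precisely to kill this term simultaneously on the Brauer-character side and the weight side that the hypotheses are shaped as they are (semidirect-product stabilizers in (iii)(a), (iv)(a); extendibility over the $D$-part in (iii)(b), (iv)(b)). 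Carrying this out through projective representations, while at the same time tracking the central subgroup $Z(\tG)$ so that the conclusion genuinely descends to the universal covering group $G$ and not merely to $\tG$, is where the substance of the criterion lies; by comparison the Clifford-theoretic synchronisation of $\IBr$ and $\Alp$ over $G$ in the first two steps --- via the cover relation and the Dade--Glauberman--Nagao correspondence --- is intricate but essentially routine.
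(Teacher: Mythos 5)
The paper does not prove this statement at all: it is quoted verbatim as \cite[Thm.~3.3]{BS19}, and the only argument the paper supplies is the remark immediately following, which justifies the one small change from [BS19] --- replacing the universal $\ell'$-covering group by the universal covering group --- by observing that $\mrO_\ell(Z(G))$ lies in the kernel of every Brauer character, of every radical subgroup, and of every weight character of $G$, so the two formulations are equivalent. Your attempt, by contrast, tries to reconstruct the proof of Brough and Sp\"ath's criterion from scratch; that is a genuinely different undertaking, and the relevant comparison is therefore with [BS19], not with the present paper.

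Your sketch does capture the correct architecture of that proof: Clifford theory over $G\unlhd\tG$ on both sides (using (i)(c) on $\IBr$ and (i)(d) together with the ``cover'' relation and the Dade--Glauberman--Nagao correspondence on $\Alp$), transport of $\tOmega$ down to a bijection on $\tG$-orbits on $\IBr(G)$ and $\Alp(G)$ with (ii)(b) forcing the fibre structure above matching orbits to agree, and then (iii), (iv) as the input to a character-triple comparison. But the step you yourself single out as ``the hard part'' --- showing that for matched $\psi_0$ and $(R,\varphi_0)$ the triples $\bigl((\tG D)_{\psi_0},G,\psi_0\bigr)$ and $\bigl((\tG D)_{R,\varphi_0},N_G(R),\varphi_0\bigr)$ satisfy the relation required by [NT11, \S 3] --- is asserted rather than proved. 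You correctly diagnose that both factor sets vanish separately in the $\tG$-direction (by (i)(c), (i)(d)) and in the $D$-direction (by (iii)(b), (iv)(b)) yet could still differ by a cross term on $(\tG_{\psi_0}/G)\rtimes D_{\psi_0}$, and you correctly observe that the split stabilizers in (iii)(a), (iv)(a) are precisely the hypotheses tuned to control this, but you then stop short. The actual content is an explicit gluing: starting from an extension of $\psi_0$ over $\tG_{\psi_0}$ and one over $G\rtimes D_{\psi_0}$, the splitting $(\tG D)_{\psi_0}=\tG_{\psi_0}\rtimes D_{\psi_0}$ lets one manufacture a projective representation of $(\tG D)_{\psi_0}$ with computable factor set, likewise on the weight side, and only then can the two be compared (and one must also track $Z(\tG)$ so that the conclusion descends to $G$ rather than stopping at $\tG$). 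There is also a matching subtlety you gloss over: (iii) and (iv) each produce one good representative per orbit, and ``after adjusting $\psi_0$ within its $\tG$-orbit'' is not obviously enough to make the two chosen representatives correspond under $\Phi$; this requires an argument that compatible choices exist. So your outline is the right skeleton, but the cohomological core of the criterion --- which is where all the difficulty lives --- is left as a gap.
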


\begin{rem}
In \cite{BS19},
the group $G$ in the criterion for the inductive (AW) condition is set up to be the universal $\ell'$-covering group of the simple group $S$.
We assume $G$ in Theorem \ref{thm:criterion} to be the universal covering group
and note that our conditions here imply the conditions in \cite{BS19},
since $\mrO_\ell(Z(G))$ is contained in the kernel of every Brauer character, in every radical subgroup and in the kernel of every weight characters.	
\end{rem}

Let $S$ be a non-abelian finite simple group,
then we say \emph{the inductive (AW) condition holds for $S$} if the inductive (AW) condition holds for $S$ and any prime $\ell$ dividing $|S|$.

\subsection{Linear and unitary groups.}\label{subsec:Linear and unitary groups}

Let $p$ be a prime, $q=p^f$  a power of $p$ and denote by $\F_q$ the finite field of $q$ elements.
Set $\tbG=\GL_n(\barF_q)$ and $\bG=\SL_n(\barF_q)$ with $n\ge 2$.
Denote by $F_q$ the standard Frobenius endomorphism on $\tbG$ and $\bG$ and by
$\sigma_{it}$ the automorphism of $\tbG$ and $\bG$ defined by $\sigma_{it}(A)=(A^{-1})^t$,
where $^t$ denotes the transpose of matrices.
Set $\eta=\pm1$ and $F_{\eta{q}}=\sigma_{it}^{\frac{1-\eta}{2}}F_q$, then $F_{\eta{q}}$ is a Frobenius endomorphism on $\tbG$ and $\bG$. 
If there is no danger of confusion, we abbreviate $F_{\eta{q}}$ as $F$.
From now on, we set $\tG=\tbG^F$ and $G=\bG^F$, then $\tG=\GL_n(\eta q)$ ($G=\SL_n(\eta q)$) is
the general (special) linear group over the field $\F_q$ or
the general (special) unitary group over the field $\F_{q^2}$ according to $\eta=1$ or $-1$.
Here $\GL_n(-q):=\GU_n(q)=\{A\in\GL_n(q^2)\mid F_q(A)^tA=I_n\}$; similarly for $\SL_n(-q)$ and $\PSL_n(-q)$.
Note that we write $I_n$ for the identity matrix of degree $n$.
Set $D=\langle \sigma_{it},F_p\rangle$ or $\langle F_p\rangle$ according as $n\ge 3$ or $n=2$,
then by \cite[Thm.~2.5.1]{GLS98}, $\tG\rtimes{D}$ induces all automorphisms of $G$; explicitly, $(\tG\rtimes{D})/Z(\tG)\cong\Aut(G)$.

For any subgroup $H$ of $\tG$, we set $\det(H) = \{ \det(h) \mid h\in H \}$, which is a cyclic subgroup of $\barF_q^\times$.
For an integer $k$,
denote by $\fZ_k$ the cyclic subgroup of $\barF_q^\times$ of order $k$.
We identify $\fZ_{q-\eta}$ with $Z(\tG)$ via the natural isomorphism $\fZ_{q-\eta} \to Z(\tG), \zeta \mapsto z_\zeta = \zeta I_n$.

Throughout we assume that the prime $\ell$ is different from $p$.
Denote by $e$ the multiplicative order of $\eta q$ modulo $\ell$.
Note that $e=1$ when $\ell=2$.
The norm map $\cN_\alpha: \fZ_{q^{e\ell^\alpha}-\eta^{e\ell^\alpha}} \to \fZ_{q-\eta}$ is defined as follows:
\begin{equation}\label{eq:norm}
\cN_\alpha: \xi \mapsto \xi F_{\eta q}(\xi) \cdots F_{\eta q}^{e\ell^\alpha-1}(\xi) = \xi^{1+\eta q+\cdots+(\eta q)^{e\ell^\alpha-1}}.
\addtocounter{thm}{1}\tag{\thethm}
\end{equation}

We follow the notation of \cite[\S 1]{FS82}.
Denote by $\F_q[X]$ ($\Irr(\F_q[X])$, resp.) the set of all polynomials (all monic irreducible polynomials, resp.) over the field $\F_q$.
Denote by $d_\Gamma$ the degree of any polynomial $\Gamma$.
For $\Delta(X)=X^m+a_{m-1}X^{m-1}+\cdots+a_0$ in $\F_{q^2}[X]$,
we define $\tDelta(X)=X^ma_0^{-q}\Delta^q(X^{-1})$,
where $\Delta^q(X)$ means the polynomial in $X$ whose coefficients are the $q$-th powers of the corresponding coefficients of $\Delta(X)$.
Then $\zeta$ is a root of $\Delta$ if and only if $\zeta^{-q}$ is a root of $\tDelta$.
Now, we set
\begin{align*}
\cF_0 &= \left\{~ \Delta ~|~ \Delta\in\Irr(\F_q[X]),\Delta\neq X ~\right\},\\
\cF_1 &= \left\{~ \Delta ~|~ \Delta\in\Irr(\F_{q^2}[X]),\Delta\neq X,\Delta=\tDelta ~\right\},\\
\cF_2 &= \left\{~ \Delta\tDelta ~|~ \Delta\in\Irr(\F_{q^2}[X]),\Delta\neq X,\Delta\neq\tDelta ~\right\}
\end{align*}
and define $\cF:=\cF_0$ or $\cF_1\cup\cF_2$ according to $\eta=1$ or $-1$.

Let the Frobenius endomorphism $F=F_{\eta q}$ act on $\barF_q^\times$ as $F (\xi) = \xi^{\eta q}$.
A polynomial $\Gamma\in\cF$ can be identified with the set of roots of $\Gamma$,
which can be again identified with an $F$-orbit $\grp{F}\cdot\xi$ of this action,
where $\xi$ is a root of $\Gamma$; see for example \cite[\S3.1]{De17}.
For any $\zeta\in\fZ_{q-\eta}$ and $\Gamma\in\cF$,
$\zeta.\Gamma$ is defined to be the polynomial in $\cF$ whose roots are the roots of $\Gamma$ multiplied by $\zeta$,
defining an action of $\fZ_{q-\eta}$ on $\cF$.

For a prime $\ell$ different from $p$, we let $\cF'$ be the subset of $\cF$ of those polynomials whose roots are of $\ell'$-order. For a natural number $\alpha$,
the sets $\cF_{\alpha,0},\cF_{\alpha,1},\cF_{\alpha,2},\cF_\alpha,\cF'_\alpha$
are defined similarly with $q,\eta$ replaced by $q^{e\ell^\alpha},\eta^{e\ell^\alpha}$ respectively.
Then we have a surjective map
\begin{equation}\label{eq:Phi_alpha}
\Phi_\alpha:\ \cF_\alpha \to \cF,\ \grp{F_{\eta{q}}^{e\ell^\alpha}}\cdot\xi \mapsto \grp{F_{\eta{q}}}\cdot\xi.
\addtocounter{thm}{1}\tag{\thethm}
\end{equation}
The inverse images of $\grp{F_{\eta{q}}}\cdot\xi$ under $\Phi_\alpha$ are
the $\grp{F_{\eta{q}}}$-orbits of the set $\grp{F_{\eta{q}}^{e\ell^\alpha}}\cdot\xi$.

For any semisimple element $s$ of $\tG$, we let $s=\prod_\Gamma s_\Gamma$ be its primary decomposition.
We denote by $m_\Gamma(s)$ the multiplicity of $\Gamma$ in $s_\Gamma$.
If $m_\Gamma(s)$  is not zero, we call $\Gamma$ an \emph{elementary divisor} of $s$.
For $\zeta\in\fZ_{q-\eta}$, define $\zeta s=z_\zeta s$, then $m_{\zeta.\Gamma}(\zeta s)=m_\Gamma(s)$.

\paragraph{}
For the representations of finite groups of Lie type, see for example \cite{CE04,DM91}.
In particular, if $\tbL$ is a Levi subgroup of $\tbG$, $R_{\tbL}^{\tbG}$ denotes the Lusztig induction.
Note that we always omit the parabolic subgroups when considering Lusztig inductions in this paper,
since the Mackey formula holds for groups of type $\mathsf A$ by  \cite{Bon00}.
Since general linear and unitary groups are self-dual and $Z(\tbL)$ is connected, there is an isomorphism (see for example \cite[(8.19)]{CE04})
\[ Z(\tbL)^F \to \Irr(\tbL^F/[\tbL^F,\tbL^F]),\quad z\mapsto\hz. \]
If $s$ is a semisimple element of $\tG$, then $C_{\tbG}(s)$ is a Levi subgroup of $\tbG$.

By Lusztig’s Jordan decomposition,
the characters of $\tG$ are parameterized by the set $i\Irr(\tG)$ of $\tG$-conjugacy classes of pairs $(s,\mu)$,
where $\mu=\prod_\Gamma\mu_\Gamma$ with $\mu_\Gamma\vdash m_\Gamma(s)$;
see for example \cite[\S1]{FS82} or \cite[\S3.1]{De17}.
The character corresponding to $(s,\mu)^{\tG}$ is denoted as  $\tchi_{s,\mu}$ if no confusion exists.
The Lusztig series $\cE(\tG,s)$ is then the set of all characters of the form $\tchi_{s,\mu}$.
If $s$ is a semisimple $\ell'$-element,
then we denote by $\cE_\ell(\tG,s)$ the union of Lusztig series $\cE(\tG,st)$,
where $t$ runs through the semisimple $\ell$-elements of $\tG$ commutative with $s$.

The group $\fZ_{q-\eta}$ acts on $i\Irr(\tG)$.
For any $(s,\mu)^{\tG}\in i\Irr(\tG)$ and $\zeta\in\fZ_{q-\eta}$,
define $\zeta.(s,\mu)^{\tG}=(\zeta s,\zeta.\mu)^{\tG}$,
where $(\zeta.\mu)_{\zeta.\Gamma}=\mu_\Gamma$.
By \cite[Thm.~7.1]{DM90},
$\widehat{z_\zeta}\tchi_{s,\mu}=\tchi_{\zeta s,\zeta.\mu}$,
thus the action of $\Irr(\tG/G)$ on $\Irr(\tG)$ is induced by the action of $Z(\tG)$ (identifying with $\fZ_{q-\eta}$) on $i\Irr(\tG)$.
See also \cite[\S 3]{De17}.

Let $G\leqslant\hG\leqslant\tG$.
Since $\tG/G$ is cyclic, we have by Clifford theory (see for example \cite[Chap. 3, \S3]{NT89}) that:
$\Irr(\hG)$ is the disjoint union of $\Irr(\hG\mid\tchi_{s,\mu})$,
where $(s,\mu)^{\tG}$ runs over a set of representatives of $\fZ_{|\tG/\hG|}$-orbits on $i\Irr(\tG)$ and
$$ \kappa_{\hG}^{\tG}(\tchi_{s,\mu}) = |\{ \zeta\in\fZ_{|\tG/\hG|} \mid \zeta.(s,\mu)^{\tG}=(s,\mu)^{\tG} \}|. $$

\section{Weights for general linear and unitary groups}\label{sect:weights-general-gp}

In this section, we recall the results in \cite{AF90,An92,An93,An94} for the weights of general linear and unitary groups
with slightly different constructions using the twisted basic subgroups.
The method  to transfer to twisted groups is used in the verification of the inductive McKay condition for type $\mathsf A$ by Cabanes and Sp\"ath \cite{CS17}.
The radical subgroups of general linear and unitary groups are products of some basic subgroups as in \cite{AF90,An92,An93,An94}.
The notation for the basic subgroups of general linear and unitary groups in this paper will be different from those in \cite{AF90,An92,An93,An94},
which are obtained from those in \cite{AF90,An92,An93,An94} by adding~ $\tilde{}$~ on the top,
since notation without~ $\tilde{}$~ are reserved for special linear and unitary groups in this paper.
The essential parts of the basic subgroups are groups of symplectic type.
To give the twisted version, we start with the embedding of groups of symplectic type in general linear and unitary groups; see \cite[\S 4.1]{LZ19}.
We also give a slight improvement of the results in \cite{AF90,An92,An93,An94} about the normalizers of groups of symplectic type;
see Proposition \ref{prop:tN-m,alpha,gamma-odd} and Propositions \ref{prop:tN-m,alpha,gamma-2-linear}--\ref{prop:tN-2-unitary-E-m,gamma}.

\subsection{Groups of symplectic type for odd primes.}
\label{subsect:gst-odd}

Assume $\ell$ is an odd prime different from $p$ and recall that $e$ is the multiplicative order of $\eta q$ modulo $\ell$.
Set $a=\nu((\eta q)^e-1)$, where the discrete valuation $\nu$ is defined as on page \pageref{def-valuation}.
For natural numbers $\alpha$ and $\gamma$, let $Z_\alpha$ be the cyclic group of order $\ell^{a+\alpha}$ and $E_\gamma$  the extraspecial group of order $\ell^{2\gamma+1}$ and of exponent $\ell$.
Denote by $Z_\alpha E_\gamma$ the central product of $Z_\alpha$ and $E_\gamma$ over $\Omega_1(Z_\alpha)=Z(E_\gamma)$.
The groups of the form $Z_\alpha E_\gamma$ are called groups of symplectic type.

Assume $m$ is a positive integer and set $\tG_{m,\alpha,\gamma}^0=\GL_{m\ell^\gamma}((\eta q)^{e\ell^\alpha})$.
Let $\zeta_{\ell^{a+\alpha}}\in\barF^\times$ be such that $o(\zeta_{\ell^{a+\alpha}})=\ell^{a+\alpha}$ and set $z_{m,\alpha,\gamma}^0=\zeta_{\ell^{a+\alpha}} I_{m\ell^\gamma}$.
Let $\zeta_\ell=\zeta_{\ell^{a+\alpha}}^{\ell^{a+\alpha-1}}$ and set
$$x^0=\diag\{1,\zeta_\ell,\ldots,\zeta_\ell^{\ell-1}\},\quad
y^0=\begin{bmatrix} \zero & 1\\ I_{\ell-1} & \zero \end{bmatrix}.$$
Then set
$
x_{\gamma,j}^0 = I_\ell \otimes \cdots \otimes x^0 \otimes \cdots \otimes I_\ell$ ($\gamma$ terms) and
$y_{\gamma,j}^0 = I_\ell \otimes \cdots \otimes y^0 \otimes \cdots \otimes I_\ell$ ($\gamma$ terms),
where $x^0$ and $y^0$ appear as the $j$-th components.
Finally set $x_{m,\gamma,j}^0=I_m\otimes x_{\gamma,j}^0$, $y_{m,\gamma,j}^0=I_m\otimes y_{\gamma,j}^0$
and $\tR_{m,\alpha,\gamma}^0 = \grp{ z_{m,\alpha,\gamma}^0, x_{m,\gamma,j}^0, y_{m,\gamma,j}^0 \mid j=1,\dots,\gamma }$.
Then $\tR_{m,\alpha,\gamma}^0$ is a subgroup of $\tG_{m,\alpha,\gamma}^0$ isomorphic to $Z_\alpha E_\gamma$.

Assume $v_\alpha = \begin{bmatrix} \zero&(-1)^{e-1}\\ I_{e\ell^\alpha-1}&\zero \end{bmatrix}$
and $v_{m,\alpha,\gamma}=I_{m\ell^\gamma}\otimes v_\alpha$.
Here, we use the convention for tensor products of matrices that $A \otimes B = (b_{ij}A)_{mn \times mn}$ for $A=(a_{ij})_{m \times m}$ and $B=(b_{ij})_{n \times n}$.
Set $\tbG_{m,\alpha,\gamma}=\GL_{me\ell^{\alpha+\gamma}}(\barF)$ and
$\tG_{m,\alpha,\gamma}^{tw}=\tbG_{m,\alpha,\gamma}^{v_{m,\alpha,\gamma}F_{\eta q}}$.
Then there is an embedding:
\begin{align*}
\hbar:\quad \tG_{m,\alpha,\gamma}^0 &\hookrightarrow \tG_{m,\alpha,\gamma}^{tw}=\tbG_{m,\alpha,\gamma}^{v_{m,\alpha,\gamma}F_{\eta q}}\\
A &\mapsto \diag\left\{A,F_{\eta q}(A),\dots,F_{\eta q}^{e\ell^\alpha-1}(A)\right\}.
\end{align*}
We denote $\tR_{m,\alpha,\gamma}^{tw}=\hbar(\tR_{m,\alpha,\gamma}^0)$, where ``tw'' represents ``twisted''.

Here, by definition, $v_{m,\alpha,\gamma} \in G_{m,\alpha,\gamma} = \SL_{me\ell^{\alpha+\gamma}}(\eta q)$.
By the Lang--Steinberg theorem (see for example \cite[Thm.~21.7]{MT11}),
there exists an element $g_\alpha\in\SL_{e\ell^\alpha}(\barF)$ such that $g_\alpha^{-1}F_{\eta q}(g_\alpha) = v_\alpha$.
Set $g_{m,\alpha,\gamma}=I_{m\ell^\gamma} \otimes g_\alpha$,
then $g_{m,\alpha,\gamma} \in \bG_{m,\alpha,\gamma} := \SL_{me\ell^{\alpha+\gamma}}(\barF)$
and $g_{m,\alpha,\gamma}^{-1}F_{\eta q}(g_{m,\alpha,\gamma})=v_{m,\alpha,\gamma}$.
Denote by $\iota$ the map $x\mapsto g_{m,\alpha,\gamma}xg_{m,\alpha,\gamma}^{-1}$.
Then $\iota$ induces an isomorphism
$$\iota:\quad \tG_{m,\alpha,\gamma}^{tw}=\tbG_{m,\alpha,\gamma}^{v_{m,\alpha,\gamma}F_{\eta q}} \to \tG_{m,\alpha,\gamma}=\tbG_{m,\alpha,\gamma}^{F_{\eta q}}.$$
Set $\tR_{m,\alpha,\gamma}=\iota(\tR_{m,\alpha,\gamma}^{tw})$.

The main constructions above can be summarized as follows
$$\tG_{m,\alpha,\gamma}^0=\GL_{m\ell^\gamma}((\eta{q})^{e\ell^\alpha})
\xlongrightarrow{\hbar} \tG_{m,\alpha,\gamma}^{tw}=\tbG_{m,\alpha,\gamma}^{v_{m,\alpha,\gamma}F_{\eta q}}
\xlongrightarrow{\iota} \tG_{m,\alpha,\gamma}=\tbG_{m,\alpha,\gamma}^{F_{\eta q}}.$$
In the sequel, when we consider constructions in $\tG_{m,\alpha,\gamma}$, we will often transfer to twisted groups, which means to transfer to constructions in $\tG_{m,\alpha,\gamma}^{tw}$ and $\tG_{m,\alpha,\gamma}^0$.

\begin{notation*}\label{notation:tZ-E}
	Set $\tZ_{m,\alpha,\gamma}^0=\grp{z_{m,\alpha,\gamma}^0}$,
	$\tZ_{m,\alpha,\gamma}^{tw}=\hbar(\tZ_{m,\alpha,\gamma}^0)$
	and $\tZ_{m,\alpha,\gamma}=\iota(\tZ_{m,\alpha,\gamma}^{tw})$;
	$E_{m,\alpha,\gamma}^0=\grp{x_{m,\gamma,j}^0,y_{m,\gamma,j}^0\mid j=1,\cdots,\gamma}$,
	$E_{m,\alpha,\gamma}^{tw}=\hbar(E_{m,\alpha,\gamma}^0)$
	and $E_{m,\alpha,\gamma}=\iota(E_{m,\alpha,\gamma}^{tw})$.
	For $*\in\{0,tw,\varnothing\}$, denote by $\tC_{m,\alpha,\gamma}^*$ and $\tN_{m,\alpha,\gamma}^*$
	the centralizer and normalizer respectively of $\tR_{m,\alpha,\gamma}^*$ in $\tG_{m,\alpha,\gamma}^*$.
\end{notation*}

\begin{lem}\label{lem:tC-m,alpha,gamma-odd}
	With the above notation.
	\vspace{-0.5ex}
	\begin{enumerate}[\rm(1)]\setlength{\itemsep}{-0.5ex}
		\item For $*\in\{0,tw,\varnothing\}$, $\det(E_{m,\alpha,\gamma}^*)=1$,
		$\tR_{m,\alpha,\gamma}^* = \tZ_{m,\alpha,\gamma}^*E_{m,\alpha,\gamma}^*$
		is the central product over $Z(E_{m,\alpha,\gamma}^*)$.
		\item $\tC_{m,\alpha,\gamma}^0 = \tC_{m,\alpha}^0 \otimes I_{\ell^\gamma}$,
		$\tC_{m,\alpha,\gamma}^{tw}=\hbar(\tC_{m,\alpha,\gamma}^0)$
		and $\tC_{m,\alpha,\gamma}=\iota(\tC_{m,\alpha,\gamma}^{tw})$.
		\item For $*\in\{0,tw,\varnothing\}$, $\tZ_{m,\alpha,\gamma}^*\leqslant\tC_{m,\alpha,\gamma}^*$
		and thus $\tC_{m,\alpha,\gamma}^*\tR_{m,\alpha,\gamma}^*
		=\tC_{m,\alpha,\gamma}^*E_{m,\alpha,\gamma}^*$ is the central product
		over $Z(E_{m,\alpha,\gamma}^*)$.
	\end{enumerate}
\end{lem}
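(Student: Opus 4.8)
The plan is to reduce everything to the ``$0$'' groups $\tG_{m,\alpha,\gamma}^0=\GL_{m\ell^\gamma}((\eta q)^{e\ell^\alpha})$ via the two maps $\hbar$ and $\iota$, since both are (iso)morphisms of algebraic/finite groups compatible with the relevant subgroups. First I would observe that $\iota$ is an isomorphism of finite groups carrying $\tR_{m,\alpha,\gamma}^{tw}$ onto $\tR_{m,\alpha,\gamma}$ and $\tG_{m,\alpha,\gamma}^{tw}$ onto $\tG_{m,\alpha,\gamma}$, so it carries centralizers to centralizers and central products to central products; likewise $\hbar$ is an injective homomorphism, and the key point is that its image $\tG_{m,\alpha,\gamma}^0\hookrightarrow\tG_{m,\alpha,\gamma}^{tw}$ is precisely the fixed points under the twisted Frobenius of a block-diagonal Levi, so that $\tC_{m,\alpha,\gamma}^{tw}=\hbar(\tC_{m,\alpha,\gamma}^0)$ and $\tN_{m,\alpha,\gamma}^{tw}=\hbar(\tN_{m,\alpha,\gamma}^0)$ — here one uses that the centralizer of $\tR_{m,\alpha,\gamma}^{tw}$ in $\tbG_{m,\alpha,\gamma}$ is connected (a Levi, since $\tbG$ is of type $\mathsf A$ with connected center) so taking fixed points commutes with taking centralizers. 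This yields the ``transfer'' clauses, i.e. the second halves of (1), all of (2), and the second halves of (3); it then suffices to prove the statements for $*=0$.

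For $*=0$ the three claims become concrete matrix computations inside $\GL_{m\ell^\gamma}((\eta q)^{e\ell^\alpha})$. For (1): $E_{m,\alpha,\gamma}^0=I_m\otimes E_{1,\alpha,\gamma}^0$ where $E_{1,\alpha,\gamma}^0$ is generated by the $\gamma$ pairs $(x_{\gamma,j}^0,y_{\gamma,j}^0)$, each an $\ell$-fold tensor slot filled with the $\ell\times\ell$ matrices $x^0=\diag\{1,\zeta_\ell,\dots,\zeta_\ell^{\ell-1}\}$ and the cyclic permutation $y^0$. Since $\det(x^0)=\zeta_\ell^{1+2+\cdots+(\ell-1)}=\zeta_\ell^{\ell(\ell-1)/2}=1$ as $\ell$ is odd, and $\det(y^0)=(-1)^{\ell-1}=1$, the multiplicativity of determinant under tensor products and Kronecker sums forces $\det(E_{m,\alpha,\gamma}^0)=1$. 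That $x^0$ and $y^0$ generate an extraspecial group of order $\ell^3$ and exponent $\ell$ (because $x^0y^0=\zeta_\ell\,y^0x^0$ and both have order $\ell$) is the classical realization of $E_1$; tensoring up over the $\gamma$ slots gives $E_\gamma$, and $z_{m,\alpha,\gamma}^0=\zeta_{\ell^{a+\alpha}}I_{m\ell^\gamma}$ is scalar of order $\ell^{a+\alpha}$ with $\ell$-th power part equal to the common center $Z(E_{m,\alpha,\gamma}^0)=\grp{\zeta_\ell I}$, so $\tR_{m,\alpha,\gamma}^0=\tZ_{m,\alpha,\gamma}^0 E_{m,\alpha,\gamma}^0$ is the asserted central product $Z_\alpha E_\gamma$.

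For (2) and (3): since $\tR_{m,\alpha,\gamma}^0$ is self-centralizing-modulo-scalars and $z_{m,\alpha,\gamma}^0$ is scalar, $\tZ_{m,\alpha,\gamma}^0\leqslant\tC_{m,\alpha,\gamma}^0$ is immediate, which gives (3) for $*=0$ once (1) is known. For (2) I would compute $\tC_{m,\alpha,\gamma}^0$ as the centralizer of $E_{m,\alpha,\gamma}^0=I_m\otimes E_{1,\alpha,\gamma}^0$ acting on $(\F_{(\eta q)^{e\ell^\alpha}}^m)\otimes(\F^{\ell})^{\otimes\gamma}$: the extraspecial group $E_{1,\alpha,\gamma}^0$ acts irreducibly on $(\F^\ell)^{\otimes\gamma}$ of dimension $\ell^\gamma$ (its unique faithful irreducible, in characteristic $\neq\ell$), so by a Schur/double-centralizer argument the centralizer of $I_m\otimes E_{1,\alpha,\gamma}^0$ in $\GL_{m\ell^\gamma}$ is $\GL_m\otimes I_{\ell^\gamma}$, i.e. $\tC_{m,\alpha,\gamma}^0=\tC_{m,\alpha}^0\otimes I_{\ell^\gamma}$ where $\tC_{m,\alpha}^0=\GL_m((\eta q)^{e\ell^\alpha})$. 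Then (3) for $*=0$ reads $\tC_{m,\alpha,\gamma}^0\tR_{m,\alpha,\gamma}^0=\tC_{m,\alpha,\gamma}^0 E_{m,\alpha,\gamma}^0$, which holds because $\tZ_{m,\alpha,\gamma}^0\leqslant\tC_{m,\alpha,\gamma}^0$, and it is a central product over $Z(E_{m,\alpha,\gamma}^0)$ because $\tC_{m,\alpha,\gamma}^0\cap E_{m,\alpha,\gamma}^0=Z(E_{m,\alpha,\gamma}^0)$ (the scalars in $E_{1,\alpha,\gamma}^0$) by the double-centralizer identification.

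The main obstacle I expect is the compatibility of taking fixed points with taking centralizers/normalizers across $\hbar$, i.e. justifying $\tC_{m,\alpha,\gamma}^{tw}=\hbar(\tC_{m,\alpha,\gamma}^0)$ cleanly: one must check that $\hbar$ identifies $\tG_{m,\alpha,\gamma}^0$ with the $v_{m,\alpha,\gamma}F_{\eta q}$-fixed points of the block-diagonal Levi $\GL_{m\ell^\gamma}^{\times e\ell^\alpha}\le\tbG_{m,\alpha,\gamma}$, that $\tR_{m,\alpha,\gamma}^{tw}$ lands in this Levi, and that the full centralizer $C_{\tbG_{m,\alpha,\gamma}}(\tR_{m,\alpha,\gamma}^{tw})$ is contained in (indeed equals) the connected centralizer inside that Levi — invoking connectedness of centralizers of semisimple-type subgroups in type $\mathsf A$ with connected center, so that $C_{\tbG^F}=(C_{\tbG})^F$ and the diagram commutes. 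Everything else is a routine tensor-product determinant and double-centralizer computation.
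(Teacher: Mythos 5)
The paper states this lemma without proof, evidently treating it as a routine verification; your proposal supplies the natural argument and it is correct in its essentials. One place where the justification can be sharpened: the containment $C_{\tbG_{m,\alpha,\gamma}}(\tR_{m,\alpha,\gamma}^{tw})\le\bL$ (with $\bL$ the block-diagonal Levi $\GL_{m\ell^\gamma}^{\times e\ell^\alpha}$) does not really come from ``connectedness of centralizers of semisimple-type subgroups''; it is forced already by the single element $\hbar(z^0_{m,\alpha,\gamma})=\diag\{\zeta,\zeta^{\eta q},\dots,\zeta^{(\eta q)^{e\ell^\alpha-1}}\}\otimes I_{m\ell^\gamma}$ with $\zeta=\zeta_{\ell^{a+\alpha}}$. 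Since $\nu((\eta q)^{j}-1)<a+\alpha$ for $0<j<e\ell^\alpha$, the $e\ell^\alpha$ blocks carry pairwise distinct eigenvalues of $\hbar(z^0)$, so $C_{\tbG}(\hbar(z^0))=\bL$ and hence $C_{\tbG}(\tR^{tw})=C_{\bL}(\tR^{tw})$, which is block-diagonal and whose $v F_{\eta q}$-fixed points are exactly $\hbar(\tC^0_{m,\alpha,\gamma})$. Connectedness of centralizers in $\GL_n$ (automatic, as the unit group of the centralizer algebra) is still useful as a sanity check but is not the driving mechanism. With that observation folded in, the reduction to the untwisted case, the determinant computation $\det(x^0)=\zeta_\ell^{\ell(\ell-1)/2}=1$ and $\det(y^0)=(-1)^{\ell-1}=1$ (using $\ell$ odd), and the double-centralizer step giving $\tC^0_{m,\alpha,\gamma}=\tC^0_{m,\alpha}\otimes I_{\ell^\gamma}$ with $\tC^0\cap E^0=Z(E^0)$ by uniqueness of Kronecker factors up to scalar, complete the proof exactly as you outline.
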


Let $\Aut^0(Z_\alpha E_\gamma)$ be the group of automorphisms of $Z_\alpha E_\gamma$ acting trivially on $Z_\alpha$.
Set $\Out^0(Z_\alpha E_\gamma)=\Aut^0(Z_\alpha E_\gamma)/\Inn(Z_\alpha E_\gamma)$.
Then by \cite{Win72}, $\Out^0(Z_\alpha E_\gamma) \cong \Sp_{2\gamma}(\ell)$.
To consider the normalizers of subgroups of symplectic type, J. An (\cite[(1A), (1B)]{An94}) considers
realization of $\Aut^0(Z_\alpha E_\gamma)$ in $\GL_{\ell^\gamma}(\eta q)$ by giving a set of generators.
We have given the generators of the group $Z_\alpha E_\gamma$ explicitly in matrix form as above.
Now, we specify the generators of the group $\Aut^0(Z_\alpha E_\gamma)$ modulo $\Inn(Z_\alpha E_\gamma)$ in matrix form
such that most of these matrices have determinant one.

Before giving the list, we first give some elementary properties about the Vandermonde matrix
$$V(\zeta_\ell)= \begin{bmatrix} 1&1&\cdots&1\\ 1&\zeta_\ell&\cdots&\zeta_\ell^{\ell-1}\\ \vdots&\vdots&&\vdots\\ 1&\zeta_\ell^{\ell-1}&\cdots&\zeta_\ell^{(\ell-1)(\ell-1)} \end{bmatrix}.$$
Since $\zeta_\ell$ is a primitive $\ell$-th root of unity, we have
\begin{equation}\label{eq:sum-ell-th-roots}
1+\zeta_\ell+\cdots+\zeta_\ell^{\ell-1}=0.
\addtocounter{thm}{1}\tag{\thethm}
\end{equation}
We consider the determinant of $V(\zeta_\ell)$.
First, assume $\ell\geqslant5$, then
\begin{align*}
\det(V(\zeta_\ell)) = &(\zeta_\ell-1)\cdots(\zeta_\ell^{\ell-1}-1)\cdot(\zeta_\ell^2-\zeta_\ell)\cdots(\zeta_\ell^{\ell-1}-\zeta_\ell)\cdots(\zeta_\ell^{\ell-1}-\zeta_\ell^{\ell-2})\\
=& \zeta_\ell^{\sum_{i=1}^{\ell-2}i(\ell-i-1)} (\zeta_\ell-1)\cdots(\zeta_\ell^{\ell-1}-1)\cdot(\zeta_\ell-1)\cdots(\zeta_\ell^{\ell-2}-1)\cdots(\zeta_\ell-1)\\
=& \zeta_\ell^{\frac{\ell(\ell-1)(\ell-2)}{6}} (\zeta_\ell-1)^{\ell-1}(\zeta_\ell^2-1)^{\ell-2}\cdots(\zeta_\ell^{\ell-1}-1)\\
=& (\zeta_\ell-1)^{\sum_{i=1}^{\ell-1}i} (\zeta_\ell+1)^{\ell-2} \cdots (\zeta_\ell^{\ell-2}+\cdots+1)\\
=& (\zeta_\ell-1)^{\frac{\ell(\ell-1)}{2}} [(\zeta_\ell+1)^{\ell-2}(\zeta_\ell^{\ell-3}+\cdots+1)^2]\\
&\cdots [(\zeta_\ell^{\frac{\ell-3}{2}}+\cdots+1)^{\frac{\ell+1}{2}}(\zeta_\ell^{\frac{\ell-1}{2}}+\cdots+1)^{\frac{\ell-1}{2}}] \cdot (\zeta_\ell^{\ell-2}+\cdots+1)\\
\xlongequal{(\ref{eq:sum-ell-th-roots})} & (\zeta_\ell-1)^{\frac{\ell(\ell-1)}{2}} [(-\zeta_\ell^{\ell-2})^2(\zeta_\ell+1)^\ell] \cdots [(-\zeta_\ell^{\frac{\ell+1}{2}})^\frac{\ell-1}{2}(\zeta_\ell^{\frac{\ell-3}{2}}+\cdots+1)^\ell] \cdot (-\zeta_\ell^{\ell-1})\\
=& (-1)^{\sum_{i=1}^{\frac{\ell-1}{2}}i} (\zeta_\ell-1)^{\frac{\ell(\ell-1)}{2}} \zeta_\ell^{\sum_{i=1}^{\frac{\ell-1}{2}}i(\ell-i)} (\zeta_\ell+1)^\ell \cdots (\zeta_\ell^{\frac{\ell-3}{2}}+\cdots+1)^\ell\\
=& \left( (-1)^{\frac{(\ell-1)(\ell+1)}{8}} (\zeta_\ell-1)^{\frac{\ell-1}{2}} (\zeta_\ell+1) \cdots (\zeta_\ell^{\frac{\ell-3}{2}}+\cdots+1) \right)^\ell\\
=& \left( (-1)^{\frac{(\ell-1)(\ell+1)}{8}} (\zeta_\ell-1) (\zeta_\ell^2-1) \cdots (\zeta_\ell^{\frac{\ell-1}{2}}-1) \right)^\ell.
\end{align*}
Note that $\zeta_\ell^{\frac{\ell(\ell-1)(\ell-2)}{6}}=1$ and $\zeta_\ell^{\sum_{i=1}^{\frac{\ell-1}{2}}i(\ell-i)}=1$ hold only for $\ell\geq5$.
Let $\mu^{-1}=(-1)^{\frac{(\ell-1)(\ell+1)}{8}} (\zeta_\ell-1) (\zeta_\ell^2-1) \cdots (\zeta_\ell^{\frac{\ell-1}{2}}-1)$ and $V_0(\zeta_\ell)=\mu V(\zeta_\ell)$, then $\det(V_0(\zeta_\ell))=1$.
Thus $V_0(\zeta_\ell) \in \SL_\gamma(q^{e\ell^\alpha})$ if $\eta=1$ or $\eta=-1$ and $e$ is even.
If $\eta=-1$ and $e$ is odd, $\zeta_\ell^{q^{e\ell^\alpha}}= \zeta_\ell^{-1}$ since $\ell\mid((\eta q)^{e\ell^\alpha}-1)$, thus
$$\mu^{-(q^{e\ell^\alpha}+1)}=(1-\zeta_\ell)(1-\zeta_\ell^2)\cdots(1-\zeta_\ell^{\ell-1}).$$
From $(X-\zeta_\ell)(X-\zeta_\ell^2)\cdots(X-\zeta_\ell^{\ell-1})=X^{\ell-1}+\cdots+1$, we have $\mu^{-(q^{e\ell^\alpha}+1)}=\ell$.
So
\begin{align*}
F_{q^{e\ell^\alpha}}(V_0(\zeta_\ell))^tV_0(\zeta_\ell)
&=\mu^{q^{e\ell^\alpha}+1}
\begin{bmatrix} 1&1&\cdots&1\\ 1&\zeta_\ell^{-1}&\cdots&\zeta_\ell^{-(\ell-1)}\\ \vdots&\vdots&&\vdots\\ 1&\zeta_\ell^{-(\ell-1)}&\cdots&\zeta_\ell^{-(\ell-1)(\ell-1)} \end{bmatrix}
\begin{bmatrix} 1&1&\cdots&1\\ 1&\zeta_\ell&\cdots&\zeta_\ell^{\ell-1}\\ \vdots&\vdots&&\vdots\\ 1&\zeta_\ell^{\ell-1}&\cdots&\zeta_\ell^{(\ell-1)(\ell-1)} \end{bmatrix}\\
&=\mu^{q^{e\ell^\alpha}+1}\ell I_\ell=I_\ell.
\end{align*}
Thus $V_0(\zeta_\ell) \in \SU_\ell(q^{e\ell^\alpha})$ when $\eta=-1$ and $e$ is odd.

Assume then $\ell=3$.
Thus $\det(V(\zeta_3)) = (\zeta_3-1)(\zeta_3^2-1)(\zeta_3^2-\zeta_3) = -3(1+2\zeta_3) = (1-\zeta_3)^3$.
Set $V_0(\zeta_3) = (1-\zeta_3)^{-1} V(\zeta_3)$,
then it follows that $V_0(\zeta_3) \in \SL_3(q^{e\ell^\alpha})$ when $\eta=1$ or $\eta=-1$ and $e$ is even,
and $V_0(\zeta_3) \in \SU_3(q^{e\ell^\alpha})$ when $\eta=-1$ and $e$ is odd.

Let $W_\alpha$ be a linear space of dimension $\ell$ over the field $\F_{q^{e\ell^\alpha}}$ when $\eta=1$ or $\eta=-1$ and $e$ is even
or a unitary space of dimension $\ell$ over $\F_{q^{2e\ell^\alpha}}$ when $\eta=-1$ and $e$ is odd
such that $W_{\alpha,\gamma}=W_\alpha \otimes\cdots\otimes W_\alpha$ ($\gamma$ terms)
is the underlying space of $\tG_{1,\alpha,\gamma}^0$.
Let $\varepsilon_0,\cdots,\varepsilon_{\ell-1}$ be a basis of $W_\alpha$
such that $\GL(W_{\alpha,\gamma}) \cong \tG_{1,\alpha,\gamma}^0 = \GL_{\ell^\gamma}((\eta q)^{e\ell^\alpha})$
under the basis $\varepsilon_{j_1}\otimes\cdots\otimes\varepsilon_{j_\gamma}$, $j_i=0,\ldots,\ell-1$.
In the sequel, we view the subscript $i$ of $\varepsilon_i$ as element in $\Z/\ell\Z$.
Now, we begin our list of generators.

\begin{enumerate}[(i)]
	\item Set $n_{m,\gamma}^{(i)} = I_m \otimes V_0(\zeta_\ell)\otimes I_{\ell^{\gamma-1}}$,
	then $n_{m,\gamma}^{(i)} \in G_{m,\alpha,\gamma}^0=\SL_{m\ell^\gamma}((\eta q)^{e\ell^\alpha})$ satisfying
	$$n_{m,\gamma}^{(i)} x_{m,\gamma,1}^0 (n_{m,\gamma}^{(i)})^{-1} = (y_{m,\gamma,1}^0)^{-1},\quad
	n_{m,\gamma}^{(i)} y_{m,\gamma,1}^0 (n_{m,\gamma}^{(i)})^{-1}=x_{m,\gamma,1}^0$$
	and fixing other $x_{m,\gamma,k}^0,y_{m,\gamma,k}^0$. \label{n-i}
	
	\item In the situation
	\begin{equation}\label{eq:special-case-3-2}
	\ell=3, \nu(m)=\alpha=0, \gamma=1, a\geq2,
	\addtocounter{thm}{1}\tag{\thethm}
	\end{equation}
	we let $\zeta_9 = \zeta_{3^{a+\alpha}}^{3^{a+\alpha-2}}$ and
	define $n_\gamma^{(ii)}=\diag\{\zeta_9^{-1},\zeta_9^2,\zeta_9^{-1}\}$;
	see the proof of Proposition~4.8 in \cite[p.140]{FLL17a}.
	When we are not in the case (\ref{eq:special-case-3-2}), let $n_\gamma^{(ii)}$ be such that
	$n_\gamma^{(ii)}(\varepsilon_{j_1}\otimes\cdots\otimes\varepsilon_{j_\gamma})
	= \zeta_\ell^{\frac{j_1(j_1+1)}{2}}(\varepsilon_{j_1}\otimes\cdots\otimes\varepsilon_{j_\gamma})$.
	Set $n_{m,\gamma}^{(ii)} = I_m \otimes n_\gamma^{(ii)}$.
	Then $n_{m,\gamma}^{(ii)} \in \tG_{m,\alpha,\gamma}^0$ satisfying
	$n_{m,\gamma}^{(ii)} y_{m,\gamma,1}^0 (n_{m,\gamma}^{(ii)})^{-1} = x_{m,\gamma,1}^0y_{m,\gamma,1}^0$
	and fixing other $x_{m,\gamma,k}^0,y_{m,\gamma,k}^0$.
	Note that $n_{m,\gamma}^{(ii)}$ is a diagonal matrix.
	In the case (\ref{eq:special-case-3-2}),
	$\hbar(n_{m,\gamma}^{(ii)})=n_{m,\gamma}^{(ii)}$ and $\det(n_{m,\gamma}^{(ii)})=1$.
	When we are not in the case (\ref{eq:special-case-3-2}), 
	$$\det(\hbar(n_{m,\gamma}^{(ii)})) 
	= \cN_\alpha\left(\left(\zeta_\ell^{\sum_{j=0}^{\ell-1}\frac{j(j+1)}{2}}\right)^{m\ell^{\gamma-1}}\right)
	= \cN_\alpha\left(\zeta_\ell^{\frac{m\ell^\gamma(\ell-1)(\ell+1)}{6}}\right) .$$
	Thus $\det(\hbar(n_{m,\gamma}^{(ii)}))=1$ unless
	\begin{equation}\label{eq:special-case-3-1}
	\ell=3, a=1, \nu(m)=\alpha=0, \gamma=1,
	\addtocounter{thm}{1}\tag{\thethm}
	\end{equation}
	in which case, $\det\grp{\hbar(n_{m,\gamma}^{(ii)})}=\fZ_3$. \label{detM}
	
	\item Assume $\gamma\geqslant2$.
	Let $P_{\gamma,i}^{(iii)}$ ($i\geqslant2$) be the permutation matrix transposing the pairs
	$(\varepsilon_{j_1} \otimes\cdots\otimes \varepsilon_{j_i} \otimes\cdots\otimes \varepsilon_{j_\gamma},
	\varepsilon_{j_i} \otimes\cdots\otimes \varepsilon_{j_1} \otimes\cdots\otimes \varepsilon_{j_\gamma})$.
	Set $n_{m,\gamma,i}^{(iii)} = I_m \otimes \det(P_{\gamma,i}^{(iii)}) P_{\gamma,i}^{(iii)}$,
	then $n_{m,\gamma,i}^{(iii)}$ satisfies
	$$n_{m,\gamma,i}^{(iii)} x_{m,\gamma,1}^0 (n_{m,\gamma,i}^{(iii)})^{-1}=x_{m,\gamma,i}^0,\quad
	n_{m,\gamma,i}^{(iii)} y_{m,\gamma,1}^0 (n_{m,\gamma,i}^{(iii)})^{-1}=y_{m,\gamma,i}^0$$
	and fixes other $x_{m,\gamma,k}^0,y_{m,\gamma,k}^0$.
	Since $\ell$ is odd, $\det(n_{m,\gamma,i}^{(iii)})=1$ and $n_{m,\gamma,i}^{(iii)} \in G_{m,\alpha,\gamma}^0$.
	
	\item Assume $\gamma\geqslant2$.
	Let $P_{\gamma,1,2}^{(iv)}$ be the permutation matrix such that
	$P_{\gamma,1,2}^{(iv)}(\varepsilon_{j_1}\otimes\varepsilon_{j_2}\otimes\cdots\otimes\varepsilon_{j_\gamma})
	= \varepsilon_{j_1-j_2+1}\otimes\varepsilon_{j_2}\otimes\cdots\otimes\varepsilon_{j_\gamma}$.
	Set $n_{m,\gamma}^{(iv)} = I_m \otimes \det(P_{\gamma,1,2}^{(iv)})P_{\gamma,1,2}^{(iv)}$,
	then $n_{m,\gamma}^{(iv)}$ satisfies
	$$n_{m,\gamma}^{(iv)} x_{m,\gamma,1}^0 (n_{m,\gamma}^{(iv)})^{-1}=x_{m,\gamma,1}^0x_{m,\gamma,2}^0,\quad
	n_{m,\gamma}^{(iv)} y_{m,\gamma,2}^0 (n_{m,\gamma}^{(iv)})^{-1}=(y_{m,\gamma,1}^0)^{-1}y_{m,\gamma,2}^0$$
	and fixes other $x_{m,\gamma,k}^0,y_{m,\gamma,k}^0$.
	Since $\ell$ is odd, $\det(n_{m,\gamma}^{(iv)})=1$ and $n_{m,\gamma}^{(iv)} \in G_{m,\alpha,\gamma}^0$.
	
	\item
	Let $\mu$ be a nonzero element in $\Z/\ell\Z$.
	Assume $P_{\gamma,\mu}^{(v)}$ be the permutation matrix such that
	$P_{\gamma,\mu}^{(v)}(\varepsilon_{j_1}\otimes\varepsilon_{j_2}\otimes\cdots\otimes\varepsilon_{j_\gamma})
	= \varepsilon_{\mu^{-1}j_1}\otimes\varepsilon_{j_2}\otimes\cdots\otimes\varepsilon_{j_\gamma}$.
	Set $n_{m,\gamma,\mu}^{(v)} = I_m \otimes \det(P_{\gamma,\mu}^{(v)}) P_{\gamma,\mu}^{(v)}$,
	then $n_{m,\gamma,\mu}^{(v)}$ satisfies
	$$n_{m,\gamma,\mu}^{(v)} x_{m,\gamma,1}^0 (n_{m,\gamma,\mu}^{(v)})^{-1}=(x_{m,\gamma,1}^0)^\mu,\quad
	n_{m,\gamma,\mu}^{(v)} y_{m,\gamma,1}^0 (n_{m,\gamma,\mu}^{(v)})^{-1}=(y_{m,\gamma,1}^0)^{\mu^{-1}}$$
	and fixes other $x_{m,\gamma,k}^0,y_{m,\gamma,k}^0$.
	Since $\ell$ is odd,
	$\det(n_{m,\gamma,\mu}^{(v)})=1$ and $n_{m,\gamma,\mu}^{(v)} \in G_{m,\alpha,\gamma}^0$.	
\end{enumerate}

\begin{notation}\label{notation:M-m,alpha,gamma-odd}
	Let $\tM_{m,\alpha,\gamma}^0$ be the subgroup of $\tG_{m,\alpha,\gamma}^0$
	generated by $E_{m,\alpha,\gamma}^0$, $n_{m,\gamma}^{(i)}$, $n_{m,\gamma}^{(ii)}$,
	$n_{m,\gamma,i}^{(iii)}$, $n_{m,\gamma}^{(iv)}$, $n_{m,\gamma,\mu}^{(v)}$.
	Set $\tM_{m,\alpha,\gamma}^{tw}=\hbar(\tM_{m,\alpha,\gamma}^0)$.
\end{notation}

\begin{prop}\label{prop:tN-m,alpha,gamma-odd}
	With the Notation~\ref{notation:M-m,alpha,gamma-odd}, we have the following.
	\vspace{-0.5ex}
	\begin{enumerate}[\rm(1)]\setlength{\itemsep}{-0.5ex}
		\item $\tN_{m,\alpha,\gamma}^0$ is the central product of
		$\tC_{m,\alpha,\gamma}^0$ and $\tM_{m,\alpha,\gamma}^0$ over $Z(E_{m,\alpha,\gamma}^0)$;
		similarly for $\hbar(\tN_{m,\alpha,\gamma}^0) = \tC_{m,\alpha,\gamma}^{tw} \tM_{m,\alpha,\gamma}^{tw}$.
		\item $\tM_{m,\alpha,\gamma}^0/E_{m,\alpha,\gamma}^0
		\cong \tM_{m,\alpha,\gamma}^{tw}/E_{m,\alpha,\gamma}^{tw}
		\cong \Sp_{2\gamma}(\ell)$.
		\item $\tN_{m,\alpha,\gamma}^0/\tR_{m,\alpha,\gamma}^0 \cong
		\Sp_{2\gamma}(\ell) \times \tC_{m,\alpha,\gamma}^0/\tZ_{m,\alpha,\gamma}^0$;
		similarly for $\hbar(\tN_{m,\alpha,\gamma}^0)/\tR_{m,\alpha,\gamma}^{tw}$.
		\item $\tN_{m,\alpha,\gamma}^{tw} = \hbar(\tN_{m,\alpha,\gamma}^0) V_{m,\alpha,\gamma}$
		with $V_{m,\alpha,\gamma}=\grp{v_{m,\alpha,\gamma}}$,
		and $\tN_{m,\alpha,\gamma}=\iota(\tN_{m,\alpha,\gamma}^{tw})$.
		\item $\det(\tM_{m,\alpha,\gamma}^{tw})=1$ unless the case (\ref{eq:special-case-3-1}),
		in which case, $\det(\tM_{m,0,1})=\fZ_3$.
	\end{enumerate}
\end{prop}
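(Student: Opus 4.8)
The plan is to reduce the proposition to the conjugation action of $\tN^0_{m,\alpha,\gamma}$ on the symplectic-type group $\tR^0_{m,\alpha,\gamma}$, then transport everything to the twisted groups via the embedding $\hbar$ and the isomorphism $\iota$, and finally to read off determinants. Throughout I abbreviate the subscript $m,\alpha,\gamma$, writing $\tN^0,\tR^0,\tZ^0,E^0,\tM^0,\tC^0,\tG^0$ and their $tw$- and unadorned analogues. First I would observe that $z^0_{m,\alpha,\gamma}$ is a scalar matrix, so $\tZ^0\le Z(\tG^0)$; hence $\tN^0$ centralizes $\tZ^0=Z(\tR^0)$ and maps into $\Aut^0(\tR^0)$ with kernel $\tC^0$, and composing with $\Aut^0(\tR^0)\to\Out^0(\tR^0)\cong\Sp_{2\gamma}(\ell)$ (by \cite{Win72}, acting on the symplectic space $\tR^0/\tZ^0\cong E^0/Z(E^0)$) gives a homomorphism $\phi\colon\tN^0\to\Sp_{2\gamma}(\ell)$ with $\ker\phi=\tC^0\tR^0=\tC^0E^0$ (the preimage of $\Inn(\tR^0)$). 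By the explicit conjugation formulas in items (i)--(v) above, the images $\phi(n^{(i)}_{m,\gamma}),\dots,\phi(n^{(v)}_{m,\gamma,\mu})$ generate $\Sp_{2\gamma}(\ell)$ --- this is the generation statement of \cite[(1A),(1B)]{An94} --- so $\phi$ is surjective and $\tN^0=\ker\phi\cdot\tM^0=\tC^0\tM^0$.

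To pin down the central products, I note that $E^0$ and all the generators of $\tM^0$ have the form $I_m\otimes(\,\cdot\,)$, whereas $\tC^0=\tC^0_{m,\alpha}\otimes I_{\ell^\gamma}$ by Lemma~\ref{lem:tC-m,alpha,gamma-odd}; hence $\tC^0$ and $\tM^0$ commute elementwise, and $\tC^0\cap\tM^0$ consists of scalar matrices. Such a scalar is central in $\tM^0$, so $\phi$ sends it into $Z(\Sp_{2\gamma}(\ell))=\{\pm1\}$; but a scalar matrix centralizes $\tR^0$, hence lies in $\ker\phi$, hence in $E^0$, hence in $Z(E^0)$. Therefore $\tC^0\cap\tM^0=Z(E^0)$, which is (1); Dedekind's law then gives $\ker(\phi|_{\tM^0})=E^0(\tC^0\cap\tM^0)=E^0$, which is (2); and since $\tR^0=\tZ^0E^0$ is the central product of $\tZ^0\le\tC^0$ and $E^0\le\tM^0$ over $Z(E^0)$ (Lemma~\ref{lem:tC-m,alpha,gamma-odd}), dividing the central product $\tN^0=\tC^0\tM^0$ by it yields $\tN^0/\tR^0\cong(\tC^0/\tZ^0)\times(\tM^0/E^0)\cong\Sp_{2\gamma}(\ell)\times\tC^0/\tZ^0$, which is (3). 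Since $\hbar$ is an isomorphism onto $\hbar(\tG^0)$ carrying $\tC^0,E^0,\tM^0,\tR^0,\tN^0$ to $\tC^{tw},E^{tw},\tM^{tw},\tR^{tw},\hbar(\tN^0)$ (using Lemma~\ref{lem:tC-m,alpha,gamma-odd} for $\hbar(\tC^0)=\tC^{tw}$), the statements (1)--(3) transfer to the $tw$-versions.

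For (4), I would first check that $\tR^0$ is $F_{\eta q}$-stable --- on its generators $z^0_{m,\alpha,\gamma}$, $x^0_{m,\gamma,j}$, $y^0_{m,\gamma,j}$ the Frobenius raises to the $\eta q$-th power, respectively fixes --- so that conjugation by $v_{m,\alpha,\gamma}$, which on $\hbar(\tG^0)\cong\tG^0$ realizes the order-$e\ell^\alpha$ field automorphism cyclically permuting the $e\ell^\alpha$ coordinate blocks of $\barF^{me\ell^{\alpha+\gamma}}$, normalizes $\tR^{tw}=\hbar(\tR^0)$; hence $\hbar(\tN^0)V_{m,\alpha,\gamma}$ is a subgroup of $\tN^{tw}$. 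For the reverse inclusion, the $e\ell^\alpha$ coordinate blocks are exactly the $\tR^{tw}$-homogeneous components, pairwise non-isomorphic because $Z(\tR^{tw})$ acts on the $j$-th block through the scalar $\zeta_{\ell^{a+\alpha}}^{(\eta q)^j}$ and $\eta q$ has multiplicative order $e\ell^\alpha$ modulo $\ell^{a+\alpha}$; comparing central characters shows that every $g\in\tN^{tw}$ permutes these components by a translation, so for a suitable $k$ the element $gv_{m,\alpha,\gamma}^{-k}$ fixes each block, is block-diagonal, hence lies in $\hbar(\tG^0)$, hence in $\hbar(\tG^0)\cap\tN^{tw}=\hbar(\tN^0)$. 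This gives $\tN^{tw}=\hbar(\tN^0)V_{m,\alpha,\gamma}$, and $\tN_{m,\alpha,\gamma}=\iota(\tN^{tw})$ is immediate since $\iota$ is an isomorphism with $\iota(\tR^{tw})=\tR_{m,\alpha,\gamma}$.

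For (5): since $\det(\hbar(A))=\prod_{j=0}^{e\ell^\alpha-1}F_{\eta q}^j(\det A)=\cN_\alpha(\det A)$ for $A\in\tG^0$, and $E^0$ together with $n^{(i)}_{m,\gamma},n^{(iii)}_{m,\gamma,i},n^{(iv)}_{m,\gamma},n^{(v)}_{m,\gamma,\mu}$ all have determinant $1$ (Lemma~\ref{lem:tC-m,alpha,gamma-odd} and items (i), (iii)--(v)), the $\hbar$-images of these contribute $\cN_\alpha(1)=1$ to $\det(\tM^{tw})$; and by item (ii), $\det(\hbar(n^{(ii)}_{m,\gamma}))=1$ unless we are in case \eqref{eq:special-case-3-1}, in which case it generates $\fZ_3$. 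As $\det(\tM^{tw})$ is generated by the determinants of its generators, $\det(\tM^{tw}_{m,\alpha,\gamma})=1$ outside \eqref{eq:special-case-3-1}, while in that case $\det(\tM_{m,0,1})=\det(\iota(\tM^{tw}_{m,0,1}))=\det(\tM^{tw}_{m,0,1})=\fZ_3$, since $\iota$ is conjugation by a matrix of determinant $1$. I expect the main obstacle to be the module-theoretic step in (4): identifying $\hbar(\tG^0)$ inside $\tN^{tw}$ through the $\tR^{tw}$-homogeneous components and verifying that only translations of these components can occur; the generation of $\Sp_{2\gamma}(\ell)$ by the listed symplectic maps is taken from \cite{An94}.
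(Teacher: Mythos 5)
Your proof follows the same high-level route as the paper's: cite An~\cite{An94} for surjectivity of the map to $\Sp_{2\gamma}(\ell)$, establish $\tC^0\cap\tM^0=Z(E^0)$, deduce (1)--(3), treat (4) as the twisted version of \cite[(1C)]{An94}, and read off determinants for (5). The commutativity argument via the tensor structure, the module-theoretic case analysis for (4), and the determinant bookkeeping for (5) are all sound and consistent with what the paper does (the paper simply cites \cite[(1C)]{An94} where you spell out the block-permutation argument).

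However, your justification of $\tC^0\cap\tM^0=Z(E^0)$ is circular. You argue that a scalar $\lambda I\in\tC^0\cap\tM^0$ "lies in $\ker\phi$, hence in $E^0$, hence in $Z(E^0)$." But by your own identification $\ker\phi=\tC^0E^0$, so $\ker\phi\cap\tM^0=(\tC^0\cap\tM^0)E^0$ by Dedekind, and $\lambda I$ belonging to this set is a tautology since $\lambda I\in\tC^0\cap\tM^0$. The leap from $\ker\phi$ to $E^0$ is precisely the assertion $\tC^0\cap\tM^0\le E^0$ you set out to prove. (The observation that $\phi(\lambda I)\in Z(\Sp_{2\gamma}(\ell))$ doesn't help either, since $\phi(\lambda I)=1$ trivially.) What is actually needed — and what the paper's phrase "by the construction" is shorthand for — is a direct inspection that the generators $n^{(i)},\dots,n^{(v)}$ together with $E^0$ do not produce scalar matrices of order bigger than $\ell$. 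Without that concrete check, the possibility that $\tM^0$ contains a scalar of order prime to $\ell$ (in which case $\tM^0/E^0$ would be a proper central extension of $\Sp_{2\gamma}(\ell)$, not $\Sp_{2\gamma}(\ell)$ itself) is not ruled out. You should either perform that inspection of the explicit matrices, or cite \cite[(1B)]{An94} for the stronger statement that the group generated by these matrices has image in $\Aut^0(Z_\alpha E_\gamma)$ modulo inner automorphisms of order exactly $|\Sp_{2\gamma}(\ell)|$, from which the intersection claim follows by a counting argument.
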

\begin{proof}
	It is easy to see that $[\tC_{m,\alpha,\gamma}^*,\tM_{m,\alpha,\gamma}^*]=1$.
	By the proof of \cite[(1B)]{An94},
	the map $\tM_{m,\alpha,\gamma}^0 \to \Out^0(Z_\alpha E_\gamma) \cong \Sp_{2\gamma}(\ell)$ is surjective.
	By the construction, $\tC_{m,\alpha,\gamma}^0 \cap \tM_{m,\alpha,\gamma}^0 = Z(E_{m,\alpha,\gamma}^0)$,
	thus we have an isomorphism $\tM_{m,\alpha,\gamma}^0/E_{m,\alpha,\gamma}^0 \cong \Sp_{2\gamma}(\ell)$.
	Then (1) $\sim$ (3) follow easily.
	(4) is the twisted version of \cite[(1C)]{An94}.
	Finally, (5) follows from the definition of $\tM_{m,\alpha,\gamma}^{tw}$,
	noting that  in the case (\ref{eq:special-case-3-1}), $\tM_{m,0,1}$ can be generated by $E_{m,0,1}$ and 
	\[ n_{m,1}^{(i)} = I_m \otimes (1-\zeta_3)^{-1}\begin{bmatrix} 1&1&1\\ 1&\zeta_3&\zeta_3^2\\ 1&\zeta_3^2&\zeta_3 \end{bmatrix},\quad
	n_{m,1}^{(ii)} = I_m \otimes \diag\{ 1,\zeta_3,1 \}. \]
\end{proof}

\begin{rem}\label{M-is-stable-odd}
It can be proved by direct calculation that $\tM_{m,\alpha,\gamma}^{tw}\unlhd \tN_{m,\alpha,\gamma}^{tw}$
and the subgroups $\tR_{m,\alpha,\gamma}^{tw}$,
$\tN_{m,\alpha,\gamma}^{tw}$,
$\tC_{m,\alpha,\gamma}^{tw}$ and 
$\tM_{m,\alpha,\gamma}^{tw}$ are stable under the action of field and graph automorphisms.
\end{rem}

\begin{rmk}
	Part (3) of the Proposition \ref{prop:tN-m,alpha,gamma-odd} improves \cite[(1) of (1C)]{An94} concerning the structure of $N^0$.
	Consequently, the last assertion of \cite[(1) of (1C)]{An94}, which is used in the proof of \cite[(3A)]{An94}, becomes trivial.
	The same remark can be made for the case when $\ell=2$ in the following two subsections.
\end{rmk}


\subsection{Groups of symplectic type for $\ell=2$ and $4 \mid q-\eta$.}
\label{subsect:gst-2-linear}

Now, assume $\ell=2$, $p$ is an odd prime,
and let $a$ be the positive integer such that $2^{a+1}=(q^2-1)_2$.
Assume $4 \mid q-\eta$ throughout this subsection \S\ref{subsect:gst-2-linear}, then $2^a=(q-\eta)_2$.
Note that $a\geq2$.
For a natural number $\gamma$,
denote by $E_\gamma^+,E_\gamma^-$ the extraspecial group of order $2^{2\gamma+1}$ and of type plus and minus respectively.
In particular, $E_1^+ \cong D_8$ the dihedral group of order $8$ and $E_1^- \cong Q_8$ the quaternion group of order $8$.
For any natural number $\alpha$, denote by $Z_\alpha$ the cyclic group of order $2^{a+\alpha}$.
Note that since $|Z_\alpha|\geq4$, $Z_\alpha E_\gamma^+ \cong Z_\alpha E_\gamma^-$,
which we will denote as $Z_\alpha E_\gamma$ if no confusion is caused.

For a positive integer $m$, let $\tG_{m,\alpha,\gamma}^0 = \GL_{m2^\gamma}((\eta q)^{2^\alpha})$.
Fix a $\zeta_{2^{a+\alpha}} \in \barF_q^\times$ with $o(\zeta_{2^{a+\alpha}}) = 2^{a+\alpha}$
and set $z_{m,\alpha,\gamma}^0 = \zeta_{2^{a+\alpha}} I_{m2^\gamma}$.
Set $\zeta_4 = \zeta_{2^{a+\alpha}}^{2^{a+\alpha-2}}$ and
$\zeta_8=\zeta_{2^{a+\alpha}}^{2^{a+\alpha-3}}$ if $a+\alpha>2$.
Note that $\zeta_{2^{a+\alpha}}^{2^{a+\alpha-1}}=-1$ and set
\[ x^0 = \begin{bmatrix} 1&0\\0&-1 \end{bmatrix},\quad y^0 = \begin{bmatrix} 0&1\\1&0 \end{bmatrix}. \]
When $\gamma\geq2$, set
$x_{\gamma,j}^0 = I_2 \otimes \cdots \otimes x^0 \otimes \cdots \otimes I_2$ ($\gamma$ terms) and
$y_{\gamma,j}^0 = I_2 \otimes \cdots \otimes y^0 \otimes \cdots \otimes I_2$ ($\gamma$ terms),
where $x^0,y^0$ appear as the $j$-th components;
while when $\gamma=1$, set 
\[ x_{1,1}^0 = \begin{bmatrix} \zeta_4&0\\0&-\zeta_4 \end{bmatrix},\quad y_{1,1}^0 = \begin{bmatrix} 0&1\\-1&0 \end{bmatrix}. \]
Note that $\zeta_4^{-1}=-\zeta_4$.
Finally, set $x_{m,\gamma,j}^0 = I_m \otimes x_{\gamma,j}^0$ and $y_{m,\gamma,j}^0 = I_m \otimes y_{\gamma,j}^0$ as before.
Define $E_{m,\alpha,\gamma}^0, \tZ_{m,\alpha,\gamma}^0, \tR_{m,\alpha,\gamma}^0$ as before.
Then $E_{m,\alpha,\gamma}^0 \cong E_\gamma^+$ when $\gamma\geq2$ and $E_{m,\alpha,1}^0 \cong Q_8$.
Set $\tR_{m,\alpha,\gamma}^0 \cong Z_\alpha E_\gamma$.

Set $\tbG_{m,\alpha,\gamma} = \GL_{m2^{\alpha+\gamma}}(\barF_q)$
and $\bG_{m,\alpha,\gamma} = \SL_{m2^{\alpha+\gamma}}(\barF_q)$.
Set $ v_\alpha = \begin{bmatrix} \zero&-1\\ I_{2^\alpha-1}&\zero \end{bmatrix}$
and $v_{m,\alpha,\gamma}=I_{m2^\gamma}\otimes v_\alpha$.
Define $\hbar,g_{m,\alpha,\gamma},\iota$ as before,
and let the notation $\tG_{m,\alpha,\gamma}^*$, $G_{m,\alpha,\gamma}^*$,
$E_{m,\alpha,\gamma}^*$, $\tZ_{m,\alpha,\gamma}^*$, $\tR_{m,\alpha,\gamma}^*$,
$\tC_{m,\alpha,\gamma}^*$, $\tN_{m,\alpha,\gamma}^*$, etc. for $*\in\set{0,tw,\varnothing}$ be as before.
The conclusions of  Lemma \ref{lem:tC-m,alpha,gamma-odd} hold.

Let $\Aut^0(Z_\alpha E_\gamma)$ and $\Out^0(Z_\alpha E_\gamma)$ be as before.
Then by \cite{Win72}, $\Out^0(Z_\alpha E_\gamma) \cong \Sp_{2\gamma}(2)$.
To consider the normalizers of subgroups of symplectic type, J. An (\cite{An92,An93}) considers
realization of $\Aut^0(Z_\alpha E_\gamma)$ in $\GL_{2^\gamma}(\eta q)$ by giving a set of generators.
We have given the generators of the group $Z_\alpha E_\gamma$ explicitly in matrix form as above.
Now, we specify the generators of the group $\Aut^0(Z_\alpha E_\gamma)$ modulo $\Inn(Z_\alpha E_\gamma)$ in matrix form
such that most of these matrices have determinant one.

Let $W_\alpha$ be a linear space of dimension $2$ over the field $\F_{q^{2^\alpha}}$ when $\eta=1$ or $\eta=-1$ and $\alpha>0$
or a unitary space over $\F_{q^2}$ when $\eta=-1$ and $\alpha=0$ such that
$W_{\alpha,\gamma}=W_\alpha \otimes\cdots\otimes W_\alpha$ ($\gamma$ terms) is the underlying space of $\tG_{1,\alpha,\gamma}$.
Let $\varepsilon_0,\varepsilon_1$ be a basis of $W_\alpha$
such that $\GL(W_{\alpha,\gamma}) \cong \tG_{1,\alpha,\gamma}^0 = \GL_{2^\gamma}((\eta q)^{2^\alpha})$
with respect to the basis $\varepsilon_{j_1}\otimes\cdots\otimes\varepsilon_{j_\gamma}$, $j_i=0,1$.

Now, we consider the normalizer of $\tR_{m,\alpha,\gamma}^0$ in $\tG_{m,\alpha,\gamma}^0$.
Assume $\gamma=1$ first.
Let $\tn^{(i)} = \begin{bmatrix} -1&\zeta_4\\-1&-\zeta_4 \end{bmatrix}$, then
$ \tn^{(i)}x_{1,1}^0(\tn^{(i)})^{-1}=x_{1,1}^0y_{1,1}^0$ and $\tn^{(i)}y_{1,1}^0(\tn^{(i)})^{-1}=x_{1,1}^0$.
Note that $\det\tn^{(i)} = 2\zeta_4$.

\begin{lem}
	$2\zeta_4$ is a square in $\F_q^\times$ if $4 \mid q-1$,
	and $2\zeta_4$ is a square in $\F_{q^2}^\times$.
\end{lem}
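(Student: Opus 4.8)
The plan is to reduce both assertions to one elementary identity. Recall from the construction that $\zeta_4=\zeta_{2^{a+\alpha}}^{2^{a+\alpha-2}}$ is a primitive fourth root of unity in $\barF_q^\times$, so $\zeta_4^2=-1$ (equivalently $\zeta_4^{-1}=-\zeta_4$, as already noted). The key observation I would make is that
\[
(1+\zeta_4)^2 = 1 + 2\zeta_4 + \zeta_4^2 = 2\zeta_4 ,
\]
and that $1+\zeta_4\neq 0$ (since $\zeta_4\neq -1$) while $2\zeta_4\neq 0$ (since $p$ is odd). Hence $2\zeta_4$ is automatically a nonzero square in \emph{any} field that contains $\zeta_4$.

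It then only remains to locate $\zeta_4$ inside the relevant fields. For the first statement, if $4\mid q-1$ then $\F_q^\times$, being cyclic of order $q-1$, contains an element of order $4$; since all primitive fourth roots of unity in $\barF_q^\times$ generate the unique cyclic subgroup of order $4$, this forces $\zeta_4\in\F_q$. Therefore $1+\zeta_4\in\F_q^\times$ and $2\zeta_4=(1+\zeta_4)^2$ is a square in $\F_q^\times$. For the second statement, since $p$ is odd one always has $4\mid q^2-1$ (in fact $8\mid q^2-1$), so the same argument applied to $\F_{q^2}^\times$ gives $\zeta_4\in\F_{q^2}$ and hence $2\zeta_4=(1+\zeta_4)^2\in(\F_{q^2}^\times)^2$.

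I do not expect a genuine obstacle here: the whole content is the identity $(1+\zeta_4)^2=2\zeta_4$ together with the routine fact that the fourth roots of unity lie in $\F_q$ precisely when $4\mid q-1$ and always lie in $\F_{q^2}$. If one wished to avoid invoking membership of $\zeta_4$ in $\F_q$, one could instead run the Legendre-symbol computation, writing $2^{(q-1)/2}=(-1)^{(q^2-1)/8}$ and $\zeta_4^{(q-1)/2}=(-1)^{(q-1)/4}$ when $4\mid q-1$ and checking the product is $1$; but the squaring identity is cleaner and handles the $\F_{q^2}$ case uniformly, so that is the route I would take.
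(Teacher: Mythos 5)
Your proof is correct, and it takes a genuinely different route from the paper's. The paper argues via Euler's criterion: it recalls that $2$ is a square in $\F_q^\times$ iff $q\equiv\pm1\pmod 8$, and then observes that under $4\mid q-1$ the elements $2$ and $\zeta_4$ are either both squares or both nonsquares in $\F_q^\times$ (the dichotomy being $q\equiv1$ versus $q\equiv5\pmod 8$), so their product is a square; for $\F_{q^2}$ it notes $q^2\equiv1\pmod 8$ so both are squares outright. Your argument instead exhibits an explicit square root via the identity $(1+\zeta_4)^2=2\zeta_4$, reducing both claims to the routine fact that $\zeta_4$ lies in $\F_q$ when $4\mid q-1$ and always lies in $\F_{q^2}$. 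This is cleaner: it avoids the second supplement to quadratic reciprocity entirely, treats the two fields uniformly, and even produces the square root $1+\zeta_4$ explicitly — which is in the spirit of the paper's surrounding material, where the element $\lambda$ with $\lambda^2=2\zeta_4$ is used concretely to normalize determinants. Both proofs are short and correct; yours is the more elementary and arguably the more useful one for the subsequent constructions.
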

\begin{proof}
	By Euler's quadratic reciprocity law, $2$ is a square in $\F_q^\times$ if and only if $q\equiv\pm1 \mod 8$.
	Then if $4 \mid q-1$, $2$ and $\zeta_4$ are both squares or both non-squares, so $2\zeta_4$ is a square in $\F_q^\times$.
	Since $q^2\equiv1\mod8$, both $2$ and $\zeta_4$ are squares in $\F_{q^2}^\times$ and so is $2\zeta_4$.
\end{proof}

Assume $4 \mid q-\eta$ and
take $\lambda\in\F_{q^{2^\alpha}}^\times$ when $\eta=1$ or $\eta=-1$ and $\alpha>0$
or take $\lambda\in\F_{q^2}^\times$ when $\eta=-1$ and $\alpha=0$
such that $\lambda^2=2\zeta_4$.

\begin{lem}
	Assume $\eta=-1$, $\alpha=0$ and let $\lambda$ be as above.
	\vspace{-0.5ex}
	\begin{enumerate}[\rm(1)]\setlength{\itemsep}{-0.5ex}
		\item $2^{\frac{q-1}{2}}=1$ if $a>2$ while $2^{\frac{q-1}{2}}=-1$ if $a=2$.
		\item $\lambda^{q+1}=2$.
	\end{enumerate}
\end{lem}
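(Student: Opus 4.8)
The plan is to derive part (1) directly from the supplement to quadratic reciprocity already recorded above. By Euler's criterion $2^{(q-1)/2}$ equals $1$ or $-1$ according as $2$ is or is not a square in $\F_q^\times$, and the lemma quoted above says $2$ is a square in $\F_q^\times$ exactly when $q\equiv\pm1\pmod 8$. Since we assume $4\mid q-\eta=q+1$, only $q\equiv 3$ or $q\equiv 7\pmod 8$ can occur: the case $q\equiv 7\pmod 8$ is equivalent to $8\mid q+1$, i.e.\ $a=\nu(q+1)>2$, and then $2$ is a square, while $q\equiv 3\pmod 8$ is equivalent to $q+1\equiv 4\pmod 8$, i.e.\ $a=2$, and then $2$ is not a square. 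This yields $2^{(q-1)/2}=1$ iff $a>2$ and $2^{(q-1)/2}=-1$ iff $a=2$.

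For part (2) I would first note that $\lambda^{q+1}=\lambda\lambda^q$ is fixed by $x\mapsto x^q$, so it lies in $\F_q^\times$, and that $(\lambda^{q+1})^2=(\lambda^2)^{q+1}=(2\zeta_4)^{q+1}=2^{q+1}\zeta_4^{q+1}=4$, using $2\in\F_q$ (hence $2^{q+1}=2^2$) and $\zeta_4^q=\zeta_4^{-1}$ (as $q\equiv-1\pmod 4$, so $\zeta_4^{q+1}=1$). Thus $\lambda^{q+1}=\pm 2$, and the whole point is to determine the sign. To do this I plan to factor $\lambda$ inside $\F_{q^2}^\times$: since $a\ge 2$ we have $8\mid q^2-1$, so in the cyclic group $\F_{q^2}^\times$ the order-$4$ element $\zeta_4$ is a square (for instance $\zeta_4=(2\zeta_4)/2$, a quotient of two squares, using the lemma above and $q^2\equiv1\pmod 8$); fix $\rho\in\F_{q^2}^\times$ with $\rho^2=\zeta_4$, necessarily of order $8$, and fix $t\in\F_{q^2}^\times$ with $t^2=2$. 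Then $\lambda^2=2\zeta_4=(t\rho)^2$ forces $\lambda=\pm t\rho$, and since $q+1$ is even, $\lambda^{q+1}=t^{q+1}\rho^{q+1}$.

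It then remains to compute the two factors, and here the dichotomy of part (1) returns. The Frobenius $x\mapsto x^q$ acts on $\grp{\rho}\cong\Z/8\Z$ as multiplication by $q\bmod 8$. If $a>2$, then $q\equiv 7\pmod 8$, so $\rho^q=\rho^{-1}$ and $\rho^{q+1}=1$, whereas $t\in\F_q^\times$ by part (1), so $t^{q+1}=t^2=2$; hence $\lambda^{q+1}=2$. If $a=2$, then $q\equiv 3\pmod 8$, so $\rho^{q+1}=\rho^4=\zeta_4^2=-1$, whereas $t\notin\F_q$ by part (1); since $t^2\in\F_q$ this forces $t^q=-t$, so $t^{q+1}=-t^2=-2$ and $\lambda^{q+1}=(-1)(-2)=2$. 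Either way $\lambda^{q+1}=2$, as claimed. The one genuine obstacle is fixing this sign in part (2); the case split $a=2$ versus $a>2$ (equivalently $q\equiv 3$ versus $q\equiv 7\pmod 8$) seems unavoidable, and it is precisely the same split that gives part (1) its content, which is why the two statements are packaged in one lemma.
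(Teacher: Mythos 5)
Your proof is correct. Part (1) is essentially the paper's argument (quadratic reciprocity supplement plus Euler's criterion, keyed on $q\equiv 3$ vs.\ $7\pmod 8$). For part (2) the paper only says ``follows from (1) by easy calculation,'' and you have supplied a valid computation; your route via the factorization $\lambda=\pm t\rho$ with $t^2=2$, $\rho^2=\zeta_4$ works, though it is slightly more elaborate than the shortest path: since $q+1$ is even one can write directly $\lambda^{q+1}=(\lambda^2)^{(q+1)/2}=(2\zeta_4)^{(q+1)/2}=2\cdot 2^{(q-1)/2}\cdot\zeta_4^{(q+1)/2}$, and then $\zeta_4^{(q+1)/2}$ is $-1$ when $a=2$ (as $(q+1)/2\equiv 2\pmod 4$) and $1$ when $a>2$ (as $4\mid (q+1)/2$), which combined with part (1) gives $\lambda^{q+1}=2$ in both cases without any auxiliary square roots. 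Both versions establish the lemma.
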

\begin{proof}
	Assume $a>2$, then $q\equiv-1\mod8$,
	so $2$ is square in $\F_q^\times$,
	thus $2^{\frac{q-1}{2}}=1$.
	Now, assume $a=2$.
	By assumption, $4 \mid q+1$, then $q\equiv3\mod8$.
	So $2$ is again a non-square in $\F_q^\times$.
	Since $\frac{q-1}{2}$ is odd, $2$ would be a square if $2^{\frac{q-1}{2}}=1$,
	thus $2^{\frac{q-1}{2}}=-1$.
	Recall that $\lambda^2=2\zeta_4$, then (2) follows from (1) by easy calculation. 
\end{proof}

We now consider the normalizer of $\tR_{m,\alpha,\gamma}^*$ in $\tG_{m,\alpha,\gamma}^*$ for $*\in\set{0,tw,\varnothing}$. 

\begin{enumerate}[(1)]
	
	\item Assume $\gamma=1$.
	
	\begin{enumerate}[(i)]
		\item Set $n_{m,1}^{(i)} = I_m \otimes \frac{1}{\lambda}\tn^{(i)}$,
		then by the above two lemmas, $n_{m,1}^{(i)} \in \SU_{2m}((\eta q)^{2^\alpha})$,
		$ n_{m,1}^{(i)}x_{m,1,1}^0(n_{m,1}^{(i)})^{-1}=x_{m,1,1}^0y_{m,1,1}^0$ and  $n_{m,1}^{(i)}y_{m,1,1}^0(n_{m,1}^{(i)})^{-1}=x_{m,1,1}^0$. 
		\item Set
		\[ n_{m,1}^{(ii)} = \begin{cases}
		I_m \otimes \begin{bmatrix} \zeta_8&0\\0&\zeta_8^{-1} \end{bmatrix}, & \textrm{if $\alpha\geq1$ or $a>2$},\\
		\diag\{ \zeta_4, I_{m-1} \} \otimes \begin{bmatrix} \zeta_4&0\\0&1 \end{bmatrix},
		& \textrm{if $\alpha=0$, $a=2$ and $\nu(m)=1$},\\
		I_m \otimes \begin{bmatrix} \zeta_4&0\\0&1 \end{bmatrix}, & \textrm{if $\alpha=0$, $a=2$ and $\nu(m)\neq1$}.
		\end{cases}\]
		Then $ n_{m,1}^{(ii)}x_{m,1,1}^0(n_{m,1}^{(ii)})^{-1}=x_{m,1,1}^0$ and
		$n_{m,1}^{(ii)}y_{m,1,1}^0(n_{m,1}^{(ii)})^{-1}=x_{m,1,1}^0y_{m,1,1}^0$. 
		Note that $\det \hbar(n_{m,1}^{(ii)}) = 1$ unless $\alpha=\nu(m)=0$ and $a=2$,
		in which case, $\det \hbar(n_{m,\gamma}^{(ii)}) = \zeta_4^m$.
	\end{enumerate}
	
	\item Assume $\gamma\geq2$.
	
	\begin{enumerate}[(i)]
		\item Set $n_{m,\gamma}^{(i)}
		= I_m \otimes \frac{1}{\lambda}\begin{bmatrix} 1&1\\-\zeta_4&\zeta_4 \end{bmatrix} \otimes I_{2^{\gamma-1}}$,
		where $\lambda$ is as above,
		then similar as above, $n_{m,\gamma}^{(i)} \in \SU_{m2^\gamma}((\eta q)^{2^\alpha})$ satisfying
		$ n_{m,\gamma}^{(i)}x_{m,\gamma,1}^0(n_{m,\gamma}^{(i)})^{-1}=\zeta_4x_{m,\gamma,1}^0y_{m,\gamma,1}^0$,
		$n_{m,\gamma}^{(i)}y_{m,\gamma,1}^0(n_{m,\gamma}^{(i)})^{-1}=x_{m,\gamma,1}^0$
		and fixing all other $x_{m,\gamma,k}^0, y_{m,\gamma,k}^0$.
		\item Set
		\[ n_{m,\gamma}^{(ii)} = \begin{cases}
		I_m \otimes \begin{bmatrix} \zeta_8&0\\0&\zeta_8^{-1} \end{bmatrix} \otimes I_{2^{\gamma-1}},
		& \textrm{if $\alpha\geq1$ or $a>2$},\\
		I_m \otimes \begin{bmatrix} \zeta_4&0\\0&1 \end{bmatrix} \otimes I_{2^{\gamma-1}},
		& \textrm{if $\alpha=0$ and $a=2$}.
		\end{cases}\]
		Then $n_{m,\gamma}^{(ii)}$ satisfies
		$n_{m,\gamma}^{(ii)}y_{m,\gamma,1}^0(n_{m,\gamma}^{(ii)})^{-1}=\zeta_4x_{m,\gamma,1}^0y_{m,\gamma,1}^0$
		and fixes all other $x_{m,\gamma,k}^0, y_{m,\gamma,k}^0$.
		Note that $\det \hbar(n_{m,\gamma}^{(ii)}) = 1$ unless $\alpha=\nu(m)=0$, $\gamma=2$ and $a=2$,
		in which case $\det \hbar(n_{m,\gamma}^{(ii)})=-1$.
		\item Let $P_{\gamma,i}^{(iii)}$ be the permutation matrix transposing all the pairs
		$(\varepsilon_{j_1}\otimes\cdots\otimes\varepsilon_{j_i}\otimes\cdots\otimes\varepsilon_{j_\gamma},
		\varepsilon_{j_i}\otimes\cdots\otimes\varepsilon_{j_1}\otimes\cdots\otimes\varepsilon_{j_\gamma})$.
		Note that $\det P_{\gamma,i}^{(iii)} = (-1)^{2^{\gamma-2}}$.
		Set \[ n_{m,\gamma,i}^{(iii)} = \begin{cases}
		\diag\{ \zeta_8, I_{m-1} \} \otimes P_{2,i}^{(iii)}, & \textrm{if $\alpha=\nu(m)=0, \gamma=2, a>2$},\\
		I_m \otimes P_{\gamma,i}^{(iii)}, & \textrm{otherwise}.
		\end{cases}\]
		Then $n_{m,\gamma,i}^{(iii)}$ transposes pairs $(x_{m,\gamma,1}^0,x_{m,\gamma,i}^0)$, $(y_{m,\gamma,1}^0,y_{m,\gamma,i}^0)$
		and fixes all other $x_{m,\gamma,k}^0,y_{m,\gamma,k}^0$.
		Note that $\det \hbar(n_{m,\gamma}^{(iii)}) = 1$ unless $\alpha=\nu(m)=0$, $\gamma=2$ and $a=2$,
		in which case $\det \hbar(n_{m,2}^{(iii)}) = -1$.
		\item Let $P_{\gamma,1,2}^{(iv)}$ be the permutation matrix transposing all the pairs
		$(\varepsilon_{j_1}\otimes\varepsilon_{j_2}\otimes\cdots\otimes\varepsilon_{j_\gamma},
		\varepsilon_{j_1+j_2+1}\otimes\varepsilon_{j_2}\otimes\cdots\otimes\varepsilon_{j_\gamma})$.
		Note that $\det P_{\gamma,1,2}^{(iv)} = (-1)^{2^{\gamma-2}}$.
		Set \[ n_{m,\gamma}^{(iv)} = \begin{cases}
		\diag\{ \zeta_8, I_{m-1} \} \otimes P_{\gamma,1,2}^{(iv)}, & \textrm{if $\alpha=\nu(m)=0, \gamma=2, a>2$},\\
		I_m \otimes P_{\gamma,1,2}^{(iv)}, & \textrm{otherwise}. \end{cases}\]
		Then $n_{m,\gamma}^{(iv)}$ satisfies
		$ n_{m,\gamma}^{(iv)} x_{m,\gamma,1}^0(n_{m,\gamma}^{(iv)})^{-1}=x_{m,\gamma,1}^0x_{m,\gamma,2}^0$,
		$n_{m,\gamma}^{(iv)} y_{m,\gamma,2}^0(n_{m,\gamma}^{(iv)})^{-1}=y_{m,\gamma,1}^0y_{m,\gamma,2}^0$
		and fixes all other $x_{m,\gamma,k}^0,y_{m,\gamma,k}^0$.
		Note that $\det \hbar(n_{m,\gamma}^{(iv)}) = 1$ unless $\alpha=\nu(m)=0$, $\gamma=2$ and $a=2$,
		in which case $\det \hbar(n_{m,2}^{(iv)}) = -1$.
	\end{enumerate}
	
\end{enumerate}

\begin{notation*}
	Let $\tM_{m,\alpha,\gamma}^0$ be the subgroup of $\tG_{m,\alpha,\gamma}^0$ generated by $n_{m,\gamma,i}^{(i)}$, $n_{m,\gamma}^{(ii)}$, 	$n_{m,\gamma,\nu}^{(iii)}$, $n_{m,\gamma}^{(iv)}$ and $E_{m,\alpha,\gamma}^0$.
	Set $\tM_{m,\alpha,\gamma}^{tw}=\hbar(\tM_{m,\alpha,\gamma}^0)$ and $\tM_{m,\alpha,\gamma}=\iota(\tM_{m,\alpha,\gamma}^{tw})$.
	We also set
	\begin{gather}
	\label{eq:special-case-2-linear-0}
	\ell=2, 4 \mid q-\eta, \alpha=0 ~\textrm{and}~ \gamma=1, \nu(m)=1, a=2~\textrm{or}~\gamma=2,\nu(m)=0,a>2;
	\addtocounter{thm}{1}\tag{\thethm}\\
	\label{eq:special-case-2-linear-1} \ell=2, 4 \mid q-\eta, a=2, \nu(m)=\alpha=0, 1\leq\gamma\leq2, \addtocounter{thm}{1}\tag{\thethm}
	\end{gather}
\end{notation*}

Then similar as in subsection \S \ref{subsect:gst-odd}, we have the following from \cite{An92,An93}.

\begin{prop}\label{prop:tN-m,alpha,gamma-2-linear}
	Assume $\ell=2$, $4 \mid q-\eta$ and keep the above notation.
	\vspace{-0.5ex}
	\begin{enumerate}[\rm(1)]\setlength{\itemsep}{-0.5ex}
		\item $\tC_{m,\alpha,\gamma}^0 = \GL_m((\eta q)^{2^\alpha}) \otimes I_{2^\gamma}$.
		\item We have $C_{\tM_{m,\alpha,\gamma}^0}(E_{m,\alpha,\gamma}^0)=Z(E_{m,\alpha,\gamma})$ and
		$\tM_{m,\alpha,\gamma}^0/E_{m,\alpha,\gamma}^0 \cong \Sp_{2\gamma}(2)$,
		unless (\ref{eq:special-case-2-linear-0}) holds, in which case,
		$\tM_{m,\alpha,\gamma}^0/C_{\tM_{m,\alpha,\gamma}^0}(E_{m,\alpha,\gamma}^0)E_{m,\alpha,\gamma}^0 \cong \Sp_{2\gamma}(2)$.
		Similar results hold for $\tM_{m,\alpha,\gamma}^{tw}$.
		\item $\tN_{m,\alpha,\gamma}^0 = \tC_{m,\alpha,\gamma}^0\tM_{m,\alpha,\gamma}^0$ is
		the central product of $\tC_{m,\alpha,\gamma}^0$ and $\tM_{m,\alpha,\gamma}^0$ over $Z(\tR_{m,\alpha,\gamma}^0)$,
		unless (\ref{eq:special-case-2-linear-0}) holds, in which case,
		$\tN_{m,\alpha,\gamma}^0 = \tC_{m,\alpha,\gamma}^0\tM_{m,\alpha,\gamma}^0$
		with $\tC_{m,\alpha,\gamma}^0 \cap \tM_{m,\alpha,\gamma}^0$ being a $2$-group.
		Similar results hold for $\hbar(\tN_{m,\alpha,\gamma}^0) = \tC_{m,\alpha,\gamma}^{tw} \tM_{m,\alpha,\gamma}^{tw}$.
		\item $\tN_{m,\alpha,\gamma}^0/\tR_{m,\alpha,\gamma}^0 \cong
		\Sp_{2\gamma}(2) \times \tC_{m,\alpha,\gamma}^0/\tZ_{m,\alpha,\gamma}^0$,
		unless (\ref{eq:special-case-2-linear-0}) holds, in which case,
		$\tN_{m,\alpha,\gamma}^0/\tC_{m,\alpha,\gamma}^0\tR_{m,\alpha,\gamma}^0 \cong \Sp_{2\gamma}(2)$.
		Similar results hold for $\hbar(\tN_{m,\alpha,\gamma}^0)$.
		\item $\tN_{m,\alpha,\gamma}^{tw} = \hbar(\tN_{m,\alpha,\gamma}^0) V_{m,\alpha,\gamma}$
		with $V_{m,\alpha,\gamma}=\grp{v_{m,\alpha,\gamma}}$,
		and $\tN_{m,\alpha,\gamma}=\iota(\tN_{m,\alpha,\gamma}^{tw})$.
		\item $\det(\tM_{m,\alpha,\gamma}^{tw})=1$ unless (\ref{eq:special-case-2-linear-1}) holds,
		in which case, $\det(\tM_{m,0,1}^{tw})=\fZ_4^{2^{\gamma-1}}$.
	\end{enumerate}
\end{prop}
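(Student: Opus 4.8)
The plan is to follow, \emph{mutatis mutandis}, the proof of Proposition~\ref{prop:tN-m,alpha,gamma-odd}, now for the prime $\ell=2$ with $4\mid q-\eta$, replacing the appeal to \cite{An94} by the corresponding normalizer computations of An in \cite{An92,An93}. Part~(1) restates the relevant part of Lemma~\ref{lem:tC-m,alpha,gamma-odd}, which was noted to hold in this setting, and comes from Schur's lemma applied to the faithful $2^\gamma$-dimensional module of $E_{m,\alpha,\gamma}^0\cong E_\gamma$. For (2)--(4), I would read off from the conjugation formulas recorded with $n_{m,\gamma}^{(i)}$, $n_{m,\gamma}^{(ii)}$, $n_{m,\gamma,i}^{(iii)}$, $n_{m,\gamma}^{(iv)}$ that $\tM_{m,\alpha,\gamma}^0$ maps onto $\Out^0(Z_\alpha E_\gamma)\cong\Sp_{2\gamma}(2)$ (using \cite{Win72}) with kernel $C_{\tM_{m,\alpha,\gamma}^0}(E_{m,\alpha,\gamma}^0)$; since $\tC_{m,\alpha,\gamma}^0$ is normal in $\tN_{m,\alpha,\gamma}^0$ and $\tM_{m,\alpha,\gamma}^0\supseteq E_{m,\alpha,\gamma}^0$ hits all of $\tN_{m,\alpha,\gamma}^0/\tC_{m,\alpha,\gamma}^0$, this yields $\tN_{m,\alpha,\gamma}^0=\tC_{m,\alpha,\gamma}^0\tM_{m,\alpha,\gamma}^0$ in every case. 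Outside case~(\ref{eq:special-case-2-linear-0}), each listed generator has the form $I_m\otimes A$ with $A$ of trivial scalar content, so $[\tC_{m,\alpha,\gamma}^0,\tM_{m,\alpha,\gamma}^0]=1$ and $\tC_{m,\alpha,\gamma}^0\cap\tM_{m,\alpha,\gamma}^0=Z(E_{m,\alpha,\gamma}^0)=Z(\tR_{m,\alpha,\gamma}^0)$, giving the central-product form and $\tN_{m,\alpha,\gamma}^0/\tR_{m,\alpha,\gamma}^0\cong\Sp_{2\gamma}(2)\times\tC_{m,\alpha,\gamma}^0/\tZ_{m,\alpha,\gamma}^0$ exactly as in the odd case; in case~(\ref{eq:special-case-2-linear-0}) the extra $\diag\{\zeta_4,I_{m-1}\}$- or $\diag\{\zeta_8,I_{m-1}\}$-type factors carried by some generators --- inserted to keep determinants under control --- enlarge $\tC_{m,\alpha,\gamma}^0\cap\tM_{m,\alpha,\gamma}^0=C_{\tM_{m,\alpha,\gamma}^0}(E_{m,\alpha,\gamma}^0)$ to a $2$-group, producing the exceptional forms. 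The $\hbar$-image statements then follow since $\hbar$ is an injective homomorphism compatible with all this structure.

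Claim (5) is handled as in the twisted version of \cite[(1C)]{An94}: $v_{m,\alpha,\gamma}$ normalizes $\tR_{m,\alpha,\gamma}^{tw}$, and by a Lang--Steinberg argument $\tN_{m,\alpha,\gamma}^{tw}=\hbar(\tN_{m,\alpha,\gamma}^0)\,V_{m,\alpha,\gamma}$ with $V_{m,\alpha,\gamma}=\grp{v_{m,\alpha,\gamma}}$; transporting along $\iota$ then gives $\tN_{m,\alpha,\gamma}=\iota(\tN_{m,\alpha,\gamma}^{tw})$.

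Finally, (6) is a determinant bookkeeping, done separately for $\gamma=1$ and for $\gamma\ge2$, after first recording that each generator is defined over the relevant field --- this is where the case splits ``$\alpha\ge1$ or $a>2$'' versus ``$\alpha=0$ and $a=2$'' enter. For $\gamma\ge2$, and for $\gamma=1$ outside the exceptional cases, each generator either already has determinant $1$ in $\tG_{m,\alpha,\gamma}^0$ --- which is precisely where the two preparatory lemmas, that $2\zeta_4$ is a square and that $\lambda^{q+1}=2$, are used, namely to make $n_{m,\gamma}^{(i)}$ have determinant $1$ --- or its $\hbar$-image has determinant $\cN_\alpha$ of an element of trivial norm. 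The only surviving contributions occur in case~(\ref{eq:special-case-2-linear-1}), where $\alpha=0$, so that $\hbar$ induces the identity on determinants ($\cN_0$ being the identity map), and the determinants of the listed generators take the values $\zeta_4^m$ or $-1$ recorded above; combining these with $\det(E_{m,\alpha,\gamma}^0)=1$, the formula $\det(A\otimes B\otimes C)=\det(A)^{\dim B\dim C}\det(B)^{\dim A\dim C}\det(C)^{\dim A\dim B}$ and $\nu(m)=0$, one checks that the determinant subgroup is exactly $\fZ_4^{2^{\gamma-1}}$ in $\barF_q^\times$. I expect this last step --- pinning down the precise determinant subgroup in the two sub-cases $\gamma=1$ and $\gamma=2$ --- to be the main obstacle, since it depends on the interaction between the tensor multiplicity $2^{\gamma-1}$, the parity of $m$, and which variant of each generator (the ``$\zeta_8$-version'' or the ``$\zeta_4$-version'') is in force.
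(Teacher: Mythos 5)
Your proposal follows exactly the route the paper intends: the paper gives no explicit proof of this proposition, merely remarking that the argument is ``similar as in subsection~\S\ref{subsect:gst-odd}'' using \cite{An92,An93} in place of \cite{An94}, and your plan is precisely an adaptation of the proof of Proposition~\ref{prop:tN-m,alpha,gamma-odd} with the new lists of generators, the two preparatory lemmas on $2\zeta_4$ and $\lambda$, and the case distinctions (\ref{eq:special-case-2-linear-0}), (\ref{eq:special-case-2-linear-1}). Your identification of where (5) comes from (twisted version of \cite[(1C)]{An94}) and of (6) as determinant bookkeeping over the two $\gamma$-ranges is also aligned with what the paper does.

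One concrete slip: you assert $\tC_{m,\alpha,\gamma}^0\cap\tM_{m,\alpha,\gamma}^0=Z(E_{m,\alpha,\gamma}^0)=Z(\tR_{m,\alpha,\gamma}^0)$. The second equality is false. Here $Z(\tR_{m,\alpha,\gamma}^0)=\tZ_{m,\alpha,\gamma}^0$ has order $2^{a+\alpha}\geq 4$ (recall $a\geq 2$ when $4\mid q-\eta$), whereas $Z(E_{m,\alpha,\gamma}^0)$ has order $2$. Since $\tM_{m,\alpha,\gamma}^0/E_{m,\alpha,\gamma}^0\cong\Sp_{2\gamma}(2)$ has trivial center, the scalar subgroup of $\tM_{m,\alpha,\gamma}^0$ is exactly $Z(E_{m,\alpha,\gamma}^0)$ and not $\tZ_{m,\alpha,\gamma}^0$, so your computation $\tC_{m,\alpha,\gamma}^0\cap\tM_{m,\alpha,\gamma}^0=Z(E_{m,\alpha,\gamma}^0)$ is the right one --- matching both the odd-$\ell$ case (Proposition~\ref{prop:tN-m,alpha,gamma-odd}(1)) and the $\ell=2$, $4\mid q+\eta$ case (Proposition~\ref{prop:tN-m,alpha,gamma-2-unitary}(1)) --- and the occurrence of $Z(\tR_{m,\alpha,\gamma}^0)$ in part~(3) of the statement should be read as $Z(E_{m,\alpha,\gamma}^0)$; but you should not paper over this by pretending the two groups coincide. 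A minor second point: the kernel of the map $\tM_{m,\alpha,\gamma}^0\to\Out^0(Z_\alpha E_\gamma)$ is $C_{\tM_{m,\alpha,\gamma}^0}(E_{m,\alpha,\gamma}^0)\,E_{m,\alpha,\gamma}^0$, not $C_{\tM_{m,\alpha,\gamma}^0}(E_{m,\alpha,\gamma}^0)$ alone (the two coincide with $E_{m,\alpha,\gamma}^0$ only when $C_{\tM_{m,\alpha,\gamma}^0}(E_{m,\alpha,\gamma}^0)=Z(E_{m,\alpha,\gamma}^0)$, i.e.\ outside (\ref{eq:special-case-2-linear-0})); this is exactly what the statement of part~(2) records, so your phrasing should be tightened to match.
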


\begin{rem}\label{not-cent}
If we change the definitions slightly as follows:
\begin{enumerate}[(1)]
	\item When $\gamma=1,\alpha=0,a=2,\nu(m)=1$, set $n_{m,1}^{(ii)}=I_m\otimes\begin{bmatrix}\zeta_4&0\\0&1\end{bmatrix}$,
	\item When $\gamma=2,\alpha=\nu(m)=0,a>2$,
		set $n_{m,2,i}^{(iii)} = I_m \otimes P_{2,i}^{(iii)}$ and $n_{m,2}^{(iv)} = I_m \otimes P_{2,1,2}^{(iv)}$,
\end{enumerate}
and define $\tM_{m,\alpha,\gamma}^0,\tM_{m,\alpha,\gamma}^{tw},\tM_{m,\alpha,\gamma}$ as above,
then the statements in (2) (3) (4) can be simplified by deleting ``unless (\ref{eq:special-case-2-linear-0}), \ldots''.
Note that with this new definition, $\det (\tM_{m,\alpha,\gamma}^{tw})=\{\pm1\}$ in case (\ref{eq:special-case-2-linear-0}).
\end{rem}

\begin{rem}\label{M-is-stable-2-1}
	It can be proved by direct calculation that $\tM_{m,\alpha,\gamma}^{tw}\unlhd \tN_{m,\alpha,\gamma}^{tw}$
	and the groups $\tR_{m,\alpha,\gamma}^{tw}$,
	$\tN_{m,\alpha,\gamma}^{tw}$,
	$\tC_{m,\alpha,\gamma}^{tw}$ and 
	$\tM_{m,\alpha,\gamma}^{tw}$ are stable under the action of field and graph automorphisms.
\end{rem}


\subsection{Groups of symplectic type for $\ell=2$ and $4 \mid q+\eta$.}
\label{subsect:gst-2-unitary}

Assume $\ell=2$, $p$ is an odd prime, and let $a$ be as in \S\ref{subsect:gst-2-linear}.
Assume $4 \mid q+\eta$ throughout this subsection \S\ref{subsect:gst-2-unitary}, then $2^a=(q+\eta)_2$ and $(q-\eta)_2=2$.
Let $E_\gamma^+,E_\gamma^-$ for a natural integer $\gamma$ be as in \S\ref{subsect:gst-2-linear},
and let $Z_\alpha$ for a positive integer $\alpha$ be as in \S\ref{subsect:gst-2-linear},
but let $Z_0$ denote the cyclic group of order $2$.

First, for $\alpha>0$, define $\tR_{m,\alpha,\gamma}$ and all related notation as in \S\ref{subsect:gst-2-linear}.
Then all the results in \S\ref{subsect:gst-2-linear} continue to hold.
Note that since $\alpha>0$, the exceptional cases (\ref{eq:special-case-2-linear-0}) and (\ref{eq:special-case-2-linear-1}) can not happen.
For convenience, we restate these results as follows.

\begin{prop}\label{prop:tN-m,alpha,gamma-2-unitary}
	Assume $\ell=2$, $4 \mid q+\eta$ and $\alpha>0$.
	\vspace{-0.5ex}
	\begin{enumerate}[\rm(1)]\setlength{\itemsep}{-0.5ex}
		\item $\tC_{m,\alpha,\gamma}^0 = \GL_m(q^{2^\alpha}) \otimes I_{2^\gamma}$.
		$\tN_{m,\alpha,\gamma}^0 = \tC_{m,\alpha,\gamma}^0\tM_{m,\alpha,\gamma}^0$
		is the central product over $Z(E_{m,\alpha,\gamma}^0)$,
		and similarly for $\hbar(\tN_{m,\alpha,\gamma}^0) = \tC_{m,\alpha,\gamma}^{tw} \tM_{m,\alpha,\gamma}^{tw}$.
		\item $\tM_{m,\alpha,\gamma}^0/E_{m,\alpha,\gamma}^0
		\cong \tM_{m,\alpha,\gamma}^{tw}/E_{m,\alpha,\gamma}^{tw} \cong \Sp_{2\gamma}(2)$.
		\item $\tN_{m,\alpha,\gamma}^0/\tR_{m,\alpha,\gamma}^0 \cong
		\Sp_{2\gamma}(2) \times \tC_{m,\alpha,\gamma}^0/\tZ_{m,\alpha,\gamma}^0$;
		similarly for $\hbar(\tN_{m,\alpha,\gamma}^0)/\tR_{m,\alpha,\gamma}^{tw}$.
		\item $\tN_{m,\alpha,\gamma}^{tw} = \hbar(\tN_{m,\alpha,\gamma}^0) \rtimes V_{m,\alpha,\gamma}$
		with $V_{m,\alpha,\gamma}=\grp{v_{m,\alpha,\gamma}}$,
		and $\tN_{m,\alpha,\gamma} = \iota(\tN_{m,\alpha,\gamma}^{tw})$.
		\item $\det(\tM_{m,\alpha,\gamma}^{tw})=1$.
	\end{enumerate}
\end{prop}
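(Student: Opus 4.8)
The plan is to deduce the statement directly from Proposition~\ref{prop:tN-m,alpha,gamma-2-linear} together with the remarks made just before it. The first and essentially only substantive point is that, for $\alpha>0$, the entire construction of \S\ref{subsect:gst-2-linear} depends on the pair $(q,\eta)$ only through the field $\F_{q^{2^\alpha}}$ and the integer $a$: since $2^\alpha$ is even we have $(\eta q)^{2^\alpha}=q^{2^\alpha}$, so $\tG_{m,\alpha,\gamma}^0=\GL_{m2^\gamma}(q^{2^\alpha})$, the twisted group $\tG_{m,\alpha,\gamma}^{tw}$, the maps $\hbar,\iota$, the matrices $z_{m,\alpha,\gamma}^0,x_{m,\gamma,j}^0,y_{m,\gamma,j}^0,v_{m,\alpha,\gamma}$ and the generators $n_{m,\gamma}^{(i)},\dots,n_{m,\gamma}^{(iv)}$, hence the subgroups $\tR_{m,\alpha,\gamma}^{*},E_{m,\alpha,\gamma}^{*},\tZ_{m,\alpha,\gamma}^{*},\tM_{m,\alpha,\gamma}^{*}$ ($*\in\{0,tw,\varnothing\}$), are given by exactly the same formulas as in \S\ref{subsect:gst-2-linear}. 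I would check the few field-theoretic prerequisites: $a$ is unchanged (it is fixed by $2^{a+1}=(q^2-1)_2$); one has $(q^{2^\alpha}-1)_2=2^{a+\alpha}$, so $\zeta_{2^{a+\alpha}}$ lies in $\F_{q^{2^\alpha}}^\times$; $a\ge2$ and $\alpha\ge1$ give $a+\alpha\ge3$, so $\zeta_8$ is defined; and $2\zeta_4$, already a square in $\F_{q^2}^\times$ by the lemma in \S\ref{subsect:gst-2-linear}, remains a square in $\F_{q^{2^\alpha}}^\times\supseteq\F_{q^2}^\times$, so $\lambda$ with $\lambda^2=2\zeta_4$ exists. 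Consequently the proof of Proposition~\ref{prop:tN-m,alpha,gamma-2-linear} applies word for word in the present setting.

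Next I would observe that both exceptional configurations \eqref{eq:special-case-2-linear-0} and \eqref{eq:special-case-2-linear-1} stipulate $\alpha=0$, hence cannot occur here. Thus in Proposition~\ref{prop:tN-m,alpha,gamma-2-linear} the ``unless \eqref{eq:special-case-2-linear-0}\,\dots'' alternatives in (2)--(4) and the ``unless \eqref{eq:special-case-2-linear-1}\,\dots'' alternative in (6) are vacuous. Collecting what remains --- together with $(\eta q)^{2^\alpha}=q^{2^\alpha}$ in the description of $\tC_{m,\alpha,\gamma}^0$ --- yields statements (1), (2), (3) and (5) here verbatim, as well as the equalities $\tN_{m,\alpha,\gamma}^{tw}=\hbar(\tN_{m,\alpha,\gamma}^0)V_{m,\alpha,\gamma}$ and $\tN_{m,\alpha,\gamma}=\iota(\tN_{m,\alpha,\gamma}^{tw})$ of statement (4).

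The only extra item in statement (4) not literally contained in Proposition~\ref{prop:tN-m,alpha,gamma-2-linear}(5) is the normality of the left-hand factor implicit in ``$\rtimes$''; for this I would note that the decomposition into $2^\alpha$ blocks underlying $\hbar$ gives a homomorphism $\tG_{m,\alpha,\gamma}^{tw}\to\Z/2^\alpha$ (cyclic shift of the blocks, with the class of $v_{m,\alpha,\gamma}$ as a generator) whose kernel is precisely $\hbar(\tG_{m,\alpha,\gamma}^0)$; hence $\hbar(\tG_{m,\alpha,\gamma}^0)\unlhd\tG_{m,\alpha,\gamma}^{tw}$ and therefore $\hbar(\tN_{m,\alpha,\gamma}^0)=\hbar(\tG_{m,\alpha,\gamma}^0)\cap\tN_{m,\alpha,\gamma}^{tw}\unlhd\tN_{m,\alpha,\gamma}^{tw}$, which combined with the product equality gives the presentation of statement (4) (so ``$\rtimes$'' should be read as recording this normality, since $v_{m,\alpha,\gamma}^{2^\alpha}\in\tR_{m,\alpha,\gamma}^{tw}$ shows $V_{m,\alpha,\gamma}$ itself is not a complement). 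I expect no genuine obstacle in all of this: the content is entirely the bookkeeping check of the first paragraph, namely that \S\ref{subsect:gst-2-linear} was arranged so that switching the hypothesis from $4\mid q-\eta$ to $4\mid q+\eta$ affects nothing once $\alpha>0$.
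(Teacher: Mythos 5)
Your proposal matches the paper's own treatment. The paper gives no separate proof of this proposition: immediately before it, the authors write that for $\alpha>0$ one defines $\tR_{m,\alpha,\gamma}$ and all related notation as in \S\ref{subsect:gst-2-linear}, that "all the results in \S\ref{subsect:gst-2-linear} continue to hold," and that "since $\alpha>0$, the exceptional cases (\ref{eq:special-case-2-linear-0}) and (\ref{eq:special-case-2-linear-1}) can not happen" — exactly the two-step reduction you give. Your field-theoretic sanity checks ($\alpha$ and $a$ unchanged, $\zeta_{2^{a+\alpha}}\in\F_{q^{2^\alpha}}^\times$, $\zeta_8$ defined, $\lambda$ exists) and the remark about how to read the "$\rtimes$" in part (4) (whose appearance is indeed inconsistent with the plain product in Proposition~\ref{prop:tN-m,alpha,gamma-2-linear}(5), and is better regarded as recording normality of $\hbar(\tN_{m,\alpha,\gamma}^0)$ than an actual splitting) are useful elaborations, but the argument is the same.
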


Then assume $\gamma\geq1$.
Let $E_{m,1,\gamma-1}^{tw}$ and $z_{m,1,\gamma-1}^0$ be as in \S\ref{subsect:gst-2-linear},
and set $z_{m,1,\gamma-1}^{tw} = \hbar(z_{m,1,\gamma-1}^0)$,
$\tau_{m,1,\gamma-1}^{tw} = v_{m,1,\gamma-1} = I_{m2^{\gamma-1}} \otimes \begin{bmatrix} 0&1\\ -1&0 \end{bmatrix}$.
Set $\tS_{m,1,\gamma-1}^{tw} = \grp{E_{m,1,\gamma-1}^{tw},z_{m,1,\gamma-1}^{tw},\tau_{m,1,\gamma-1}^{tw}}$,
then $\tS_{m,1,\gamma-1}^{tw} \leq \tG_{m,1,\gamma-1}^{tw}$.
Note that $\grp{z_{m,1,\gamma-1}^{tw},\tau_{m,1,\gamma-1}^{tw}}$ is isomorphic to the semidihedral group $S_{a+2}$ of order $2^{a+2}$.
By the definition, $E_{m,1,\gamma-1}^{tw} = E_{m,1,\gamma-1}^0 \otimes I_2$,
$z_{m,1,\gamma-1}^{tw} = I_{m2^{\gamma-1}} \otimes \diag\{\zeta_{2^{a+1}},\zeta_{2^{a+1}}^{\eta q}\}$,
thus $\tS_{m,1,\gamma-1}^{tw} = E_{m,1,\gamma-1}^{tw}\grp{z_{m,1,\gamma-1}^{tw},\tau_{m,1,\gamma-1}^{tw}}$
is a central product of $E_{m,1,\gamma-1}^{tw}$ and $\grp{z_{m,1,\gamma-1}^{tw},\tau_{m,1,\gamma-1}^{tw}}$.
Note that $\tbG_{m,0,\gamma} = \tbG_{m,1,\gamma-1} = \GL_{m2^\gamma}(\barF_q)$,
$\bG_{m,0,\gamma} = \bG_{m,1,\gamma-1} = \SL_{m2^\gamma}(\barF_q)$.
Let $g_{m,1,\gamma-1}$ and $\iota$ be as in \S\ref{subsect:gst-2-linear},
and set $\tS_{m,1,\gamma-1} = \iota(\tS_{m,1,\gamma-1}^{tw})$.
Then $\tS_{m,1,\gamma-1}$ is a subgroup of $\tG_{m,0,\gamma}=\tG_{m,1,\gamma-1}$
isomorphic to the central product $E_{\gamma-1}S_{a+2}$,
which is independent of the type of $E_{\gamma-1}$ by \cite[(1F)]{An92}.
Set $z_{m,1,\gamma-1}=\iota(z_{m,1,\gamma-1}^{tw})$, $\tau_{m,1,\gamma-1}=\iota(\tau_{m,1,\gamma-1}^{tw})$.
Note that $\det z_{m,1,\gamma-1} = \det z_{m,1,\gamma-1}^{tw} = (-1)^{m2^{\gamma-1}}$
and $\det \tau_{m,1,\gamma-1} = \det \tau_{m,1,\gamma-1}^{tw} = 1$.

\begin{prop}\label{prop:tN-2-unitary-S-m,gamma}
	Assume $\ell=2$, $4 \mid q+\eta$, $\gamma>0$ and keep the above notation.
	Set $\tC^{tw} = C_{\tG_{m,1,\gamma-1}^{tw}}(\tS_{m,1,\gamma-1}^{tw})$,
	$\tN^{tw} = N_{\tG_{m,1,\gamma-1}^{tw}}(\tS_{m,1,\gamma-1}^{tw})$
	and $\tN_0^{tw} = C_{\tN^{tw}}(z_{m,1,\gamma-1}^{tw})$.
	\begin{enumerate}[\rm(1)]\setlength{\itemsep}{-2pt}
		\item $\tC^{tw} = \GL_m(\eta q) \otimes I_{2^\gamma}$.
		\item $\tN_0^{tw} = \tC^{tw} \grp{\tM_{m,1,\gamma-1}^{tw},z_{m,1,\gamma-1}^{tw}}$
		with $\tM_{m,1,\gamma-1}^{tw}$ as in \S\ref{subsect:gst-2-linear}.		
		\item $\tN^{tw} = \grp{\tN_0^{tw},\tau_{m,1,\gamma-1}^{tw}} = \tC^{tw} \tS_{m,1,\gamma-1} \tM_{m,1,\gamma-1}^{tw}$ and
		$\tN^{tw}/\tS_{m,1,\gamma}^{tw} \cong \Sp_{2(\gamma-1)}(2) \times \GL_m(\eta q)/\mrO_2(Z(\GL_m(\eta q)))$.
	\end{enumerate}
\end{prop}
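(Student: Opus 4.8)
The plan is to mimic the argument of \S\ref{subsect:gst-odd}, transferring J.~An's description of the normaliser of a basic subgroup of semidihedral type (\cite[(1F)]{An92}) to the twisted group $\tG_{m,1,\gamma-1}^{tw}=\tbG_{m,1,\gamma-1}^{v_{m,1,\gamma-1}F_{\eta q}}$; all three parts will be reduced, through the embedding $\hbar$, to the description of $\tN_{m,1,\gamma-1}^0$ already established in Proposition~\ref{prop:tN-m,alpha,gamma-2-unitary} (with $\alpha=1$). The two structural facts I would record first are: (a) $z_{m,1,\gamma-1}^{tw}=I_{m2^{\gamma-1}}\otimes\diag\{\zeta_{2^{a+1}},\zeta_{2^{a+1}}^{\eta q}\}$ has precisely the two eigenvalues $\zeta_{2^{a+1}}$ and $\zeta_{2^{a+1}}^{\eta q}$ (distinct, since $4\mid q+\eta$ forces $\eta q\equiv2^a-1\pmod{2^{a+1}}$), each of multiplicity $m2^{\gamma-1}$, so that $C_{\tG_{m,1,\gamma-1}^{tw}}(z_{m,1,\gamma-1}^{tw})=\hbar(\tG_{m,1,\gamma-1}^0)$; and (b) $\tS_{m,1,\gamma-1}^{tw}\cap\hbar(\tG_{m,1,\gamma-1}^0)=\tR_{m,1,\gamma-1}^{tw}$, because $E_{m,1,\gamma-1}^{tw}$ and $\langle z_{m,1,\gamma-1}^{tw}\rangle$ lie inside $\hbar(\tG_{m,1,\gamma-1}^0)$ whereas $\tau_{m,1,\gamma-1}^{tw}=v_{m,1,\gamma-1}$ is anti-block-diagonal and does not.

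For (1), since $\tS_{m,1,\gamma-1}^{tw}=\langle\tR_{m,1,\gamma-1}^{tw},\tau_{m,1,\gamma-1}^{tw}\rangle$, I would write $\tC^{tw}=\tC_{m,1,\gamma-1}^{tw}\cap C_{\tG_{m,1,\gamma-1}^{tw}}(\tau_{m,1,\gamma-1}^{tw})$; by Proposition~\ref{prop:tN-m,alpha,gamma-2-unitary}(1) the first intersectand is $\hbar(\GL_m(q^2)\otimes I_{2^{\gamma-1}})$, and as conjugation by $v_{m,1,\gamma-1}$ interchanges the two diagonal blocks of $\hbar(A\otimes I_{2^{\gamma-1}})$, centralising it is equivalent to $A=F_{\eta q}(A)$, i.e.\ $A\in\GL_m(\eta q)$; this gives $\tC^{tw}=\GL_m(\eta q)\otimes I_{2^\gamma}$. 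For (2), fact (a) gives $\tN_0^{tw}=\tN^{tw}\cap\hbar(\tG_{m,1,\gamma-1}^0)$, and since an element of $\hbar(\tG_{m,1,\gamma-1}^0)$ normalising $\tS_{m,1,\gamma-1}^{tw}$ also normalises $\tS_{m,1,\gamma-1}^{tw}\cap\hbar(\tG_{m,1,\gamma-1}^0)=\tR_{m,1,\gamma-1}^{tw}$ by fact (b), one gets $\tN_0^{tw}\le\hbar(\tN_{m,1,\gamma-1}^0)=\tC_{m,1,\gamma-1}^{tw}\tM_{m,1,\gamma-1}^{tw}$. For the reverse inclusion I would test, for $g=\hbar(X)$ in this group, whether $g$ normalises $\tS_{m,1,\gamma-1}^{tw}$; a short computation gives $g\,\tau_{m,1,\gamma-1}^{tw}\,g^{-1}(\tau_{m,1,\gamma-1}^{tw})^{-1}=\hbar(XF_{\eta q}(X)^{-1})$, so the condition becomes $XF_{\eta q}(X)^{-1}\in\tR_{m,1,\gamma-1}^0$. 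Writing $X=CM$ with $C\in\GL_m(q^2)\otimes I_{2^{\gamma-1}}$, $M\in\tM_{m,1,\gamma-1}^0$ (these commute), and using that $\tM_{m,1,\gamma-1}^0$ is $F_{\eta q}$-stable and $F_{\eta q}$ acts trivially on $\tM_{m,1,\gamma-1}^0/E_{m,1,\gamma-1}^0\cong\Sp_{2(\gamma-1)}(2)$ (Remark~\ref{M-is-stable-2-1} and its proof), one has $MF_{\eta q}(M)^{-1}\in E_{m,1,\gamma-1}^0$, so the condition reduces to $CF_{\eta q}(C)^{-1}\in\tC_{m,1,\gamma-1}^0\cap\tR_{m,1,\gamma-1}^0=\tZ_{m,1,\gamma-1}^0$; a Hilbert~90 computation, using $(q-\eta)_2=2$, identifies the set of such $C$ as $(\GL_m(\eta q)\otimes I_{2^{\gamma-1}})\tZ_{m,1,\gamma-1}^0$. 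Transporting back through $\hbar$ yields $\tN_0^{tw}=\tC^{tw}\langle\tM_{m,1,\gamma-1}^{tw},z_{m,1,\gamma-1}^{tw}\rangle$.

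For (3): $\tau_{m,1,\gamma-1}^{tw}\in\tS_{m,1,\gamma-1}^{tw}\le\tN^{tw}$ conjugates $z_{m,1,\gamma-1}^{tw}$ by swapping its two diagonal blocks, hence to $(z_{m,1,\gamma-1}^{tw})^{\eta q}\neq z_{m,1,\gamma-1}^{tw}$, so $\tau_{m,1,\gamma-1}^{tw}\notin\tN_0^{tw}$; conversely, for $g\in\tN^{tw}$ the conjugate $gz_{m,1,\gamma-1}^{tw}g^{-1}$ lies in $\tS_{m,1,\gamma-1}^{tw}$, has order $2^{a+1}$, and has the same eigenvalue multiset as $z_{m,1,\gamma-1}^{tw}$, and inspecting the order-$2^{a+1}$ elements of $\tS_{m,1,\gamma-1}^{tw}$ (all of which lie in $E_{m,1,\gamma-1}^{tw}\langle z_{m,1,\gamma-1}^{tw}\rangle$) together with this eigenvalue constraint forces $gz_{m,1,\gamma-1}^{tw}g^{-1}\in\{z_{m,1,\gamma-1}^{tw},(z_{m,1,\gamma-1}^{tw})^{\eta q}\}$. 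Thus $[\tN^{tw}:\tN_0^{tw}]=2$, $\tN^{tw}=\langle\tN_0^{tw},\tau_{m,1,\gamma-1}^{tw}\rangle$, and using $E_{m,1,\gamma-1}^{tw}\le\tM_{m,1,\gamma-1}^{tw}$ this rewrites as $\tN^{tw}=\tC^{tw}\tS_{m,1,\gamma-1}^{tw}\tM_{m,1,\gamma-1}^{tw}$. Finally $\tN^{tw}/\tS_{m,1,\gamma-1}^{tw}\cong\tN_0^{tw}/\tR_{m,1,\gamma-1}^{tw}$, and this splits, using $[\tC^{tw},\tM_{m,1,\gamma-1}^{tw}]=1$, $\tM_{m,1,\gamma-1}^{tw}/E_{m,1,\gamma-1}^{tw}\cong\Sp_{2(\gamma-1)}(2)$ and $\tC^{tw}\cap\langle z_{m,1,\gamma-1}^{tw}\rangle=\langle-I_{m2^\gamma}\rangle=\mrO_2(Z(\GL_m(\eta q)))\otimes I_{2^\gamma}$, as $\Sp_{2(\gamma-1)}(2)\times\GL_m(\eta q)/\mrO_2(Z(\GL_m(\eta q)))$.

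I expect the main obstacle to be the Frobenius-twist bookkeeping in part (2): checking carefully that $\tM_{m,1,\gamma-1}^0$ is $F_{\eta q}$-stable and acts trivially on its symplectic quotient, and executing the scalar / Hilbert~90 analysis, together with handling the small exceptional value $a=2$ in the eigenvalue comparison used for part (3).
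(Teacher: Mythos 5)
Your proposal is correct and follows essentially the route the paper intends: the paper's own proof is a single sentence (``twisted versions of refinements of results in \cite{An92,An93}, \ldots\ noting that $\tC^{tw}=C_{\tC_{m,1,\gamma-1}^{tw}}(\tau_{m,1,\gamma-1}^{tw})$''), and your argument implements precisely that plan—part (1) via the centralizer identity the paper records, and parts (2)--(3) by pulling $\tN_{m,1,\gamma-1}^{0}$ through $\hbar$ and identifying which elements normalise $\tS^{tw}$. The Hilbert-90 step in (2) and the eigenvalue argument in (3) are details the paper leaves implicit, but both go through: $\zeta_{2^{a+1}}^{\eta q}=-\zeta_{2^{a+1}}^{-1}$, so the eigenvalue multiset of $z^{tw}$ is not $\pm$-symmetric, forcing any $\tN^{tw}$-conjugate of $z^{tw}$ into the block-diagonal part $\tR^{tw}$, and within $\tR^{tw}$ a short case check (on $e'\in E_{\gamma-1}$ of order $1$, $2$, $4$) rules out everything but $z^{tw}$ and $(z^{tw})^{\eta q}$, including when $a=2$.
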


\begin{proof}
	These are twisted versions of refinements of results in \cite{An92,An93}, which can be proved similarly as before,
	noting that $\tC^{tw} = C_{\tC_{m,1,\gamma-1}^{tw}}(\tau_{m,1,\gamma-1}^{tw})$
	with $\tC_{m,1,\gamma-1}^{tw}$ being as in \S\ref{subsect:gst-2-linear}.
\end{proof}

\begin{rem}\label{M-is-stable-2-2}
	It can be proved by direct calculation that $\tM_{m,\alpha,\gamma}^{tw}\unlhd \tN_{m,\alpha,\gamma}^{tw}$
	and the groups $\tR_{m,\alpha,\gamma}^{tw}$,
	$\tN_{m,\alpha,\gamma}^{tw}$,
	$\tC_{m,\alpha,\gamma}^{tw}$ and 
	$\tM_{m,\alpha,\gamma}^{tw}$ are stable under the action of field and graph automorphisms.
\end{rem}

Now, set
\[ x^+ = \begin{bmatrix} 1&0\\0&-1 \end{bmatrix},\quad y^+ = \begin{bmatrix} 0&1\\1&0 \end{bmatrix}, \]
and set
$
x_{\gamma,j}^+ = I_2 \otimes \cdots \otimes x^+ \otimes \cdots \otimes I_2$ ($\gamma$ terms) and
$y_{\gamma,j}^+ = I_2 \otimes \cdots \otimes y^+ \otimes \cdots \otimes I_2$ ($\gamma$ terms),
where $x^+,y^+$ appear as the $j$-th component.
Set $x_{m,\gamma,j}^+ = I_m \otimes x_{\gamma,j}^+$, $y_{m,\gamma,j}^+ = I_m \otimes y_{\gamma,j}^+$
and $E_{m,\gamma}^+ = \grp{ x_{m,\gamma,j}^+,y_{m,\gamma,j}^+ \mid j=1,\ldots,\gamma }$.
Then $E_{m,\gamma}^+$ is isomorphic to the extra-special $2$-group of plus type.
Set $x_{m,\gamma,j}^- = x_{m,\gamma,j}^+$, $y_{m,\gamma,j}^- = y_{m,\gamma,j}^+$ for $j\leq\gamma-1$
and $x_{m,\gamma,\gamma}^- = z_{m,1,\gamma}^{2^{a-1}}$, $y_{m,\gamma,\gamma}^- = \tau_{m,1,\gamma-1}$.
Set $E_{m,\gamma}^- = \grp{ x_{m,\gamma,j}^-,y_{m,\gamma,j}^- \mid j=1,\ldots,\gamma }$,
then $E_{m,\gamma}^-$ is isomorphic to the extra-special $2$-group of minus type.
We then consider the normalizers of $E_{m,\gamma}^{\pm}$ in $\tG_{m,0,\gamma}=\GL_{m2^\gamma}(\eta q)$.
Note that $\det E_{m,\gamma}^\pm=1$ except that $\det E_{m,1}^+=-1$ when $\nu(m)=0$.

\begin{prop}\label{prop:tN-2-unitary-E-m,gamma}
	Assume $\ell=2$, $4 \mid q+\eta$, $\gamma>0$ and keep the above notation.
	Set $\tC_{m,0,\gamma}^\pm = C_{\tG_{m,0,\gamma}}(E_{m,\gamma}^\pm)$,
	$\tN_{m,0,\gamma}^\pm = N_{\tG_{m,0,\gamma}}(E_{m,\gamma}^\pm)$.
	There is a subgroup $\tM_{m,0,\gamma}^\pm$ of $\tN_{m,0,\gamma}^\pm$
	containing $E_{m,\gamma}^\pm$ such that the following hold.
	\vspace{-0.5ex}
	\begin{enumerate}[\rm(1)]\setlength{\itemsep}{-0.5ex}
		\item $\tC_{m,0,\gamma}^\pm = \GL_m(\eta q) \otimes I_{2^\gamma}$ and
		$\tN_{m,0,\gamma}^\pm = \tC_{m,0,\gamma}^\pm \tM_{m,0,\gamma}^\pm$ is the central product over $Z(E_{m,\gamma}^\pm)$.
		\item $\tN_{m,0,\gamma}^\pm/E_{m,\gamma}^\pm \cong
		\GL_m(\eta q)/\mrO_2(Z(\GL_m(\eta q))) \times \tM_{m,0,\gamma}^\pm/E_{m,\gamma}^\pm$
		with $\tM_{m,0,\gamma}^\pm/E_{m,\gamma}^\pm \cong \GO_{2\gamma}^\pm(2)$.
		\item $\det(\tM_{m,0,\gamma}^\pm)=1$ unless
		{\setlength\abovedisplayskip{0.5ex}\setlength\belowdisplayskip{0.5ex}
			\begin{equation}\label{special-case-2-uni-1}
			\begin{aligned}
			&\textrm{the type is plus}, \gamma=1,2, \nu(m)=0,~\textrm{or}\\
			&\textrm{the type is minus}, \gamma=1, \nu(m)=0, a=2,
			\end{aligned}
			\addtocounter{thm}{1}\tag{\thethm}
			\end{equation}}
		in which case, $\det(\tM_{m,0,\gamma}^+)=\set{\pm1}$.
	\end{enumerate}
\end{prop}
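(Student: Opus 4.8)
The plan is to establish Proposition~\ref{prop:tN-2-unitary-E-m,gamma} as the $\ell=2$, $4\mid q+\eta$ analogue of Propositions~\ref{prop:tN-m,alpha,gamma-odd} and~\ref{prop:tN-m,alpha,gamma-2-linear}, i.e. as a twisted refinement of An's computation of normalizers of extraspecial $2$-groups in \cite{An92,An93}, with the minus type largely reduced to Proposition~\ref{prop:tN-2-unitary-S-m,gamma}: indeed $E_{m,\gamma}^-$ is by construction the central product of $E_{m,\gamma-1}^+$ with the quaternion group $\grp{z_{m,1,\gamma-1}^{2^{a-1}},\tau_{m,1,\gamma-1}}$ sitting inside $\tS_{m,1,\gamma-1}$, so its normalizer can be read off from $\tN^{tw}$ there via An's Lemma (1F).

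For part~(1) I would first note that, by construction, $E_{m,\gamma}^{\pm}=I_m\otimes\bar E_\gamma^{\pm}$ with $\bar E_\gamma^{\pm}\le\GL_{2^\gamma}(\barF_q)$ an extraspecial group acting absolutely irreducibly on the natural $2^\gamma$-dimensional module; Schur's lemma then gives $C_{\GL_{m2^\gamma}(\barF_q)}(E_{m,\gamma}^{\pm})=\GL_m(\barF_q)\otimes I_{2^\gamma}$, and since the module carries the appropriate $F$-equivariant form (symmetric bilinear for $\eta=1$, hermitian for $\eta=-1$) taking $F$-fixed points yields $\tC_{m,0,\gamma}^{\pm}=\GL_m(\eta q)\otimes I_{2^\gamma}$, exactly as in \S\ref{subsect:gst-2-linear}. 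Next, the conjugation action gives a homomorphism $\tN_{m,0,\gamma}^{\pm}\to\Out^0(E_\gamma^{\pm})\cong\GO_{2\gamma}^{\pm}(2)$ (Winter \cite{Win72}); I would check it is surjective by exhibiting, following \cite{An92,An93}, explicit matrices in $\GL_{m2^\gamma}(\eta q)$ of the form $I_m\otimes(\,\cdot\,)$ realising a generating set of $\GO_{2\gamma}^{\pm}(2)$ (Hadamard-type and signed permutation matrices in the plus case; in the minus case the matrices already produced for Proposition~\ref{prop:tN-2-unitary-S-m,gamma}). Putting $\tM_{m,0,\gamma}^{\pm}=\grp{E_{m,\gamma}^{\pm},\text{ these matrices}}$ one has $C_{\tM_{m,0,\gamma}^{\pm}}(E_{m,\gamma}^{\pm})=Z(E_{m,\gamma}^{\pm})$ by construction, whence $\tN_{m,0,\gamma}^{\pm}=\tC_{m,0,\gamma}^{\pm}\tM_{m,0,\gamma}^{\pm}$ is the central product over $Z(E_{m,\gamma}^{\pm})$, proving~(1).

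For part~(2): from $4\mid q+\eta$ one gets $(q-\eta)_2=2$, so $\mrO_2(Z(\GL_m(\eta q)))=\grp{-I_m}$, which under the identification of~(1) equals $Z(E_{m,\gamma}^{\pm})=\grp{-I_{m2^\gamma}}$; factoring the central product by $E_{m,\gamma}^{\pm}$ then gives the internal direct product $\GL_m(\eta q)/\mrO_2(Z(\GL_m(\eta q)))\times\GO_{2\gamma}^{\pm}(2)$, using the surjectivity and the faithfulness $C_{\tM_{m,0,\gamma}^{\pm}}(E_{m,\gamma}^{\pm})=Z(E_{m,\gamma}^{\pm})$. For part~(3) I would compute the determinants of the chosen generators of $\tM_{m,0,\gamma}^{\pm}$; the freedom retained is to replace a generator $n$ by $\lambda n$ ($\lambda\in\F_{\eta q}^{\times}$) or by $nc$ ($c\in\GL_m(\eta q)\otimes I_{2^\gamma}$), which multiplies $\det(n)$ by an $(m2^\gamma)$-th, resp. $2^\gamma$-th, power in $\fZ_{q-\eta}$, hence — as $(q-\eta)_2=2$ and $\gamma\ge1$ — by an arbitrary element of the odd-order subgroup of $\fZ_{q-\eta}$. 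Thus $\det(\tM_{m,0,\gamma}^{\pm})$ reduces to a subgroup of $\grp{-1}$, and it is trivial precisely when all generators (together with $E_{m,\gamma}^{\pm}$ itself, for which already $\det E_{m,1}^{+}=-1$ when $\nu(m)=0$) can be chosen of determinant $1$. I expect this to hold once $\GO_{2\gamma}^{\pm}(2)$ is large enough — $\gamma\ge3$ for the plus type, and $\gamma\ge2$ or $\gamma=1$ with $a>2$ for the minus type — and to fail exactly on the list~(\ref{special-case-2-uni-1}): when $\nu(m)=0$ (so $\GL_m(\eta q)\otimes I_{2^\gamma}$ supplies no element of determinant $-I$) together with a degenerate small $\GO_{2\gamma}^{\pm}(2)$, plus the condition $a=2$ in the minus $\gamma=1$ case, where $2$ is a non-square in $\F_q^{\times}$ so the $1/\sqrt2$-normalisation of the Hadamard-type generator is unavailable; there $\det(\tM_{m,0,\gamma}^{\pm})=\set{\pm1}$.

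The hard part will be this determinant bookkeeping in~(3): identifying explicit determinant-optimal matrix generators for the small orthogonal groups $\GO_2^{\pm}(2)$ and $\GO_4^{+}(2)$, and proving that for $\gamma\ge3$ (plus) and for $\gamma\ge2$ or $\gamma=1$ with $a>2$ (minus) every generator can be chosen with determinant $1$ — which combines a hands-on analysis of these orthogonal groups with the square-class arithmetic of $\F_{\eta q}^{\times}$ encoded by the parameter $a$.
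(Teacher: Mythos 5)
Your overall skeleton is right: compute $\tC^{\pm}$ by Schur's lemma, build $\tM^{\pm}$ from explicit matrices realising a generating set of $\Out^0(E_\gamma^{\pm})\cong\GO_{2\gamma}^{\pm}(2)$, deduce (1)--(2), then chase determinants for (3). But two things in the proposal either do not work as stated or are left open exactly where the difficulty is.

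First, the suggested reduction of the minus type to Proposition~\ref{prop:tN-2-unitary-S-m,gamma} via the central product $E_{m,\gamma}^{-}\leqslant\tS_{m,1,\gamma-1}$ does not go through directly: normalizing $\tS_{m,1,\gamma-1}$ is neither necessary nor sufficient for normalizing its subgroup $E_{m,\gamma}^{-}$, and indeed the two normalizers have different outer parts ($\Sp_{2(\gamma-1)}(2)$ versus $\GO_{2\gamma}^{-}(2)$), so $\tN_{m,0,\gamma}^{-}$ cannot simply be read off from $\tN^{tw}$ there. The paper instead handles $\gamma=1$, minus, by passing to a different twisted model $\tbG^{y_{m,1}^{-,tw}F_{\eta q}}$ and computing determinants of two explicit $2\times2$ generators, and handles $\gamma\geq2$ (minus) together with $\gamma\neq2$ (plus) via a transvection argument: one exhibits a single transvection $n_{m,\gamma}^{(i)}$ of determinant one for the order-$4$ element $x_{m,\gamma,1}^{\pm}y_{m,\gamma,1}^{\pm}$, then uses that all order-$4$ elements of $E_{m,\gamma}^{\pm}$ are conjugate in $\tN_{m,0,\gamma}^{\pm}$ so that \emph{all} transvections have determinant one, and that these transvections generate $\GO_{2\gamma}^{\pm}(2)$ in the relevant range. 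This is the key device you are missing, and it removes precisely the ``hard determinant bookkeeping'' you flag as the remaining work.

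Second, your explanation of why $a=2$ enters the minus $\gamma=1$ exception is wrong. The Hadamard-type generator $n_{m,1}^{(i)}=I_m\otimes\tfrac{1}{\lambda}\bigl[\begin{smallmatrix}-1&\zeta_4\\-1&-\zeta_4\end{smallmatrix}\bigr]$ with $\lambda^2=2\zeta_4\in\F_{q^2}^{\times}$ has $\det=\tfrac{2\zeta_4}{\lambda^2}=1$ for every $a$; the ``$1/\sqrt2$'' normalisation is always available over $\F_{q^2}$, and whether $2$ is a square in $\F_q^{\times}$ is irrelevant here. The obstruction comes from the second generator $n_{m,1}^{(ii)}=I_m\otimes\diag(\zeta_8,\zeta_8^{\eta q})$, whose determinant is $\zeta_8^{\,m(1+\eta q)}$; since $\nu(1+\eta q)=a$, this equals $1$ iff $a\geq3$ or $\nu(m)\geq1$, and equals $-1$ exactly when $\nu(m)=0$ and $a=2$. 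Similarly, for $\gamma=2$, plus, the source of the exception is not vaguely ``$\GO_4^{+}(2)$ being small'' but the specific extra generator $I_m\otimes\diag(1,1,1,-1)$ needed because transvections no longer generate; its determinant is $(-1)^m$. You do correctly flag the $\gamma=1$, plus case, via $\det E_{m,1}^{+}=-1$ when $\nu(m)=0$, but you should be aware that the paper treats this together with the non-radicality of $E_{m,1}^{+}$ noted just after the proposition.
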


\begin{proof}
	The assertion for $\tC_{m,0,\gamma}^\pm$ is clear.
	For $\tN_{m,0,\gamma}^\pm$, \cite{An93} has shown that
	$\tN_{m,0,\gamma}^\pm/E_{m,\gamma}^{\pm}\tC_{m,0,\gamma}^\pm \cong \Out(E_{m,\gamma}^{\pm}) \cong \GO_{2\gamma}^\pm(2)$
	by exhibiting a set of generators of $\tN_{m,0,\gamma}^\pm$.
	We will consider the determinants of these generators by exhibiting explicitly the matrices of these generators.
	
	First, consider the case when $\gamma=1$ and the type is minus.
	To do this, we use the twisted version $E_1^{-,tw}$ of $E_1^-$.
	Let $x_{m,1}^{-,tw} = I_m \otimes \begin{bmatrix} \zeta_4&0\\0&-\zeta_4 \end{bmatrix}$,
	$y_{m,1}^{-,tw} = I_m \otimes \begin{bmatrix} 0&1\\-1&0 \end{bmatrix}$
	and $E_1^{-,tw} = \Grp{x_{m,1}^{-,tw},y_{m,1}^{-,tw}}$.
	Let $g_{m,1} \in \bG_{m,0,1}=\SL_{2m}(\barF_q)$ such that $(g_{m,1})^{-1}F_{\eta q}(g_{m,1})=y_{m,1}^{-,tw}$
	and denote by $\iota: \tG_{m,0,1}^{tw}=\tbG_{m,0,1}^{y_{m,1}^{-,tw}F_{\eta q}} \to \tbG_{m,0,1}^{F_{\eta q}}$
	the isomorphism induced by the conjugation by $g_{m,1}$,
	then $E_1^- = \iota(E_1^{-,tw})$.
	Thus it suffices to prove the assertion for $E_1^{-,tw}$.
	Denote by $\tC_{m,0,1}^{-,tw},\tN_{m,0,1}^{-,tw}$ the centralizer and normalizer of $E_1^{-,tw}$ in $\tG_{m,0,1}^{tw}$.
	Set $n_{m,1}^{(i)} = I_m \otimes \frac{1}{\lambda}\begin{bmatrix} -1&\zeta_4\\-1&-\zeta_4 \end{bmatrix}$,
	$n_{m,1}^{(ii)} = I_m \otimes \begin{bmatrix} \zeta_8&0\\0&\zeta_8^{\eta q} \end{bmatrix}$,
	where $\lambda\in\F_{q^2}^\times$ such that $\lambda^2=2\zeta_4$.
	Similar as in \S\ref{subsect:gst-2-linear}, $n_{m,1}^{(i)} \in \tN_{m,0,1}^{-,tw}$ and $\det n_{m,1}^{(i)} = 1$.
	Direct calculation shows also that $n_{m,1}^{(ii)} \in \tN_{m,0,1}^{-,tw}$ and
	$\det n_{m,1}^{(ii)} = 1$ unless $\nu(m)=0$ and $a=2$, in which case, $\det n_{m,1}^{(ii)} = -1$.
	Set $\tM_{m,0,1}^{-,tw} = \Grp{E_{m,1}^{-,tw},n_{m,1}^{(i)},n_{m,1}^{(ii)}}$,
	then all the assertions hold in this case.
	
	Next, assume $\gamma\neq2$ when the type is plus and $\gamma\geq2$ when the type is minus.
	In this case, $\GO_{2\gamma}^\pm(2)$ is generated by orthogonal transvections of elements of order $4$ in $E_{m,\gamma}^{\pm}$;
	see \cite[p.258]{An93}.
	Note that one of $2$ and $-2$ is a square in $\F_q$ since $-1$ is not a square in $\F_q$,
	and $2$ is a square in $\F_q^\times$ if and only if $q\equiv\pm1 \mod 8$.
	When $\eta=1$, let $\mu\in\F_q$ such that $\mu^2=2$ or $\mu^2=-2$ according to that $2$ or $-2$ is a square in $\F_q$.
	When $\eta=-1$, let $\mu\in\F_q$ such that $\mu^2=2$ if $q\equiv1\mod8$
	and let $\mu\in\F_{q^2}-\F_q$ such that $\mu^2=-2$ if $q\equiv5\mod8$, then $\mu^{q+1}=2$
	(this is obvious when $q\equiv1\mod8$, while when $q\equiv5\mod8$, note that $\mu^{2(q+1)}=2^2$ and $\mu^{q+1}\neq-2$).
	Set \[ n_{m,\gamma}^{(i)} =
	\begin{cases}
	I_m \otimes \mu^{-1}\begin{bmatrix} 1&-1\\1&1 \end{bmatrix} \otimes I_{2^{\gamma-1}}, & \textrm{if $\mu^2=2$},\\
	I_m \otimes \mu^{-1}\begin{bmatrix} 1&1\\1&-1 \end{bmatrix} \otimes I_{2^{\gamma-1}}, & \textrm{if $\mu^2=-2$}.
	\end{cases}\]
	Then $n_{m,\gamma}^{(i)} \in \SU_{m2^\gamma}(\eta q)$ satisfying
	\begin{align*}
	n_{m,\gamma}^{(i)} x_{m,\gamma,1}^\pm (n_{m,\gamma}^{(i)})^{-1} = y_{m,\gamma,1}^\pm,\
	n_{m,\gamma}^{(i)} y_{m,\gamma,1}^\pm (n_{m,\gamma}^{(i)})^{-1} = -x_{m,\gamma,1}^\pm,\quad
	& \textrm{if $\mu^2=2$},\\
	n_{m,\gamma}^{(i)} x_{m,\gamma,1}^\pm (n_{m,\gamma}^{(i)})^{-1} = y_{m,\gamma,1}^\pm,\
	n_{m,\gamma}^{(i)} y_{m,\gamma,1}^\pm (n_{m,\gamma}^{(i)})^{-1} = x_{m,\gamma,1}^\pm,\quad
	& \textrm{if $\mu^2=-2$},
	\end{align*}
	and fixing all other $x_{m,\gamma,k}^\pm, y_{m,\gamma,k}^\pm$.
	By \cite[p.258 (2)]{An93}, $n_{m,\gamma}^{(i)}$ is the transvection for $x_{m,\gamma,1}^\pm y_{m,\gamma,1}^\pm$.
	By \cite[p.259--260]{An93},
	all elements in $E_{m,\gamma}^\pm$ of order $4$ are conjugate to $x_{m,\gamma,1}^\pm y_{m,\gamma,1}^\pm$ in $\tN_{m,0,\gamma}^\pm$,
	thus all transvections have determinant $1$.
	Let $\tM_{m,0,\gamma}^\pm$ be the subgroup of $\tN_{m,0,\gamma}^\pm$
	generated by $E_{m,\gamma}^\pm$ and all $\tN_{m,0,\gamma}^\pm$-conjugates of $n_{m,\gamma}^{(i)}$.
	Then all the assertions follow.
	
	Finally, consider the case when the type is plus and $\gamma=2$.
	Let $n_{m,2}^{(i)}$ be as above and
	\vspace{-0.5ex}
	\begin{enumerate}\setlength{\itemsep}{-0.5ex}
		\item[(ii)] Let $P^{(ii)}$ be the permutation matrix transposing all the pairs
		$(\varepsilon_{j_1}\otimes\varepsilon_{j_2},\varepsilon_{j_2}\otimes\varepsilon_{j_1})$.
		Set $n_{m,2}^{(ii)} = I_m \otimes P^{(ii)}$;
		\item[(iii)]	Let $P^{(iii)}$ be the permutation matrix transposing all the pairs
		$(\varepsilon_{j_1}\otimes\varepsilon_{j_2},\varepsilon_{j_1+j_2+1}\otimes\varepsilon_{j_2})$.
		Set $n_{m,2}^{(iii)} = I_m \otimes P^{(iii)}$;
		\item[(iv)] $n_{m,2}^{(iv)} = I_m \otimes \diag\{1,1,1,-1\}$.
	\end{enumerate}
	\vspace{-0.5ex}
	Let $\tM_{m,0,2}^+$ be the subgroup of $\tN_{m,0,2}^\pm$
	generated by $E_{m,2}^\pm$, $n_{m,2}^{(i)}$, $n_{m,2}^{(ii)}$, $n_{m,2}^{(iii)}$, $n_{m,2}^{(iv)}$,
	then by \cite[p.260]{An93} the assertions hold.
\end{proof}

\begin{rem}\label{M-is-stable-2-3}
	It can be proved by direct calculation that $\tM_{m,\alpha,\gamma}^{\pm,tw}\unlhd \tN_{m,\alpha,\gamma}^{\pm,tw}$
	and the groups $\tR_{m,\alpha,\gamma}^{\pm,tw}$,
	$\tN_{m,\alpha,\gamma}^{\pm,tw}$,
	$\tC_{m,\alpha,\gamma}^{\pm,tw}$ and 
	$\tM_{m,\alpha,\gamma}^{\pm,tw}$ are stable under the action of field and graph automorphisms.
\end{rem}

Finally, we set $\tR_m = \set{\pm I_m}$.
The subgroups
$\tR_{m,\alpha,\gamma}(\alpha>0,\gamma\geq0)$, $\tS_{m,1,\gamma-1}(\gamma\geq1)$,  $E_{m,\gamma}^\pm(\gamma\geq1)$, $\tR_m$
of $\tG_{m,\alpha,\gamma}$ are called the \emph{subgroups of symplectic type} when $\ell=2$ and $4 \mid q+\eta$.
For convenience, we also set $\tR_{m,0,\gamma}=\tS_{m,1,\gamma-1}$ and $\tR_{m,0,\gamma}^\pm=E_{m,\gamma}^\pm$ for $\gamma>0$.
Note that by the above Proposition, \emph{$E_{m,1}^+$ is not a radical subgroup}.

\subsection{Basic subgroups.}
\label{subsect:basic-subgps}

First, assume $\ell$ is an odd prime.
Denote by $A_c$ the elementary abelian group of order $\ell^c$ for any natural number $c$.
For any sequence $\bc=(c_1,\dots,c_r)$ of natural numbers, set $A_{\bc}=A_{c_1}\wr\cdots\wr A_{c_r}$ and $|\bc|=c_1+\cdots+c_r$.
We write $\bc=\zero$ or $\one$ if $|\bc|=0$ or $1$.
Set $\tR_{m,\alpha,\gamma,\bc}=\tR_{m,\alpha,\gamma} \wr A_{\bc}$;
here $A_{c_i}$ denotes the regular representation of itself.
We will call $\tR_{m,\alpha,\gamma,\bc}$ a basic subgroup, which is conjugate to the basic subgroup defined in \cite{AF90} and \cite{An94}.
By \cite{AF90} and \cite{An94},
any radical subgroup of $\GL_n(\eta q)$ is conjugate to a subgroup of the form $\tR_0\times \tR_1\times\cdots\times \tR_u$,
where $\tR_0$ is a trivial group and $\tR_i=\tR_{m_i,\alpha_i,\gamma_i,\bc_i}$ ($i\geqslant1$) is a basic subgroup;
note that $\nu(m_i)=0$ for each $i\geqslant1$ if $\tR$ provides a weight.

Set $\tbG_{m,\alpha,\gamma,\bc}=\GL_{me\ell^{\alpha+\gamma+|\bc|}}(\barF)$,
then $\tR_{m,\alpha,\gamma,\bc}$ is a subgroup of $\tbG_{m,\alpha,\gamma,\bc}$ and
we say that $\tR_{m,\alpha,\gamma,\bc}$ has degree $me\ell^{\alpha+\gamma+|\bc|}$.
Set $v_{m,\alpha,\gamma,\bc}=v_{m,\alpha,\gamma}\otimes I_\bc$ and $g_{m,\alpha,\gamma,\bc}=g_{m,\alpha,\gamma}\otimes I_\bc$,
where $I_\bc$ is the identity matrix of degree $\ell^{|\bc|}$.
Then $v_{m,\alpha,\gamma,\bc},g_{m,\alpha,\gamma,\bc}\in\tG_{m,\alpha,\gamma,\bc}$
and $g_{m,\alpha,\gamma,\bc}^{-1}F_{\eta q}(g_{m,\alpha,\gamma,\bc})=v_{m,\alpha,\gamma,\bc}$.
Denote again by $\iota$ the isomorphism (and all the similar isomorphism)
$$\iota:\quad \tG_{m,\alpha,\gamma,\bc}^{tw}=\tbG_{m,\alpha,\gamma,\bc}^{v_{m,\alpha,\gamma,\bc}F_{\eta q}} \to \tG_{m,\alpha,\gamma,\bc}=\tbG_{m,\alpha,\gamma,\bc}^{F_{\eta q}}$$
induced by conjugation by $g_{m,\alpha,\gamma,\bc}$.
Set $\tR_{m,\alpha,\gamma,\bc}^{tw}= \tR_{m,\alpha,\gamma}^{tw} \wr A_\bc$.
Then $\tR_{m,\alpha,\gamma,\bc}=\iota(\tR_{m,\alpha,\gamma,\bc}^{tw})$.
We call $\tR_{m,\alpha,\gamma,\bc}^{tw}$ the twisted basic subgroups.

\begin{notation*}
	Denote $\tC_{m,\alpha,\gamma,\bc}^*:=C_{\tG_{m,\alpha,\gamma,\bc}^*}(\tR_{m,\alpha,\gamma,\bc}^*)$ and
	$\tN_{m,\alpha,\gamma,\bc}^*:=N_{\tG_{m,\alpha,\gamma,\bc}^*}(\tR_{m,\alpha,\gamma,\bc}^*)$ for $*\in\{tw,\O\}$.
	We will use some obvious abbreviation,
	such as $\tR_m=\tR_{m,0,0,\zero}$, $\tR_{m,\alpha}=\tR_{m,\alpha,0,\zero}$,
	$\tR_{m,\alpha,\gamma}=\tR_{m,\alpha,\gamma,\zero}$, etc.
\end{notation*}

\begin{cau}
	The subgroup $N_{m,\alpha,\gamma}^0$ in \cite{AF90} and \cite{An94} is $(\iota\circ\hbar)(\tN_{m,\alpha,\gamma}^0)$,
	where $\tN_{m,\alpha,\gamma}^0$ is the group defined in \S\ref{subsect:gst-odd}.
\end{cau}

\begin{lem}\label{lem:tCtN}
	With the above notation, 
	$\tC_{m,\alpha,\gamma,\bc}^{tw} = \tC_{m,\alpha,\gamma}^{tw} \otimes I_\bc$ and
	$\tN_{m,\alpha,\gamma,\bc}^{tw} = (\tN_{m,\alpha,\gamma}^{tw}/\tR_{m,\alpha,\gamma}^{tw}) \otimes N_{\fS(\ell^{|\bc|})}(A_\bc)$
	with $\otimes$ defined as \cite[(1.5)]{AF90}.
\end{lem}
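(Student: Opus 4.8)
The plan is to reduce the assertion to the corresponding statement for the \emph{untwisted} groups, which is due to Alperin--Fong and An, and then transfer it along the isomorphism $\iota$. Recall from the construction that $\tG_{m,\alpha,\gamma,\bc}^{tw}=\tbG_{m,\alpha,\gamma,\bc}^{v_{m,\alpha,\gamma,\bc}F_{\eta q}}$ with $\tbG_{m,\alpha,\gamma,\bc}=\GL(V\otimes W)$, where $V=\barF^{me\ell^{\alpha+\gamma}}$ is the natural module of $\tbG_{m,\alpha,\gamma}$ (on which $\tR_{m,\alpha,\gamma}^{tw}$ acts) and $W=\barF^{\ell^{|\bc|}}$ is the permutation module of $A_\bc$; moreover $\tR_{m,\alpha,\gamma,\bc}^{tw}=\tR_{m,\alpha,\gamma}^{tw}\wr A_\bc$ acts on $V\otimes W$ through $\tR_{m,\alpha,\gamma}^{tw}$ on $V$ and through $A_\bc$ permuting the coordinates of $W$, and $\tR_{m,\alpha,\gamma,\bc}=\iota(\tR_{m,\alpha,\gamma,\bc}^{tw})$ is conjugate in $\tG_{m,\alpha,\gamma,\bc}$ to the basic subgroup of \cite{AF90,An94} (see the caution preceding the lemma). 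Hence, taking centralizers and normalizers inside $\tG_{m,\alpha,\gamma,\bc}$, the computations of \cite[(1.5)]{AF90} and \cite{An94} give $\tC_{m,\alpha,\gamma,\bc}=\tC_{m,\alpha,\gamma}\otimes I_\bc$ and $\tN_{m,\alpha,\gamma,\bc}=(\tN_{m,\alpha,\gamma}/\tR_{m,\alpha,\gamma})\otimes N_{\fS(\ell^{|\bc|})}(A_\bc)$, the right-hand sides being formed via the tensor decomposition $V\otimes W$.

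Next I would transfer these two identities back through $\iota$. The map $\iota$ is conjugation by $g_{m,\alpha,\gamma,\bc}=g_{m,\alpha,\gamma}\otimes I_\bc$, so it preserves the decomposition $V\otimes W$, acting as conjugation by $g_{m,\alpha,\gamma}$ on the first tensor factor and trivially on the second; in particular it restricts on the first factor to the isomorphism $\iota\colon\tbG_{m,\alpha,\gamma}^{v_{m,\alpha,\gamma}F_{\eta q}}\to\tbG_{m,\alpha,\gamma}^{F_{\eta q}}$ of \S\ref{subsect:gst-odd} (whence $\iota^{-1}(\tC_{m,\alpha,\gamma})=\tC_{m,\alpha,\gamma}^{tw}$, $\iota^{-1}(\tN_{m,\alpha,\gamma})=\tN_{m,\alpha,\gamma}^{tw}$, $\iota^{-1}(\tR_{m,\alpha,\gamma})=\tR_{m,\alpha,\gamma}^{tw}$), and it fixes every permutation matrix $I_V\otimes P$ with $P\in N_{\fS(\ell^{|\bc|})}(A_\bc)$. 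Since the $\otimes$-operation of \cite[(1.5)]{AF90} is built purely from this tensor decomposition — out of the embeddings $n\mapsto n\otimes I_W$, $P\mapsto I_V\otimes P$ and the block-diagonal copies of the two factors — applying $\iota^{-1}$ to the two displayed equalities commutes with the $\otimes$-operation and produces exactly $\tC_{m,\alpha,\gamma,\bc}^{tw}=\tC_{m,\alpha,\gamma}^{tw}\otimes I_\bc$ and $\tN_{m,\alpha,\gamma,\bc}^{tw}=(\tN_{m,\alpha,\gamma}^{tw}/\tR_{m,\alpha,\gamma}^{tw})\otimes N_{\fS(\ell^{|\bc|})}(A_\bc)$, as desired.

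The argument is essentially bookkeeping; the point needing care is the compatibility used above — namely that the Frobenius twist $v_{m,\alpha,\gamma,\bc}=v_{m,\alpha,\gamma}\otimes I_\bc$ (equivalently $\iota$) respects the particular tensor decomposition defining the $\otimes$-operation, and that the element conjugating $\tR_{m,\alpha,\gamma,\bc}$ to the literal basic subgroup of \cite{AF90,An94} can be chosen inside $\tC_{m,\alpha,\gamma}\otimes\GL(W)$ so that it too respects that decomposition. Alternatively one may avoid the citation and repeat the argument of \cite[(1.5)]{AF90} and \cite{An94} directly over $\barF$ with $F_{\eta q}$ replaced by $v_{m,\alpha,\gamma,\bc}F_{\eta q}$: this goes through verbatim since the twist acts only on the $V$-factor of $V\otimes W$ and fixes the permutation matrices on $W$, so the normalizer-of-an-imprimitive-wreath computation (using that $E_{m,\alpha,\gamma}^{tw}$ acts homogeneously on $V$ with absolutely irreducible constituents of dimension $\ell^\gamma$, so that the $\ell^{|\bc|}$ coordinate subspaces $V\otimes\barF e_i$ form the canonical system preserved by every normalizing element) is unaffected on the $W$-side and is the already-known computation of Proposition \ref{prop:tN-m,alpha,gamma-odd} on the $V$-side; taking $v_{m,\alpha,\gamma,\bc}F_{\eta q}$-fixed points then yields the claim.
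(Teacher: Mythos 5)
Your argument is correct and is essentially the approach the paper intends: the paper disposes of this lemma in one line as "the twisted version of results in \cite{AF90} and \cite{An94}", and your proposal simply supplies the bookkeeping — the untwisted centralizer/normalizer formulas from \cite[(1.5)]{AF90}, together with the observation that $\iota$ (conjugation by $g_{m,\alpha,\gamma}\otimes I_\bc$) respects the tensor decomposition $V\otimes W$, acts on the first factor as the rank-$me\ell^{\alpha+\gamma}$ version of $\iota$, and fixes the permutation matrices $I_V\otimes P$, so transporting through $\iota^{-1}$ commutes with the $\otimes$-operation. The subtleties you flag (compatibility of the twist with the tensor decomposition, and the wreath base group sitting block-diagonally rather than as $r\otimes I_W$) are exactly the points that need care, and you handle them correctly.
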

\begin{proof}
	This is just the twisted version of results in \cite{AF90} and \cite{An94}.
\end{proof}

\begin{notation*}
	Replacing $\eta q$ by $(\eta q)^{e\ell^\alpha}$, the corresponding constructions will be denoted by adding a superscript $^{(\alpha)}$.
	For example, $\tR_{m,\beta,\gamma,\bc}^{(\alpha)}$ denotes a basic subgroup of $\tG_{m,\beta,\gamma,\bc}^{(\alpha)}=\GL_{m\ell^{\beta+\gamma+|\bc|}}((\eta q)^{e\ell^\alpha})$.
\end{notation*}

\begin{notation*}
	For any natural number $\beta$, set $\bbeta=(1,\ldots,1)$ with $\beta$ one's.
	Set $\tR_{m,\alpha,\bbeta}=\tR_{m,\alpha,0,\bbeta}$.
\end{notation*}

$\tR_{m,\alpha,\bbeta}$ is conjugate to the subgroup $R^{m,\alpha,\beta}$ in \cite[\S3]{FS82}.
By \cite{FS82}, the defect groups of blocks of $\tG$ are conjugate to subgroups of the form $\tR_0\times \tR_1\times\cdots\times \tR_u$,
where $\tR_0$ is the trivial group, $\tR_i=\tR_{m_i,\alpha_i,\bbeta_i}$ ($i\geqslant1$) and $\nu(m_i)=0$.

For $\ell=2$ and $4 \mid q-\eta$,
let $\tR_{m,\alpha,\gamma,\bc}=\tR_{m,\alpha,\gamma}\wr A_{\bc}$,
then the similar results hold.
For $\ell=2$ and $4 \mid q+\eta$,
set $\tR_{m,\alpha,\gamma,\bc} = \tR_{m,\alpha,\gamma} \wr A_{\bc}$
and $\tR_{m,0,\gamma,\bc}^\pm = \tR_{m,0,\gamma}^\pm \wr A_{\bc}(\gamma>0)$.
Then the similar results hold.
Since $\tR_{m,0,0,\one}=\tR_{m,0,1}^+=E_{m,1}^+$ is not a radical subgroup,
we call $\tR_{m,\alpha,\gamma,\bc}$(except when $\alpha=\gamma=0, c_1=1$) and
$\tR_{m,0,\gamma,\bc}^\pm$($\gamma>1$ if the type is plus, $\gamma>0$ if the type is minus) the basic subgroups.

\subsection{Parametrization of weights of $\GL_n(q)$ and $\GU_n(q)$.}\label{subsect:weights-general}

The weights of general linear and unitary groups have been classified by Alperin--Fong \cite{AF90} and An \cite{An92,An93,An94}.
An explicit labelling of this classification has been given in \cite[\S5]{LZ18}.
Since we have used different notation for the radical subgroups, we repeat the construction in \cite{LZ18} to fix the notation.

Recall that $\cF'$ is the subset of $\cF$ of those polynomials whose roots are of $\ell'$-order.
For $\Gamma\in\cF'$, we let  $e_\Gamma$ be the multiplicative order of $(\eta q)^{d_\Gamma}$ modulo $\ell$.
Note that $e_\Gamma=1$ for any $\Gamma\in\cF'$ when $\ell=2$.
Given any $\Gamma\in\cF'$, let $m_\Gamma,\alpha_\Gamma$ be non-negative integers determined by $m_\Gamma e\ell^{\alpha_\Gamma}=e_\Gamma d_\Gamma$ and $(m_\Gamma,\ell)=1$.
Note that there is no direct connection between $m_\Gamma$ and $m_\Gamma(s)$.

First, assume $\ell$ is an odd prime.
By \cite[(5A)]{FS82}, given any $\Gamma\in\cF'$,
there is a unique block $\tB_\Gamma$ of $\tG_\Gamma=\tG_{m_\Gamma,\alpha_\Gamma}$
with $\tR_\Gamma=\tR_{m_\Gamma,\alpha_\Gamma}$ as a defect group.
Let $\tC_\Gamma=C_{\tG_\Gamma}(\tR_\Gamma)$ and $\tN_\Gamma=N_{\tG_\Gamma}(\tR_\Gamma)$.
$\Gamma$ also determines a unique $\tN_\Gamma$-conjugacy classes of pairs $(\tfb_\Gamma,\ttheta_\Gamma)$,
where $\tfb_\Gamma$ is a root block of $\tC_\Gamma\tR_\Gamma=\tC_\Gamma$ with defect group $\tR_\Gamma$
and $\ttheta_\Gamma$ is the canonical character of $\tfb_\Gamma$.
This block is $\cE_\ell(\tG_\Gamma,s_\Gamma))$,
where $s_\Gamma$ is conjugate to $e_\Gamma (\Gamma)$ in $\tG_\Gamma$,
and when viewed as an element of $C_{\tG_\Gamma}(\tR_\Gamma) \cong \GL(m_\Gamma,(\eta q)^{e\ell^{\alpha_\Gamma}})$,
$s_\Gamma$ has a unique elementary divisor which is in the inverse image of $\Gamma$ under $\Phi_{\alpha_\Gamma}$,
where $\Phi_{\alpha_\Gamma}$ is defined as in (\ref{eq:Phi_alpha}).

Let $\tR_{\Gamma,\gamma,\bc}=\tR_{m_\Gamma,\alpha_\Gamma,\gamma,\bc}$ be a basic subgroup and $\tG_{\Gamma,\gamma,\bc},\tC_{\Gamma,\gamma,\bc},\tN_{\Gamma,\gamma,\bc}$ defined similarly.
Then $\tC_{\Gamma,\gamma,\bc} \cong \tC_\Gamma\otimes I_{\ell^\gamma} \otimes I_{\bc}$.
Let $\ttheta_{\Gamma,\gamma,\bc}=\ttheta_\Gamma\otimes I_{\ell^\gamma} \otimes I_{\bc}$, then $\ttheta_{\Gamma,\gamma,\bc}$ can be viewed as a canonical character of $\tC_{\Gamma,\gamma,\bc}\tR_{\Gamma,\gamma,\bc}$ with $\tR_{\Gamma,\gamma,\bc}$ in the kernel and all canonical characters are of this form.

Let $\tcR_{\Gamma,\delta}$ be the set of all the basic subgroups of the form $\tR_{\Gamma,\gamma,\bc}$ with $\gamma+|\bc|=\delta$.
Label the basic subgroups in $\tcR_{\Gamma,\delta}$ as $\tR_{\Gamma,\delta,1}$, $\tR_{\Gamma,\delta,2}$, $\cdots$ and let $\tC_{\Gamma,\delta,i},\tN_{\Gamma,\delta,i}$ be defined similarly.
Denote the canonical character associated to $\tR_{\Gamma,\delta,i}$ by $\ttheta_{\Gamma,\delta,i}$.
In the case when $m_{\Gamma'}=m_\Gamma=:m$ and $\alpha_{\Gamma'}=\alpha_\Gamma=:\alpha$ for another $\Gamma'\in\cF'$, we may choose the labeling of $\tcR_{\Gamma,\delta}=\tcR_{\Gamma',\delta}$ such that $\tR_{\Gamma,\delta,i}=\tR_{\Gamma',\delta,i}$ for $i=1,2,\cdots$.
We denote $\tR_{m,\alpha,\gamma,\bc}$ as $\tR_{\Gamma,\delta,i}$ or $\tR_{\Gamma',\delta,i}$ depending on that the related canonical character of $\tC_{m,\alpha}\tR_{m,\alpha}=\tC_{m,\alpha}$ considered is $\ttheta_\Gamma$ or $\ttheta_{\Gamma'}$.

Let 
\begin{equation}\label{def-tsC-Gamma,delta}
\tsC_{\Gamma,\delta}=\bigcup_i \dz(\tN_{\Gamma,\delta,i}(\ttheta_{\Gamma,\delta,i})/\tR_{\Gamma,\delta,i}\mid\ttheta_{\Gamma,\delta,i}).
\addtocounter{thm}{1}\tag{\thethm}
\end{equation}
We assume $\tsC_{\Gamma,\delta}=\{\tpsi_{\Gamma,\delta,i,j}\}$ with $\tpsi_{\Gamma,\delta,i,j}$ a character of $\tN_{\Gamma,\delta,i}$.

We now define $i\Alp(G)$ in the exactly same way as on \cite[p.145]{LZ18}.
\begin{equation*}
i\Alp(G)=\left\{~(s,\lambda,K)^{\tG}~\middle|~
\begin{array}{c}
s~\textrm{is a  semisimple $\ell'$-element of}~\tG,\\
\lambda=\prod_\Gamma\lambda_\Gamma,~\lambda_\Gamma~\textrm{is the $e_\Gamma$-core of a partition of}~m_\Gamma(s),\\
K=K_\Gamma,~K_\Gamma:\bigcup_\delta\tsC_{\Gamma,\delta}\to\{~\ell\textrm{-cores}~\}~\textrm{s.t.}~\\
\sum_{\delta,i,j}\ell^\delta |K_\Gamma(\psi_{\Gamma,\delta,i,j})|=w_\Gamma,
m_\Gamma(s)=|\lambda_\Gamma|+e_\Gamma w_\Gamma.
\end{array}~\right\}
\end{equation*}
A bijection between $\Alp(G)$ and $i\Alp(G)$ constructed implicitly in \cite{AF90} and \cite{An94} can be described as follows.

Let $(\tR,\tvarphi)$ be a weight.
Then we may assume $\tR=\tR_0\times \tR_1\times\cdots\times \tR_u$ with $\tR_0$ the trivial group of degree $n_0$,
and we denote $\tR_+=\tR_1\times\cdots\times \tR_u$.
Then $\tC=C_{\tG}(\tR)$ and $\tN=N_{\tG}(\tR)$ have corresponding decompositions: $\tC=\tC_0\times \tC_+$, $\tN=\tN_0\times \tN_+$ with $\tC_0=\tN_0=\GL_{n_0}(\eta q)$.
$\tvarphi$ lies over a canonical character $\ttheta$ of $\tC\tR$, which can be decomposed as $\ttheta=\ttheta_0\times\ttheta_+$.
Then $\tvarphi=\tvarphi_0\times\tvarphi_+$ with $\tvarphi_0=\ttheta_0$ and $\tvarphi_+\in\Irr(\tN_+\mid\ttheta_+)$.

So $\tvarphi_0=\ttheta_0$ is a character of $\GL_{n_0}(\eta q)$ of defect zero.
Then $\tvarphi_0=\chi_{s_0,\lambda}$, where $s_0$ a semisimple $\ell'$-element of $\GL_{n_0}(\eta q)$ and $\lambda=\prod_\Gamma \lambda_\Gamma$ with $\lambda_\Gamma$ a partition of $m_\Gamma(s_0)$ without $e_\Gamma$-hook which affords the second component of the triple $(s,\lambda,K)$.

Assume we have the decompositions
$\ttheta_+=\prod_{\Gamma,\delta,i}\ttheta_{\Gamma,\delta,i}^{t_{\Gamma,\delta,i}}$ and
$\tR_+=\prod_{\Gamma,\delta,i}\tR_{\Gamma,\delta,i}^{t_{\Gamma,\delta,i}}$,
then $\ttheta_\Gamma$ determines a semisimple $\ell'$-element with canonical form $e_\Gamma(\Gamma)$ in $\tG_\Gamma$.
Then $s=s_0\prod_{\Gamma,\delta,i}e_\Gamma(\Gamma)\otimes I_{\ell^\delta}\otimes I_{t_{\Gamma,\delta,i}}$ is the first component of the triple $(s,\lambda,K)$.
In particular, $\lambda_\Gamma$ is the $e_\Gamma$-core of a partition of $m_\Gamma(s)$.

It is easy to see that
$\tN_+(\ttheta_+) = \prod\limits_{\Gamma,\delta,i} \tN_{\Gamma,\delta,i}(\ttheta_{\Gamma,\delta,i}) \wr \fS(t_{\Gamma,\delta,i})$.
Thus $\tvarphi_+=\Ind_{\tN_+(\theta_+)}^{\tN_+}\tpsi_+$ with $\tpsi_+\in\dz(\tN_+(\ttheta_+)/\tR_+\mid\ttheta_+)$.
Then $\tpsi_+=\prod\limits_{\Gamma,\delta,i}\tpsi_{\Gamma,\delta,i}$,
where $\tpsi_{\Gamma,\delta,i}$ is a character of $\tN_{\Gamma,\delta,i}(\ttheta_{\Gamma,\delta,i})\wr\fS(t_{\Gamma,\delta,i})$.
By Clifford theory, $\tpsi_{\Gamma,\delta,i}$ is of the form
\begin{equation}\label{equation:tpsi}
\Ind_{\tN_{\Gamma,\delta,i}(\ttheta_{\Gamma,\delta,i})\wr\prod_j\fS(t_{\Gamma,\delta,i,j})}^{\tN_{\Gamma,\delta,i}(\ttheta_{\Gamma,\delta,i})\wr\fS(t_{\Gamma,\delta,i})}
\overline{\prod_j\tpsi_{\Gamma,\delta,i,j}^{t_{\Gamma,\delta,i,j}}} \cdot \prod_j\phi_{\lambda_{\Gamma,\delta,i,j}},
\addtocounter{thm}{1}\tag{\thethm}
\end{equation}
where $t_{\Gamma,\delta,i}=\sum_jt_{\Gamma,\delta,i,j}$, $\overline{\prod_j\tpsi_{\Gamma,\delta,i,j}^{t_{\Gamma,\delta,i,j}}}$ is the canonical extension of $\prod_j\tpsi_{\Gamma,\delta,i,j}^{t_{\Gamma,\delta,i,j}}\in\Irr(\tN_{\Gamma,\delta,i}(\ttheta_{\Gamma,\delta,i})^{t_{\Gamma,\delta,i}})$ to $\tN_{\Gamma,\delta,i}(\ttheta_{\Gamma,\delta,i})\wr\prod_j\fS(t_{\Gamma,\delta,i,j})$ as in the proof of \cite[Prop.~2.3.1]{Bon99b}, $\lambda_{\Gamma,\delta,i,j}\vdash t_{\Gamma,\delta,i,j}$ without $\ell$-hook and $\phi_{\lambda_{\Gamma,\delta,i,j}}$ is the character of $\fS(t_{\Gamma,\delta,i,j})$ corresponding to $\lambda_{\Gamma,\delta,i,j}$.
Note that (\ref{equation:tpsi}) is slightly different from \cite[(5.1)]{LZ18}; see \cite[\S4.2]{LZ19} for the reason.
Now, define $K_\Gamma:\cup_\delta\sC_{\Gamma,\delta}\to\{\ell\textrm{-cores}\}$, $\psi_{\Gamma,\delta,i,j}\mapsto\lambda_{\Gamma,\delta,i,j}$.
Then we can associate the conjugacy class $\overline{(\tR,\tvarphi)}$ of weights with the label $(s,\lambda,K)$.

\vspace{1.6ex}

When $\ell=2$ and $4 \mid q-\eta$, the exactly same constructions apply.
Assume $\ell=2$ and $4 \mid q+\eta$ now.
Since some radical subgroups will not provide weights, set
\begin{equation}\label{def-D-2-uni}
\tD_{m,\alpha,\gamma,\bc} =
\left\{\begin{array}{ll}
\tR_{m,\alpha,\gamma,\bc} & \textrm{if}~\alpha>0,\\
\tS_{m,1,\gamma-1,\bc} & \textrm{if}~\alpha=0, \gamma>1,\\
\tR_{m,0,1,\bc}^- &  
\textrm{if}~\alpha=0, \gamma=1,\\
\tR_{m,0,0,\bc} & \textrm{if}~\alpha=\gamma=0,c_1\neq1\\
\tS_{m,1,0,\bc'},  & \textrm{if}~\alpha=\gamma=0,c_1=1,\\
\end{array}\right.
\addtocounter{thm}{1}\tag{\thethm}
\end{equation}
where $\bc'=(c_2,\ldots,c_r)$ for $\bc=(c_1,c_2,\ldots,c_r)$.
Using $\tD_{m,\alpha,\gamma,\bc}$ in place of $\tR_{m,\alpha,\gamma,\bc}$,
we can define $i\Alp(\tG)$ analogously,
 then the similar constructions apply again.

The definition of $\tD_{m,\alpha,\gamma,\bc}$ here is slightly different from that given in \cite{An92, An93},
as well as the one used in \cite{LZ18,Feng19}.
However, we should mention that the arguments in \cite{LZ18} and \cite{Feng19} also apply if we change the definition of $\tD_{m,\alpha,\gamma,\bc}$ there to be as in (\ref{def-D-2-uni}).

For the twisted basic subgroups, we also write $\tD^{tw}_{m,\alpha,\gamma,\bc}$ in the obvious sense.

\vspace{2ex}

The paper \cite{Feng19} defines an action of the group $\mrO_{\ell'}(\fZ_{q-\eta})$ on $i\Alp(\tG)$ which we recall as follows.
Let $\overline{(\tR,\tvarphi)}$ be a weight of $\tG$ with label $(s,\lambda,K)$.
Note that for  $\zeta\in\mrO_{\ell'}(\fZ_{q-\eta})$,
then $\hat\zeta\in\Lin_{\ell'}(\tG/G)$, and we may identify $\hat \zeta$ with its restriction to $N_{\tG}(\tR)$.
Define $\zeta.\lambda$ and $\zeta.K$ such that $(\zeta.\lambda)_{\zeta.\Gamma}=\lambda_\Gamma$ and $(\zeta.K)_{\zeta.\Gamma}=K_\Gamma$.
Then the action of $\Lin_{\ell'}(\tG/G)$ on $\Alp(\tG)$ is induced by the action of $\mrO_{\ell'}(\fZ_{q-\eta})$ (identified with $\mrO_{\ell'}(Z(\tG))$) on $i\Alp(\tG)$.

\begin{thm}[{\cite[Prop. 5.12]{Feng19}}]\label{action-z-onwei}
Assume that  $\overline{(\tR,\tvarphi)}$ is a weight of $\tG$ with label $(s,\lambda,K)$, then the weight
$\overline{(\tR,\hat \zeta\tvarphi)}$ has label $(\zeta s,\zeta.\lambda,\zeta.K)$.
\end{thm}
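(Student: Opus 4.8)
The plan is to push the twist by the linear character $\hat\zeta$ (for $\zeta\in\mrO_{\ell'}(\fZ_{q-\eta})$) through each step of the construction of the label $(s,\lambda,K)$ recalled in \S\ref{subsect:weights-general}, using repeatedly the projection formula $\hat\zeta\otimes\Ind_H^{\tN}(-)=\Ind_H^{\tN}(\Res_H\hat\zeta\otimes-)$ and the fact that $\hat\zeta$ is inflated from $\det$, so it restricts to a subgroup embedded block-diagonally in $\tG$ factor by factor.

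First I would reduce to the factors of $\tR=\tR_0\times\tR_+$. Since $\tN=N_{\tG}(\tR)=\tN_0\times\tN_+$ and $\tN_+=\prod_{\Gamma,\delta,i}\tN_{\Gamma,\delta,i}\wr\fS(t_{\Gamma,\delta,i})$ sit block-diagonally in $\tG$, $\Res_{\tN}\hat\zeta$ is the corresponding external product, so $\hat\zeta\tvarphi=(\Res_{\tN_0}\hat\zeta\cdot\tvarphi_0)\times(\Res_{\tN_+}\hat\zeta\cdot\tvarphi_+)$. On the $\tR_0$-piece one has $\tvarphi_0=\ttheta_0=\chi_{s_0,\lambda}\in\dz(\GL_{n_0}(\eta q))$ and $\Res_{\tN_0}\hat\zeta=\widehat{z_\zeta}$, whence $\widehat{z_\zeta}\chi_{s_0,\lambda}=\chi_{\zeta s_0,\zeta.\lambda}$ by \cite[Thm.~7.1]{DM90} (cf.\ \S\ref{subsec:Linear and unitary groups}); this already turns $s_0$ into $\zeta s_0$ and the second component into $\zeta.\lambda$. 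On the $\tR_+$-piece the crucial step is to show that tensoring the canonical character $\ttheta_\Gamma$ of $\tC_\Gamma\tR_\Gamma=\tC_\Gamma$ by $\hat\zeta$ yields the canonical character $\ttheta_{\zeta.\Gamma}$. For this, note that the invariants $d_\Gamma,e_\Gamma,m_\Gamma,\alpha_\Gamma$ are unchanged under $\Gamma\mapsto\zeta.\Gamma$ (multiplication of the roots by the $\ell'$-element $\zeta$), that $\ttheta_\Gamma$ is characterised as the canonical character of the unique root block, namely the one in $\cE_\ell(\tG_\Gamma,s_\Gamma)$ with $s_\Gamma$ having elementary divisor in $\Phi_{\alpha_\Gamma}^{-1}(\Gamma)$ (see (\ref{eq:Phi_alpha})), and that tensoring by $\hat\zeta$ carries $\cE_\ell(\tG_\Gamma,s_\Gamma)$ onto $\cE_\ell(\tG_\Gamma,\zeta s_\Gamma)$ with $\zeta s_\Gamma$ having elementary divisor in $\Phi_{\alpha_\Gamma}^{-1}(\zeta.\Gamma)$; uniqueness of the canonical character then forces $\hat\zeta\ttheta_\Gamma=\ttheta_{\zeta.\Gamma}$. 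Granting this, the factorisations $\ttheta_+=\prod\ttheta_{\Gamma,\delta,i}^{t_{\Gamma,\delta,i}}$ and (\ref{equation:tpsi}) transport: the wreath-product factors $\fS(t_{\Gamma,\delta,i})$ and $\fS(t_{\Gamma,\delta,i,j})$ embed in $\tG$ by permuting blocks of a fixed size, so $\hat\zeta$ restricts there to a power of the sign character, and a sign bookkeeping (using the precise form of the canonical extension $\overline{\prod_j\tpsi_{\Gamma,\delta,i,j}^{t_{\Gamma,\delta,i,j}}}$) shows that the symmetric-group characters $\phi_{\lambda_{\Gamma,\delta,i,j}}$ — hence the $\ell$-cores $\lambda_{\Gamma,\delta,i,j}=K_\Gamma(\psi_{\Gamma,\delta,i,j})$ — are preserved, only being relabelled along the bijection $\tsC_{\Gamma,\delta}\to\tsC_{\zeta.\Gamma,\delta}$, $\psi_{\Gamma,\delta,i,j}\mapsto\psi_{\zeta.\Gamma,\delta,i,j}$ induced by $\hat\zeta$. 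Assembling the pieces, the label of $\overline{(\tR,\hat\zeta\tvarphi)}$ has first component $\zeta s_0\prod_{\Gamma,\delta,i}e_\Gamma(\zeta.\Gamma)\otimes I_{\ell^\delta}\otimes I_{t_{\Gamma,\delta,i}}=\zeta s$, second component $\zeta.\lambda$, and third datum $\zeta.K$, since $(\zeta.\lambda)_{\zeta.\Gamma}=\lambda_\Gamma$ and $(\zeta.K)_{\zeta.\Gamma}=K_\Gamma$ by definition.

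The main obstacle is the identification $\hat\zeta\ttheta_\Gamma=\ttheta_{\zeta.\Gamma}$ with the correct normalisation: one must chase the transfer-to-twisted-group isomorphisms $\hbar$ and $\iota$ and verify that, under $\tC_\Gamma\cong\GL_{m_\Gamma}((\eta q)^{e\ell^{\alpha_\Gamma}})$, the determinant character $\Res_{\tC_\Gamma}\hat\zeta$ of $\tG_\Gamma=\GL_{m_\Gamma e\ell^{\alpha_\Gamma}}(\eta q)$ corresponds to the linear character attached to the class of $\zeta$ under the norm map $\cN_{\alpha_\Gamma}$ of (\ref{eq:norm}); this compatibility of $\hat\zeta$ with $\cN_{\alpha_\Gamma}$, together with the sign bookkeeping in the wreath products, is the genuinely non-formal part, the rest being transport of structure. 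The case $\ell=2$, $4\mid q+\eta$ is handled in exactly the same way after replacing the basic subgroups $\tR_{m,\alpha,\gamma,\bc}$ by the subgroups $\tD_{m,\alpha,\gamma,\bc}$ of (\ref{def-D-2-uni}), which does not affect any part of the argument.
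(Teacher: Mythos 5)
This theorem is cited from \cite[Prop.~5.12]{Feng19}; the paper under review does not reproduce a proof, so there is no in-text argument to compare against. Your outline identifies the right ingredients (the factorisation over $\tR_0\times\tR_+$, the identity $\widehat{z_\zeta}\tchi_{s,\mu}=\tchi_{\zeta s,\zeta.\mu}$, the characterisation of $\ttheta_\Gamma$ as the canonical character of the root block, and the transport through the wreath-product decomposition), and the reduction of the key lemma $\hat\zeta\ttheta_\Gamma=\ttheta_{\zeta.\Gamma}$ to the uniqueness of the canonical character plus compatibility of $\Res_{\tC_\Gamma}\hat\zeta$ with the norm $\cN_{\alpha_\Gamma}$ is the expected route.

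However, the ``sign bookkeeping'' is not merely a formality you can wave through; it is exactly where the argument can genuinely fail, and you should not assert preservation of the $\ell$-cores without spelling it out. When $\fS(t_{\Gamma,\delta,i,j})$ is embedded by permuting blocks of size $d=m_\Gamma e\ell^{\alpha_\Gamma+\delta}$, the restriction of $\hat\zeta$ to that symmetric-group factor is $\sigma\mapsto\hat\zeta(-1)^{d\cdot\epsilon(\sigma)}$, so it equals the sign character whenever $d$ is odd and $\hat\zeta(-1)=-1$. Both can happen: $d$ is odd as soon as $m_\Gamma e$ is odd (for odd $\ell$), and $\hat\zeta(-1)=-1$ occurs for $\zeta\in\mrO_{\ell'}(\fZ_{q-\eta})$ of even order. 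In that situation, with the canonical extension normalised in the usual (tensor-power) way, the identity is $\hat\zeta\bigl(\overline{\prod_j\tpsi_{\Gamma,\delta,i,j}^{t_{\Gamma,\delta,i,j}}}\cdot\prod_j\phi_{\lambda_{\Gamma,\delta,i,j}}\bigr)=\overline{\prod_j(\hat\zeta\tpsi_{\Gamma,\delta,i,j})^{t_{\Gamma,\delta,i,j}}}\cdot\prod_j\phi_{\lambda_{\Gamma,\delta,i,j}'}$, which replaces each $\ell$-core by its conjugate and would change $K$. The resolution has to come from one of: (a) verifying that the canonical extension of \cite[Prop.~2.3.1]{Bon99b} is normalised so as to absorb precisely this $\mathrm{sgn}^d$ twist, (b) a parity argument ruling out the bad case for the relevant $(\Gamma,\delta)$, or (c) proving that the labelling of $\tsC_{\Gamma,\delta}$ versus $\tsC_{\zeta.\Gamma,\delta}$ (including the auxiliary choice of $\ell$-core attached to each character) is by construction twisted in the same way. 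Without pinning down which of these is used, the claim that ``the $\ell$-cores are preserved, only being relabelled'' is unjustified and the proof has a gap at its central step.
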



\section{Special radical subgroups}\label{subsect:srs}

In this section, we consider the radical subgroups of special linear and unitary groups.

\subsection{Some general lemmas.}\label{subsect:srs-general}
We start with some general observations about the radical subgroups of a finite group and its normal subgroups.
In this subsection \S \ref{subsect:srs-general}, let $H$ be a normal subgroup of $\tH$.
We first recall a general result in \cite[\S 2.1]{Feng19}.

\begin{lem}\label{lem:motivation-special-radical}
	\begin{enumerate}[\rm(1)]\setlength{\itemsep}{-2pt}
		\item If $\tR$ is a radical subgroup of $\tH$,
		then $R := \tR\cap H$ is a radical subgroup of $H$. 
		\item The map $\Rad(\tH) \to \Rad(H),\ \tR \mapsto \tR \cap H$ is surjective.
		In particular, if $R$ is a radical subgroup of $H$,
		then $\tR:=\mrO_\ell(N_{\tH}(R))$ is a radical subgroup of $\tH$
		such that $\tR \cap H = R$ and $N_{\tH}(R)=N_{\tH}(\tR)$.
	\end{enumerate}
\end{lem}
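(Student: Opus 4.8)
The plan is to deduce both parts from one elementary group-theoretic principle: \emph{a radical $\ell$-subgroup of a finite group $G$ contains $\mrO_\ell(G)$}, together with the easy identity that if $A\unlhd B$ then $\mrO_\ell(B)\cap A=\mrO_\ell(A)$ (the inclusion ``$\le$'' holds because $\mrO_\ell(B)\cap A$ is a normal $\ell$-subgroup of $A$, and ``$\ge$'' because $\mrO_\ell(A)$ is characteristic in $A\unlhd B$, hence a normal $\ell$-subgroup of $B$). To prove the principle, set $P=\mrO_\ell(G)$ and let $R=\mrO_\ell(N_G(R))$ be radical. Then $PR$ is an $\ell$-subgroup, since $P\unlhd G$. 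Moreover $N_G(R)$ normalizes both $P$ and $R$, hence it normalizes the subgroup $PR$ and therefore also normalizes $N_{PR}(R)$; since $N_{PR}(R)\le N_G(R)$ as well, $N_{PR}(R)$ is a normal $\ell$-subgroup of $N_G(R)$, so $N_{PR}(R)\le\mrO_\ell(N_G(R))=R$. As normalizers grow properly in the nilpotent group $PR$, this forces $R=PR$, i.e.\ $P\le R$.

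For part (1), put $R=\tR\cap H$ and $M=N_{\tH}(R)$. Since $H\unlhd\tH$ we have $R\unlhd\tR$, so $\tR\le M$; and any element of $\tH$ normalizing $\tR$ also normalizes $H$, hence normalizes $\tR\cap H=R$, so $N_{\tH}(\tR)\le M$ and consequently $N_M(\tR)=N_{\tH}(\tR)$. Thus $\tR=\mrO_\ell(N_{\tH}(\tR))=\mrO_\ell(N_M(\tR))$, i.e.\ $\tR$ is radical in $M$, and the principle gives $\mrO_\ell(M)\le\tR$. Now $N_H(R)=M\cap H\unlhd M$, so by the intersection identity $\mrO_\ell(N_H(R))=\mrO_\ell(M)\cap H\le\tR\cap H=R$; the reverse inclusion is clear because $R\unlhd N_H(R)$ is an $\ell$-group. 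Hence $R\in\Rad(H)$.

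For part (2), given $R\in\Rad(H)$ set $\tR:=\mrO_\ell(N_{\tH}(R))$. Since $N_H(R)=N_{\tH}(R)\cap H\unlhd N_{\tH}(R)$, the intersection identity yields $\tR\cap H=\mrO_\ell(N_{\tH}(R))\cap H=\mrO_\ell(N_H(R))=R$, the last step using that $R$ is radical in $H$. Next, $N_{\tH}(R)\le N_{\tH}(\tR)$ because $\tR$ is characteristic in $N_{\tH}(R)$, while conversely any element of $\tH$ normalizing $\tR$ normalizes $H$ and hence $\tR\cap H=R$; so $N_{\tH}(R)=N_{\tH}(\tR)$, and therefore $\mrO_\ell(N_{\tH}(\tR))=\mrO_\ell(N_{\tH}(R))=\tR$, i.e.\ $\tR\in\Rad(\tH)$. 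This establishes the ``in particular'' assertion, and since it produces, for each $R\in\Rad(H)$, a preimage $\tR\in\Rad(\tH)$ with $\tR\cap H=R$, the map $\Rad(\tH)\to\Rad(H)$, $\tR\mapsto\tR\cap H$ (well defined by part (1)) is surjective.

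The only step that is not bookkeeping is the auxiliary principle ``radical $\Rightarrow$ contains $\mrO_\ell$'', and even there the content is just the Frattini-type observation that $N_{PR}(R)$ is normalized by $N_G(R)$ and lies inside it. The remaining care points are purely formal: using $H\unlhd\tH$ to transfer normalization of $\tR$ (or of $H$) to normalization of $\tR\cap H$, and noticing that $N_{\tH}(\tR)\le N_{\tH}(R)$ so that $\tR$ is radical in the intermediate group $M=N_{\tH}(R)$.
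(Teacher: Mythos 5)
Your proof is correct and complete. The paper's own ``proof'' is simply a pair of citations (to Olsson--Uno for part (1) and to Feng's earlier paper for part (2)), so you are supplying the argument that the paper delegates. Your route is the standard one in spirit: both referenced proofs ultimately rest on the same observation that a radical $\ell$-subgroup must contain $\mrO_\ell$, which you prove cleanly via the Frattini-style fact that $N_{PR}(R)$ is a normal $\ell$-subgroup of $N_G(R)$ and the growth of normalizers in the nilpotent group $PR$. Two small things worth flagging so the bookkeeping is airtight: in part (1), the application of the intersection identity $\mrO_\ell(B)\cap A=\mrO_\ell(A)$ is to $A=M\cap H\unlhd B=M$, and since $\mrO_\ell(M)\le M$ the set $\mrO_\ell(M)\cap H$ is the same as $\mrO_\ell(M)\cap(M\cap H)$, which justifies your abbreviated formula; and in part (2), the step $N_{\tH}(R)\le N_{\tH}(\tR)$ uses that every $g\in N_{\tH}(R)$ normalizes the whole group $N_{\tH}(R)$ (because $gN_{\tH}(R)g^{-1}=N_{\tH}(R^g)=N_{\tH}(R)$), so it normalizes the characteristic subgroup $\tR$ — you invoked ``characteristic'' but it is this slightly indirect fact that makes it apply. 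Both points are already implicit in what you wrote; they are not gaps.
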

\begin{proof}
	(1) is just \cite[(2.1)]{OU95} and (2) is from \cite[Lemma~2.2]{Feng19} and its proof.
\end{proof}

From Lemma \ref{lem:motivation-special-radical}, we introduce the following definition.

\begin{defn}
	Assume $H$ is a normal subgroup of a finite $\tH$.
	A radical subgroup $\tR$ of $\tH$ is called \emph{special} (with respect to $H$) if $\tR= \mrO_\ell (N_{\tH}(\tR\cap H))$.
\end{defn}

Given any radical subgroup $\tR$ of $\tH$, $\tR$ may be not special;
but there must be a special radical subgroup $\tR_s$ of $\tH$ such that $\tR \cap H = \tR_s \cap H$ by  Lemma \ref{lem:motivation-special-radical} (2).
Assume $\tR$ is a special radical subgroup of $\tH$ and denote $R = \tR \cap H$, then $N_{\tH}(R) = N_{\tH}(\tR)$ and thus $N_H(R)$ can be calculated by $N_H(R) = N_{\tH}(\tR) \cap H$.

\begin{lem}\label{lem:split-classes-radical}
	Assume $\tR$ is a special radical subgroup of $\tH$, then the number of $H$-conjugacy classes of radical subgroups of $H$ corresponding to the $\tH$-conjugacy class of $\tR$ via the intersection with $H$ (i.e. those $H$-conjugacy classes of radical subgroups contained in $\{H\cap{^{\tg}\tR} \mid \tg\in\tH\}$) is $|\tH:HN_{\tH}(\tR)|$.
\end{lem}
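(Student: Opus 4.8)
The plan is to reduce the statement to a standard double-coset count. First I would set $R=\tR\cap H$ and note that, since $H\unlhd\tH$, for every $\tg\in\tH$ one has $H\cap{}^{\tg}\tR={}^{\tg}(H\cap\tR)={}^{\tg}R$; hence the collection of subgroups $\{H\cap{}^{\tg}\tR\mid\tg\in\tH\}$ appearing in the statement is precisely the $\tH$-conjugacy class $\Omega:=\{{}^{\tg}R\mid\tg\in\tH\}$ of $R$, each member of which (being a conjugate of $\tR$, intersected with $H$) is a radical subgroup of $H$ by Lemma~\ref{lem:motivation-special-radical}(1). So the quantity to compute is the number of $H$-orbits on $\Omega$ under the conjugation action, the $H$-conjugacy classes in question being exactly these orbits and partitioning $\Omega$.

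Next I would exploit the transitive $\tH$-action on $\Omega$: the stabiliser of the point $R$ is $N_{\tH}(R)$, so $\Omega\cong\tH/N_{\tH}(R)$ as $\tH$-sets, and the $H$-orbits on $\Omega$ are in natural bijection with the double cosets $H\backslash\tH/N_{\tH}(R)$. Because $H$ is normal in $\tH$, the subset $HN_{\tH}(R)$ is a subgroup and every double coset $H\tg N_{\tH}(R)$ equals the left coset $\tg\,HN_{\tH}(R)$; hence the double cosets biject with $\tH/HN_{\tH}(R)$, giving that the number of $H$-classes in question equals $|\tH:HN_{\tH}(R)|$.

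Finally, to put the answer in the form stated I would invoke the hypothesis that $\tR$ is special: by definition $\tR=\mrO_\ell(N_{\tH}(R))$, so any element normalising $R$ normalises $N_{\tH}(R)$ and hence its characteristic subgroup $\tR$, i.e. $N_{\tH}(R)\le N_{\tH}(\tR)$; the reverse inclusion is automatic since conjugation fixing $\tR$ fixes $\tR\cap H=R$. Thus $N_{\tH}(R)=N_{\tH}(\tR)$ and the count becomes $|\tH:HN_{\tH}(\tR)|$, as claimed.

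The argument is essentially formal, so I do not expect a genuine obstacle; the only point deserving a line of care is the passage from ``$H$-orbits on $\Omega$'' to ``double cosets $H\backslash\tH/N_{\tH}(R)$'' and then to the index $|\tH:HN_{\tH}(\tR)|$, which rests entirely on the normality of $H$ in $\tH$. The role of the ``special'' hypothesis is merely to supply the identity $N_{\tH}(R)=N_{\tH}(\tR)$, which is just a reformulation of the definition.
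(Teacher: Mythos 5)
Your argument is correct and follows essentially the same route as the paper's proof: both reduce to counting $H$-orbits on the transitive $\tH$-set $\{\,{}^{\tg}R \mid \tg\in\tH\,\}$, use the specialness hypothesis to identify $N_{\tH}(R)$ with $N_{\tH}(\tR)$, and exploit normality of $H$ to turn the count into the index $|\tH:HN_{\tH}(\tR)|$. The paper phrases the final step as ``total size of the $\tH$-class divided by the common size of each $H$-class'' whereas you phrase it via the double-coset bijection $H\backslash\tH/N_{\tH}(R)\leftrightarrow\tH/HN_{\tH}(R)$, but these are the same computation.
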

\begin{proof}
	Set $R := H\cap\tR$, then $H\cap{^{\tg}\tR}={^{\tg}R}$.
	Since $\tR$ is special, $N_{\tH}(R)=N_{\tH}(\tR)$, so $|\{H\cap{^{\tg}\tR} \mid \tg\in\tH\}|=|\tH:N_{\tH}(\tR)|$.
	All the $H$-conjugacy classes in $\{H\cap{^{\tg}\tR} \mid \tg\in\tH\}$ have the same cardinality $H/N_H(R) \cong HN_{\tH}(\tR)/N_{\tH}(\tR)$.
	So the assertion follows.
\end{proof}

The term ``special'' comes from the case of the general linear (or unitary) group $\tG$ and the special linear (or unitary) group $G$ respectively,
which indicates the importance of the special radical subgroups of $\tG$ to the study of the weights of $G$.
But note that even for a special radical subgroup $\tR$ of $\tG$, it may happen that $\tR$ does not provide any weight for $\tG$ while $\tR\cap G$ provides some weights for $G$.

\subsection{Special radical subgroups for odd primes.}\label{subsect:srs-odd}
Now, we classify the special radical subgroups of $\tG = \GL_n(\eta q)$ with respect to $G = \SL_n(\eta q)$ for an odd prime $\ell$.
Recall that $\nu$ is the discrete valuation such that $\nu(\ell)=1$.
The radical $\ell$-subgroups of $G$ are classified in \cite{Feng19} when $\ell \nmid \gcd(n,q-\eta)$; in this case every radical $\ell$-subgroup of $\tG$ is special.
So we may assume $\ell \mid q-\eta$ throughout \S \ref{subsect:srs-odd}.
Then $e=1$ and $a=\nu(q-\eta)$.

We start with the radical subgroups of the form $\tR_{m,\alpha,\gamma}$.
Let $R_{m,\alpha,\gamma} := \tR_{m,\alpha,\gamma} \cap G_{m,\alpha,\gamma}$.
Transfer to the twisted groups.
Recall that $\tR_{m,\alpha,\gamma}^{tw}=\tZ_{m,\alpha,\gamma}^{tw}E_{m,\alpha,\gamma}^{tw}$ and $\det(E_{m,\alpha,\gamma}^{tw})=1$, thus $R_{m,\alpha,\gamma}^{tw}=Z_{m,\alpha,\gamma}^{tw}E_{m,\alpha,\gamma}^{tw}$ with $Z_{m,\alpha,\gamma}^{tw}:=Z(R_{m,\alpha,\gamma}^{tw})=\tZ_{m,\alpha,\gamma}^{tw}\cap G_{m,\alpha,\gamma}^{tw}$.
Since
\begin{equation}
\tZ_{m,\alpha,\gamma}^{tw}=\hbar(\tZ_{m,\alpha,\gamma}^0),\
\tZ_{m,\alpha,\gamma}^0=\mrO_\ell(Z(\tG_{m,\alpha,\gamma}^0))=\{\zeta I_{m\ell^\gamma} \mid \zeta\in\fZ_{\ell^{a+\alpha}}\},
\addtocounter{thm}{1}\tag{\thethm}
\end{equation}
and $\det\hbar(\zeta I_{m\ell^\gamma}) = \cN_\alpha(\zeta^{m\ell^\gamma}) = \zeta^{k\ell^{\alpha+\nu(m)+\gamma}}$ for some $\ell'$-integer $k$ by the assumption $\ell \mid (q-\eta)$, we have
\begin{equation}\label{eq:ZR_m,alpha,gamma}
Z_{m,\alpha,\gamma}^{tw}=\hbar(\hZ_{m,\alpha,\gamma}^0) ~\textrm{with}~
\hZ_{m,\alpha,\gamma}^0 = \{ \zeta I_{m\ell^\gamma} \mid \zeta\in\fZ_{\ell^{a+\alpha}}\cap\fZ_{\ell^{\alpha+\gamma+\nu(m)}} \}.
\addtocounter{thm}{1}\tag{\thethm}
\end{equation}
We also set $\hR_{m,\alpha,\gamma}^0=\hZ_{m,\alpha,\gamma}^0E_{m,\alpha,\gamma}^0$, then $R_{m,\alpha,\gamma}^{tw}=\hbar(\hR_{m,\alpha,\gamma}^0)$.

\begin{prop}\label{prop:special-odd-1}
	Assume $\ell$ is an odd prime.
	The basic subgroup $\tR_{m,\alpha,\gamma}$ of $\tG_{m,\alpha,\gamma}$ is special with respect to $G_{m,\alpha,\gamma}$
	if and only if $\gamma+\nu(m)\geqslant a$ or $\alpha=0$.
\end{prop}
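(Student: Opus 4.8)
The plan is first to reduce the assertion to an equality of normalizers, then to dispose of the two ``special'' directions by an elementary argument and the ``non‑special'' direction by a centralizer computation carried out after transferring to the twisted groups. Abbreviate $\tR=\tR_{m,\alpha,\gamma}$, $R=R_{m,\alpha,\gamma}=\tR\cap G_{m,\alpha,\gamma}$, $\tN=\tN_{m,\alpha,\gamma}$, and set $s=\min\{a+\alpha,\ \alpha+\gamma+\nu(m)\}$. By \eqref{eq:ZR_m,alpha,gamma} and Lemma \ref{lem:tC-m,alpha,gamma-odd}, transferring to the twisted groups we have $\tR^{tw}=\tZ_{m,\alpha,\gamma}^{tw}E_{m,\alpha,\gamma}^{tw}$ and $R^{tw}=Z^{tw}E_{m,\alpha,\gamma}^{tw}$, central products over $Z(E_{m,\alpha,\gamma}^{tw})$, where $Z^{tw}:=\hbar(\hZ_{m,\alpha,\gamma}^0)=Z(R^{tw})$ is cyclic of order $\ell^{s}$ and $\tZ_{m,\alpha,\gamma}^{tw}=Z(\tR^{tw})$ is cyclic of order $\ell^{a+\alpha}$. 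By Proposition \ref{prop:tN-m,alpha,gamma-odd}(3) one has $\mrO_\ell(\tN)=\tR$, so $\tR$ is a radical subgroup of $\tG_{m,\alpha,\gamma}$; moreover $\tN\le N_{\tG_{m,\alpha,\gamma}}(R)$ always. The first step is to observe that $\tR$ is special if and only if $N_{\tG_{m,\alpha,\gamma}}(R)=\tN$: if $\tR$ is special then $\tR=\mrO_\ell(N_{\tG_{m,\alpha,\gamma}}(R))\unlhd N_{\tG_{m,\alpha,\gamma}}(R)$, forcing $N_{\tG_{m,\alpha,\gamma}}(R)\le\tN$ and hence equality; conversely if $N_{\tG_{m,\alpha,\gamma}}(R)=\tN$ then $\mrO_\ell(N_{\tG_{m,\alpha,\gamma}}(R))=\mrO_\ell(\tN)=\tR$.

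If $\gamma+\nu(m)\ge a$ then $s=a+\alpha$, so $Z^{tw}=\tZ_{m,\alpha,\gamma}^{tw}$, $R=\tR$, and the criterion holds trivially. If $\alpha=0$ then $\hbar$ and $\iota$ are the identity and $Z^0=\fZ_{\ell^{\gamma+\nu(m)}}I_{m\ell^\gamma}$ is scalar (indeed $\gamma+\nu(m)\le a$, otherwise we are in the previous case), hence central in $\tG_{m,0,\gamma}$; if $\gamma=0$ then $R=Z^0$ and $\tR=\tZ^0_{m,0,0}$ are both central and there is nothing to prove, while if $\gamma\ge1$ then $E_{m,0,\gamma}^0=I_m\otimes E_\gamma$ acts on the $\ell^\gamma$-dimensional tensor factor as a faithful irreducible, so Schur's lemma gives $C_{\tG_{m,0,\gamma}}(R)=C_{\tG_{m,0,\gamma}}(E_{m,0,\gamma}^0)=\GL_m(\eta q)\otimes I_{\ell^\gamma}=\tC_{m,0,\gamma}^0$; then $N_{\tG_{m,0,\gamma}}(R)$ normalizes the characteristic subgroup $\mrO_\ell(Z(\tC_{m,0,\gamma}^0))=\tZ_{m,0,\gamma}^0$, hence it normalizes $\langle R,\tZ_{m,0,\gamma}^0\rangle=\tZ_{m,0,\gamma}^0 E_{m,0,\gamma}^0=\tR$, so $N_{\tG_{m,0,\gamma}}(R)=\tN$ and $\tR$ is special.

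Finally suppose $\alpha\ge1$ and $\gamma+\nu(m)<a$; by the criterion it suffices to show $N_{\tG_{m,\alpha,\gamma}}(R)\ne\tN$, and I work in the twisted group. The decisive point is that $E_{m,\alpha,\gamma}^{tw}=\hbar(I_m\otimes E_\gamma)$ acts on the natural module as a faithful $\ell^\gamma$-dimensional irreducible of $E_\gamma$ \emph{with multiplicity $m\ell^\alpha$} --- the Frobenius twists in $\hbar$ collapsing because $F_{\eta q}$ fixes the displayed matrix entries of $E_\gamma$ --- while $v_{m,\alpha,\gamma}=I_{m\ell^\gamma}\otimes v_\alpha$ acts on the $m\ell^\alpha$-dimensional multiplicity space as the permutation matrix $v_\alpha\otimes I_m$, which is inner; consequently
\[ C_{\tG_{m,\alpha,\gamma}^{tw}}(E_{m,\alpha,\gamma}^{tw})=\bigl(\GL_{m\ell^\alpha}(\barF)\bigr)^{(v_\alpha\otimes I_m)F_{\eta q}}\cong \GL_{m\ell^\alpha}(\eta q), \]
which strictly contains $\tC_{m,\alpha,\gamma}^{tw}=C_{\tG_{m,\alpha,\gamma}^{tw}}(\tR^{tw})$ since $\alpha\ge1$. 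Inside this copy of $\GL_{m\ell^\alpha}(\eta q)$ the group $\tZ_{m,\alpha,\gamma}^{tw}$ is generated by an element whose eigenvalues form one Frobenius orbit of size $\ell^\alpha$, and $Z^{tw}$ by one whose eigenvalues form one Frobenius orbit of size $\ell^{\max\{0,s-a\}}<\ell^\alpha$; hence, using Lemma \ref{lem:tC-m,alpha,gamma-odd}(2), their centralizers there are the field‑extension subgroups $\tC_{m,\alpha,\gamma}^{tw}\cong\GL_m((\eta q)^{\ell^\alpha})$ and $\tC_R:=\GL_{m\ell^{\alpha-\max\{0,s-a\}}}\bigl((\eta q)^{\ell^{\max\{0,s-a\}}}\bigr)\supsetneq\tC_{m,\alpha,\gamma}^{tw}$ respectively (strict because $\max\{0,s-a\}<\alpha$). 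As $Z^{tw}\le\tZ_{m,\alpha,\gamma}^{tw}\le Z(\tC_{m,\alpha,\gamma}^{tw})$ we get $C_{\tG_{m,\alpha,\gamma}^{tw}}(R^{tw})=C_{\tG_{m,\alpha,\gamma}^{tw}}(E_{m,\alpha,\gamma}^{tw})\cap C_{\tG_{m,\alpha,\gamma}^{tw}}(Z^{tw})=\tC_R$. Now $\tZ_{m,\alpha,\gamma}^{tw}$ is not normal in $\tC_R$: otherwise, the automorphism group of the cyclic group $\tZ_{m,\alpha,\gamma}^{tw}$ being abelian, the perfect group $[\tC_R,\tC_R]=\SL_{m\ell^{\alpha-\max\{0,s-a\}}}\bigl((\eta q)^{\ell^{\max\{0,s-a\}}}\bigr)$ would centralize $\tZ_{m,\alpha,\gamma}^{tw}$ and so embed into $\GL_m((\eta q)^{\ell^\alpha})$, which is impossible on comparing the $p$-parts of the orders since $m\ell^{\alpha-\max\{0,s-a\}}>m$. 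Since $\tZ_{m,\alpha,\gamma}^{tw}=Z(\tR^{tw})$, we conclude $\tC_R=C_{\tG_{m,\alpha,\gamma}^{tw}}(R^{tw})\le N_{\tG_{m,\alpha,\gamma}^{tw}}(R^{tw})$ but $\tC_R\not\le N_{\tG_{m,\alpha,\gamma}^{tw}}(\tZ_{m,\alpha,\gamma}^{tw})\supseteq N_{\tG_{m,\alpha,\gamma}^{tw}}(\tR^{tw})$, so $N_{\tG_{m,\alpha,\gamma}}(R)\supsetneq\tN$, as needed. The main obstacle is precisely the centralizer identification $C_{\tG_{m,\alpha,\gamma}^{tw}}(E_{m,\alpha,\gamma}^{tw})\cong\GL_{m\ell^\alpha}(\eta q)$ --- it is what distinguishes $\alpha\ge1$ from $\alpha=0$, and it requires keeping careful track of how the twisting element $v_{m,\alpha,\gamma}$ acts on the multiplicity space of $E_{m,\alpha,\gamma}^{tw}$.
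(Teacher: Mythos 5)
Your proof is correct, but it takes a genuinely different route from the paper's, and it is worth recording what each buys. The paper argues by conjugation: for each $(m,\alpha,\gamma)$ it identifies the unique pair $(m',\alpha')$ with $m'\ell^{\alpha'}=m\ell^\alpha$, $\gamma+\nu(m')\ge a$, and $\alpha'\ge0$ minimal, shows that $R_{m,\alpha,\gamma}$ is $\tG_{m,\alpha,\gamma}$-conjugate to $R_{m',\alpha',\gamma}$, observes that $\tR_{m',\alpha',\gamma}$ is special because either $R_{m',\alpha',\gamma}=\tR_{m',\alpha',\gamma}$ (when $\gamma+\nu(m')\ge a$) or $\tZ_{m',\alpha',\gamma}$ is central (when $\alpha'=0$), and then concludes non-specialness of $\tR_{m,\alpha,\gamma}$ in the remaining cases simply by comparing group orders, since two special radical subgroups with $\tG$-conjugate intersections with $G$ must themselves be conjugate. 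That is short and exploits the classification of basic subgroups. You instead translate specialness into the equation $N_{\tG_{m,\alpha,\gamma}}(R)=\tN_{m,\alpha,\gamma}$ (a correct and self-contained reformulation of Lemma \ref{lem:motivation-special-radical}) and verify it directly: for $\gamma+\nu(m)\ge a$ trivially, for $\alpha=0$ by a Schur's-lemma computation of $C(R)$ plus the fact that $\mrO_\ell(Z(C(R)))=\tZ_{m,0,\gamma}$ is characteristic in it, and for $\alpha\ge1$, $\gamma+\nu(m)<a$ by showing $C_{\tG^{tw}_{m,\alpha,\gamma}}(R^{tw})$ is strictly larger than $\tC^{tw}_{m,\alpha,\gamma}$ and does not normalize $\tZ^{tw}_{m,\alpha,\gamma}=Z(\tR^{tw}_{m,\alpha,\gamma})$ (the $p$-part comparison you use to derive the contradiction is robust even in the small-rank exceptions where $\SL$ fails to be perfect, since a Borel's unipotent radical is always contained in the derived group). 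This is longer and more computational, but it is constructive in a useful sense: it actually exhibits the extra elements in $N_{\tG}(R)\setminus \tN$, namely elements of $\tC_R\setminus N_{\tG^{tw}}(\tZ^{tw})$, rather than inferring their existence from a cardinality count. One small stylistic note: the decisive centralizer identification $C_{\tG^{tw}}(E^{tw})\cong\GL_{m\ell^\alpha}(\eta q)$ rests on $F_{\eta q}$ fixing the matrix entries of $E_\gamma$, which in turn uses the standing hypothesis $\ell\mid q-\eta$ of \S\ref{subsect:srs-odd} (so $\zeta_\ell^{\eta q}=\zeta_\ell$); this is in force in the paper's context, but you should make it explicit if this argument were lifted out.
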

\begin{proof}
	First, assume $\gamma+\nu(m)\geqslant a$.
	Then $\hZ_{m,\alpha,\gamma}^0=\tZ_{m,\alpha,\gamma}^0$ and $\tR_{m,\alpha,\gamma}=R_{m,\alpha,\gamma}$ is obviously special.
	Assume $a-\alpha<\gamma+\nu(m)<a$.
	Then $\hZ_{m,\alpha,\gamma}^0 = \{ \zeta I_{m\ell^\gamma} \mid \zeta\in\fZ_{\ell^{a+(\alpha+\gamma+\nu(m)-a)}} \}$.
	Thus $R_{m,\alpha,\gamma} = \tR_{m,\alpha,\gamma} \cap G_{m,\alpha,\gamma}$ is $\tG_{m,\alpha,\gamma}$-conjugate to
	$R_{m\ell^{a-\gamma-\nu(m)},\alpha+\gamma+\nu(m)-a,\gamma}
	:= \tR_{m\ell^{a-\gamma-\nu(m)},\alpha+\gamma+\nu(m)-a,\gamma} \cap G_{m,\alpha,\gamma}$
	(since the $Z_{m,\alpha,\gamma}$ and $Z_{m\ell^{a-\gamma-\nu(m)},\alpha+\gamma+\nu(m)-a,\gamma}$ are conjugate in $\tG_{m,\alpha,\gamma}$
	and the extraspecial component of $R_{m,\alpha,\gamma}$ are unique up to conjugacy in $C_{\tG_{m,\alpha,\gamma}}(Z_{m,\alpha,\gamma})$).
	But $\tR_{m\ell^{a-\gamma-\nu(m)},\alpha+\gamma+\nu(m)-a,\gamma}$ is a special radical subgroup of
	$\tG_{m,\alpha,\gamma}=\GL_{m\ell^{\alpha+\gamma}}(\eta{q})=\tG_{m\ell^{a-\gamma-\nu(m)},\alpha+\gamma+\nu(m)-a,\gamma}$.
	Finally, assume $\alpha+\gamma+\nu(m)\leqslant a$.
	Then similarly, $R_{m,\alpha,\gamma}$ is $\tG_{m,\alpha}$-conjugate to
	$R_{m\ell^\alpha,0,\gamma} := \tR_{m\ell^\alpha,0,\gamma} \cap G_{m,\alpha,\gamma}$.
	Since $\tZ_{m\ell^\alpha,0,\gamma}$ is central in
	$\tG_{m,\alpha,\gamma}=\GL_{m\ell^{\alpha+\gamma}}(\eta{q})=\tG_{m\ell^\alpha,0,\gamma}$,
	$\tR_{m\ell^\alpha,0,\gamma}$ is  special by construction.
\end{proof}

\begin{prop}\label{prop:cc-R-m,alpha,gamma-odd}
	Assume $\ell$ is an odd prime.
	\vspace{-0.5ex}
	\begin{enumerate}[\rm(1)]\setlength{\itemsep}{-0.5ex}
		\item $\det(\tR_{m,\alpha,\gamma})=\det(\tR_{m,\alpha,\gamma}^{tw})=\fZ_{\ell^a}^{\ell^{\nu(m)+\gamma}}$;
		$\det(\tC_{m,\alpha,\gamma})=\det(\tC_{m,\alpha,\gamma}^{tw})=\fZ_{q-\eta}^{\ell^\gamma}$.
		\item $\det(\tN_{m,\alpha,\gamma})=\det(\tN_{m,\alpha,\gamma}^{tw})=\fZ_{q-\eta}^{\ell^\gamma}$,
		unless (\ref{eq:special-case-3-1}), in which case,
		$\det(\tN_{m,0,1})=\Grp{\det(\tC_{m,0,1}),\det(\tM_{m,0,1})}=\fZ_{q-\eta}$.
		\item Assume $\tR_{m,\alpha,\gamma}$ is special in $\tG_{m,\alpha,\gamma}$,
		then	the number of $G_{m,\alpha,\gamma}$-conjugacy classes of radical subgroups contained in the set
		$\{ G_{m,\alpha,\gamma}\cap{^{\tg}\tR_{m,\alpha,\gamma}} \mid \tg\in\tG_{m,\alpha,\gamma} \}$ is
		\[\left\{\begin{array}{ll}
		\ell^{\min\{a,\gamma\}} & \textrm{if we are not in the case (\ref{eq:special-case-3-1});}\\
		1, & \textrm{if we are in the case (\ref{eq:special-case-3-1}).}
		\end{array}\right.\]
	\end{enumerate}
\end{prop}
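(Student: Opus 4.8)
The plan is to compute the three determinant groups by passing to the twisted subgroups; since $\iota$ is conjugation by $g_{m,\alpha,\gamma}\in\bG_{m,\alpha,\gamma}$ it preserves determinants, so $\det(\tR_{m,\alpha,\gamma})=\det(\tR_{m,\alpha,\gamma}^{tw})$, and likewise for $\tC$ and $\tN$. For the radical subgroup, Lemma~\ref{lem:tC-m,alpha,gamma-odd}(1) gives $\tR_{m,\alpha,\gamma}^{tw}=\tZ_{m,\alpha,\gamma}^{tw}E_{m,\alpha,\gamma}^{tw}$ with $\det(E_{m,\alpha,\gamma}^{tw})=1$, so $\det(\tR_{m,\alpha,\gamma}^{tw})=\det(\tZ_{m,\alpha,\gamma}^{tw})=\{\det\hbar(\zeta I_{m\ell^\gamma})\mid\zeta\in\fZ_{\ell^{a+\alpha}}\}$. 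Because $\hbar(A)$ is block diagonal with blocks $A,F_{\eta q}(A),\dots,F_{\eta q}^{\ell^\alpha-1}(A)$, we have $\det\hbar(A)=\cN_\alpha(\det A)$; the discussion preceding (\ref{eq:ZR_m,alpha,gamma}) then records that $\det\hbar(\zeta I_{m\ell^\gamma})=\zeta^{k\ell^{\alpha+\nu(m)+\gamma}}$ for a fixed $\ell'$-integer $k$ (this uses $\ell\mid q-\eta$ and $\ell$ odd, via lifting the exponent). Hence, as $\zeta$ runs over $\fZ_{\ell^{a+\alpha}}$, one gets $\det(\tR_{m,\alpha,\gamma}^{tw})=\fZ_{\ell^{a+\alpha}}^{\ell^{\alpha+\nu(m)+\gamma}}=\fZ_{\ell^a}^{\ell^{\nu(m)+\gamma}}$, both sides being the unique subgroup of $\barF^\times$ of order $\ell^{\max\{0,\,a-\nu(m)-\gamma\}}$.

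For $\det(\tC_{m,\alpha,\gamma})$: Lemma~\ref{lem:tC-m,alpha,gamma-odd}(2) gives $\tC_{m,\alpha,\gamma}^0=\tC_{m,\alpha}^0\otimes I_{\ell^\gamma}$, and since $\tR_{m,\alpha}^0$ consists of scalars, $\tC_{m,\alpha}^0=\GL_m((\eta q)^{\ell^\alpha})$, which has determinant group $\fZ_{q^{\ell^\alpha}-\eta}$ (as $\ell^\alpha$ is odd). Using $\det(A\otimes I_{\ell^\gamma})=\det(A)^{\ell^\gamma}$ and then applying $\hbar$, together with the surjectivity of the norm map $\cN_\alpha\colon\fZ_{q^{\ell^\alpha}-\eta}\to\fZ_{q-\eta}$, yields $\det(\tC_{m,\alpha,\gamma})=\cN_\alpha(\fZ_{q^{\ell^\alpha}-\eta})^{\ell^\gamma}=\fZ_{q-\eta}^{\ell^\gamma}$; this proves (1). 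For (2), Proposition~\ref{prop:tN-m,alpha,gamma-odd}(1) and (4) give $\tN_{m,\alpha,\gamma}^{tw}=\tC_{m,\alpha,\gamma}^{tw}\tM_{m,\alpha,\gamma}^{tw}\langle v_{m,\alpha,\gamma}\rangle$, and since $\det$ is a homomorphism into the abelian group $\barF^\times$ and $v_{m,\alpha,\gamma}\in G_{m,\alpha,\gamma}$ has determinant $1$, the group $\det(\tN_{m,\alpha,\gamma}^{tw})$ is generated by $\det(\tC_{m,\alpha,\gamma}^{tw})$ and $\det(\tM_{m,\alpha,\gamma}^{tw})$. Outside the configuration~(\ref{eq:special-case-3-1}), $\det(\tM_{m,\alpha,\gamma}^{tw})=1$ by Proposition~\ref{prop:tN-m,alpha,gamma-odd}(5), so $\det(\tN_{m,\alpha,\gamma})=\fZ_{q-\eta}^{\ell^\gamma}$; in the case~(\ref{eq:special-case-3-1}) we have $\ell=3$ and $a=\nu(q-\eta)=1$, so $\fZ_{q-\eta}^{3}$ is exactly the $3'$-part of $\fZ_{q-\eta}$ while $\det(\tM_{m,0,1})=\fZ_3$ supplies the $3$-part, giving $\det(\tN_{m,0,1})=\langle\det(\tC_{m,0,1}),\det(\tM_{m,0,1})\rangle=\fZ_{q-\eta}$.

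For (3), since $\tR_{m,\alpha,\gamma}$ is assumed special, Lemma~\ref{lem:split-classes-radical} identifies the number in question with $|\tG_{m,\alpha,\gamma}:G_{m,\alpha,\gamma}\tN_{m,\alpha,\gamma}|$, and because $\det$ induces an isomorphism $\tG_{m,\alpha,\gamma}/G_{m,\alpha,\gamma}\cong\fZ_{q-\eta}$ carrying $G_{m,\alpha,\gamma}\tN_{m,\alpha,\gamma}/G_{m,\alpha,\gamma}$ onto $\det(\tN_{m,\alpha,\gamma})$, this index equals $|\fZ_{q-\eta}:\det(\tN_{m,\alpha,\gamma})|$. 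By (2) this is $|\fZ_{q-\eta}:\fZ_{q-\eta}^{\ell^\gamma}|=\gcd(q-\eta,\ell^\gamma)=\ell^{\min\{a,\gamma\}}$ outside~(\ref{eq:special-case-3-1}), and $1$ in the case~(\ref{eq:special-case-3-1}). The only genuine work is the $\ell$-adic bookkeeping in (1) — the lifting-the-exponent computation and the surjectivity of $\cN_\alpha$ — together with keeping careful track of the exceptional configuration~(\ref{eq:special-case-3-1}); everything else is formal once Lemma~\ref{lem:tC-m,alpha,gamma-odd}, Proposition~\ref{prop:tN-m,alpha,gamma-odd} and Lemma~\ref{lem:split-classes-radical} are available.
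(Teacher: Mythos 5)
Your proof is correct and takes essentially the same approach as the paper: the paper's own proof is just a terse reference to Lemma~\ref{lem:tC-m,alpha,gamma-odd}, Proposition~\ref{prop:tN-m,alpha,gamma-odd}, and Lemma~\ref{lem:split-classes-radical}, and you have spelled out exactly the determinant and index computations those references are meant to supply. The only thing worth making explicit (you do gesture at it with ``this uses $\ell\mid q-\eta$'') is that the entire section \S\ref{subsect:srs-odd} works under the standing hypothesis $\ell\mid q-\eta$, so that $e=1$ and $a=\nu(q-\eta)$; without this the equality $\det(\tR_{m,\alpha,\gamma})=\fZ_{\ell^a}^{\ell^{\nu(m)+\gamma}}$ would not even parse as a statement about subgroups of $\fZ_{q-\eta}$.
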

\begin{proof}
	(1) follows easily from Lemma~\ref{lem:tC-m,alpha,gamma-odd}.
	(2) follows from (1) and Proposition \ref{prop:tN-m,alpha,gamma-odd}.
	Then (3) follows from Lemma~\ref{lem:split-classes-radical}.
\end{proof}

Set
\begin{equation}\label{eq:hC-m,alpha,gamma}
\hC_{m,\alpha,\gamma}^0=\hG_{m,\alpha}^{0,\gamma}\otimes I_{\ell^\gamma} ~\textrm{with}~ \hG_{m,\alpha}^{0,\gamma} = \{ A\in\tG_{m,\alpha}^0 \mid \cN_\alpha(\det(A)^{\ell^\gamma})=1 \},
\addtocounter{thm}{1}\tag{\thethm}
\end{equation}
where $\cN_\alpha$ is defined in (\ref{eq:norm}).
We also set
\begin{equation}\label{eq:hz-m,alpha-0,gamma}
\hZ_{m,\alpha}^{0,\gamma} = \{ \zeta I_m \mid \zeta\in\fZ_{\ell^{a+\alpha}}\cap\fZ_{\ell^{\alpha+\gamma+\nu(m)}} \},
\addtocounter{thm}{1}\tag{\thethm}
\end{equation}
then $\hZ_{m,\alpha}^{0,\gamma}=\mrO_\ell(Z(\hG_{m,\alpha}^{0,\gamma}))$, $\hZ_{m,\alpha,\gamma}^0 = \hZ_{m,\alpha}^{0,\gamma}\otimes I_{\ell^\gamma}$ and $|\hZ_{m,\alpha}^{0,\gamma}|=\ell^{\min\{\alpha+a,\alpha+\gamma+\nu(m)\}}$.
Finally, set
\begin{equation}\label{eq:hN-m,alpha,gamma}
\hN_{m,\alpha,\gamma}^0=\hC_{m,\alpha,\gamma}^0\tM_{m,\alpha,\gamma}^0.
\addtocounter{thm}{1}\tag{\thethm}
\end{equation}

Assume that $\tR_{m,\alpha,\gamma}$ is special in $\tG_{m,\alpha,\gamma}$.
Denote by $C_{m,\alpha,\gamma}^*$, $N_{m,\alpha,\gamma}^*$ for the centralizer and normalizer
of $R_{m,\alpha,\gamma}^*$ in $G_{m,\alpha,\gamma}^*$, and $M_{m,\alpha,\gamma}^*=\tM_{m,\alpha,\gamma}^*\cap G_{m,\alpha,\gamma}^*$ for $*\in\set{tw,\varnothing}$.

\begin{prop}\label{prop:CN_m,alpha,gamma-odd}
	Then $C_{m,\alpha,\gamma}^{tw}=\hbar(\hC_{m,\alpha,\gamma}^0)$ and the following holds.
	\vspace{-0.5ex}
	\begin{enumerate}[\rm(1)]\setlength{\itemsep}{-0.5ex}
		\item Assume $\gamma+\nu(m)\geqslant a$.
		\vspace{-0.5ex}
		\begin{enumerate}\setlength{\itemsep}{-0.25ex}
			\item[\rm(1.1)] If we are not in the case (\ref{eq:special-case-3-1}), then $\det(\tM_{m,\alpha,\gamma}^{tw})=1$.
			Thus $\hbar(\tN_{m,\alpha,\gamma}^0) \cap G_{m,\alpha,\gamma}^{tw}
			= \hbar(\hN_{m,\alpha,\gamma}^0) = C_{m,\alpha,\gamma}^{tw}\tM_{m,\alpha,\gamma}^{tw}$,
			$\hbar(\hN_{m,\alpha,\gamma}^0)/R_{m,\alpha,\gamma}^{tw}
			\cong C_{m,\alpha,\gamma}^{tw}/Z(R_{m,\alpha,\gamma}^{tw}) \times \Sp_{2\gamma}(\ell)$
			and $N_{m,\alpha,\gamma}^{tw}=\hbar(\hN_{m,\alpha,\gamma}^0)\rtimes V_{m,\alpha,\gamma}$.
			\item[\rm(1.2)] In the case (\ref{eq:special-case-3-1}),
			 $N_{m,0,1}=C_{m,0,1}Q_{m,0,1}$ is the central product of $C_{m,0,1}$ and $M_{m,0,1}$ over $Z(E_{m,0,1})$
			such that $M_{m,0,1}/E_{m,0,1} \cong Q_8$ with $Q_8$ being the quaternion group
			and $N_{m,0,1}/E_{m,0,1} \cong C_{m,0,1}/Z(R_{m,0,1}) \times Q_8$.
		\end{enumerate}
		\vspace{-0.5ex}
		\item If $\gamma+\nu(m)<a$ and $\alpha=0$,
		$\det(\tM_{m,0,\gamma})=1$, $N_{m,0,\gamma}=C_{m,0,\gamma}\tM_{m,0,\gamma}$
		and $N_{m,0,\gamma}/R_{m,0,\gamma} \cong C_{m,0,\gamma}/Z(R_{m,0,\gamma}) \times \Sp_{2\gamma}(\ell)$.
	\end{enumerate}
\end{prop}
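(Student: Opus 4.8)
The plan is to transfer everything to the twisted group and argue throughout with determinants; recall that here $e=1$, so Proposition~\ref{prop:special-odd-1} says that being special means $\gamma+\nu(m)\ge a$ or $\alpha=0$, and cases (1.1), (1.2), (2) exhaust this. The basic computational input is the identity $\det\hbar(B)=\cN_\alpha(\det B)$ for $B\in\tG_{m,\alpha,\gamma}^0$, immediate from the definition of $\hbar$ in \S\ref{subsect:gst-odd} and of $\cN_\alpha$ in (\ref{eq:norm}). Combined with Lemma~\ref{lem:tC-m,alpha,gamma-odd}(2), which gives $\tC_{m,\alpha,\gamma}^{tw}=\hbar(\GL_m((\eta q)^{\ell^\alpha})\otimes I_{\ell^\gamma})$, the opening assertion $C_{m,\alpha,\gamma}^{tw}=\hbar(\hC_{m,\alpha,\gamma}^0)$ follows at once: $\hbar(A\otimes I_{\ell^\gamma})\in G_{m,\alpha,\gamma}^{tw}$ precisely when $\cN_\alpha((\det A)^{\ell^\gamma})=1$, which is the defining condition (\ref{eq:hC-m,alpha,gamma}) for $A\otimes I_{\ell^\gamma}\in\hC_{m,\alpha,\gamma}^0$. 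I will also repeatedly use: the central product $\hbar(\tN_{m,\alpha,\gamma}^0)=\tC_{m,\alpha,\gamma}^{tw}\tM_{m,\alpha,\gamma}^{tw}$ over $Z(E_{m,\alpha,\gamma}^{tw})$ with $[\tC^{tw},\tM^{tw}]=1$ (Proposition~\ref{prop:tN-m,alpha,gamma-odd}(1) and its proof); $\tM^{tw}/E^{tw}\cong\Sp_{2\gamma}(\ell)$ (Proposition~\ref{prop:tN-m,alpha,gamma-odd}(2)); the decomposition $R_{m,\alpha,\gamma}^{tw}=Z(R^{tw})E^{tw}$ with $Z(R^{tw})=\hbar(\hZ_{m,\alpha,\gamma}^0)$ from (\ref{eq:ZR_m,alpha,gamma}); and the fact that $Z(E^{tw})\subseteq G^{tw}$ when $\gamma\ge1$ (a scalar of $\ell$-power order on a space whose degree is divisible by $\ell$ has trivial determinant after $\cN_\alpha$), while $Z(E^{tw})=1$ when $\gamma=0$.

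I first handle the generic cases (1.1) and (2), where $\det(\tM_{m,\alpha,\gamma}^{tw})=1$: in (1.1) this is Proposition~\ref{prop:tN-m,alpha,gamma-odd}(5) since we are not in (\ref{eq:special-case-3-1}), and in (2) the hypothesis $\gamma+\nu(m)<a$ rules out $a=1$ and hence (\ref{eq:special-case-3-1}). Thus $\tM^{tw}\le G^{tw}$, so $M_{m,\alpha,\gamma}^{tw}=\tM_{m,\alpha,\gamma}^{tw}$. Intersecting the central product with $G^{tw}$: if $cm\in\hbar(\tN^0)$ with $c\in\tC^{tw}$, $m\in\tM^{tw}$, then $\det(cm)=\det(c)$, so $cm\in G^{tw}$ iff $c\in C^{tw}$; hence $\hbar(\tN_{m,\alpha,\gamma}^0)\cap G_{m,\alpha,\gamma}^{tw}=C^{tw}\tM^{tw}=\hbar(\hC_{m,\alpha,\gamma}^0)\,\hbar(\tM_{m,\alpha,\gamma}^0)=\hbar(\hN_{m,\alpha,\gamma}^0)$, a central product over $Z(E^{tw})$ because $C^{tw}\cap\tM^{tw}=Z(E^{tw})$. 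For the quotient, note $Z(R^{tw})\le C^{tw}$, $E^{tw}\le\tM^{tw}$ and $Z(R^{tw})\cap E^{tw}=Z(E^{tw})$ (this uses $\gamma+\nu(m)\ge a$ in (1.1), so $Z(E^{tw})\le Z(R^{tw})$, and is trivial in (2)); since a central product modulo its amalgamated centre is the direct product of the two quotients by that centre, $\hbar(\hN^0)/R^{tw}\cong C^{tw}/Z(R^{tw})\times\tM^{tw}/E^{tw}\cong C^{tw}/Z(R^{tw})\times\Sp_{2\gamma}(\ell)$. Finally, in (1.1) we have $\tN^{tw}=\hbar(\tN^0)V_{m,\alpha,\gamma}$ (Proposition~\ref{prop:tN-m,alpha,gamma-odd}(4)) with $v_{m,\alpha,\gamma}\in G^{tw}$, $\det v_{m,\alpha,\gamma}=1$ and $V_{m,\alpha,\gamma}\cap\hbar(\tN^0)=1$ (the image of $\hbar$ consists of matrices block diagonal in the $\ell^\alpha$ blocks cyclically permuted by $v_{m,\alpha,\gamma}$); intersecting with $G^{tw}$ then gives $N_{m,\alpha,\gamma}^{tw}=\hbar(\hN_{m,\alpha,\gamma}^0)\rtimes V_{m,\alpha,\gamma}$. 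In (2) one has $\alpha=0$, so $\hbar$, $\iota$ and $V_{m,0,\gamma}$ are trivial and $N_{m,0,\gamma}=C_{m,0,\gamma}\tM_{m,0,\gamma}$ directly, with the same quotient argument.

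It remains to treat case (1.2), the configuration (\ref{eq:special-case-3-1}): $\ell=3$, $a=1$, $\nu(m)=\alpha=0$, $\gamma=1$. Here $\hbar$ and $\iota$ are trivial, $\tN_{m,0,1}=\tC_{m,0,1}\tM_{m,0,1}$ is a central product over $Z(E_{m,0,1})$, and by Proposition~\ref{prop:tN-m,alpha,gamma-odd}(5) now $\det(\tM_{m,0,1})=\fZ_3$ while $\tM_{m,0,1}/E_{m,0,1}\cong\Sp_2(3)\cong\SL_2(3)$. The key observation is that $\det(\tC_{m,0,1})=\fZ_{q-\eta}^{3}$ (Proposition~\ref{prop:cc-R-m,alpha,gamma-odd}(1)) and $\fZ_{q-\eta}^{3}\cap\fZ_3=1$: since $a=\nu(q-\eta)=1$, $\fZ_3$ is the Sylow $3$-subgroup of $\fZ_{q-\eta}$ and the cubing endomorphism of $\fZ_{q-\eta}$ has kernel exactly $\fZ_3$ and image of index $3$, so the cubes meet $\fZ_3$ trivially. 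Consequently, if $cm\in\tN_{m,0,1}$ with $c\in\tC_{m,0,1}$, $m\in\tM_{m,0,1}$ and $\det(cm)=1$, then $\det(c)=\det(m)^{-1}\in\fZ_{q-\eta}^{3}\cap\fZ_3=1$, forcing $c\in C_{m,0,1}$ and $m\in M_{m,0,1}$; hence $N_{m,0,1}=C_{m,0,1}M_{m,0,1}$, a central product over $Z(E_{m,0,1})$ since $C_{m,0,1}\cap M_{m,0,1}=Z(E_{m,0,1})$ and $[C_{m,0,1},M_{m,0,1}]=1$. For the isomorphism type, $E_{m,0,1}\le\ker(\det|_{\tM_{m,0,1}})$, so the surjection $\tM_{m,0,1}\twoheadrightarrow\fZ_3$ factors through $\tM_{m,0,1}/E_{m,0,1}\cong\SL_2(3)$, and as $\SL_2(3)/[\SL_2(3),\SL_2(3)]\cong\Z/3$ this factored map is the abelianization, with kernel $[\SL_2(3),\SL_2(3)]\cong Q_8$; thus $M_{m,0,1}/E_{m,0,1}\cong Q_8$. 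Finally $Z(R_{m,0,1})=\hZ_{m,0,1}^0=\fZ_3=Z(E_{m,0,1})$ here (so $R_{m,0,1}=E_{m,0,1}$), and one more application of the central-product-modulo-centre fact gives $N_{m,0,1}/E_{m,0,1}\cong C_{m,0,1}/Z(R_{m,0,1})\times M_{m,0,1}/E_{m,0,1}\cong C_{m,0,1}/Z(R_{m,0,1})\times Q_8$.

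The determinant bookkeeping and the central-product-modulo-centre computations are routine; the one genuinely delicate point is case (1.2), where $\det(\tM_{m,0,1})\ne1$, so the clean central-product structure of $N_{m,0,1}$ is not automatic but rests on the arithmetic coincidence $\fZ_{q-\eta}^{3}\cap\fZ_3=1$ forced by $a=1$, together with the identification $\tM_{m,0,1}/E_{m,0,1}\cong\SL_2(3)$ and its abelianization. A secondary technicality is checking $V_{m,\alpha,\gamma}\cap\hbar(\tN_{m,\alpha,\gamma}^0)=1$ in (1.1) to obtain the semidirect (and not merely internal) product, which follows from the explicit block form of $v_\alpha$.
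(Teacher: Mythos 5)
Your proof is essentially correct, and for cases (1.1) and (2) it follows the same route as the paper (transfer to the twisted group, then track determinants). Two points are worth flagging.

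There is a small but genuine gap at the outset. You assert that $C^{tw}_{m,\alpha,\gamma}=\hbar(\hC^0_{m,\alpha,\gamma})$ ``follows at once'' from the determinant identity and Lemma~\ref{lem:tC-m,alpha,gamma-odd}(2). But $C^{tw}_{m,\alpha,\gamma}$ is by definition $C_{G^{tw}_{m,\alpha,\gamma}}(R^{tw}_{m,\alpha,\gamma})$, whereas the determinant calculation identifies $\tC^{tw}_{m,\alpha,\gamma}\cap G^{tw}_{m,\alpha,\gamma}$; you still need $C_{\tG^{tw}}(R^{tw})=C_{\tG^{tw}}(\tR^{tw})=\tC^{tw}$ before intersecting with $G^{tw}$. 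That is true, but it requires the separate observation (which the paper makes explicitly) that in case (1) one has $\det(\tR^{tw})=1$, hence $R^{tw}=\tR^{tw}$, while in case (2) the group $\tZ_{m,0,\gamma}$ lies in $Z(\tG_{m,0,\gamma})$, so centralizing $R$ and $\tR$ impose the same condition. The identical remark applies to the normalizer: one must first pass from $N_{\tG^{tw}}(R^{tw})$ to $N_{\tG^{tw}}(\tR^{tw})=\tN^{tw}$ (using specialness, resp.\ the above) before your intersection-with-$G^{tw}$ computation applies. Once this is inserted, your chain of equalities and the central-product-modulo-centre computation go through.

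Your treatment of (1.2) is a genuinely different and arguably cleaner route than the paper's. The paper first bounds $|N_{m,0,1}/C_{m,0,1}E_{m,0,1}|=8$ by comparing determinant images, and then exhibits explicit matrices $n_{m,1}^{(i)}$ and $n_m^{(vi)}$ in $G_{m,0,1}$ generating (modulo $E_{m,0,1}$) a copy of $Q_8$, thereby identifying $M_{m,0,1}$. Your argument avoids the explicit matrices: the determinant map on $\tM_{m,0,1}$ kills $E_{m,0,1}$, so factors through $\tM_{m,0,1}/E_{m,0,1}\cong\SL_2(3)$, and being a surjection onto $\fZ_3\cong\SL_2(3)^{\mathrm{ab}}$ its kernel must be $[\SL_2(3),\SL_2(3)]\cong Q_8$, i.e. $M_{m,0,1}/E_{m,0,1}\cong Q_8$; then the arithmetic $\fZ_{q-\eta}^3\cap\fZ_3=1$ (valid precisely because $a=1$) yields $N_{m,0,1}=C_{m,0,1}M_{m,0,1}$ directly from $\det(cm)=\det(c)\det(m)$. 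This is correct and more conceptual; the paper's version has the compensating advantage of producing concrete generators of $M_{m,0,1}$.
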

\begin{proof}
	(1) In this case, $R_{m,\alpha,\gamma}^{tw}=\tR_{m,\alpha,\gamma}^{tw}$.
	So $C_{\tG_{m,\alpha,\gamma}^{tw}}(R_{m,\alpha,\gamma}^{tw})=\tC_{m,\alpha,\gamma}^{tw}$
	and $N_{\tG_{m,\alpha,\gamma}^{tw}}(R_{m,\alpha,\gamma}^{tw})=\tN_{m,\alpha,\gamma}^{tw}$,
	which are given in Proposition \ref{prop:tN-m,alpha,gamma-odd}.
	Thus $C_{m,\alpha,\gamma}^{tw} = \tC_{m,\alpha,\gamma}^{tw} \cap G_{m,\alpha,\gamma}^{tw} = \hbar(\hC_{m,\alpha,\gamma}^0)$
	and $N_{m,\alpha,\gamma}^{tw} = \left(\hbar(\tN_{m,\alpha,\gamma}^0)\cap G_{m,\alpha,\gamma}^{tw}\right) \rtimes V_{m,\alpha,\gamma}$
	since $\det(v_{m,\alpha,\gamma})=1$.
	
	(1.1) If we are not in the case (\ref{eq:special-case-3-1}), the assertions follow from Proposition~\ref{prop:tN-m,alpha,gamma-odd}.
	
	(1.2) Now, assume we are in the case (\ref{eq:special-case-3-1}),
	i.e. $\ell=3$, $\nu(m)=\alpha=0$, $\gamma=1$ and $(q-\eta)_3=3$.
	In particular, there is no need to introduce the twisting process.
	Since $\tR_{m,0,1}$ is special, it is readily seen that
	$|\tN_{m,0,1}/\tC_{m,0,1}E_{m,0,1}|/|N_{m,0,1}/C_{m,0,1}E_{m,0,1}| = |\det\tN_{m,0,1}|/|\det\tC_{m,0,1}| = 3$
	by Proposition \ref{prop:cc-R-m,alpha,gamma-odd}.
	Since $\tN_{m,0,1}/\tC_{m,0,1}E_{m,0,1} \cong \SL_2(3)$, $|\SL_2(3)|=24$
	and $\SL_2(3)$ has a Sylow $2$-subgroup isomorphic to the quaternion group $Q_8$,
	$N_{m,0,1}/C_{m,0,1}E_{m,0,1}$ is isomorphic to a subgroup of $Q_8$.
	On the other hand, let
	\[ n_m^{(vi)}=I_m\otimes (1-\zeta_3)^{-1}\begin{bmatrix} \zeta_3&\zeta_3&1\\ 1&\zeta_3^2&1\\ 1&\zeta_3&\zeta_3 \end{bmatrix}, \]
	then it can be verified that $n_m^{(vi)}\in G_{m,0,1}$,
	$$(n_m^{(vi)})^{-1}x_{m,1,1}^0n_m^{(vi)}=x_{m,1,1}^0y_{m,1,1}^0,\ (n_m^{(vi)})^{-1}y_{m,1,1}^0n_m^{(vi)}=x_{m,1,1}^0(y_{m,1,1}^0)^{-1}.$$
	So it is easy to see that $\Grp{n_{m,1}^{(i)},n_m^{(vi)},E_{m,0,1}} \cong Q_8$ and $\det \Grp{n_{m,1}^{(i)},n_m^{(vi)},E_{m,0,1}} = 1$
	(see p.\pageref{n-i} for the definition of $n_{m,1}^{(i)}$),
	Thus $M_{m,0,1}=\Grp{n_{m,1}^{(i)},n_m^{(vi)},E_{m,0,1}}$ and the rest follows easily.
	
	(2) In this case, there is no need to introduce the twisting process.
	Recall that $\tR_{m,0,\gamma}=\tZ_{m,0,\gamma}E_{m,0,\gamma}$, $R_{m,0,\gamma}=Z_{m,0,\gamma}E_{m,0,\gamma}$
	and $Z_{m,0,\gamma}=\tZ_{m,0,\gamma}\cap G_{m,0,\gamma}$.
	Since $\tZ_{m,0,\gamma}$ is central in $\tG_{m,0,\gamma}$ in this case,
	$C_{\tG_{m,0,\gamma}}(R_{m,0,\gamma})=\tC_{m,0,\gamma}$
	and $N_{\tG_{m,0,\gamma}}(R_{m,0,\gamma})=\tN_{m,0,\gamma}$.
	In this case, (\ref{eq:special-case-3-1}) can not happen,
	thus $\det(\tM_{m,0,\gamma})=1$ and the rest follows similarly as the case (1.1).
\end{proof}

\begin{rem}
	\cite[Prop.~4.8]{FLL17a} demonstrates the special case $m=1,\alpha=0,\gamma=1$ of the above propositions.
	The elements $n_1,n_2$ in the proof of \cite[Prop.~4.8]{FLL17a} are different from $n_{m,1}^{(ii)},n_m^{(vi)}$ in the above proof, this is because we choose the following embedding of the symmetric group $\fS(3)$ in $\SL_3(q)$ in \cite{FLL17a}:
	$$\Grp{ \begin{bmatrix} 0&1&0\\1&0&0\\0&0&-1 \end{bmatrix}, \begin{bmatrix} 0&0&-1\\-1&0&0\\0&1&0 \end{bmatrix} }.$$
	If we change the embedding to the following probably better one:
	$$\Grp{ \begin{bmatrix} 0&-1&0\\-1&0&0\\0&0&-1 \end{bmatrix}, \begin{bmatrix} 0&0&1\\1&0&0\\0&1&0 \end{bmatrix} },$$
	then the elements $n_1,n_2$ could be changed to $n_{m,1}^{(ii)},n_m^{(vi)}$ as above.
\end{rem}

Now, assume $\bc=(c_1,\dots,c_r)\neq\zero$ is a
sequence  of natural numbers
and consider the special radical subgroup of the form $\tR_{m,\alpha,\gamma,\bc}$.
Recall that $\tR_{m,\alpha,\gamma,\bc}=\tR_{m,\alpha,\gamma}\wr A_{\bc}$, where $A_{\bc}$ is defined as in \S \ref{subsect:basic-subgps}.
Denote by $\tB_{m,\alpha,\gamma,\bc}$ the base subgroup of $\tR_{m,\alpha,\gamma,\bc}$, then $\tR_{m,\alpha,\gamma,\bc}=\tB_{m,\alpha,\gamma} \rtimes A_{\bc}$.
Set $R_{m,\alpha,\gamma,\bc} := \tR_{m,\alpha,\gamma,\bc} \cap G_{m,\alpha,\gamma,\bc}$
and $B_{m,\alpha,\gamma,\bc} = \tB_{m,\alpha,\gamma,\bc} \cap G_{m,\alpha,\gamma,\bc}$,
then $R_{m,\alpha,\gamma,\bc}=B_{m,\alpha,\gamma} \rtimes A_{\bc}$ since $\det(A_{\bc})=1$.
To consider the condition for a basic subgroup $\tR_{m,\alpha,\gamma,\bc}$ to be special, we begin with two lemmas.

\begin{lem}\label{lem:normal-abelian}
	Assume $\ell$ is an odd prime and $\bc\neq\zero$.
	All normal abelian subgroups of $R_{m,\alpha,\gamma,\bc}$ are contained in $B_{m,\alpha,\gamma,\bc}$
	unless $\ell=3$, $a=1$, $\nu(m)=\alpha=\gamma=0$ and $\bc=\one$.
\end{lem}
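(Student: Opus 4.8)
The plan is to argue by contradiction: assume $A$ is a normal abelian subgroup of $R:=R_{m,\alpha,\gamma,\bc}$ with $A\nleq B:=B_{m,\alpha,\gamma,\bc}$, and deduce that $\ell=3$, $a=1$, $\nu(m)=\alpha=\gamma=0$ and $\bc=\one$. Write $R=B\rtimes A_{\bc}$ (legitimate since $\det(A_{\bc})=1$) and set $k=\ell^{|\bc|}$, so that $B\unlhd R$ is the subgroup of $\det$-value $1$ inside the base group $\tB:=\tR_{m,\alpha,\gamma}^{\,k}$, the $k$ coordinates being permuted by $A_{\bc}$. Since $AB/B$ is a nontrivial normal subgroup of the $\ell$-group $A_{\bc}$, it meets $Z(A_{\bc})$ nontrivially; and because $A_{\bc}$ is an iterated wreath product of nontrivial regular permutation $\ell$-groups, every nontrivial element of $Z(A_{\bc})$ is fixed-point-free on the $k$ coordinates. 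So I may choose $x\in A$ whose image $\sigma\in A_{\bc}$ has order $\ell$ and is fixed-point-free, and write $x=b\sigma$ with $b\in B$; as a permutation $\sigma$ is then a product of $k/\ell$ disjoint $\ell$-cycles, and since $\ell$ is odd the cells $i$, $\sigma(i)$, $\sigma^{-1}(i)$ are pairwise distinct for each $i$.

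The key mechanism is the following consequence of $A$ being abelian and normal: for every $\beta\in B$ the commutator $w_{\beta}:=[\beta,x^{-1}]=\beta x^{-1}\beta^{-1}x$ lies in $A\cap B$ (it is a product of the $A$-elements $\beta x^{-1}\beta^{-1}$ and $x$, and lies in the normal subgroup $B$), so $x$ centralizes $w_{\beta}$. A short computation in $\tB\rtimes A_{\bc}$ gives $w_{\beta}=\beta\cdot{}^{\sigma^{-1}}(b^{-1}\beta^{-1}b)$, whose coordinate at cell $i$ is $\beta_{i}$, whereas the coordinate of ${}^{x}w_{\beta}$ at cell $i$ is $\beta_{i}^{-1}$ (here one uses $\sigma(i)\neq i\neq\sigma^{-1}(i)$ to read off the cell-$i$ entry); hence $w_{\beta}={}^{x}w_{\beta}$ forces $\beta_{i}=\beta_{i}^{-1}$ for all $i$.

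Now split on $\gamma$. If $\gamma\ge 1$, then $E_{m,\alpha,\gamma}\le\tR_{m,\alpha,\gamma}$ is a nontrivial $\ell$-group of exponent $\ell$ with $\det(E_{m,\alpha,\gamma})=1$ by Lemma~\ref{lem:tC-m,alpha,gamma-odd}(1); so for any cell $i$ the element $\beta$ supported in cell $i$ alone with $\beta_{i}\in E_{m,\alpha,\gamma}$ nontrivial belongs to $B$, and $\beta_{i}=\beta_{i}^{-1}$ is impossible for odd $\ell$. Thus $A\le B$ when $\gamma\ge 1$, so no exception can arise there. If $\gamma=0$, then $\tR_{m,\alpha,0}$ is cyclic, so $\tB$ and $B$ are abelian and the mechanism simplifies: $[x,\beta]=({}^{\sigma}\beta)\beta^{-1}$, hence, writing $B$ additively, $(\sigma-1)B\subseteq A\cap B$, while $x$ centralizing $A\cap B$ says that $\sigma$ fixes $A\cap B$ pointwise; therefore $(\sigma-1)^{2}B=0$.

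It remains — and this is the heart of the matter — to see that $(\sigma-1)^{2}B=0$ forces the stated parameters. As a $\langle\sigma\rangle$-module, $\tB=\tR_{m,\alpha,0}^{\,k}$ splits, via the fixed-point-free $\sigma$, into $k/\ell$ copies of a cyclic-shift module on $\tR_{m,\alpha,0}^{\,\ell}$, and on $\tR_{m,\alpha,0}^{\,\ell}$ the operator $\sigma-1$ reduces modulo $\ell$ to a single nilpotent Jordan block of size $\ell$. One then checks: (a) if $\sigma$ has more than one orbit on the cells, an element of $B$ supported across two orbits survives $(\sigma-1)^{2}$, so we need $k=\ell$, i.e.\ $\bc=\one$; (b) with a single orbit, $(\sigma-1)^{2}$ applied to a difference of two cell generators equals $\ell$ times another such difference, which vanishes only when $\ell=3$ and the coefficient ring has exponent $3$, i.e.\ $a+\alpha=1$, forcing $\ell=3$, $a=1$, $\alpha=0$; (c) if $\nu(m)\ge 1$ then $\det(\tR_{m,\alpha,0})$ is trivial by Proposition~\ref{prop:cc-R-m,alpha,gamma-odd}(1), so $B=\tB$ is the full base group and $(\sigma-1)^{2}$ is already nonzero on it, forcing $\nu(m)=0$. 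This pins down precisely the exceptional case, and conversely there $A:=\langle(\sigma-1)B,\sigma\rangle\cong(\Z/3)^{2}$ is a normal abelian subgroup of $R$ not contained in $B$, so the exception is genuine. The main obstacle is exactly this last module-theoretic bookkeeping, where the nilpotency length of the shift operator has to be played off against the determinant-one condition that cuts $B$ out of the base group.
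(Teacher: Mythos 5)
Your proof is correct, and it takes a genuinely different route from the paper's. The paper picks an arbitrary $bh\in A\setminus B$, passes to a conjugate where $h$ cycles the first $\ell$ coordinates, and runs explicit commutator computations with hand-chosen test matrices $y$, $y'$, finishing with an induction on the length $r$ of $\bc$. You instead choose the image $\sigma$ of some $x\in A\setminus B$ inside $Z(A_\bc)$, exploiting that any such nontrivial central element is automatically fixed-point-free; this treats all $\bc$ at once with no induction, and recasts the $\gamma=0$ analysis as the single operator condition $(\sigma-1)^2B=0$ together with a Jordan-block count. For $\gamma\ge 1$, your $w_\beta=[\beta,x^{-1}]$ mechanism extracting $\beta_i=\beta_i^{-1}$ differs slightly from the paper's route (which lets $y_1$ range over $E_{m,\alpha,\gamma}$ to force that non-abelian group to commute), but both close the case cleanly. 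Two presentation points worth tightening: in step (b) the assertion that $(\sigma-1)^2$ applied to a difference of cell generators "equals $\ell$ times another such difference" holds only for $\ell=3$ (for $\ell\ge5$ the result is already nonzero modulo $\ell$, which suffices but is not of that form); and step (c)'s deduction that $\det(\tR_{m,\alpha,0})$ is trivial whenever $\nu(m)\ge 1$ presupposes $a=1$, which step (b) supplies, so (b), (c) are sequential rather than independent checks. Your closing observation that the exception is genuine, exhibited by $A=\langle(\sigma-1)B,\sigma\rangle$, is a nice addendum that the lemma's "unless" does not require.
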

\begin{proof}
	We argue by contradiction using the method of \cite[(a) of (2A)]{An92}.
	Assume $A$ is a normal abelian subgroup of $R_{m,\alpha,\gamma,\bc}$ not contained in $B_{m,\alpha,\gamma,\bc}$.
	First, assume $\bc=(c_1)$.
	Then there is an element $bh$ in $A$ with $b\in B_{m,\alpha,\gamma,\bc}$ and $1\neq h\in A_{\bc}$.
	So $b=\diag\{b_1,\ldots,b_{\ell^{c_1}}\}$ with $b_i \in \tR_{m,\alpha,\gamma}$ and $\det(b_1\cdots b_{\ell^{c_1}})=1$.
	Without loss of generality, we may assume $h$ permutes cyclically $1,\ldots,\ell$.
	Take $y=\diag\{y_1,1,\ldots,1\}$ with $y_1 \in E_{m,\alpha,\gamma}$, then $y \in B_{m,\alpha,\gamma,\bc}$.
	Note that $[y,bh]=ybhy^{-1}h^{-1}b^{-1}=\diag\{y_1,\ldots\} \in A$.
	If $\gamma>0$, let $y_1$ run over all elements of $E_{m,\alpha,\gamma}$,
	then there would be a contradiction to $A$ being abelian.
	Thus $\gamma=0$.
	Since $\tR_{m,\alpha}$ is abelian, for any $y=\diag\{y_1,y_1^{-1},1,\ldots,1\}$ with $y_1 \in \tR_{m,\alpha}$, we have
	\begin{equation}\label{eq:[y,bh]}
	[y,bh]=[y,h]=yhy^{-1}h^{-1}=\diag\{y_1,y_1^{-2},y_1,\ldots\} \in A.
	\addtocounter{thm}{1}\tag{\thethm}
	\end{equation}
	Since $A$ is abelian and $bh,[y,bh]\in A$, we have $bh[y,bh]h^{-1}b^{-1}=[y,bh]$.
	Consequently,
	\begin{equation}\label{eq:h[y,h]h-1}
	h[y,h]h^{-1}=[y,h].
	\addtocounter{thm}{1}\tag{\thethm}
	\end{equation}
	If $\ell>3$, $h[y,h]h^{-1}=\diag\{1,y_1,y_1^{-2},y_1,\ldots\}$,
	then by (\ref{eq:[y,bh]}) and (\ref{eq:h[y,h]h-1}), we have a contradiction by taking $y_1\neq1$.
	So we assume furthermore that $\ell=3$.
	Then $h[y,h]h^{-1}=\diag\{y_1,y_1,y_1^{-2},\ldots\}$.
	If $a+\alpha>1$, there exists $y_1\in\tR_{m,\alpha}$ such that $o(y_1)=9$,
	thus we have a contradiction again from (\ref{eq:[y,bh]}) and (\ref{eq:h[y,h]h-1}).	
	Now, assume $\ell=3$, $a=1$ and $\alpha=\gamma=0$.
	If $\nu(m)\neq0$, then $\det(\tR_m)=1$ and taking $y=\diag\{y_1,1,\ldots,1\}$ with $y_1 \in \tR_m$ will give a contradiction.
	So assume $\nu(m)=0$.
	If $\bc=(c_1)$ with $c_1\neq1$, we take $y'=\diag\{y_1,1,1,y_1^{-1},1,1,\ldots\}$ with $y_1\neq1$.
	Then $[y',bh]=[y',h]=\diag\{y_1,y_1^{-1},1,\ldots\} \in A.$
	So (\ref{eq:h[y,h]h-1}) also holds with $y'$ replacing $y$, but $h[y',h]h^{-1}=\diag\{1,y_1,y_1^{-1},\ldots\}$, which is a contradiction.
	Now, assume $\bc=(c_1,\ldots,c_r)$ with $r\geq2$.
	Let $\bc'=(c_1,\ldots,c_{r-1})$ and $\tR_{m,\alpha,\gamma,\bc} = \tR_{m,\alpha,\gamma,\bc'} \wr A_{c_r}$.
	Since $\tR_{m,\alpha,\gamma,\bc'}$ has a non-abelian subgroup with determinant $1$,
	an argument as before shows that any normal abelian subgroup of $\tR_{m,\alpha,\gamma,\bc}$
	is contained in the base subgroup of $\tR_{m,\alpha,\gamma,\bc'} \wr A_{c_r}$,
	thus induction on $r$ shows the assertion.
\end{proof}

\begin{rem}\label{rem:back-symplectic}
	When we are in the case
	\begin{equation}\label{eq:special-case-3-3}
	\ell=3, a=1, \nu(m)=\alpha=\gamma=0 ~\textrm{and}~ \bc=\one,
	\addtocounter{thm}{1}\tag{\thethm}
	\end{equation}
	$\tG_{m,0,0,\one}=\tG_{m,0,1}$ and $R_{m,0,0,\one} = \tR_{m,0,0,\one} \cap G_{m,0,0,\one} = \tR_{m,0,1} \cap G_{m,0,1} = R_{m,0,1}$,
	which has been addressed before.
	So from now on we consider the cases other than (\ref{eq:special-case-3-3}).
\end{rem}

\begin{lem}\label{lem:characteristic-subgp}
	Assume $\ell$ is an odd prime, $\bc\neq\zero$ and we are not in the case (\ref{eq:special-case-3-3}).
	Then $Z(B_{m,\alpha,\gamma,\bc})$ is a characteristic subgroup of $R_{m,\alpha,\gamma,\bc}$.
\end{lem}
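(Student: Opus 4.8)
The plan is to prove the stronger statement that $B_{m,\alpha,\gamma,\bc}$ is itself characteristic in $R_{m,\alpha,\gamma,\bc}$; since the centre of a characteristic subgroup is again characteristic, this yields the lemma. Write $R=R_{m,\alpha,\gamma,\bc}$ and $B=B_{m,\alpha,\gamma,\bc}$, and let $W\unlhd R$ be the subgroup generated by all abelian normal subgroups of $R$. An automorphism of $R$ carries abelian normal subgroups to abelian normal subgroups, so $W$ is characteristic in $R$; hence it suffices to show $W=B$.

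One inclusion is immediate: since we are not in case~(\ref{eq:special-case-3-3}), Lemma~\ref{lem:normal-abelian} tells us that every abelian normal subgroup of $R$ lies in $B$, so $W\le B$.

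For the reverse inclusion $B\le W$ I would exhibit enough honest abelian normal subgroups of $R$ to generate $B$. Set $N=\ell^{|\bc|}$, so that $\tB_{m,\alpha,\gamma,\bc}$ is the direct product of $N$ copies of $\tR_{m,\alpha,\gamma}$ with $A_\bc$ permuting the factors. If $\gamma=0$ then $\tR_{m,\alpha,\gamma}=\tZ_{m,\alpha,\gamma}$ is abelian, so $B$ is itself an abelian normal subgroup of $R$ and $B\le W$. Assume $\gamma\ge1$. By Lemma~\ref{lem:tC-m,alpha,gamma-odd}, $\tR_{m,\alpha,\gamma}=\tZ_{m,\alpha,\gamma}E_{m,\alpha,\gamma}$ is a central product with $E_{m,\alpha,\gamma}$ extraspecial and $\det(E_{m,\alpha,\gamma})=1$, whence $C_{\tR_{m,\alpha,\gamma}}(E_{m,\alpha,\gamma})=Z(\tR_{m,\alpha,\gamma})=\tZ_{m,\alpha,\gamma}$. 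Using this, together with $\det(E_{m,\alpha,\gamma})=1$ and $\det(\tZ_{m,\alpha,\gamma})=\det(\tR_{m,\alpha,\gamma})$, one checks that $Z(B)=Z(\tB_{m,\alpha,\gamma,\bc})\cap G_{m,\alpha,\gamma,\bc}$ and $B/Z(B)\cong V^{(N)}$, where $V:=\tR_{m,\alpha,\gamma}/\tZ_{m,\alpha,\gamma}$ is an $\F_\ell$-vector space of dimension $2\gamma$, $V^{(N)}$ denotes the direct sum of $N$ copies of $V$, and $A_\bc$ permutes these $N$ summands. Now, for each $1$-dimensional subspace $L\le V$, let $A_L$ be the preimage in $B$, under $B\to B/Z(B)\cong V^{(N)}$, of the subgroup consisting of all tuples all of whose entries lie in $L$. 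This subgroup of the abelian group $B/Z(B)$ is invariant under the permutation action of $A_\bc$, so $A_L\unlhd R$; and $A_L$ is abelian, because in each coordinate the entries of its elements lie in the preimage of $L$ in $\tR_{m,\alpha,\gamma}$, which is abelian (being generated by the central subgroup $\tZ_{m,\alpha,\gamma}$ together with one further element). Hence $A_L\le W$ for every $L$, and likewise $Z(B)\le W$; since the $1$-dimensional subspaces span $V$, the corresponding subgroups of $V^{(N)}$ generate it, so $\langle Z(B),\,A_L:L\rangle=B\le W$. This gives $W=B$, so $B$ is characteristic in $R$ and therefore so is $Z(B)$.

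The one place that needs genuine calculation is the inclusion $B\le W$: it forces one to pass to the quotient $B/Z(B)$, because an abelian subgroup of a single coordinate copy of $E_{m,\alpha,\gamma}$ need not be normal in $R$, so the subgroups $A_L$ — which spread a single line of $V$ across all $N$ coordinates simultaneously — are exactly what is required. Verifying that each $A_L$ is genuinely abelian (this uses that $\tZ_{m,\alpha,\gamma}$ is the full centre of $\tR_{m,\alpha,\gamma}$) and genuinely normal in $R$ (this uses that $A_\bc$ acts purely by permuting coordinates) is short, and all remaining steps are formal.
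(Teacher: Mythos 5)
Your proof is correct, and it takes a genuinely different route from the paper. The paper identifies $Z(B_{m,\alpha,\gamma,\bc})$ as the \emph{intersection} of all maximal normal abelian subgroups of $R_{m,\alpha,\gamma,\bc}$: Lemma~\ref{lem:normal-abelian} gives the inclusion $Z(B)\subseteq A$ for every maximal normal abelian $A$, and for the reverse the paper constructs, for each $x\in\tR_{m,\alpha,\gamma}\setminus Z(\tR_{m,\alpha,\gamma})$, a maximal normal abelian subgroup of $R_{m,\alpha,\gamma,\bc}$ (a determinant-one diagonal of a suitable maximal normal abelian $W\le\tR_{m,\alpha,\gamma}$) avoiding elements with a component equal to $x$, following \cite[p.14]{AF90}. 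You instead characterize $B_{m,\alpha,\gamma,\bc}$ itself as the subgroup \emph{generated} by all normal abelian subgroups of $R_{m,\alpha,\gamma,\bc}$: the easy inclusion again comes from Lemma~\ref{lem:normal-abelian}, and the hard one is your construction of the "diagonal line'' subgroups $A_L$ lifting $L^{(N)}\le V^{(N)}\cong B/Z(B)$, which are abelian because the preimage of a line in $\tR_{m,\alpha,\gamma}$ is $\tZ_{m,\alpha,\gamma}\grp{e}$ for a single $e$, and normal in $R$ because $A_\bc$ permutes coordinates. Both arguments rely on the same easy half, but the canonical subgroups they intersect/generate are different; your version proves the formally stronger statement that $B_{m,\alpha,\gamma,\bc}$ is already characteristic (which would serve equally well in the subsequent Proposition~\ref{prop:special-odd-2}). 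One small point worth making explicit when you write this up: the verification that $B\to V^{(N)}$ is surjective uses $\det(\tZ_{m,\alpha,\gamma})=\det(\tR_{m,\alpha,\gamma})$, which follows from $\det(E_{m,\alpha,\gamma})=1$ (Lemma~\ref{lem:tC-m,alpha,gamma-odd}); you invoke this but it deserves a sentence.
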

\begin{proof}
	In fact, we prove that $Z(B_{m,\alpha,\gamma,\bc})$ is
	the intersection of all maximal normal abelian subgroups of $R_{m,\alpha,\gamma,\bc}$ as in \cite[p.14]{AF90}.
	Any maximal normal abelian subgroup of $R_{m,\alpha,\gamma,\bc}$
	contains $Z(B_{m,\alpha,\gamma,\bc})$ by Lemma \ref{lem:normal-abelian}.
	Conversely, given any $x\in \tR_{m,\alpha,\gamma}-Z(\tR_{m,\alpha,\gamma})$,
	there exists a maximal normal abelian subgroup $W$ of $\tR_{m,\alpha,\gamma}$ not containing $x$; see \cite[p.14]{AF90}.
	Set $A=\{ \diag\{w_1,\ldots,w_{\ell^{|\bc|}}\} \mid w_i\in W, \det(w_1\cdots w_{\ell^{|\bc}})=1 \}$.
	Then $A$ is a maximal normal abelian subgroup of $R_{m,\alpha,\gamma,\bc}$
	not containing any element of $B_{m,\alpha,\gamma,\bc}$ with a component $x$; see again \cite[p.14]{AF90}.
	This completes the proof.
\end{proof}

\begin{prop}\label{prop:special-odd-2}
	Assume $\ell$ is an odd prime, $\bc\neq\zero$ and we are not in the case (\ref{eq:special-case-3-3}).
	The basic subgroup $\tR_{m,\alpha,\gamma,\bc}$ of $\tG_{m,\alpha,\gamma,\bc}$ is special.
\end{prop}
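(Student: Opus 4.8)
The plan is to prove the equality of normalizers
$N_{\tG_{m,\alpha,\gamma,\bc}}(R_{m,\alpha,\gamma,\bc}) = N_{\tG_{m,\alpha,\gamma,\bc}}(\tR_{m,\alpha,\gamma,\bc})$, where
$R_{m,\alpha,\gamma,\bc} := \tR_{m,\alpha,\gamma,\bc}\cap G_{m,\alpha,\gamma,\bc}$; this is enough, since
$\tR_{m,\alpha,\gamma,\bc}$ is a radical subgroup of $\tG_{m,\alpha,\gamma,\bc}$ by \cite{AF90,An94}, so that
$\mrO_\ell(N_{\tG_{m,\alpha,\gamma,\bc}}(R_{m,\alpha,\gamma,\bc})) = \mrO_\ell(N_{\tG_{m,\alpha,\gamma,\bc}}(\tR_{m,\alpha,\gamma,\bc})) = \tR_{m,\alpha,\gamma,\bc}$,
which is exactly the defining property of being special. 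Write $\tG = \tG_{m,\alpha,\gamma,\bc}$, $\tR = \tR_{m,\alpha,\gamma,\bc}$,
$R = R_{m,\alpha,\gamma,\bc}$, $\tB = \tB_{m,\alpha,\gamma,\bc}$, $B = B_{m,\alpha,\gamma,\bc}$ and $N = N_{\tG}(R)$. The
inclusion $N_{\tG}(\tR)\subseteq N$ is clear because $R$ is invariant under $N_{\tG}(\tR)$, so the point is the reverse inclusion.

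By Lemma \ref{lem:characteristic-subgp} (valid since we are not in case (\ref{eq:special-case-3-3})), $Z(B)$ is characteristic in $R$,
so $N$ normalizes $Z(B)$ and hence normalizes $\tL := C_{\tG}(Z(B))$. The subgroup $Z(B)$ lies in
$Z(\tB) = \prod_{i=1}^{\ell^{|\bc|}}\tZ_{m,\alpha,\gamma}$ (block-diagonally), and a first task is to identify $\tL$: transferring to the
twisted groups and using that the $\ell^\alpha$ eigenspaces of each $\tZ_{m,\alpha,\gamma}$ inside its block form a single Frobenius orbit, one gets
$\tL = \prod_{i=1}^{\ell^{|\bc|}}\GL_{m\ell^\gamma}((\eta q)^{\ell^\alpha})$ --- provided that $Z(B)$ separates the $\ell^{|\bc|}$ blocks, which is
where $\bc\neq\zero$ together with the exclusion of (\ref{eq:special-case-3-3}) is used. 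Since $B = Z(B)E$ with
$E := \prod_{i=1}^{\ell^{|\bc|}}E_{m,\alpha,\gamma}$ (block-diagonal, with $\det E = 1$), we have $\tB\leq\tL$; moreover $B = R\cap\tL$, because an
element of $R$ with non-trivial $A_\bc$-component permutes the blocks non-trivially and so cannot centralize the block-separating group $Z(B)$. Thus
$N$ normalizes $B$ as well.

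It remains to recover $\tB$ from the $N$-invariant data $B$ and $\tL$. Here the key identities are $\tB = Z(\tB)\cdot B$ and
$Z(\tB) = \mrO_\ell(Z(C_{\tL}(B)))$: indeed $C_{\tL}(B) = C_{\tL}(E) = \prod_{i=1}^{\ell^{|\bc|}}\tC_{m,\alpha,\gamma}$ by
Lemma \ref{lem:tC-m,alpha,gamma-odd}, and the $\ell$-part of the centre of $\tC_{m,\alpha,\gamma}$ is precisely $\tZ_{m,\alpha,\gamma}$ (recall
$\tC_{m,\alpha,\gamma}\cong\GL_m((\eta q)^{\ell^\alpha})$ after transferring to twisted groups), so taking the direct product returns $Z(\tB)$.
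Therefore $\tB$ is determined by $B$ and $\tL$, so $N$ normalizes $\tB$; since $\tR = \tB\cdot R$, we conclude $N\subseteq N_{\tG}(\tR)$, as wanted.

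The step I expect to require the most care is the structural identification of $\tL = C_{\tG}(Z(B))$ and of $C_{\tL}(B)$: this means keeping track,
through the twisting maps $\hbar$ and $\iota$ of \S\ref{subsect:gst-odd}, of how $Z(B)$ and the extraspecial part $E$ are embedded in the blocks of
$\tB$, and in particular verifying that the eigenvalue patterns of elements of $Z(B)$ genuinely distinguish the $\ell^{|\bc|}$ blocks. This separation
is exactly what fails in the excluded case (\ref{eq:special-case-3-3}), where $\tR_{m,0,0,\one} = \tR_{m,0,1}$ is instead a symplectic-type group and
the assertion is already covered by Proposition \ref{prop:special-odd-1} (cf. Remark \ref{rem:back-symplectic}).
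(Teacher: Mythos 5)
Your proof is correct and reaches the same starting point as the paper (Lemma \ref{lem:characteristic-subgp}, giving that $N:=N_{\tG}(R)$ normalizes $Z(B)$) but then takes a genuinely different route. The paper works combinatorially with the two families of subspaces $\tilde{\cE}=\{[g,U]\mid 1\neq g\in Z(\tB)\}$ and $\cE=\{[g_1,[g_2,U]]\mid 1\neq g_i\in Z(B)\}$ from \cite[p.14]{AF90}: since $\ell^{|\bc|}>2$ the minimal members of both sets are exactly the underlying spaces of the factors of $\tB$, and the surjectivity of the projections $Z(B)\to Z(\tR_{m,\alpha,\gamma})$ then yields $N_{\tG}(Z(B))=N_{\tG}(Z(\tB))$ directly, after which $\tR=R\cdot Z(\tB)$ finishes. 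You instead package the same information group-theoretically via centralizers: from $\tL:=C_{\tG}(Z(B))$ you recover $B=R\cap\tL$ and then $Z(\tB)=\mrO_\ell(Z(C_{\tL}(B)))$, at which point $\tR=\tB\cdot R=Z(\tB)\cdot R$ gives the conclusion. This is a valid alternative; indeed, the verification that $\tL$ is block-diagonal is precisely the same separation fact that the paper encodes as ``minimal elements of $\cE$ are the blocks,'' and in both accounts it rests on $\ell^{|\bc|}\ge \ell\ge 3$. Your approach trades the combinatorics of $\tilde{\cE},\cE$ for explicit group constructions, which may read more cleanly to some readers; the paper's version avoids having to transfer through the twisting maps and so is a bit shorter on the page.

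Two small inaccuracies in your closing remarks, neither affecting the proof itself: the excluded case $(\ref{eq:special-case-3-3})$ is excluded because Lemma \ref{lem:characteristic-subgp} fails there (a ``diagonal'' normal abelian subgroup of $R$ escapes $B$), not because the eigenvalue separation fails --- separation still holds for $\ell^{|\bc|}=3$ blocks. Also, the identity from Remark \ref{rem:back-symplectic} is $R_{m,0,0,\one}=R_{m,0,1}$, not $\tR_{m,0,0,\one}=\tR_{m,0,1}$ (those two groups have different orders, $|\tR_{m,0,0,\one}|=81$ and $|\tR_{m,0,1}|=27$); it is the intersections with $G$ that coincide.
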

\begin{proof}
	Assume $n\in N_{\tG_{m,\alpha,\gamma,\bc}}(R_{m,\alpha,\gamma,\bc})$.
	Then by Lemma \ref{lem:characteristic-subgp}, $n$ normalizes $Z(B_{m,\alpha,\gamma,\bc})$.
	Let $U$ be the underlying space of $\tG_{m,\alpha,\gamma,\bc}$ and consider the following two sets (see \cite[p.14]{AF90}):
	\begin{align*}
	\tilde{\cE} &= \{ [g,U] \mid g\in Z(\tB_{m,\alpha,\gamma,\bc}), g\neq1 \};\\
	\cE &= \{ [g_1,[g_2,U]] \mid g_i\in Z(B_{m,\alpha,\gamma,\bc}), g_i\neq1 \}.
	\end{align*}
	Since $\ell^{|\bc|}>2$, the minimal elements of $\tilde{\cE}$ and $\cE$ are the same,
	each being the underlying space of a factor of the base subgroup $\tB_{m,\alpha,\gamma,\bc}$.
	Also note that the projection from $Z(B_{m,\alpha,\gamma,\bc})$ to each component $Z(\tR_{m,\alpha,\gamma})$ is surjective.
	From these, we can conclude that the normalizers of $Z(B_{m,\alpha,\gamma,\bc})$
	and $Z(\tB_{m,\alpha,\gamma,\bc})$ in $\tG_{m,\alpha,\gamma,\bc}$ are the same,
	since the normalizer permutes the minimal elements of $\tilde{\cE}$ and $\cE$.
	So $n$ normalizes $Z(\tB_{m,\alpha,\gamma,\bc})$.
	But note that $\tR_{m,\alpha,\gamma,\bc}=R_{m,\alpha,\gamma,\bc}Z(\tB_{m,\alpha,\gamma,\bc})$,
	thus $n$ normalizes $\tR_{m,\alpha,\gamma,\bc}$.
	So $N_{\tG_{m,\alpha,\gamma,\bc}}(R_{m,\alpha,\gamma,\bc}) = N_{\tG_{m,\alpha,\gamma,\bc}}(\tR_{m,\alpha,\gamma,\bc})$,
	which means that $\tR_{m,\alpha,\gamma,\bc}$ is special.
\end{proof}

For $*\in\{tw,\varnothing\}$,
denote by $C_{m,\alpha,\gamma,\bc}^*$ and $N_{m,\alpha,\gamma,\bc}^*$ respectively
the centralizer and normalizer of $R_{m,\alpha,\gamma,\bc}^*$ in $G_{m,\alpha,\gamma,\bc}^*$.

\begin{notation}\label{notation:section-Sn}
	For any positive integer $k$,
	denote by $\tW_k$ the subgroup of all permutation matrices in $\GL_k(\eta q)$
	and set $W_k = \{ (\det\sigma)\sigma \mid \sigma\in\tW_k \}$, then $\tW_k \cong W_k \cong \fS_k$.
	Note that if $k$ is odd,
	then $W_k\leqslant\SL_k(\eta q)$,
	which means that $W_k$ is an embedding of $\fS_k$ in $\SL_k(\eta q)$.
	Recall from Lemma~\ref{lem:tCtN} that
	$$\tN_{m,\alpha,\gamma,\bc} = (\tN_{m,\alpha,\gamma}/\tR_{m,\alpha,\gamma}) \otimes N_{\fS(\ell^{|\bc|})}(A_\bc),$$
	where $\otimes$ is defined as \cite[(1.3) (1.5)]{AF90}.
	Now, we change slightly the descrption of the elements on the right hand of the above equation.
	We identify $\fS(\ell^{|\bc|})$ with its embedding $W_{\ell^{|\bc|}}$.
	Since $\ell$ is odd, $\det(W_{\ell^{|\bc|}})=1$.
	Note that $A_{\bc}\leqslant W_{\ell^{|\bc|}}$.
	Let $\sigma$ be a permutation matrix normalizing $A_{\bc}$, then $\det(\sigma)\sigma\in W_{\ell^{|\bc|}}$.
	Replace the non-zero element in $\det(\sigma)\sigma$
	by $\det(\sigma)n_1,\ldots,\det(\sigma)n_{\ell^{|\bc|}}$
	with $n_i\in\tN_{m,\alpha,\gamma}$ lying in the same coset of $\tR_{m,\alpha,\gamma}$
	and denote the resulting element as $\sigma(n_1,\ldots,n_{\ell^{|\bc|}})$,
	then $\sigma(n_1,\ldots,n_{\ell^{|\bc|}}) \in \tN_{m,\alpha,\gamma,\bc}$.
	Conversely, any elements of $\tN_{m,\alpha,\gamma,\bc}$ is of this form.
	In the subsection \S \ref{subsect:srs-odd}, we will always represent the elements in $\tN_{m,\alpha,\gamma,\bc}$ in this way.
	In particular $\det(\sigma(n_1,\ldots,n_{\ell^{|\bc|}}))=\det(n_1)\cdots\det(n_{\ell^{|\bc|}})$.
\end{notation}

\begin{prop}\label{prop:cc-R-m,alpha,gamma,c-odd}
	Assume $\ell$ is an odd prime and $\bc\ne\zero$.
	\vspace{-0.5ex}
	\begin{enumerate}[\rm(1)]\setlength{\itemsep}{-0.5ex}
		\item $\det(\tR_{m,\alpha,\gamma,\bc}) = \det(\tR_{m,\alpha,\gamma}) = \fZ_{\ell^a}^{\ell^{\nu(m)+\gamma}}$,
		$\det(\tC_{m,\alpha,\gamma,\bc}) = (\det\tC_{m,\alpha,\gamma})^{\ell^{|\bc|}}=\fZ_{q-\eta}^{\ell^{\gamma+|\bc|}}$
		and $\det(\tN_{m,\alpha,\gamma,\bc}) = \Grp{\fZ_{q-\eta}^{\ell^{\gamma+|\bc|}},\fZ_{\ell^a}^{\ell^{\nu(m)+\gamma}}}$.
		\item Assume $\tR_{m,\alpha,\gamma,\bc}$ is special in $\tG_{m,\alpha,\gamma,\bc}$,
		then	the number of $G_{m,\alpha,\gamma,\bc}$-conjugacy classes of radical subgroups
		contained in the set $\{ G_{m,\alpha,\gamma,\bc}\cap{^{\tg}\tR_{m,\alpha,\gamma,\bc}} \mid \tg\in\tG_{m,\alpha,\gamma,\bc} \}$
		is $\ell^{\min\{a,\gamma+|\bc|,\gamma+\nu(m)\}}.$
	\end{enumerate}
\end{prop}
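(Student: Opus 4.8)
The plan is to reduce both parts to the determinant formulas of Proposition~\ref{prop:cc-R-m,alpha,gamma-odd} for $\tR_{m,\alpha,\gamma}$, $\tC_{m,\alpha,\gamma}$, $\tN_{m,\alpha,\gamma}$, combined with the structure $\tR_{m,\alpha,\gamma,\bc}=\tR_{m,\alpha,\gamma}\wr A_\bc$ and the explicit descriptions of $\tC_{m,\alpha,\gamma,\bc}$, $\tN_{m,\alpha,\gamma,\bc}$ supplied by Lemma~\ref{lem:tCtN} and Notation~\ref{notation:section-Sn}. For the first two equalities of part~(1): the base subgroup of $\tR_{m,\alpha,\gamma,\bc}$ is the block-diagonal $\tR_{m,\alpha,\gamma}^{\ell^{|\bc|}}$ and $A_\bc$ acts by permutation matrices; since $\ell$ is odd, $A_\bc$ is generated by products of $\ell$-cycles, hence lies in the kernel of $\det$, so $\det(\tR_{m,\alpha,\gamma,\bc})$ consists of the products $\det(r_1)\cdots\det(r_{\ell^{|\bc|}})$ with $r_i\in\tR_{m,\alpha,\gamma}$, which is just the subgroup $\det(\tR_{m,\alpha,\gamma})=\fZ_{\ell^a}^{\ell^{\nu(m)+\gamma}}$. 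By Lemma~\ref{lem:tCtN}, $\tC_{m,\alpha,\gamma,\bc}$ is the diagonal copy $\tC_{m,\alpha,\gamma}\otimes I_\bc$ (this transfers back from the twisted groups because $\iota$ preserves determinants), so every element of it has determinant an $\ell^{|\bc|}$-th power of a determinant from $\tC_{m,\alpha,\gamma}$, giving $\det(\tC_{m,\alpha,\gamma,\bc})=\det(\tC_{m,\alpha,\gamma})^{\ell^{|\bc|}}=\fZ_{q-\eta}^{\ell^{\gamma+|\bc|}}$.

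The heart of part~(1) is $\det(\tN_{m,\alpha,\gamma,\bc})$. By Notation~\ref{notation:section-Sn} every element of $\tN_{m,\alpha,\gamma,\bc}$ has the form $\sigma(n_1,\dots,n_{\ell^{|\bc|}})$ with $\sigma$ normalizing $A_\bc$ and with $n_1,\dots,n_{\ell^{|\bc|}}$ lying in one coset $n\tR_{m,\alpha,\gamma}$ ($n\in\tN_{m,\alpha,\gamma}$), and $\det\sigma(n_1,\dots,n_{\ell^{|\bc|}})=\det(n_1)\cdots\det(n_{\ell^{|\bc|}})$. Writing $n_i=nr_i$ with $r_i\in\tR_{m,\alpha,\gamma}$ free, this product runs over $\Grp{\det(\tN_{m,\alpha,\gamma})^{\ell^{|\bc|}},\det(\tR_{m,\alpha,\gamma})}$. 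If we are not in case~(\ref{eq:special-case-3-1}), Proposition~\ref{prop:cc-R-m,alpha,gamma-odd} gives $\det(\tN_{m,\alpha,\gamma})=\fZ_{q-\eta}^{\ell^\gamma}$, hence the value $\Grp{\fZ_{q-\eta}^{\ell^{\gamma+|\bc|}},\fZ_{\ell^a}^{\ell^{\nu(m)+\gamma}}}$. In case~(\ref{eq:special-case-3-1}) ($\ell=3$, $a=1$, $\nu(m)=\alpha=0$, $\gamma=1$) one has $\det(\tN_{m,0,1})=\fZ_{q-\eta}$ and $\det(\tR_{m,0,1})=\fZ_3^{3}=1$, so $\det(\tN_{m,0,1,\bc})=\fZ_{q-\eta}^{3^{|\bc|}}$; but $\bc\ne\zero$ forces $|\bc|\ge1=a$, and for $j\ge a$ the subgroup $\fZ_{q-\eta}^{\ell^j}$ is independent of $j$ (it is the $\ell'$-part), so $\fZ_{q-\eta}^{3^{|\bc|}}=\fZ_{q-\eta}^{3^{1+|\bc|}}=\Grp{\fZ_{q-\eta}^{\ell^{\gamma+|\bc|}},\fZ_{\ell^a}^{\ell^{\nu(m)+\gamma}}}$, and the uniform formula still holds.

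For part~(2), since $\tR_{m,\alpha,\gamma,\bc}$ is assumed special, Lemma~\ref{lem:split-classes-radical} shows the number of $G_{m,\alpha,\gamma,\bc}$-classes in question equals $|\tG_{m,\alpha,\gamma,\bc}:G_{m,\alpha,\gamma,\bc}N_{\tG_{m,\alpha,\gamma,\bc}}(\tR_{m,\alpha,\gamma,\bc})|$. As $\det$ is a surjection $\tG_{m,\alpha,\gamma,\bc}\to\fZ_{q-\eta}$ with kernel $G_{m,\alpha,\gamma,\bc}$, this index is $|\fZ_{q-\eta}:\det(\tN_{m,\alpha,\gamma,\bc})|$. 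By part~(1) the $\ell'$-part of $\fZ_{q-\eta}$ already lies in $\fZ_{q-\eta}^{\ell^{\gamma+|\bc|}}$, so the quotient is an $\ell$-group; working inside the cyclic $\ell$-group $\fZ_{\ell^a}$ one sees that $\mrO_\ell(\det(\tN_{m,\alpha,\gamma,\bc}))$ has order $\ell^{\max\{a-\gamma-|\bc|,\,a-\nu(m)-\gamma,\,0\}}$, whence the index equals $\ell^{a-\max\{a-\gamma-|\bc|,\,a-\nu(m)-\gamma,\,0\}}=\ell^{\min\{\gamma+|\bc|,\,\gamma+\nu(m),\,a\}}$, as claimed.

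The main obstacle I anticipate is not conceptual but lies in pinning down $\det(\tN_{m,\alpha,\gamma,\bc})$ correctly: one must genuinely use the ``common coset'' restriction recorded in Notation~\ref{notation:section-Sn} (the naive candidate $\det(\tN_{m,\alpha,\gamma})^{\ell^{|\bc|}}$ alone is too small), and then check that the exceptional configuration~(\ref{eq:special-case-3-1}) is still captured by the uniform formula — which works precisely because $\bc\ne\zero$ forces $|\bc|\ge a$ there. I would cross-check the final count against the parallel statements in \cite{LZ18} and \cite{Feng19}.
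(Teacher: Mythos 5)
Your proof is correct and follows essentially the same route as the paper: both parts come down to computing determinants of elements $\sigma(n_1,\ldots,n_{\ell^{|\bc|}})$ using Notation~\ref{notation:section-Sn}, then applying Lemma~\ref{lem:split-classes-radical}. The one stylistic difference is that the paper decomposes the common coset representative as $n=c_0m_0v_0$ with $c_0\in\tC_{m,\alpha,\gamma}$, $m_0\in\tM_{m,\alpha,\gamma}$, $v_0\in\iota(V_{m,\alpha,\gamma})$, observes that $\det(\tM_{m,\alpha,\gamma})^{\ell^{|\bc|}}=1$ (since $\det\tM_{m,\alpha,\gamma}$ is $1$ or $\fZ_3$ and $|\bc|>0$), and thereby arrives directly at $\Grp{\det(\tC_{m,\alpha,\gamma})^{\ell^{|\bc|}},\det(\tR_{m,\alpha,\gamma})}$ without needing a separate check for case~(\ref{eq:special-case-3-1}); you instead retain $\det(\tN_{m,\alpha,\gamma})^{\ell^{|\bc|}}$ and verify the exceptional case by hand, which works but is marginally longer.
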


\begin{proof}
	The assertions about $\det(\tR_{m,\alpha,\gamma,\bc})$ and $\det(\tC_{m,\alpha,\gamma,\bc})$ is seen by direct computation.
	Assume $n\in\tN_{m,\alpha,\gamma,\bc}$ has non-zero components $\det(\sigma)n_1$, \dots, $\det(\sigma)n_{\ell^{|\bc|}}$ with
	$n_i\in\tN_{m,\alpha,\gamma}$ for some permutation matrix $\sigma$ normalizing $A_{\bc}$,
	then by Notation \ref{notation:section-Sn}, $\det(n)=\det(n_1)\cdots\det(n_{\ell^{|\bc|}})$.
	Recall that $\tN_{m,\alpha,\gamma} = \tC_{m,\alpha,\gamma}\tM_{m,\alpha,\gamma} \rtimes \iota(V_{m,\alpha,\gamma})$.
	Since the $n_i$'s lie in the same $\tR_{m,\alpha,\gamma}$-coset,
	we may assume $n_i=c_0m_0v_0r_i$,
	where $c_0\in\tC_{m,\alpha,\gamma}$, $m_0\in \tM_{m,\alpha,\gamma}$,
	$v_0\in \iota(V_{m,\alpha,\gamma})$ and $r_i\in\tR_{m,\alpha,\gamma}$.
	Note that $\det(\iota(V_{m,\alpha,\gamma}))=\det(V_{m,\alpha,\gamma})=1$,
	$\det(\tM_{m,\alpha,\gamma}) = 1$ or $\fZ_3$ by Proposition \ref{prop:tN-m,alpha,gamma-odd} and $|\bc|>0$.
	Thus $\det(n) = \det(c_0)^{\ell^{|\bc|}}\det(r_1\cdots r_{\ell^{|\bc|}})$.
	So $\det(\tN_{m,\alpha,\gamma,\bc})
	= \Grp{\det(\tC_{m,\alpha,\gamma})^{\ell^{|\bc|}},\det(\tR_{m,\alpha,\gamma})}
	= \Grp{\fZ_{q-\eta}^{\ell^{\gamma+|\bc|}},\fZ_{\ell^a}^{\ell^{\nu(m)+\gamma}}}$.
	Then (2) follows by Lemma~\ref{lem:split-classes-radical}.
\end{proof}

Finally, we consider the special radical subgroups of the form $\tR=\tR_1\times\cdots\times\tR_u$
with $u>1$ and $\tR_i=\tR_{m_i,\alpha_i,\gamma_i,\bc_i}$.
Let $R=G\cap\tR$.
First, we give some notation.
Let $\tR$ and $R$ be as above.
Set $\tC=C_{\tG}(\tR)$, $\tN=N_{\tG}(\tR)$ and $C=C_G(R)$, $N=N_G(R)$.
Denote $\tC_i=\tC_{m_i,\alpha_i,\gamma_i,\bc_i}$, $\tN_i=\tN_{m_i,\alpha_i,\gamma_i,\bc_i}$ for each $i$.
Set
\begin{align*}
a(\tR_i)=\nu(m_i)+\gamma_i,\quad &a(\tR) = \min\{ a(\tR_i) \mid i=1,\dots,u \};\\
\delta(\tR_i)=\gamma_i+|\bc_i|,\quad &\delta(\tR) = \min\{ \delta(\tR_i) \mid i=1,\dots,u \}.
\end{align*}
The structure of $\tN$ is given in \cite[(4B)]{AF90} and \cite[(2C)]{An94}.
Note that the symmetric groups there consist of permutation matrices, which have determinants $\pm 1$, an $\ell'$-element.
By Proposition \ref{prop:cc-R-m,alpha,gamma-odd} and  \ref{prop:cc-R-m,alpha,gamma,c-odd},
we have the following.

\begin{lem}\label{det-radical-N-C-odd}
	Assume $\ell$ is an odd prime.
	Let $\tR$ be a radical subgroup of $\tG$ which can be expressed as 
	$\tR=\tR_1\times\cdots\times \tR_u$, a direct product of basic subgroups.
	\vspace{-0.5ex}
	\begin{enumerate}[\rm(1)]\setlength{\itemsep}{-0.5ex}
		\item $\mrO_{\ell'}(\det(\tN))=\mrO_{\ell'}(\det(\tR\tC))=\mrO_{\ell'}(\fZ_{q-\eta})$.
		In particular, $|\tG:GN_{\tG}(\tR)|$ is an $\ell$-number.
		\item If we are not in the case that
		\begin{equation}\label{special-case-wei-ell3}
		\begin{aligned}
		&\ell=3, a=1, a(\tR)=1  \ \textrm{and}\\
		&\textrm{there exists some $1\le i\le u$ satisfying}\ \tR_{i}=\tR_{m_{i},0,1}\  \textrm{and} \  3\nmid m_{i},
		\end{aligned}
		\addtocounter{thm}{1}\tag{\thethm}
		\end{equation}
		then $\det(\tN)=\det(\tR\tC)$.
		\item If we are in the case (\ref{special-case-wei-ell3}),
		then $\nu(\det(\tR\tC))=0$ and $\nu(\det(\tN))=1$.
	\end{enumerate}
\end{lem}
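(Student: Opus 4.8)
The plan is to reduce the three assertions to the determinant formulas for a single basic subgroup established in Propositions~\ref{prop:cc-R-m,alpha,gamma-odd} and~\ref{prop:cc-R-m,alpha,gamma,c-odd}, using the explicit shape of $\tN=N_{\tG}(\tR)$ for $\tR=\tR_1\times\cdots\times\tR_u$ recorded in \cite[(4B)]{AF90} and \cite[(2C)]{An94}. According to that description $\tN$ is generated by the blockwise normalizers $\tN_i=N_{\tG_{i}}(\tR_i)$ together with matrices permuting mutually isomorphic factors; since $\ell$ is odd each such permutation matrix has determinant $\pm1$, an $\ell'$-element, and moreover $\pm1\in\mrO_{\ell'}(\fZ_{q-\eta})\subseteq\det(\tC_i)$ for every $i$ (using $\det(\tC_i)=\fZ_{q-\eta}^{\ell^{\gamma_i+|\bc_i|}}$, whose $\ell'$-part is all of $\mrO_{\ell'}(\fZ_{q-\eta})$, and that $-1=1$ in characteristic $2$). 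Hence
\[ \det(\tN)=\Grp{\det(\tN_i)\mid i=1,\dots,u},\qquad \det(\tR\tC)=\Grp{\det(\tR_i\tC_i)\mid i=1,\dots,u}, \]
and similarly for the twisted groups. Recall also that $\det$ induces $\tG/G\cong\fZ_{q-\eta}$, so that $\det(H)\leqslant\fZ_{q-\eta}$ for every subgroup $H\leqslant\tG$ and $|\tG:GN_{\tG}(\tR)|=[\fZ_{q-\eta}:\det(\tN)]$.

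For~(1), Propositions~\ref{prop:cc-R-m,alpha,gamma-odd} and~\ref{prop:cc-R-m,alpha,gamma,c-odd} give $\mrO_{\ell'}(\det(\tN_i))=\mrO_{\ell'}(\det(\tC_i))=\mrO_{\ell'}(\fZ_{q-\eta})$ for every basic subgroup $\tR_i$ (including the exceptional case~(\ref{eq:special-case-3-1})), while $\det(\tR_i)$ is an $\ell$-group; taking the join over $i$ yields $\mrO_{\ell'}(\det(\tN))=\mrO_{\ell'}(\det(\tR\tC))=\mrO_{\ell'}(\fZ_{q-\eta})$, and then $|\tG:GN_{\tG}(\tR)|=[\fZ_{q-\eta}:\det(\tN)]$ is an $\ell$-number.

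For~(2) and~(3) the key point is that, by Propositions~\ref{prop:cc-R-m,alpha,gamma-odd}(1)(2) and~\ref{prop:cc-R-m,alpha,gamma,c-odd}(1), one has $\det(\tN_i)=\det(\tR_i\tC_i)=\Grp{\fZ_{q-\eta}^{\ell^{\gamma_i+|\bc_i|}},\fZ_{\ell^a}^{\ell^{\nu(m_i)+\gamma_i}}}$ for \emph{every} basic subgroup $\tR_i$ \emph{except} when $\tR_i$ falls under~(\ref{eq:special-case-3-1}) (that is, $\ell=3$, $a=1$, $\bc_i=\zero$, $\nu(m_i)=\alpha_i=0$, $\gamma_i=1$), in which case $\nu(\det(\tR_i\tC_i))=0$ but $\nu(\det(\tN_i))=\nu(\fZ_{q-\eta})=1$. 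A factor falling under~(\ref{eq:special-case-3-1}) is exactly a factor of the shape $\tR_{m_i,0,1}$ with $3\nmid m_i$, and its presence forces $a(\tR)\leqslant a(\tR_i)=1$; hence if some factor falls under~(\ref{eq:special-case-3-1}) we are in case~(\ref{special-case-wei-ell3}) iff $a(\tR)=1$. Thus: if \emph{no} factor falls under~(\ref{eq:special-case-3-1}) then $\det(\tN_i)=\det(\tR_i\tC_i)$ for all $i$ and taking joins gives $\det(\tN)=\det(\tR\tC)$; if some factor does but $a(\tR)=0$, then choosing $\tR_j$ with $\nu(m_j)+\gamma_j=0$ gives (since $\tR_j$ is a genuine basic subgroup, so $\gamma_j+|\bc_j|\geqslant1$, whence $\bc_j\neq\zero$) $\det(\tR_j)=\fZ_{\ell^a}^{\ell^{0}}=\fZ_{\ell^a}=\mrO_\ell(\fZ_{q-\eta})$, so by~(1) $\det(\tR\tC)=\fZ_{q-\eta}=\det(\tN)$; this proves~(2). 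In case~(\ref{special-case-wei-ell3}) itself, $a(\tR)=1$ forces $\nu(\det(\tR_i))=\max\{a-(\nu(m_i)+\gamma_i),0\}=0$ for all $i$ (as $a=1$), and $\gamma_i+|\bc_i|\geqslant1$ forces $\nu(\det(\tC_i))=\max\{a-(\gamma_i+|\bc_i|),0\}=0$ for all $i$, so $\nu(\det(\tR\tC))=0$; on the other hand the exceptional factor contributes $\nu(\det(\tN_i))=1$, and together with $\det(\tN)\leqslant\fZ_{q-\eta}$ and $\nu(\fZ_{q-\eta})=a=1$ this gives $\nu(\det(\tN))=1$, proving~(3).

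I expect the main obstacle to be extracting from \cite[(4B)]{AF90} and \cite[(2C)]{An94} the precise statement that the generators of $\tN$ beyond the $\tN_i$ may be taken to be permutation matrices (so have $\ell'$-determinant and contribute nothing new to $\det(\tN)$), as this mixes the wreath-product structure with the twisting conventions of \S\ref{subsect:basic-subgps}; and, more subtly, pinning down the exact interplay between the single-factor exceptional case~(\ref{eq:special-case-3-1}) and the global hypothesis~(\ref{special-case-wei-ell3}), in particular verifying that a genuine basic-subgroup decomposition (with no purely scalar summand) in case~(\ref{special-case-wei-ell3}) has $\delta(\tR)\geqslant1$, so that no other factor already forces $\det(\tR\tC)$ up to all of $\fZ_{q-\eta}$.
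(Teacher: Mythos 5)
Your proof follows the paper's own (largely implicit) approach: use the wreath-product structure of $\tN$ from \cite[(4B)]{AF90}, \cite[(2C)]{An94}, observe that the extra permutation generators have $\ell'$-determinant $\pm1$ already lying in every $\det(\tC_i)$, so $\det(\tN)=\Grp{\det(\tN_i)}$ and $\det(\tR\tC)=\Grp{\det(\tR_i\tC_i)}$, then apply Propositions~\ref{prop:cc-R-m,alpha,gamma-odd} and~\ref{prop:cc-R-m,alpha,gamma,c-odd} factor by factor. Part~(1) and part~(2) are correct, though your parenthetical ``since $\tR_j$ is a genuine basic subgroup, so $\gamma_j+|\bc_j|\geqslant1$'' is false: $\tR_{m,\alpha,0,\zero}$ with $\alpha\geq 0$ \emph{is} a non-trivial basic subgroup with $\gamma_j+|\bc_j|=0$. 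In~(2) this is harmless, since $\det(\tR_j)=\fZ_{\ell^a}$ follows from $\nu(m_j)+\gamma_j=0$ alone, without knowing anything about $\bc_j$.

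Your flagged concern about~(3) is well-founded, and it is a genuine gap — in your write-up and, as far as I can see, in the lemma's literal statement as well. The hypothesis~(\ref{special-case-wei-ell3}) asserts $a(\tR)=1$, which bounds $\nu(m_i)+\gamma_i$ from below, but places no lower bound on $\delta(\tR_i)=\gamma_i+|\bc_i|$. A factor $\tR_j=\tR_{m_j,\alpha_j,0,\zero}$ with $\nu(m_j)\geqslant1$ is a legitimate basic subgroup with $a(\tR_j)\geq 1$ and $\delta(\tR_j)=0$, and then $\det(\tC_j)=\fZ_{q-\eta}^{\ell^0}=\fZ_{q-\eta}$, so $\nu(\det(\tR\tC))\geq a=1$, contradicting the claimed $\nu(\det(\tR\tC))=0$. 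Concretely, with $\ell=3$, $a=1$, the radical subgroup $\tR=\tR_{1,0,1}\times\tR_{3,0}$ of $\GL_6(\eta q)$ is special (it satisfies clause~(1) of Proposition~\ref{prop:special-odd-3}), satisfies~(\ref{special-case-wei-ell3}) with $a(\tR)=1$, yet has $\det(\tC_2)=\fZ_{q-\eta}$. What you \emph{can} conclude under~(\ref{special-case-wei-ell3}) without further hypotheses is $\nu(\det(\tR))=0$ (this really does follow from $a(\tR)=1$) and $\nu(\det(\tN))=1$ (from the exceptional factor), hence $\nu(\tN/\tR N)=\nu(\det(\tN))-\nu(\det(\tR))=1$ — and this weaker consequence is the one the paper actually uses in Lemma~\ref{lem:char-def-small} and Proposition~\ref{sepcial-defect}. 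If one imposes in addition $\nu(m_i)=0$ for all $i$ (the weight-providing case the paper highlights earlier), then $a(\tR)=1$ does force $\gamma_i\geq1$, hence $\delta(\tR_i)\geq1$ and $\nu(\det(\tR\tC))=0$; you should make that hypothesis explicit if you want to recover~(3) as literally stated.
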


\begin{prop}\label{prop:special-odd-3}
	Assume $\ell$ is an odd prime and $\tR=\tR_1\times\cdots\times\tR_u$ with $u>1$ and $a(\tR_1) \leq\cdots\leq a(\tR_u)$.
	Then $\tR$ is special if and only if one of the following holds:
	\vspace{-0.5ex}
	\begin{enumerate}[\rm(1)]\setlength{\itemsep}{-0ex}
		\item $a(\tR)\geqslant a$;
		\item $\sum\limits_{i:a(\tR_i)=a(\tR)} \ell^{|\bc_i|} \geqslant2$ except when
		\vspace{-0.5ex}
		\begin{enumerate}[\rm(2a)]\setlength{\itemsep}{-0.5ex}
			\item\label{special-odd-3-1} $\tR_1 = \tR_{m_1}, \tR_2 = \tR_{m_2}$, $\nu(m_1)=\nu(m_2)=0, \nu(m_1+m_2) \geq a$,
			$\det (\tR_3 \times\cdots\times \tR_u)=1$;
			\item\label{special-odd-3-2} $\ell=3, a=1$, $a(\tR_1)=\alpha_1=0, \bc_1=\one$ and $a(\tR_i)>0$ for $i>1$.
		\end{enumerate}
		\item $a(\tR_1)=a(\tR)<\min\{a,a(\tR_2)\}$ with $\bc_1=\zero$ and $\alpha_1=0$.
	\end{enumerate}
	Finally, if $\tR$ is special, then $C_{\tG}(R)=\tC$.
\end{prop}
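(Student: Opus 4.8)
The plan is to replace ``special'' by a condition on normalizers. Since $\tR$ is radical in $\tG$, we have $\tR=\mrO_\ell(N_{\tG}(\tR))$; moreover $N_{\tG}(\tR)\le N_{\tG}(R)$ since conjugation preserves $R=\tR\cap G$. Hence, if $N_{\tG}(R)\le N_{\tG}(\tR)$, then $N_{\tG}(R)=N_{\tG}(\tR)$ and $\mrO_\ell(N_{\tG}(R))=\mrO_\ell(N_{\tG}(\tR))=\tR$, which (as $R$ is radical in $G$ by Lemma~\ref{lem:motivation-special-radical}) is exactly the statement that $\tR$ is special; conversely, if $\tR$ is special then $\tR=\mrO_\ell(N_{\tG}(R))\unlhd N_{\tG}(R)$. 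So $\tR$ is special if and only if every $g\in\tG$ normalizing $R$ also normalizes $\tR$, and the whole proposition reduces to deciding, configuration by configuration, whether such a $g$ exists.

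\textbf{The trivial case and a determinant identity.} If $a(\tR)\ge a$, then for each $i$ the formula $\det\hbar(\zeta I_{m_i\ell^{\gamma_i}})=\zeta^{k\ell^{\alpha_i+\nu(m_i)+\gamma_i}}$ recalled before Proposition~\ref{prop:special-odd-1}, together with $\det E_{m_i,\alpha_i,\gamma_i}=1$ and $\det A_{\bc_i}=1$ (permutation matrices; $\ell$ odd), forces $\det\tR=1$; hence $\tR=R\le G$, which is trivially special, and then $C_{\tG}(R)=C_{\tG}(\tR)=\tC$. So assume from now on $a(\tR)<a$ (in particular $a\ge1$). The basic computational input for the remaining cases is, writing $\tB$ for the product of the base subgroups of the components $\tR_i$: using Propositions~\ref{prop:cc-R-m,alpha,gamma-odd} and~\ref{prop:cc-R-m,alpha,gamma,c-odd} one finds $\det\tR=\det\tB=\fZ_{\ell^{a-a(\tR)}}=\det Z(\tB)$, whence $\tR=R\cdot Z(\tB)$; and $\tR=R\cdot Z(\tR)$ holds precisely when some component with $a(\tR_i)=a(\tR)$ has $\bc_i=\zero$.

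\textbf{Analysis and the exceptions.} For $g\in N_{\tG}(R)$ one first uses characteristic subgroups of $R$ — for components with $\bc_i\ne\zero$ the subgroup $Z(B_{m_i,\alpha_i,\gamma_i,\bc_i})$ (Lemmas~\ref{lem:normal-abelian} and~\ref{lem:characteristic-subgp}), for the remaining components the cyclic scalar subgroups $Z(\tR_i)\cap G$, whose orders on the blocks $V_i$ of the underlying module record $m_i,\alpha_i,\gamma_i,\bc_i$ — to show that, away from the exceptional configurations, $g$ permutes the $V_i$, identifying only components of the same type; this reduces the question, type-orbit by type-orbit, to the single-basic-subgroup situation handled by Propositions~\ref{prop:special-odd-1}, \ref{prop:special-odd-2} and~\ref{prop:CN_m,alpha,gamma-odd}, together with the determinant coupling of Lemma~\ref{det-radical-N-C-odd}. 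The outcome is that $\tR$ fails to be normalised by all of $N_{\tG}(R)$ exactly when: the minimal value $a(\tR)$ is realised by a single pure component $\tR_{m_1,\alpha_1,\gamma_1}$ with $\alpha_1>0$ (hence not special, by Proposition~\ref{prop:special-odd-1}) — excluded by conditions (2) and (3); or the minimal components are scalar components whose degrees can be recombined inside $\tG$ without changing $R$, which happens precisely in configuration~(\ref{special-odd-3-1}) (there $\nu(m_1+m_2)\ge a$ makes $R$'s scalar part on $V_1\oplus V_2$ equal to the group of global scalars, and $\det(\tR_3\times\cdots\times\tR_u)=1$ means nothing else re-constrains it); or one is in~(\ref{special-odd-3-2}), i.e.\ configuration~(\ref{eq:special-case-3-3}), where $\tR_{m_1,0,0,\one}$ has the same intersection with $G$ as the special subgroup $\tR_{m_1,0,1}$ without being special. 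Finally $C_{\tG}(R)=\tC$: in each of the special cases one shows that any $g\in C_{\tG}(R)$ preserves the block (and sub-block) decomposition used above — using the extraspecial parts of the components, which lie in $R$, and the coupling relation — and hence centralises $Z(\tB)$, so $C_{\tG}(R)=C_{\tG}(R\cdot Z(\tB))=C_{\tG}(\tR)=\tC$.

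\textbf{Main obstacle.} The delicate part is the matching in the third step: relating the arithmetic condition $\sum_{i:a(\tR_i)=a(\tR)}\ell^{|\bc_i|}\ge2$ — and verifying that within it the only failures are~(\ref{special-odd-3-1}) and~(\ref{special-odd-3-2}) — to the (non)existence of a scalar- or block-mixing $g\in\tG$ normalizing $R$ but not $\tR$. This requires the exact values of $\det\tR_i$, $\det Z(\tR_i)$ and a careful bookkeeping of how the single defining determinant relation of $R=\tR\cap G$ couples the components; it becomes genuinely intricate in the low-rank configurations and when $\ell=3$, $a=1$.
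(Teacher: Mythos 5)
Your overall plan — recast ``special'' as $N_{\tG}(R)\le N_{\tG}(\tR)$, observe that $\tR=R\cdot Z(\tB)$ when $a(\tR)<a$, and then argue via a characteristic subgroup of $R$ — is the right approach and matches the paper's strategy in spirit. The reduction step and the determinant identity $\det\tR=\det Z(\tB)=\fZ_{\ell^{a-a(\tR)}}$ are correct and useful. However, the central verification is not actually carried out, and your ``Main obstacle'' paragraph explicitly names it as the delicate unproved step. That is the gap: what is written in ``Analysis and the exceptions'' is a description of the answer, not a derivation of it.

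Two concrete problems worth flagging. First, the subgroups you propose to use are not characteristic subgroups of $R$ as claimed. Lemma~\ref{lem:characteristic-subgp} establishes that $Z(B_{m,\alpha,\gamma,\bc})$ is characteristic in $R_{m,\alpha,\gamma,\bc}$, a single wreath-type radical subgroup; it does not say anything about components sitting inside a product. Moreover, $R$ is not a direct product of the $R_i$ — it is cut out of $\prod_i\tR_i$ by a single global determinant relation — so the individual $Z(B_{m_i,\alpha_i,\gamma_i,\bc_i})$ and $Z(\tR_i)\cap G$ are not even subgroups of $R$ in the obvious way, much less characteristic ones. The paper instead works with the single global subgroup $Z(B)$ where $B=\tB\cap G$, re-running the Lemma~\ref{lem:characteristic-subgp} argument (which needs $\ge 2$ base factors, exactly the hypothesis in case (2)) to show $Z(B)$ is characteristic in $R$, and then applies the $\tilde{\cE}$/$\cE$ commutator-subspace argument from Proposition~\ref{prop:special-odd-2} to identify $N_{\tG}(Z(B))$ with $N_{\tG}(Z(\tB))$; conclusion then follows from $\tR=R\cdot Z(\tB)$. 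Your ``permute the $V_i$'' formulation is morally this, but the step from ``$N_{\tG}(R)$ normalizes the componentwise characteristic pieces'' to ``$N_{\tG}(R)$ preserves the block decomposition'' is precisely what needs proof and is not given. Second, in case (3) and the generic non-special case (unique minimal component, $\bc_1=\zero$, $\alpha_1>0$) you write ``hence not special, by Proposition~\ref{prop:special-odd-1}.'' But Proposition~\ref{prop:special-odd-1} is about a single basic subgroup being special in \emph{its own} ambient $\tG_{m,\alpha,\gamma}$; it does not directly transfer to a product $\tR_1\times\cdots\times\tR_u$ inside the larger $\tG$, where elements normalizing $R$ could in principle mix the blocks and restore specialness. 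The paper's actual argument replaces $\tR_1$ by a conjugate basic subgroup $\tR_{m_1\ell^{\alpha_1-\alpha_1'},\alpha_1',\gamma_1}$ with smaller $\alpha$, observes that this keeps $\tR\cap G$ fixed up to conjugacy while changing the $\tG$-conjugacy class of $\tR$, and invokes the uniqueness of the special radical subgroup lying above a given $R$ (Lemma~\ref{lem:motivation-special-radical}). That iteration and uniqueness argument is absent from your proposal.
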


\begin{proof}
	(1) If $a(\tR)\geqslant a$, then $\det(\tR)=1$ and $R = \tR \cap G = \tR$, thus $\tR$ is special by definition.
	So we assume $a(\tR)<a$ from now on.
	
	(2) Assume $\sum_{i:a(\tR_i)=a(\tR)} \ell^{|\bc_i|} \geqslant2$.
	Let $\tB_i$ be the base subgroup of $\tR_i$, $\tB = \tB_1 \times \cdots \times \tB_u$ and $B = \tB \cap G$.
	Assume we are not in the case (\ref{special-odd-3-1}) or (\ref{special-odd-3-2}),
	then similar arguments as in Lemma \ref{lem:characteristic-subgp} show that $Z(B)$ is a characteristic subgroup of $R$
	and the normalizers of $Z(B)$ and $Z(\tB)$ coincide and
	the same arguments as in Proposition \ref{prop:special-odd-2} show that $\tR$ is special.
	If we are in the case (\ref{special-odd-3-1}),
	direct calculation shows that $R = \tR_{m_1+m_2} \times \tR_3 \times \cdots \times \tR_u$,
	thus $\tR$ is not special.
	If we are in the case (\ref{special-odd-3-1}), $\tR$ is not special by Proposition \ref{prop:special-odd-2}

	(3) Now, assume $a(\tR_1)=a(\tR)<\min\{a,a(\tR_2)\}$ with $\bc_1=\zero$.
	We may replace $\tR_{m_1,\alpha_1,\gamma_1}$ by $\tR_{m_1\ell^{\alpha_1-\alpha'_1},\alpha'_1,\gamma_1}$
	with some $\alpha'_1<\alpha_1$ and we either go to (1) or (2) or go to the case $\alpha'_1=0$.
	
	The last statement follows from case by case verification.
\end{proof}

\subsection{Special radical subgroups for $\ell=2$ and $4 \mid q-\eta$.}
\label{subsect:srs-2-linear}

Now, we classify the special radical subgroups of $\tG = \GL_n(\eta q)$ for $\ell=2$ and $4 \mid q-\eta$.
Recall that $\nu$ is the discrete valuation such that $\nu(2)=1$ and $a=\nu(q-\eta)$.

We start also with the radical subgroup of the form $\tR_{m,\alpha,\gamma}$.
Keep all the notation in \S\ref{subsect:gst-2-linear} and
use notation in \S\ref{subsect:srs-odd} for the case when $\ell=2$ and $4 \mid q-\eta$ with the odd prime $\ell$ replaced by $\ell=2$.

\begin{prop}\label{prop:special-2-linear-1}
	Assume $\ell=2$ and $4 \mid q-\eta$.
	The basic subgroup $\tR_{m,\alpha,\gamma}$ of $\tG_{m,\alpha,\gamma}$ is special
	if and only if $\gamma+\nu(m)\geqslant a$ or $\alpha=0$.
\end{prop}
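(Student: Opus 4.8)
The plan is to run, essentially verbatim, the argument used for Proposition~\ref{prop:special-odd-1}, now in the $2$-modular setting where $a\geq 2$. First I would transfer to the twisted groups and record, exactly as in~(\ref{eq:ZR_m,alpha,gamma}), that $R_{m,\alpha,\gamma}^{tw}=Z_{m,\alpha,\gamma}^{tw}E_{m,\alpha,\gamma}^{tw}$ with $Z_{m,\alpha,\gamma}^{tw}=Z(R_{m,\alpha,\gamma}^{tw})=\tZ_{m,\alpha,\gamma}^{tw}\cap G_{m,\alpha,\gamma}^{tw}=\hbar(\hZ_{m,\alpha,\gamma}^0)$, where $\hZ_{m,\alpha,\gamma}^0=\{\zeta I_{m2^\gamma}\mid\zeta\in\fZ_{2^{a+\alpha}}\cap\fZ_{2^{\alpha+\gamma+\nu(m)}}\}$. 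This uses $\det(E_{m,\alpha,\gamma}^{tw})=1$ (the conclusions of Lemma~\ref{lem:tC-m,alpha,gamma-odd} hold in this setting) together with the computation $\det\hbar(\zeta I_{m2^\gamma})=\cN_\alpha(\zeta^{m2^\gamma})=\zeta^{k2^{\alpha+\nu(m)+\gamma}}$ for an odd integer $k$, which is valid because $4\mid q-\eta$ forces $\eta q\equiv 1\pmod 4$, so the exponent $1+\eta q+\cdots+(\eta q)^{2^\alpha-1}$ in $\cN_\alpha$ contributes exactly a factor $2^\alpha$ by the standard lifting-the-exponent count.

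For the ``if'' direction I would split into the same two cases. If $\gamma+\nu(m)\geq a$ then $\hZ_{m,\alpha,\gamma}^0=\tZ_{m,\alpha,\gamma}^0$, hence $R_{m,\alpha,\gamma}=\tR_{m,\alpha,\gamma}$ and the basic subgroup is special by definition. If $\alpha=0$ then $\tZ_{m,0,\gamma}\leq\fZ_{2^a}I_{m2^\gamma}\subseteq Z(\tG_{m,0,\gamma})$, since $2^a=(q-\eta)_2$; therefore $C_{\tG_{m,0,\gamma}}(R_{m,0,\gamma})=\tC_{m,0,\gamma}$ and $N_{\tG_{m,0,\gamma}}(R_{m,0,\gamma})=\tN_{m,0,\gamma}=N_{\tG_{m,0,\gamma}}(\tR_{m,0,\gamma})$, so $\tR_{m,0,\gamma}$ is special.

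For the ``only if'' direction I would assume $\gamma+\nu(m)<a$ and $\alpha>0$ and show $\tR_{m,\alpha,\gamma}$ is not special. As in the odd case, $Z_{m,\alpha,\gamma}$ is $\tG_{m,\alpha,\gamma}$-conjugate to $Z_{m2^{a-\gamma-\nu(m)},\,\alpha+\gamma+\nu(m)-a,\,\gamma}$ when $a-\alpha<\gamma+\nu(m)$, and to $Z_{m2^\alpha,0,\gamma}$ when $\alpha+\gamma+\nu(m)\leq a$; since the extraspecial component of $R_{m,\alpha,\gamma}$ is unique up to conjugacy in $C_{\tG_{m,\alpha,\gamma}}(Z_{m,\alpha,\gamma})$ (the case $\gamma=0$, where $R_{m,\alpha,\gamma}$ is cyclic, being trivial), it follows that $R_{m,\alpha,\gamma}$ is $\tG_{m,\alpha,\gamma}$-conjugate to $\tR_\bullet\cap G_{m,\alpha,\gamma}$ for a basic subgroup $\tR_\bullet$ whose parameters satisfy $\gamma'+\nu(m')\geq a$ or $\alpha'=0$, hence already known to be special by the previous paragraph. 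By Lemma~\ref{lem:motivation-special-radical}(2) this forces $\mrO_\ell(N_{\tG_{m,\alpha,\gamma}}(R_{m,\alpha,\gamma}))$ to be $\tG_{m,\alpha,\gamma}$-conjugate to $\tR_\bullet$, whose cyclic centre has order $2^{a+\alpha'}$ with $\alpha'=\max\{0,\alpha+\gamma+\nu(m)-a\}<\alpha$; this is strictly smaller than $|Z(\tR_{m,\alpha,\gamma})|=2^{a+\alpha}$, so $\mrO_\ell(N_{\tG_{m,\alpha,\gamma}}(R_{m,\alpha,\gamma}))\neq\tR_{m,\alpha,\gamma}$ and $\tR_{m,\alpha,\gamma}$ is not special.

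I do not expect a genuine new obstacle here: the proof is a transcription of that of Proposition~\ref{prop:special-odd-1}, and the only points needing a moment's attention are that in this subsection $a\geq 2$, so the exceptional configuration~(\ref{eq:special-case-3-1}) cannot arise; that the exceptional configurations~(\ref{eq:special-case-2-linear-0}) and~(\ref{eq:special-case-2-linear-1}) concern only $\det(\tM_{m,\alpha,\gamma}^{tw})$ and not the structure of $\tR_{m,\alpha,\gamma}$ or of its centre, hence are irrelevant to the present criterion; and that for $\gamma=1$ the extraspecial factor is $Q_8$ with the modified generators $x_{1,1}^0,y_{1,1}^0$, which nevertheless still have determinant $1$, so neither $\det(E_{m,\alpha,\gamma}^{tw})=1$ nor the computation of $\hZ_{m,\alpha,\gamma}^0$ is affected.
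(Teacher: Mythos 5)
Your proposal is correct and follows the same route the paper indicates: the paper's own proof of Proposition~\ref{prop:special-2-linear-1} consists of the single line ``Similar as for Proposition \ref{prop:special-odd-1}.'', and your argument is precisely a careful transcription of that odd-$\ell$ proof to the $\ell=2$, $4\mid q-\eta$ setting, supplying the LTE computation of $\det\hbar(\zeta I_{m2^\gamma})$, the $\alpha=0$ case via centrality of $\tZ$, and the order comparison that the paper leaves implicit.
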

\begin{proof}
	Similar as for Proposition \ref{prop:special-odd-1}.
\end{proof}

\begin{prop}\label{prop:cc-R-m,alpha,gamma-2-linear}
	Assume $\ell=2$ and $4 \mid q-\eta$.
	\vspace{-0.5ex}
	\begin{enumerate}[\rm(1)]\setlength{\itemsep}{-0.5ex}
		\item $\det(\tR_{m,\alpha,\gamma})=\det(\tR_{m,\alpha,\gamma}^{tw})=\fZ_{2^a}^{2^{\nu(m)+\gamma}}$;
		$\det(\tC_{m,\alpha,\gamma})=\det(\tC_{m,\alpha,\gamma}^{tw})=\fZ_{q-\eta}^{2^\gamma}$.
		\item We have $\det(\tN_{m,\alpha,\gamma})=\det(\tN_{m,\alpha,\gamma}^{tw})=\fZ_{q-\eta}^{2^\gamma}$
		unless in case (\ref{eq:special-case-2-linear-1}), in which case,
		$\det(\tN_{m,0,\gamma}) = \Grp{\det(\tC_{m,0,\gamma}),\det(\tM_{m,0,\gamma})}=\fZ_{q-\eta}^{2^{\gamma-1}}$.
		\item Assume $\tR_{m,\alpha,\gamma}$ is special in $\tG_{m,\alpha,\gamma}$,
		then	the number of $G_{m,\alpha,\gamma}$-conjugacy classes of radical subgroups contained in the set
		$\{ G_{m,\alpha,\gamma}\cap{^{\tg}\tR_{m,\alpha,\gamma}} \mid \tg\in\tG_{m,\alpha,\gamma} \}$ is
		\[\left\{\begin{array}{ll}
		2^{\min\{a,\gamma\}} & \textrm{if (\ref{eq:special-case-2-linear-1}) does not hold;}\\
		2^{\gamma-1}, & \textrm{if (\ref{eq:special-case-2-linear-1}) holds.}
		\end{array}\right.\]
	\end{enumerate}
\end{prop}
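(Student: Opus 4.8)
The plan is to run the same three-step argument as in Proposition~\ref{prop:cc-R-m,alpha,gamma-odd}, now feeding in the $\ell=2$ structural results of \S\ref{subsect:gst-2-linear}. For part~(1), I would first recall that the conclusions of Lemma~\ref{lem:tC-m,alpha,gamma-odd} still hold here, so $\tR_{m,\alpha,\gamma}^*=\tZ_{m,\alpha,\gamma}^*E_{m,\alpha,\gamma}^*$ with $\det(E_{m,\alpha,\gamma}^*)=1$; since $\iota$ acts by conjugation and hence preserves determinants, it suffices to compute $\det$ on $\hbar(\tZ_{m,\alpha,\gamma}^0)$ and $\hbar(\tC_{m,\alpha,\gamma}^0)$, where $\tZ_{m,\alpha,\gamma}^0=\grp{\zeta_{2^{a+\alpha}}I_{m2^\gamma}}$ and $\tC_{m,\alpha,\gamma}^0=\GL_m((\eta q)^{2^\alpha})\otimes I_{2^\gamma}$ by Proposition~\ref{prop:tN-m,alpha,gamma-2-linear}(1). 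The key is that $\det\circ\hbar$ is computed by the norm map $\cN_\alpha$ of (\ref{eq:norm}), and the arithmetic point where $4\mid q-\eta$ enters is that $4\mid\eta q-1$ (recall $a\geq2$), so by the lifting-the-exponent lemma $\nu((\eta q)^{2^\alpha}-1)=a+\alpha$; hence $1+\eta q+\cdots+(\eta q)^{2^\alpha-1}$ has $2$-adic valuation exactly $\alpha$ and is prime to the odd part of $q-\eta$. Substituting this into $\cN_\alpha(\zeta_{2^{a+\alpha}}^{m2^\gamma})$ gives $\det(\tR_{m,\alpha,\gamma})=\fZ_{2^a}^{2^{\nu(m)+\gamma}}$, while surjectivity of $\cN_\alpha$ together with $\det(A\otimes I_{2^\gamma})=\det(A)^{2^\gamma}$ gives $\det(\tC_{m,\alpha,\gamma})=\fZ_{q-\eta}^{2^\gamma}$; the twisted versions agree since $\iota$ preserves determinants.

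For part~(2), I would combine part~(1) with Proposition~\ref{prop:tN-m,alpha,gamma-2-linear}(3),(5): these show $\tN_{m,\alpha,\gamma}$ is generated by $\tC_{m,\alpha,\gamma}$, $\tM_{m,\alpha,\gamma}$ and $v_{m,\alpha,\gamma}$, and $\det(v_{m,\alpha,\gamma})=1$ (a direct check, $v_\alpha$ being, up to a sign, a cyclic permutation matrix on $2^\alpha$ symbols), whence $\det(\tN_{m,\alpha,\gamma})=\grp{\det(\tC_{m,\alpha,\gamma}),\det(\tM_{m,\alpha,\gamma})}$. By Proposition~\ref{prop:tN-m,alpha,gamma-2-linear}(6) the factor $\det(\tM_{m,\alpha,\gamma}^{tw})$ is trivial outside case~(\ref{eq:special-case-2-linear-1}), which gives $\det(\tN_{m,\alpha,\gamma})=\fZ_{q-\eta}^{2^\gamma}$ in that range. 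In case~(\ref{eq:special-case-2-linear-1}), where $a=2$, $\nu(m)=\alpha=0$ and $1\leq\gamma\leq2$, one has $\det(\tM_{m,0,\gamma}^{tw})=\fZ_4^{2^{\gamma-1}}$, and a short computation with the $2$-parts of the cyclic group $\fZ_{q-\eta}$ (using $(q-\eta)_2=2^a=4$) yields $\grp{\fZ_{q-\eta}^{2^\gamma},\fZ_4^{2^{\gamma-1}}}=\fZ_{q-\eta}^{2^{\gamma-1}}$.

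Finally, part~(3) follows from Lemma~\ref{lem:split-classes-radical} once $\tR_{m,\alpha,\gamma}$ is known to be special (Proposition~\ref{prop:special-2-linear-1}): the number in question equals $|\tG:GN_{\tG}(\tR_{m,\alpha,\gamma})|$, and since $\det$ identifies $\tG/G$ with $\fZ_{q-\eta}$ and sends $GN_{\tG}(\tR_{m,\alpha,\gamma})/G$ onto $\det(\tN_{m,\alpha,\gamma})$, this index is $[\fZ_{q-\eta}:\det(\tN_{m,\alpha,\gamma})]$; substituting part~(2) and using $[\fZ_{q-\eta}:\fZ_{q-\eta}^{2^k}]=2^{\min\{a,k\}}$ gives $2^{\min\{a,\gamma\}}$ in general and $2^{\gamma-1}$ in case~(\ref{eq:special-case-2-linear-1}). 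The only genuinely delicate steps are the $2$-adic lifting-the-exponent computation pinning down $\nu(1+\eta q+\cdots+(\eta q)^{2^\alpha-1})$ and the careful bookkeeping of the exceptional configuration~(\ref{eq:special-case-2-linear-1}); I expect that exceptional-case arithmetic to be the main (albeit minor) obstacle, the rest being a routine transcription of the odd-prime argument.
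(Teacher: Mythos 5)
Your proposal is correct and follows exactly the route the paper takes: the paper's proof of this proposition is literally the one-liner ``Similar as Proposition~\ref{prop:cc-R-m,alpha,gamma-odd},'' whose three steps (Lemma~\ref{lem:tC-m,alpha,gamma-odd}-style decomposition $\tR^*=\tZ^*E^*$ with $\det(E^*)=1$ for part~(1), Proposition~\ref{prop:tN-m,alpha,gamma-2-linear} for part~(2), Lemma~\ref{lem:split-classes-radical} for part~(3)) are precisely the ones you run. The only addition you make is to spell out the $2$-adic arithmetic — the valuation $\nu\bigl(1+\eta q+\cdots+(\eta q)^{2^\alpha-1}\bigr)=\alpha$ via Lemma~\ref{valuation} (your ``lifting-the-exponent'' step), the surjectivity of $\cN_\alpha$, $\det(v_{m,\alpha,\gamma})=1$, and the index computation $[\fZ_{q-\eta}:\fZ_{q-\eta}^{2^k}]=2^{\min\{a,k\}}$ — all of which is accurate, and the bookkeeping in case~(\ref{eq:special-case-2-linear-1}) using $\det(\tM_{m,0,\gamma}^{tw})=\fZ_4^{2^{\gamma-1}}$ together with $(q-\eta)_2=4$ is carried out correctly.
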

\begin{proof}
	Similar as Proposition \ref{prop:cc-R-m,alpha,gamma-odd}.
\end{proof}

We define the notation $\hC_{m,\alpha,\gamma}^0,\hN_{m,\alpha,\gamma}^0$, etc. similarly as in \S\ref{subsect:srs-odd}.
Assume that $\tR_{m,\alpha,\gamma}$ is special in $\tG_{m,\alpha,\gamma}$.
Denote by $C_{m,\alpha,\gamma}^*$, $N_{m,\alpha,\gamma}^*$ for the centralizer and normalizer
of $R_{m,\alpha,\gamma}^*$ in $G_{m,\alpha,\gamma}^*$
and $M_{m,\alpha,\gamma}^*=\tM_{m,\alpha,\gamma}^*\cap G_{m,\alpha,\gamma}^*$		
for $*\in\set{tw,\varnothing}$.

\begin{prop}\label{prop:CN_m,alpha,gamma-2-linear}
We have $C_{m,\alpha,\gamma}^{tw}=\hbar(\hC_{m,\alpha,\gamma}^0)$ and the following holds.
	\vspace{-0.5ex}
	\begin{enumerate}[\rm(1)]\setlength{\itemsep}{-0.5ex}
		\item If we are not in the case (\ref{eq:special-case-2-linear-1}), $\det(\tM_{m,\alpha,\gamma}^{tw})=1$.
		Thus $\hbar(\tN_{m,\alpha,\gamma}^0) \cap G_{m,\alpha,\gamma}^{tw}
		= \hbar(\hN_{m,\alpha,\gamma}^0) = C_{m,\alpha,\gamma}^{tw}\tM_{m,\alpha,\gamma}^{tw}$,
		$\hbar(\hN_{m,\alpha,\gamma}^0)/C_{m,\alpha,\gamma}^{tw}R_{m,\alpha,\gamma}^{tw} \cong \Sp_{2\gamma}(2)$
		and $N_{m,\alpha,\gamma}^{tw}=\hbar(\hN_{m,\alpha,\gamma}^0)V_{m,\alpha,\gamma}$.
		When we are not in the case (\ref{eq:special-case-2-linear-0}), we have furthermore that
		$\hbar(\hN_{m,\alpha,\gamma}^0)/R_{m,\alpha,\gamma}^{tw}
		\cong C_{m,\alpha,\gamma}^{tw}/Z(R_{m,\alpha,\gamma}^{tw}) \times \Sp_{2\gamma}(2)$.
		\item If we are in the case (\ref{eq:special-case-2-linear-1}) and $\gamma=1$,
		$R_{m,0,1} \cong Q_8$ and $N_{m,0,1}/C_{m,0,1}R_{m,0,1} \cong C_3$.
		\item If we are in the case (\ref{eq:special-case-2-linear-1}) and $\gamma=2$,
		$N_{m,0,2}/C_{m,0,2}R_{m,0,2} \cong \fA_6$, the alternating group of $6$ symbols.
	\end{enumerate}
\end{prop}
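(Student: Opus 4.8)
The plan is to follow the strategy of the proof of Proposition~\ref{prop:CN_m,alpha,gamma-odd}: transfer to the twisted groups, where the normalizer of $\tR_{m,\alpha,\gamma}^{tw}$ in $\tG_{m,\alpha,\gamma}^{tw}$ is described by Proposition~\ref{prop:tN-m,alpha,gamma-2-linear}, then intersect with $G_{m,\alpha,\gamma}^{tw}=\SL_{m2^{\alpha+\gamma}}(\eta q)$, controlling the intersections by the determinant computations of Proposition~\ref{prop:cc-R-m,alpha,gamma-2-linear}. I would first record that, since $\tR_{m,\alpha,\gamma}$ is special (Proposition~\ref{prop:special-2-linear-1}), either $\gamma+\nu(m)\geqslant a$, in which case $R_{m,\alpha,\gamma}^{tw}=\tR_{m,\alpha,\gamma}^{tw}$, or $\alpha=0$, in which case $\tZ_{m,0,\gamma}^{tw}$ is central in $\tG_{m,0,\gamma}^{tw}$; in both situations $C_{\tG_{m,\alpha,\gamma}^{tw}}(R_{m,\alpha,\gamma}^{tw})=\tC_{m,\alpha,\gamma}^{tw}$ and $N_{\tG_{m,\alpha,\gamma}^{tw}}(R_{m,\alpha,\gamma}^{tw})=\tN_{m,\alpha,\gamma}^{tw}$, so that $C_{m,\alpha,\gamma}^{tw}=\tC_{m,\alpha,\gamma}^{tw}\cap G_{m,\alpha,\gamma}^{tw}=\hbar(\hC_{m,\alpha,\gamma}^0)$ by the definition of $\hC_{m,\alpha,\gamma}^0$, and $N_{m,\alpha,\gamma}^{tw}=\tN_{m,\alpha,\gamma}^{tw}\cap G_{m,\alpha,\gamma}^{tw}$.

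For part~(1) we are not in case~(\ref{eq:special-case-2-linear-1}), so $\det(\tM_{m,\alpha,\gamma}^{tw})=1$ by Proposition~\ref{prop:tN-m,alpha,gamma-2-linear}(6), i.e. $\tM_{m,\alpha,\gamma}^{tw}\leqslant G_{m,\alpha,\gamma}^{tw}$. Writing $\tN_{m,\alpha,\gamma}^{tw}=\tC_{m,\alpha,\gamma}^{tw}\tM_{m,\alpha,\gamma}^{tw}V_{m,\alpha,\gamma}$ by Proposition~\ref{prop:tN-m,alpha,gamma-2-linear}(3)(5) and using that $\det(v_{m,\alpha,\gamma})=1$ as in the odd case, an element $cmv$ with $c\in\tC_{m,\alpha,\gamma}^{tw}$, $m\in\tM_{m,\alpha,\gamma}^{tw}$, $v\in V_{m,\alpha,\gamma}$ lies in $G_{m,\alpha,\gamma}^{tw}$ exactly when $\det(c)=1$; this gives $\hbar(\tN_{m,\alpha,\gamma}^0)\cap G_{m,\alpha,\gamma}^{tw}=\hbar(\hN_{m,\alpha,\gamma}^0)=C_{m,\alpha,\gamma}^{tw}\tM_{m,\alpha,\gamma}^{tw}$ and $N_{m,\alpha,\gamma}^{tw}=\hbar(\hN_{m,\alpha,\gamma}^0)V_{m,\alpha,\gamma}$. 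The isomorphism $\hbar(\hN_{m,\alpha,\gamma}^0)/C_{m,\alpha,\gamma}^{tw}R_{m,\alpha,\gamma}^{tw}\cong\Sp_{2\gamma}(2)$ follows from Proposition~\ref{prop:tN-m,alpha,gamma-2-linear}(2), since $C_{m,\alpha,\gamma}^{tw}R_{m,\alpha,\gamma}^{tw}\tM_{m,\alpha,\gamma}^{tw}/C_{m,\alpha,\gamma}^{tw}R_{m,\alpha,\gamma}^{tw}\cong\tM_{m,\alpha,\gamma}^{tw}/(\tM_{m,\alpha,\gamma}^{tw}\cap C_{m,\alpha,\gamma}^{tw}R_{m,\alpha,\gamma}^{tw})$ and the latter intersection is $C_{\tM_{m,\alpha,\gamma}^{tw}}(E_{m,\alpha,\gamma}^{tw})E_{m,\alpha,\gamma}^{tw}$; and if we are moreover not in case~(\ref{eq:special-case-2-linear-0}), then $\tC_{m,\alpha,\gamma}^{tw}\cap\tM_{m,\alpha,\gamma}^{tw}=Z(R_{m,\alpha,\gamma}^{tw})$ by Proposition~\ref{prop:tN-m,alpha,gamma-2-linear}(2)(3), so the central product descends to $\hbar(\hN_{m,\alpha,\gamma}^0)/R_{m,\alpha,\gamma}^{tw}\cong C_{m,\alpha,\gamma}^{tw}/Z(R_{m,\alpha,\gamma}^{tw})\times\Sp_{2\gamma}(2)$.

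For parts~(2) and~(3) we are in case~(\ref{eq:special-case-2-linear-1}), so $\alpha=0$, $a=2$, $\nu(m)=0$, and no twisting occurs. From~(\ref{eq:ZR_m,alpha,gamma}) one computes that $Z(R_{m,0,\gamma})$ has order $2$ when $\gamma=1$ (so $R_{m,0,1}=E_{m,0,1}\cong Q_8$) and order $4$ when $\gamma=2$ (so $\det(\tR_{m,0,2})=1$, whence $R_{m,0,2}=\tR_{m,0,2}$). In both cases Proposition~\ref{prop:tN-m,alpha,gamma-2-linear}(4) gives $\tN_{m,0,\gamma}/\tC_{m,0,\gamma}R_{m,0,\gamma}\cong\Sp_{2\gamma}(2)$, which is $\fS_3$ for $\gamma=1$ and $\fS_6$ for $\gamma=2$. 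Because $\det$ is trivial on $R_{m,0,\gamma}$, one has $\tC_{m,0,\gamma}R_{m,0,\gamma}\cap G_{m,0,\gamma}=C_{m,0,\gamma}R_{m,0,\gamma}$, so $N_{m,0,\gamma}/C_{m,0,\gamma}R_{m,0,\gamma}$ is isomorphic to the subgroup $N_{m,0,\gamma}\tC_{m,0,\gamma}R_{m,0,\gamma}/\tC_{m,0,\gamma}R_{m,0,\gamma}$ of $\Sp_{2\gamma}(2)$, whose index is $[\tN_{m,0,\gamma}:N_{m,0,\gamma}\tC_{m,0,\gamma}R_{m,0,\gamma}]=|\det(\tN_{m,0,\gamma})|/|\det(\tC_{m,0,\gamma})|$. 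By Proposition~\ref{prop:cc-R-m,alpha,gamma-2-linear} this equals $|\fZ_{q-\eta}^{2^{\gamma-1}}|/|\fZ_{q-\eta}^{2^\gamma}|$, which is $2$ since $(q-\eta)_2=2^a=4$. As $\fS_3$ and $\fS_6$ each have a unique subgroup of index~$2$, namely $C_3$ and $\fA_6$, the claimed isomorphisms follow.

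The step I expect to be the main obstacle is the bookkeeping in the last paragraph: one must pin down the orders of $Z(R_{m,0,\gamma})$ and the isomorphism types of $R_{m,0,\gamma}$, and, crucially, check that $R_{m,0,\gamma}$ (together with $E_{m,0,\gamma}$ and its products with $C_{m,0,\gamma}$) meets $\SL$ in the expected way, so that one genuinely lands on the \emph{unique} index-$2$ subgroup rather than some other proper subgroup of the image; in part~(1) one must also confirm that the exceptional cases~(\ref{eq:special-case-2-linear-0}) and~(\ref{eq:special-case-2-linear-1}) are disjoint, so that the ``furthermore'' clause is correctly placed. Everything else is routine once Propositions~\ref{prop:tN-m,alpha,gamma-2-linear} and~\ref{prop:cc-R-m,alpha,gamma-2-linear} are in hand.
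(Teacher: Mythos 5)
Your proof is correct and follows essentially the approach the paper intends: the paper's own proof is a one‑line pointer to the odd‑prime argument of Proposition~\ref{prop:CN_m,alpha,gamma-odd}, noting $\Sp_2(2)\cong\fS_3$ and $\Sp_4(2)\cong\fS_6$, and your text is a careful expansion of exactly that. One mild stylistic deviation worth noting: for the exceptional cases in (2) and (3) you identify the image inside $\Sp_{2\gamma}(2)$ by computing its index to be $2$ via determinants and then invoking uniqueness of the index‑$2$ subgroup of $\fS_3$ and $\fS_6$, whereas the odd‑prime proof the paper points to instead exhibits explicit generators ($n_{m,1}^{(i)}, n_m^{(vi)}$) to recognise $Q_8$ inside $\SL_2(3)$. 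Your route is slightly cleaner here because uniqueness of $\fA_n$ in $\fS_n$ is immediate, and the worry you raise at the end about possibly landing on a ``different'' proper subgroup is already settled by that uniqueness; similarly, the disjointness of cases~(\ref{eq:special-case-2-linear-0}) and~(\ref{eq:special-case-2-linear-1}) is a direct check on the defining conditions ($\gamma=1$ forces different $\nu(m)$, $\gamma=2$ forces different $a$) and poses no obstacle.
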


\begin{proof}
	Similar as in Proposition \ref{prop:CN_m,alpha,gamma-odd}, noting that $\Sp_2(2) \cong \fS_3\cong C_3 \rtimes C_2$ and $\Sp_4(2) \cong \fS_6$.
\end{proof}

\begin{rem}\label{M-for-not-central-product}
If the definition of $\tM_{m,\alpha,\gamma}^0,\tM_{m,\alpha,\gamma}^{tw},\tM_{m,\alpha,\gamma}$ is chosen as in Remark \ref{not-cent},
then the isomorphism $\hbar(\hN_{m,\alpha,\gamma}^0)/R_{m,\alpha,\gamma}^{tw} \cong
C_{m,\alpha,\gamma}^{tw}/Z(R_{m,\alpha,\gamma}^{tw}) \times \Sp_{2\gamma}(2)$ holds without any restriction.

Assume that we are in the case (\ref{eq:special-case-2-linear-0}). Then $\det(\tM_{m,\alpha,\gamma})=\{\pm 1\}$.
In addition, if we let $M_{m,\alpha,\gamma}=\tM_{m,\alpha,\gamma}\cap G_{m,\alpha,\gamma}$, then $R_{m,\alpha,\gamma}M_{m,\alpha,\gamma}/R_{m,\alpha,\gamma}\cong C_3$ or $\fA_6$ according as $\gamma=1$ or $2$.
\end{rem}

Now, assume $\bc\neq\zero$ and consider the special radical subgroup of the form $\tR_{m,\alpha,\gamma,\bc}$.
Let $R_{m,\alpha,\gamma,\bc} := \tR_{m,\alpha,\gamma,\bc} \cap G_{m,\alpha,\gamma,\bc}$.
Recall that $\tR_{m,\alpha,\gamma,\bc} = \tR_{m,\alpha,\gamma}\wr (A_{c_1} \wr\cdots\wr A_{c_r})$ with $\bc=(c_1,\ldots,c_r)$.
Denote by $\tB_{m,\alpha,\gamma,\bc}$ the base subgroup of $\tR_{m,\alpha,\gamma,\bc}$.
For $i\geq2$, the elements in $A_{c_i}$ are represented by matrices of the form
$I_{m2^{\alpha+\gamma+c_1+\cdots+c_{i-1}}} \otimes P$ with $P$ a permutation matrix, thus have determinant $1$.
On the other hand, the elements in $A_{c_1}$ are represented by matrices of the form
$I_{m2^{\alpha+\gamma}} \otimes P$ with $P$ a permutation matrix consisting of $2^{c_1-1}$ transpositions,
thus when $\nu(m)+\alpha+\gamma>0$ or $c_1>1$, $A_{c_1}$ has determinant $1$.
Now, when $\nu(m)+\alpha+\gamma=0$ and $c_1=1$,
we replace $A_{c_1}$ by the group $\Grp{ I_m \otimes \begin{bmatrix} 0&1\\-1&0 \end{bmatrix} }$,
then $\det A_{c_1} =1$,
but note that in this case, $\tB_mA_{c_1}$ is not a semidirect product.
Set $B_{m,\alpha,\gamma,\bc} = \tB_{m,\alpha,\gamma,\bc} \cap G_{m,\alpha,\gamma,\bc}$,
then $R_{m,\alpha,\gamma,\bc}=B_{m,\alpha,\gamma,\bc}A_{\bc}$ since $\det(A_{\bc})=1$.

\begin{lem}\label{lem:normal-abelian-2}
	Assume $\ell=2$, $4 \mid q-\eta$ and $\bc\neq\zero$.
	All normal abelian subgroups of $R_{m,\alpha,\gamma,\bc}$ are contained in $B_{m,\alpha,\gamma,\bc}$
	unless $\alpha=\gamma=0$, $a=2$, $\nu(m)\leq1$ and $\bc=\one$.
\end{lem}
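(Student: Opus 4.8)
The plan is to mimic the proof of Lemma~\ref{lem:normal-abelian}, which itself follows An's argument in \cite[(a) of (2A)]{An92}, proceeding by contradiction: suppose $A$ is a normal abelian subgroup of $R_{m,\alpha,\gamma,\bc}$ with $A\not\le B_{m,\alpha,\gamma,\bc}$. First I would reduce to the case $\bc=(c_1)$ by induction on the length $r$ of $\bc$: writing $\tR_{m,\alpha,\gamma,\bc}=\tR_{m,\alpha,\gamma,\bc'}\wr A_{c_r}$ with $\bc'=(c_1,\dots,c_{r-1})$ for $r\ge 2$ and using that $R_{m,\alpha,\gamma,\bc'}$ always contains a nonabelian subgroup of determinant $1$, the commutator argument shows that a normal abelian subgroup of $R_{m,\alpha,\gamma,\bc}$ must lie in the base of $\tR_{m,\alpha,\gamma,\bc'}\wr A_{c_r}$; peeling off one wreath factor at a time then gives the assertion, and since the exceptional configuration has $\bc=\one$ of length $1$, nothing is lost for $r\ge 2$.

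Now let $\bc=(c_1)$, and fix $bh\in A$ with $b\in B_{m,\alpha,\gamma,\bc}$ and $1\ne h\in A_{c_1}$; since $\ell=2$ the regular action makes $h$ a fixed-point-free involution, so after relabelling one of its transpositions is $(1\,2)$. If $\gamma\ge 1$, conjugating $bh$ by the elements $y=\diag\{y_1,1,\dots,1\}$ with $y_1\in E_{m,\alpha,\gamma}$ --- which lie in $B_{m,\alpha,\gamma,\bc}$ because $\det(E_{m,\alpha,\gamma})=1$ --- produces elements of $A$ whose first block ranges over all of $E_{m,\alpha,\gamma}$, contradicting that $A$ is abelian since $E_{m,\alpha,\gamma}$ is not. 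Hence $\gamma=0$, so $\tR_{m,\alpha,0}=\tZ_{m,\alpha,0}$ is cyclic of order $2^{a+\alpha}$ and $B_{m,\alpha,\gamma,\bc}$ is abelian. Then for $y=\diag\{y_1,y_1^{-1},1,\dots\}$ with $y_1\in\tZ_{m,\alpha,0}$ one gets $[y,bh]=[y,h]\in A$, and combining with $bh\in A$ yields $h[y,h]h^{-1}=[y,h]$ exactly as in Lemma~\ref{lem:normal-abelian}. If $2^{a+\alpha}>4$ one takes $y_1$ of order $8$ to reach a contradiction, and if $c_1>1$ one instead uses $y'=\diag\{y_1,1,y_1^{-1},1,\dots\}$ with its two nontrivial blocks in distinct $h$-orbits. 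This leaves only $a=2$, $\alpha=\gamma=0$, $\bc=\one$.

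In that remaining case the decisive invariant is whether $\det(\tR_m)=\det(\grp{\zeta_4 I_m})$ is trivial; since $2^a=4$ this holds precisely when $\nu(m)\ge 2$, the exact analogue of the condition $\nu(m)=0$ for $\ell=3,a=1$ in Lemma~\ref{lem:normal-abelian}. If $\nu(m)\ge 2$ then $B_{m,0,0,\one}$ is the full base of the wreath-type product $\tR_m\wr C_2$, and the classical argument of \cite[p.~14]{AF90} --- the diagonal subgroup together with the top involution fails to be normalised once the base is full --- shows every normal abelian subgroup lies in $B_{m,0,0,\one}$. If $\nu(m)\le 1$ the determinant-$1$ condition cuts the base down, $R_{m,0,0,\one}$ becomes a $2$-group of order $8$ or $16$ that genuinely contains a normal abelian subgroup meeting the top coset --- the semidihedral-type phenomenon already present in \S\ref{subsect:gst-2-unitary} --- so the conclusion fails and the ``unless'' clause is sharp.

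The main obstacle will be the boundary case $a=2$, $\alpha=\gamma=0$, $\bc=(c_1)$: one has to push determinants carefully through $\hbar$ and $\iota$, pin down the isomorphism type of the determinant-$1$ subgroup of $\tZ_{m,0,0}\wr A_{c_1}$ for the various values of $c_1$ and $\nu(m)$, and verify both that a contradiction still survives in every non-exceptional subcase and that it genuinely disappears in the exceptional one.
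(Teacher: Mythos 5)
Your proposal follows essentially the same route as the paper: reduction to $\bc=(c_1)$ by induction, elimination of $\gamma\ge 1$ by conjugation with $\diag\{y_1,1,\ldots\}$, elimination of $a+\alpha>2$ via the commutator identity $h[y,h]h^{-1}=[y,h]$ forcing $y_1^4=1$, and then the determinant-driven elimination of $\nu(m)\ge 2$ and $c_1>1$. The only places you are looser than the paper are in the $\nu(m)\ge 2$ step, where the paper simply takes $y=\diag\{y_1,1,\ldots\}$ with $o(y_1)=4$ (which lies in $B$ since $\det(\tR_m)=1$) and reruns the same commutator identity to force $y_1^2=1$, rather than appealing generically to \cite[p.~14]{AF90}, and in the closing worry about $\hbar$, $\iota$, which is moot since $\alpha=0$ involves no twisting.
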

\begin{proof}
	We argue as the proof of Lemma \ref{lem:normal-abelian}.
	Assume $A$ is a normal abelian subgroup of $R_{m,\alpha,\gamma,\bc}$ not contained in $B_{m,\alpha,\gamma,\bc}$,
	and assume first $\bc=(c_1)$.
	Then there is an element $bh$ in $A$ with $b\in B_{m,\alpha,\gamma,\bc}$ and $1\neq h\in A_{\bc}$.
	So $b=\diag\{b_1,\ldots,b_{\ell^{c_1}}\}$ with $b_i \in \tR_{m,\alpha,\gamma}$ and $\det(b_1\cdots b_{\ell^{c_1}})=1$.
	Without loss of generality, we may assume $h$ transposes $1$ and $2$.
	By the same argument as in the proof of Lemma \ref{lem:normal-abelian},
	we have $\gamma=0$,
	and for any $y=\diag\{y_1,y_1^{-1},1,\ldots,1\}$ with $y_1 \in \tR_{m,\alpha}$, we have
	\[ [y,bh]=[y,h]=yhy^{-1}h^{-1}=\diag\{y_1^2,y_1^{-2},1,\ldots,1\} \in A, \]
	\[ h[y,h]h^{-1}=[y,h], \]
	and \[ h[y,h]h^{-1}=\diag\{y_1^{-2},y_1^2,1,\ldots,1\}. \]
	If $a+\alpha>2$, there exists $y_1\in\tR_{m,\alpha}$ such that $o(y_1)=8$,
	then we have a contradiction from the above three equations.
	Now, assume $\gamma=\alpha=0$ and $a=2$.
	If $\nu(m)\geq2$, then $\det(\tR_m)=1$ and taking $y=\diag\{y_1,1,\ldots,1\}$ with $y_1 \in \tR_m$ and $o(y_1)=4$ will give a contradiction.
	So we have $\nu(m)\leq1$.
	The rest can be proved as Lemma \ref{lem:normal-abelian}.
\end{proof}

\begin{rem}\label{rem:back-symplectic-2}
	When we are in the following case:
	\begin{equation}\label{eq:special-case-2-linear-2}
	\ell=2,\ 4 \mid q-\eta,\ a=2,\ \alpha=\gamma=0,\ \nu(m)\leq1,\ \bc=\one,
	\addtocounter{thm}{1}\tag{\thethm}
	\end{equation}
	then $\tG_{m,0,0,\one}=\tG_{m,0,1}$ and $R_{m,0,0,\one} = \tR_{m,0,0,\one} \cap G_{m,0,0,\one} = \tR_{m,0,1} \cap G_{m,0,1} = R_{m,0,1}$,
	which has been addressed before.
	So now we consider the cases other than (\ref{eq:special-case-2-linear-2}).
	Note that $R_{m,0,1} \cong Z_0E_1$ when $\nu(m)=1$ and $R_{m,0,1} \cong Q_8$ when $\nu(m)=0$.
\end{rem}

\begin{prop}\label{prop:special-2-linear-2}
	Assume $\ell=2$, $4 \mid q-\eta$ and $\bc\neq\zero$.
	The basic subgroup $\tR_{m,\alpha,\gamma,\bc}$ of $\tG_{m,\alpha,\gamma,\bc}$ is special
	unless (\ref{eq:special-case-2-linear-2}) holds.
\end{prop}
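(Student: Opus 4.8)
The proof will run parallel to that of Proposition~\ref{prop:special-odd-2}, the only genuinely new point being the case $\bc=\one$. Write $\tR=\tR_{m,\alpha,\gamma,\bc}$, $R=R_{m,\alpha,\gamma,\bc}=\tR\cap G_{m,\alpha,\gamma,\bc}$, and let $\tB=\tB_{m,\alpha,\gamma,\bc}$, $B=B_{m,\alpha,\gamma,\bc}$ be the base subgroups. Since we are not in case (\ref{eq:special-case-2-linear-2}), the exceptional situation of Lemma~\ref{lem:normal-abelian-2} does not occur, so all normal abelian subgroups of $R$ lie in $B$; then the same argument as in the proof of Lemma~\ref{lem:characteristic-subgp} (which applies equally for $\ell=2$) shows that $Z(B)$ is the intersection of the maximal normal abelian subgroups of $R$, hence characteristic in $R$. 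Moreover $\det(Z(\tR_{m,\alpha,\gamma}))=\det(\tR_{m,\alpha,\gamma})$ by Proposition~\ref{prop:cc-R-m,alpha,gamma-2-linear}(1), so $\det(Z(\tB))=\det(\tB)$; together with $\det(A_\bc)=1$ this gives $Z(B)=Z(\tB)\cap G_{m,\alpha,\gamma,\bc}$ and $\tR=R\,Z(\tB)$. It therefore suffices to prove that any $n\in N_{\tG_{m,\alpha,\gamma,\bc}}(R)$, which necessarily normalises $Z(B)$, also normalises $Z(\tB)$; for then $n$ normalises $\tR=RZ(\tB)$, which is exactly what ``special'' means.

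When $|\bc|\geq 2$ the base subgroup has at least four factors, and one copies the argument of Proposition~\ref{prop:special-odd-2} verbatim. With $U$ the underlying space, the projection of $Z(B)$ onto each factor $Z(\tR_{m,\alpha,\gamma})$ is surjective, and in the families
\[ \tilde{\cE}=\{\,[g,U]\mid g\in Z(\tB),\ g\neq 1\,\},\qquad \cE=\{\,[g_1,[g_2,U]]\mid g_i\in Z(B),\ g_i\neq 1\,\} \]
the minimal nonzero members coincide, each being the underlying space of a factor of $\tB$; this is where one uses that there are at least three blocks, so that determinants can be cancelled while still isolating a single block. Hence $N_{\tG_{m,\alpha,\gamma,\bc}}(Z(B))=N_{\tG_{m,\alpha,\gamma,\bc}}(Z(\tB))$, and $\tR$ is special.

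It remains to handle $\bc=\one$, where $\tB$ has only two factors and the families above degenerate; this is the heart of the matter. Here $Z(B)$ contains the element $(\zeta,\zeta^{-1})$ with $\zeta$ a generator of $\tZ_{m,\alpha,\gamma}=Z(\tR_{m,\alpha,\gamma})$, and, since $\zeta^2\neq 1$, this element has disjoint eigenvalue sets on the two half-spaces $U_1,U_2$ (the underlying spaces of the two copies of $\tR_{m,\alpha,\gamma}$). Thus a normaliser $n$ of $Z(B)$ permutes $\{U_1,U_2\}$; replacing $n$ by $nw$ for a wreath-swap element $w\in R$ if necessary, we may assume $n\in\GL(U_1)\times\GL(U_2)$. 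Then $n$ normalises $R\cap(\GL(U_1)\times\GL(U_2))=B$; projecting $B$ onto each factor $\tR_{m,\alpha,\gamma}$ is surjective (any element completes to a determinant-$1$ pair), so $n=(n_1,n_2)$ with each $n_i$ normalising $\tR_{m,\alpha,\gamma}$, hence $Z(\tR_{m,\alpha,\gamma})=\tZ_{m,\alpha,\gamma}$; therefore $n$ normalises $\tZ_{m,\alpha,\gamma}\times\tZ_{m,\alpha,\gamma}=Z(\tB)$, and again $\tR$ is special. Finally, in case (\ref{eq:special-case-2-linear-2}) the exclusion is genuine: there $R_{m,0,0,\one}=R_{m,0,1}$ by Remark~\ref{rem:back-symplectic-2}, while $\tR_{m,0,1}$ is special by Proposition~\ref{prop:special-2-linear-1} and has order strictly smaller than $\tR_{m,0,0,\one}$, so $\tR_{m,0,0,\one}\neq\mrO_2(N_{\tG_{m,0,0,\one}}(R_{m,0,0,\one}))$.

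The step I expect to be the main obstacle is precisely the $\bc=\one$ case: the clean subspace-recovery mechanism of Proposition~\ref{prop:special-odd-2} collapses with only two wreath blocks, so one must extract the block decomposition by hand from $Z(B)$ and, more delicately, verify that the determinant bookkeeping forces the breakdown of ``$Z(B)$ is characteristic in $R$'' to occur in exactly the family (\ref{eq:special-case-2-linear-2}) --- which is the content of Lemma~\ref{lem:normal-abelian-2}, whose exceptional clause matches (\ref{eq:special-case-2-linear-2}) on the nose.
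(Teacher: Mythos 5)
Your proof is correct, and for $|\bc|\geq 2$ it coincides with the paper's (both reduce to the argument of Proposition~\ref{prop:special-odd-2} with the same two families $\tilde\cE$ and $\cE$). The genuinely different content is $\bc=\one$. The paper compresses this to a single sentence: when (\ref{eq:special-case-2-linear-2}) fails there is $g\in\tR_{m,\alpha,\gamma}$ with $g\neq1$ and $\det g=1$, hence the single-commutator families $\tilde\cE$ and $\cE$ already coincide and the argument of Proposition~\ref{prop:special-odd-2} goes through. You instead recover the block decomposition directly: the element $(\zeta,\zeta^{-1})\in Z(B)$, with $\zeta$ a generator of $\tZ_{m,\alpha,\gamma}$ and $\zeta^2\neq1$, already has $U_1$ and $U_2$ as its two eigenspaces, so any normaliser of $Z(B)$ permutes $\{U_1,U_2\}$, becomes block-diagonal after multiplying by the swap in $R$, normalises $B=R\cap(\GL(U_1)\times\GL(U_2))$, hence (by surjectivity of the two projections) each $\tZ_{m,\alpha,\gamma}$, hence $Z(\tB)$, and finally $\tR=RZ(\tB)$. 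Your version is longer but more explicit, and in fact more robust: for the paper's shortcut to control $\cE$, the element $g$ needs to lie in $Z(\tR_{m,\alpha,\gamma})=\tZ_{m,\alpha,\gamma}$ (not merely in $\tR_{m,\alpha,\gamma}$), and in the boundary subcase $\alpha=\gamma=\nu(m)=0$, $a>2$ --- which is not excluded by (\ref{eq:special-case-2-linear-2}) --- no such $g$ exists since $\det$ is injective on $\tZ_{m,0}$, so $\cE$ degenerates to $\{U\}$; the proposition remains true there ($R$ is generalised quaternion of order $2^{a+1}\geq 16$, whose unique cyclic subgroup of index $2$ is $B$ and hence characteristic), and this is precisely what your eigenspace argument produces uniformly. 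Two minor remarks: your appeal to Lemma~\ref{lem:characteristic-subgp} for $\ell=2$ is an (unproblematic) extrapolation from the odd-$\ell$ statement --- the paper tacitly does the same, and for $\gamma=0$ it is trivial since $B$ is abelian --- and your final paragraph, verifying that (\ref{eq:special-case-2-linear-2}) is a genuine exception, is correct but, strictly speaking, not part of what the stated proposition asserts.
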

\begin{proof}
	If $|\bc|>1$, this can be proved by the same arguments as for Proposition \ref{prop:special-odd-2}.
	If $|\bc|=1$ and (\ref{eq:special-case-2-linear-2}) does not hold,
	there are elements $g \in \tR_{m,\alpha,\gamma}$ with $g\neq1$ and $\det g = 1$,
	then the following two sets
	\begin{align*}
	\tilde{\cE} &= \{ [g,U] \mid g\in Z(\tB_{m,\alpha,\gamma,\bc}), g\neq1 \},\\
	\cE &= \{ [g,U] \mid g\in Z(B_{m,\alpha,\gamma,\bc}), g\neq1 \}
	\end{align*}
	are the same,
	then the assertion in this case can be proved again by the same arguments as for Proposition \ref{prop:special-odd-2}.
\end{proof}

For $*\in\{tw,\varnothing\}$,
denote by $C_{m,\alpha,\gamma,\bc}^*$ and $N_{m,\alpha,\gamma,\bc}^*$ respectively
the centralizer and normalizer of $R_{m,\alpha,\gamma,\bc}^*$ in $G_{m,\alpha,\gamma,\bc}^*$.

\begin{rem}
	By results in \cite{An92,An93},
	$$\tN_{m,\alpha,\gamma,\bc} = (\tN_{m,\alpha,\gamma}/\tR_{m,\alpha,\gamma}) \otimes N_{\fS(2^{|\bc|})}(A_\bc),$$
	where $\otimes$ is defined as in \cite[(1.3) (1.5)]{AF90}.
	Identify $\fS(2^{|\bc|})$ with the group of permutation matrices in $\GL_{2^{|\bc|}}(\eta q)$.
	Let $\sigma$ be a permutation matrix normalizing $A_{\bc}$.
	Replace the non-zero element in $\sigma$ by $n_1,n_2,\ldots,n_{2^{|\bc|}}$
	with $n_i\in\tN_{m,\alpha,\gamma}$ lying in the same coset of $\tR_{m,\alpha,\gamma}$
	and denote the resulting element as $\sigma(n_1,n_2,\ldots,n_{2^{|\bc|}})$,
	then $\sigma(n_1,n_2,\ldots,n_{2^{|\bc|}}) \in \tN_{m,\alpha,\gamma,\bc}$.
	Conversely, any element of $\tN_{m,\alpha,\gamma,\bc}$ is of this form.
	Then $\det(\sigma(n_1,n_2,\ldots,n_{2^{|\bc|}}))=\det(n_1)\det(n_2)\cdots\det(n_{2^{|\bc|}})$
	unless $\nu(m)+\alpha+\gamma=0$, in which case,
	$\det(\sigma(n_1,n_2,\ldots,n_{2^{|\bc|}})) = \pm \det(n_1)\det(n_2)\cdots\det(n_{2^{|\bc|}})$.
\end{rem}

\begin{prop}\label{prop:cc-R-m,alpha,gamma,c-2-linear}
	Assume $\ell=2$, $4 \mid q-\eta$ and $\bc\ne \zero$.
	\vspace{-0.5ex}
	\begin{enumerate}[\rm(1)]\setlength{\itemsep}{-0.5ex}
		\item $\det(\tR_{m,\alpha,\gamma,\bc}) = \det(\tR_{m,\alpha,\gamma}) = \fZ_{2^a}^{2^{\nu(m)+\gamma}}$,
		$\det(\tC_{m,\alpha,\gamma,\bc})=\det(\tC_{m,\alpha,\gamma})^{2^{|\bc|}}=\fZ_{q-\eta}^{2^{\gamma+|\bc|}}$
		and $\det(\tN_{m,\alpha,\gamma,\bc}) = \Grp{\fZ_{q-\eta}^{2^{\gamma+|\bc|}},\fZ_{2^a}^{2^{\nu(m)+\gamma}}}$.
		\item Assume $\tR_{m,\alpha,\gamma,\bc}$ is special in $\tG_{m,\alpha,\gamma,\bc}$,
		then	the number of $G_{m,\alpha,\gamma,\bc}$-conjugacy classes of radical subgroups
		contained in the set $\{ G_{m,\alpha,\gamma,\bc}\cap{^{\tg}\tR_{m,\alpha,\gamma,\bc}} \mid \tg\in\tG_{m,\alpha,\gamma,\bc} \}$
		is $2^{\min\{a,\gamma+|\bc|,\gamma+\nu(m)\}}.$
	\end{enumerate}
\end{prop}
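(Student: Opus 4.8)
The plan is to follow the proof of Proposition \ref{prop:cc-R-m,alpha,gamma,c-odd} essentially line by line; the only genuinely new features for $\ell=2$, $4\mid q-\eta$ are the extra sign $\pm1$ that can occur in the determinant of an element of $\tN_{m,\alpha,\gamma,\bc}$ when $\nu(m)+\alpha+\gamma=0$, and the exceptional case (\ref{eq:special-case-2-linear-1}), in which $\det(\tM_{m,\alpha,\gamma})$ is nontrivial. As in the odd case, part (2) will reduce to part (1): since $\tR_{m,\alpha,\gamma,\bc}$ is special we have $N_{\tG_{m,\alpha,\gamma,\bc}}(R_{m,\alpha,\gamma,\bc})=\tN_{m,\alpha,\gamma,\bc}$, so Lemma \ref{lem:split-classes-radical} gives the number of $G_{m,\alpha,\gamma,\bc}$-classes as $|\tG_{m,\alpha,\gamma,\bc}:G_{m,\alpha,\gamma,\bc}\tN_{m,\alpha,\gamma,\bc}|$, which, after identifying $\tG_{m,\alpha,\gamma,\bc}/G_{m,\alpha,\gamma,\bc}$ with $\fZ_{q-\eta}$ via $\det$, equals $[\fZ_{q-\eta}:\det(\tN_{m,\alpha,\gamma,\bc})]$; a short index computation in the cyclic group $\fZ_{q-\eta}$ (whose $2$-part is $\fZ_{2^a}$) then yields $2^{\min\{a,\gamma+|\bc|,\gamma+\nu(m)\}}$, so everything comes down to computing $\det(\tN_{m,\alpha,\gamma,\bc})$ in part (1).

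For (1), the determinants of $\tR_{m,\alpha,\gamma,\bc}$ and $\tC_{m,\alpha,\gamma,\bc}$ follow by direct computation from Proposition \ref{prop:cc-R-m,alpha,gamma-2-linear}(1): since $\tR_{m,\alpha,\gamma,\bc}=\tR_{m,\alpha,\gamma}\wr A_\bc$ with $\det(A_\bc)=1$ and the base subgroup surjects onto each factor, $\det(\tR_{m,\alpha,\gamma,\bc})=\det(\tR_{m,\alpha,\gamma})=\fZ_{2^a}^{2^{\nu(m)+\gamma}}$, while $\tC_{m,\alpha,\gamma,\bc}$ is the diagonal copy $\tC_{m,\alpha,\gamma}\otimes I_\bc$, whence $\det(\tC_{m,\alpha,\gamma,\bc})=\det(\tC_{m,\alpha,\gamma})^{2^{|\bc|}}=\fZ_{q-\eta}^{2^{\gamma+|\bc|}}$. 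The main step is $\det(\tN_{m,\alpha,\gamma,\bc})$. Using the description of $\tN_{m,\alpha,\gamma,\bc}$ in the remark preceding the proposition, any $n\in\tN_{m,\alpha,\gamma,\bc}$ is of the form $\sigma(n_1,\dots,n_{2^{|\bc|}})$ with $\sigma$ a permutation matrix normalizing $A_\bc$ and the $n_i\in\tN_{m,\alpha,\gamma}$ lying in one common $\tR_{m,\alpha,\gamma}$-coset, and $\det(n)=\pm\det(n_1)\cdots\det(n_{2^{|\bc|}})$, the sign being nontrivial only when $\nu(m)+\alpha+\gamma=0$. Writing $n_i=c_0m_0v_0r_i$ with $c_0\in\tC_{m,\alpha,\gamma}$, $m_0\in\tM_{m,\alpha,\gamma}$, $v_0\in\iota(V_{m,\alpha,\gamma})$, $r_i\in\tR_{m,\alpha,\gamma}$ (possible since the $n_i$ share an $\tR_{m,\alpha,\gamma}$-coset and $\tN_{m,\alpha,\gamma}=\tC_{m,\alpha,\gamma}\tM_{m,\alpha,\gamma}\iota(V_{m,\alpha,\gamma})$ by Proposition \ref{prop:tN-m,alpha,gamma-2-linear}), and using $\det(v_0)=1$, one obtains $\det(n)=\pm\det(c_0)^{2^{|\bc|}}\det(m_0)^{2^{|\bc|}}\det(r_1\cdots r_{2^{|\bc|}})$.

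Here $\det(c_0)^{2^{|\bc|}}\in\fZ_{q-\eta}^{2^{\gamma+|\bc|}}$ and $\det(r_1\cdots r_{2^{|\bc|}})\in\det(\tR_{m,\alpha,\gamma})=\fZ_{2^a}^{2^{\nu(m)+\gamma}}$, so what remains — and this is the one place demanding attention — is to check that the sign and the term $\det(m_0)^{2^{|\bc|}}$ also lie in $\det(\tR_{m,\alpha,\gamma})$. The sign $-1$, when present, forces $\nu(m)=\gamma=0$, so $\det(\tR_{m,\alpha,\gamma})=\fZ_{2^a}$ already contains it; and $\det(m_0)^{2^{|\bc|}}$ is trivial outside (\ref{eq:special-case-2-linear-1}) by Proposition \ref{prop:tN-m,alpha,gamma-2-linear}(6), while inside (\ref{eq:special-case-2-linear-1}) it lies in $\fZ_4^{2^{|\bc|}}$, which is contained in $\fZ_2=\det(\tR_{m,\alpha,\gamma})$ when $\gamma=1$ and is trivial when $\gamma=2$. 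Hence $\det(\tN_{m,\alpha,\gamma,\bc})\subseteq\Grp{\fZ_{q-\eta}^{2^{\gamma+|\bc|}},\fZ_{2^a}^{2^{\nu(m)+\gamma}}}$, and the reverse inclusion is immediate (take $\sigma=1$ with all $n_i=c_0$, respectively $n_1=r_1$ and $n_i=1$ for $i\geq2$), giving $\det(\tN_{m,\alpha,\gamma,\bc})=\Grp{\fZ_{q-\eta}^{2^{\gamma+|\bc|}},\fZ_{2^a}^{2^{\nu(m)+\gamma}}}$. Feeding this into the index computation of the first paragraph completes (2). The main obstacle is exactly this bookkeeping of the sign and the exceptional $\tM_{m,\alpha,\gamma}$-contribution; once absorbed, the rest is a routine transcription of the odd-prime argument.
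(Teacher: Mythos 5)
Your proposal is correct and follows essentially the same route as the paper's (very terse) proof, which simply cites the odd-prime Proposition~\ref{prop:cc-R-m,alpha,gamma,c-odd} together with the remark on $\sigma(n_1,\dots,n_{2^{|\bc|}})$ and Proposition~\ref{prop:tN-m,alpha,gamma-2-linear}; you have filled in exactly the two new bookkeeping items (the possible $\pm1$ when $\nu(m)=\alpha=\gamma=0$ and the nontrivial $\det(\tM_{m,0,\gamma})$ in case~(\ref{eq:special-case-2-linear-1})) and correctly verified that both land inside $\det(\tR_{m,\alpha,\gamma})$.
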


\begin{proof}
	This can be proved similarly as Proposition \ref{prop:cc-R-m,alpha,gamma,c-odd}
	using the observation of the above remark and Proposition \ref{prop:tN-m,alpha,gamma-2-linear}.
\end{proof}

Finally, we consider the special radical subgroups of the form $\tR=\tR_1\times\cdots\times\tR_u$
with $u>1$ and $\tR_i=\tR_{m_i,\alpha_i,\gamma_i,\bc_i}$.
We use the same notation preceding Proposition \ref{det-radical-N-C-odd}.

\begin{lem}\label{det-radical-N-C-2-linear}
	Assume $\ell=2$ and $4 \mid q-\eta$.
	Let $\tR$ be a radical subgroup of $\tG$ which can be expressed as 
	$\tR=\tR_1\times\cdots\times \tR_u$, a direct product of basic subgroups.
	Denote $\tC=C_{\tG}(\tR)$, $\tN=N_{\tG}(\tR)$.
	\vspace{-0.5ex}
	\begin{enumerate}[\rm(1)]\setlength{\itemsep}{-0.5ex}
		\item $\mrO_{2'}(\det(\tN))=\mrO_{2'}(\det(\tR\tC))=\mrO_{2'}(\fZ_{q-\eta})$.
		In particular, $|\tG:GN_{\tG}(\tR)|$ is a $2$-number.
		\item We have $\det(\tN)=\det(\tR\tC)$ unless 
		\begin{equation} \label{special-case-wei-ell2-linear}
		\begin{aligned}
		&a=2 \ \text{and there exists some}\ 1\le i\le u\
		\text{such that}\\
		&\tR_i=\tR_{m_{i},0,\gamma_i}\ \text{with}\ \nu(m_i)=0, \gamma_i=1,2 \ \text{and}\ a(\tR_i)=a(\tR)=\delta(\tR),
		\end{aligned}
			\addtocounter{thm}{1}\tag{\thethm}		
		\end{equation}
			in which case, $\nu(\det(\tR\tC))=2-\gamma_i$ and $\nu(\det(\tN))=3-\gamma_i$.
	\end{enumerate}
\end{lem}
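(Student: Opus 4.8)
The plan is to follow the pattern of Lemma~\ref{det-radical-N-C-odd}, replacing the odd-prime input with the corresponding structural results for $\ell=2$, $4\mid q-\eta$. First I would recall from \cite[(4B)]{AF90} the shape of $\tN=N_{\tG}(\tR)$ for $\tR=\tR_1\times\cdots\times\tR_u$ a direct product of basic subgroups $\tR_i=\tR_{m_i,\alpha_i,\gamma_i,\bc_i}$: after grouping the mutually conjugate $\tR_i$, the centralizer $\tC=C_{\tG}(\tR)$ is the block-diagonal product of the $\tC_i:=\tC_{m_i,\alpha_i,\gamma_i,\bc_i}$, while $\tN$ is generated by the $\tN_i:=\tN_{m_i,\alpha_i,\gamma_i,\bc_i}$ together with permutation matrices permuting blocks attached to equal basic subgroups. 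Since $\det$ carries $\tG$ onto the cyclic group $\fZ_{q-\eta}$ with kernel $G$, the whole problem reduces to comparing subgroups of $\fZ_{q-\eta}$; as that group is cyclic, once the $2'$-parts are seen to coincide, the comparison is governed entirely by the valuations $\nu$. I would also note at the outset that $\det(\tR)=\prod_i\det(\tR_i)$, $\det(\tC)=\prod_i\det(\tC_i)$, hence $\det(\tR\tC)=\prod_i\det(\tR_i\tC_i)$, and that the determinant of any permutation matrix is $\pm1$.

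For part~(1): by Propositions~\ref{prop:cc-R-m,alpha,gamma-2-linear}(1) and~\ref{prop:cc-R-m,alpha,gamma,c-2-linear}(1) each $\det(\tC_i)=\fZ_{q-\eta}^{2^{\gamma_i+|\bc_i|}}$, and raising a cyclic group to a power of $\ell=2$ leaves its $2'$-part unchanged, so $\mrO_{2'}(\det(\tC_i))=\mrO_{2'}(\fZ_{q-\eta})$ and hence $\mrO_{2'}(\det(\tC))=\mrO_{2'}(\fZ_{q-\eta})$. Since $\det(\tC)\le\det(\tR\tC)\le\det(\tN)\le\fZ_{q-\eta}$, the three $2'$-parts agree. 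The ``in particular'' assertion then follows because $|\tG:GN_{\tG}(\tR)|$ equals the index of $\det(N_{\tG}(\tR))=\det(\tN)$ in $\fZ_{q-\eta}$ (as $\det(G)=1$ and $\det$ induces $\tG/G\cong\fZ_{q-\eta}$), which by the above is a $2$-number.

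For part~(2): I would use Propositions~\ref{prop:cc-R-m,alpha,gamma-2-linear}(1)(2) and~\ref{prop:cc-R-m,alpha,gamma,c-2-linear}(1) to obtain $\nu(\det(\tR_i\tC_i))=\max\{0,\,a-\min(a(\tR_i),\delta(\tR_i))\}$, together with the fact that $\det(\tN_i)=\det(\tR_i\tC_i)$ \emph{unless} $\tR_i$ falls into the exceptional case~(\ref{eq:special-case-2-linear-1}), i.e.\ $a=2$, $\tR_i=\tR_{m_i,0,\gamma_i}$ with $\nu(m_i)=0$ and $\gamma_i\in\{1,2\}$, in which case $\nu(\det(\tR_i\tC_i))=2-\gamma_i$ while $\nu(\det(\tN_i))=3-\gamma_i$. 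Next I would dispose of the permutation-matrix contributions: if two conjugate constituents have even degree the corresponding transposition has determinant $+1$, and if the degree is odd then necessarily $a(\tR_i)=\delta(\tR_i)=0$, so $\nu(\det(\tR_i\tC_i))=a\ge1$ and the value $-1$ already lies in $\det(\tR\tC)$; either way nothing new is added. Hence $\nu(\det(\tN))=\max_i\nu(\det(\tN_i))$ and $\nu(\det(\tR\tC))=\max_i\nu(\det(\tR_i\tC_i))$, so $\det(\tN)\ne\det(\tR\tC)$ exactly when there is an exceptional index $i$ with $3-\gamma_i>\nu(\det(\tR_j\tC_j))$ for every $j$. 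The final, most delicate step, which is the main obstacle, is the combinatorial verification that this inequality is equivalent to $\gamma_i\le\min(a(\tR_j),\delta(\tR_j))$ for all $j$, that is, to $a(\tR_i)=a(\tR)=\delta(\tR)$, which is precisely~(\ref{special-case-wei-ell2-linear}); the stated values $\nu(\det(\tR\tC))=2-\gamma_i$ and $\nu(\det(\tN))=3-\gamma_i$ are then read off, noting that any other exceptional index $j$ is forced to have $\gamma_j=\gamma_i$.
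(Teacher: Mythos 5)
Your argument is correct and follows essentially the same route as the paper's proof: both reduce to comparing subgroups of the cyclic group $\fZ_{q-\eta}$ via Propositions~\ref{prop:cc-R-m,alpha,gamma-2-linear} and~\ref{prop:cc-R-m,alpha,gamma,c-2-linear}, and the key step in each is showing that the wreath-product permutation matrices contribute at most $-1$, which already lies in $\det(\tC_{i_0})=\fZ_{q-\eta}$ whenever a transposed block has odd degree, so that $\det(\tN)=\grp{\det(\tN_1),\ldots,\det(\tN_s)}$. The translation of the exceptional condition into~(\ref{special-case-wei-ell2-linear}) is then the routine valuation bookkeeping you carry out, which the paper simply asserts follows ``immediately'' from the propositions.
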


\begin{proof}
The structure of $\tN$ is given in 	\cite[2C]{An92} and \cite[2C]{An93}.
We rewrite $\tR=\tR_1^{t_1}\times\cdots\times \tR_s^{t_s}$ where $R_i$'s are distinct basic subgroups.
Then $\tN=\prod_{i=1}^s \tN_i\wr \fS(t_i)$. Here, the symmetric groups consist of permutation matrices, whose determinants are $\pm 1$.
We claim that $\det(\tN)=\grp{\det(\tN_1),\ldots, \det(\tN_s)}$. Then this proposition follows by
 Proposition \ref{prop:cc-R-m,alpha,gamma-2-linear} and  \ref{prop:cc-R-m,alpha,gamma,c-2-linear} immediately.

If $\det(\tN)=\grp{\det(\tN_1),\ldots, \det(\tN_s)}$ does not hold, then $\grp{\det(\tN_1),\ldots, \det(\tN_s)}$ is a $2'$-group and $2\mid|\det(\tN)|$.
Then there exists $i_0$ satisfying that $\tR_{i_0}=\tR_{m,0,0,\zero}$ with odd $m$.
However, in this case, $\det(\tN_{i_0})=\det(\tC_{i_0})=\fZ_{q-\eta}$, and this is a contradiction. Thus the claim holds.
\end{proof}

\begin{prop}\label{prop:special-2-linear-3}
	Assume $\ell=2$, $4 \mid q-\eta$ and $\tR=\tR_1\times\cdots\times\tR_u$ with $u>1$ and $a(\tR_1) \leq\cdots\leq a(\tR_u)$.
	Then $\tR$ is special if and only if one of the following holds:
	\begin{enumerate}[\rm(1)]\setlength{\itemsep}{-2pt}
		\item $a(\tR)\geqslant a$;
		\item $\sum\limits_{i:a(\tR_i)=a(\tR)} 2^{|\bc_i|} \geqslant2$ except when
		\begin{enumerate}[\rm(2a)]\setlength{\itemsep}{-2pt}
			\item $\tR_1 = \tR_{m_1}, \tR_2 = \tR_{m_2}$, $\nu(m_1)=\nu(m_2)=0, \nu(m_1+m_2) \geq a$,
			$\det (\tR_3 \times\cdots\times \tR_u)=1$;
			\item $a=2$, $\alpha_1=\gamma_1=0, v(m_1) \leq 1, \bc_1=\one$
			and $a(\tR_i)>a(\tR_1)$ for $i>1$.
		\end{enumerate}
		\item $a(\tR_1)=a(\tR)<\min\{a,a(\tR_2)\}$ with $\bc_1=\zero$ and $\alpha_1=0$.
	\end{enumerate}
	Finally, if $\tR$ is special, then $C_{\tG}(R)=\tC$.
\end{prop}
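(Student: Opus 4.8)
The plan is to imitate the proof of Proposition \ref{prop:special-odd-3}, tracking the configurations that the prime $2$ forces to be exceptional. We argue the ``if'' direction case by case. If $a(\tR)\ge a$, then by Proposition \ref{prop:cc-R-m,alpha,gamma-2-linear}(1) and Proposition \ref{prop:cc-R-m,alpha,gamma,c-2-linear}(1) every factor has $\det(\tR_i)=\fZ_{2^a}^{2^{\nu(m_i)+\gamma_i}}=1$, whence $R=\tR\cap G=\tR$ and $\tR$ is special by definition. So assume $a(\tR)<a$ from now on.

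Suppose $\sum_{i:\,a(\tR_i)=a(\tR)}2^{|\bc_i|}\ge2$ and we are not in case (2a) or (2b). Let $\tB_i$ be the base subgroup of $\tR_i$, set $\tB=\tB_1\times\cdots\times\tB_u$ and $B=\tB\cap G$. The key step is to prove that $Z(B)$ is characteristic in $R$, which I would do by applying Lemma \ref{lem:normal-abelian-2} to the factors $\tR_i$ with $a(\tR_i)=a(\tR)$ and then running the argument of Lemma \ref{lem:characteristic-subgp} to identify $Z(B)$ with the intersection of the maximal normal abelian subgroups of $R$; the hypothesis $\sum 2^{|\bc_i|}\ge2$ is exactly what makes this work, the two configurations (2a), (2b) being where it breaks down. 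Granting this, one compares the sets $\tilde{\cE}$ and $\cE$ attached as in the proof of Proposition \ref{prop:special-2-linear-2} to the block of factors of minimal $a$-value: their minimal members coincide, so any element of $\tG$ normalizing $Z(B)$ permutes these members and hence normalizes $Z(\tB)$; since $\tR=R\,Z(\tB)$ this yields $N_{\tG}(R)=N_{\tG}(\tR)$, i.e.\ $\tR$ is special. For the excluded configurations one checks directly, as in the odd case: in (2a) a computation identifies $R$, up to $\tG$-conjugacy, with $G\cap\tR'$ for $\tR'=\tR_{m_1+m_2}\times\tR_3\times\cdots\times\tR_u$, which is not $\tG$-conjugate to $\tR$, so $\tR$ is not special; in (2b) the configuration reduces on the first factor to $\tR_{m_1,0,0,\one}$, which is already not special by Proposition \ref{prop:special-2-linear-2} (compare Remark \ref{rem:back-symplectic-2}), and the remaining factors cannot repair this.

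In case (3), with $a(\tR_1)=a(\tR)<\min\{a,a(\tR_2)\}$, $\bc_1=\zero$ and $\alpha_1=0$, the point (as in Proposition \ref{prop:special-odd-3}(3)) is that $Z(\tR_1)=\tZ_{m_1,0,\gamma_1}$ is central in the ambient linear/unitary group of degree $m_1 2^{\gamma_1}$, so $N_{\tG}(R)$ can be computed factor by factor via Proposition \ref{prop:CN_m,alpha,gamma-2-linear} and equals $N_{\tG}(\tR)$. For the converse, if none of (1), (2), (3) holds, then $a(\tR)<a$, there is a unique index with $a(\tR_i)=a(\tR)$, say $i=1$, with $\bc_1=\zero$, but either $\alpha_1>0$ or a forbidden sub-case occurs; replacing $\tR_{m_1,\alpha_1,\gamma_1}$ by $\tR_{m_1 2^{\alpha_1-\alpha_1'},\alpha_1',\gamma_1}$ for suitable $\alpha_1'<\alpha_1$ leaves $R$ unchanged up to $\tG$-conjugacy, and iterating either lands in (1), (2) or (3) (a contradiction) or exhibits $N_{\tG}(R)\supsetneq N_{\tG}(\tR)$, so $\tR$ is not special. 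The final assertion $C_{\tG}(R)=\tC$ follows in each special case by the same case-by-case inspection as in Proposition \ref{prop:special-odd-3}, using Propositions \ref{prop:CN_m,alpha,gamma-2-linear} and \ref{prop:cc-R-m,alpha,gamma,c-2-linear} for the relevant centralizers. The main obstacle is the accurate bookkeeping of the exceptional configurations (2a), (2b) and their interplay with the special cases (\ref{eq:special-case-2-linear-0}), (\ref{eq:special-case-2-linear-1}), (\ref{eq:special-case-2-linear-2}) where $\det(\tM_{m,\alpha,\gamma})$ is nontrivial or the symplectic-type factor is not radical — precisely the points at which the $\ell=2$ analysis diverges from the odd one and which produce the exceptions listed in the statement.
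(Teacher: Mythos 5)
Your proposal is correct and follows essentially the same route as the paper: the paper's proof simply says "Similar as Proposition \ref{prop:special-odd-3}, using arguments in Lemma \ref{lem:normal-abelian-2} and Proposition \ref{prop:special-2-linear-2} with some slight modifications," and your sketch is exactly that adaptation -- case $a(\tR)\ge a$ via determinant computations, case (2) via the $Z(B)$-characteristic and $\tilde{\cE},\cE$ argument from the proof of Proposition \ref{prop:special-2-linear-2}, the two exceptional configurations handled by the direct computation $R=\tR_{m_1+m_2}\times\tR_3\times\cdots\times\tR_u$ and by Proposition \ref{prop:special-2-linear-2} respectively, and case (3) by the $\alpha_1$-reduction. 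You have correctly identified all the intermediate results and exceptional configurations the authors intended.
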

\begin{proof}
	Similar as Proposition \ref{prop:special-odd-3},
	using arguments in Lemma \ref{lem:normal-abelian-2}
	and Proposition \ref{prop:special-2-linear-2} with some slight modifications.
\end{proof}

\subsection{Special radical subgroups for $\ell=2$ and $4 \mid q+\eta$.}
\label{subsect:srs-2-unitary}

Now, we classify the special radical subgroups of $\tG = \GL_n(\eta q)$ for $\ell=2$ and $4 \mid q+\eta$.
Note that in this case, for any $2$-subgroup $\tR$ of $\tG$, $\det\tR\leq\set{\pm1}$.
Recall that $\nu$ is the discrete valuation such that $\nu(2)=1$ and $a=\nu(q+\eta)$.

We start with the radical subgroups of the form $\tR_{m,\alpha,\gamma}$
or $\tR_{m,0,\gamma}^\pm$($\gamma>1$ if the type is plus, $\gamma>0$ if the type is minus).
For $\alpha>0$, all notation in \S\ref{subsect:srs-2-linear} will be used without further reference.

\begin{prop}\label{prop:special-2-unitary-1}
	Assume $\ell=2$, $4 \mid q+\eta$, and $\tR=\tR_{m,\alpha,\gamma}$ or
	$\tR=\tR_{m,0,\gamma}^\pm$($\gamma>1$ if the type is plus, $\gamma>0$ if the type is minus)
	is a subgroup of symplectic type of $\tG_{m,\alpha,\gamma}$.
	Then $\tR$ is special if and only if one of the following holds.
	\vspace{-0.5ex}
	\begin{enumerate}[\rm(1)]\setlength{\itemsep}{-0.5ex}
		\item $\tR=\tR_{m,\alpha,\gamma}(\alpha\ge1)$ with $\nu(m)+\gamma>0$ or  $\alpha=1$.
		\item $\tR=\tR_{m,0,\gamma}^\pm=E_{m,\gamma}^\pm$($\gamma>1$ if the type is plus, $\gamma>0$ if the type is minus).
		\item $\tR=\tR_{m,0,\gamma}=\tS_{m,1,\gamma-1}(\gamma\geq1)$ unless $\nu(m)=0,\gamma=1,a=2$.
		\item $\tR=\tR_m$.
	\end{enumerate}
\end{prop}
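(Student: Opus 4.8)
The plan is to use the same characterisation that drives the preceding propositions: for a radical subgroup $\tR$ of $\tG_{m,\alpha,\gamma}$, writing $R=\tR\cap G_{m,\alpha,\gamma}$, one has $\tR$ special if and only if $N_{\tG_{m,\alpha,\gamma}}(R)=N_{\tG_{m,\alpha,\gamma}}(\tR)$, equivalently $\mrO_2(N_{\tG_{m,\alpha,\gamma}}(R))=\tR$; indeed $\mrO_2(N_{\tG}(R))\leq\tR$ always holds for radical $\tR$, by the remarks following the definition of ``special'' together with Lemma~\ref{lem:motivation-special-radical}. Hence whenever $\det(\tR)=1$ we get $R=\tR$ and $\tR$ is automatically special, and all the work is in the cases $\det(\tR)=\{\pm1\}$; throughout, $a=\nu(q+\eta)\geq2$. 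I would dispose of two families immediately. For $\tR=\tR_m=\{\pm I_m\}$ there is nothing to check, since $\tR\leq Z(\tG_m)$ and so $N_{\tG_m}(R)=\tG_m=N_{\tG_m}(\tR)$. For $\tR=E_{m,\gamma}^\pm$ in the stated range ($\gamma\geq2$ for plus type, $\gamma\geq1$ for minus type), reading off the explicit generators gives $\det(E_{m,\gamma}^\pm)=1$: for plus type because each tensor factor $\diag\{1,-1\}$ resp.\ $\begin{bmatrix}0&1\\1&0\end{bmatrix}$ is padded with $I_{2^{\gamma-1}}$, $\gamma-1\geq1$; for minus type using in addition $\det\tau_{m,1,\gamma-1}=1$ and $\det(z_{m,1,\gamma-1}^{2^{a-1}})=1$ (here $a\geq2$ is needed). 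So $R=\tR$ and $\tR$ is special, which gives cases (2) and (4).

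For $\tR=\tR_{m,\alpha,\gamma}$ with $\alpha\geq1$ I would transfer to twisted groups as in \S\ref{subsect:srs-2-linear}. Since $\det E_{m,\alpha,\gamma}^{tw}=1$ we have $\det\tR_{m,\alpha,\gamma}^{tw}=\det\tZ_{m,\alpha,\gamma}^{tw}$, and the norm-map formula $\det\hbar(\zeta I_{m2^\gamma})=\cN_\alpha(\zeta)^{m2^\gamma}$ together with $\nu(1+\eta q+\cdots+(\eta q)^{2^\alpha-1})=a+\alpha-1$ shows $\det(\tR_{m,\alpha,\gamma})=1$ exactly when $\nu(m)+\gamma>0$ (special), and $\det(\tR_{m,\alpha,\gamma})=\{\pm1\}$ exactly when $\gamma=\nu(m)=0$. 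In the latter case $\tR=\tR_{m,\alpha,0}$ is cyclic of order $2^{a+\alpha}$, generated after conjugation into $\GL_{m2^\alpha}(\eta q)$ by a semisimple element $A$ whose eigenvalues form one $F_{\eta q}$-orbit of size $2^\alpha$, each of multiplicity $m$, and $R=\langle A^2\rangle$ is cyclic of order $2^{a+\alpha-1}$. The decisive point, and where $a\geq2$ enters, is that the order of $\eta q$ modulo $2^n$ equals $2$ for $2\leq n\leq a+1$ and $2^{n-a}$ for $n\geq a+1$. Thus for $\alpha=1$ the element $A^2$ still has two eigenspaces, which coincide with those of $A$, whence $C_{\tG_{m,1,0}}(A^2)=C_{\tG_{m,1,0}}(A)\cong\GL_m((\eta q)^2)$ and $\langle A\rangle=\mrO_2(Z(C_{\tG_{m,1,0}}(A^2)))$; this forces $\mrO_2(N_{\tG_{m,1,0}}(R))=\langle A\rangle=\tR$, so $\tR$ is special. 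For $\alpha\geq2$, instead, $A^2$ has only $2^{\alpha-1}$ eigenspaces, so $C_{\tG_{m,\alpha,0}}(A^2)\cong\GL_{2m}((\eta q)^{2^{\alpha-1}})$ and $R=\langle A^2\rangle$ is $\tG_{m,\alpha,0}$-conjugate to the basic subgroup $\tR_{2m,\alpha-1,0}$, hence is itself radical; therefore $\mrO_2(N_{\tG_{m,\alpha,0}}(R))=R\subsetneq\tR$ and $\tR$ is not special. Altogether $\tR_{m,\alpha,\gamma}$ ($\alpha\geq1$) is special iff $\nu(m)+\gamma>0$ or $\alpha=1$, i.e.\ case (1).

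For $\tR=\tR_{m,0,\gamma}=\tS_{m,1,\gamma-1}$ ($\gamma\geq1$) I would again begin with determinants: $\det z_{m,1,\gamma-1}=(-1)^{m2^{\gamma-1}}$ and $\det\tau_{m,1,\gamma-1}=1$ give $\det(\tS_{m,1,\gamma-1})=1$ whenever $\gamma\geq2$, or $\gamma=1$ and $\nu(m)\geq1$; in those cases $R=\tR$ is special. The subcase $\gamma=1$, $\nu(m)=0$ is the main obstacle: here $\tR=\tS_{m,1,0}$ is semidihedral of order $2^{a+2}$ and $R=\langle z_{m,1,0}^2,\tau_{m,1,0}\rangle$ is generalized quaternion of order $2^{a+1}$ and index $2$, and everything turns on whether $R$ has a characteristic cyclic subgroup of index $2$. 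If $a\geq3$, then $R\cong Q_{2^{a+1}}$ (order $\geq16$) has the unique such subgroup $\langle z_{m,1,0}^2\rangle$, so every element of $N_{\tG_{m,0,1}}(R)$ normalizes it; an eigenvalue argument then shows such an element also normalizes $\langle z_{m,1,0}\rangle$, because the generator of $\langle z_{m,1,0}^2\rangle$ has two eigenspaces, which are exactly those of $z_{m,1,0}$, and any permutation of them is realised by $z_{m,1,0}\mapsto z_{m,1,0}^{\pm1}$ or $z_{m,1,0}\mapsto z_{m,1,0}^{\pm\eta q}$; consequently it normalizes $\tR=\langle z_{m,1,0}\rangle R$, so $N_{\tG}(R)=N_{\tG}(\tR)$ and $\tR$ is special. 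If $a=2$, then $R\cong Q_8$ is $\tG_{m,0,1}$-conjugate to the radical subgroup $E_{m,1}^-$, so $\mrO_2(N_{\tG_{m,0,1}}(R))=R\subsetneq\tS_{m,1,0}=\tR$ — equivalently, $\Out(Q_8)\cong S_3$ is realised in $N_{\tG}(R)$ while $\mrO_2(C_{\tG}(R))=\{\pm I\}\leq R$, so no proper overgroup of $R$ inside $\tR$ is normal in $N_{\tG}(R)$ — and $\tR$ is not special. This produces precisely the exception $\nu(m)=0$, $\gamma=1$, $a=2$ in case (3), and assembling the four families gives the proposition; I expect the $a=2$ versus $a\geq3$ dichotomy for $\tS_{m,1,0}$ (and, in parallel, the $\alpha=1$ versus $\alpha\geq2$ dichotomy for $\tR_{m,\alpha,0}$) to be the only genuinely delicate part.
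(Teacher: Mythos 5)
Your proposal is correct and follows essentially the same approach as the paper: determinant computations reduce to the cases with $\det(\tR)=\{\pm 1\}$, the conjugacy $R_{m,\alpha,0}\sim\tR_{2m,\alpha-1,0}$ handles $\alpha\geq 2$, and the identification $R\cong Q_8\cong E^-_{m,1}$ handles the failing semidihedral case $a=2$. The only expositional difference is in the $a\geq3$ semidihedral case, where the paper computes $\Aut(R)$ and the image of $N_{\tG}(R)$ in it explicitly, while you instead use the characteristic cyclic index-$2$ subgroup of $Q_{2^{a+1}}$ plus the identity $\langle z_{m,1,0}\rangle=\mrO_2(Z(C_{\tG}(z_{m,1,0}^2)))$ — a marginally cleaner route to the same conclusion.
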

\begin{proof}
	Set $\tG=\tG_{m,\alpha,\gamma}$, $G=G_{m,\alpha,\gamma}$ and $R= \tR \cap G$.
	
	(1) Assume $\tR=\tR_{m,\alpha,\gamma}$ with $\alpha\geq1$.
	When $\nu(m)+\gamma>0$, $\det\tR=1$, thus $R=\tR$ and $\tR$ is special.
	Assume $\nu(m)=\gamma=0$.
	If $\alpha>1$, then direct calculation shows that
	$\tR_{m,\alpha} \cap G_{m,\alpha}$ is conjugate in $\tG_{m,\alpha}$ to $\tR_{2m,\alpha-1} \cap G_{m,\alpha}$,
	but note that $\tR_{2m,\alpha-1}$ is special.
	Thus we assume $\nu(m)=\gamma=0$ and $\alpha=1$.
	Then $\tR_{m,1}^{tw} = \Grp{I_m\otimes\diag\{\zeta_{2^{a+1}},\zeta_{2^{a+1}}^{\eta q}\}}$,
	and $R^{tw} := \tR_{m,1}^{tw} \cap G_{m,1}^{tw} = \Grp{I_m\otimes\diag\{\zeta_{2^a},\zeta_{2^a}^{-1}\}}$.
	Direct calculation shows that $C_{\tG_{m,1}^{tw}}(\tR_{m,1}^{tw})=C_{\tG_{m,1}^{tw}}(R_{m,1}^{tw})=\hbar(\GL_m(q^2))$
	and $N_{\tG_{m,1}^{tw}}(\tR_{m,1}^{tw})=C_{\tG_{m,1}^{tw}}(\tR_{m,1}^{tw})\grp{v_{m,1}}=N_{\tG_{m,1}^{tw}}(R_{m,1}^{tw})$,
	thus $\tR_{m,1}$ is special when $\nu(m)=0$.
	
	(2) Assume $\tR=E_{m,\gamma}^\pm$($\gamma>1$ if the type is plus, $\gamma>0$ if the type is minus).
	Then it is special since $\det E_{m,\gamma}^\pm = 1$ when $\gamma>1$ if the type is plus and $\gamma>0$ if the type is minus.
	
	(3) Assume $\tR=\tS_{m,1,\gamma-1}$.
	If $\nu(m)>0$ or $\gamma>1$, $\det\tR=1$, then $\tR$ is special.
	Thus assume $\nu(m)=0$ and $\gamma=1$, then $\tR$ is isomorphic to the semi-dihedral group of order $2^{a+2}$.
	Use the notation in \S\ref{subsect:gst-2-unitary}, $\tS_{m,1}^{tw}=\grp{z_{m,1}^{tw},\tau_{m,1}^{tw}}$ with
	$z_{m,1}^{tw}=I_m\otimes\diag\{\zeta_{2^{a+1}},\zeta_{2^{a+1}}^{\eta q}\}$
	and $\tau_{m,1}^{tw}=I_m\otimes\begin{bmatrix}0&1\\-1&0\end{bmatrix}$.
	By Proposition \ref{prop:tN-2-unitary-S-m,gamma},
	$C_{\tG_{m,1}^{tw}}(\tS_{m,1}^{tw}) = \GL_m(\eta q) \otimes I_2$ and
	$N_{\tG_{m,1}^{tw}}(\tS_{m,1}^{tw}) = C_{\tG_{m,1}^{tw}}(\tS_{m,1}^{tw}) \tS_{m,1}^{tw}$.
	Note that $R=\grp{(z_{m,1}^{tw})^2,\tau_{m,1}^{tw}}$ is isomorphic to the generalized quaternion group of order $2^{a+1}$.
	First, assume $a>2$.
	Thus $\Aut(R) = \grp{\sigma} \rtimes \Aut(\grp{z_{m,1}^{tw})^2})$,
	where $\sigma: (z_{m,1}^{tw})^2 \mapsto (z_{m,1}^{tw})^2, \tau_{m,1}^{tw} \mapsto \tau_{m,1}^{tw}(z_{m,1}^{tw})^2$
	and $\Aut(\grp{z_{m,1}^{tw})^2})$ fixes $\tau_{m,1}^{tw}$.
	For any $n \in N_{\tG_{m,1}^{tw}}(R)$, $n(z_{m,1}^{tw})^2n^{-1}$ has the same eigenvalues as $(z_{m,1}^{tw})^2$,
	thus $n(z_{m,1}^{tw})^2n^{-1}$ equals $(z_{m,1}^{tw})^2$ or $(z_{m,1}^{tw})^{-2}$.
	So $N_{\tG_{m,1}^{tw}}(R)$ induces a subgroup of $\grp{\sigma,\tau}\leq\Aut(R)$,
	where $\tau: (z_{m,1}^{tw})^2 \mapsto (z_{m,1}^{tw})^{-2}, \tau_{m,1}^{tw} \mapsto \tau_{m,1}^{tw}$.
	But $S_{m,1}^{tw}$ already induces $\grp{\sigma,\nu}$ and
	direct calculation shows that $C_{\tG_{m,1}^{tw}}(R)=C_{\tG_{m,1}^{tw}}(\tS_{m,1}^{tw})$,
	thus $N_{\tG_{m,1}^{tw}}(R) = C_{\tG_{m,1}^{tw}}(\tS_{m,1}^{tw}) \tS_{m,1}^{tw} = N_{\tG_{m,1}^{tw}}(\tS_{m,1}^{tw})$,
	i.e. $\tS_{m,1}^{tw}$ is special when $a>2$.
	When $a=2$, $R=E_{m,1}^-$ and $E_{m,1}^-$ is special.
	
	(4) This case follows from the definition of $\tR_m$.
\end{proof}

\begin{prop}\label{prop:cc-R-m,alpha,gamma-2-unitary}
	Assume $\ell=2$, $4 \mid q+\eta$, and $\tR=\tR_{m,\alpha,\gamma}$ or
	$\tR=\tR_{m,0,\gamma}^\pm$($\gamma>1$ if the type is plus, $\gamma>0$ if the type is minus)
	is a subgroup of symplectic type of $\tG_{m,\alpha,\gamma}$.
	Denote the centralizer and normalizer of $\tR_{m,\alpha,\gamma}$ (or $\tR_{m,0,\gamma}^\pm$)
	as $\tC_{m,\alpha,\gamma}$  (or $\tC_{m,0,\gamma}^\pm$) and $\tN_{m,\alpha,\gamma}$  (or $\tN_{m,0,\gamma}^\pm$).
	\vspace{-1ex}
	\begin{enumerate}[\rm(1)]\setlength{\itemsep}{-0.5ex}
		\item Assume $\tR=\tR_{m,\alpha,\gamma}$($\alpha>0$ or $\alpha=\gamma=0$).
		Then $\det\tR_{m,\alpha,\gamma}=\fZ_2^{2^{\nu(m)+\gamma}}$,
		$\det\tC_{m,\alpha,\gamma}=\det\tN_{m,\alpha,\gamma}=\fZ_{q-\eta}^{2^\gamma}$.
		If $\tR$ is special, the number of $G_{m,\alpha,\gamma}$-conjugacy classes of radical subgroups contained in the set
		$\{ G_{m,\alpha,\gamma}\cap{^{\tg}\tR_{m,\alpha,\gamma}} \mid \tg\in\tG_{m,\alpha,\gamma} \}$ is $2^{\min\{1,\gamma\}}$.
		\item Assume $\tR=\tR_{m,0,\gamma}=\tS_{m,1,\gamma-1}(\gamma\geq1)$.
		Then $\det\tR_{m,0,\gamma}=1$ unless $\nu(m)=0$ and $\gamma=1$, in which case, $\det\tR_{m,0,\gamma}=\fZ_2$;
		$\det\tC_{m,0,\gamma}=\fZ_{q-\eta}^{2^\gamma}=\mrO_{2'}(\fZ_{q-\eta})$,
		$\det\tN_{m,0,\gamma}= \Grp{ \mrO_{2'}(\fZ_{q-\eta}),\det\tR_{m,0,\gamma} }$.
		If $\tR$ is special, the number of $G_{m,0,\gamma}$-conjugacy classes of radical subgroups contained in the set
		$\{ G_{m,0,\gamma}\cap{^{\tg}\tR_{m,0,\gamma}} \mid \tg\in\tG_{m,0,\gamma} \}$ is $2$,
		unless $\nu(m)=0$ and $\gamma=1$, in which case, the number is $1$.	
		\item Assume $\tR=\tR_{m,0,\gamma}^\pm=E_{m,\gamma}^\pm$~($\gamma>1$ if the type is plus, $\gamma>0$ if the type is minus).
		Then $\det\tR_{m,0,\gamma}^\pm=1$, $\det\tC_{m,0,\gamma}^\pm=\fZ_{q-\eta}^{2^\gamma}=\mrO_{2'}(\fZ_{q-\eta})$,
		$\det\tN_{m,0,\gamma}^\pm=\det\tC_{m,0,\gamma}^\pm=\mrO_{2'}(\fZ_{q-\eta})$
		unless (\ref{special-case-2-uni-1}), in which case, $\det\tN_{m,0,\gamma}^\pm=\fZ_{q-\eta}$.
		If $\tR$ is special, the number of $G_{m,0,\gamma}$-conjugacy classes of radical subgroups contained in the set
		$\{ G_{m,0,\gamma}\cap{^{\tg}\tR_{m,0,\gamma}^\pm} \mid \tg\in\tG_{m,0,\gamma} \}$ is $2$,
		unless (\ref{special-case-2-uni-1}), in which case, the number is $1$.
	\end{enumerate}
\end{prop}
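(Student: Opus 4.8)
The plan is to run the argument entirely in parallel with the proofs of Propositions~\ref{prop:cc-R-m,alpha,gamma-odd} and~\ref{prop:cc-R-m,alpha,gamma-2-linear}; the only genuinely new features are the two extra families of symplectic-type subgroups, $\tS_{m,1,\gamma-1}$ and $E_{m,\gamma}^\pm$, together with the fact that now $(q-\eta)_2=2$, so that a radical $\tR$ may itself have nontrivial determinant. For each of the three cases I would first determine $\det\tR$, $\det\tC$ and $\det\tN$ by direct matrix computation combined with the structural results already available, and then read off the number of $G_{m,\alpha,\gamma}$-classes from Lemma~\ref{lem:split-classes-radical}.

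For the determinants: the extraspecial generators $x_{m,\gamma,j}^\pm,y_{m,\gamma,j}^\pm$ and all generators of $E_{m,\gamma}^\pm$ have determinant $1$ (recorded where they were introduced), so only the central cyclic generators contribute. Transferring through $\iota$ (conjugation by an element of $\SL$, hence determinant-preserving) and through $\hbar$ (which multiplies the determinant by the norm $\cN_\alpha$), one computes $\det\tR_{m,\alpha,\gamma}=\fZ_2^{2^{\nu(m)+\gamma}}$ in case~(1), one sees that $\det(\tR_{m,0,\gamma}=\tS_{m,1,\gamma-1})$ is governed by $\det z_{m,1,\gamma-1}=(-1)^{m2^{\gamma-1}}$ in case~(2), and $\det E_{m,\gamma}^\pm=1$ for the radical choices in case~(3). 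For the centralizers, Propositions~\ref{prop:tN-m,alpha,gamma-2-unitary}--\ref{prop:tN-2-unitary-E-m,gamma} give $\tC=\GL_m(\eta q)\otimes I_{2^\gamma}$, whence $\det\tC=\fZ_{q-\eta}^{2^\gamma}$, and this equals $\mrO_{2'}(\fZ_{q-\eta})$ as soon as $\gamma\ge1$ since $\gcd(q-\eta,2^\gamma)=2$. For the normalizers I would use the (near-)central-product decompositions $\tN=\tC\tR\tM$ from those same propositions (the factor $V_{m,\alpha,\gamma}$ having determinant $1$ and so being discarded), so that $\det\tN=\grp{\det\tC,\det\tR,\det\tM}$; the required values of $\det\tM$ are exactly those listed there, namely $1$ away from the exceptional configuration~(\ref{special-case-2-uni-1}) and $\set{\pm1}$ on it. Substituting yields the stated formulas for $\det\tN$ in all three cases.

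Finally, for the counting: since $\det$ induces an isomorphism $\tG/G\cong\fZ_{q-\eta}$ and $\tR$ is special (so $N_{\tG}(R)=N_{\tG}(\tR)$ for $R=\tR\cap G$), one has $GN_{\tG}(\tR)/G\cong\det\tN$ and therefore $|\tG:GN_{\tG}(\tR)|=(q-\eta)/|\det\tN|$; Lemma~\ref{lem:split-classes-radical} then identifies this index with the number of $G_{m,\alpha,\gamma}$-conjugacy classes in $\set{G_{m,\alpha,\gamma}\cap{}^{\tg}\tR\mid\tg\in\tG_{m,\alpha,\gamma}}$, and plugging in $\det\tN$ gives $2^{\min\{1,\gamma\}}$ in case~(1), $2$ (resp.\ $1$ when $\nu(m)=0,\gamma=1$) in case~(2), and $2$ (resp.\ $1$ in case~(\ref{special-case-2-uni-1})) in case~(3). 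The \textbf{main obstacle} I anticipate is purely bookkeeping: tracking how $\det\tR$ and the exceptional $\det\tM$ interact with $\det\tC$ — in particular checking in the twisted case $\alpha>0$ that the norm map $\cN_\alpha$ does not collapse $\det\tR$ or $\det\tC$ below $\mrO_{2'}(\fZ_{q-\eta})$, and being careful that for $\tS_{m,1,0}$ the subgroup carries its own determinant contribution, which is the single point where the $4\mid q+\eta$ situation departs from the $4\mid q-\eta$ one.
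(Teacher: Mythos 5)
Your plan is correct and matches the paper's (very terse) proof, which simply says the proposition "follows from calculations case by case using the results in \S3.C"; your bookkeeping via $\det\tR$, $\det\tC$, $\det\tM$ from Propositions~\ref{prop:tN-m,alpha,gamma-2-unitary}--\ref{prop:tN-2-unitary-E-m,gamma} and the index formula $|\tG:GN_{\tG}(\tR)|=(q-\eta)/|\det\tN|$ fed into Lemma~\ref{lem:split-classes-radical} is exactly the intended computation. You have also correctly isolated the one genuinely new feature of the $4\mid q+\eta$ case, namely that $\det\tR$ and the exceptional $\det\tM=\set{\pm1}$ can contribute nontrivially to $\det\tN$ on top of $\mrO_{2'}(\fZ_{q-\eta})$.
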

\begin{proof}
	This follows from calculations case by case using the results in \S\ref{subsect:gst-2-unitary}.
\end{proof}

\begin{prop}\label{prop:CN_m,alpha,gamma-2-unitary}
	Assume $\ell=2$, $4 \mid q+\eta$,
	and $\tR=\tR_{m,\alpha,\gamma}$ or $\tR_{m,0,\gamma}^\pm$ is a special subgroup of symplectic type of $\tG_{m,\alpha,\gamma}$.
	Set $R=R_{m,\alpha,\gamma} = \tR_{m,\alpha,\gamma} \cap G_{m,\alpha,\gamma}$
	or $R_{m,0,\gamma}^\pm = \tR_{m,0,\gamma}^\pm \cap G_{m,0,\gamma}$.
	Denote by $C_{m,\alpha,\gamma}$, $N_{m,\alpha,\gamma}$ (or $C_{m,0,\gamma}^\pm$, $N_{m,0,\gamma}^\pm$)
	the centralizer and normalizer of $R_{m,\alpha,\gamma}$ (or $\tR_{m,0,\gamma}^\pm$) in $G_{m,\alpha,\gamma}$.
	If the results are stated using twisted version, the superscript ``$tw$'' will be used.
	\vspace{-1ex}
	\begin{enumerate}[\rm(1)]\setlength{\itemsep}{-0.5ex}
		\item Assume $\tR=\tR_{m,\alpha,\gamma}(\alpha>0)$ with $\nu(m)+\gamma>0$ or $\nu(m)=\gamma=0$ and $\alpha=1$.
		Let $\tM_{m,\alpha,\gamma}^0,\hC_{m,\alpha,\gamma}^0, \hN_{m,\alpha,\gamma}^0=\hC_{m,\alpha,\gamma}^0\tM_{m,\alpha,\gamma}^0,
		\tM_{m,\alpha,\gamma}^{tw}$ be defined as in \S\ref{subsect:srs-odd}.
		Then $C_{m,\alpha,\gamma}^{tw}=\hbar(\hC_{m,\alpha,\gamma}^0)$,
		$\hbar(\tN_{m,\alpha,\gamma}^0) \cap G_{m,\alpha,\gamma}^{tw}
		= \hbar(\hN_{m,\alpha,\gamma}^0) = C_{m,\alpha,\gamma}^{tw}\tM_{m,\alpha,\gamma}^{tw}$,
		and $\hbar(\hN_{m,\alpha,\gamma}^0)/R_{m,\alpha,\gamma}^{tw}
		\cong C_{m,\alpha,\gamma}^{tw}/Z(R_{m,\alpha,\gamma}^{tw}) \times \Sp_{2\gamma}(\ell)$.
		Finally, $N_{m,\alpha,\gamma}^{tw}=\hbar(\hN_{m,\alpha,\gamma}^0)V_{m,\alpha,\gamma}$
		with $V_{m,\alpha,\gamma} = \grp{v_{m,\alpha,\gamma}}$.
		\item Assume $\tR=\tR_{m,0,\gamma}^\pm=E_{m,\gamma}^\pm$($\gamma>1$ if the type is plus, $\gamma>0$ if the type is minus).
		Use the notation in \S\ref{subsect:gst-2-unitary}.
		Then $R_{m,0,\gamma}^\pm=\tR_{m,0,\gamma}^\pm$ and
		$C_{m,0,\gamma}^\pm = \set{A \otimes I_{2^\gamma} \mid A \in \GL_m(\eta q), \det(A)=\pm1}$.
		When we are not in the case (\ref{special-case-2-uni-1}),
		$N_{m,0,\gamma}^\pm = C_{m,0,\gamma}^\pm \tM_{m,0,\gamma}^\pm$
		and $N_{m,0,\gamma}^\pm/E_{m,\gamma}^\pm
		\cong C_{m,0,\gamma}^\pm/Z(E_{m,\gamma}^\pm) \times \GO_{2\gamma}^\pm(2)$.
		When the type is plus and $\gamma=2,\nu(m)=0$,
		$N_{m,0,2}^+=C_{m,0,2}^+\tM_{m,0,2}^{+,0}$ with $\tM_{m,0,2}^{+,0} \leq \tM_{m,0,2}^+$
		such that $\tM_{m,0,2}^{+,0}/E_{m,2}^+ \cong \Omega_4^+(2)$ and
		$N_{m,0,2}^+/E_{m,2}^+ \cong C_{m,0,2}^+/Z(E_{m,2}^+) \times \Omega_4^+(2)$.
		When the type is minus and $\gamma=1,\nu(m)=0,a=2$,
		$N_{m,0,1}^-=C_{m,0,1}^+\tM_{m,0,1}^{-,0}$ with $\tM_{m,0,1}^{-,0} \leq \tM_{m,0,1}^-$
		such that $\tM_{m,0,1}^{-,0}/E_{m,1}^- \cong C_3$ and
		$N_{m,0,1}^-/E_{m,1}^- \cong C_{m,0,1}^-/Z(E_{m,1}^-) \times C_3$.
		\item Assume $\tR=\tR_{m,0,\gamma}=\tS_{m,1,\gamma-1}(\gamma\geq1)$ except the case $\nu(m)=0,\gamma=1,a=2$.
		Use the notation in \S\ref{subsect:gst-2-unitary}.
		Then $C_{m,0,\gamma} = \set{A \otimes I_{2^\gamma} \mid A \in \GL_m(\eta q), \det(A)=\pm1}$.
		When $\nu(m)\neq0$ or $\gamma>1$, $R_{m,0,\gamma}=\tR_{m,0,\gamma}$;
		while when $\nu(m)=0$, $\gamma=1$ and $a>2$,
		$R_{m,0,1} \cong Q_{a+1}$ the generalized quaternion group of order $2^{a+1}$.
		In both case, $N_{m,0,\gamma}/R_{m,0,\gamma} \cong C_{m,0,\gamma}/Z(R_{m,0,\gamma}) \times \Sp_{2(\gamma-1)}(2)$.
		\item If $\tR=\tR_m$, $\det\tR_m=\fZ_2^{\nu(m)}$, $C_m=N_m=G_m$.	
	\end{enumerate}
\end{prop}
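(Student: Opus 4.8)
The plan is to reuse the transfer-to-twisted-groups argument that proved Propositions~\ref{prop:CN_m,alpha,gamma-odd} and~\ref{prop:CN_m,alpha,gamma-2-linear}. First I would record the reduction: since each $\tR$ in the statement is special by Proposition~\ref{prop:special-2-unitary-1}, one has $N_{\tG_{m,\alpha,\gamma}}(R)=N_{\tG_{m,\alpha,\gamma}}(\tR)$ automatically, and likewise $C_{\tG_{m,\alpha,\gamma}}(R)=C_{\tG_{m,\alpha,\gamma}}(\tR)$ — trivially when $\det\tR=1$ (so $R=\tR$), and in the two remaining cases, $\tR=\tR_{m,1}$ with $\nu(m)=\gamma=0$ and $\tR=\tS_{m,1,0}$ with $\nu(m)=0$ and $a>2$, by the explicit centralizer computations already carried out inside the proof of Proposition~\ref{prop:special-2-unitary-1}. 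Thus every assertion reduces to intersecting the twisted normalizers and centralizers described in \S\ref{subsect:gst-2-unitary} with $G_{m,\alpha,\gamma}^{tw}$ and transporting back along $\iota$, and the only genuine content is keeping track of determinants.

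For part~(1), with $\alpha>0$ Proposition~\ref{prop:tN-m,alpha,gamma-2-unitary} gives $\det\tM_{m,\alpha,\gamma}^{tw}=1$ and $\det v_{m,\alpha,\gamma}=1$, so intersecting $\tN_{m,\alpha,\gamma}^{tw}=\hbar(\tN_{m,\alpha,\gamma}^0)\rtimes V_{m,\alpha,\gamma}$ with $G_{m,\alpha,\gamma}^{tw}$ preserves the whole $\Sp_{2\gamma}(\ell)$-quotient together with the whole cyclic factor $V_{m,\alpha,\gamma}$; this yields $C_{m,\alpha,\gamma}^{tw}=\hbar(\hC_{m,\alpha,\gamma}^0)$, $\hbar(\tN_{m,\alpha,\gamma}^0)\cap G_{m,\alpha,\gamma}^{tw}=\hbar(\hN_{m,\alpha,\gamma}^0)=C_{m,\alpha,\gamma}^{tw}\tM_{m,\alpha,\gamma}^{tw}$ with the stated direct-product structure modulo $R_{m,\alpha,\gamma}^{tw}$, and $N_{m,\alpha,\gamma}^{tw}=\hbar(\hN_{m,\alpha,\gamma}^0)V_{m,\alpha,\gamma}$. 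The boundary subcase $\nu(m)=\gamma=0$, $\alpha=1$ would be handled separately using the explicit description of $C_{\tG_{m,1}^{tw}}(\tR_{m,1}^{tw})=C_{\tG_{m,1}^{tw}}(R_{m,1}^{tw})$ recorded in the proof of Proposition~\ref{prop:special-2-unitary-1}(1).

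For parts~(2) and~(3), I would first note $\tC=\GL_m(\eta q)\otimes I_{2^\gamma}$ by Propositions~\ref{prop:tN-2-unitary-S-m,gamma} and~\ref{prop:tN-2-unitary-E-m,gamma}, so $C_G(R)=\{A\otimes I_{2^\gamma}\mid A\in\GL_m(\eta q),\ \det(A)^{2^\gamma}=1\}=\{A\otimes I_{2^\gamma}\mid\det A=\pm1\}$, since $4\mid q+\eta$ forces $(q-\eta)_2=2$ and hence the $2$-power roots of unity in $\fZ_{q-\eta}$ to be $\pm1$. For $\tR=E_{m,\gamma}^\pm$ I would invoke Proposition~\ref{prop:tN-2-unitary-E-m,gamma}: outside case~(\ref{special-case-2-uni-1}) we have $\det\tM_{m,0,\gamma}^\pm=1$, giving the central product $N_{m,0,\gamma}^\pm=C_{m,0,\gamma}^\pm\tM_{m,0,\gamma}^\pm$ over $Z(E_{m,\gamma}^\pm)$ and the $\GO_{2\gamma}^\pm(2)$-quotient; inside case~(\ref{special-case-2-uni-1}) I would replace $\tM_{m,0,\gamma}^\pm$ by its determinant-one subgroup $\tM_{m,0,\gamma}^{\pm,0}$ and check, by reading off which of the explicit generators of $\tM_{m,0,\gamma}^\pm$ exhibited in the proof of Proposition~\ref{prop:tN-2-unitary-E-m,gamma} have determinant $-1$, that $\tM_{m,0,\gamma}^{\pm,0}/E_{m,\gamma}^\pm\cong\Omega_4^+(2)$ for the plus type with $\gamma=2$ and $\cong C_3$ for the minus type with $\gamma=1$, $a=2$. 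For $\tR=\tS_{m,1,\gamma-1}$ I would argue the same way from Proposition~\ref{prop:tN-2-unitary-S-m,gamma}(3), using that $\tM_{m,1,\gamma-1}^{tw}$ has determinant $1$ (it is the $\alpha=1$ case of \S\ref{subsect:gst-2-linear}) and $\det\tau_{m,1,\gamma-1}^{tw}=1$, so that after intersecting with $G$ the residual factor is $\Sp_{2(\gamma-1)}(2)$; the quaternion subcase $\nu(m)=0$, $\gamma=1$, $a>2$, where $R_{m,0,1}$ is generalized quaternion of order $2^{a+1}$, is covered by the identification $N_{\tG_{m,1}^{tw}}(R_{m,0,1})=N_{\tG_{m,1}^{tw}}(\tS_{m,1,0}^{tw})$ from the proof of Proposition~\ref{prop:special-2-unitary-1}(3).

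Part~(4) is immediate, since $\tR_m=\{\pm I_m\}$ is central in $\tG_{m,0,0}=\GL_m(\eta q)$, so $N_{\tG_m}(\tR_m)=C_{\tG_m}(\tR_m)=\GL_m(\eta q)$ and $C_m=N_m=G_m$, while $\det(\pm I_m)=(\pm1)^m$ gives the claimed value of $\det\tR_m$. The step I expect to be the main obstacle is the exceptional-case bookkeeping in~(\ref{special-case-2-uni-1}): pinning down the determinant-one subgroup $\tM_{m,0,\gamma}^{\pm,0}$ and verifying that it surjects onto precisely $\Omega_4^+(2)$, resp.\ $C_3$, modulo the extraspecial group rather than onto something smaller, and at the same time making sure the minus-type $\gamma=1$, $a=2$ situation — where $E_{m,1}^-$ already arose in the analysis of $\tS_{m,1,0}$ — is accounted for exactly once across parts~(2) and~(3).
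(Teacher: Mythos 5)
Your proposal is correct and follows the same route as the paper's (terse) proof: intersect the twisted normalizers and centralizers from Propositions~\ref{prop:tN-m,alpha,gamma-2-unitary}, \ref{prop:tN-2-unitary-S-m,gamma}, and~\ref{prop:tN-2-unitary-E-m,gamma} with $G$, track determinants, and read the exceptional case~(\ref{special-case-2-uni-1}) off the explicit generators. The extra bookkeeping you add — the preliminary reduction $C_{\tG}(R)=C_{\tG}(\tR)$, $N_{\tG}(R)=N_{\tG}(\tR)$ via speciality, the boundary subcases $\alpha=1,\nu(m)=\gamma=0$ and $\nu(m)=0,\gamma=1,a>2$, and the check that $E_{m,1}^-$ is counted once across parts~(2) and~(3) — is exactly what the paper leaves implicit.
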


\begin{proof}
	(1) follows from Proposition \ref{prop:tN-m,alpha,gamma-2-unitary},
	noting that $\det(\tM_{m,\alpha,\gamma}^{tw})=1$ and $\tM_{m,\alpha,\gamma}^{tw}/E_{m,\alpha,\gamma}^{tw} \cong \Sp(2\gamma,\ell)$.
	(2) follows from Proposition \ref{prop:tN-2-unitary-E-m,gamma},
	noting that $\det\tC_{m,0,\gamma}^\pm=\mrO_{2'}(\fZ_{q-\eta})$,
	$\det \tM_{m,\gamma}^\pm \leq \mrO_2(\fZ_{q-\eta})$,
	and $\tM_{m,\gamma}^\pm/E_{m,\gamma}^\pm \cong \GO_{2\gamma}^\pm(2)$;
	while for the case (\ref{special-case-2-uni-1}),
	it can be calculated from the explicit construction in the proof of Proposition \ref{prop:tN-2-unitary-E-m,gamma}.
	(3) follows from Proposition \ref{prop:tN-2-unitary-S-m,gamma}
	and (4) follows by direct calculation.
\end{proof}


\begin{rem}\label{4.39}
	By the above proposition,
	the radical subgroup $R_{m,0,\gamma}^\pm=E_{m,\gamma}^\pm$($\gamma>1$ if the type is plus, $\gamma>0$ if the type is minus)
	will not provide any weight except in the following two cases (see also the Remark in \cite[p.511]{An92}).
	\vspace{-1ex}
	\begin{enumerate}[(i)]\setlength{\itemsep}{-0.5ex}
		\item The type is plus and $\nu(m)=0,\gamma=2$.
		Note that $N_{m,0,2}^+/E_{m,2}^+ \cong C_{m,0,2}^+/Z(E_{m,2}^+) \times \Omega_4^+(2)$
		and $\Omega_4^+(2)$ has a unique character of defect zero.
		\item The type is minus and $\gamma=1$.
		This case splits into two cases.
		When $\nu(m)\neq0$ or $a>2$, note that
		$N_{m,0,1}^-/E_{m,1}^- \cong C_{m,0,1}^-/Z(E_{m,1}^-) \times \GO_2^-(2)$
		and $\GO_2^-(2)\cong\fS_3$ has a unique character of defect zero.
		When $\nu(m)=0$ and $a=2$, note that
		$N_{m,0,1}^-/E_{m,1}^- \cong C_{m,0,1}^-/Z(E_{m,1}^-) \times C_3$
		and $C_3$ provides $3$ characters of defect zero.
	\end{enumerate}
\end{rem}

Now, assume $\bc\neq\zero$ and consider the special radical subgroup of the form
$\tR_{m,\alpha,\gamma,\bc} := \tR_{m,\alpha,\gamma} \wr A_{\bc}$(except $\alpha=\gamma=0, c_1=1$)
or $\tR_{m,0,\gamma,\bc}^\pm := \tR_{m,0,\gamma}^\pm \wr A_{\bc}$($\gamma>1$ if the type is plus, $\gamma>0$ if the type is minus).
Let $R_{m,\alpha,\gamma,\bc} := \tR_{m,\alpha,\gamma,\bc} \cap G_{m,\alpha,\gamma,\bc}$
and $R_{m,0,\gamma,\bc}^\pm := \tR_{m,0,\gamma,\bc}^\pm \cap G_{m,0,\gamma,\bc}$.
Denote by $\tB_{m,\alpha,\gamma,\bc}$ ($\tB_{m,0,\gamma,\bc}^\pm$)
the base subgroup of $\tR_{m,\alpha,\gamma,\bc}$ ($\tR_{m,0,\gamma,\bc}^\pm$).
Set $B_{m,\alpha,\gamma,\bc} = \tB_{m,\alpha,\gamma,\bc} \cap G_{m,\alpha,\gamma,\bc}$
and $B_{m,0,\gamma,\bc}^\pm = \tB_{m,0,\gamma,\bc}^\pm \cap G_{m,0,\gamma,\bc}$.
Since $\tR_{m,0,0,\bc}$ with $c_1=1$ is not a basic subgroup,
we have $\det A_{\bc} = 1$ for all other cases,
thus $R_{m,\alpha,\gamma,\bc}=B_{m,\alpha,\gamma,\bc}A_{\bc}$
and $R_{m,0,\gamma,\bc}^\pm=B_{m,0,\gamma,\bc}^\pm A_{\bc}$.
We denote $\two=(2)$.

\begin{prop}\label{prop:special-2-unitary-2}
	Assume $\ell=2$, $4 \mid q+\eta$ and $\bc\neq\zero$.
	Let $\tR=\tR_{m,\alpha,\gamma,\bc}$(except $\alpha=\gamma=0, c_1=1$) or
	$\tR=\tR_{m,0,\gamma,\bc}^\pm$($\gamma>1$ if the type is plus, $\gamma>0$ if the type is minus)
	be a basic subgroup of $\tG_{m,\alpha,\gamma,\bc}$.
	Then $\tR$ is special except for $\tR=\tR_{m,0,0,\two}$.
\end{prop}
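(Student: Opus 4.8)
The plan is to argue as in the proofs of Propositions~\ref{prop:special-odd-2} and~\ref{prop:special-2-linear-2}. Write $\tB$ for the base subgroup of $\tR$, and set $R=\tR\cap G$, $B=\tB\cap G$; it suffices to prove $N_{\tG}(R)=N_{\tG}(\tR)$, since $N_{\tG}(\tR)\le N_{\tG}(R)$ always. The mechanism is the familiar one: first, $Z(B)$ should be a characteristic subgroup of $R$ — an analogue of Lemmas~\ref{lem:normal-abelian-2} and~\ref{lem:characteristic-subgp}, namely that every normal abelian subgroup of $R$ lies in $B$, proved by the same commutator manipulations as there — so that any $n\in N_{\tG}(R)$ normalizes $Z(B)$; second, by inspecting the minimal members of the commutator sets $\tilde{\cE}=\{ [g,U]\mid g\in Z(\tB),\ g\ne1 \}$ and $\cE=\{ [g_1,[g_2,U]]\mid g_i\in Z(B),\ g_i\ne1 \}$ (with $U$ the underlying space of the ambient group), which are exactly the underlying spaces of the factors of $\tB$, such an $n$ must also normalize $Z(\tB)$; since $\tR=RZ(\tB)$ (or the appropriate variant, see below) this forces $n\in N_{\tG}(\tR)$.

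The verification splits by the shape of the essential factor of $\tR$, using the determinant and normalizer data of \S\ref{subsect:gst-2-unitary} together with Propositions~\ref{prop:special-2-unitary-1} and~\ref{prop:CN_m,alpha,gamma-2-unitary}. When the essential factor has determinant $1$ — $E_{m,\gamma}^\pm$ with $\gamma>1$ in the plus case, $\tS_{m,1,\gamma-1}$ with $\gamma\ge2$ or $\nu(m)>0$, and $\tR_{m,\alpha,\gamma}$ with $\nu(m)+\gamma\ge a$ — one has $B=\tB$, so $\tilde{\cE}$ and $\cE=\{ [g,U]\mid g\in Z(B),\ g\ne1 \}$ coincide and the argument runs verbatim. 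When the essential factor has nontrivial determinant but still contains nontrivial determinant-one elements — $\tR_{m,\alpha,\gamma}$ with $\alpha>0$ and $\nu(m)+\gamma<a$, $\tS_{m,1,0}$ with $\nu(m)=0$ (so that $R$ carries a generalized quaternion, resp. $Q_8$, factor), and $\tR_{m,0,0,\bc}$ with $c_1\ge3$ or with $\bc$ having at least two parts — one runs the iterated-commutator version of the argument of Proposition~\ref{prop:special-odd-2}; in the sub-cases where $Z(\tB)\not\le G$ the factor-space decomposition is recovered instead from the determinant-one central involutions of the quaternion factors of $B$, which lie in $B\cap\tB$, after which $\tR=RZ(\tB)$ is replaced by the corresponding identity generating $\tR$. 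In all of these cases $\tR$ is special.

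It remains to treat $\tR=\tR_{m,0,0,\two}$. Here the essential factor $\tR_m=\{\pm I_m\}$ is central, so $\tB$ is elementary abelian of rank $4$ and $R=B\rtimes A_\bc$ with $B=\tB\cap G$ and $A_\bc$ acting as the regular representation of $C_2\times C_2$ on the four factors. Writing $-I$ for the negative of the identity matrix of the ambient group (which lies in $\tB\cap G$), the subgroup $\grp{-I}\times A_\bc$ is abelian and is normal in $R$, because $[b,v]\in\grp{-I}$ for all $b\in B$ and all $v\in A_\bc$ (for such $b$ and a fixed-point-free involution $v$ the ``commutator codeword'' $b_i+b_{v(i)}$ is constant in $i$). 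Hence $Z(B)$ is not characteristic in $R$, and one checks directly that $\mrO_2(N_{\tG}(R))=R\subsetneq\tR$ (for $\nu(m)=0$, $R$ is extraspecial of order $2^5$ with $N_{\tG}(R)/C_{\tG}(R)\cong\Aut(R)$, and the remaining values of $m$ are handled similarly), so $\tR_{m,0,0,\two}$ is not special. By contrast, for $c_1\ge3$ the analogous subgroup fails to be normal in $R$, which is exactly why those cases are special.

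I expect the main obstacle to be the bookkeeping across the several shapes of the essential symplectic-type factor, in particular the degenerate small cases — $\tS_{m,1,0}$ with $\nu(m)=0$, $E_{m,1}^-$ and $\tR_{m,1,0}$ — where $Z(\tB)$ is not contained in $G$, so that the clean identity $\tR=RZ(\tB)$ must be replaced by an argument recovering the factor-space decomposition from determinant-one central elements of $B$.
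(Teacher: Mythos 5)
The general template you propose — find a characteristic subgroup of $R$, pass to $\tB$ via the minimal members of the commutator sets $\cE$ and $\tilde{\cE}$, then recover $\tR$ from $R$ and the base — is indeed the paper's template, and your treatment of $\tR_{m,0,0,\two}$ is correct and even a bit illuminating: you exhibit the normal abelian subgroup $\grp{-I}\times A_{\two}\not\le B$ explaining why the characteristic-subgroup step fails, whereas the paper simply observes $R_{m,0,0,\two}=E_{m,2}^+$ and cites the specialness of $E_{m,2}^+$. But the proposal has a genuine gap for the factor $\tS_{m,1,0,\bc}=\tR_{m,0,1,\bc}$ with $\nu(m)=0$. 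There $Z(\tS_{m,1})=\set{\pm I_{2m}}$, which has determinant $1$, so $Z(\tB)\le G$, hence $Z(\tB)\subseteq R$ and $R\,Z(\tB)=R\neq\tR$. Your main mechanism ($\tR=R\,Z(\tB)$) collapses, and your stated escape hatch is conditioned on $Z(\tB)\not\le G$, which does not apply here. The paper instead works with the much larger subgroup $\tilde A=(\tZ_{m,1})^{2^{|\bc|}}$ (the direct product of the cyclic \emph{maximal} subgroups of order $2^{a+1}$, not the centers of order $2$), identifies $A=\tilde A\cap G$ as \emph{the} maximal normal abelian subgroup of $R$ (hence characteristic), and recovers $\tR=R\tilde A$; none of that is contained in your write-up.

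There is a second soft spot in the case $\tR=\tR_{m,0,0,\bc}$, $\nu(m)=0$, $c_1\neq 1$, $\bc\neq\two$. You propose to show $Z(B)$ (here $B$ itself) is characteristic in $R$ via "the same commutator manipulations" as Lemma~\ref{lem:normal-abelian-2}, but that lemma's commutator contradiction requires an element of order at least $8$ in the wreathed factor, while $\tR_m$ has order $2$; the direct argument does not produce a contradiction, and as your own $\bc=\two$ example shows the conclusion is in fact false in general. The paper works around this by first decomposing $\tR=\tR_1\wr A_{\bc'}$ with $\tR_1=\tR_{m,0,0,(c_1)}$ non-abelian, showing $B_1=\tB_1\cap G$ (base of the \emph{outer} wreath) is characteristic, and only then identifying the full base $B_{m,0,0,\bc}=C_{B_1}([B_1,B_1])$ as characteristic in $R$; this two-stage step is where the restriction $\bc\neq\two$ enters, and it is not an "iterated-commutator version" of Proposition~\ref{prop:special-odd-2}. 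So while your high-level picture is sound and you correctly pinpoint both the exceptional case and its cause, the proposal as written does not supply a proof for two of the subcases.
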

\begin{proof}
	For $\tR=\tR_{m,\alpha,\gamma,\bc}$ with $\alpha>0$, this assertion can be proved similarly as Proposition \ref{prop:special-2-linear-2}.
	
	Assume $\tR=\tR_{m,0,\gamma,\bc}=\tS_{m,1,\gamma-1,\bc}$ with $\gamma\geq1$.
	If $\nu(m)>0$ or $\gamma>1$, then $\det\tR=1$, thus $\tR$ is obviously special.
	Now, assume $\nu(m)=0$ and $\gamma=1$.
	Let $z_{m,1}^{tw}$ and $\iota$ be as in \S\ref{subsect:gst-2-unitary},
	and set $z_{m,1}=\iota(z_{m,1}^{tw})$, $\tZ_{m,1}=\grp{z_{m,1}}$.
	The $\tZ_{m,1}$ is cyclic of order $2^{a+1}$.
	Let $\tilde{A}=(\tZ_{m,1})^{2^{c_1+\cdots+c_r}}$, where $\bc=(c_1,\ldots,c_r)$,
	and $A = \tilde{A} \cap G_{m,0,1,\bc}$.
	Since $\tS_{m,1}$ has a non-abelian subgroup with determinant $1$,
	arguments in Lemma \ref{lem:normal-abelian-2} show that
	any normal abelian subgroup of $R_{m,0,1,\bc}$ is contained in the base subgroup $B_{m,0,1,\bc}$.
	Then $A$ is the maximal normal abelian subgroup of $R_{m,0,1,\bc}$,
	thus $A$ is characteristic in $R_{m,0,1,\bc}$.
	So $N_{\tG_{m,0,1,\bc}}(R_{m,0,1,\bc})$ normalizes $A$.
	It follows by straight forward calculation that $N_{\tG_{m,0,1,\bc}}(A)=N_{\tG_{m,0,1,\bc}}(\tilde{A})$.
	Since $\tR_{m,0,1,\bc}=R_{m,0,1,\bc}\tilde{A}$, $N_{\tG_{m,0,1,\bc}}(R_{m,0,1,\bc}) \leq N_{\tG_{m,0,1,\bc}}(\tR_{m,0,1,\bc})$.
	The reverse inclusion follows by construction and thus the equality holds, which means that $\tR_{m,0,1,\bc}$ is special.
	
	Assume $\tR=\tR_{m,0,0,\bc}$ with $c_1\neq1$.
	When $\nu(m)\neq0$, $\det\tR=1$, hence $\tR=R_{m,0,0,\bc}$ is special.
	Assume then $\nu(m)=0$ and $\bc=(c_1,\ldots,c_r)$.
	Set $\tR_1=\tR_{m,0,0,(c_1)}$, $\bc'=(c_2,\ldots,c_r)$ and $n'=2^{c_2+\cdots+c_r}$.
	Then $\tR = \tR_1\wr A_{\bc'}$.
	Let $\tB_1$ be the base subgroup of the wreath product $\tR_1\wr A_{\bc'}$
	and set $B_1 = \tB_1 \cap G_{m,0,0,\bc}$, then $R_{m,0,0,\bc} = B_1 \rtimes A_{\bc'}$.
	Thus arguments in Lemma \ref{lem:normal-abelian-2}
	show that any normal abelian subgroup of $R_{m,0,0,\bc}$ is contained in $B_1$.
	Since $\tR_1$ is generated by its normal abelian subgroups by \cite[(2A)(b)]{An92},
	and if $W$ is a normal abelian subgroup of $\tR_1$,
	then $W^{n'} \cap B_1$ is a normal subgroup of $R_{m,0,0,\bc}$.
	Thus $B_1$ is generated by all normal abelian subgroups of $R_{m,0,0,\bc}$ and is a characteristic subgroup of $R_{m,0,0,\bc}$.
	Note that the base subgroup $B_{m,0,0,\bc}$ can also be viewed as the base subgroup of $B_1$.
	Assume $\bc\neq\two$.
	Arguments in \cite[(2A)(b)]{An92} with some obvious modifications show that $B_{m,0,0,\bc}=C_{B_1}([B_1,B_1])$.
	Then $B_{m,0,0,\bc}$ is a characteristic subgroup of $R_{m,0,0,\bc}$.
	So $N_{\tG_{m,0,0,\bc}}(R_{m,0,0,\bc})$ normalizes $B_{m,0,0,\bc}$.
	Easy considerations show that the normalizers of $B_{m,0,0,\bc}$ and $\tB_{m,0,0,\bc}$ in $\tG_{m,0,0,\bc}$ coincide.
	Since $\tR_{m,0,0,\bc}=R_{m,0,0,\bc}\tB_{m,0,0,\bc}$, $\tR_{m,0,0,\bc}$ is special.
	When $\bc=\two$, note that $R_{m,0,0,\two} = \tR_{m,0,0,\two} \cap G_{m,0,0,\two} = E_{m,2}^+$
	and $E_{m,2}^+$ is special.
	
	Finally, if $\tR=\tR_{m,0,\gamma,\bc}^\pm$($\gamma>1$ if the type is plus, $\gamma>0$ if the type is minus),
	then $\det\tR=1$, and $\tR$ is obviously special.
\end{proof}

We use the notation $\tC_{m,\alpha,\gamma,\bc}$, $\tC_{m,0,\gamma,\bc}^\pm$,
$\tN_{m,\alpha,\gamma,\bc}$, $\tN_{m,0,\gamma.\bc}^\pm$,
$C_{m,\alpha,\gamma,\bc}$, $C_{m,0,\gamma,\bc}^\pm$,
$N_{m,\alpha,\gamma,\bc}$, $N_{m,0,\gamma.\bc}^\pm$ in the obvious sense.

\begin{prop}\label{prop:cc-R-m,alpha,gamma,c-2-unitary}
	Assume $\ell=2$, $4 \mid q+\eta$ and $\bc\neq\zero$.
	Let $\tR=\tR_{m,\alpha,\gamma,\bc}$(except $\alpha=\gamma=0, c_1=1$) or
	$\tR=\tR_{m,0,\gamma,\bc}^\pm$($\gamma>1$ if the type is plus, $\gamma>0$ if the type is minus)
	be a basic subgroup of $\tG_{m,\alpha,\gamma,\bc}$.
	\vspace{-1ex}
	\begin{enumerate}[\rm(1)]\setlength{\itemsep}{-0.5ex}
		\item Assume $\tR=\tR_{m,\alpha,\gamma,\bc}$($\alpha>0$ or $\alpha=\gamma=0$ and $c_1\neq1$).
		Then $\det\tR_{m,\alpha,\gamma,\bc}=\det\tR_{m,\alpha,\gamma}=\fZ_2^{2^{\nu(m)+\gamma}}$,
		$\det\tC_{m,\alpha,\gamma,\bc}=(\det\tC_{m,\alpha,\gamma})^{2^{|\bc|}}=\fZ_{q-\eta}^{2^{\gamma+|\bc|}}=\mrO_{2'}(\fZ_{q-\eta})$,
		$\det\tN_{m,\alpha,\gamma,\bc}=\Grp{\mrO_{2'}(\fZ_{q-\eta}),\fZ_2^{2^{\nu(m)+\gamma}}}$.
		If $\tR$ is special, the number of $G_{m,\alpha,\gamma,\bc}$-conjugacy classes of radical subgroups contained in the set
		$\{ G_{m,\alpha,\gamma,\bc}\cap{^{\tg}\tR_{m,\alpha,\gamma,\bc}} \mid \tg\in\tG_{m,\alpha,\gamma,\bc} \}$
		is $2^{\min\{1,\nu(m)+\gamma\}}$.
		\item Assume $\tR=\tR_{m,0,\gamma,\bc}=\tS_{m,1,\gamma-1,\bc}(\gamma\geq1)$.
		Then $\det\tR_{m,0,\gamma,\bc}=\det\tR_{m,0,\gamma}=1$
		unless $\nu(m)=0,\gamma=1$, in which case, $\det\tR_{m,0,\gamma,\bc}=\fZ_2$;
		$\det\tC_{m,\alpha,\gamma,\bc}=(\det\tC_{m,\alpha,\gamma})^{2^{|\bc|}}=\fZ_{q-\eta}^{2^{\gamma+|\bc|}}=\mrO_{2'}(\fZ_{q-\eta})$,
		$\det\tN_{m,0,\gamma,\bc}=\Grp{\mrO_{2'}(\fZ_{q-\eta}),\det\tR_{m,0,\gamma,\bc}}$.
		If $\tR$ is special, the number of $G_{m,0,\gamma,\bc}$-conjugacy classes of radical subgroups contained in the set
		$\{ G_{m,0,\gamma,\bc}\cap{^{\tg}\tR_{m,0,\gamma,\bc}} \mid \tg\in\tG_{m,0,\gamma,\bc} \}$ is $2$,
		unless $\nu(m)=0,\gamma=1$, in which case, the number is $1$.	
		\item Assume $\tR=\tR_{m,0,\gamma,\bc}^\pm=E_{m,\gamma,\bc}^\pm$
		($\gamma>1$ if the type is plus, $\gamma>0$ if the type is minus).
		Then $\det\tR_{m,0,\gamma,\bc}^\pm=\det\tR_{m,0,\gamma}^\pm=1$,
		$\det\tC_{m,0,\gamma,\bc}^\pm=(\det\tC_{m,0,\gamma}^\pm)^{2^{|\bc|}}=\fZ_{q-\eta}^{2^{\gamma+|\bc|}}=\mrO_{2'}(\fZ_{q-\eta})$,
		$\det\tN_{m,0,\gamma,\bc}^\pm=\mrO_{2'}(\fZ_{q-\eta})$.
		If $\tR$ is special, the number of $G_{m,0,\gamma}$-conjugacy classes of radical subgroups contained in the set
		$\{ G_{m,0,\gamma}\cap{^{\tg}\tR_{m,0,\gamma}^\pm} \mid \tg\in\tG_{m,0,\gamma} \}$ is $2$.
	\end{enumerate}
\end{prop}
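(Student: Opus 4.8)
The plan is to handle the three families of basic subgroups appearing in the statement --- the groups $\tR_{m,\alpha,\gamma,\bc}$, the groups $\tS_{m,1,\gamma-1,\bc}$ and the groups $E_{m,\gamma,\bc}^\pm$ --- in parallel. Write $\tR$ for any one of them, $\tR_0$ for its $\bc=\zero$ counterpart (a group of symplectic type: $\tR_{m,\alpha,\gamma}$, $\tR_{m,0,\gamma}$ or $E_{m,\gamma}^\pm$), and $\tC=C_{\tG}(\tR)$, $\tN=N_{\tG}(\tR)$. Every assertion then reduces to two ingredients: the determinants of $\tR$, $\tC$ and $\tN$, and an application of Lemma~\ref{lem:split-classes-radical}. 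Throughout I would use that $4\mid q+\eta$ forces $(q-\eta)_2=2$, so that $\fZ_{q-\eta}^{2^k}=\mrO_{2'}(\fZ_{q-\eta})$ and $\fZ_2^{2^k}=1$ whenever $k\geq1$; this is exactly what makes the $\bc\ne\zero$ formulas uniform, without the exceptional cases that occur for $\bc=\zero$ in Proposition~\ref{prop:cc-R-m,alpha,gamma-2-unitary}.

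First I would compute the three determinants. The base subgroup of $\tR$ is a direct product of $2^{|\bc|}$ diagonal copies of $\tR_0$, so its determinant is $\det\tR_0$; since $A_\bc$ has determinant $1$ in all the relevant cases (as recorded just before the statement) this gives $\det\tR=\det\tR_0$, which is read off from Proposition~\ref{prop:cc-R-m,alpha,gamma-2-unitary}. For the centralizer I would use the wreath-product analogue of Lemma~\ref{lem:tCtN}, which gives $\tC=C_{\tG}(\tR_0)\otimes I_{2^{|\bc|}}$, whence $\det\tC=(\det C_{\tG}(\tR_0))^{2^{|\bc|}}=\mrO_{2'}(\fZ_{q-\eta})$ since $\gamma+|\bc|\geq1$. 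For the normalizer I would use the description $\tN=(N_{\tG}(\tR_0)/\tR_0)\otimes N_{\fS(2^{|\bc|})}(A_\bc)$ recalled after Proposition~\ref{prop:special-2-linear-2}: a general element has the form $\sigma(n_1,\dots,n_{2^{|\bc|}})$ with the $n_i\in N_{\tG}(\tR_0)$ lying in one $\tR_0$-coset, so its determinant is $\pm\det(n_1)^{2^{|\bc|}}\prod_i\det(r_i)$ for suitable $r_i\in\tR_0$. By Propositions~\ref{prop:tN-m,alpha,gamma-2-unitary}, \ref{prop:tN-2-unitary-S-m,gamma} and \ref{prop:tN-2-unitary-E-m,gamma} one has $\det N_{\tG}(\tR_0)\leq\det(C_{\tG}(\tR_0))\cdot\set{\pm1}$, so the $2^{|\bc|}$-th power kills both the $\set{\pm1}$ factor and the leading sign, leaving $\det\tN=\Grp{\mrO_{2'}(\fZ_{q-\eta}),\det\tR_0}$; the leading sign can arise only when $\nu(m)+\alpha+\gamma=0$ (an analogue of the remark in \S\ref{subsect:srs-2-linear}), and in that case $-1\in\det\tR_0$ in any event.

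Next I would count conjugacy classes. Each $\tR$ here is special by Proposition~\ref{prop:special-2-unitary-2}, so Lemma~\ref{lem:split-classes-radical} identifies the number of $G$-classes in question with $|\tG:G\tN|$. Since $\det\colon\tG\twoheadrightarrow\fZ_{q-\eta}$ has kernel $G$, this index equals $|\fZ_{q-\eta}:\det\tN|$. Feeding in $\det\tN=\Grp{\mrO_{2'}(\fZ_{q-\eta}),\det\tR_0}$ and $(q-\eta)_2=2$, this index is $2$ when $\det\tR_0=1$ and $1$ when $\det\tR_0=\fZ_2$. Reading off $\det\tR_0$ from Proposition~\ref{prop:cc-R-m,alpha,gamma-2-unitary} then yields $2^{\min\{1,\nu(m)+\gamma\}}$ in case (1), $2$ (respectively $1$ when $\nu(m)=0$ and $\gamma=1$) in case (2), and $2$ in case (3).

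I expect the main obstacle to be bookkeeping rather than any single conceptual point: one must verify that the structural results of \S\ref{subsect:gst-2-unitary} pass correctly through the twistings $\hbar$ and $\iota$ (this is the content of the cited propositions together with Remarks~\ref{M-is-stable-2-2} and \ref{M-is-stable-2-3}), and one must reconcile the degenerate configurations with the uniform formulas one at a time --- $A_{c_1}$ with $c_1=1$ on degree-two blocks, where the relevant product with the base subgroup is not even a semidirect product; the quaternion and semidihedral situations $\tS_{m,1,\gamma-1}$ with $\nu(m)=0$ and $\gamma=1$; and the borderline $E_{m,\gamma}^\pm$ with small $\gamma$ and $a=2$. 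In every one of these, the fact that $|\bc|\geq1$ lets the $2^{|\bc|}$-th power in the normalizer computation absorb the small index discrepancies, so no separate exceptional case survives into the final statement.
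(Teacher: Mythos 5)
Your proof is correct and takes essentially the same approach as the paper: the paper's own proof simply defers to the odd-prime case (Proposition \ref{prop:cc-R-m,alpha,gamma,c-odd}) ``case by case, noting that for (3), a slight modification is needed,'' and your write-up supplies precisely those details --- decompose $\tN$ via the twisted version of Lemma \ref{lem:tCtN}, factor elements as $\sigma(c_0m_0v_0r_1,\dots,c_0m_0v_0r_{2^{|\bc|}})$, use $|\bc|\geq1$ to kill the $\set{\pm1}$ contribution of $\tM$ (the ``slight modification'' for $E_{m,\gamma}^\pm$), and conclude via Lemma \ref{lem:split-classes-radical} and $(q-\eta)_2=2$.
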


\begin{proof}
	This can be shown by the same arguments as in Proposition \ref{prop:cc-R-m,alpha,gamma,c-odd} case by case,
	noting that for (3), a slight modification is needed.
\end{proof}

\begin{rem}
	Assume $\ell=2$, $4 \mid q+\eta$ and $\bc\neq\zero$,
	and $\tR=\tR_{m,0,\gamma,\bc}^\pm=E_{m,\gamma,\bc}^\pm$($\gamma>1$ if the type is plus, $\gamma>0$ if the type is minus).
	Then $R=R_{m,0,\gamma,\bc}^\pm=\tR_{m,0,\gamma,\bc}^\pm$.
	Recall that
	$$\tN_{m,0,\gamma,\bc}^\pm = (\tN_{m,0,\gamma}^\pm/\tR_{m,0,\gamma}^\pm) \otimes N_{\fS(2^{|\bc|})}(A_\bc),$$
	where $\otimes$ is defined as \cite[(1.3) (1.5)]{AF90}.
	By Proposition \ref{prop:tN-2-unitary-E-m,gamma},
	$\tN_{m,0,\gamma}^\pm = \tC_{m,0,\gamma}^\pm \tM_{m,\gamma}^\pm$ with $\det \tM_{m,\gamma}^\pm \leq \fZ_2$.
	By the meaning of $\otimes$ in the above equation,
	the contribution of $\tM_{m,\gamma}^\pm$ in $\tN_{m,0,\gamma,\bc}^\pm$ will have determinant $1$;
	see the proof of Proposition \ref{prop:cc-R-m,alpha,gamma,c-odd}.
	Thus
	\[ N_{m,0,\gamma,\bc}^\pm/C_{m,0,\gamma,\bc}^\pm R_{m,0,\gamma,\bc}^\pm \cong
	\GO_{2\gamma}^\pm(2) \times N_{\fS(2^{|\bc|})}(A_\bc)/A_\bc. \]
	So $R_{m,0,\gamma,\bc}^\pm$ can not provide any weight except for $\tR_{m,0,1,\bc}^-$; see the Remark on  \cite[p.511]{An92}.
\end{rem}

Finally, we consider the special radical subgroups of the form $\tR=\tR_1\times\cdots\times\tR_u$
with $u>1$ and $\tR_i$ being a basic subgroup.
By Proposition \ref{prop:cc-R-m,alpha,gamma-2-unitary} and  \ref{prop:cc-R-m,alpha,gamma,c-2-unitary}, and similar as Lemma \ref{det-radical-N-C-2-linear}, we have

\begin{lem}\label{det-radical-N-C-2-unitary}
	Assume $\ell=2$ and $4 \mid q+\eta$.
	Let $\tR$ be a radical subgroup of $\tG$ which can be expressed as 
	$\tR=\tR_1\times\cdots\times \tR_u$, a direct product of basic subgroups.
	Denote $\tC=C_{\tG}(\tR)$, $\tN=N_{\tG}(\tR)$.
	\vspace{-0.5ex}
	\begin{enumerate}[\rm(1)]\setlength{\itemsep}{-0.5ex}
		\item $\mrO_{2'}(\det(\tN))=\mrO_{2'}(\det(\tR\tC))=\mrO_{2'}(\fZ_{q-\eta})$.
		In particular, $|\tG:GN_{\tG}(\tR)|$ is a $2$-power.
		\item $\det(\tN)\neq\det(\tR\tC)$ if and only if $\det(\tR\tC)=\mrO_{2'}(\fZ_{q-\eta})$ and $\det(\tN)=\fZ_{q-\eta}$,
		which happens exactly when the following hold
		\vspace{-0.5ex}		
		\begin{equation} \label{special-case-wei-ell2-uni}
		\begin{aligned}
		\textrm{\rm(i)}\ \ & \textrm{there is a component}\ \tR_{m,0,2}^+ \ \textrm{with}\  \nu(m)=0 \ \textrm{or a component} \\ & \tR_{m,0,1}^- \ \textrm{with}\ \nu(m)=0 \ \textrm{when}\ a=2;\\
		\textrm{\rm(ii)}\ \ & \textrm{there is no component of the form}\ \tR_{m,\alpha,0,\bc} ~(\alpha>0\ \textrm{or}\ \alpha=0 \\ & \textrm{and} \ c_1\neq1)\ \textrm{with}\ \nu(m)=0;\\
		\textrm{\rm(iii)}\ \ & \textrm{there is no component of the form}\ \tR_{m,0,1,\bc}=\tS_{m,1,0,\bc} \ \textrm{with}\\ & \nu(m)=0.
		\end{aligned}
		\addtocounter{thm}{1}\tag{\thethm}	
		\end{equation}
	\end{enumerate}
\end{lem}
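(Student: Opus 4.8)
The plan is to imitate the proof of Lemma~\ref{det-radical-N-C-2-linear} and reduce everything to the single-component computations in Propositions~\ref{prop:cc-R-m,alpha,gamma-2-unitary} and~\ref{prop:cc-R-m,alpha,gamma,c-2-unitary}, the only new ingredient being the extra shapes of basic subgroups ($\tS$-type, $E^\pm$-type and $\tR_m$) available when $4\mid q+\eta$.

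First I would group the factors of $\tR$ into pairwise non-conjugate basic subgroups, writing $\tR=\tR_1^{t_1}\times\cdots\times\tR_s^{t_s}$, so that by An's description of normalizers in \cite[2C]{An92} and \cite[2C]{An93} one has $\tN=\prod_{i=1}^s\tN_i\wr\fS(t_i)$ and $\tC=\prod_{i=1}^s\tC_i^{t_i}$ along the orthogonal decomposition of the underlying module, where $\tN_i$, $\tC_i$ denote the normalizer and centralizer of $\tR_i$ in the relevant general linear or unitary group. Consequently, inside $\fZ_{q-\eta}$ one has $\det(\tR\tC)=\Grp{\det\tR_i,\det\tC_i\mid 1\le i\le s}$, while $\det(\tN)$ is generated by the $\det\tN_i$ together with the determinants of the block permutation matrices realising the symmetric groups $\fS(t_i)$. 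Since $(q-\eta)_2=2$ in the present case, such a block permutation matrix has determinant $\pm1$, and the value $-1$ can only occur when it moves an odd number of pairs of odd-dimensional blocks; as the natural module of $\tR_{m,\alpha,\gamma,\bc}$ (or $\tS_{m,1,\gamma-1,\bc}$ or $E^{\pm}_{m,\gamma,\bc}$) has even dimension $m2^{\alpha+\gamma+|\bc|}$ unless $\tR_i=\tR_{m_i}$ with $m_i$ odd, and in that exceptional case $\tN_i=\GL_{m_i}(\eta q)$ already satisfies $\det\tN_i=\fZ_{q-\eta}\ni-1$, this forces $\det(\tN)=\Grp{\det\tN_1,\dots,\det\tN_s}$.

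Part~(1) then follows at once: by the two cited Propositions each $\det\tR_i$ is contained in $\fZ_2$, while each of $\det\tC_i$ and $\det\tN_i$ lies in $\{\fZ_{q-\eta}^{2^\gamma},\fZ_{q-\eta}\}$, and all of these subgroups have $2'$-part exactly $\mrO_{2'}(\fZ_{q-\eta})$; hence $\mrO_{2'}(\det\tN)=\mrO_{2'}(\det(\tR\tC))=\mrO_{2'}(\fZ_{q-\eta})$. Since $\det\colon\tG\to\fZ_{q-\eta}$ is surjective with kernel $G$, we get $|\tG:GN_{\tG}(\tR)|=[\fZ_{q-\eta}:\det\tN]$, which divides $[\fZ_{q-\eta}:\mrO_{2'}(\fZ_{q-\eta})]=(q-\eta)_2=2$ and is therefore a $2$-power.

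For part~(2) the key point is that every $\det\tN_i$ and every $\det(\tR_i\tC_i)$ is a subgroup of $\fZ_{q-\eta}$ containing $\mrO_{2'}(\fZ_{q-\eta})$, hence equals either $\mrO_{2'}(\fZ_{q-\eta})$ or $\fZ_{q-\eta}$. Going through Propositions~\ref{prop:cc-R-m,alpha,gamma-2-unitary} and~\ref{prop:cc-R-m,alpha,gamma,c-2-unitary} one checks that $\det(\tR_i\tC_i)\le\det\tN_i$ in every case, with equality except precisely when $\tR_i$ falls into case~(\ref{special-case-2-uni-1}) — that is, $\tR_i=E^{+}_{m_i,2}$ with $\nu(m_i)=0$ or $\tR_i=E^{-}_{m_i,1}$ with $\nu(m_i)=0$ and $a=2$ — where $\det\tN_i=\fZ_{q-\eta}$ but $\det(\tR_i\tC_i)=\mrO_{2'}(\fZ_{q-\eta})$. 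Combining with the previous paragraph, $\det(\tR\tC)\le\det(\tN)$ always, and they differ exactly when $\det(\tR\tC)=\mrO_{2'}(\fZ_{q-\eta})$ and $\det(\tN)=\fZ_{q-\eta}$. One then reads off from the Propositions which components force $\det(\tR_i\tC_i)=\fZ_{q-\eta}$: the requirement that no such component occurs is exactly conditions~(ii) and~(iii), and once these hold the only way for some $\det\tN_i$ to equal $\fZ_{q-\eta}$ is that a component as in~(\ref{special-case-2-uni-1}) is present, which is condition~(i). The main obstacle will be exactly this final bookkeeping: running through every basic-subgroup shape, including the sporadic small cases and their wreath-product variants, and matching the precise conditions under which $\det(\tR_i\tC_i)$ or $\det\tN_i$ acquires the factor $2$ against the exact wording of~(i)--(iii).
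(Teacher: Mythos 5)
The paper does not give an explicit proof of this lemma: the text just says "By Proposition~\ref{prop:cc-R-m,alpha,gamma-2-unitary} and \ref{prop:cc-R-m,alpha,gamma,c-2-unitary}, and similar as Lemma~\ref{det-radical-N-C-2-linear}, we have", and your proposal spells out precisely that argument. Your reduction of $\tN$ to the wreath-product form, the claim $\det\tN=\grp{\det\tN_1,\dots,\det\tN_s}$ (obtained, as in the paper's proof of Lemma~\ref{det-radical-N-C-2-linear}, by noting that a block permutation of determinant $-1$ forces an odd-dimensional block, hence a component $\tR_{m_i}$ with $m_i$ odd, whose $\det\tN_i$ already contains $-1$), and the deduction of part~(1) all match the intended argument.

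One caution on the bookkeeping for part~(2), which you correctly identify as the remaining work. Your sentence ``the requirement that no such component occurs is exactly conditions~(ii) and~(iii)'' is not quite right as stated: components of the shape $\tR_{m,\alpha,0,\zero}=\tR_{m,\alpha}$ with $\nu(m)>0$ (or $\tR_m$ with $m$ even) also give $\det(\tR_i\tC_i)=\fZ_{q-\eta}$, because $\det\tC_i=\fZ_{q-\eta}^{2^{\gamma_i+|\bc_i|}}=\fZ_{q-\eta}$ whenever $\gamma_i=|\bc_i|=0$, yet these are not excluded by conditions~(ii) or (iii), which only mention $\nu(m)=0$. The reason this does not break the ``only if'' direction is that such a component simultaneously forces $\det\tN_i=\fZ_{q-\eta}$, so $\det\tN=\det(\tR\tC)$; in other words such components never separate $\det\tN$ from $\det(\tR\tC)$. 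You should state this explicitly in the final write-up so that the equivalence with~(i)--(iii) is actually justified, rather than asserting that the list in~(ii)--(iii) exhausts the components with $\det(\tR_i\tC_i)=\fZ_{q-\eta}$.
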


\begin{prop}\label{prop:special-2-unitary-3}
	Assume $\ell=2$ and $4 \mid q+\eta$.
	Let $\tR$ be a radical subgroup of $\tG$ which can be expressed as 
	$\tR=\tR_1\times\cdots\times \tR_u$ with $u>1$, $\tR_i$ being a basic subgroup and $\det\tR_1 \geq \cdots \geq \det\tR_u$.
	$\tR$ is special if and only if one of the following holds:
	\vspace{-0.5ex}
	\begin{enumerate}[\rm(1)]\setlength{\itemsep}{0ex}
		\item $\det\tR=1$, which happens exactly when
		there is no component of the form $\tR_{m,\alpha,0,\bc}$($\alpha>0$ or $\alpha=0$ and $c_1\neq1$) with $\nu(m)=0$
		and there is no component of the form $\tR_{m,0,1,\bc}=\tS_{m,1,0,\bc}$ with $\nu(m)=0$.
		\item $\sum\limits_{i:\det\tR_i\neq1} 2^{|\bc_i|} \geqslant2$ except when
		\vspace{-0.5ex}
		\begin{enumerate}[\rm(2a)]\setlength{\itemsep}{0.5ex}
			\item $\tR_1 = \tR_{m_1}, \tR_2 = \tR_{m_2}$, $v(m_1)=v(m_2)=0$, $\det (\tR_3 \times\cdots\times \tR_u)=1$;
			\item $\tR_1=\tR_{m_1,0,0,\bc_1}$ with $v(m_1)=0$, $\bc_1=\one$ or $\two$ and $\det\tR_i=1$ for $i>1$.
		\end{enumerate}
		\item $\tR_1=\tR_{m_1,1}$ or $\tR_{m_1}$ with $v(m_1)=0$ and $\det\tR_i=1$ for $i>1$.
		\item $\tR_1=\tR_{m_1,0,1}=\tS_{m,1}$ with $v(m_1)=0$, $a>2$ and $\det\tR_i=1$ for $i>1$.
	\end{enumerate}
	\vspace{-0.5ex}
	Finally, if $\tR$ is special, then $C_{\tG}(R)=\tC$.
\end{prop}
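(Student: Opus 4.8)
The proof will follow the template of Propositions~\ref{prop:special-odd-3} and \ref{prop:special-2-linear-3}, combining the single-component analysis of Proposition~\ref{prop:special-2-unitary-1}, the normal-abelian/characteristic-subgroup arguments underlying Propositions~\ref{prop:special-2-unitary-2} and \ref{prop:special-odd-2}, and the determinant computations of Propositions~\ref{prop:cc-R-m,alpha,gamma-2-unitary}, \ref{prop:cc-R-m,alpha,gamma,c-2-unitary} and Lemma~\ref{det-radical-N-C-2-unitary}. Throughout I write $R=\tR\cap G$, $\tC=C_{\tG}(\tR)$, $\tN=N_{\tG}(\tR)$, and I use that $\tR$ is special exactly when $N_{\tG}(R)=N_{\tG}(\tR)$. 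Since $\det\tR_i\in\{1,\fZ_2\}$ for each $i$ in the present situation, the hypothesis $\det\tR_1\geq\cdots\geq\det\tR_u$ merely lists the components of non-trivial determinant first. For case~(1), if $\det\tR=1$ then $R=\tR$ and $\tR$ is trivially special; by the determinant formulas just cited, the components $\tR_i$ with $\det\tR_i\neq1$ are exactly those of the form $\tR_{m,\alpha,0,\bc}$ ($\alpha>0$, or $\alpha=0$ and $c_1\neq1$) with $\nu(m)=0$ and those of the form $\tR_{m,0,1,\bc}=\tS_{m,1,0,\bc}$ with $\nu(m)=0$, so $\det\tR=1$ is equivalent to the absence of such components. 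From now on assume $\det\tR\neq1$.

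For case~(2), suppose $\sum_{i:\det\tR_i\neq1}2^{|\bc_i|}\geq2$. Outside the exceptions (2a), (2b) I would reproduce the characteristic-subgroup argument of Proposition~\ref{prop:special-2-unitary-2}: letting $\tB$ be the product of the base subgroups of the $\tR_i$ and $B=\tB\cap G$, the condition $\sum 2^{|\bc_i|}\geq2$ forces, via the unitary analogue of Lemma~\ref{lem:normal-abelian-2}, the subgroup $Z(B)$ to be characteristic in $R$; comparing the minimal members of the two families of commutator subspaces exactly as in Proposition~\ref{prop:special-odd-2} gives $N_{\tG}(Z(B))=N_{\tG}(Z(\tB))$, and since $\tR=R\,Z(\tB)$ this yields $N_{\tG}(R)=N_{\tG}(\tR)$. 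For the exception (2a), a direct computation gives $R=\tR_{m_1+m_2}\times\tR_3\times\cdots\times\tR_u$, whose normalizer strictly contains $N_{\tG}(\tR)$, so $\tR$ is not special; for (2b), intersecting the first factor with $G$ collapses it to a symplectic-type radical subgroup ($R_{m_1,0,1}$ resp.\ $E_{m_1,2}^+$), which is already a radical subgroup of the ambient general linear group, so $\mrO_2(N_{\tG}(R))\neq\tR$ and $\tR$ is not special.

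It then remains to treat cases~(3) and~(4), where exactly one component, say $\tR_1$, has $\det\tR_1\neq1$ and all others have $\det\tR_i=1$. Since a component of non-trivial determinant with $|\bc_1|\geq1$ already falls under case~(2), here $\tR_1$ is a plain basic subgroup with $\nu(m_1)=\gamma_1=0$, hence one of $\tR_{m_1}$, $\tR_{m_1,\alpha_1}$, or $\tS_{m_1,1}=\tR_{m_1,0,1}$. The question of specialness of $\tR$ now localises to the first factor, and by Proposition~\ref{prop:special-2-unitary-1} the component $\tR_1$ is special exactly when $\tR_1=\tR_{m_1}$, or $\tR_1=\tR_{m_1,1}$ (the case $\alpha_1>1$ reducing to $\tR_{2m_1,\alpha_1-1}$, which fails), or $\tR_1=\tS_{m_1,1}$ with $a>2$ (it fails for $a=2$, where $R_{m_1,0,1}=E_{m_1,1}^-$); this is precisely the list in (3) and (4). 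Finally, the assertion $C_{\tG}(R)=\tC$ in each special case follows by a case-by-case check using the centralizer computations of Proposition~\ref{prop:CN_m,alpha,gamma-2-unitary} and its analogues for wreathed and product radical subgroups, exactly as in the final line of Proposition~\ref{prop:special-odd-3}.

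The main obstacle will be the localisation step in cases~(3) and~(4): one must argue carefully that for a direct product in which only $\tR_1$ has non-trivial determinant, any extra element of $N_{\tG}(R)$ not normalising $\tR$ necessarily acts only on the first factor, so that specialness of $\tR$ is equivalent to specialness of $\tR_1$; and one must handle uniformly the reduction replacing $\tR_{m_1,\alpha_1}$ by $\tR_{2m_1,\alpha_1-1}$, the $a=2$ versus $a>2$ dichotomy for $\tS_{m_1,1}$, and the fact that $E_{m,1}^+$ is not a radical subgroup.
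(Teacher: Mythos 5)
Your proof takes the same route as the paper: the paper's own proof is a one-line reference (``Similar as Proposition~\ref{prop:special-odd-3}, using arguments from Proposition~\ref{prop:special-2-unitary-1} and Proposition~\ref{prop:special-2-unitary-2} with some slight modifications''), and you have correctly unpacked what those ``slight modifications'' amount to---the split on $\det\tR$, the characteristic-subgroup and commutator-subspace argument for case~(2), the direct computations for the exceptions (2a)--(2b), and the reduction to the single-component analysis of Proposition~\ref{prop:special-2-unitary-1} in cases~(3) and~(4). The localisation step you flag as the main obstacle is indeed where the paper is most elliptical, but your observation that $\tR_1$ is distinguished among the factors by being the unique component with nontrivial determinant is precisely the feature that makes the reduction to specialness of $\tR_1$ go through.
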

\begin{proof}
	Similar as Proposition \ref{prop:special-odd-3},
	using arguments from Proposition \ref{prop:special-2-unitary-1} and Proposition \ref{prop:special-2-unitary-2} with some slight modifications.
\end{proof}

\section{Weights for special linear and unitary groups}
\label{subsect:wos}

The weights of $G=\SL_n(\eta q)$ can be obtained  in two ways.
The first one is to consider every  special radical subgroup $\tR$ of $\tG$ with respect to $G$
and determine which of the irreducible constituents of the restrictions of irreducible characters of $N_{\tG}(\tR)/\tR$ to $N_G(\tR)/(\tR\cap G)$
are of defect zero.
Thus using Lemma \ref{lem:motivation-special-radical} we can obtain all weights of $G$.
The second way is to find all the weights of $G$ covered by a given weight of $\tG$.
In this section, we will classify the weights for $G$ via both ways;
we start from the first one and then give a classification for the weights of $G$ in \S  \ref{sec-const-wei}, while we give the results for the second way in \S \ref{Para-of-wei}.

\vspace{2ex}

We will make use of the following lemmas.

\begin{lem}\label{def-clifford}
Let $Y\unlhd X$ be arbitrary finite groups, $\theta\in\Irr(Y)$ and $\chi\in\Irr(X\mid\theta)$.
If $\theta$ is of defect $d$ and $\chi$ is of defect $d'$, then  $d\le d' \le d+\nu(X/Y) $.
\end{lem}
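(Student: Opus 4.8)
The plan is to reduce the statement to two elementary divisibility relations among the degrees, namely $\theta(1)\mid\chi(1)$ and $\chi(1)\mid[X:Y]\,\theta(1)$, and then to do arithmetic with the additive valuation $\nu$. First I would recall that for any finite group and any irreducible character $\chi$ of it, the ($\ell$-)defect equals $\nu(G)-\nu(\chi(1))$; thus $d=\nu(Y)-\nu(\theta(1))$ and $d'=\nu(X)-\nu(\chi(1))$. Since $|X|=|Y|\cdot|X/Y|$ and $\nu$ is additive on products of integers, $\nu(X)=\nu(Y)+\nu(X/Y)$, so
\[
d'=\nu(Y)+\nu(X/Y)-\nu(\chi(1)).
\]

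Next I would invoke Clifford theory. Writing $\Res^X_Y\chi=e\sum_{i=1}^t\theta_i$ with the $\theta_i$ the $X$-conjugates of $\theta$, $t=[X:X_\theta]$ and $e\ge 1$ the ramification, we get $\chi(1)=et\,\theta(1)$, so in particular $\theta(1)\mid\chi(1)$, hence $\nu(\theta(1))\le\nu(\chi(1))$. For the other bound, the Clifford correspondence gives $\chi=\Ind_{X_\theta}^X\psi$ with $\psi\in\Irr(X_\theta\mid\theta)$, and since $\theta$ is $X_\theta$-invariant the standard degree divisibility ($\psi(1)/\theta(1)$ divides $[X_\theta:Y]$) yields $\chi(1)/\theta(1)=[X:X_\theta]\cdot\psi(1)/\theta(1)$ dividing $[X:X_\theta][X_\theta:Y]=[X:Y]$; therefore $\nu(\chi(1))\le\nu(\theta(1))+\nu(X/Y)$.

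Finally I would substitute these two inequalities into the formula for $d'$ above: the lower bound $\nu(\chi(1))\ge\nu(\theta(1))$ gives $d'\le\nu(Y)-\nu(\theta(1))+\nu(X/Y)=d+\nu(X/Y)$, and the upper bound $\nu(\chi(1))\le\nu(\theta(1))+\nu(X/Y)$ gives $d'\ge\nu(Y)-\nu(\theta(1))=d$. This is exactly $d\le d'\le d+\nu(X/Y)$. The only non-formal ingredient is the degree divisibility $\chi(1)/\theta(1)\mid[X:Y]$, which is the standard consequence of Clifford theory recalled above; I do not anticipate any real obstacle, as everything else is bookkeeping with $\nu$.
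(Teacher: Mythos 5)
Your proof is correct. The paper actually states this lemma without giving any proof at all (it is treated as a standard consequence of Clifford theory), so there is nothing to compare against; your argument via the two degree divisibilities $\theta(1)\mid\chi(1)$ and $\chi(1)/\theta(1)\mid[X:Y]$ (the latter being Isaacs, \emph{Character Theory of Finite Groups}, Cor.~11.29) together with $\nu(X)=\nu(Y)+\nu(X/Y)$ is exactly the expected justification and is complete.
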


\begin{rem}\label{rem:5.2}
Let $(R,\varphi)$ be a weight of $G$, and $\tR$ a special radical subgroup of $\tG$ such that $R=\tR\cap G$.
We view $\varphi$ as a character of $N_G(R)/R$ and let $\tvarphi\in \Irr(N_{\tG}(\tR)/\tR\mid \varphi)$.
Let $d=\nu(N_{\tG}(\tR)/\tR N_G(R))$ and $\tilde\theta\in \Irr(\tR C_{\tG}(\tR)\mid \tilde\varphi)$.
Then by Lemma \ref{def-clifford}, $\tilde\varphi$ is of defect $\le d$ as a character of $N_{\tG}(\tR)/\tR$ and
then $\ttheta$ is of defect $\le d$ when viewed as a character of $\tR C_{\tG}(\tR)/\tR$.
Moreover, if $\ttheta$ is of defect $d$ and
 $\tpsi\in\Irr(N_{\tG}(\tR)_{\ttheta}\mid \tilde \theta)$ such that $\tilde\varphi=\Ind_{N_{\tG}(\tR)_{\tilde\theta}}^{N_{\tG}(\tR)}(\tilde\psi)$,
then by Clifford theory,  $|N_{\tG}(\tR)_{\ttheta}/\tR C_{\tG}(\tR)|_\ell=\tilde\psi(1)_\ell/\tilde\theta(1)_\ell$.
\end{rem}

\subsection{Defect of characters of general linear and unitary groups.}\label{basic-result}

We first  give some elementary lemmas,  which will be needed in the sequel.

\begin{lem}\label{valuation}	
Recall that $\ell$ is a prime not dividing $q$,  a power of some prime.
Let $e$, $a$ be defined as before.
\begin{enumerate}[\rm(1)]
	\item If $\ell$ is odd, then $\nu((\eta q)^k-1)>0$ if and only if $e\mid k$, and if so, $\nu((\eta q)^k-1)=a+\nu(k)$.
	\item Let $\ell=2$.  Then the following hold.
	\begin{itemize}
		\item If $4\mid q-\eta$, then $\nu((\eta q)^k-1)=a+\nu(k)$ and $\nu((\eta q)^k+1)=1$.
		\item Let $4\mid q+\eta$. If $k$ is even, then $\nu((\eta q)^k-1)=a+\nu(k)$ and $\nu((\eta q)^k+1)=1$. If $k$ is odd, then $\nu((\eta q)^k+1)=a+\nu(k)$ and $\nu((\eta q)^k-1)=1$.
	\end{itemize}
\end{enumerate}
\end{lem}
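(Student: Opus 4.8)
This lemma is a standard ``lifting the exponent''-type statement, so the plan is to reduce everything to the classical LTE lemma (in the form $\nu(x^k-1) = \nu(x-1) + \nu(k)$ when $\ell$ is odd, $\ell \mid x-1$, together with its $\ell = 2$ counterparts), applied to $x = (\eta q)^e$ or $x = \eta q$ as appropriate. First I would record the precise LTE statements to be used: for an odd prime $\ell$ and $\ell \mid x - 1$ we have $\nu(x^k - 1) = \nu(x-1) + \nu(k)$; for $\ell = 2$ and $x$ odd, $\nu(x^k - 1) = \nu(x-1) + \nu(x+1) + \nu(k) - 1$ when $k$ is even, while $\nu(x^k-1) = \nu(x-1)$ when $k$ is odd. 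These are textbook facts and I would simply cite them.

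For part (1), recall $e$ is by definition the multiplicative order of $\eta q$ modulo $\ell$, so $\ell \mid (\eta q)^k - 1$ if and only if $e \mid k$; this gives the ``if and only if''. When $e \mid k$, write $k = e k'$ and apply LTE to $x = (\eta q)^e$, which satisfies $\ell \mid x - 1$: we get $\nu((\eta q)^k - 1) = \nu(x^{k'} - 1) = \nu((\eta q)^e - 1) + \nu(k')$. Since $\ell \nmid e$ (as $e$ is a divisor of $\ell - 1$ or, more precisely, $e \mid \ell - 1$), $\nu(k') = \nu(k)$, and $\nu((\eta q)^e - 1) = a$ by the definition $a = \nu((\eta q)^e - 1)$ recorded on page~\pageref{def-valuation}. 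This yields $\nu((\eta q)^k - 1) = a + \nu(k)$.

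For part (2), the case $\ell = 2$ splits according to whether $4 \mid q - \eta$ or $4 \mid q + \eta$, matching the setup of \S\ref{subsect:gst-2-linear} and \S\ref{subsect:gst-2-unitary}. When $4 \mid q - \eta$: here $e = 1$, so $\nu(\eta q - 1) = a \geq 2$ and $\nu(\eta q + 1) = 1$ (since $(\eta q - 1)(\eta q + 1) = (\eta q)^2 - 1$ and only one factor can be divisible by $4$). Applying the $\ell = 2$ LTE to $x = \eta q$ gives, for $k$ even, $\nu((\eta q)^k - 1) = \nu(\eta q - 1) + \nu(\eta q + 1) + \nu(k) - 1 = a + 1 + \nu(k) - 1 = a + \nu(k)$; for $k$ odd, $\nu((\eta q)^k - 1) = \nu(\eta q - 1) = a$, which still equals $a + \nu(k)$ since $\nu(k) = 0$. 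And $\nu((\eta q)^k + 1) = 1$ follows because $(\eta q)^k \equiv \eta q \not\equiv -1 \pmod 4$ when $k$ is odd and $(\eta q)^k \equiv 1 \pmod 4$... wait, more carefully: $(\eta q)^k + 1$ is even but $(\eta q)^k \equiv \pm 1 \pmod 4$ never congruent to $3 \pmod 4$ here since $\eta q \equiv 1 \pmod 4$, so $(\eta q)^k \equiv 1 \pmod 4$ and $(\eta q)^k + 1 \equiv 2 \pmod 4$, giving $\nu = 1$. When $4 \mid q + \eta$: now $\nu(\eta q + 1) = a \geq 2$ and $\nu(\eta q - 1) = 1$; here $e = 1$ still (order of $\eta q$ mod $2$ is $1$). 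For $k$ even, the same LTE computation gives $\nu((\eta q)^k - 1) = \nu(\eta q - 1) + \nu(\eta q + 1) + \nu(k) - 1 = 1 + a + \nu(k) - 1 = a + \nu(k)$, and then $\nu((\eta q)^k + 1) = \nu(((\eta q)^k)^2 - 1) - \nu((\eta q)^k - 1)$... actually simpler: for $k$ even $(\eta q)^k \equiv 1 \pmod 8$ typically, so $(\eta q)^k + 1 \equiv 2$, giving $\nu = 1$. For $k$ odd, $(\eta q)^k \equiv \eta q \pmod{2^{a}}$ and more, so $\nu((\eta q)^k + 1) = \nu(\eta q + 1) + \nu(k) = a + \nu(k)$ by applying LTE to $\nu(((\eta q)^k)^{?})$ — precisely, write $(\eta q)^k + 1 = (\eta q)^k - (-1)^k$ and use $\nu(y^k - z^k)$ with $y = \eta q$, $z = -1$, both odd, $k$ odd: $\nu(y^k - z^k) = \nu(y - z) = \nu(\eta q + 1) = a$, consistent with $\nu(k) = 0$; while $\nu((\eta q)^k - 1) = \nu(\eta q - 1) = 1$ for $k$ odd. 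The main obstacle, such as it is, is just bookkeeping the $\ell = 2$ LTE conventions correctly (the $-1$ term and the role of $\nu(\eta q \pm 1)$) and handling the odd-$k$ versus even-$k$ dichotomy uniformly; there is no genuine mathematical difficulty, and the whole proof is a short citation-plus-casework argument.
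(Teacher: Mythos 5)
Your proof is correct. The paper itself gives no argument here, merely citing (1) to \cite[(3A)]{FS82} and (2) to \cite[Lemma 2.1]{FLLMZ19a}; your version supplies the underlying elementary content, which is exactly the standard lifting-the-exponent computation one would find in those references, so the approach is the same in substance, only written out rather than deferred. Two small cosmetic points: in part~(1) it is worth saying explicitly that $e \mid \ell-1$ is why $\nu(k')=\nu(k)$, which you do correctly; and in the $4\mid q+\eta$, $k$ even case, the parenthetical ``$(\eta q)^k\equiv 1 \pmod 8$ typically'' should simply be ``$(\eta q)^k\equiv 1\pmod 4$ since $\eta q\equiv -1\pmod 4$ and $k$ is even'' (though as it happens every odd square is $\equiv 1\pmod 8$, so nothing is wrong). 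Otherwise the casework and the use of $\nu(\eta q\mp 1)=\nu(q\mp\eta)$ regardless of the sign of $\eta$ is handled correctly.
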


\begin{proof}
	This lemma is elementary.
For example, (1)  is just \cite[(3A)]{FS82}, while (2) is
\cite[Lemma 2.1]{FLLMZ19a}. 
\end{proof}

Recall that the definition of ``$a$" for $\ell=2$ is different from the cases that $\ell$ is odd;
one has $a=\nu(q-\eta)$ if $\ell$ is odd or $\ell=2$ and $4\mid q-\eta$, while when $\ell=2$ and $4\mid q+\eta$,
one has $\nu(q-\eta)=1$ and $a=\nu(q+\eta)$.

\begin{lem}\label{degree}
Let $\Gamma$, $\Delta \in \cF$. Assume that $\Gamma$ has a root of order $k$  and $\Delta$ has a root of order $k\ell^\alpha$ with $\ell\nmid k$ and $\alpha>0$.
Then $d_\Delta=e_\Gamma d_\Gamma \ell^{\alpha'}$, where 
\begin{itemize}
	\item $\alpha'=\max\{ 0, \alpha-a-\alpha_\Gamma \}$ if ``$\ell$ is odd" or ``$\ell=2$, $4\mid q-\eta$ or $\alpha_\Gamma>0$", 
	while
	\item for $\ell=2$, $4\mid q+\eta$ and $\alpha_\Gamma=0$,
	one has $\alpha'=0$ if $\alpha= 1$, $\alpha'=1$ if $1< \alpha\le a$, and $\alpha'=\alpha-a$ if $\alpha>a$. 
\end{itemize}
\end{lem}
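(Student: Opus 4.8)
The plan is to recast everything in terms of multiplicative orders of $\eta q$. Under the identification of a polynomial $\Gamma\in\cF$ with the $F$-orbit of one of its roots $\xi$ (where $F$ acts by $F(\xi)=\xi^{\eta q}$), the degree $d_\Gamma$ is exactly the length of that orbit, i.e. the multiplicative order of $\eta q$ modulo $o(\xi)$; in particular $d_\Gamma=\mathrm{ord}_k(\eta q)$, and for the root of order $k\ell^\alpha$ of $\Delta$ one has $d_\Delta=\mathrm{ord}_{k\ell^\alpha}(\eta q)$. Since $\ell\nmid k$, the Chinese Remainder Theorem gives a clean factorisation: setting $h=(\eta q)^{d_\Gamma}$, a positive integer $m$ satisfies $(\eta q)^m\equiv1\pmod{k\ell^\alpha}$ if and only if $d_\Gamma\mid m$ and $h^{m/d_\Gamma}\equiv1\pmod{\ell^\alpha}$, whence $d_\Delta=d_\Gamma\cdot\mathrm{ord}_{\ell^\alpha}(h)$. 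As $\mathrm{ord}_\ell(h)=e_\Gamma$ by definition of $e_\Gamma$ (and $e_\Gamma=1$ when $\ell=2$), the asserted equality $d_\Delta=e_\Gamma d_\Gamma\ell^{\alpha'}$ is precisely $\mathrm{ord}_{\ell^\alpha}(h)=e_\Gamma\ell^{\alpha'}$, so the whole lemma reduces to computing the order of $h$ modulo $\ell^\alpha$.

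The next step is to pin down the key valuation $\nu(h^{e_\Gamma}-1)$. From $m_\Gamma e\ell^{\alpha_\Gamma}=e_\Gamma d_\Gamma$ with $(m_\Gamma,\ell)=1$, and since $\ell\nmid e$ and $\ell\nmid e_\Gamma$ (both divide $\ell-1$), one reads off $\alpha_\Gamma=\nu(e_\Gamma d_\Gamma)$; and because $e\mid e_\Gamma d_\Gamma$, Lemma \ref{valuation} gives $\nu(h^{e_\Gamma}-1)=\nu\big((\eta q)^{e_\Gamma d_\Gamma}-1\big)=a+\alpha_\Gamma$. In the generic situations — $\ell$ odd, or $\ell=2$ with $4\mid q-\eta$, or $\ell=2$, $4\mid q+\eta$ and $\alpha_\Gamma>0$ — one has in addition $\ell\mid h^{e_\Gamma}-1$: for odd $\ell$ this is the definition of $e_\Gamma$, and for $\ell=2$ it holds because then $h^{e_\Gamma}=h\equiv1\pmod 4$ (indeed $\nu(h-1)=a+\alpha_\Gamma\ge2$, using $4\mid q-\eta$ directly, or else $(\eta q)^2\equiv1\pmod 8$ together with $2\mid d_\Gamma$). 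Then the lifting-the-exponent lemma yields $\nu\big(h^{e_\Gamma\ell^j}-1\big)=a+\alpha_\Gamma+j$; since the order of $h$ modulo $\ell^\alpha$ is of the form $e_\Gamma\ell^j$ (its prime-to-$\ell$ part being $e_\Gamma$, and the $\ell$-part of $(\Z/\ell^\alpha\Z)^\times$ being cyclic) and is the least such with $\ell^\alpha\mid h^{e_\Gamma\ell^j}-1$, we get $j=\max\{0,\alpha-a-\alpha_\Gamma\}$, i.e. $\alpha'=\max\{0,\alpha-a-\alpha_\Gamma\}$, as claimed.

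The remaining case $\ell=2$, $4\mid q+\eta$, $\alpha_\Gamma=0$ is where the real work lies, because $(\Z/2^\alpha\Z)^\times$ is no longer cyclic and $h$ sits outside its cyclic $2$-part; this is the main obstacle. Here $d_\Gamma$ is odd and $q\equiv-\eta\pmod 4$, so $h=(\eta q)^{d_\Gamma}\equiv\eta q\equiv-1\pmod 4$; thus $h\not\equiv1\pmod 4$ but $h^2\equiv1\pmod 4$, and Lemma \ref{valuation} applied to the even integer $2d_\Gamma$ gives $\nu(h^2-1)=\nu\big((\eta q)^{2d_\Gamma}-1\big)=a+\nu(2d_\Gamma)=a+1$. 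Feeding $h^2$ into the cyclic computation of the previous paragraph yields $\mathrm{ord}_{2^\alpha}(h^2)=2^{\max\{0,\alpha-a-1\}}$, and since $h\equiv3\pmod 4$ forces $\mathrm{ord}_{2^\alpha}(h)$ to be even for $\alpha\ge2$, we obtain $\mathrm{ord}_{2^\alpha}(h)=2\cdot2^{\max\{0,\alpha-a-1\}}$ for $\alpha\ge2$ while $\mathrm{ord}_2(h)=1$. Splitting into the ranges $\alpha=1$, $1<\alpha\le a$ and $\alpha>a$ then reads off $\alpha'=0$, $1$ and $\alpha-a$ respectively. For the write-up I would first isolate the CRT reduction $d_\Delta=d_\Gamma\,\mathrm{ord}_{\ell^\alpha}(h)$ and the valuation identity $\nu(h^{e_\Gamma}-1)=a+\alpha_\Gamma$, after which each case collapses to a two-line consequence of Lemma \ref{valuation} and the elementary structure of $(\Z/\ell^\alpha\Z)^\times$.
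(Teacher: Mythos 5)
Your argument is correct and, unlike the paper, is genuinely self-contained. The paper's proof of Lemma \ref{degree} is essentially a citation (``slight generalization of [FLLMZ19a, Lemma 2.2]'', with the odd case declared analogous), so there is no written argument in the paper to compare against line by line. Your route --- reduce via CRT to $d_\Delta=d_\Gamma\cdot\mathrm{ord}_{\ell^\alpha}(h)$ where $h=(\eta q)^{d_\Gamma}$, pin down $\nu(h^{e_\Gamma}-1)=a+\alpha_\Gamma$ from Lemma \ref{valuation} and the defining relation $m_\Gamma e\ell^{\alpha_\Gamma}=e_\Gamma d_\Gamma$, then apply lifting-the-exponent to read off $\alpha'$ --- is the natural elementary proof, and the treatment of the exceptional branch ($\ell=2$, $4\mid q+\eta$, $\alpha_\Gamma=0$) via $h\equiv-1\pmod 4$, passage to $h^2$, and the factor of $2$ recovered from evenness of the order is exactly right. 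Two small presentational caveats: your initial sentence asserts $\nu(h^{e_\Gamma}-1)=a+\alpha_\Gamma$ before the case split, but this equality in fact fails precisely in the exceptional case you later isolate (there $\nu(h-1)=1\ne a$), so that sentence should be explicitly restricted to the generic cases; and the parenthetical appeal to ``the $\ell$-part of $(\Z/\ell^\alpha\Z)^\times$ being cyclic'' is false for $\ell=2$, $\alpha\ge 3$, though harmless since for $\ell=2$ one has $e_\Gamma=1$ and the order of $h$ is trivially a $2$-power; the correct general justification is simply that $h^{e_\Gamma}$ lies in the kernel of $(\Z/\ell^\alpha)^\times\to(\Z/\ell)^\times$, which is an $\ell$-group. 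Neither point affects the validity of the argument.
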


\begin{proof}
The case $\ell=2$ is just a slight generalization of \cite[Lemma 2.2]{FLLMZ19a} and the proof is similar.
The case that $\ell$ is odd is entirely analogous to the case $\ell=2$ and $4\mid q-\eta$.	
\end{proof}

\begin{lem}\label{def-b}
	Let $\tG=\GL_{n}(\eta q)$ and $\tR=\mrO_\ell(Z(\tG))$. 
	Suppose that $s$ is a semisimple $\ell'$-element of $\tG$ such that $s$ has at least two elementary divisors.
	Let $\tilde\chi\in\mathcal E_\ell(\tG,s)$. 
	Then $\nu(\tG/\tR)-\nu(\tilde\chi(1))\ge \nu(q-\eta)$.
\end{lem}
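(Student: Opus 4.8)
The plan is to reduce everything to a degree estimate via Lusztig's Jordan decomposition together with the structure of centralizers of semisimple elements. Writing $\tilde\chi\in\cE_\ell(\tG,s)$ as $\tilde\chi=\tilde\chi_{st,\mu}$ for a suitable semisimple $\ell$-element $t$ centralizing $s$, the Jordan decomposition gives $\tilde\chi(1)=|\tG:C_{\tG}(st)|_{p'}\,\psi(1)$, where $\psi$ is the unipotent character of $C:=C_{\tG}(st)$ labelled by $\mu$. Since $t$ centralizes $s$, it stabilizes the decomposition of the underlying space into generalized eigenspaces of $s$, of which there are at least two by hypothesis; hence $C$ stabilizes a decomposition into $\ge2$ nonzero subspaces and therefore splits as a direct product $C=G_1\times\cdots\times G_r$ of general linear and unitary groups with $r\ge2$, with $\psi=\psi_1\times\cdots\times\psi_r$ and each $\psi_j\in\Irr(G_j)$ unipotent. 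Taking $\nu$ and using $\nu(C)=\sum_j\nu(G_j)$ this yields
\[
\nu(\tilde\chi(1))=\nu(\tG)-\sum_{j=1}^{r}\bigl(\nu(G_j)-\nu(\psi_j(1))\bigr).
\]

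Next I would prove two facts about each factor $G_j$. First, $Z(\tG)\le Z(G_j)$: the scalar matrices constituting $Z(\tG)$ centralize $st$, hence lie in $C$, and under $C\cong\prod_jG_j$ such a scalar restricts on each summand to a scalar of $G_j$; this restriction is injective, so $q-\eta=|Z(\tG)|$ divides $|Z(G_j)|$ and $\nu(Z(G_j))\ge\nu(q-\eta)$. Second, every unipotent character of $G_j$ is trivial on $Z(G_j)$ — unipotent characters are constituents of the $R_T^{G_j}(1)$, whose central character is trivial — so $\psi_j\in\Irr(G_j/Z(G_j))$ and $\psi_j(1)\mid|G_j/Z(G_j)|$. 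Combining the two, $\nu(\psi_j(1))\le\nu(G_j)-\nu(Z(G_j))$, i.e.
\[
\nu(G_j)-\nu(\psi_j(1))\ \ge\ \nu(Z(G_j))\ \ge\ \nu(q-\eta)\qquad(1\le j\le r).
\]

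Substituting into the displayed identity and using $r\ge2$ gives $\nu(\tilde\chi(1))\le\nu(\tG)-2\nu(q-\eta)$. Since $\tR=\mrO_\ell(Z(\tG))$ has order $(q-\eta)_\ell$, we have $\nu(\tR)=\nu(q-\eta)$, whence
\[
\nu(\tG/\tR)-\nu(\tilde\chi(1))\ \ge\ \bigl(\nu(\tG)-\nu(q-\eta)\bigr)-\bigl(\nu(\tG)-2\nu(q-\eta)\bigr)\ =\ \nu(q-\eta),
\]
which is the assertion. (The case $n=2$ with $s$ regular split shows the inequality can be an equality.)

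The part requiring care is purely structural rather than computational: being precise — uniformly in $\eta=\pm1$ — that $C_{\tG}(st)$ is a direct product of general linear and unitary groups, one factor per elementary divisor of $st$, so that the hypothesis on $s$ genuinely forces $r\ge2$, and that a central element of $\tG$ maps into the centre of every factor. Both are standard facts about centralizers in $\GL_n$ and $\GU_n$ (compare the description in \cite{FS82}); granting them, the proof needs only the Jordan decomposition degree formula and the triviality of unipotent characters on the centre, and in particular avoids any case analysis on the $\ell$-adic valuations of $(\eta q)^k\mp1$.
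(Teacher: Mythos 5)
Your proof is correct, and it takes a genuinely different route from the paper's. The paper's argument passes through the explicit hook--length degree formula (5.6), $\nu(\tG)-\nu(\tchi(1))=\sum_{\Gamma}\sum_{\Delta}\sum_{h\in\mathcal H_{\mu_\Delta}}\nu((\eta q)^{d_\Delta h}-1)$, then invokes Lemma \ref{valuation} to see that every inner summand is at least $a=\nu(q-\eta)$ and that there are at least two nonempty outer sums; this also forces the paper to first reduce to the case $\ell\mid q-\eta$. Your argument replaces the hook--length computation with two soft structural facts: that unipotent characters are trivial on the (connected) centre, so $\psi_j(1)$ divides $|G_j/Z(G_j)|$, and that $Z(\tG)$ injects into each $Z(G_j)$ because $Z(\tG)\subseteq Z(C_{\tG}(st))=\prod_j Z(G_j)$ with injective projections (for which $q-\eta\mid(\eta q)^{d_\Delta}-1$ is the only arithmetic fact one needs). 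This yields exactly the bound $\nu(G_j)-\nu(\psi_j(1))\ge\nu(Z(G_j))\ge\nu(q-\eta)$ that (5.6) also delivers term by term, and it works uniformly without the $\ell\mid q-\eta$ reduction or any case split on $\nu((\eta q)^k\mp1)$. What the paper's method buys is that equation (5.6) is not throwaway machinery: it is reused immediately and more delicately in Lemma \ref{def-a} and the subsequent Remarks \ref{def-aa-C}, \ref{def-a-C}, where the soft central-character bound is no longer sharp enough and the hook lengths genuinely matter. What your method buys is a cleaner, more conceptual argument for this particular lemma, at the cost of explicitly invoking the centralizer structure of semisimple elements in $\GL_n$ and $\GU_n$ and the triviality of unipotent characters on the centre (both standard, cf.\ \cite{FS82} and \cite{DM91}, but worth a citation).
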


\begin{proof}
	We first compute the defect of a character of $\tG$.
	Let $t$ be the semisimple $\ell$-element of $C_{\tG}(s)$ such that $\tchi\in\cE(\tG,st)$.
	Then $\tilde\chi=\tilde\chi_{st,\mu}$ where  $\mu=\prod_\Delta \mu_\Delta$ with $\mu_\Delta\vdash m_\Delta(st)$.	
	Let $\prod\limits_{\Gamma} s_{\Gamma}$ be the primary decomposition of $s$.
	Then $C_{\tG}(s)$ has a corresponding decompostion, so does $t$.
	We write $t=\prod\limits_\Gamma t_\Gamma$,  where $t_\Gamma$ and $s_\Gamma$ commute.
	Let $\tilde\bL=C_{\tilde\bG}(st)$. Then $\tilde\chi=R_{\tilde\bL}^{\tilde\bG}(\widehat{st} \psi_{\mu})$,
	where $\psi_{\mu}$ is the unipotent 	character of ${\tilde\bL}^F$ corresponding to $\mu$.

	Let $\Delta$ be an elementary divisor of $s_\Gamma t_\Gamma$, then by Lemma 
	\ref{degree}, either $\Delta=\Gamma$ or $d_\Delta=e_\Gamma d_\Gamma \ell^{c}$ for some $c\ge 0$.
	Also, $\alpha_\Delta=\alpha_\Gamma+c$.
	Let $\mathcal H_{\mu_{\Delta}}$ be the set of hook lengths (with multiplicity) of $\mu_{\Delta}$
	for every $\Delta\in\cF$ with $m_\Delta(st)\ne 0$.
	Using \cite[(1.3)]{FS82} and
	the degree formulas of unipotent characters of groups of type $\mathsf A$ (see the Appendix of \cite{Lu84}), we have
	\begin{equation}\label{equ-5.6}
	\nu(\tG)-\nu(\tilde \chi(1))=\sum\limits_{\Gamma:m_\Gamma(s)>0}\ \ \sum\limits_{\Delta:m_\Delta(s_\Gamma t_\Gamma)>0}\ \
	\sum\limits_{h\in\mathcal H_{\mu_\Delta}} \nu((\eta q)^{d_{\Delta}h}-1).
	\addtocounter{thm}{1}\tag{\thethm}
	\end{equation}

	For this lemma,	we need only consider the case $\ell\mid q-\eta$ and then $e=e_\Gamma=1$ for every $\Gamma\in\cF$.
	If $s$ has at least two elementary divisor, then according to (\ref{equ-5.6}),
	we have $\nu(\tG)-\nu(\tilde\chi(1))\ge 2\nu(q-\eta)$ by Lemma \ref{valuation}. 
	Thus $\nu(\tG/\tR)-\nu(\tilde\chi(1))\ge \nu(q-\eta)$ since $\nu(\tR)=\nu(q-\eta)$.
	Then the assertion holds.
\end{proof}

\begin{rem}\label{def-aa-C}
	We give more information for Lemma	\ref{def-b}.
	Suppose that the equality in
	$\nu(\tG/\tR)-\nu(\tilde\chi(1))\ge \nu(q-\eta)$ holds.
	Then $st$ has two elementary divisors and so does $s$.
	Let $\Gamma_1,\Gamma_2$ (or $\Delta_1,\Delta_2$, resp.) be the elementary divisors of $s$ (or $st$, resp.).
	Thus we have further $m_{\Delta_i}(st)=1$ and $\alpha_{\Gamma_i}=\alpha_{\Delta_i}=0$ for $i=1,2$.
	We also write $\tilde\chi_{st}$ for such $\tchi$.
	By Lemma \ref{degree},
	we can assume that $d_{\Delta_i}=d_{\Gamma_i}$ for $i=1,2$.
	From this, $o(t)\le \ell^a$ if $\ell$ is odd or ``$\ell=2$ and $4\mid q-\eta$"
	and $o(t)\le 2$ if $\ell=2$ and $4\mid q+\eta$.
	In addition, similar to Remark \ref{def-a-C},
	we have $\tR\subseteq \ker(\hat t)$ if and only if $t\in\tR$.
\end{rem}

\begin{lem}\label{def-a}
	Let $\tG=\GL_{n}(\eta q)$ and $\tR=\mrO_\ell(Z(\tG))$.  Assume further $\ell\mid q-\eta$.
	Suppose that $s$ is a semisimple $\ell'$-element of $\tG$ such that $s$ has only one elementary divisor. 
	Let $\tilde\chi\in\mathcal E_\ell(\tG,s)$. 
	Then $\nu(\tG/\tR)-\nu(\tilde\chi(1))\ge \nu(n)$.
\end{lem}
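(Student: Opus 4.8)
The plan is to argue along the same lines as the proof of Lemma~\ref{def-b}, using the defect formula recorded in (\ref{equ-5.6}) but now specialised to the case where $s$ has a single elementary divisor. Fix the semisimple $\ell$-element $t$ of $C_{\tG}(s)$ with $\tchi\in\cE(\tG,st)$, write $\tchi=\tchi_{st,\mu}$ with $\mu=\prod_\Delta\mu_\Delta$, and let $\Gamma$ be the unique elementary divisor of $s$. Since $\ell\mid q-\eta$ we have $e=e_\Gamma=1$, hence $\alpha_\Gamma=\nu(d_\Gamma)$, $n=d_\Gamma\,m_\Gamma(s)$, and therefore $\nu(n)=\alpha_\Gamma+\nu(m_\Gamma(s))$; moreover $\nu(\tR)=\nu(q-\eta)=:\beta$, where $\beta=a$ if $\ell$ is odd or $4\mid q-\eta$, and $\beta=1$ (with $a\ge2$) if $4\mid q+\eta$. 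Thus the assertion is equivalent to $\nu(\tG)-\nu(\tchi(1))\ge\alpha_\Gamma+\nu(m_\Gamma(s))+\beta$, and by (\ref{equ-5.6}) the left-hand side equals $\sum_\Delta\sum_{h\in\mathcal H_{\mu_\Delta}}\nu((\eta q)^{d_\Delta h}-1)$, the sum running over the elementary divisors $\Delta$ of $st$.

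Next I would parametrise those divisors. As $s$ and $t$ commute and $t$ is an $\ell$-element, the $\ell'$-part of every root of $\Delta$ is a root of $\Gamma$, so Lemma~\ref{degree} gives $d_\Delta=d_\Gamma\ell^{c_\Delta}$ for some $c_\Delta\ge0$, and comparing degrees yields $\sum_\Delta\ell^{c_\Delta}m_\Delta(st)=m_\Gamma(s)$. The core step is the uniform lower bound
$$\sum_{h\in\mathcal H_{\mu_\Delta}}\nu\big((\eta q)^{d_\Delta h}-1\big)\ \ge\ m_\Delta(st)\,(\beta+\alpha_\Gamma+c_\Delta)\qquad\text{for every }\Delta.$$
Indeed, when $\ell$ is odd, or $\ell=2$ and $4\mid q-\eta$, or $\ell=2$, $4\mid q+\eta$ with $d_\Delta$ even, Lemma~\ref{valuation} gives $\nu((\eta q)^{d_\Delta h}-1)=a+\nu(d_\Delta h)=a+\alpha_\Gamma+c_\Delta+\nu(h)\ge\beta+\alpha_\Gamma+c_\Delta$ for each of the $m_\Delta(st)$ hooks $h$; in the only remaining situation ($\ell=2$, $4\mid q+\eta$, $d_\Delta$ odd, so $\alpha_\Gamma=c_\Delta=0$ and $\beta=1$) the same lemma gives $a+\nu(h)$ on the even hooks and $1$ on the odd hooks, and since $a\ge2$ the total is still at least $m_\Delta(st)$. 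Summing over $\Delta$ and writing $W=\sum_\Delta m_\Delta(st)$ yields $\nu(\tG)-\nu(\tchi(1))\ge(\beta+\alpha_\Gamma)W+\sum_\Delta c_\Delta\,m_\Delta(st)$.

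It then remains to prove the elementary inequality $(\beta+\alpha_\Gamma)(W-1)+\sum_\Delta c_\Delta\,m_\Delta(st)\ge\nu(m_\Gamma(s))$. Put $c^\ast=\max_\Delta c_\Delta$. This follows by combining $\beta+\alpha_\Gamma\ge1$, the estimate $W-1\ge\log_\ell W$ (equivalently $\ell^{W-1}\ge W$, valid for all $W\ge1$), the trivial $\sum_\Delta c_\Delta m_\Delta(st)\ge c^\ast$, and $\nu(m_\Gamma(s))\le\log_\ell m_\Gamma(s)\le\log_\ell(\ell^{c^\ast}W)=c^\ast+\log_\ell W$, where the last step uses $m_\Gamma(s)=\sum_\Delta\ell^{c_\Delta}m_\Delta(st)\le\ell^{c^\ast}W$. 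In the extreme case $W=1$ one has $m_\Gamma(s)=\ell^{c^\ast}$ and equality holds throughout, so the bound of Lemma~\ref{def-a} is sharp.

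The step I expect to be the main obstacle is the case $\ell=2$ with $4\mid q+\eta$: there Lemma~\ref{valuation} no longer assigns a single clean value to $\nu((\eta q)^{d_\Delta h}-1)$, so one must split the hook lengths of $\mu_\Delta$ by parity and cannot simply transport the odd-prime computation. The point that makes it work is that the target is correspondingly weaker (one only needs $\beta=1$, not $\beta=a$), and the inequality $a\ge2$ exactly absorbs the deficiency coming from the odd hooks, so the displayed per-$\Delta$ bound still holds. In carrying this out one should also verify carefully in this case the parametrisation $d_\Delta=d_\Gamma\ell^{c_\Delta}$ and the identity $\sum_\Delta\ell^{c_\Delta}m_\Delta(st)=m_\Gamma(s)$, noting in particular that when $d_\Gamma$ is odd at most the two divisors $\Gamma$ and its twist by $-1$ can have $c_\Delta=0$.
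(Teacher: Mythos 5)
Your proposal is correct and follows essentially the same route as the paper: both start from the defect formula~(\ref{equ-5.6})/(\ref{equ-5.3}), use the parametrisation $d_\Delta=d_\Gamma\ell^{c_\Delta}$ coming from Lemma~\ref{degree}, and appeal to Lemma~\ref{valuation} to bound $\nu((\eta q)^{d_\Delta h}-1)$. The only genuine difference is cosmetic: where the paper splits into sub-cases according to $\alpha_\Gamma$ and the $c_i$'s and uses the (coarser) bound $\nu(n)\le\sum_{\alpha_\Gamma+c_i=0}m_{\Delta_i}(st)+\sum_{\alpha_\Gamma+c_i>0}m_{\Delta_i}(st)(\alpha_\Gamma+c_i)$, you collapse all cases into a single per-$\Delta$ lower bound $\ge m_\Delta(st)(\beta+\alpha_\Gamma+c_\Delta)$ and then close with the uniform arithmetic estimate $\ell^{(W-1)+c^\ast}\ge\ell^{c^\ast}W\ge m_\Gamma(s)\ge\ell^{\nu(m_\Gamma(s))}$, which is slightly cleaner; one should state that last step directly in terms of $\ell$-powers rather than via $\log_\ell$, as both sides are integers. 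A couple of small remarks: in the odd-$d_\Delta$ sub-case for $4\mid q+\eta$ the bound already holds for $a\ge1$, so $a\ge2$ is not actually needed there; and your closing worry about which $\Delta$ can have $c_\Delta=0$ is harmless, since only the degree identity $\sum_\Delta\ell^{c_\Delta}m_\Delta(st)=m_\Gamma(s)$ (which holds by comparing $n=\sum_\Delta m_\Delta(st)d_\Delta=d_\Gamma m_\Gamma(s)$) is used, not any finer information about the number of such $\Delta$.
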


\begin{proof}	
	We write $\tilde\chi=\tilde\chi_{st,\mu}$ as in Lemma \ref{def-b}.
	Suppose that $\Gamma$ is the elementary divisor  of $s$.
	Let $m_\Gamma$ be defined as before. Then $d_\Gamma=m_\Gamma \ell^{\alpha_\Gamma}$ since $e=e_\Gamma=1$.
	Let $\Delta_1, \Delta_2, \ldots, \Delta_k$ be the elementary divisors of $st$.
	For $1\le i\le k$, by Lemma \ref{degree}, $d_{\Delta_i}=d_\Gamma \ell^{c_i}$
	and then $\alpha_{\Delta_i}=\alpha_\Gamma+c_i$ for some $c_i\ge 0$.
	Also $n=\sum_{i=1}^{k}m_{\Delta_i}(st)d_\Gamma \ell^{c_i}=m_\Gamma  \sum_{i=1}^{k} m_{\Delta_i}(st) \ell^{\alpha_\Gamma+c_i}$.
	From this  $\nu(n)\le \sum\limits_{i:\alpha_\Gamma+c_i=0} m_{\Delta_i}(st) +
	\sum\limits_{i:\alpha_{\Gamma}+c_i>0} m_{\Delta_i}(st) (\alpha_\Gamma+c_i)$ since $\ell\nmid m_\Gamma$.
	By (\ref{equ-5.6}), we have
	\begin{equation}\label{equ-5.3}
	\nu(\tG)-\nu(\tilde \chi(1))=
	\sum\limits_{i=1}^k\sum\limits_{h\in\mathcal H_{\mu_{\Delta_i}}} \nu((\eta q)^{d_{\Delta_i}h}-1)=
	\sum\limits_{i=1}^k\sum\limits_{h\in\mathcal H_{\mu_{\Delta_i}}} \nu((\eta q)^{d_{\Gamma}\ell^{c_i} h}-1).
	\addtocounter{thm}{1}\tag{\thethm}
	\end{equation}

	We first assume that $\ell$ is odd or ``$\ell=2$ and $4\mid q-\eta$".
	Then $\nu(\tG)-\nu(\tilde \chi(1))\ge
	\sum_{i=1}^{k} m_{\Delta_i}(st) (a+\alpha_\Gamma+c_i)\ge a+\sum_{i=1}^{k} m_{\Delta_i}(st) (\alpha_\Gamma+c_i)\ge
	a+\nu(n)$ by Lemma \ref{valuation}.
	Since $\nu(\tR)=a$, we have  $\nu(\tG/\tR)-\nu(\tilde\chi(1))\ge \nu(n)$.

	Now assume that $\ell=2$ and $4\mid q+\eta$. 
	Then $\nu(R)=1$.
	If $\alpha_\Gamma>0$, then similar as above we also have $\nu(\tG)-\nu(\tilde \chi(1))\ge a+\nu(n)$.
	So $\nu(\tG/\tR)-\nu(\tilde\chi(1))\ge\nu(n)$.
	Let $\alpha_\Gamma=0$. If $c_i>0$ for some $i$, then 
	$\nu(\tG)-\nu(\tilde \chi(1))\ge \sum\limits_{i:c_i=0} m_{\Delta_i}(st) + \sum\limits_{i:c_i>0} m_{\Delta_i}(st) (a+c_i)\ge a+\nu(n)>1+\nu(n)$
	by Lemma \ref{valuation} and then $\nu(\tG/\tR)-\nu(\tilde\chi(1))> \nu(n)$.
	If $c_i=0$ for  $1\le i\le k$, then $n=m_\Gamma  \sum_{i=1}^{k} m_{\Delta_i}(st)$ and then $\nu(n)\le \nu( \sum_{i=1}^{k} m_{\Delta_i}(st))$.
	Also by (\ref{equ-5.3}) and Lemma  \ref{valuation},  $\nu(\tG)-\nu(\tilde \chi(1))\ge \sum_{i=1}^{k} m_{\Delta_i}(st)$.
	Thus $\nu(\tG)-\nu(\tilde \chi(1))\ge \nu(n)+1$ since $m\ge \nu(m)+1$ for every positive integer $m$.
	This gives $\nu(\tG/\tR)-\nu(\tilde\chi(1))\ge \nu(n)$ and completes the proof.
\end{proof}

\begin{rem}\label{def-a-C}
	We give more information for Lemma	\ref{def-a}.
	First we assume that $\ell$ is odd or ``$\ell=2$ and $4\mid q-\eta$".
	By its proof, the equality in $\nu(\tG/\tR)-\nu(\tilde \chi(1))\ge \nu(n)$ holds only when $k=1$ and $m_{\Delta_1}(st)=1$.
	This implies that $st$ has only one  elementary divisor $\Delta_1$ with multiplicity $1$,
	then $s$ has only one elementary divisor $\Gamma$ and we set $m=m_\Gamma(s)$.
	So $n=d_{\Delta_1}$ and $\tilde{\bL}=C_{\tbG}(st)$ is a Coxeter torus of $\tG$.
	Now we assume further
	$\tR\subseteq \ker (\tilde\chi)$.
	
	Suppose that $m>1$.
	It is easy to check that $o(t)=\ell^{a+\alpha_{\Delta_1}}=\ell^{a+\nu(n)}$ and
	$t$ is a generator of $\mrO_\ell(Z(C_{\tilde G}(st)))$, then $\tR\nsubseteq \ker(\hat t)$.
	Let $z\in \tR$. Now $\tilde\chi=\tilde\chi_{st,\mu}$ with $\mu\vdash 1$.
	We also abbreviate $\tilde\chi_{st}$ for $\tilde\chi_{st,\mu}$ when no confusion can arise.
	According to the character formula~(see, for example, \cite[Prop.~12.2]{DM91}),
	\begin{align*}
	\tilde\chi(z)=&R_{\tilde\bL}^{\tilde\bG}(\widehat{st})(z)=|\tilde G|^{-1}\sum\limits_{h\in\tilde\bG}Q^{\tilde\bG}_{\tilde\bL}(1)\widehat{st}(z)
	=Q^{\tilde\bG}_{\tilde\bL}(1)\widehat{st}(z) \\
	=&R^{\tilde\bG}_{\tilde\bL}(1_{\bL^F})\widehat{st}(z)=R^{\tilde\bG}_{\tilde\bL}(\widehat{st})(1)\widehat{st}(z)=\hat{t}(z)\tilde\chi(1).
	\end{align*}
	Thus $z\in \ker(\tilde\chi)$ if and only if $z\in\ker(\hat{t})$.
	So $\tR\nsubseteq\ker (\tilde\chi)$.
	This is a contradiction.
	
	Hence $m=1$ and then $d_\Gamma=n$.
	Then $C_{\tG}(s)\cong \GL_1((\eta q)^{n})$ and $t\in C_{\tG}(s)$. 
	Thus $\tR\subseteq\ker (\hat t)$ if and only if $o(t)\le \ell^{\alpha_\Gamma}=n_\ell$.
	
	For the case $\ell=2$ and $4\mid q+\eta$, by (\ref{equ-5.3}),
	we have $\nu(\tG/\tR)-\nu(\tilde \chi(1))=\nu(n)$ holds only when $\nu(n)=0$.
	In this case, $st$ has only one  elementary divisor $\Delta_1$ with multiplicity $1$ and
	then $s$ has only one  elementary divisor $\Gamma$ with $d_{\Delta_1}=d_\Gamma$ and $m_\Gamma(s)=1$.
	So $t=1$, $\alpha_\Gamma=0$ and $m=1$.
	In addition, this case is entirely analogous with the case that  $\ell$ is odd or ``$\ell=2$ and $4\mid q-\eta$"
	and we also use the notation defined above for this case.
\end{rem}

\begin{prop}\label{def-aa}
	Let $\tG=\GL_{n}(\eta q)$ and $\tR=\mrO_\ell(Z(\tG))$.
	Then $\nu(\tG/\tR)-\nu(\tilde\chi(1))\ge \mathrm{min}\{\nu(q-\eta), \nu(n) \} $ for every $\tilde\chi\in\Irr(\tG)$.
\end{prop}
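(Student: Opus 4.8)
The plan is to reduce everything to the two lemmas just established, after a case split on whether $\ell$ divides $q-\eta$.

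First I would dispose of the case $\ell\nmid q-\eta$. Here $\nu(q-\eta)=0$, so the asserted lower bound $\min\{\nu(q-\eta),\nu(n)\}$ is $0$, and it suffices to check $\nu(\tilde\chi(1))\le\nu(\tG/\tR)$. Since $\tR\le Z(\tG)$, the index $|\tG:Z(\tG)|$ divides $|\tG/\tR|$, and $\tilde\chi(1)$ divides $|\tG:Z(\tG)|$ by the standard property of irreducible character degrees, so $\nu(\tilde\chi(1))\le\nu(\tG/Z(\tG))\le\nu(\tG/\tR)$, which is exactly what is needed in this case.

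For the main case $\ell\mid q-\eta$, I would use that $e=1$ and $\nu(\tR)=\nu(q-\eta)$, together with the fact that $\Irr(\tG)$ is partitioned by the sets $\cE_\ell(\tG,s)$ with $s$ ranging over the semisimple $\ell'$-classes of $\tG$. Given $\tilde\chi\in\Irr(\tG)$, take the $\ell'$-element $s$ with $\tilde\chi\in\cE_\ell(\tG,s)$. If $s$ has two or more elementary divisors, Lemma~\ref{def-b} gives $\nu(\tG/\tR)-\nu(\tilde\chi(1))\ge\nu(q-\eta)\ge\min\{\nu(q-\eta),\nu(n)\}$; if $s$ has exactly one elementary divisor, Lemma~\ref{def-a} gives $\nu(\tG/\tR)-\nu(\tilde\chi(1))\ge\nu(n)\ge\min\{\nu(q-\eta),\nu(n)\}$. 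Since a nonzero semisimple element always has at least one elementary divisor, these two subcases are exhaustive, and the proposition follows.

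There is essentially no obstacle at this stage: the substantive content lies in Lemmas~\ref{def-b} and~\ref{def-a}, which rest on the degree formula~(\ref{equ-5.6}) derived from \cite[(1.3)]{FS82} and the degree formulas for unipotent characters in type $\mathsf A$, on the valuation estimates of Lemma~\ref{valuation}, and on the degree relation $d_\Delta=e_\Gamma d_\Gamma\ell^{\alpha'}$ of Lemma~\ref{degree}. The present statement is just a bookkeeping assembly of these; the one point worth not forgetting is the elementary divisibility $\tilde\chi(1)\mid|\tG:Z(\tG)|$, needed precisely because when $\ell\nmid q-\eta$ neither of the two lemmas applies.
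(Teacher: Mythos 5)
Your proposal is correct and follows essentially the same route as the paper: the case $\ell\nmid q-\eta$ is trivial because the lower bound is $0$ (and indeed in that case $\tR=1$, so $\nu(\tilde\chi(1))\le\nu(\tG)=\nu(\tG/\tR)$ already follows from $\tilde\chi(1)\mid|\tG|$, though your slightly sharper appeal to $\tilde\chi(1)\mid|\tG:Z(\tG)|$ also works), and the case $\ell\mid q-\eta$ is exactly the dichotomy between Lemmas~\ref{def-b} and~\ref{def-a} according to whether the underlying semisimple $\ell'$-element $s$ has at least two elementary divisors or exactly one.
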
	

\begin{proof}
If $\ell\nmid q-\eta$, then $\nu(q-\eta)=0$ and this assertion holds,
while the case $\ell\mid q-\eta$ follows from Lemmas  \ref{def-b} and \ref{def-a} immediately.
\end{proof}

\begin{rem}
	Let $\tG=\GL_{n}(\eta q)$ and $\tR=\mrO_\ell(Z(\tG))$. Suppose that $\tG/\tR$ has a character $\tchi$ of defect zero.
	Then by Proposition \ref{def-aa}, $\nu(q-\eta)=0$ or $\nu(n)=0$.
	 Assume further $\ell\mid q-\eta$. Then $\nu(n)=0$.
	We regard $\tchi$ as a character of $\tG$ and let $s$ be a semisimple $\ell'$-element of $\tG$ such that $\tchi\in\cE_\ell(\tG,s)$.
	By Lemma \ref{def-b}, $s$ has only one elementary divisor, say $\Gamma$.
	Then by Remark \ref{def-a-C},  the multiplicity of $\Gamma$ in $s$ is 1, which implies that $C_{\tG}(s)$ is a Coxeter torus of $\tG$.
	We mention that this is \cite[(4B)]{FS82} when $\ell$ is odd.
\end{rem}

\subsection{Characters of $\tC$.}\label{char-tC-def}

The weights of $\SL_n(q)$ and $\SU_n(q)$ are described in \cite[\S 5]{Feng19} if $\ell\nmid\gcd(n,q-\eta)$.
So  we always assume $\ell\mid q-\eta$ in \S \ref{char-tC-def} and \S \ref{sec-const-wei}.
In this way, $e=1$ and $e_\Gamma=1$ for every $\Gamma\in\cF$.

\begin{lem}\label{can-char-1}
	Let $\tR_{m,\alpha}$ be  special. 
\begin{enumerate}[\rm(1)]
	\item No irreducible character of $\tC_{m,\alpha}/\tR_{m,\alpha}$ is of defect $<\nu(\tN_{m,\alpha}/\tR_{m,\alpha} N_{m,\alpha})$.
	\item If $\tC_{m,\alpha}/\tR_{m,\alpha}$ has an irreducible character of defect $\nu(\tN_{m,\alpha}/\tR_{m,\alpha} N_{m,\alpha})$,
	then either $\nu(m)\le \nu(q-\eta)$ or $\alpha=0$. 
	\item $\nu(\tN_{m,\alpha}/\tR_{m,\alpha} N_{m,\alpha})=0$ if and only if $\nu(m)=0$.
\end{enumerate}	
\end{lem}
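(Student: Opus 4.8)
The plan is to reduce all three assertions, via transfer to the twisted group, to properties of the quotient $\GL_m(Q)/\mrO_\ell(Z(\GL_m(Q)))$ with $Q=(\eta q)^{\ell^\alpha}$, combined with the determinant computations of \S\ref{subsect:srs}. Since $\tR_{m,\alpha}=\tR_{m,\alpha,0}$ is a basic subgroup with $\gamma=0$, passing to $*=tw$ and then applying $\iota$ gives $\tR_{m,\alpha}^{tw}=\tZ_{m,\alpha}^{tw}=\hbar(\mrO_\ell(Z(\tG_{m,\alpha}^0)))$ and $\tC_{m,\alpha}^{tw}=\hbar(\tG_{m,\alpha}^0)=\hbar(\GL_m(Q))$ by Lemma~\ref{lem:tC-m,alpha,gamma-odd} (and Propositions~\ref{prop:tN-m,alpha,gamma-2-unitary}, \ref{prop:CN_m,alpha,gamma-2-unitary} in the cases $\ell=2$); hence $\iota\circ\hbar$ furnishes an isomorphism $\tC_{m,\alpha}/\tR_{m,\alpha}\cong\GL_m(Q)/\mrO_\ell(Z(\GL_m(Q)))$, under which the defect is preserved and $\Irr(\tC_{m,\alpha}/\tR_{m,\alpha})$ corresponds to those characters of $\GL_m(Q)$ trivial on $\mrO_\ell(Z(\GL_m(Q)))$. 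By Lemma~\ref{valuation} one has $\nu(Q-1)=a+\alpha$ in all three regimes (recall $a=\nu(q-\eta)$ when $\ell$ is odd or $4\mid q-\eta$, while $\nu(q-\eta)=1$ and $a=\nu(q+\eta)\ge2$ when $4\mid q+\eta$), so in particular $\mrO_\ell(Z(\GL_m(Q)))=\tR_{m,\alpha}$ has order $\ell^{a+\alpha}$, and always $a+\alpha\ge\nu(q-\eta)$, with $a+\alpha>\nu(q-\eta)$ whenever $\alpha\ge1$.

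Next I would compute $d:=\nu(\tN_{m,\alpha}/\tR_{m,\alpha}N_{m,\alpha})$. As $\tR_{m,\alpha}$ is special, $N_{m,\alpha}=\tN_{m,\alpha}\cap G_{m,\alpha}$, so the determinant map identifies $\tN_{m,\alpha}/\tR_{m,\alpha}N_{m,\alpha}$ with $\det(\tN_{m,\alpha})/\det(\tR_{m,\alpha})$ (using $\det(N_{m,\alpha})=1$). The determinant formulas of Propositions~\ref{prop:cc-R-m,alpha,gamma-odd}, \ref{prop:cc-R-m,alpha,gamma-2-linear} and \ref{prop:cc-R-m,alpha,gamma-2-unitary}, specialised to $\gamma=0$, give $\det(\tN_{m,\alpha})=\fZ_{q-\eta}$ and $|\det(\tR_{m,\alpha})|=\ell^{\max\{0,\nu(q-\eta)-\nu(m)\}}$, whence
\[ d=\nu(q-\eta)-\max\{0,\nu(q-\eta)-\nu(m)\}=\min\{\nu(q-\eta),\nu(m)\}. \]
Assertion~(3) follows at once, since $\nu(q-\eta)\ge1$.

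Finally I would obtain (1) and (2) from Proposition~\ref{def-aa}, applied to $\GL_m(Q)$ with $Q$, $m$ and $\mrO_\ell(Z(\GL_m(Q)))$ playing the roles of $\eta q$, $n$ and $\tR$: every $\tilde\chi\in\Irr(\GL_m(Q))$ with $\mrO_\ell(Z(\GL_m(Q)))$ in its kernel, viewed as a character of the quotient, has defect $\ge\min\{\nu(Q-1),\nu(m)\}=\min\{a+\alpha,\nu(m)\}$. Transporting along the isomorphism of the first paragraph, every $\varphi\in\Irr(\tC_{m,\alpha}/\tR_{m,\alpha})$ has defect $\ge\min\{a+\alpha,\nu(m)\}$; since $a+\alpha\ge\nu(q-\eta)$ this is $\ge\min\{\nu(q-\eta),\nu(m)\}=d$, which is (1). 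For (2), if $\alpha\ge1$ and $\nu(m)>\nu(q-\eta)$, then $\min\{a+\alpha,\nu(m)\}>\nu(q-\eta)=\min\{\nu(q-\eta),\nu(m)\}=d$, so no character of $\tC_{m,\alpha}/\tR_{m,\alpha}$ has defect exactly $d$; the contrapositive of this is precisely (2) (the conclusion being vacuous when $\alpha=0$).

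The substance of the argument lies in the first paragraph: one must make the identification $\tC_{m,\alpha}/\tR_{m,\alpha}\cong\GL_m((\eta q)^{\ell^\alpha})/\mrO_\ell(Z)$ and the value $d=\min\{\nu(q-\eta),\nu(m)\}$ work uniformly across the three regimes ($\ell$ odd, $\ell=2$ with $4\mid q-\eta$, $\ell=2$ with $4\mid q+\eta$) — in the last case keeping in mind that $\nu(q-\eta)=1\neq a$ and that $\det(\tR_{m,\alpha})=\fZ_2^{\ell^{\nu(m)}}$ rather than $\fZ_{\ell^a}^{\ell^{\nu(m)}}$ — and to dispose of the degenerate case $\alpha=0$, where $\tR_{m,0}$ is $\mrO_\ell(Z(\GL_m(\eta q)))$ (equal to $\{\pm I_m\}$ when $4\mid q+\eta$) and (2) is vacuous. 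Once these normalisations are pinned down, assertions (1)--(3) are the short deductions above from Proposition~\ref{def-aa} and the determinant formulas, with no further computation needed.
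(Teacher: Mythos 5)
Your argument follows the paper's own proof step for step: use the determinant computations of Propositions \ref{prop:cc-R-m,alpha,gamma-odd}, \ref{prop:cc-R-m,alpha,gamma-2-linear}, \ref{prop:cc-R-m,alpha,gamma-2-unitary} to get $\nu(\tN_{m,\alpha}/\tR_{m,\alpha}N_{m,\alpha})=\min\{\nu(q-\eta),\nu(m)\}$, identify $\tC_{m,\alpha}\cong\GL_m((\eta q)^{\ell^\alpha})$ via the twist, evaluate $\nu((\eta q)^{\ell^\alpha}-1)$ by Lemma \ref{valuation}, and apply Proposition \ref{def-aa}. One small imprecision: the blanket claim ``$\nu(Q-1)=a+\alpha$ in all three regimes'' (and hence $|\tR_{m,\alpha}|=\ell^{a+\alpha}$) fails exactly when $\ell=2$, $4\mid q+\eta$ and $\alpha=0$, where $\nu(Q-1)=1$ and $\tR_{m,0}=\{\pm I_m\}$ — the exception Lemma \ref{valuation} and the paper's proof both record explicitly; you catch this in your closing paragraph, and since (2) is vacuous at $\alpha=0$ the deduction is unaffected, but the sentence as written should be qualified.
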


\begin{proof}
By Proposition \ref{prop:cc-R-m,alpha,gamma-odd}, \ref{prop:cc-R-m,alpha,gamma-2-linear} and
\ref{prop:cc-R-m,alpha,gamma-2-unitary},
$\nu(\tN_{m,\alpha}/\tR_{m,\alpha} N_{m,\alpha})=\min\{\nu(q-\eta),\nu(m)\}$.
Note that $\tC_{m,\alpha}\cong \GL_m((\eta q)^{\ell^\alpha})$. 
By Lemma \ref{valuation}, $\nu((\eta q)^{\ell^\alpha}-1)=a+\alpha$ unless $\ell=2$, $4\mid q+\eta$ and $\alpha=0$,
in which case $\nu(\eta q-1)=1$.
Then this lemma 
follows from Proposition  \ref{def-aa} immediately.
\end{proof}

\begin{rem}\label{can-char-1-const}
Using Remark \ref{def-aa-C} and \ref{def-a-C}, we give more information for Lemma \ref{can-char-1}.
If $\tC_{m,\alpha}/\tR_{m,\alpha}$ has an irreducible  character of defect $\nu(\tN_{m,\alpha}/\tR_{m,\alpha} N_{m,\alpha})$,
then we have four cases as follows.
\begin{enumerate}
	\item[(i)] $\alpha>0$ and $\nu(m)\le \nu(q-\eta)$.
	\item[(ii)] $\alpha=0$ and $\nu(m)<\nu(q-\eta)$.
\end{enumerate}
In cases (i) and (ii), we have $\nu(m)<\nu((\eta q)^{\ell^\alpha}-1)$,
and then an irreducible character of $\tC_{m,\alpha}/\tR_{m,\alpha}$ is of defect $\nu(\tN_{m,\alpha}/\tR_{m,\alpha} N_{m,\alpha})=\nu(m)$
if and only if it has the form $\tilde\chi_{st}$~(as in Remark  \ref{def-a-C}),
where $s$ is a semisimple $\ell'$-element of $\tC_{m,\alpha}$ and $t\in \tR_{m,\alpha}$ such that 
as an element of $\GL_m((\eta q)^{\ell^\alpha})$, $s$ has only one elementary divisor $\Delta$ with $d_\Delta=m$ and $o(t)\le m_\ell$.
Thus $s$, when viewed as an element of $\GL_{m\ell^\alpha}(\eta q)$,
has a unique elementary divisor $\Gamma=\Phi(\Delta)$, $d_\Delta=m\ell^\alpha$ and $m_\Gamma(s)=1$.
\begin{enumerate}
	\item[(iii)] $\alpha=0$ and $\nu(m)>\nu(q-\eta)$. 
	An irreducible  character of $\tC_{m,\alpha}\tR_{m,\alpha}/\tR_{m,\alpha}$ is of defect $\nu(\tN_{m,\alpha}/\tR_{m,\alpha} N_{m,\alpha})$
	if and only if it has the form $\tilde\chi_{st}$~(as in Remark  \ref{def-aa-C}),
	where $s$ is a semisimple $\ell'$-element of $\tC_{m,\alpha}$ and $t\in \tR_{m,\alpha}$ such that 
	$s$ has exactly two elementary divisors $\Gamma_1$ and $\Gamma_2$ with
	$\alpha_{\Gamma_1}=\alpha_{\Gamma_2}=0$,
	$m_{\Gamma_1}(s)=m_{\Gamma_2}(s)=1$,
	 $d_{\Gamma_1}+d_{\Gamma_2}=m$.
	\item[(iv)] $\alpha=0$ and $\nu(m)=\nu(q-\eta)$.
	An irreducible  character of $\tC_{m,\alpha}\tR_{m,\alpha}/\tR_{m,\alpha}$ is of defect $\nu(\tN_{m,\alpha}/\tR_{m,\alpha} N_{m,\alpha})$
	when viewed as a characters of $\tC_{m,\alpha}$,
	if and only if it has the form in case (ii) or (iii).
\end{enumerate}
\end{rem}

\begin{lem}\label{can-char-2}
Let $\gamma>0$ and $\tR_{m,\alpha,\gamma}$ be special.
Assume that 
\begin{equation*}
``\ell\ \textrm{is odd}" \ \textrm{or} \ ``\ell=2\ \textrm{and}\ 4\mid q-\eta" \ \textrm{or}  \  ``\ell=2,\ 4\mid q+\eta\ \textrm{and}\ \alpha\ge 1". 
\end{equation*}
and we are neither in the cases  (\ref{eq:special-case-3-1}) nor in the case (\ref{eq:special-case-2-linear-1}). 
\begin{enumerate}[\rm(1)]
\item No irreducible  character of $\tC_{m,\alpha,\gamma}\tR_{m,\alpha,\gamma}/\tR_{m,\alpha,\gamma}$
	is of defect  $<\nu(\tN_{m,\alpha,\gamma}/\tR_{m,\alpha,\gamma} N_{m,\alpha,\gamma})$.
\item Assume $\tC_{m,\alpha,\gamma}\tR_{m,\alpha,\gamma}/\tR_{m,\alpha,\gamma}$ has an irreducible character
	of defect $\nu(\tN_{m,\alpha,\gamma}/\tR_{m,\alpha,\gamma} N_{m,\alpha,\gamma})$.
	Then $\nu(m)+\gamma\le \nu(q-\eta)$,
	and thus $\nu(\tN_{m,\alpha,\gamma}/\tR_{m,\alpha,\gamma} N_{m,\alpha,\gamma})=0$ if and only if $\nu(m)=0$.
\end{enumerate}	
\end{lem}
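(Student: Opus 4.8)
The plan is to follow the pattern of the proof of Lemma~\ref{can-char-1}: identify $\tC_{m,\alpha,\gamma}\tR_{m,\alpha,\gamma}/\tR_{m,\alpha,\gamma}$ with a general linear or unitary group modulo the $\ell$-part of its centre, and then read off everything from Proposition~\ref{def-aa} (and its refinements), the only new point compared with Lemma~\ref{can-char-1} being the value of $\nu(\tN_{m,\alpha,\gamma}/\tR_{m,\alpha,\gamma}N_{m,\alpha,\gamma})$.

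First I would identify the quotient. Since $\tR_{m,\alpha,\gamma}=\tZ_{m,\alpha,\gamma}E_{m,\alpha,\gamma}$ is a central product over $Z(E_{m,\alpha,\gamma})$ with $\tZ_{m,\alpha,\gamma}\leq\tC_{m,\alpha,\gamma}$ (Lemma~\ref{lem:tC-m,alpha,gamma-odd} and its $\ell=2$ analogues in \S\ref{subsect:gst-2-linear}--\S\ref{subsect:gst-2-unitary}), one has $\tC_{m,\alpha,\gamma}\tR_{m,\alpha,\gamma}=\tC_{m,\alpha,\gamma}E_{m,\alpha,\gamma}$ and $\tC_{m,\alpha,\gamma}\cap\tR_{m,\alpha,\gamma}=\tZ_{m,\alpha,\gamma}$, so $\tC_{m,\alpha,\gamma}\tR_{m,\alpha,\gamma}/\tR_{m,\alpha,\gamma}\cong\tC_{m,\alpha,\gamma}/\tZ_{m,\alpha,\gamma}$. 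By Lemma~\ref{lem:tC-m,alpha,gamma-odd}(2) (and its analogues) $\tC_{m,\alpha,\gamma}\cong\GL_m((\eta q)^{\ell^\alpha})$, and under this isomorphism $\tZ_{m,\alpha,\gamma}$ corresponds to $\mrO_\ell(Z(\GL_m((\eta q)^{\ell^\alpha})))$; indeed by Lemma~\ref{valuation} one has $\nu((\eta q)^{\ell^\alpha}-1)=a+\alpha$ in each of the three cases permitted by the hypothesis, so both groups have order $\ell^{a+\alpha}$. Hence the quotient in question is $\GL_m((\eta q)^{\ell^\alpha})/\mrO_\ell(Z)$ --- exactly the group occurring in the proof of Lemma~\ref{can-char-1}. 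Applying Proposition~\ref{def-aa} to this group (for which the role of ``$q-\eta$'' is played by $(\eta q)^{\ell^\alpha}-1$, of valuation $a+\alpha$), every irreducible character of it has defect at least $\min\{a+\alpha,\nu(m)\}$.

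Next I would compute $d:=\nu(\tN_{m,\alpha,\gamma}/\tR_{m,\alpha,\gamma}N_{m,\alpha,\gamma})$. Because $\tR_{m,\alpha,\gamma}$ is special, $N_{\tG_{m,\alpha,\gamma}}(R_{m,\alpha,\gamma})=N_{\tG_{m,\alpha,\gamma}}(\tR_{m,\alpha,\gamma})=\tN_{m,\alpha,\gamma}$ and $N_{m,\alpha,\gamma}=\tN_{m,\alpha,\gamma}\cap G_{m,\alpha,\gamma}$; combining this with the isomorphism $\det\colon\tG_{m,\alpha,\gamma}/G_{m,\alpha,\gamma}\xrightarrow{\ \sim\ }\fZ_{q-\eta}$ gives $d=\nu(\det\tN_{m,\alpha,\gamma})-\nu(\det\tR_{m,\alpha,\gamma})$. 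Outside the excluded configurations (\ref{eq:special-case-3-1}) and (\ref{eq:special-case-2-linear-1}), Propositions~\ref{prop:cc-R-m,alpha,gamma-odd}, \ref{prop:cc-R-m,alpha,gamma-2-linear} and \ref{prop:cc-R-m,alpha,gamma-2-unitary} give $\det\tR_{m,\alpha,\gamma}=\fZ_{\ell^a}^{\ell^{\nu(m)+\gamma}}$ and $\det\tN_{m,\alpha,\gamma}=\fZ_{q-\eta}^{\ell^\gamma}$, whence
\[ d=\max\{0,a-\gamma\}-\max\{0,a-\nu(m)-\gamma\}=\min\{\nu(m),\max\{0,a-\gamma\}\}. \]
In particular $d\leq\max\{0,a-\gamma\}\leq a\leq a+\alpha$, so $d\leq\min\{a+\alpha,\nu(m)\}$, which together with the previous paragraph proves~(1). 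For~(2), assume some irreducible character has defect exactly $d$; then $d\geq\min\{a+\alpha,\nu(m)\}$. If $\nu(m)>a+\alpha$ this would force $a+\alpha\leq d\leq a$, impossible, so $\nu(m)\leq a+\alpha$ and therefore $\nu(m)=\min\{a+\alpha,\nu(m)\}\leq d=\min\{\nu(m),\max\{0,a-\gamma\}\}\leq\nu(m)$; hence $d=\nu(m)$ and $\nu(m)\leq\max\{0,a-\gamma\}$. Since $\gamma>0$ this yields $\nu(m)+\gamma\leq a=\nu(q-\eta)$, and the identity $d=\nu(m)$ then gives $d=0$ iff $\nu(m)=0$, as required.

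The determinant bookkeeping in the second step is routine once one has Propositions~\ref{prop:cc-R-m,alpha,gamma-odd}--\ref{prop:cc-R-m,alpha,gamma-2-unitary}. The part that needs care is the equality case in the third step: one must make sure that the only way $\GL_m((\eta q)^{\ell^\alpha})/\mrO_\ell(Z)$ acquires a character of the minimal possible defect is the one described (a Coxeter-type semisimple character), for which I would invoke the refined description in Remark~\ref{def-a-C}, and one must also check that the excluded configurations (\ref{eq:special-case-3-1}) and (\ref{eq:special-case-2-linear-1}) are genuinely the only exceptions --- in those cases $\det\tN_{m,\alpha,\gamma}$ is strictly larger than $\fZ_{q-\eta}^{\ell^\gamma}$, the structure of $N_{m,\alpha,\gamma}/R_{m,\alpha,\gamma}$ degenerates (it involves $Q_8$, $\fA_6$, and similar small groups rather than $\Sp_{2\gamma}(\ell)$), and they are treated separately. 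That bookkeeping around the boundary value $\gamma\approx a$ is where I expect the main (if modest) difficulty to lie.
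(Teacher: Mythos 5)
The proposal reproduces the paper's argument: identify $\tC_{m,\alpha,\gamma}\tR_{m,\alpha,\gamma}/\tR_{m,\alpha,\gamma}$ with $\GL_m((\eta q)^{\ell^\alpha})/\mrO_\ell(Z)$, invoke Proposition~\ref{def-aa} to bound defects from below by $\min\{a+\alpha,\nu(m)\}$, and read $\nu(\tN_{m,\alpha,\gamma}/\tR_{m,\alpha,\gamma}N_{m,\alpha,\gamma})$ off the determinant computations of Propositions~\ref{prop:cc-R-m,alpha,gamma-odd}, \ref{prop:cc-R-m,alpha,gamma-2-linear}, \ref{prop:cc-R-m,alpha,gamma-2-unitary}. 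Your version is more explicit than the paper in two places --- you verify $|\tZ_{m,\alpha,\gamma}|=\ell^{a+\alpha}=|\mrO_\ell(Z(\GL_m((\eta q)^{\ell^\alpha})))|$ via Lemma~\ref{valuation}, and you write out the closed formula $d=\min\{\nu(m),\max\{0,a-\gamma\}\}$ for the index rather than the paper's three unverified inequalities --- but structurally it is the same proof.

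One caveat, which applies to the paper's own proof as much as to yours: the concluding step ``$\nu(m)\le\max\{0,a-\gamma\}$ together with $\gamma>0$ yields $\nu(m)+\gamma\le a$'' tacitly assumes $\gamma\le a$. When $\gamma>a$ and $\nu(m)=0$ (a configuration permitted by the specialness criterion, e.g.\ $\alpha=0$), one has $d=0$, and $\GL_m((\eta q)^{\ell^\alpha})/\mrO_\ell(Z)$ does possess a defect-zero character by Proposition~\ref{def-aa} since $\nu(m)=0$, while $\nu(m)+\gamma=\gamma>a=\nu(q-\eta)$. The same boundary case also makes the ``only if'' direction of the paper's assertion ``$d=\nu(m)$ iff $\nu(m)+\gamma\le\nu(q-\eta)$'' fail. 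This is exactly the subtlety you flagged at the end of your proposal, and it is inherited from the paper rather than introduced by you; to make the argument airtight one would have to either record that only $\gamma\le a$ arises in the downstream applications (cf.\ Proposition~\ref{sepcial-defect}), or weaken the conclusion in the branch $\gamma>a$ to $\nu(m)=0$, which is all that is actually used.
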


\begin{proof}
Under the assumption,
$\nu(\tN_{m,\alpha,\gamma}/\tR_{m,\alpha,\gamma} N_{m,\alpha,\gamma})<\nu(q-\eta)$ and 
$\nu(\tN_{m,\alpha,\gamma}/\tR_{m,\alpha,\gamma} N_{m,\alpha,\gamma})\le\nu(m)$ 
by Proposition \ref{prop:cc-R-m,alpha,gamma-odd}, \ref{prop:cc-R-m,alpha,gamma-2-linear} and \ref{prop:cc-R-m,alpha,gamma-2-unitary}.
In addition,
$\nu(\tN_{m,\alpha,\gamma}/\tR_{m,\alpha,\gamma} N_{m,\alpha,\gamma})=\nu(m)$ if and only if $\nu(m)+\gamma\le \nu(q-\eta)$.
Therefore (1) follows by Proposition \ref{def-aa} since $\tC_{m,\alpha,\gamma}\cong\GL_m((\eta q)^{\ell^\alpha})$.
If $\nu(m)+\gamma> \nu(q-\eta)$,
then $\nu(\tN_{m,\alpha,\gamma}/\tR_{m,\alpha,\gamma} N_{m,\alpha,\gamma})<\min\{\nu(q-\eta),  \nu(m)\}$,
and thus by Proposition \ref{def-aa} again, 
no character of $\tC_{m,\alpha,\gamma}\tR_{m,\alpha,\gamma}/\tR_{m,\alpha,\gamma}$
is of defect $\nu(\tN_{m,\alpha,\gamma}/\tR_{m,\alpha,\gamma} N_{m,\alpha,\gamma})$.
So $\nu(m)+\gamma\le \nu(q-\eta)$.
Using Proposition \ref{prop:cc-R-m,alpha,gamma-odd}, \ref{prop:cc-R-m,alpha,gamma-2-linear} and \ref{prop:cc-R-m,alpha,gamma-2-unitary} again, we have $\nu(\tN_{m,\alpha,\gamma}/\tR_{m,\alpha,\gamma} N_{m,\alpha,\gamma})=0$ if and only if $\nu(m)=0$.
\end{proof}

\begin{rem}
In Lemma \ref{can-char-2},
if some irreducible character of $\tC_{m,\alpha,\gamma}\tR_{m,\alpha,\gamma}/\tR_{m,\alpha,\gamma}$
is of defect $\nu(\tN_{m,\alpha,\gamma}/\tR_{m,\alpha,\gamma} N_{m,\alpha,\gamma})$,
then it has the form similar to the case (i) or (ii) of Remark \ref{can-char-1-const}.
\end{rem}

\begin{rem}\label{special-case-ell3}
	Now we assume that  (\ref{eq:special-case-3-1}) holds for $\tR_{m,\alpha,\gamma}$,
	which means $\ell=3$, $a=1$, $\nu(m)=\alpha=0$, $\gamma=1$.
	By Proposition \ref{prop:special-odd-1}, $\tR_{m,0,1}$ is special.
	Then we have 
	$\tR_{m,0,1}=R_{m,0,1}$,
	$\tC_{m,0,1}/C_{m,0,1}\cong\mrO_{3'}(\fZ_{q-\eta})$, 
	$|\tM_{m,0,1}/M_{m,0,1}|=3$ and
	$\nu(\tN_{m,0,1}/\tR_{m,0,1}N_{m,0,1})=1$
	by \S \ref{subsect:srs-odd}.
	Moreover, 
	$\tM_{m,0,1}\tR_{m,0,1}/\tR_{m,0,1}\cong \SL_{2}(3)$,  $\tN_{m,0,1}/\tR_{m,0,1}\cong\tR_{m,0,1}\tC_{m,0,1}/\tR_{m,0,1}\times \SL_{2}(3)$,
	and $M_{m,0,1}R_{m,0,1}/R_{m,0,1}\cong Q_8$,
	$N_{m,0,1}/R_{m,0,1}\cong R_{m,0,1}C_{m,0,1}/R_{m,0,1}\times Q_8$.
	
	For the character table of $\SL_2(3)$  we refer to the Appendix B of \cite[p.~306]{We16}.
	Using the notation there, the group $\SL_2(3)=Q_8\rtimes C_3$ has $7$ irreducible characters
	$\chi_1$, $\chi_{1a}$, $\chi_{1b}$, $\chi_{2a}$, $\chi_{2b}$, $\chi_{2c}$, $\chi_{3}$ and we denote $\chi_{1c}=\chi_1$ here for convenience.
	Note that $\chi_3$ is the Steinberg character of $\SL_2(3)$.
	Compare with the character table of $Q_8$~(see e.g. \cite[p.~299]{We16}), we have that
	for any $j\in\{ a,b,c\}$, $\Res^{\SL_2(3)}_{Q_8}(\chi_{1j})=1_{Q_8}$,
	$\Res^{\SL_2(3)}_{Q_8}(\chi_{2j})$ are the (unique) irreducible character of $Q_8$ of degree $2$,
	and $\Res^{\SL_2(3)}_{Q_8}(\chi_{3})$ is a sum of the three non-trivial irreducible characters of $Q_8$ of degree $1$.
	Let $\tilde\psi\in\Irr(\tN_{m,0,1}/\tR_{m,0,1})$ be of defect $\le \nu(\tN_{m,0,1}/\tR_{m,0,1}N_{m,0,1})=1$.
	
	(I). 
	If $\tilde\psi$ is of defect $0$, then
	$\tilde\psi=\tilde\theta\times \chi_3$, where $\tilde\theta\in\dz(\tR_{m,0,1}\tC_{m,0,1}/\tR_{m,0,1})$.
	Then every $\psi\in\Irr(N_{m,0,1}/R_{m,0,1}\mid \tilde\psi)$ has defect $0$.
	Moreover, $\nu(\kappa^{\tN_{m,0,1}}_{N_{m,0,1}}(\tilde\psi))=1$.
	
	(II). If $\tilde\psi$ is of defect $1$ and $\tilde\psi=\tilde\theta\times \chi_{kj}$,
	where $\tilde\theta\in\dz(\tR_{m,0,1}\tC_{m,0,1}/\tR_{m,0,1})$, and $k\in\{1,2\}$ and $j\in\{a,b,c\}$,
	then every $\psi\in\Irr(N_{m,0,1}/R_{m,0,1}\mid \tilde\psi)$ has defect $0$.
	Moreover, $3\nmid\kappa^{\tN_{m,0,1}}_{N_{m,0,1}}(\tilde\psi)$.
	
	(III). If $\tilde\psi$ is of defect $1$ and $\tilde\psi=\tilde\theta'\times \chi_{3}$,
	where $\tilde\theta'\in\Irr(\tR_{m,0,1}\tC_{m,0,1}/\tR_{m,0,1})$ is of defect $1$,
	then every $\psi\in\Irr(N_{m,0,1}/R_{m,0,1}\mid \tilde\psi)$ has defect $1$.
	\end{rem}

\begin{rem}\label{special-case-ell2-linear}
	Now we  assume  that (\ref{eq:special-case-2-linear-1}) holds for $\tR_{m,\alpha,\gamma}$,
	which means $\ell=2$, $4 \mid q-\eta$,
	$a=2$, $\nu(m)=\alpha=0$ and $\gamma\in\{1, 2\}$.
	By Proposition \ref{prop:special-2-linear-1},
	$\tR_{m,0,\gamma}$ is special.

	We first assume that $\gamma=1$.
	Then by \S \ref{subsect:srs-2-linear},
	$\nu(\det(\tC_{m,0,1}\tR_{m,0,1}))=1$,
	$\nu(\det(\tN_{m,0,1}))=2$ and
	$\nu(\tN_{m,0,1}/\tR_{m,0,1}N_{m,0,1})=1$.
	Moreover, 
	$\tM_{m,0,1}\tR_{m,0,1}/\tR_{m,0,1}\cong \Sp_{2}(2)\cong\fS_3$,
	$\tN_{m,0,1}/\tR_{m,0,1}\cong\tR_{m,0,1}\tC_{m,0,1}/\tR_{m,0,1}\times \fS_3$ and $M_{m,0,1}R_{m,0,1}/R_{m,0,1}\cong C_3$,
	$N_{m,0,1}/R_{m,0,1}\cong R_{m,0,1}C_{m,0,1}/R_{m,0,1}\times C_3$.
	For the character table of $\fS_3$ we refer to \cite[p.~298]{We16}.
	Using the notation there, the group $\fS_3$ have three irreducible characters $\chi_1$, $\chi_{\textrm{sign}}$, $\chi_2$,
	and for the unique subgroup $C_3$ of $\fS_3$ with index $2$,
	the restrictions of the characters $\chi_1$, $\chi_{\textrm{sign}}$ to $C_3$ are the trivial character of $C_3$,
	while the restriction of the character $\chi_2$ to $C_3$ splits to the two non-trivial characters of $C_3$.
	Let  $\tilde\psi\in\Irr(\tN_{m,0,1}/\tR_{m,0,1})$ be of defect $\le \nu(\tN_{m,0,1}/\tR_{m,0,1}N_{m,0,1})=1$.
	If $\tilde\psi$ is of defect $0$,
	then $\tilde\psi=\tilde\theta\times \chi_2$, where $\tilde\theta\in\dz(\tR_{m,0,1}\tC_{m,0,1}/\tR_{m,0,1})$,
	and then every $\psi\in\Irr(N_{m,0,1}/R_{m,0,1}\mid \tilde\psi)$ has defect $0$.
	Moreover, $\nu(\kappa^{\tN_{m,0,1}}_{N_{m,0,1}}(\tilde\psi))=1$.
	If $\tilde\psi$ is of defect $1$ and
	$\tilde\psi=\tilde\theta\times \chi_i$, where $\tilde\theta\in\dz(\tR_{m,0,1}\tC_{m,0,1}/\tR_{m,0,1})$ and $i\in\{1,\textrm{sign}\}$,
	then every $\psi\in\Irr(N_{m,0,1}/R_{m,0,1}\mid \tilde\psi)$ has defect $0$.
	Moreover, $2\nmid\kappa^{\tN_{m,0,1}}_{N_{m,0,1}}(\tilde\psi)$.
	In addition, the restrictions of $\tilde\theta\times \chi_1$ and $\tilde\theta\times \chi_{\textrm{sign}}$ to $N_{m,0,1}$ coincide.
	If $\tilde\psi$ is of defect $1$ and $\tilde\psi=\tilde\theta'\times \chi_2$,
	where $\tilde\theta'\in\Irr(\tR_{m,0,1}\tC_{m,0,1}/\tR_{m,0,1})$ is of defect $1$,
	then every $\psi\in\Irr(N_{m,0,1}/R_{m,0,1}\mid \tilde\psi)$ has defect $1$.

	Now let $\gamma=2$.
	Then we know 
	$\tR_{m,0,2}=R_{m,0,2}$,
	$\tC_{m,0,2}/C_{m,0,2}\cong\mrO_{2'}(\tG_{m,0,2}/G_{m,0,2})$, 
	$|\tM_{m,0,2}/M_{m,0,2}|=2$ and
	$\nu(\tN_{m,0,2}/\tR_{m,0,2}N_{m,0,2})=1$
	from \S \ref{subsect:srs-2-linear}.
	Moreover, 
	$\tM_{m,0,2}\tR_{m,0,2}/\tR_{m,0,2}\cong \fS_6$,  $\tN_{m,0,2}/\tR_{m,0,2}\cong\tR_{m,0,2}\tC_{m,0,2}/\tR_{m,0,2}\times \fS_6$,
	and 
	$M_{m,0,2}R_{m,0,2}/R_{m,0,2}\cong \fA_6$,
	$N_{m,0,2}/R_{m,0,2}\cong R_{m,0,2}C_{m,0,2}/R_{m,0,2}\times \fA_6$.	
	For the character table of $\fS_6$  we refer to \cite[p.~205]{JL93}.
	Using the notation there, the group
	$\fS_6$ has a unique irreducible character $\chi_{11}$ of defect $0$ and has no character of defect $1$.	
	Compared with the character table of $\fA_6$~(see e.g. \cite[p.~424]{JL93}),
	we know that the group $\fA_6$  has exactly two irreducible characters of defect $0$
	and they are the two irreducible constituents of $\Res^{\fS_6}_{\fA_6}\chi_{11}$.
	Let  $\tilde\psi\in\Irr(\tN_{m,0,2}/\tR_{m,0,2})$ be of defect $\le \nu(\tN_{m,0,2}/\tR_{m,0,2}N_{m,0,2})=1$.
	If $\tilde\psi$ is of defect $0$, then
	$\tilde\psi=\tilde\theta\times \chi_{11}$, where $\tilde\theta\in\dz(\tR_{m,0,2}\tC_{m,0,2}/\tR_{m,0,2})$,
	then every $\psi\in\Irr(N_{m,0,2}/R_{m,0,2}\mid \tilde\psi)$ has defect $0$.
	Moreover, $\nu(\kappa^{\tN_{m,0,2}}_{N_{m,0,2}}(\tilde\psi))=1$.
	If $\tilde\psi$ is of defect $1$, then  $\tilde\psi=\tilde\theta'\times \chi_{11}$,
	where $\tilde\theta'\in\Irr(\tR_{m,0,2}\tC_{m,0,2}/\tR_{m,0,2})$ is of defect $1$,
	then every $\psi\in\Irr(N_{m,0,2}/R_{m,0,2}\mid \tilde\psi)$ has defect $1$.
\end{rem}

\begin{lem}\label{can-char-2'}
Let $\gamma>0$. Assume that 
$\ell=2$, $4\mid q+\eta$ and $\alpha=0$. 
\begin{enumerate}[\rm(1)]
\item Let $\tR_{m,0,\gamma}=\tS_{m,1,\gamma-1}$ be special. Then $\nu(\tN_{m,0,\gamma}/\tR_{m,0,\gamma} N_{m,0,\gamma})=0$. Furthermore,
if $\tC_{m,0,\gamma}\tR_{m,0,\gamma}/\tR_{m,0,\gamma}$ has an irreducible  character of defect $0$, then $\nu(m)=0$.
\item Let $\tR^\pm_{m,0,\gamma}=E^\pm_{m,\gamma}$~($\gamma>1$ if the type is plus, $\gamma>0$ if the type is minus) be special. Assume further we are not in the case (\ref{special-case-2-uni-1}).
Then $\nu(\tN^\pm_{m,0,\gamma}/\tR^\pm_{m,0,\gamma} N^\pm_{m,0,\gamma})=0$. In addition,
if $\tC^\pm_{m,0,\gamma}\tR^\pm_{m,0,\gamma}/\tR^\pm_{m,0,\gamma}$ has an irreducible  character of defect $0$, then $\nu(m)=0$.
\end{enumerate}
\end{lem}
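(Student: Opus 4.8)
\textbf{Proof plan for Lemma \ref{can-char-2'}.}

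The strategy is to reduce both statements to the structural results on $\tN_{m,0,\gamma}$ (resp.\ $\tN^\pm_{m,0,\gamma}$) obtained in \S\ref{subsect:srs-2-unitary} together with the valuation estimate of Proposition \ref{def-aa}. For part (1), first I would invoke Proposition \ref{prop:cc-R-m,alpha,gamma-2-unitary}(2): when $\tR_{m,0,\gamma}=\tS_{m,1,\gamma-1}$ is special we have $\det\tC_{m,0,\gamma}=\mrO_{2'}(\fZ_{q-\eta})$ and $\det\tN_{m,0,\gamma}=\grp{\mrO_{2'}(\fZ_{q-\eta}),\det\tR_{m,0,\gamma}}$, and $\det\tR_{m,0,\gamma}\le\fZ_2$, so the index $|\tN_{m,0,\gamma}:\tR_{m,0,\gamma}N_{m,0,\gamma}|$ — which by Lemma \ref{lem:split-classes-radical} equals $|\tG_{m,0,\gamma}:G_{m,0,\gamma}N_{\tG_{m,0,\gamma}}(\tR_{m,0,\gamma})|$ — is a power of $2$ of valuation either $0$ or $1$, and it is $0$ precisely when $\det\tR_{m,0,\gamma}\subseteq\det\tC_{m,0,\gamma}$, i.e.\ when $\det\tR_{m,0,\gamma}=1$. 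One checks from Proposition \ref{prop:cc-R-m,alpha,gamma-2-unitary}(2) that $\det\tR_{m,0,\gamma}=1$ holds unless $\nu(m)=0$ and $\gamma=1$. But in the exceptional case $\nu(m)=0,\gamma=1,a>2$ the group $R_{m,0,1}$ is a generalized quaternion group (by Proposition \ref{prop:CN_m,alpha,gamma-2-unitary}(3)) and a direct count of the radical-subgroup classes still gives $1$, while the case $a=2$ is excluded by the hypothesis that $\tR_{m,0,1}=\tS_{m,1}$ is special (Proposition \ref{prop:special-2-unitary-1}(3)). Hence $\nu(\tN_{m,0,\gamma}/\tR_{m,0,\gamma}N_{m,0,\gamma})=0$ in all cases under consideration.

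For the second assertion of (1), note $\tC_{m,0,\gamma}\cong\GL_m(\eta q)$ (Proposition \ref{prop:CN_m,alpha,gamma-2-unitary}(3)), and $\tR_{m,0,\gamma}$ acts on $\tC_{m,0,\gamma}$ through its determinant which lies in $\mrO_2(Z(\GL_m(\eta q)))$, so an irreducible character $\ttheta$ of $\tC_{m,0,\gamma}\tR_{m,0,\gamma}/\tR_{m,0,\gamma}$ of defect $0$ corresponds to a defect-zero character of $\GL_m(\eta q)/\mrO_2(Z(\GL_m(\eta q)))$. Applying Proposition \ref{def-aa} with $n=m$ and $\tR=\mrO_2(Z(\GL_m(\eta q)))$ (using $\nu(q-\eta)=1$ here since $4\mid q+\eta$) forces $\min\{\nu(q-\eta),\nu(m)\}=0$, i.e.\ $\nu(m)=0$, as desired. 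Part (2) proceeds identically: by Proposition \ref{prop:cc-R-m,alpha,gamma-2-unitary}(3), when $\tR^\pm_{m,0,\gamma}=E^\pm_{m,\gamma}$ is special and we avoid the case (\ref{special-case-2-uni-1}), we have $\det\tN^\pm_{m,0,\gamma}=\det\tC^\pm_{m,0,\gamma}=\mrO_{2'}(\fZ_{q-\eta})$ and $\det\tR^\pm_{m,0,\gamma}=1$, so the index is again of $2'$-order and, being a $2$-power, is trivial; this gives $\nu(\tN^\pm_{m,0,\gamma}/\tR^\pm_{m,0,\gamma}N^\pm_{m,0,\gamma})=0$. Then $\tC^\pm_{m,0,\gamma}=\{A\otimes I_{2^\gamma}\mid A\in\GL_m(\eta q),\det A=\pm1\}$ (Proposition \ref{prop:CN_m,alpha,gamma-2-unitary}(2)), whose image modulo $Z(E^\pm_{m,\gamma})$ is essentially $\GL_m(\eta q)/\mrO_2(Z(\GL_m(\eta q)))$, so the existence of a defect-zero character again forces $\nu(m)=0$ via Proposition \ref{def-aa}.

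The step I expect to require the most care is the first one: confirming that the exceptional possibility ``$\nu(m)=0,\gamma=1$'' flagged by the determinant computation in Proposition \ref{prop:cc-R-m,alpha,gamma-2-unitary}(2) does not actually produce a nonzero valuation, because there $R_{m,0,1}$ is a proper subgroup of $\tR_{m,0,1}$ (a generalized quaternion group inside a semidihedral group) and one must track the normalizers through Proposition \ref{prop:CN_m,alpha,gamma-2-unitary}(3) rather than reading the index off the determinants directly; the subcase $a=2$ there is genuinely excluded by the specialness hypothesis, which is the crucial bookkeeping point. Everything else is a routine combination of the cited propositions with the valuation bound in Proposition \ref{def-aa}.
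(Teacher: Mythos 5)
You are citing the right propositions — the paper's own proof is literally ``This follows from Proposition~\ref{prop:cc-R-m,alpha,gamma-2-unitary} and~\ref{def-aa}'' — and your treatment of the second assertions of (1) and (2), reducing to defect-zero characters of $\GL_m(\eta q)/\mrO_2(Z(\GL_m(\eta q)))$ and invoking Proposition~\ref{def-aa} with $\nu(q-\eta)=1$, is correct. But your computation of $\nu(\tN_{m,0,\gamma}/\tR_{m,0,\gamma}N_{m,0,\gamma})$ in (1) contains several errors that happen to cancel.

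The quantity $|\tN/\tR N|$ is not the one Lemma~\ref{lem:split-classes-radical} computes. Since $N=\tN\cap G$ and $R=\tR\cap G$ are the kernels of $\det$ restricted to $\tN$ and $\tR$, one has $\tN/N\cong\det(\tN)$ and $\tR N/N\cong\tR/R\cong\det(\tR)$ (using $\det(N)=1$), hence $|\tN/\tR N|=|\det(\tN)|/|\det(\tR)|$; by contrast $|\tG/GN_{\tG}(\tR)|=(q-\eta)/|\det(\tN)|$, which is what Lemma~\ref{lem:split-classes-radical} returns. Only the second of these is a $2$-power; the first is always divisible by $|\mrO_{2'}(\fZ_{q-\eta})|$, because $\det\tC=\mrO_{2'}(\fZ_{q-\eta})\subseteq\det\tN$ while $\det\tR\subseteq\fZ_2$. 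Consequently your claim that $\nu(\tN/\tR N)$ ``is $0$ precisely when $\det\tR_{m,0,\gamma}=1$'' is false: one has $\nu(\tN/\tR N)=\nu(\det\tN)-\nu(\det\tR)$, and since Proposition~\ref{prop:cc-R-m,alpha,gamma-2-unitary}(2) gives $\det\tN_{m,0,\gamma}=\langle\mrO_{2'}(\fZ_{q-\eta}),\det\tR_{m,0,\gamma}\rangle$, we get $\nu(\det\tN)=\nu(\det\tR)$ whether $\det\tR=1$ or $\det\tR=\fZ_2$, so the valuation is $0$ uniformly. The ``exceptional case $\nu(m)=0,\gamma=1,a>2$'' that you single out and patch with a separate class count is therefore a phantom: the determinant bookkeeping already covers it, and the count ``$1$'' in Proposition~\ref{prop:cc-R-m,alpha,gamma-2-unitary}(2) pertains to the other index $|\tG/GN_{\tG}(\tR)|$, not to $|\tN/\tR N|$. (Your remark that $a=2$ is excluded by specialness is correct but incidental.) The same slip recurs in (2), where you call the index simultaneously ``of $2'$-order'' and ``a $2$-power''; it is of $2'$-order, and that alone gives $\nu=0$. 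You should replace the class-counting detour by the direct computation $\nu(\tN/\tR N)=\nu(\det\tN)-\nu(\det\tR)$ read off from Proposition~\ref{prop:cc-R-m,alpha,gamma-2-unitary}.
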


\begin{proof}
This follows from Proposition \ref{prop:cc-R-m,alpha,gamma-2-unitary} and \ref{def-aa}.
\end{proof}

\begin{rem}\label{exc-case-2uni}
Now we assume that
 (\ref{special-case-2-uni-1})  holds for
 $\tR^\pm_{m,0,\gamma}=E^\pm_{m,\gamma}$, which means $\nu(m)=0$ and $\gamma=2$  if the type is plus and $\gamma=1$ and $a=2$
if the type is minus.
By Proposition \ref{prop:special-2-unitary-1}, $\tR^\pm_{m,0,\gamma}$ is special.

First we assume that  the type is plus, $\nu(m)=0$ and $\gamma=2$. 	Then we know 
$\tR^+_{m,0,2}=R^+_{m,0,2}$,
$\tC^+_{m,0,2}/C^+_{m,0,2}\cong\mrO_{2'}(\tG^+_{m,0,2}/G^+_{m,0,2})$, 
$|\tM^+_{m,0,2}/M^+_{m,0,2}|=2$ and
$\nu(\tN^+_{m,0,2}/\tR^+_{m,0,2}N^+_{m,0,2})=1$ by 
\S \ref{subsect:srs-2-unitary}.
Moreover, 
$\tM_{m,0,2}\tR_{m,0,2}/\tR_{m,0,2}\cong \GO^+_4(2)$,  $\tN_{m,0,2}/\tR_{m,0,2}\cong\tR_{m,0,2}\tC_{m,0,2}/\tR_{m,0,2}\times \GO^+_4(2)$,
and 
$M_{m,0,2}R_{m,0,2}/R_{m,0,2}\cong \Omega^+_4(2)$,
$N_{m,0,2}/R_{m,0,2}\cong R_{m,0,2}C_{m,0,2}/R_{m,0,2}\times \Omega^+_4(2)$.
We recall the remark in \cite[p.511]{An92};
the group $\Omega^+_4(2)$ has a unique irreducible character of defect 0, the Steinberg character, which has two extensions $\chi_1$, $\chi_2$ to $\GO^+_4(2)$.
Then the characters $\chi_1$ and $\chi_2$ are of defect $1$.
In addition, the group $\GO^+_4(2)$ has no irreducible character of defect $0$ and if $\chi\notin\{\chi_1,\chi_2\}$ is an irreducible character of $\GO^+_4(2)$ with defect $1$, then every irreducible constituent of $\Res^{\GO^+_4(2)}_{\Omega^+_4(2)}(\chi)$ has defect $1$.
Let  $\tilde\psi\in\Irr(\tN^+_{m,0,2}/\tR^+_{m,0,2})$ be of defect $\le \nu(\tN^+_{m,0,2}/\tR^+_{m,0,2}N^+_{m,0,2})=1$.
If some irreducible character of $N^+_{m,0,2}/R^+_{m,0,2}$ lying below $\tilde{\psi}$ is of defect $0$, then by Remark \ref{4.39},
$\tilde\psi=\tilde\theta\times \chi_{i}$, where $\tilde\theta\in\dz(\tR^+_{m,0,2}\tC^+_{m,0,2}/\tR^+_{m,0,2})$ and $i=1$ or $2$.
Moreover, the restrictions of $\tilde\theta\times \chi_{1}$ and $\tilde\theta\times \chi_{2}$ to $N^+_{m,0,2}$ coincide.

Now let the type be minus, $a=2$, $\nu(m)=0$ and $\gamma=1$.
Then we know 
$\tR^-_{m,0,1}=R^-_{m,0,1}$,
$\tC^-_{m,0,1}/C^-_{m,0,1}\cong\mrO_{2'}(\fZ_{q-\eta})$, 
$|\tM^-_{m,0,1}/M^-_{m,0,1}|=2$ and
$\nu(\tN^-_{m,0,1}/\tR^-_{m,0,1}N^-_{m,0,1})=1$ by 
\S \ref{subsect:srs-2-unitary}.
Moreover, 
$\tM^-_{m,0,1}\tR_{m,0,1}/\tR^-_{m,0,1}\cong \GO^-_2(2)\cong\fS_3$,  $\tN^-_{m,0,1}/\tR^-_{m,0,1}\cong\tR^-_{m,0,1}\tC^-_{m,0,1}/\tR^-_{m,0,1}\times \fS_3$,
and 
$M^-_{m,0,1}R^-_{m,0,1}/R^-_{m,0,1}\cong C_3$,
$N^-_{m,0,1}/R^-_{m,0,1}\cong R^-_{m,0,1}C^-_{m,0,1}/R^-_{m,0,1}\times C_3$.
We use  the notation of irreducible characters of $\fS_3$ as in
the case $\gamma=1$ in
Remark \ref{special-case-ell2-linear}.
Let  $\tilde\psi\in\Irr(\tN^-_{m,0,1}/\tR^-_{m,0,1})$ be of defect $\le \nu(\tN^-_{m,0,1}/\tR^-_{m,0,1}N^-_{m,0,1})=1$.
If $\tilde\psi$ is of defect $0$, then
$\tilde\psi=\tilde\theta\times \chi_2$, where $\tilde\theta\in\dz(\tR^-_{m,0,1}\tC^-_{m,0,1}/\tR^-_{m,0,1})$,
and then every $\psi\in\Irr(N^-_{m,0,1}/R^-_{m,0,1}\mid \tilde\psi)$ has defect $0$.
Moreover, $\nu(\kappa^{\tN^-_{m,0,1}}_{N^-_{m,0,1}}(\tilde\psi))=1$.
If $\tilde\psi$ is of defect $1$ and
$\tilde\psi=\tilde\theta\times \chi_i$, where where $\tilde\theta\in\dz(\tR^-_{m,0,1}\tC^-_{m,0,1}/\tR^-_{m,0,1})$ and $i\in\{1,\textrm{sign}\}$,
then every $\psi\in\Irr(N^-_{m,0,1}/R^-_{m,0,1}\mid \tilde\psi)$ has defect $0$.
Moreover, $2\nmid\kappa^{\tN^-_{m,0,1}}_{N^-_{m,0,1}}(\tilde\psi)$.
In addition, the restrictions of $\tilde\theta\times \chi_1$ and $\tilde\theta\times \chi_{\textrm{sign}}$ to $N^-_{m,0,1}$ coincide.
If $\tilde\psi$ is of defect $1$ and $\tilde\psi=\tilde\theta'\times \chi_2$, where $\tilde\theta'\in\Irr(\tR^-_{m,0,1}\tC^-_{m,0,1}/\tR^-_{m,0,1})$ is of defect $1$,
then every $\psi\in\Irr(N^-_{m,0,1}/R^-_{m,0,1}\mid \tilde\psi)$ has defect $1$.
\end{rem}

\begin{lem}\label{can-char-3}
Let $\bc\ne\zero$ and $\tR_{m,\alpha,\gamma,\bc}$ be special.
\begin{enumerate}[\rm(1)]
\item No irreducible character of $\tC_{m,\alpha,\gamma,\bc}\tR_{m,\alpha,\gamma,\bc}/\tR_{m,\alpha,\gamma,\bc}$
	is of defect $<\nu(\tN_{m,\alpha,\gamma,\bc}/\tR_{m,\alpha,\gamma,\bc} N_{m,\alpha,\gamma,\bc})$.
\item If $\tC_{m,\alpha,\gamma,\bc}\tR_{m,\alpha,\gamma,\bc}/\tR_{m,\alpha,\gamma,\bc}$
	has an irreducible character of defect $\nu(\tN_{m,\alpha,\gamma,\bc}/\tR_{m,\alpha,\gamma,\bc} N_{m,\alpha,\gamma,\bc})$,
	then $\nu(m)=0$ and $\nu(\tN_{m,\alpha,\gamma,\bc}/\tR_{m,\alpha,\gamma,\bc} N_{m,\alpha,\gamma,\bc})=0$. 
\end{enumerate}	
\end{lem}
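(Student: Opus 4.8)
\textbf{Proof plan for Lemma \ref{can-char-3}.}

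The plan is to reduce the statement to the already-established lemmas about the component pieces. Recall from Lemma~\ref{lem:tCtN} (and its $\ell=2$ analogues) that $\tN_{m,\alpha,\gamma,\bc}$ is built from $\tN_{m,\alpha,\gamma}/\tR_{m,\alpha,\gamma}$ ``wreathed'' by $N_{\fS(\ell^{|\bc|})}(A_\bc)$ in the sense of \cite[(1.5)]{AF90}, and that $\tC_{m,\alpha,\gamma,\bc} = \tC_{m,\alpha,\gamma}\otimes I_\bc$, so $\tC_{m,\alpha,\gamma,\bc}\cong\GL_m((\eta q)^{e\ell^\alpha})$ just as in the $\bc=\zero$ case. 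The point is that the extra wreathing does not change the centralizer, so a character of $\tC_{m,\alpha,\gamma,\bc}\tR_{m,\alpha,\gamma,\bc}/\tR_{m,\alpha,\gamma,\bc}$ is, up to the inflation through the kernel, still a character of $\GL_m((\eta q)^{e\ell^\alpha})$.

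First I would compute $\nu(\tN_{m,\alpha,\gamma,\bc}/\tR_{m,\alpha,\gamma,\bc}N_{m,\alpha,\gamma,\bc})$ precisely. By Lemma~\ref{lem:split-classes-radical} this index equals the number of $G$-classes of radical subgroups lying under $\tR_{m,\alpha,\gamma,\bc}$, which by Propositions~\ref{prop:cc-R-m,alpha,gamma,c-odd}, \ref{prop:cc-R-m,alpha,gamma,c-2-linear} and \ref{prop:cc-R-m,alpha,gamma,c-2-unitary} is $\ell^{\min\{a,\gamma+|\bc|,\gamma+\nu(m)\}}$ (with the obvious modifications when $\ell=2$). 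So $\nu(\tN_{m,\alpha,\gamma,\bc}/\tR_{m,\alpha,\gamma,\bc}N_{m,\alpha,\gamma,\bc})=\min\{\nu(q-\eta),\gamma+|\bc|,\gamma+\nu(m)\}$ in the linear-type situations, and $\min\{1,\nu(m)+\gamma\}$ when $4\mid q+\eta$. Then part~(1) follows directly from Proposition~\ref{def-aa} applied to $\GL_m((\eta q)^{e\ell^\alpha})$: the minimal defect of an irreducible character of this group modulo its central $\ell$-part is $\min\{\nu((\eta q)^{e\ell^\alpha}-1),\nu(m)\}$, which by Lemma~\ref{valuation} is $\min\{a+\alpha,\nu(m)\}$ (or $\min\{1,\nu(m)\}$ in the exceptional $\ell=2$, $4\mid q+\eta$, $\alpha=0$ case), and one checks this is at least $\nu(\tN_{m,\alpha,\gamma,\bc}/\tR_{m,\alpha,\gamma,\bc}N_{m,\alpha,\gamma,\bc})$ because $|\bc|>0$ forces the minimum to be governed by $\nu(m)$ rather than $\gamma+|\bc|$ once $\gamma+|\bc|\ge$ that bound. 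For part~(2), if equality of defects is attained then Proposition~\ref{def-aa} (together with Remarks~\ref{def-aa-C} and \ref{def-a-C}) forces $\nu(m)$ to be the attained value, and comparing $\nu(m)$ with $\min\{\nu(q-\eta),\gamma+|\bc|,\gamma+\nu(m)\}$ shows this can only happen when $\gamma+\nu(m)\le\nu(m)$, i.e. $\gamma=0$, and then $|\bc|\ge 1$ forces $\min\{\nu(q-\eta),|\bc|,\nu(m)\}=\nu(m)$ which in turn is only possible when $\nu(m)=0$; hence $\nu(m)=0$ and the index is $1$.

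The main obstacle I expect is the bookkeeping in the exceptional cases: when $\ell=3$ and $(\ref{eq:special-case-3-1})$-type degeneracies occur, or when $\ell=2$ and one of $(\ref{eq:special-case-2-linear-0})$, $(\ref{eq:special-case-2-linear-1})$, $(\ref{special-case-2-uni-1})$ applies to the underlying $\tR_{m,\alpha,\gamma}$, the structure of $\tN/\tR$ is no longer a clean direct product and the determinant contributions from $\tM$ interfere. However, Lemma~\ref{lem:normal-abelian} (resp.\ \ref{lem:normal-abelian-2}) shows that once $\bc\ne\zero$ we are, apart from the small ``return to symplectic type'' cases recorded in Remarks~\ref{rem:back-symplectic} and \ref{rem:back-symplectic-2}, outside these degeneracies, because wreathing with $A_\bc$ produces genuine non-abelian determinant-one subgroups that rigidify the normalizer; and the excluded cases $(\ref{eq:special-case-3-3})$, $(\ref{eq:special-case-2-linear-2})$ reduce to the already-treated $\bc=\zero$ situation. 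So the real work is just to verify that in the remaining ranges the relevant $\det(\tM)$ is trivial (Propositions~\ref{prop:tN-m,alpha,gamma-odd}(5), \ref{prop:tN-m,alpha,gamma-2-linear}(6), \ref{prop:tN-m,alpha,gamma-2-unitary}(5)) so that the $\otimes$-construction contributes nothing $\ell$-singular beyond the base-group determinants, exactly as in the proof of Proposition~\ref{prop:cc-R-m,alpha,gamma,c-odd}, after which the defect comparison above goes through verbatim.
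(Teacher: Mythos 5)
There is a genuine gap at the very first step, and it invalidates the rest of the argument. You identify $\nu(\tN_{m,\alpha,\gamma,\bc}/\tR_{m,\alpha,\gamma,\bc}N_{m,\alpha,\gamma,\bc})$ with the $\ell$-valuation of the number of $G$-classes of radical subgroups lying under $\tR_{m,\alpha,\gamma,\bc}$, citing Lemma~\ref{lem:split-classes-radical}. But that lemma computes the quantity $|\tG_{m,\alpha,\gamma,\bc}:G_{m,\alpha,\gamma,\bc}N_{\tG_{m,\alpha,\gamma,\bc}}(\tR_{m,\alpha,\gamma,\bc})|$, which is a \emph{different} index. Writing $\tN:=\tN_{m,\alpha,\gamma,\bc}$, $\tR:=\tR_{m,\alpha,\gamma,\bc}$, $N:=N_{m,\alpha,\gamma,\bc}$, one has $\tN G/G\cong\det(\tN)$ and $\tR G/G\cong\det(\tR)$ as subgroups of $\tG/G\cong\fZ_{q-\eta}$, so $\tN/\tR N\cong\det(\tN)/\det(\tR)$, whereas $\tG/G\tN\cong\fZ_{q-\eta}/\det(\tN)$. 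These are the two \emph{complementary} quotients of the cyclic group $\fZ_{q-\eta}/\det(\tR)$, and in general they are not equal. Concretely, with $\ell$ odd, $\alpha=0$, $\gamma=1$, $|\bc|=1$, $\nu(m)=0$, $a=3$, Proposition~\ref{prop:cc-R-m,alpha,gamma,c-odd}(1) gives $\nu(\det\tR)=2=\nu(\det\tN)$, so $\nu(\tN/\tR N)=0$, while the number of $G$-classes is $\ell^{\min\{3,2,1\}}=\ell$; your formula would put $\nu(\tN/\tR N)=1$. Similarly, in the $\ell=2$, $4\mid q+\eta$ case one actually has $\nu(\tN/\tR N)=0$ for \emph{all} $\bc\ne\zero$, not $\min\{1,\nu(m)+\gamma\}$.

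This wrong identification breaks both parts. For part~(1), your claimed bound $\min\{a+\alpha,\nu(m)\}\ge\min\{\nu(q-\eta),\gamma+|\bc|,\gamma+\nu(m)\}$ is simply false in general (take $\gamma$ large and $\nu(m)$ small). The correct computation, which the paper uses implicitly, is $\nu(\tN/\tR N)=\nu(\det\tN)-\nu(\det\tR)$; with $|\bc|\ge1$ and $\nu(m)>0$ one checks from Propositions~\ref{prop:cc-R-m,alpha,gamma,c-odd}, \ref{prop:cc-R-m,alpha,gamma,c-2-linear} and \ref{prop:cc-R-m,alpha,gamma,c-2-unitary} that this is strictly less than $\min\{\nu(q-\eta),\nu(m)\}$, and by Proposition~\ref{def-aa} applied to $\tC\tR/\tR\cong\GL_m((\eta q)^{\ell^\alpha})/\mrO_\ell(Z)$ no character of that group has defect that small. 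If instead $\nu(m)=0$ one gets $\nu(\tN/\tR N)=0$ directly, and both parts follow. Your argument for part~(2) also does not follow even granting your formula: the chain ``$\nu(m)=\min\{\nu(q-\eta),\gamma+|\bc|,\gamma+\nu(m)\}$ forces $\gamma+\nu(m)\le\nu(m)$'' is not valid (the minimum can equal $\nu(m)$ without being attained at the $\gamma+\nu(m)$ slot), and ``$\min\{\nu(q-\eta),|\bc|,\nu(m)\}=\nu(m)$ is only possible when $\nu(m)=0$'' is also false (e.g.\ $\nu(m)=1$, $|\bc|=2$, $\nu(q-\eta)=2$). Your extended discussion of the exceptional $\ell=3$ and $\ell=2$ cases, while not wrong, is moot for this lemma: the paper's proof never needs it, because the only thing required from the Propositions is the pair of determinant groups $\det\tR$ and $\det\tN$, and the degeneracies with $\bc\ne\zero$ are already ruled out by the ``special'' hypothesis.
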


\begin{proof}
If $\nu(m)>0$, then by	
Proposition \ref{prop:cc-R-m,alpha,gamma,c-odd},
\ref{prop:cc-R-m,alpha,gamma,c-2-linear} and \ref{prop:cc-R-m,alpha,gamma,c-2-unitary},
$\nu(\tN_{m,\alpha,\gamma,\bc}/\tR_{m,\alpha,\gamma,\bc} N_{m,\alpha,\gamma,\bc})<\min\{\nu(q-\eta),\nu(m)\}$.
Therefore no character of $\tC_{m,\alpha,\gamma,\bc}\tR_{m,\alpha,\gamma,\bc}/\tR_{m,\alpha,\gamma,\bc}$ is of defect $\le\nu(\tN_{m,\alpha,\gamma,\bc}/\tR_{m,\alpha,\gamma,\bc} N_{m,\alpha,\gamma,\bc})$ by Proposition \ref{def-aa}.
If $\nu(m)=0$, then $\nu(\tN_{m,\alpha,\gamma,\bc}/\tR_{m,\alpha,\gamma,\bc} N_{m,\alpha,\gamma,\bc})=0$, and the lemma follows.	
\end{proof}

\begin{lem}\label{can-char-3'}
Let $\gamma>0$.
Assume that $\ell=2$, $4\mid q+\eta$ and $\alpha=0$. 
Let $\tR^\pm_{m,0,\gamma,\bc}$ be a special basic subgroup such that $\gamma>1$ if the type is plus and $\gamma>0$ if the type is minus.
Then we have $\nu(\tN^\pm_{m,0,\gamma,\bc}/\tR^\pm_{m,0,\gamma,\bc} N^\pm_{m,0,\gamma,\bc})=0$,
and if $\tC^\pm_{m,0,\gamma,\bc}\tR^\pm_{m,0,\gamma,\bc}/\tR^\pm_{m,0,\gamma,\bc}$ has an irreducible character of defect $0$,
then $\nu(m)=0$.		
\end{lem}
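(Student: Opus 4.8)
The plan is to follow the proof of Lemma~\ref{can-char-3}, with Proposition~\ref{prop:cc-R-m,alpha,gamma,c-2-unitary} playing the role of the determinant and conjugacy-class counts used there. Set $\tG=\tG_{m,0,\gamma,\bc}$, $G=G_{m,0,\gamma,\bc}$, and write $\tR=\tR^\pm_{m,0,\gamma,\bc}$, $\tN=\tN^\pm_{m,0,\gamma,\bc}$, $\tC=\tC^\pm_{m,0,\gamma,\bc}$, $R=\tR\cap G$, $N=N^\pm_{m,0,\gamma,\bc}$; here one may assume $\bc\neq\zero$, since for $\bc=\zero$ one is back in the situation of Lemma~\ref{can-char-2'}(2) and Remark~\ref{exc-case-2uni}. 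As $\tR$ is special, $N_{\tG}(R)=N_{\tG}(\tR)=\tN$ and $N=\tN\cap G$. By Proposition~\ref{prop:cc-R-m,alpha,gamma,c-2-unitary}(3), $\det\tR=1$, so $\tR\leqslant N$ and $\tR N=N$; the determinant map then identifies $\tN/N$ with $\det\tN=\mrO_{2'}(\fZ_{q-\eta})$, which is an $\ell'$-group. Hence $\nu(\tN/\tR N)=0$, the first assertion.

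For the second assertion I would first identify the quotient $\tC\tR/\tR$. Using Proposition~\ref{prop:tN-2-unitary-E-m,gamma}(1) together with the structure of $\tN^\pm_{m,0,\gamma,\bc}$ recalled before Proposition~\ref{prop:cc-R-m,alpha,gamma,c-2-unitary} (compare Lemma~\ref{lem:tCtN}), one gets $\tC=\tC^\pm_{m,0,\gamma}\otimes I_\bc\cong\GL_m(\eta q)$, embedded block-diagonally, with $\tC\cap\tR=Z(\tR)=Z(E^\pm_{m,\gamma})$; inside this copy of $\GL_m(\eta q)$ the latter is $\grp{-I_m}=\mrO_2(Z(\GL_m(\eta q)))$, where we use that $4\mid q+\eta$ forces $(q-\eta)_2=2$. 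Therefore $\tC\tR/\tR\cong\GL_m(\eta q)/\mrO_2(Z(\GL_m(\eta q)))$.

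Now suppose $\tC\tR/\tR$ affords an irreducible character of defect $0$. Inflating it to $\GL_m(\eta q)$ yields $\tchi\in\Irr(\GL_m(\eta q))$ with $\mrO_2(Z(\GL_m(\eta q)))\leqslant\ker\tchi$ and with $\nu\bigl(\GL_m(\eta q)/\mrO_2(Z(\GL_m(\eta q)))\bigr)=\nu(\tchi(1))$. Applying Proposition~\ref{def-aa} to $\GL_m(\eta q)$, so that $n=m$ and $\nu(q-\eta)=1$, gives $0=\nu\bigl(\GL_m(\eta q)/\mrO_2(Z(\GL_m(\eta q)))\bigr)-\nu(\tchi(1))\geqslant\min\{\nu(q-\eta),\nu(m)\}=\min\{1,\nu(m)\}$, which forces $\nu(m)=0$, as wanted.

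The argument is essentially routine; the step that needs genuine care is the identification of $\tC\tR/\tR$ with $\GL_m(\eta q)/\mrO_2(Z(\GL_m(\eta q)))$ — in particular keeping track of the central subgroup that gets collapsed — together with the observation that here $\nu(q-\eta)=1$, so that the lower bound in Proposition~\ref{def-aa} vanishes exactly when $\nu(m)=0$. Everything else is bookkeeping already carried out in \S\ref{subsect:srs-2-unitary} and \S\ref{basic-result}.
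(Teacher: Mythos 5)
Your proof is correct and follows essentially the same route as the paper: the paper's own proof is just the one-line remark ``Using Proposition~\ref{prop:cc-R-m,alpha,gamma,c-2-unitary}, this can be shown by the same arguments in Lemma~\ref{can-char-3}'', and your write-up is precisely what those arguments amount to once unwound — use $\det\tR^\pm_{m,0,\gamma,\bc}=1$ and $\det\tN^\pm_{m,0,\gamma,\bc}=\mrO_{2'}(\fZ_{q-\eta})$ to get $\nu(\tN^\pm/\tR^\pm N^\pm)=0$, identify $\tC\tR/\tR$ with $\GL_m(\eta q)/\mrO_2(Z(\GL_m(\eta q)))$, and apply Proposition~\ref{def-aa} with $\nu(q-\eta)=1$. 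Your observation that one should implicitly take $\bc\neq\zero$ here (the $\bc=\zero$ case being covered by Lemma~\ref{can-char-2'} and Remark~\ref{exc-case-2uni}, and indeed the exceptional cases of (\ref{special-case-2-uni-1}) would otherwise contradict $\nu(\tN^\pm/\tR^\pm N^\pm)=0$) is a valid and worthwhile clarification of the statement's hypotheses.
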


\begin{proof}
Using Proposition \ref{prop:cc-R-m,alpha,gamma,c-2-unitary},
this can be shown by the same arguments  in Lemma \ref{can-char-3}.
\end{proof}

\begin{rem}
In Lemmas \ref{can-char-3} and \ref{can-char-3'},
if $\nu(m)=0$, 
then the  irreducible characters of $\tC_{m,\alpha,\gamma,\bc}\tR_{m,\alpha,\gamma,\bc}/\tR_{m,\alpha,\gamma,\bc}$ of defect zero are constructed in \cite{AF90,An92, An93,An94}.
In fact, every such character is of the form $\ttheta_\Gamma\otimes I_{\ell^\gamma}\otimes I_\bc$ as in \S \ref{subsect:weights-general}, for some $\Gamma\in\cF'$ with $m=m_\Gamma$ and $\alpha=\alpha_\Gamma$.
\end{rem}

Summarizing the results above, we have

\begin{cor}\label{can-char-4}
	For any special basic subgroup $\tR=\tR_{m,\alpha,\gamma,\bc}$ (or $\tR^\pm_{m,\alpha,\gamma,\bc}$),
	we let $\tN=\tN_{m,\alpha,\gamma,\bc}$ (or $\tN^\pm_{m,\alpha,\gamma,\bc}$) and $\tC=\tC_{m,\alpha,\gamma,\bc}$ (or $\tC^\pm_{m,\alpha,\gamma,\bc}$).	

	\begin{enumerate}[\rm(1)]
		\item $\tC\tR/\tR$ has an irreducible  character of defect zero only when $\nu(m)=0$.
		\item No irreducible  character of $\tC\tR/\tR$ is of defect $<\nu(\tN/\tR N)$ unless when we are in one of the cases (\ref{eq:special-case-3-1}), (\ref{eq:special-case-2-linear-1}) and (\ref{special-case-2-uni-1}),
		in which case, $\nu(\tN/\tR N)=1$.	
	\end{enumerate}	
\end{cor}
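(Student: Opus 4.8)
The plan is to assemble Corollary~\ref{can-char-4} directly from the catalogue of lemmas and remarks just established, organising the argument by the shape of the special basic subgroup $\tR$. First I would dispose of statement~(1): by Lemmas~\ref{can-char-1}(3), \ref{can-char-2}(2), \ref{can-char-2'} and \ref{can-char-3}(2) (together with Lemma~\ref{can-char-3'}), in every case where $\tC\tR/\tR$ admits a defect-zero character one is forced into a situation with $\nu(m)\le$ some bound, and tracing through Remarks~\ref{can-char-1-const}, \ref{special-case-ell3}, \ref{special-case-ell2-linear}, \ref{exc-case-2uni} one sees that a genuine defect-zero character on $\tC\tR/\tR$ only occurs alongside $\nu(m)=0$; the remaining case $\ell\nmid\gcd(n,q-\eta)$ is handled by the reference to \cite{Feng19} recalled at the start of \S\ref{char-tC-def}. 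So (1) is just a bookkeeping consolidation.

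For statement~(2) I would split according to $\gamma$ and $\bc$. When $\bc=\zero$ and $\gamma=0$, i.e. $\tR=\tR_{m,\alpha}$, Lemma~\ref{can-char-1}(1) gives exactly ``no character of defect $<\nu(\tN/\tR N)$'', with no exceptional case arising. When $\bc=\zero$ and $\gamma>0$, i.e. $\tR=\tR_{m,\alpha,\gamma}$ (or the unitary analogues $\tS_{m,1,\gamma-1}$, $E^\pm_{m,\gamma}$): outside the three flagged cases~(\ref{eq:special-case-3-1}), (\ref{eq:special-case-2-linear-1}), (\ref{special-case-2-uni-1}) this is precisely Lemma~\ref{can-char-2}(1) or Lemma~\ref{can-char-2'}, so the bound holds without exception there; inside those three cases one reads off from Remarks~\ref{special-case-ell3}, \ref{special-case-ell2-linear}, \ref{exc-case-2uni} that $\nu(\tN/\tR N)=1$ and that $\tC\tR/\tR$ does carry characters of defect $0<1$ (namely $\tilde\theta\times\mathrm{St}$ with $\tilde\theta$ of defect zero), which is exactly the asserted exception. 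Finally when $\bc\neq\zero$, Lemmas~\ref{can-char-3}(1) and \ref{can-char-3'} give the bound with no exception, and here $\nu(\tN/\tR N)=0$ anyway so there is nothing to exclude.

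Concretely the write-up is: \emph{(1) If $\tC\tR/\tR$ has a character of defect zero, then by Lemmas~\ref{can-char-1}, \ref{can-char-2}, \ref{can-char-2'}, \ref{can-char-3}, \ref{can-char-3'} (and the remark following \ref{char-tC-def} opening when $\ell\nmid\gcd(n,q-\eta)$) we must have $\nu(m)=0$. (2) Combine Lemmas~\ref{can-char-1}(1), \ref{can-char-2}(1), \ref{can-char-2'}, \ref{can-char-3}(1), \ref{can-char-3'}; the only special basic subgroups not covered by the hypotheses of \ref{can-char-2}/\ref{can-char-2'} are those in cases~(\ref{eq:special-case-3-1}), (\ref{eq:special-case-2-linear-1}), (\ref{special-case-2-uni-1}), and for these Remarks~\ref{special-case-ell3}, \ref{special-case-ell2-linear}, \ref{exc-case-2uni} show $\nu(\tN/\tR N)=1$ while a defect-zero character of $\tR\tC/\tR$ times the Steinberg character of $\Sp_2(2)$, $\Sp_4(2)$, $\SL_2(3)$, $\GO_4^+(2)$ or $\GO_2^-(2)$ gives a defect-zero (hence defect $<1$) character of $\tN/\tR$ restricting from $\tC\tR/\tR$.\/} The main obstacle is not any single deduction but making sure the case division is genuinely exhaustive—every special basic subgroup produced in \S\ref{subsect:srs} (the $\tR_{m,\alpha,\gamma,\bc}$, the wreathed $\tS$'s, the $E^\pm$'s, and $\tR_m$) must be matched to exactly one of the lemmas, and one must double-check that the three exceptional configurations are precisely the ones left uncovered by the hypothesis ``we are neither in case~(\ref{eq:special-case-3-1}) nor~(\ref{eq:special-case-2-linear-1})'' of Lemma~\ref{can-char-2} and the ``not in case~(\ref{special-case-2-uni-1})'' hypothesis of Lemma~\ref{can-char-2'}, so that no further exception is silently omitted.
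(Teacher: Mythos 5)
The proposal is essentially the same assembly the paper intends — the corollary carries no displayed proof and is explicitly introduced as "Summarizing the results above," so the expected argument is precisely to apportion the special basic subgroups among Lemmas~\ref{can-char-1}--\ref{can-char-3'} and read off the three exceptional configurations left outside their hypotheses. Your case division ($\bc=\zero,\gamma=0$ $\to$ \ref{can-char-1}; $\bc=\zero,\gamma>0$ $\to$ \ref{can-char-2} or \ref{can-char-2'} outside (\ref{eq:special-case-3-1}), (\ref{eq:special-case-2-linear-1}), (\ref{special-case-2-uni-1}); $\bc\neq\zero$ $\to$ \ref{can-char-3} or \ref{can-char-3'}) is exhaustive, and the observation that $\nu(\tN/\tR N)=1$ in the three exceptional cases follows from Remarks~\ref{special-case-ell3}, \ref{special-case-ell2-linear}, \ref{exc-case-2uni}.

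One small correction to your write-up: in justifying the exception in part~(2), you do not need, and should not invoke, any multiplication by a Steinberg character of $\Sp_2(2)$, $\SL_2(3)$, etc. — that device produces a defect-zero character of $\tN/\tR$, but the corollary's claim concerns $\tC\tR/\tR$ alone. The point is simply that in each of the three exceptional cases $\nu(m)=0$ is built into the definition, so the canonical character $\ttheta_\Gamma\otimes I_{\ell^\gamma}$ already gives a defect-zero character of $\tC\tR/\tR$, which then has defect $0<1=\nu(\tN/\tR N)$. The Steinberg factor becomes relevant only later, when one lifts from $\tC\tR/\tR$ to weight characters of $\tN/\tR$ in \S\ref{sec-const-wei}. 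Also, the mention of $\ell\nmid\gcd(n,q-\eta)$ in your part~(1) is superfluous here: \S\ref{char-tC-def} already globally assumes $\ell\mid q-\eta$, so that case does not arise within the hypotheses of the corollary.
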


Now we consider the special radical subgroups of the form $\tR=\tR_1\times\cdots\times\tR_u$  with $\tR_i$ being a basic subgroup.
Assume that $\det(\tR_1) \geq\cdots\geq \det(\tR_u)$.

\begin{lem}\label{lem:char-def-small}
Keep the notation of Proposition \ref{prop:special-odd-3}, \ref{prop:special-2-linear-3} and \ref{prop:special-2-unitary-3}.
Assume that $\tR$ is special.
Then no irreducible  character of $\tC\tR/\tR$ is of defect $<\nu(\tN/\tR N)$ unless we are in one of the cases (\ref{special-case-wei-ell3}), (\ref{special-case-wei-ell2-linear}) and (\ref{special-case-wei-ell2-uni}),
in which case, $\nu(\tN/\tR N)=1$.
\end{lem}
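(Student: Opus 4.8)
The plan is to reduce the statement to the individual basic subgroups making up $\tR$ by means of the direct product decomposition of $\tC$, and then to run an $\ell$-valuation count. The first step is to record the identity
$\nu(\tN/\tR N)=\nu(\det(\tN))-\nu(\det(\tR))$: since $\tR$ is special we have $R=\tR\cap G$ and $N=\tN\cap G$, and because $\det$ restricted to $\tR$ has kernel $R$ we get $|\tR N:N|=|\tR/R|=|\det(\tR)|$, while $\tN/N\cong\tN G/G$ maps isomorphically onto $\det(\tN)\le\fZ_{q-\eta}$, so $|\tN:\tR N|=|\det(\tN)|/|\det(\tR)|$; taking $\ell$-parts and using $\mrO_{\ell'}(\det(\tN))=\mrO_{\ell'}(\fZ_{q-\eta})$ from Lemmas~\ref{det-radical-N-C-odd}(1), \ref{det-radical-N-C-2-linear}(1), \ref{det-radical-N-C-2-unitary}(1) yields the identity, and the same identity holds with $\tR$ replaced by any single basic subgroup. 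Next write $\tR=\tR_1^{t_1}\times\cdots\times\tR_s^{t_s}$ with the $\tR_i$ pairwise non-conjugate basic subgroups. By the decomposition of $\tC$ recalled in \S\ref{subsect:weights-general} we have $\tC\tR/\tR\cong\prod_i(\tC_i\tR_i/\tR_i)^{t_i}$, so every $\theta\in\Irr(\tC\tR/\tR)$ is a product $\prod_{i,k}\theta_{i,k}$ with $\theta_{i,k}\in\Irr(\tC_i\tR_i/\tR_i)$, and, the defect $d(\cdot)$ being additive over direct products, $d(\theta)=\sum_{i,k}d(\theta_{i,k})$. By Corollary~\ref{can-char-4}(2), $d(\theta_{i,k})\ge\nu(\tN_i/\tR_i N_i)$ whenever $\tR_i$ is not one of the exceptional basic subgroups (\ref{eq:special-case-3-1}), (\ref{eq:special-case-2-linear-1}), (\ref{special-case-2-uni-1}); for an exceptional $\tR_i$ only $d(\theta_{i,k})\ge0$ is available, but then $\nu(\tN_i/\tR_i N_i)=1$. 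I would also note the elementary inequality $\nu(\det(\tR_i))\ge\nu(\det(\tC_i))$ for every exceptional basic subgroup $\tR_i$, immediate from the values of these groups in Propositions~\ref{prop:cc-R-m,alpha,gamma-odd}--\ref{prop:cc-R-m,alpha,gamma,c-2-unitary}.

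Suppose first that $\tR$ is \emph{not} in any of (\ref{special-case-wei-ell3}), (\ref{special-case-wei-ell2-linear}), (\ref{special-case-wei-ell2-uni}). Then Lemmas~\ref{det-radical-N-C-odd}(2), \ref{det-radical-N-C-2-linear}(2), \ref{det-radical-N-C-2-unitary}(2) give $\det(\tN)=\det(\tR\tC)=\grp{\det(\tR),\det(\tC)}$, so $\nu(\tN/\tR N)=\max\{0,\nu(\det(\tC))-\nu(\det(\tR))\}$. If this is $0$ there is nothing to prove. Otherwise pick $i_0$ with $\nu(\det(\tC_{i_0}))=\nu(\det(\tC))=\max_j\nu(\det(\tC_j))$. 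Then $\tR_{i_0}$ is not exceptional, for otherwise $\nu(\det(\tR))\ge\nu(\det(\tR_{i_0}))\ge\nu(\det(\tC_{i_0}))=\nu(\det(\tC))$, contradicting $\nu(\det(\tC))>\nu(\det(\tR))$. Hence, using $\det(\tN_{i_0})=\det(\tR_{i_0}\tC_{i_0})$ for the non-exceptional $\tR_{i_0}$ and $\nu(\det(\tR_{i_0}))\le\nu(\det(\tR))$,
$$d(\theta)\ \ge\ t_{i_0}\,\nu(\tN_{i_0}/\tR_{i_0}N_{i_0})\ \ge\ \nu(\det(\tN_{i_0}))-\nu(\det(\tR_{i_0}))\ \ge\ \nu(\det(\tC_{i_0}))-\nu(\det(\tR))\ =\ \nu(\tN/\tR N),$$
which settles this case.

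Finally, if $\tR$ lies in one of (\ref{special-case-wei-ell3}), (\ref{special-case-wei-ell2-linear}), (\ref{special-case-wei-ell2-uni}), I only need $\nu(\tN/\tR N)=1$. By Lemmas~\ref{det-radical-N-C-odd}(3), \ref{det-radical-N-C-2-linear}(2), \ref{det-radical-N-C-2-unitary}(2) one has $\nu(\det(\tN))=\nu(\det(\tR\tC))+1$ in each of these cases, so it suffices to check $\nu(\det(\tR))=\nu(\det(\tR\tC))$. When $\nu(\det(\tR\tC))=0$ this is automatic; the only remaining subcase is (\ref{special-case-wei-ell2-linear}) with $\gamma_i=1$, where $\nu(\det(\tR\tC))=1$ and the distinguished component $\tR_i=\tR_{m_i,0,1}$ (with $\nu(m_i)=0$) has $\det(\tR_i)=\fZ_2$, forcing $\nu(\det(\tR))=1$. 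Therefore $\nu(\tN/\tR N)=(\nu(\det(\tR\tC))+1)-\nu(\det(\tR\tC))=1$, as required.

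The main obstacle is the bookkeeping around the exceptional configurations: one has to verify, separately for $\ell$ odd, for $\ell=2$ with $4\mid q-\eta$, and for $\ell=2$ with $4\mid q+\eta$, the elementary inequality $\nu(\det(\tR_i))\ge\nu(\det(\tC_i))$ for exceptional components and the value $\det(\tR_i)=\fZ_2$ occurring in (\ref{special-case-wei-ell2-linear}), and to make sure that Lemmas~\ref{det-radical-N-C-odd}--\ref{det-radical-N-C-2-unitary} and Corollary~\ref{can-char-4} are invoked with exactly matching hypotheses (in particular that ``$\tR$ not in a global exceptional case'' is precisely what makes $\det(\tN)=\det(\tR\tC)$). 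These checks are routine given Propositions~\ref{prop:cc-R-m,alpha,gamma-odd}--\ref{prop:cc-R-m,alpha,gamma,c-2-unitary}, but they are the part that must be written out in full.
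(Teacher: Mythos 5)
Your proof is correct and follows essentially the same route as the paper: decompose $\tC\tR/\tR$ into its basic-subgroup factors, bound the defect of each factor by Corollary~\ref{can-char-4}, and translate $\nu(\tN/\tR N)$ into determinant valuations via Lemmas~\ref{det-radical-N-C-odd}--\ref{det-radical-N-C-2-unitary}; the only (cosmetic) differences are that you argue directly rather than by contradiction and select the dominant component by maximizing $\nu(\det(\tC_{i_0}))$ instead of via $\det(\tN_{i_0})=\det(\tN)$.
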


\begin{proof}
As in the proof of Lemma  \ref{det-radical-N-C-2-linear}, $\det(\tN)=\grp{\det(\tN_i)\mid 1\le i\le u}$.
Note that $\tC\tR/\tR=\prod_{i=1}^u \tC_i\tR_i/\tR_i$.
Let $\tilde\theta=\prod_{i=1}^u \tilde\theta_i$ be an irreducible character of $\tC\tR/\tR$ with $\tilde\theta_i\in \Irr(\tC_i\tR_i/\tR_i)$.	
Assume that  $\tilde{\theta}$ is of defect $<\nu(\tN/\tR N)$.
If $\nu(\tN/\tR N)< \nu(\tN_i/\tR_i N_i)$ for some $i$, then by 
Corollary \ref{can-char-4}, $\tC\tR/\tR$ has no 
irreducible character of defect
$<\nu(\tN/\tR N)$.
Then $\nu(\tN/\tR N)\ge\nu(\tN_i/\tR_i N_i)$ for any $i$.
Let $i_0$ satisfy $\det(\tN)=\det(\tN_{i_0})$.
Then $\nu(\tN/\tR N)= \nu(\det(\tN_{i_0}))- \nu(\det(\tR_{1}))\le\nu(\tN_{i_0}/\tR_{i_0} N_{i_0})$.
So $\det(\tR_{i_0})=\det(\tR_1)$ and 
we assume $i_0=1$ without loss of generality. 
Therefore, $\tilde{\theta}_1$ has defect $<\nu(\tN_{1}/\tR_{1} N_{1})$.
By Corollary \ref{can-char-4}, $R_1$ is one of the cases (\ref{eq:special-case-3-1}), (\ref{eq:special-case-2-linear-1}) or (\ref{special-case-2-uni-1}) and $\nu(\tN_{1}/\tR_{1} N_{1})=1$, $\tilde{\theta}_1$ is of defect $0$.
In addition, $\tilde{\theta}$ is of defect $0$. From this, $\nu(m_i)=0$ if $i>1$ and one can check that we are in one of the cases (\ref{special-case-wei-ell3}), (\ref{special-case-wei-ell2-linear}) and (\ref{special-case-wei-ell2-uni}).
\end{proof}

\begin{prop}\label{sepcial-defect}
Keep the notation of Proposition \ref{prop:special-odd-3}, \ref{prop:special-2-linear-3} and \ref{prop:special-2-unitary-3}.
Assume that $\tR$ is special,
$\det(\tR_1) \geq\cdots\geq \det(\tR_u)$
 and $\tR\tC/\tR$ has an irreducible character of defect
$\le\nu(\tN/\tR N)$.
Write $\tR_i=\tR_{m_i,\alpha_i,\gamma_i,\bc_i}$ or $\tR^\pm_{m_i,\alpha_i,\gamma_i,\bc_i}$~~(note that $\tR^\pm_{m_i,\alpha_i,\gamma_i,\bc_i}$ only occurs when $\ell=2$, $4\mid q+\eta$ and $\alpha_i=0$).

\begin{enumerate}[\rm(1)]
	\item Suppose that we are not in one of the cases (\ref{special-case-wei-ell3}), (\ref{special-case-wei-ell2-linear}) and (\ref{special-case-wei-ell2-uni}).
	Then there eixsts some $1\le i_0\le u$ such that
	$\det(\tN)=\det(\tN_{i_0})$ and $\det(\tR)=\det(\tR_{i_0})$.
	We assume $i_0=1$ without loss of generality. 
	Then $\nu(m_{i})=0$ if $i>1$.
	\begin{enumerate}[\rm(a)]
\item $\tR\tC/\tR$ has no irreducible character of defect
$<\nu(\tN/\tR N)$.
     \item $\nu(\tN/\tR N)=0$ if and only if $\nu(m_1)=0$. 
		\item If $\bc_1\ne\zero$, then $\nu(m_{1})=0$.  Moreover, none of the cases
		``$\ell=3$, $a=1$, $\nu(m_1)=\alpha_1=\gamma_1=0$, $\bc_1=\one$ and $a(R_i)>0$ for $i>1$",
		``$\ell=2$, $4\mid q-\eta$, $a=2$, $\nu(m_1)=\alpha_1=\gamma_1=0$, $\bc_1=\one$ and $a(R_i)>a(R_1)$ for $i>1$" and 
		``$\ell=2$, $4\mid q+\eta$,  $\nu(m_1)=\alpha_1=\gamma_1=0$, $\bc_1=\one$ or $\two$ and $\det(R_i)=1$ for $i>1$"
		occurs.
		\item Let $\bc_1=\zero$ and $\gamma_{1}\ne 0$. Moreover, 
		\begin{enumerate}[\rm(i)]
			\item if ``$\ell$ is odd" or ``$\ell=2$ and $4\mid q-\eta$", then $\nu(m_{1})+\gamma_{1}\le \nu(q-\eta)$ and
			one of the following holds,
			\begin{itemize}
				\item ``$\nu(m_{1})+\gamma_{1}=\nu(q-\eta)$" or ``$\nu(m_{1})+\gamma_{1}=\nu(m_{2})+\gamma_{2}$",
				\item
				$\alpha_1=0$ and $\nu(m_{1})+\gamma_{1}<\min\{ \nu(q-\eta),\nu(m_{i})+\gamma_{i}\}$ for $i>1$,
			\end{itemize}	
			\item if $\ell=2$ and $4\mid q+\eta$, then  $\nu(m_1)=0$ and
			one of the following holds,
			\begin{itemize}
				\item 
				$\det(\tR_1)=1$, i.e., $\tR_1=\tR_{m_1,\alpha_1,\gamma_1}$~ ($\alpha_1>0$),
				 $\tR_1=\tR_{m_1,0,\gamma_1}=\tS_{m_1,1,\gamma_1-1}$~ ($\gamma_1>1$),
			or $\tR_1=\tR^\pm_{m_1,0,\gamma_1}$,
			\item $\tR_1=\tR_{m_1,0,1}=\tS_{m_1,1}$  and $\det(\tR_2)>1$,
			\item $\tR_1=\tR_{m_1,0,1}=\tS_{m_1,1}$, $a>2$, and $\det(\tR_i)=1$  for $i>1$.
			\end{itemize}
		\end{enumerate}
		\item Let $\bc_{1}=\zero$ and $\gamma_{1}= 0$. Then $\alpha_1=0$ or $\nu(m_1)\le \nu(q-\eta)$. Moreover,
			\begin{enumerate}[\rm(i)]
				\item  if ``$\ell$ is odd" or ``$\ell=2$ and $4\mid q-\eta$", then 
				one of the following holds,
		\begin{itemize}
			\item $\nu(m_1)=\nu(q-\eta)$,
			\item $\alpha_1=0$, $\nu(m_1)< \nu(q-\eta)$ except
			``$\nu(m_1)=\nu(m_2)=0$, $\alpha_2=\gamma_2=|\bc_2|=0$, $\nu(m_1+m_2)\ge \nu(q-\eta)$, $a(\tR_3)\ge \nu(q-\eta)$",
			\item  $\alpha_1>0$, $\nu(m_1)<\nu(q-\eta)$ and $\nu(m_2)+\gamma_2=\nu(m_1)$,
		\end{itemize}
		\item if $\ell=2$ and $4\mid q+\eta$, then 
		one of the following holds,
		\begin{itemize}
			\item  $\nu(m_1)=1$, 
			\item $\nu(m_1)=0$ and $\alpha_1=1$,
			\item $\nu(m_1)=\alpha_1=0$ except ``$\nu(m_1)=\nu(m_2)=0$, $\alpha_2=\gamma_2=|\bc_2|=0$, $\det(\tR_i)=1$  for $i>2$".
			\end{itemize}
		\end{enumerate}
	\end{enumerate}	
	\item In cases (\ref{special-case-wei-ell3}), (\ref{special-case-wei-ell2-linear}) and (\ref{special-case-wei-ell2-uni}), there exists at most one $i_0$ such that $\nu(m_{i_0})\ne0$. 
\end{enumerate}
\end{prop}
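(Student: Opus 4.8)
The plan is to reduce everything to the individual basic factors. Write $\tR=\tR_1\times\cdots\times\tR_u$ as in the statement; then $\tC\tR/\tR=\prod_{i=1}^u\tC_i\tR_i/\tR_i$, so that an irreducible character $\tilde\theta=\prod_i\tilde\theta_i$ of $\tC\tR/\tR$ has defect $\sum_i\mathrm{def}(\tilde\theta_i)$. The first thing I would record is that, since $\tR$ is special, $N=\tN\cap G$ and $\tR\cap N=R$, whence $\tN/\tR N\cong\det(\tN)/\det(\tR)$ and therefore $\nu(\tN/\tR N)=\nu(\det(\tN))-\nu(\det(\tR))$, and similarly $\nu(\tN_i/\tR_i N_i)=\nu(\det(\tN_i))-\nu(\det(\tR_i))$ for each $i$. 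Since the $\ell$-parts of the $\det(\tR_i)$ (resp.\ of the $\det(\tN_i)$, modulo the common $\ell'$-part $\mrO_{\ell'}(\fZ_{q-\eta})$) are subgroups of a cyclic $\ell$-group, hence totally ordered by inclusion, the ordering hypothesis gives $\det(\tR)=\det(\tR_1)$, and there is an index $i_0$ with $\det(\tN)=\det(\tN_{i_0})$. The determinant data for these joins is exactly Lemmas \ref{det-radical-N-C-odd}, \ref{det-radical-N-C-2-linear} and \ref{det-radical-N-C-2-unitary}.

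For Part (1), assume $\tR$ is not in one of the cases (\ref{special-case-wei-ell3}), (\ref{special-case-wei-ell2-linear}), (\ref{special-case-wei-ell2-uni}). By Lemma \ref{lem:char-def-small}, $\tC\tR/\tR$ then has no irreducible character of defect $<\nu(\tN/\tR N)$, which is assertion (a) and forces the hypothesised character $\tilde\theta$ to have defect exactly $\nu(\tN/\tR N)$. For a basic factor $\tR_i$ that is not one of the exceptional subgroups (\ref{eq:special-case-3-1}), (\ref{eq:special-case-2-linear-1}), (\ref{special-case-2-uni-1}), Corollary \ref{can-char-4}(2) gives $\mathrm{def}(\tilde\theta_i)\ge\nu(\tN_i/\tR_i N_i)$. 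I would check (from \S\ref{subsect:srs-odd}--\S\ref{subsect:srs-2-unitary}) that whenever a special $\tR$ has a factor equal to one of those exceptional subgroups — for all of which $\nu(\det(\tN_i))-\nu(\det(\tR_i))=1$ — the group $\tR$ either lies in one of the product cases (\ref{special-case-wei-ell3}), (\ref{special-case-wei-ell2-linear}), (\ref{special-case-wei-ell2-uni}), contrary to assumption, or has $\det(\tN)$ already attained by a non-exceptional factor; so in either case we may take the index $i_0$ above to be non-exceptional. The chain
\[
\nu(\det(\tN_{i_0}))-\nu(\det(\tR_1)) = \mathrm{def}(\tilde\theta) = \sum_i\mathrm{def}(\tilde\theta_i) \ge \mathrm{def}(\tilde\theta_{i_0}) \ge \nu(\det(\tN_{i_0}))-\nu(\det(\tR_{i_0})) \ge \nu(\det(\tN_{i_0}))-\nu(\det(\tR_1))
\]
is then forced to be an equality throughout: this yields $\det(\tR_{i_0})=\det(\tR_1)=\det(\tR)$ — so we may relabel $i_0=1$ — and $\mathrm{def}(\tilde\theta_i)=0$ for every $i>1$, whence $\nu(m_i)=0$ for $i>1$ by Corollary \ref{can-char-4}(1). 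This proves the opening assertion of (1) and (a); assertion (b) is then the per-factor equivalence ``$\nu(\tN_1/\tR_1 N_1)=0\iff\nu(m_1)=0$'' supplied, according to the shape of $\tR_1$, by Lemmas \ref{can-char-1}(3), \ref{can-char-2}(2), \ref{can-char-2'} and \ref{can-char-3}(2).

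For the refined alternatives (c), (d), (e) I would split on the shape of the distinguished factor $\tR_1$, which by construction carries a character $\tilde\theta_1$ of defect $\nu(\tN_1/\tR_1 N_1)$. If $\bc_1\ne\zero$, Lemmas \ref{can-char-3} and \ref{can-char-3'} force $\nu(m_1)=0$ and $\nu(\tN_1/\tR_1 N_1)=0$, and the excluded configurations listed in (c) are precisely those for which $\tR$ would then fail to be special by Proposition \ref{prop:special-odd-3}, \ref{prop:special-2-linear-3} or \ref{prop:special-2-unitary-3} (for example $\ell=3$, $a=1$, $\nu(m_1)=\alpha_1=\gamma_1=0$, $\bc_1=\one$ is impossible, as such a $\tR$ is special neither by the clause $a(\tR)\ge a$ nor by the $\bc_1=\zero$ clause there). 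If $\bc_1=\zero$ and $\gamma_1\ne0$, Lemmas \ref{can-char-2} and \ref{can-char-2'} together with Remarks \ref{can-char-1-const}, \ref{special-case-ell3}, \ref{special-case-ell2-linear} and \ref{exc-case-2uni} give $\nu(m_1)+\gamma_1\le\nu(q-\eta)$ (resp.\ $\nu(m_1)=0$ when $4\mid q+\eta$), and intersecting this with the three clauses of the relevant special-subgroup classification produces exactly the bulleted alternatives of (d); if $\bc_1=\zero$ and $\gamma_1=0$, Lemma \ref{can-char-1} gives ``$\alpha_1=0$ or $\nu(m_1)\le\nu(q-\eta)$'' and, via Remark \ref{can-char-1-const}, the finer list in (e) after the same intersection with Propositions \ref{prop:special-odd-3}, \ref{prop:special-2-linear-3}, \ref{prop:special-2-unitary-3}. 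I expect this step — three regimes ($\ell$ odd with $\ell\mid q-\eta$; $\ell=2$ with $4\mid q-\eta$; $\ell=2$ with $4\mid q+\eta$) against the several types of basic subgroup ($\tR_{m,\alpha,\gamma,\bc}$, $\tS_{m,1,\gamma-1,\bc}$, $\tR^\pm_{m,0,\gamma,\bc}$, $\tR_m$) — to be the main obstacle, even though each individual verification is routine given the preceding lemmas.

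Finally, Part (2) is short: in the cases (\ref{special-case-wei-ell3}), (\ref{special-case-wei-ell2-linear}), (\ref{special-case-wei-ell2-uni}) one has $\nu(\tN/\tR N)=1$ by Lemma \ref{lem:char-def-small}, so the hypothesised character has defect $\le1$; if $\nu(m_i)\ne0$ for two distinct indices, then by Corollary \ref{can-char-4}(1) neither $\tC_i\tR_i/\tR_i$ admits a character of defect zero, so $\mathrm{def}(\tilde\theta_i)\ge1$ for both and $\mathrm{def}(\tilde\theta)\ge2$, a contradiction. Hence at most one $i_0$ has $\nu(m_{i_0})\ne0$.
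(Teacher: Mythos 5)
Your proposal follows the same overall strategy as the paper's proof: reduce to the per-factor decomposition $\tC\tR/\tR=\prod_i\tC_i\tR_i/\tR_i$, use the determinant formula $\nu(\tN/\tR N)=\nu(\det\tN)-\nu(\det\tR)$, invoke Lemma \ref{lem:char-def-small} for (1)(a), pin down the distinguished index $i_0$ via the chain-of-inequalities argument already present in that lemma's proof, and then combine Corollary \ref{can-char-4} with the case lists of the special-radical-subgroup classifications to read off (b)--(e); Part (2) is likewise handled by the same count of nonzero-defect factors. In fact the paper's own proof is considerably terser — it merely says ``Using a similar argument as in the proof of Lemma \ref{lem:char-def-small}, we may assume that $\det(\tN)=\det(\tN_1)$ and $\det(\tR)=\det(\tR_1)$ \dots\ By Corollary \ref{can-char-4} again, $\tilde\theta_1$ has defect $\nu(\tN_1/\tR_1 N_1)$'' — without commenting on whether the selected factor $\tR_1=\tR_{i_0}$ could itself be one of the exceptional basic subgroups (\ref{eq:special-case-3-1}), (\ref{eq:special-case-2-linear-1}), (\ref{special-case-2-uni-1}), a situation in which Corollary \ref{can-char-4}(2) gives no lower bound for $\mathrm{def}(\tilde\theta_{i_0})$. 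Your proposal notices this gap explicitly and sketches the requisite verification (that either such a configuration is already excluded by the product-level exceptional cases (\ref{special-case-wei-ell3}), (\ref{special-case-wei-ell2-linear}), (\ref{special-case-wei-ell2-uni}), or $i_0$ can be chosen non-exceptional). That extra care is a genuine improvement in rigor over the paper's exposition, although you leave the verification as an ``I would check'', just as the paper leaves (c)--(e) as a routine intersection with the special-subgroup propositions. Both proofs leave essentially the same amount of case-checking implicit; yours simply flags one more place where it is needed.

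One small point worth nailing down: for $\ell=3$, $a=1$, exceptionality of $\tR_{i_0}=\tR_{m_{i_0},0,1}$ with $3\nmid m_{i_0}$ and $\det(\tR_{i_0})=\det(\tR)$ maximal forces $a(\tR)=a(\tR_{i_0})=1$, hence puts $\tR$ directly into case (\ref{special-case-wei-ell3}); but for $\ell=2$, $4\mid q-\eta$, $a=2$ it can happen that the exceptional factor achieves both $\det(\tN)$ and $\det(\tR)$ and $\tR$ is still not in case (\ref{special-case-wei-ell2-linear}) (when $\delta(\tR)<a(\tR)$). In those situations the hypothesis that $\tC\tR/\tR$ has a character of defect $\le\nu(\tN/\tR N)$ eventually rules the configuration out, or the required equalities still hold because another non-exceptional factor shares the extremal determinants — but this is the part where the ``routine'' verification really does carry weight and should be spelled out if one wants a fully self-contained argument.
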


\begin{proof}
As in  the proof of Lemma 	\ref{lem:char-def-small}, we
let $\tilde\theta=\prod_{i=1}^u \tilde\theta_i$ be an irreducible character of $\tC\tR/\tR$. Assume that $\tilde{\theta}$ is of defect $\le\nu(\tN/\tR N)$, where $\tilde\theta_i\in \Irr(\tC_i\tR_i/\tR_i)$.	
	
First suppose that we are not in one of the cases (\ref{special-case-wei-ell3}), (\ref{special-case-wei-ell2-linear}) and (\ref{special-case-wei-ell2-uni}).
Then by Lemma \ref{lem:char-def-small}, (1) (a) holds.
Using a similar argument as in the proof of Lemma \ref{lem:char-def-small}, we may assume that $\det(\tN)=\det(\tN_1)$ and $\det(\tR)=\det(\tR_1)$,
which imply that $\tN/\tR N\cong \tN_1/\tR_1 N_1$. Also $\nu(m_i)=0$ if $i>1$.
By Corollary \ref{can-char-4} again, $\tilde\theta_1$ has defect $\nu(\tN_1/\tR_1 N_1)$ and 
$\tilde\theta_i$ has defect $0$ for $i>1$.
Thus (c)-(e) of (1) follows by Lemma \ref{can-char-1} -- \ref{can-char-3'} and Proposition \ref{prop:special-odd-3}, \ref{prop:special-2-linear-3}, \ref{prop:special-2-unitary-3}.

Now suppose that we are in one of the cases (\ref{special-case-wei-ell3}), (\ref{special-case-wei-ell2-linear}) and (\ref{special-case-wei-ell2-uni}). 
We have  $\nu(\tN/\tR N)=1$.
By Proposition \ref{prop:special-odd-3}, \ref{prop:special-2-linear-3} and \ref{prop:special-2-unitary-3}, $\tR$ is special.
If there exist two $i_1\ne i_2$ such that $\nu(m_{i_1})\ne 0$ and $\nu(m_{i_2})\ne 0$,
then by Corollary \ref{can-char-4} again,
$\tC\tR/\tR$ has no irreducible character of defect $0$, which contradicts the assumption.
 This completes the proof.
\end{proof}

Let $(R,\varphi)$ be a weight of $G$, and $\tR$ a special radical subgroup of $\tG$ such that $R=\tR\cap G$.
Then by	Remark \ref{rem:5.2}, we may assume that $\tR$ is  listed in Proposition \ref{sepcial-defect}.

\subsection{Construction of weights.} \label{sec-const-wei}

Recall that $\cF'$ denotes the subset of $\cF$ consisting of those polynomials whose roots are of $\ell'$-order.
For $\Gamma\in\cF'$,  by \cite[(3.2)]{Brou86},
there is a unique block $\tB_\Gamma$ of $\tG_\Gamma= \tG
_{m_\Gamma,\alpha_\Gamma}$
with $\tR_\Gamma= \tR
_{m_\Gamma,\alpha_\Gamma}$ being a defect group. 
This block $\tB_\Gamma$ is $\cE_\ell(\tG_\Gamma,(\Gamma))$.
Let $\tR_\Gamma^{(k)}= \tR
_{m_\Gamma\ell^k,\alpha_\Gamma-k}$ for $0\le k\le k_\Gamma$, where $k_\Gamma= \min\{\nu(q-\eta),\alpha_\Gamma\}$ if ``$\ell$ is odd" or ``$\ell=2$ and $4\mid q-\eta$",
$k_\Gamma=1$ if $\ell=2$, $4\mid q+\eta$ and $\alpha_\Gamma>1$, and $k_\Gamma=0$ if $\ell=2$, $4\mid q+\eta$ and $\alpha_\Gamma\le 1$.
Then $\tR_\Gamma^{(k)}\le_{\tG_\Gamma} \tR_\Gamma^{(k-1)}\le_{\tG_\Gamma} \cdots \le_{\tG_\Gamma} \tR_\Gamma^{(0)}=\tR_\Gamma$  for $1\le k\le k_\Gamma$.
Let $\tfb_\Gamma$ be a block of $\tC_\Gamma=C_{\tG_\Gamma}(\tR_\Gamma)$
with defect group $\tR_\Gamma$ which is a root block of $\tB_\Gamma$.
Then there exists a unique block $\tfb_\Gamma^{(k)}$ of 
$\tC_\Gamma^{(k)}=C_{\tG_\Gamma}(\tR_\Gamma^{(k)})$
such that 
for some $g\in\tG_\Gamma$, we have
$(1,\tB_\Gamma)\le (\tR_\Gamma^{(k)},\tfb_\Gamma^{(k)})^g \le (\tR_\Gamma,\tfb_\Gamma)$ 
for $0\le k\le k_\Gamma$.

It can be checked that:
(i) the characters of the block $\tfb_\Gamma^{(k)}$ whose kernels contain $\tR_\Gamma^{(k)}$
are of defect $k$ when viewed as characters of $\tC_\Gamma^{(k)}\tR_\Gamma^{(k)}/\tR_\Gamma^{(k)}$;
(ii) the characters of the block $\tfb_\Gamma^{(k)}$ containing $\tR_\Gamma^{(k)}$ in their kernels are of the form $\tilde\chi_{st}$
(as in \S \ref{basic-result}), where $s=(\Gamma)$ and $t\in\tR^{(k)}_\Gamma$ such that $o(t)\le \ell^k$;
(iii) the restrictions of these characters to $C^{(k)}_\Gamma=\tC^{(k)}_\Gamma\cap G_\Gamma$ are the same
(where $G_\Gamma=G_{m_\Gamma,\alpha_\Gamma}$).
We denote by $\tilde\theta_\Gamma^{(k)}:=\tilde\chi_{s}$
and $\tilde\theta_\Gamma^{(k)(t)}:=\tilde\chi_{st}$.
Note that since $\tC_\Gamma^{(k)}\cong\GL_{m_\Gamma \ell^k}((\eta q)^{\ell^{\alpha_\Gamma-k}})$,
we also regard $st$ as a semisimple element of the group $\GL_{m_\Gamma \ell^k}((\eta q)^{\ell^{\alpha_\Gamma-k}})$,
and the elementary divisor of $st$ means one of the inverse images under $\Phi_{\alpha_\Gamma-k}$
(i.e., polynomials in $\cF_{\alpha_\Gamma-k}$), where $\Phi_{\alpha_\Gamma-k}$ is defined as in (\ref{eq:Phi_alpha}).
Then $\tilde\theta_\Gamma:=\tilde\theta_\Gamma^{(0)}$ is the canonical character of $\tfb_\Gamma$.
Also $\tilde\theta_\Gamma^{(k)(t)}=\hat t \tilde\theta_\Gamma^{(k)}$.
Let $\tN_\Gamma^{(k)}=N_{\tG_\Gamma}(\tR_\Gamma^{(k)})$.
Then $\tN_\Gamma^{(k)}(\tilde\theta_\Gamma^{(k)(t)})=\tC_\Gamma^{(k)}$.
Note that the notation $\tfb_\Gamma$ and $\tilde\theta_\Gamma$ coincide with those in \S\ref{subsect:weights-general}.
We let $\tsC^{(k)(t)}_{\Gamma,0}=\{ \tilde\theta_\Gamma^{(k)(t)} \}$   
for $0\le k\le k_\Gamma$ and $o(t)\le \ell^{k}$.
Note that $\tilde\theta_\Gamma^{(k)(t)}$ is of defect $k$ as an irreducible character of $\tN_\Gamma^{(k)}(\tilde\theta_\Gamma^{(k)(t)})/\tR_\Gamma^{(k)}$.
If $t=1$, we abbreviate $\tilde\theta_\Gamma^{(k)(t)}$ and $\tsC^{(k)(t)}_{\Gamma,0}$ as $\tilde\theta_\Gamma^{(k)}$ and $\tsC^{(k)}_{\Gamma,0}$ respectively.

Let $\Gamma_1,\Gamma_2\in\cF'$ satisfy that
$\alpha_{\Gamma_1}=\alpha_{\Gamma_2}=0$ and $\ell^{\nu(q-\eta)}\mid m_{\Gamma_1,\Gamma_2}:=d_{\Gamma_1}+d_{\Gamma_2}$.
Then there exists a block $\tB_{\Gamma_1,\Gamma_2}$ of $\tG_{\Gamma_1,\Gamma_2}=\GL_{m_{\Gamma_1,\Gamma_2}}(\eta q)$ with defect group $\tR_{\Gamma_1}\times \tR_{\Gamma_2}=\tR_{d_{\Gamma_1},0}\times \tR_{d_{\Gamma_2},0}$.
This block $\tB_{\Gamma_1,\Gamma_2}$ is $\cE_\ell(\tG_{\Gamma_1,\Gamma_2},s)$,
where $s=\diag((\Gamma_1),(\Gamma_2))$.
Let $\tR_{\Gamma_1,\Gamma_2}=\tR_{m_{\Gamma_1,\Gamma_2},0}=Z(\tG_{\Gamma_1,\Gamma_2})$.
So $C_{\tG_{\Gamma_1,\Gamma_2}}(\tR_{\Gamma_1,\Gamma_2})=\tG_{\Gamma_1,\Gamma_2}$.
The characters of $\tB_{\Gamma_1,\Gamma_2}$ whose kernels contain $\tR_{\Gamma_1,\Gamma_2}$
is of defect $\nu(q-\eta)$ when viewed as characters of $\tG_{\Gamma_1,\Gamma_2}/\tR_{\Gamma_1,\Gamma_2}$.
On the other hand,
the characters of $\tB_{\Gamma_1,\Gamma_2}$ whose kernels contain $\tR_{\Gamma_1,\Gamma_2}$ are of form $\tilde\chi_{st}$,
where $t\in\tR_{\Gamma_1,\Gamma_2}$.
Denote $\tilde\theta_{\Gamma_1,\Gamma_2}^{(t)}=\tilde\chi_{st}$.
Also, the restrictions of these characters to $G_{\Gamma_1,\Gamma_2}=\SL_{m_{\Gamma_1,\Gamma_2}}(\eta q)$ are the same.
We let 
$\tsC^{(t)}_{\Gamma_1,\Gamma_2}=\{\tilde\theta^{(t)}_{\Gamma_1,\Gamma_2}\}$ for $t\in \tR_{\Gamma_1,\Gamma_2}$.
Note that $\ttheta^{(t)}_{\Gamma_1,\Gamma_2}$ is of defect $a$ as an irreducible character of $\tG_{\Gamma_1,\Gamma_2}/\tR_{\Gamma_1,\Gamma_2}$.
If $t=1$, we abbreviate $\tilde\theta_{\Gamma_1,\Gamma_2}^{(t)}$ and $\tsC^{(t)}_{\Gamma_1,\Gamma_2}$
as $\tilde\theta_{\Gamma_1,\Gamma_2}$ and $\tsC_{\Gamma_1,\Gamma_2}$ respectively.

\vspace{2ex}

Let $\gamma>0$. 
First assume that 
``$\ell$ is odd" or  ``$\ell=2$ and $4\mid q-\eta$" or ``$\ell=2$, $4\mid q+\eta$ and $\alpha_\Gamma>0$".
We denote $\tG_{\Gamma,\gamma}:=\tG_{m_\Gamma,\alpha_\Gamma,\gamma}$,
$G_{\Gamma,\gamma}:=G_{m_\Gamma,\alpha_\Gamma,\gamma}$,
$\tR_{\Gamma,\gamma}^{(k)}:=\tR_{m_\Gamma\ell^k,\alpha_\Gamma-k,\gamma}$,
$\tC_{\Gamma,\gamma}^{(k)}:=C_{\tG_{\Gamma,\gamma}}(\tR_{\Gamma,\gamma}^{(k)})$ and
$\tN_{\Gamma,\gamma}^{(k)}:=N_{\tG_{\Gamma,\gamma}}(\tR_{\Gamma,\gamma}^{(k)})$ for $0\le k\le k_\Gamma$, where
$k_{\Gamma,\gamma}=\min\{\alpha_\Gamma,\max\{ \nu(q-\eta)-\gamma,0\}\}$. Note that $k_\Gamma=0$ if $\ell=2$ and $4\mid q+\eta$.

Then $\tC_{\Gamma,\gamma}^{(k)}=\tC_{\Gamma}^{(k)}\otimes I_{\ell^\gamma}$.
Let $\tN_{\Gamma,\gamma}^{0,(k)}=\tR_{\Gamma,\gamma}^{(k)}\tC_{\Gamma,\gamma}^{(k)}\tM_{\Gamma,\gamma}^{(k)}$, where $\tM_{\Gamma,\gamma}^{(k)}=\tM_{m_\Gamma\ell^k,\alpha_\Gamma-k,\gamma}$ as in \S \ref{sect:weights-general-gp}.
Then $\tN_{\Gamma,\gamma}^{0,(k)}\unlhd \tN_{\Gamma,\gamma}^{(k)}$ and $\tM_{\Gamma,\gamma}^{(k)}\tR_{\Gamma,\gamma}^{(k)}/\tR_{\Gamma,\gamma}^{(k)}\cong \Sp_{2\gamma}(\ell)$. Also $\tN_{\Gamma,\gamma}^{0,(k)}/\tR_{\Gamma,\gamma}^{(k)}\cong \tR_{\Gamma,\gamma}^{(k)}\tC_{\Gamma,\gamma}^{(k)}/\tR_{\Gamma,\gamma}^{(k)}\times \Sp_{2\gamma}(\ell)$.
Note that we take $\tM_{m_\Gamma\ell^k,\alpha_\Gamma-k,\gamma}$ as in Remark \ref{not-cent} and \ref{M-for-not-central-product} if  (\ref{eq:special-case-2-linear-0}) holds for $\tR_{\Gamma,\gamma}^{(k)}$. 
We identify $t\in Z(\tC_{\Gamma}^{(k)})$ with its image in $Z(\tC_{\Gamma,\gamma}^{(k)})$.
We let $\tilde\theta_{\Gamma,\gamma}^{(k)(t)}:=\tilde\theta_\Gamma^{(k)(t)}\otimes I_{\ell^\gamma}$.
Then  $\tN_{\Gamma,\gamma}^{0,(k)}=\tN_{\Gamma,\gamma}^{(k)}(\tilde\theta_{\Gamma,\gamma}^{(k)(t)})$ if $o(t)\le\ell^k$ since $e_\Gamma=1$ (as we assume that $\ell\mid q-\eta$).
If $k=0$, then we abbreviate $\tR_{\Gamma,\gamma}^{(k)}$,
$\tC_{\Gamma,\gamma}^{(k)}$,
$\tN_{\Gamma,\gamma}^{(k)}$,
$\tN_{\Gamma,\gamma}^{0,(k)}$,
$\tM_{\Gamma,\gamma}^{(k)}$
as
$\tR_{\Gamma,\gamma}$,
$\tC_{\Gamma,\gamma}$,
$\tN_{\Gamma,\gamma}$, $\tN_{\Gamma,\gamma}^{0}$,
$\tM_{\Gamma,\gamma}$ respectively.

Assume that neither (\ref{eq:special-case-3-1}) nor (\ref{eq:special-case-2-linear-1}) holds for $\tR_{m_\Gamma,\alpha_\Gamma,\gamma}$. 
Let $\tilde\psi_{\Gamma,\gamma}^{(k)(t)}:=\tilde\theta_{\Gamma,\gamma}^{(k)(t)}\times \mathrm{St}_{2\gamma}$, where $\mathrm{St}_{2\gamma}$ denotes the Steinberg character of $\Sp_{2\gamma}(\ell)$.
Then $\tilde\psi_{\Gamma,\gamma}^{(k)(t)}$ is of defect $k$ when viewed as a character of $\tN_{\Gamma,\gamma}^{0,(k)}/\tR_{\Gamma,\gamma}^{(k)}$.
We let $\tsC^{(k)(t)}_{\Gamma,\gamma}=\{\tilde\psi_{\Gamma,\gamma}^{(k)(t)}\}$
for $0\le k\le k_\Gamma$ and $o(t)\le \ell^{k}$. Then $\tsC^{(k)(t)}_{\Gamma,\gamma}$ is the set of irreducible characters of $\tN_{\Gamma,\gamma}^{(k)}(\tilde\theta_{\Gamma,\gamma}^{(k)(t)})$ lying over $\tilde\theta_{\Gamma,\gamma}^{(k)(t)}$ of defect $k$.
When $t=1$, we abbreviate $\tilde\theta_{\Gamma,\gamma}^{(k)(t)}$, $\tilde\psi_{\Gamma,\gamma}^{(k)(t)}$ and
$\tsC^{(k)(t)}_{\Gamma,\gamma}$ as 
$\tilde\theta_{\Gamma,\gamma}^{(k)}$, $\tilde\psi_{\Gamma,\gamma}^{(k)}$ and
$\tsC^{(k)}_{\Gamma,\gamma}$
respectively.

Now assume that 
(\ref{eq:special-case-3-1}) holds for $\tR_{m_\Gamma,\alpha_\Gamma,\gamma}$, which means $\ell=3$, $a=1$, $\alpha_\Gamma=0$, $\gamma=1$.
Then  $\tN_{\Gamma,1}=\tN_{\Gamma,1}^{0}=\tR_{\Gamma,1}\tC_{\Gamma,1}\tM_{\Gamma,1}$ and $\tilde{\theta}_{\Gamma,1}$ is $\tN_{\Gamma,1}$-invariant.
Let 
$N_{\Gamma,1}=\tN_{\Gamma,1}\cap G_{\Gamma,1}$,
$C_{\Gamma,1}=\tC_{\Gamma,1}\cap G_{\Gamma,1}$,
$R_{\Gamma,1}=\tR_{\Gamma,1}\cap G_{\Gamma,1}$,
$M_{\Gamma,1}=\tM_{\Gamma,1}\cap N_{\Gamma,1}$.
Then we have 
$\tR_{\Gamma,1}=R_{\Gamma,1}$,
$\tC_{\Gamma,1}/C_{\Gamma,1}\cong\mrO_{3'}(\tG_{\Gamma,1}/G_{\Gamma,1})$, 
$|\tM_{\Gamma,1}/M_{\Gamma,1}|=3$ and
$\nu(\tN_{\Gamma,1}/\tR_{\Gamma,1}N_{\Gamma,1})=1$
by \S \ref{subsect:srs-odd}.
Moreover, 
$\tM_{\Gamma,1}\tR_{\Gamma,1}/\tR_{\Gamma,1}\cong \SL_2(3)$.
We let $\tilde\psi_{\Gamma,1}=\tilde\theta_{\Gamma,1}\times \chi_3$, and 
$\tilde\psi_{\Gamma,1}^{(kj)}=\tilde\theta_{\Gamma,1}\times \chi_{kj}$, where $k=1,2$ and $j\in\{a,b,c\}$ (for the characters $\chi_{kj}$ see Remark \ref{special-case-ell3}).
Let $z\in Z(\tG_{\Gamma,1})$ be of order $3$. Then by Remark \ref{special-case-ell3}, 
$\hat z\tilde\psi_{\Gamma,1}=\tilde\psi_{\Gamma,1}$ and the group $\langle \hat z\rangle$
acts faithfully on the sets $\{\tilde\psi_{\Gamma,1}^{(1a)},\tilde\psi_{\Gamma,1}^{(1b)},\tilde\psi_{\Gamma,1}^{(1c)}\}$ and $\{\tilde\psi_{\Gamma,1}^{(2a)},\tilde\psi_{\Gamma,1}^{(2b)},\tilde\psi_{\Gamma,1}^{(2c)}\}$.
We let 
$\tsC^{(0)}_{\Gamma,1}=\{\tilde\psi_{\Gamma,1}\}$ and
$\tsC^{(1)}_{\Gamma,1}=\{\tilde\psi_{\Gamma,1}^{(kj)}\mid k\in\{1,2\}, j\in\{ a,b,c\}\}$.
Then $\tsC^{(k)}_{\Gamma,1}$  is the set of irreducible character of $\tN_{\Gamma,1}/\tR_{\Gamma,1}$
over $\tilde\theta_{\Gamma,1}$
of defect $k$, for $k=0,1$.

Assume that (\ref{eq:special-case-2-linear-1}) holds for $\tR_{m_\Gamma,\alpha_\Gamma,\gamma}$, which means $\ell=2$, $4\mid q-\eta$, $a=2$, $\alpha_\Gamma=0$ and $\gamma\in\{1,2\}$.
This is similar with the case (\ref{eq:special-case-3-1}) above.
Then $\tN_{\Gamma,\gamma}=\tN_{\Gamma,\gamma}^{0}=\tR_{\Gamma,\gamma}\tC_{\Gamma,\gamma}\tM_{\Gamma,\gamma}$
with $|\det(\tM_{\Gamma,\gamma})|=2^{\gamma-1}$ and $\tilde{\theta}_{\Gamma,\gamma}$ is $\tN_{\Gamma,\gamma}$-invariant.
Let 
$N_{\Gamma,\gamma}=\tN_{\Gamma,\gamma}\cap G_{\Gamma,\gamma}$.
Then $\nu(\tN_{\Gamma,\gamma}/\tR_{\Gamma,\gamma}N_{\Gamma,\gamma})=1$.
Moreover,
$\tM_{\Gamma,\gamma}\tR_{\Gamma,\gamma}/\tR_{\Gamma,\gamma}\cong \fS_{3\gamma}$.
We use the notation in Remark \ref{special-case-ell2-linear} for the characters of $\fS_{3\gamma}$ for $\gamma=1$ or $2$.
Let $\gamma=1$.
We let $\tilde\psi_{\Gamma,1}=\tilde\theta_{\Gamma,1}\times \chi_2$,
$\tilde\psi_{\Gamma,1}^{(1)}=\tilde\theta_{\Gamma,1}\times \chi_{1}$, and 
$\tilde\psi_{\Gamma,1}^{(-1)}=\tilde\theta_{\Gamma,1}\times \chi_{\mathrm{sign}}$.
Let $z\in Z(\tG_{\Gamma,1})$ be of order $2$. Then by Remark \ref{special-case-ell2-linear}, 
$\hat z \tilde\psi_{\Gamma,1}=\tilde\psi_{\Gamma,1}$ and 
$\hat z \tilde\psi_{\Gamma,1}^{(1)}=\tilde\psi_{\Gamma,1}^{(-1)}$.
We let 
$\tsC^{(0)}_{\Gamma,1}=\{\tilde\psi_{\Gamma,1}\}$ and
$\tsC^{(1)}_{\Gamma,1}=\{\tilde\psi_{\Gamma,1}^{(1)},\tilde\psi_{\Gamma,1}^{(-1)}\}$.
Then $\tsC^{(k)}_{\Gamma,1}$  is the set of irreducible characters of $\tN_{\Gamma,1}/\tR_{\Gamma,1}$
lying over $\tilde\theta_{\Gamma,1}$ of defect $k$, for $k=0,1$. 
Now let $\gamma=2$.
We let $\tilde\psi_{\Gamma,2}=\tilde\theta_{\Gamma,2}\times \chi_{11}$.
Let $z\in Z(\tG_{\Gamma,2})$ be of order $2$. Then by Remark \ref{special-case-ell2-linear}, 
$\hat z \tilde\psi_{\Gamma,2}=\tilde\psi_{\Gamma,2}$.
Then $\tsC_{\Gamma,2}:=\{\tilde\psi_{\Gamma,2}\}$  is the set of irreducible character of $\tN_{\Gamma,2}/\tR_{\Gamma,2}$
lying over $\tilde\theta_{\Gamma,2}$ of defect $0$.
On the other hand, $\tN_{\Gamma,2}/\tR_{\Gamma,2}$ does not have a irreducible character of defect $1$.

Now let $\ell=2$, $4\mid q+\eta$, $\gamma>0$ and $\alpha_\Gamma=0$.
For basic subgroups $\tS_{m_\Gamma,1,\gamma}$ and $\tR^\pm_{m_\Gamma,0,\gamma}$, by Lemma \ref{can-char-2'},  irreducible characters with positive defect only occur in the case (\ref{special-case-2-uni-1}). 
We use the notation in Remark \ref{exc-case-2uni}.

First let $a=2$ and consider the basic subgroup
$\tR^-_{m_\Gamma,0,1}$.
Let $\tilde{\theta}^-_{\Gamma,1}:=\tilde{\theta}_\Gamma\otimes I_{2}$ be defined similar with the character $\tilde{\theta}_\Gamma$ as above.
Then $\tN^-_{m_\Gamma,0,1}=\tR^-_{m_\Gamma,0,1}\tC^-_{m_\Gamma,0,1}\tM^-_{m_\Gamma,0,1}$ and $\tilde{\theta}^-_{\Gamma,1}$ is $\tN^-_{m_\Gamma,0,1}$-invariant.
Moreover, $\nu(\tN^-_{m_\Gamma,0,1}/\tR^-_{m_\Gamma,0,1}N^-_{m_\Gamma,0,1})=1$,
$\tM^-_{m_\Gamma,0,1}\tR^-_{m_\Gamma,0,1}/\tR^-_{m_\Gamma,0,1}\cong \fS_{3}$ and $$\tN^-_{m_\Gamma,0,1}/\tR^-_{m_\Gamma,0,1}\cong \tC^-_{m_\Gamma,0,1}\tR^-_{m_\Gamma,0,1}/\tR^-_{m_\Gamma,0,1}\times \fS_3.$$
We let $\tilde\psi^-_{\Gamma,1}=\tilde\theta^-_{\Gamma,1}\times \chi_2$,
$\tilde\psi_{\Gamma,1}^{-,(1)}=\tilde\theta^-_{\Gamma,1}\times \chi_{1}$, and 
$\tilde\psi_{\Gamma,1}^{-,(-1)}=\tilde\theta_{\Gamma,1}^-\times \chi_{\mathrm{sign}}$.
Let $z\in Z(\tG_{\Gamma,1})$ be of order $2$. Then by Remark \ref{exc-case-2uni}, 
$\hat z \tilde\psi^-_{\Gamma,1}=\tilde\psi^-_{\Gamma,1}$ and 
$\hat z \tilde\psi_{\Gamma,1}^{-,(1)}=\tilde\psi_{\Gamma,1}^{-,(-1)}$
We let 
$\tsC^{-,(0)}_{\Gamma,1}=\{\tilde\psi^-_{\Gamma,1}\}$ and
$\tsC^{-,(1)}_{\Gamma,1}=\{\tilde\psi_{\Gamma,1}^{-,(1)},\tilde\psi_{\Gamma,1}^{-,(-1)}\}$.
Then $\tsC^{-,(k)}_{\Gamma,1}$  is the set of irreducible characters of $\tN^-_{m_\Gamma,0,1}/\tR^-_{m_\Gamma,0,1}$
lying over $\tilde\theta^-_{\Gamma,1}$ of defect $k$, for $k=0,1$.

Now consider $R^+_{m_\Gamma,0,2}$.
As above, let $\tilde{\theta}^+_{\Gamma,2}:=\tilde{\theta}_\Gamma\otimes I_{2^2}$.
Then $\tN^+_{m_\Gamma,0,2}=\tR^+_{m_\Gamma,0,2}\tC^+_{m_\Gamma,0,2}\tM^+_{m_\Gamma,0,2}$ and $\tilde{\theta}^+_{\Gamma,2}$ is $\tN^+_{m_\Gamma,0,2}$+invariant.
Moreover, $\nu(\tN^+_{m_\Gamma,0,2}/\tR^+_{m_\Gamma,0,2}N^+_{m_\Gamma,0,2})=1$,
$\tM^+_{m_\Gamma,0,2}\tR^+_{m_\Gamma,0,2}/\tR^+_{m_\Gamma,0,2}\cong \GO^+_4(2)$ and $\tN^+_{m_\Gamma,0,2}/\tR^+_{m_\Gamma,0,2}\cong \tC^+_{m_\Gamma,0,2}\tR^+_{m_\Gamma,0,2}/\tR^+_{m_\Gamma,0,2}\times \GO^+_4(2)$.
For the characters of $\GO^+_4(2)$, we use the notation in Remark \ref{exc-case-2uni}.
We let $\tilde\psi^{+,(1)}_{\Gamma,2}=\tilde\theta^+_{\Gamma,2}\times \chi_1$ and 
$\tilde\psi^{+,(2)}_{\Gamma,2}=\tilde\theta^+_{\Gamma,2}\times \chi_2$.
Let $z\in Z(\tG_{\Gamma,1})$ be of order $2$. Then by Remark \ref{exc-case-2uni},
$\hat z \tilde\psi^{+,(1)}_{\Gamma,2}=\tilde\psi^{+,(2)}_{\Gamma,2}$.
We let 
$\tsC^{+,(1)}_{\Gamma,2}=\{\tilde\psi_{\Gamma,2}^{+,(1)},\tilde\psi_{\Gamma,2}^{+,(2)}\}$.
Then $\tsC^{+,(1)}_{\Gamma,2}$  is the set of irreducible characters of $\tN^+_{m_\Gamma,0,2}/\tR^+_{m_\Gamma,0,2}$
lying over $\tilde\theta^+_{\Gamma,2}$ of defect $1$.
Note that $\tN^+_{m_\Gamma,0,2}/\tR^+_{m_\Gamma,0,2}$ does not have defect $0$ characters.

\vspace{2ex}

We recall the set
$\tsC_{\Gamma,\gamma,\bc}:=\dz(\tN_{\Gamma,\gamma,\bc}(\ttheta_{\Gamma,\gamma,\bc})/\tR_{\Gamma,\gamma,\bc}\mid\ttheta_{\Gamma,\gamma,\bc})$ 
or 
$\dz(\tN_{\Gamma,\gamma,\bc}(\ttheta_{\Gamma,\gamma,\bc})/\tD_{\Gamma,\gamma,\bc}\mid\ttheta_{\Gamma,\gamma,\bc})$ 
as  in \S \ref{subsect:weights-general},
which was constructed in \cite{AF90,An92,An93, An94}.

\vspace{2ex}

Now we keep the notation and assumptions of Proposition \ref{sepcial-defect}.
Rewrite $\tR=\tR_1^{t_1}\times\cdots\times\tR_u^{t_1}$ with $\tR_i$'s being basic subgroups such that $\tR_i$ and $\tR_j$ are not conjugate if $i\ne j$.
Recall that we assume $\det(\tR_1) \geq\cdots\geq \det(\tR_u)$.
Now we suppose that $R= \tR \cap G$ provides a weight of $G$, say $(R,\varphi)$.
Let $\tilde\varphi\in\Irr(\tN/\tR\mid \varphi)$.
Then  $\tilde\varphi$ is of defect $\le \nu(\tN/\tR N)$ by Lemma \ref{def-clifford}.
If $\nu(\tN/\tR N)=0$, then $(\tR,\tilde\varphi)$ is a weight of $\tG$ which was constructed in \S \ref{subsect:weights-general}.
So we assume that $d:=\nu(\tN/\tR N)>0$.
We also view $\tilde\varphi$ as a character of $\tN$.
Let $\tilde\theta\in\Irr(\tR\tC\mid \tilde\varphi)$.
Then $\tilde\theta$ is of defect $\le \nu(\tN/\tR N)$ when viewed as a character of $\tR\tC/\tR$.
Note that $\tC\tR=(\tC_1\tR_1)^{t_1}\times\cdots\times(\tC_u\tR_u)^{t_u}$.
We let 
\begin{equation}\label{exp-theta}
\tilde\theta=\prod_{i=1}^{u}\prod_{j=1}^{v_i} \tilde\theta_{ij}^{t_{ij}},
\addtocounter{thm}{1}\tag{\thethm}
\end{equation}
where $\tilde\theta_{ij}\in\Irr(\tC_i\tR_i/\tR_i)$ and $t_i=\sum_{j=1}^{v_i} t_{ij}$
such that
$\tilde\theta_{ij_1}\ne \tilde\theta_{ij_2}$ if $j_1\ne j_2$.
Then $\tN(\tilde\theta)/\tR=\prod_{i=1}^{u}\prod_{j=1}^{v_i} (\tN_i(\tilde\theta_{ij})/\tR_i)\wr \fS(t_{ij})$ since 
$(\tN_i(\tilde\theta_{ij})\wr \fS(t_{ij}))/\tR_i^{t_{ij}}\cong 
(\tN_i(\tilde\theta_{ij})/\tR_i)\wr \fS(t_{ij})$.
The irreducible character $\tilde\psi$ of $\tN(\tilde\theta)$ covering $\theta$
and satifies that $\tilde\varphi=\Ind_{\tN(\tilde\theta)}^{\tN}\tilde\psi$
 is then expressible as 
\begin{equation}\label{exp-psi}
\tilde\psi=\prod_{i=1}^{u}\prod_{j=1}^{v_i} \tilde\psi_{ij},
\addtocounter{thm}{1}\tag{\thethm}
\end{equation}
where $\tilde\psi_{ij}$ is an irreducible character of $(\tN_i(\tilde\theta_{ij})/\tR_i)\wr \fS(t_{ij})$ covering the character $\tilde\theta_{ij}^{t_{ij}}$
of the base subgroup.

Suppose that we are not in one of the cases (\ref{special-case-wei-ell3}), (\ref{special-case-wei-ell2-linear}) and (\ref{special-case-wei-ell2-uni}).
Then by Proposition \ref{sepcial-defect} (1), we may assume that $t_1=1$ and $\nu(m_i)=0$ for any $i>1$ without loss of generality.
Also $\bc_1=\zero$ since we assume $d:=\nu(\tN/\tR N)>0$.
We abbreviate $\tilde\theta_{1j}$,  $\tilde\psi_{1j}$
to 
$\tilde\theta_1$, $\tilde\psi_{1}$ respectively.
Also, $\tilde\theta$ is of defect $\nu(\tN/\tR N)$ when viewed as a character of $\tR\tC/\tR$ and then
$\tilde\varphi$ is of defect $\nu(\tN/\tR N)$.
These imply that $\tilde\theta_1$ is of defect $\nu(\tN_1/\tR_1 N_1)$ when viewed as a character of $\tR_1\tC_1/\tR_1$, and 
$\tilde\theta_{ij}$ is of defect $0$ when viewed as a character of $\tR_i\tC_i/\tR_i$ for $i>1$ and $1\le j\le v_i$.
Since $\tilde\varphi$ is of defect $\nu(\tN/\tR N)$, we know
$\tilde\psi$ is of defect $\nu(\tN/\tR N)$ when viewed as a character of $\tN(\tilde\theta)/\tR$ by Lemma \ref{def-clifford}.
From this, $\tilde\psi_1$ is of defect $\nu(\tN_1/\tR_1 N_1)$ and 
$\tilde\psi_{ij}$ is of defect $0$ 
as a character of $(\tN_i(\tilde\theta_{ij})/\tR_i)\wr \fS(t_{ij})$
for $i>1$, $1\le j\le v_i$.

For $i>1$,
let $\tilde\zeta_{ijk}$ be the irreducible characters of $\tN_i(\tilde\theta_{ij})$ covering $\tilde\theta_{ij}$, and having defect $0$ as characters of $\tN_i(\tilde\theta_{ij})/\tN_i$, where $k$ ranges over a set depending on $i$, $j$.
Then the construction of $\tilde\psi_{ij}$ can be found in the proof of \cite[(4C)]{AF90}, \cite[(3B)]{An92}, \cite[(3B)]{An93} and \cite[(4D)]{An94}
and we state as follows:
$\tilde\psi_{ij}=\Ind_{\tN_i(\tilde\theta_{ij})\wr \prod_k\fS(t_{ijk})}^{\tN_i(\tilde\theta_{ij})\wr \fS(t_{ij})} (\overline{\prod_k \tilde\zeta_{ijk}^{t_{ijk}}}\cdot \prod_{k=1} \phi_{\lambda_{ijk}})$,
where $t_{ij}=\sum_k t_{ijk}$, $\overline{\prod_k \tilde\zeta_{ijk}^{t_{ijk}}}$ is the extension of $\prod_k\tilde\zeta_{ijk}^{t_{ijk}}$ to 
$\tN_i(\tilde\theta_{ij})\wr \fS(t_{ijk})$, 
$\lambda_{ijk}\vdash t_{ijk}$ is an $\ell$-core,
and $\phi_{\lambda_{ijk}}$
is the character of $\fS(t_{ijk})$ corresponding to $\lambda_{ijk}$.

The pair $(\tR_1,\tilde\theta_1)$ is of type
$(\tR_{\Gamma}^{(d)},\tilde\theta_{\Gamma}^{(d)(t)})$, $(\tR_{\Gamma,\gamma}^{(d)},\tilde\theta_{\Gamma,\gamma}^{(d)(t)})$, or $(\tR_{\Gamma_1,\Gamma_2},\tilde\theta_{\Gamma_1,\Gamma_2}^{(t)})$.
Note that $(\tR_{\Gamma_1,\Gamma_2},\tilde\theta_{\Gamma_1,\Gamma_2}^{(t)})$ occurs only when $d=\nu(q-\eta)$.
Then $\tilde\psi_1$ is in 
$\tsC^{(d)(t)}_{\Gamma}$,
$\tsC^{(d)(t)}_{\Gamma,
	\gamma}$ or
$\tsC^{(t)}_{\Gamma_1,\Gamma_2}$ respectively.
If we change $t$ and keep other parametrizations, then we obtain a new $\tilde\theta$ and then 
a new $\tilde\psi$ and 
a new $\tilde\varphi$.
It can be checked that the restriction of $\tilde\varphi$ to $N$
remain unchanged.
Thus in order to classify the weights of $G$, it suffices to let $t=1$.

Now suppose that we are in the case (\ref{special-case-wei-ell3}). Then $\ell=3$ and $a=1$.
By Proposition \ref{sepcial-defect} (2),
we may assume that there exists at most one $i_0$ such that $\nu(m_{i_0})\ne0$. 
Let $I$ be the set of $i$ such that $\tR_i=\tR_{m,0,1}$ for some $3\nmid m$.
First let $\nu(m_{i})=0$ for all $i$.
Then $3\nmid |\det(\tN_i)|$ if $i\notin I$.
If $\tilde\varphi$ is of defect $0$, i.e., $(\tR,\tilde\varphi)$ is a weight of $\tG$,  then its construction is as in \S \ref{subsect:weights-general}.
We assume that $\tilde\varphi$ is of defect $1$, then $\tilde\psi$ is of defect $1$ too.
We write $\tilde\psi=\tilde\psi^{(1)}\times\tilde\psi^{(2)}$, where
$\tilde\psi^{(1)}=\prod_{i\in I}\prod_{j=1}^{v_i} \tilde\psi_{ij}$ and 
$\tilde\psi^{(2)}=\prod_{i\notin I}\prod_{j=1}^{v_i} \tilde\psi_{ij}$.

Suppose that $\tilde\psi^{(2)}$ is of defect $1$ as a character of
$\prod_{i\notin I}(\tN_i(\tilde\theta_{ij})/\tR_i)\wr \fS(t_{ij})$. Then $\tilde\psi^{(1)}$ is of defect $0$ as a character of
$\prod_{i\in I}(\tN_i(\tilde\theta_{ij})/\tR_i)\wr \fS(t_{ij})$ and its construction is as in \S \ref{subsect:weights-general}.
If $z\in\mrO_3(\fZ_{q-\eta})$, then by Remark \ref{special-case-ell3}, $\hat z \tilde\psi^{(1)}=\tilde\psi^{(1)}$.
Therefore,
$\hat z \tilde\varphi=\tilde\varphi$ since $3\nmid\det(N_i)$ for all $i$. 
Then $3\mid \kappa^{\tN}_{N}(\tilde\varphi)$ and then $\varphi$ is of defect $1$ as a character of $N/R$.
This is a contradiction.

Thus $\tilde\psi^{(2)}$ is of defect $0$ as a character of
$\prod_{i\notin I}(\tN_i(\tilde\theta_{ij})/\tR_i)\wr \fS(t_{ij})$,
then $\tilde\psi_{ij}$ is of defect $0$ 
as a character of $(\tN_i(\tilde\theta_{ij})/\tR_i)\wr \fS(t_{ij})$
for $i\notin I$, $1\le j\le v_i$.
On the other hand, $\tilde\psi^{(1)}$ is of defect $1$ as a character of
$\prod_{i\in I}(\tN_i(\tilde\theta_{ij})/\tR_i)\wr \fS(t_{ij})$.
This implies that there exist a unique $k$ such that 
$t_k=1$, and
$\tilde\psi_{k}:=\tilde\psi_{k1}$ is of defect $1$ 
as a character of $\tN_k/\tR_k$. 
The pair $(\tR_k,\tilde\theta_k)$ is of type $(\tR_{\Gamma,1},\tilde\theta_{\Gamma,1})$
and $\tpsi_k \in \tsC^{(1)}_{{\Gamma,1}}$.
In addition,
$\tilde\psi_{ij}$ is of defect $0$ 
as a character of $(\tN_i(\tilde\theta_{ij})/\tR_i)\wr \fS(t_{ij})$ for $i\notin \set{k}$, $1\le j\le v_i$.
For $1\ne z\in\mrO_3(\fZ_{q-\eta})$,  by Remark \ref{special-case-ell3}, we have
$\hat z \tilde\varphi\ne \tilde\varphi$.
Thus $3\nmid \kappa^{\tN}_{N}(\tilde\varphi)$ and then $\varphi$ is of defect $0$ as a character of $N/R$.

Now assume that
there exists a unique $i_0$ such that $\nu(m_{i_0})\ne0$.
Then by Corollary \ref{can-char-4}, $\psi^{(2)}$ is of defect $1$ and then $\psi^{(1)}$ is of defect $0$. 
This implies $\tilde\psi_{ij}$ is of defect $0$ 
as a character of $(\tN_i(\tilde\theta_{ij})/\tR_i)\wr \fS(t_{ij})$
for $i\ne i_0$, $1\le j\le v_i$.
Also, $t_{i_0}=1$ and the pair $(\tR_{i_0},\tilde\theta_{i_0})$ is of type
$(\tR_{\Gamma}^{(1)},\tilde\theta_{\Gamma}^{(1)(t)})$, $(\tR_{\Gamma,\gamma}^{(1)},\tilde\theta_{\Gamma,\gamma}^{(1)(t)})$, or $(\tR_{\Gamma_1,\Gamma_2},\tilde\theta_{\Gamma_1,\Gamma_2}^{(t)})$.
Then $\tilde\psi_{i_0}$ is in 
$\tsC^{(1)(t)}_{\Gamma}$,
$\tsC^{(1)(t)}_{\Gamma,
	\gamma}$ or
$\tsC^{(t)}_{\Gamma_1,\Gamma_2}$ respectively.
Similar as the argument above, it suffices to let $t=1$.
For $1\ne z\in\mrO_3(\fZ_{q-\eta})$,  by Remark \ref{special-case-ell3}, we have
$\hat z \tilde\varphi\ne \tilde\varphi$.
Thus $3\nmid \kappa^{\tN}_{N}(\tilde\varphi)$ and then $\varphi$ is of defect $0$ as a character of $N/R$.

The  cases (\ref{special-case-wei-ell2-linear}) and (\ref{special-case-wei-ell2-uni}) are entirely similar with the case (\ref{special-case-wei-ell3}) above.

\subsection{Classification of weights of $\SL_n(q)$ and $\SU_n(q)$.}\label{Para-of-wei}

Now we give a classification of weights of $G=\SL_n(\eta q)$.
We consider not only $\ell\mid q-\eta$ but all prime $\ell$ from now on.
First we define a set $\mathrm{Alp}'(\tG)$,
with the labeling set  $i\mathrm{Alp}(\tG)$ (defined as in (\S \ref{subsect:weights-general})), 
consisting of some $\tG$-conjugacy classes of
pairs $(\tR,\tilde\varphi)$, where $\tR$ is a special radical subgroup of $\tG$,
$\tilde\varphi\in\Irr(N_{\tG}(\tR)/\tR)$ such that 
$(R,\varphi)$ is a weight of $G$
for $R=\tR\cap G$ and
$\varphi\in\Irr(N_G(R)\mid \tilde\varphi)$. 

We start from $\Alp(\tG)$, the set of $\tG$-conjugacy classes weights of $\tG$ with the labeling set  $i\mathrm{Alp}(\tG)$.
For $(\tR,\tilde\varphi)\in \Alp(\tG)$, if $\tR$ is not special (then $\ell\mid \gcd(n,q-\eta)$ by \cite[Lemma~5.3]{Feng19}),
then we make the change as follows.

First assume ``$\ell$ is odd" or ``$\ell=2$ and $4\mid q-\eta$". 
Suppose that $\tR=\tR_1^{t_1}\times \tR_2^{t_2} \times \cdots\times \tR_u^{t_u}$ is a direct product of basic subgroups with 
$\tR_i=\tR_{m_{i},\alpha_i,\gamma_i,\bc_i}$
and $a(\tR_1)\le\cdots\le a(\tR_u)$.
Of course, $\nu(m_i)=0$ and $a(\tR_i)=\gamma_i$ for every $1\le i\le u$ since $\tR$ provides some weight of $\tG$ (see \S \ref{subsect:weights-general}).
We will use the notation in \S \ref{subsect:weights-general} and \S \ref{sec-const-wei}.
Note that $\tR=\tR_+$, $\tN=\tN_+$, $\tC=\tC_+$, $\tilde\varphi=\tilde\varphi_+$, $\tilde\theta=\tilde\theta_+$ and $\tilde\psi=\tilde\psi_+$.
Also, 
 $\tilde\theta=\prod_{i}\prod_{j} \tilde\theta_{ij}^{t_{ij}}$  similar as  (\ref{exp-theta}) and
$\tilde\psi=\prod_{i}\prod_{j} \tilde\psi_{ij}$ similar as  (\ref{exp-psi}), where $ \tilde\psi_{ij}$
is a character of form (\ref{equation:tpsi}).
Since $\tR$ is not special,
by Proposition \ref{prop:special-odd-3} and \ref{prop:special-2-linear-3}, exactly one of the following cases occurs.

\emph{Case (i)}.~ $t_1=1$, $\bc_1=\zero$, $\gamma_1<a$, $\alpha_1>0$ and $\gamma_i>\gamma_1$ (i.e., $a(\tR_i)>a(\tR_1)$) if $i>1$.
Then we replace $\tR$ by $\tR'=\tR'_1\times \prod_{i>1} R_i^{t_i}$ with
$\tR'_1=\tR_{m'_1,\alpha'_1,\gamma_1}$, where $m'_1=m_1\ell^{\min\{ \alpha_1,a-\gamma_1, \gamma_2-\gamma_1\}}$ and $\alpha'_1=\alpha_1-\min\{ \alpha_1,a-\gamma_1, \gamma_2-\gamma_1\}$.
We abbreviate $\tilde\theta_{11}$, $\tilde\psi_{11}$
as
$\tilde\theta_{1}$, $\tilde\psi_{1}$ and then 
$\tilde\theta=\tilde\theta_{1}\times \prod_{i>1}\prod_{j} \tilde\theta_{ij}^{t_{ij}}$ and
$\tilde\psi= \tilde\psi_1\times \prod_{i>2}\prod_{j} \tilde\psi_{ij}$.
Suppose that 
$\tilde\theta_1=\tilde\theta_{\Gamma,\gamma_1}$,
$\tilde\psi_1=\tilde\psi_{\Gamma,\gamma_1}$.
Then we replace $\tilde\theta$, $\tilde\psi$, $\tilde\varphi$
by $\tilde\theta'=\tilde\theta'_{1}\times \prod_{i>1}\prod_{j} \tilde\theta_{ij}^{t_{ij}}$,
$\tilde\psi'= \tilde\psi'_1\times \prod_{i>1}\prod_{j} \tilde\psi_{ij}$,
$\tilde\varphi'=\Ind^{\tN}_{\tN(\tilde\theta')}\tilde\psi'$ 
respectively,
where $\tilde\theta'_1=\tilde\theta_{\Gamma,\gamma}^{(\min\{ \alpha_1,a-\gamma_1, \gamma_2-\gamma_1\})}$,
$\tilde\psi'_1=\tilde\psi_{\Gamma,\gamma}^{(\min\{ \alpha_1,a-\gamma_1, \gamma_2-\gamma_1\})}$.

\emph{Case (ii)}.~ $t_1=t_2=1$, $\tR_1=\tR_{m_1,0}$, $\tR_2=\tR_{m_2,0}$, $\nu(m_1+m_2)\ge a$ and
$\gamma_i\ge a$ if $i>2$.
Then we replace $\tR$ by $\tR'=\tR'_1\times \prod_{i>2} R_i^{t_i}$ with $\tR'_1=\tR_{m_1+m_2,0}$.
We abbreviate  $\tilde\theta_{i1}$, $\tilde\psi_{i1}$
as
$\tilde\theta_{i}$, $\tilde\psi_{i}$ for $i=1,2$,
then 
$\tilde\theta=\tilde\theta_{1}\times \tilde\theta_{2}\times \prod_{i>2}\prod_{j} \tilde\theta_{ij}^{t_{ij}}$ and
$\tilde\psi= \tilde\psi_1\times \tilde\psi_2\times \prod_{i>2}\prod_{j} \tilde\psi_{ij}$.
Suppose that 
$\tilde\theta_i=\tilde\theta_{\Gamma_i}$,
$\tilde\psi_i=\tilde\psi_{\Gamma_i}$  for $i=1,2$.
Then we replace $\tilde\theta$, $\tilde\psi$, $\tilde\varphi$
by $\tilde\theta'=\tilde\theta'_{1}\times\prod_{i>2}\prod_{j} \tilde\theta_{ij}^{t_{ij}}$,
$\tilde\psi'= \tilde\psi'_1\times \prod_{i>2}\prod_{j} \tilde\psi_{ij}$,
$\tilde\varphi'=\Ind^{\tN}_{\tN(\tilde\theta')}\tilde\psi'$ 
respectively,
where $\tilde\psi'_1=\tilde\theta'_1=\tilde\theta_{\Gamma_1,\Gamma_2}$.

\emph{Case (iii)}.~ $\ell=3$, $a=1$, $t_1=1$, $\tR_1=\tR_{m_1,0,0,\one}$ and $\gamma_i>0$ if $i>1$.
Then we replace $\tR$ by $\tR'=\tR'_1\times \prod_{i>1} R_i^{t_i}$ with
$\tR'_1=\tR_{m_1,0,1}$.
We abbreviate  $\tilde\theta_{11}$, $\tilde\psi_{11}$
as
$\tilde\theta_{1}$, $\tilde\psi_{1}$,
then 
$\tilde\theta=\tilde\theta_{1}\times \prod_{i>1}\prod_{j} \tilde\theta_{ij}^{t_{ij}}$ and
$\tilde\psi= \tilde\psi_1\times \prod_{i>1}\prod_{j} \tilde\psi_{ij}$.
Let
$\tilde\theta_1=\tilde\theta_{\Gamma}\otimes I_3$.
Assume that $\zeta_1$ and $\zeta_2$ are the irreducible characters of $\GL_1(3).$
Then $\tilde\psi_1=\tilde\theta_{1}\times\zeta_k$ for $k\in\{1,2\}$.
We fix $\tilde\theta$, and
replace  $\tilde\psi$, $\tilde\varphi$
by $\tilde\psi'= \tilde\psi'_1\times \prod_{i>1}\prod_{j} \tilde\psi_{ij}$,
$\tilde\varphi'=\Ind^{\tN}_{\tN(\tilde\theta)}\tilde\psi'$ 
respectively,
where $\tilde\psi'_1=\tilde\psi_{\Gamma,1}^{(ka)}$.

\emph{Case (iv)}.~
$\ell=2$, $a=2$, $t_1=1$, $\tR_1=\tR_{m_1,0,0,\one}$ and $\gamma_i>0$ if $i>1$.
Then we replace $\tR$ by $\tR'=\tR'_1\times \prod_{i>1} R_i^{t_i}$ with
$\tR'_1=\tR_{m_1,0,1}$.
We abbreviate  $\tilde\theta_{11}$, $\tilde\psi_{11}$
as 
$\tilde\theta_{1}$, $\tilde\psi_{1}$ and then 
$\tilde\theta=\tilde\theta_{1}\times \prod_{i>1}\prod_{j} \tilde\theta_{ij}^{t_{ij}}$ and
$\tilde\psi= \tilde\psi_1\times \prod_{i>1}\prod_{j} \tilde\psi_{ij}$.
Let
$\tilde\theta_1=\tilde\theta_{\Gamma}\otimes I_2$.
We fix $\tilde\theta$, and
replace  $\tilde\psi$, $\tilde\varphi$
by $\tilde\psi'= \tilde\psi'_1\times \prod_{i>1}\prod_{j} \tilde\psi_{ij}$,
$\tilde\varphi'=\Ind^{\tN}_{\tN(\tilde\theta)}\tilde\psi'$ 
respectively,
where $\tilde\psi'_1=\tilde\psi_{\Gamma,1}^{(1)}$.

Now let $\ell=2$ and $4\mid q+\eta$.
Suppose that $\tR=\tR_1^{t_1}\times \tR_2^{t_2} \times \cdots\times \tR_u^{t_u}$ is a direct product of basic subgroups with 
$\tR_i=\tD_{m_{i},\alpha_i,\gamma_i,\bc_i}$
and $\det(\tR_1)\ge\cdots\ge \det(\tR_u)$.
Of course, $\nu(m_i)=0$ for every $1\le i\le u$.
We also use the notation in \S \ref{subsect:weights-general} and \S \ref{sec-const-wei}.
Note that $\tR=\tR_+$, $\tN=\tN_+$, $\tC=\tC_+$, $\tilde\varphi=\tilde\varphi_+$, $\tilde\theta=\tilde\theta_+$ and $\tilde\psi=\tilde\psi_+$.
Also, 
$\tilde\theta=\prod_{i}\prod_{j} \tilde\theta_{ij}^{t_{ij}}$  and
$\tilde\psi=\prod_{i}\prod_{j} \tilde\psi_{ij}$ similar as  above.
since $\tR$ is not special, by Proposition \ref{prop:special-2-unitary-3}, exactly one of the following cases occurs.

\emph{Case (v)}.~
$t_1=1$, 
$\tR_1=\tD_{m_1,\alpha_1,0,\zero}=\tR_{m_1,\alpha_1}$ (with $\alpha_1>1$) and $\det(\tR_i)=1$ for $i>1$.
Then we replace $\tR$ by $\tR'=\tR'_1\times \prod_{i>1} R_i^{t_i}$ with
$\tR'_1=\tR_{m'_1,\alpha'_1}$, where $m'_1=2m_1$ and $\alpha'_1=\alpha_1-1$.
We abbreviate  $\tilde\theta_{11}$, $\tilde\psi_{11}$
as
$\tilde\theta_{1}$, $\tilde\psi_{1}$,
then 
$\tilde\theta=\tilde\theta_{1}\times \prod_{i>1}\prod_{j} \tilde\theta_{ij}^{t_{ij}}$ and
$\tilde\psi= \tilde\psi_1\times \prod_{i>2}\prod_{j} \tilde\psi_{ij}$.
Suppose that 
$\tilde\theta_1=\tilde\theta_{\Gamma}$,
$\tilde\psi_1=\tilde\psi_{\Gamma}$.
Then we replace $\tilde\theta$, $\tilde\psi$, $\tilde\varphi$
by $\tilde\theta'=\tilde\theta'_{1}\times \prod_{i>1}\prod_{j} \tilde\theta_{ij}^{t_{ij}}$,
$\tilde\psi'= \tilde\psi'_1\times \prod_{i>1}\prod_{j} \tilde\psi_{ij}$,
$\tilde\varphi'=\Ind^{\tN}_{\tN(\tilde\theta')}\tilde\psi'$ 
respectively,
where $\tilde\theta'_1=\tilde\theta_{\Gamma}^{(1)}$,
$\tilde\psi'_1=\tilde\psi_{\Gamma}^{(1)}$.

\emph{Case (vi)}.~
$t_1=t_2=1$, $\tR_1=\tD_{m_1,0,0,\zero}=\tR_{m_1,0}$, $\tR_2=\tD_{m_2,0,0,\zero}=\tR_{m_2,0}$  and
$\det(\tR_i)=1$ for $i>2$.
Then we replace $\tR$ by $\tR'=\tR'_1\times \prod_{i>2} R_i^{t_i}$ with $\tR'_1=\tR_{m_1+m_2,0}$.
We abbreviate  $\tilde\theta_{i1}$, $\tilde\psi_{i1}$
as
$\tilde\theta_{i}$, $\tilde\psi_{i}$ for $i=1,2$,
then 
$\tilde\theta=\tilde\theta_{1}\times \tilde\theta_{2}\times \prod_{i>2}\prod_{j} \tilde\theta_{ij}^{t_{ij}}$ and
$\tilde\psi= \tilde\psi_1\times \tilde\psi_2\times \prod_{i>2}\prod_{j} \tilde\psi_{ij}$.
Suppose that 
$\tilde\theta_i=\tilde\theta_{\Gamma_i}$,
$\tilde\psi_i=\tilde\psi_{\Gamma_i}$  for $i=1,2$.
Then we replace $\tilde\theta$, $\tilde\psi$, $\tilde\varphi$
by $\tilde\theta'=\tilde\theta'_{1}\times\prod_{i>2}\prod_{j} \tilde\theta_{ij}^{t_{ij}}$,
$\tilde\psi'= \tilde\psi'_1\times \prod_{i>2}\prod_{j} \tilde\psi_{ij}$,
$\tilde\varphi'=\Ind^{\tN}_{\tN(\tilde\theta')}\tilde\psi'$ 
respectively,
where $\tilde\psi'_1=\tilde\theta'_1=\tilde\theta_{\Gamma_1,\Gamma_2}$.

\emph{Case (vii)}.~
$t_1=2$, $\tR_1=\tD_{m_1,0,0,\zero}=\tR_{m_1,0}$, and
$\det(\tR_i)=1$ for $i>1$.
Then we replace $\tR$ by $\tR'=\tR'_1\times \prod_{i>1} R_i^{t_i}$ with $\tR'_1=\tR_{2m_1,0}$.
Note that
$\tilde\theta=\tilde\theta_{11}\times \tilde\theta_{12}\times \prod_{i>2}\prod_{j} \tilde\theta_{ij}^{t_{ij}}$ and
$\tilde\psi= \tilde\psi_{11}\times \tilde\psi_{12}\times \prod_{i>1}\prod_{j} \tilde\psi_{ij}$.
Suppose that 
$\tilde\theta_{1i}=\tilde\theta_{\Gamma_i}$,
$\tilde\psi_{1i}=\tilde\psi_{\Gamma_i}$  for $i=1,2$.
Then we replace $\tilde\theta$, $\tilde\psi$, $\tilde\varphi$
by $\tilde\theta'=\tilde\theta'_{1}\times\prod_{i>1}\prod_{j} \tilde\theta_{ij}^{t_{ij}}$,
$\tilde\psi'= \tilde\psi'_1\times \prod_{i>1}\prod_{j} \tilde\psi_{ij}$,
$\tilde\varphi'=\Ind^{\tN}_{\tN(\tilde\theta')}\tilde\psi'$ 
respectively,
where $\tilde\psi'_1=\tilde\theta'_1=\tilde\theta_{\Gamma_1,\Gamma_2}$.

\emph{Case (viii)}.~
$t_1=1$, $\tR_1=\tD_{m_1,0,0,\two}=\tR_{m_1,0,0,\two}$ and
$\det(\tR_i)=1$ for $i>1$.
Then we replace $\tR$ by $\tR'=\tR'_1\times \prod_{i>1} R_i^{t_i}$ with
$\tR'_1=\tR^+_{m_1,0,2}$.
We abbreviate  $\tilde\theta_{11}$, $\tilde\psi_{11}$
as 
$\tilde\theta_{1}$, $\tilde\psi_{1}$,
then 
$\tilde\theta=\tilde\theta_{1}\times \prod_{i>1}\prod_{j} \tilde\theta_{ij}^{t_{ij}}$ and
$\tilde\psi= \tilde\psi_1\times \prod_{i>1}\prod_{j} \tilde\psi_{ij}$.
Let
$\tilde\theta_1=\tilde\theta_{\Gamma}\otimes I_{2^2}$.
We fix $\tilde\theta$, and
replace  $\tilde\psi$, $\tilde\varphi$
by $\tilde\psi'= \tilde\psi'_1\times \prod_{i>1}\prod_{j} \tilde\psi_{ij}$,
$\tilde\varphi'=\Ind^{\tN}_{\tN(\tilde\theta)}\tilde\psi'$ 
respectively,
where $\tilde\psi'_1=\tilde\psi_{\Gamma,2}^{+,(1)}$.

\emph{Case (ix)}.~
$t_1=1$, $a=2$, $\tR_1=\tD_{m_1,0,0,\one}=\tS_{m_1,1}$
and
$\det(\tR_i)=1$ for $i>1$.
Then we replace $\tR$ by $\tR'=\tR'_1\times \prod_{i>1} R_i^{t_i}$ with
$\tR'_1=\tR^-_{m_1,0,1}$.
We abbreviate  $\tilde\theta_{11}$, $\tilde\psi_{11}$
as
$\tilde\theta_{1}$, $\tilde\psi_{1}$,
then 
$\tilde\theta=\tilde\theta_{1}\times \prod_{i>1}\prod_{j} \tilde\theta_{ij}^{t_{ij}}$ and
$\tilde\psi= \tilde\psi_1\times \prod_{i>1}\prod_{j} \tilde\psi_{ij}$.
Let
$\tilde\theta_1=\tilde\theta_{\Gamma}\otimes I_{2}$.
We fix $\tilde\theta$, and
replace  $\tilde\psi$, $\tilde\varphi$
by $\tilde\psi'= \tilde\psi'_1\times \prod_{i>1}\prod_{j} \tilde\psi_{ij}$,
$\tilde\varphi'=\Ind^{\tN}_{\tN(\tilde\theta)}\tilde\psi'$ 
respectively,
where $\tilde\psi'_1=\tilde\psi_{\Gamma,1}^{-,(1)}$.

\vspace{2ex}

Let $(s,\lambda,K)\in i\mathrm{Alp}(\tG)$ be the parametrization for $(\tR,\tilde\varphi)\in\Alp(\tG)$.
If $\tR$ is special, then we take $(\tR',\tilde\varphi')=(\tR,\tilde\varphi)$. If $\tR$ is not special, then we take $(\tR',\tilde\varphi')$ as in the above cases~(i.e., Cases (i)--(ix)).
 Also, we take $(s,\lambda,K)$ to be the parametrization for $(\tR',\tilde\varphi')$. 
Now we obtain a set $\mathrm{Alp}'(\tG)$ containing the pairs $(\tR',\tilde\varphi')$ and a bijection 
\begin{equation}\label{bij-wei}
\Xi:\mathrm{Alp}(\tG)\to\mathrm{Alp}'(\tG), \quad (\tR,\tilde\varphi)\mapsto (\tR',\tilde\varphi').
\addtocounter{thm}{1}\tag{\thethm}
\end{equation}
Then $\mathrm{Alp}'(\tG)$ has the labeling set  $i\mathrm{Alp}(\tG)$ and from this we classify the weights of $G$.

\begin{lem}
Let $(\tR,\tilde\varphi)\in\Alp(\tG)$ and $(\tR',\tilde\varphi')=\Xi(\tR,\tilde\varphi)$.
Then $\tR\cap G$ and $\tR'\cap G$ are $\tG$-conjugate.
\end{lem}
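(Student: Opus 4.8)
The statement asserts that the modification map $\Xi$ of \eqref{bij-wei} does not change the isomorphism type (indeed the $\tG$-conjugacy class) of the intersection $\tR\cap G$. The plan is to go through the nine cases (i)--(ix) one at a time, since $\Xi$ is the identity on special radical subgroups and in those cases there is nothing to prove. In each non-special case, $\tR'$ differs from $\tR$ only in its ``first factor'': one replaces a basic subgroup $\tR_1^{t_1}$ (or $\tR_1^{t_1}\times\tR_2^{t_2}$) by a single basic subgroup $\tR_1'$ of the \emph{same degree}, leaving all other factors $\tR_i^{t_i}$ untouched. Since the ambient group $\tG=\GL_n(\eta q)$ decomposes compatibly with this direct-product decomposition and the untouched factors contribute the same intersections with $G$, it suffices to prove, for each case, that $\tR_1'\cap G_1'$ is $\tG_1'$-conjugate to $(\tR_1^{t_1}\times\cdots)\cap G_1$ inside the corresponding smaller general linear/unitary group (a group of the shape $\GL_k(\eta q)$ with $k$ the common degree).

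First I would record the degree-matching: in each of (i)--(ix) the replacement is degree-preserving by construction (e.g. in Case (i), $m_1'\ell^{\alpha_1'+\gamma_1}=m_1\ell^{\alpha_1+\gamma_1}$; in Case (ii), $(m_1+m_2)$ is the sum $m_1+m_2$; in Case (v), $2m_1\cdot 2^{\alpha_1'}=m_1\cdot2^{\alpha_1}$; etc.), so all the intersections live in one fixed group $\GL_k(\eta q)$. Then I would invoke the computations already done in \S\ref{subsect:srs}: the key observation is that for a basic subgroup $\tR_{m,\alpha,\gamma}$ (or $\tR_{m,\alpha,\gamma,\bc}$, or $\tS_{m,1,\gamma-1}$, or $E_{m,\gamma}^\pm$), the intersection with the special linear/unitary group is governed entirely by $Z(\tR)\cap G$ together with the extraspecial (or symplectic-type) part on which the determinant is trivial; this is exactly the content used in the proofs of Propositions~\ref{prop:special-odd-1}, \ref{prop:special-2-linear-1}, \ref{prop:special-2-unitary-1} and the displayed conjugacies there (e.g. the line showing $R_{m,\alpha,\gamma}$ is $\tG_{m,\alpha,\gamma}$-conjugate to $R_{m\ell^{a-\gamma-\nu(m)},\alpha+\gamma+\nu(m)-a,\gamma}$, and similarly $R_{m,\alpha,\gamma,\bc}$ to a lower-$\alpha$ variant). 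Concretely: in Cases (i), (v) the replacement $\tR_{m_1,\alpha_1,\gamma_1}\leadsto\tR_{m_1',\alpha_1',\gamma_1}$ is precisely one of those ``$Z$ conjugate, extraspecial part unique up to $C_{\tG}(Z)$-conjugacy'' moves, so $\tR\cap G\sim\tR'\cap G$. In Cases (ii), (vi), (vii) one uses the direct calculation, already cited in Proposition~\ref{prop:special-odd-3}(2a) and Proposition~\ref{prop:special-2-unitary-3}(2a), that $\tR_{m_1,0}\times\tR_{m_2,0}$ (resp.\ $\tR_{m_1,0}^{2}$) intersected with $G$ equals $\tR_{m_1+m_2,0}\cap G$ (resp.\ $\tR_{2m_1,0}\cap G$). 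In Cases (iii), (iv), (viii), (ix) the point is the identifications $\tG_{m,0,0,\one}=\tG_{m,0,1}$ with $\tR_{m,0,0,\one}\cap G=\tR_{m,0,1}\cap G$ and $\tR_{m,0,0,\two}\cap G=E_{m,2}^+$, which are exactly the equalities recorded in Remarks~\ref{rem:back-symplectic}, \ref{rem:back-symplectic-2} and in Proposition~\ref{prop:special-2-unitary-2}; so again $\tR$ and $\tR'$ have literally the same intersection with $G$ (up to conjugacy).

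The main obstacle I anticipate is bookkeeping rather than depth: one must be careful that in each case the \emph{other} factors $\tR_i^{t_i}$ really are carried along unchanged and that the relevant ``smaller'' general linear/unitary group into which the first factor(s) embed is the same for $\tR$ and $\tR'$ — this is where the degree-preservation is essential, and it is the one point where a slip (e.g.\ an off-by-a-power-of-$\ell$ in $m_1',\alpha_1'$) would break the argument. I would therefore organize the write-up as a short lemma-by-case table, in each line citing the precise displayed conjugacy or equality from \S\ref{subsect:srs} that produces $\tR_1'\cap G_1'\sim(\tR_1^{t_1}\times\cdots)\cap G_1$, and then concluding by forming the (internal) direct product with the common tail $\prod_{i>1}(\tR_i^{t_i}\cap G_i)$ to get $\tR\cap G\sim\tR'\cap G$ in $\tG$. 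Since in every case one side is obtained from the other by an explicit $\tG$-conjugation supplied earlier, no new computation is needed beyond assembling these pieces.
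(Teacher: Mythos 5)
Your route matches the paper's own very terse proof (``this follows from the arguments above case by case directly''): case-by-case, use the degree-preserving replacement and the explicit conjugacies and identities displayed in \S\ref{subsect:srs}. That much is right.

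There is, however, a genuine gap in your assembly step. You propose to ``conclude by forming the (internal) direct product with the common tail $\prod_{i>1}(\tR_i^{t_i}\cap G_i)$,'' but $\tR\cap G$ is \emph{not} this direct product: it is the kernel of $\det$ on all of $\tR=\prod_i\tR_i^{t_i}$, and so also contains tuples $(r_1,\dots,r_u)$ whose individual determinants are nontrivial but cancel globally. This really happens — e.g.\ in Case~(i) the tail has $\det(\tR_{>1})=\fZ_{\ell^{a-\gamma_2}}$, which can be nontrivial. Consequently, knowing $\tR_1\cap G_1\sim_{\tG_1}\tR'_1\cap G_1$ does not by itself give $\tR\cap G\sim_{\tG}\tR'\cap G$. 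One must also verify, in each case, two determinant-compatibility conditions: (a) $\det(\tR_{>1})\subseteq\det(\tR'_1)$, which holds by the minimality built into the definition of the replacement (in Case~(i), $c\le\gamma_2-\gamma_1$ gives $\det(\tR'_1)=\fZ_{\ell^{a-c-\gamma_1}}\supseteq\fZ_{\ell^{a-\gamma_2}}=\det(\tR_{>1})$, and in Cases~(ii), (vi), (vii) the tail has trivial determinant), and (b) the conjugating element $g\in\tG_1$ can be taken so that $\tR'_1\subseteq\tR_1^{g}$, whence a count of orders gives $\tR_1^{g}\cap\det^{-1}(\det(\tR'_1))=\tR'_1$; only then does conjugation by $(g,I,\dots,I)$ carry $\tR\cap G$ exactly onto $\tR'\cap G$. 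These checks are the genuine content of the ``bookkeeping'' and must be added; the degree-matching you stress is necessary but not sufficient.
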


\begin{proof}
This follows from the arguments above case by case directly.
\end{proof}

\begin{prop}\label{classify-wei-slsu}
\begin{enumerate}[\rm(1)]
\item For $(\tR',\tilde\varphi')\in \mathrm{Alp}'(\tG)$, if we let $R=\tR'\cap G$ and $\varphi\in\Irr(N_G(R)\mid \tvarphi')$, then $(R,\varphi)$ is a weight of $G$.
\item For $(R,\varphi)\in \mathrm{Alp}(G) $, there exists $g\in\tG$ and $(\tR',\tvarphi')\in \mathrm{Alp}'(\tG)$ such that $R^g=\tR'\cap G$ and $\varphi^g\in\Irr(N_G(R)\mid \tvarphi')$
\end{enumerate}
\end{prop}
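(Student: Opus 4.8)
The plan is to prove the two assertions of Proposition~\ref{classify-wei-slsu} by reducing, via the construction of $\mathrm{Alp}'(\tG)$ in \S\ref{Para-of-wei}, to what has already been assembled in \S\ref{basic-result}--\S\ref{sec-const-wei}. For statement~(1), fix $(\tR',\tvarphi')\in\mathrm{Alp}'(\tG)$, set $R=\tR'\cap G$ and pick $\varphi\in\Irr(N_G(R)\mid\tvarphi')$. By construction $\tR'$ is a special radical subgroup of $\tG$, so by Lemma~\ref{lem:motivation-special-radical}(1) $R$ is a radical subgroup of $G$, and moreover $N_{\tG}(R)=N_{\tG}(\tR')$ because $\tR'$ is special. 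It remains to check that $\varphi\in\dz(N_G(R)/R)$. The key point is that $\tvarphi'$ was chosen in \S\ref{sec-const-wei} and \S\ref{Para-of-wei} precisely so that its defect equals $\nu(\tN/\tR'N)=:d$ (with $\tN=N_{\tG}(\tR')$, $N=N_G(R)$), and that every irreducible constituent of $\Res^{\tN}_{N}(\tvarphi')$ has defect $0$: when $d=0$ this is the case where $(\tR',\tvarphi')$ is already a weight of $\tG$; when $d>0$ one is in one of the special configurations of Proposition~\ref{sepcial-defect}, Remark~\ref{special-case-ell3}, Remark~\ref{special-case-ell2-linear} or Remark~\ref{exc-case-2uni}, where the explicit description of the constituents (using the character tables of $\SL_2(3)$, $\fS_3$, $\fS_6$, $\GO_4^+(2)$) shows every $\varphi$ below $\tvarphi'$ has defect zero, and $R\le\ker\varphi$ by inflation. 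Hence $(R,\varphi)$ is a weight of $G$.

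For statement~(2), start from an arbitrary weight $(R,\varphi)$ of $G$. By Lemma~\ref{lem:motivation-special-radical}(2) there is a special radical subgroup $\tR$ of $\tG$ with $\tR\cap G$ conjugate to $R$; replacing $(R,\varphi)$ by a $G$-conjugate, assume $R=\tR\cap G$ and $N_{\tG}(R)=N_{\tG}(\tR)$. Choosing $\tvarphi\in\Irr(N_{\tG}(\tR)/\tR\mid\varphi)$, Lemma~\ref{def-clifford} forces $\tvarphi$ to have defect $\le d=\nu(\tN/\tR N)$, and since $\varphi$ has defect zero one shows, using the Clifford-theoretic bookkeeping of Remark~\ref{rem:5.2} together with Proposition~\ref{sepcial-defect} and the structure results of \S\ref{subsect:srs}, that $\tvarphi$ has defect exactly $d$ and that the pair $(\tR,\tvarphi)$ is one of the ``standard'' objects analysed in \S\ref{sec-const-wei}: namely $\tR$ is a direct product of basic subgroups of the forms listed there, and $\tvarphi=\Ind_{\tN(\tilde\theta)}^{\tN}\tilde\psi$ with $\tilde\theta,\tilde\psi$ of the explicit shapes in (\ref{exp-theta}), (\ref{exp-psi}). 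After a further $\tG$-conjugation normalising the labelling, $(\tR,\tvarphi)$ is visibly a member of $\mathrm{Alp}'(\tG)$ (or is $\Xi$ of a weight of $\tG$), with $\tR\cap G=R$ and $\varphi\in\Irr(N_G(R)\mid\tvarphi)$ by Lemma~\ref{spec-rad-cover} (which applies since $N_{\tG}(R)=N_{\tG}(\tR)$). This yields the required $g$ and $(\tR',\tvarphi')$.

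The main obstacle is not the formal Clifford theory but the case analysis needed in statement~(2) to pin down which special radical subgroups and which characters actually arise: one must rule out that the defect of $\tvarphi$ drops below $d$ (handled by Lemma~\ref{lem:char-def-small} and Corollary~\ref{can-char-4}), and in the exceptional situations (\ref{eq:special-case-3-1}), (\ref{eq:special-case-2-linear-1}), (\ref{special-case-2-uni-1}) and their ``global'' counterparts (\ref{special-case-wei-ell3}), (\ref{special-case-wei-ell2-linear}), (\ref{special-case-wei-ell2-uni}) one must verify, component by component and using $\kappa^{\tN}_N(\tvarphi)$, that $\varphi$ has defect zero exactly when the parametrisation lands in $\mathrm{Alp}'(\tG)$. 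Most of this has in fact already been carried out in \S\ref{sec-const-wei}; the proof here should simply invoke those computations and record that the correspondence $(R,\varphi)\leftrightarrow(\tR',\tvarphi')$ respects the intersection with $G$ and the ``lying over'' relation, which is the content of the Lemma immediately preceding the Proposition. So in practice both parts reduce to citing \S\ref{Para-of-wei} case by case, and the write-up is short provided the bookkeeping of \S\ref{sec-const-wei} is taken as given.
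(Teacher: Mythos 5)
Your proposal is essentially correct and takes the same route as the paper, whose own proof is a one-line citation: for $\ell\mid q-\eta$ it says the proposition ``follows from the construction in \S\ref{sec-const-wei},'' and for $\ell\nmid\gcd(n,q-\eta)$ it invokes \cite[Rmk.~5.6]{Feng19}. Your write-up simply unpacks what that citation stands for, and the lemmas and remarks you cite (Lemma~\ref{lem:motivation-special-radical}, Lemma~\ref{def-clifford}, Remark~\ref{rem:5.2}, Proposition~\ref{sepcial-defect}, Lemma~\ref{lem:char-def-small}, Corollary~\ref{can-char-4}, the exceptional-case Remarks~\ref{special-case-ell3}, \ref{special-case-ell2-linear}, \ref{exc-case-2uni}) are indeed the pieces that the cited construction assembles. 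One small omission: you do not explicitly separate out the case $\ell\nmid q-\eta$ (where every radical subgroup of $\tG$ is special, $\Xi$ is the identity, and the statement is already in \cite[Rmk.~5.6]{Feng19}); although your appeal to the \S\ref{Para-of-wei} construction morally covers it, the paper handles it by citation and it would be cleanest to do the same. Also, the phrase ``or is $\Xi$ of a weight of $\tG$'' in part~(2) is redundant, since by definition every element of $\mathrm{Alp}'(\tG)$ is in the image of $\Xi$.
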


\begin{proof}
	This follows from the construction in \S \ref{sec-const-wei} when $\ell\mid q-\eta$ and follows by \cite[Rmk.~5.6]{Feng19} when $\ell\nmid \gcd(n,q-\eta)$.
\end{proof}

\begin{rmk}
Using the methods of \cite{LZ18} and \cite{Feng19}, we can consider the actions of  groups $D$ and $\Lin_{\ell'}(\tG/G)$ on $\Alp'(\tG)$.
As a consequence, $\Xi$ is $\Lin_{\ell'}(\tG/G)\rtimes D$-equivariant.
\end{rmk}

We mention that for Proposition \ref{classify-wei-slsu}, if none of (\ref{special-case-wei-ell3}) 
 (\ref{special-case-wei-ell2-linear}) and (\ref{special-case-wei-ell2-uni})
holds  for $\tR$ and $t\in\mrO_\ell(\tN/\tR N)\le \mrO_\ell(\tG/G)$, then $\hat t \tilde\varphi=\tilde\varphi$ by the construction in \S \ref{sec-const-wei}.
Thus using Lemma \ref{det-radical-N-C-odd}, \ref{det-radical-N-C-2-linear}, \ref{det-radical-N-C-2-unitary}, we have the following result.

\begin{cor}\label{num-ell-ell'}
Suppose that $(\tR',\tvarphi')\in \mathrm{Alp}'(\tG)$.
Then 
\begin{enumerate}[\rm(1)]
\item $|\tG:GN_{\tG}(\tR')|$ is an $\ell$-number, and
\item $\kappa^{N_{\tG}(\tR')}_{N_{G}(\tR')}(\tvarphi')$ is an $\ell'$-number if none of (\ref{special-case-wei-ell3}), (\ref{special-case-wei-ell2-linear}) and (\ref{special-case-wei-ell2-uni}) holds for $\tR'$.
\end{enumerate}
\end{cor}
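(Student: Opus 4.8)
\textbf{Proof plan for Corollary \ref{num-ell-ell'}.}

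The plan is to read off both assertions from the structural results already assembled in the paper, since $(\tR',\tvarphi')$ has been built up in a controlled way by the bijection $\Xi$ of (\ref{bij-wei}). First I would recall that $\tR'$ is by construction a \emph{special} radical subgroup of $\tG$ (this is the defining property of membership in $\mathrm{Alp}'(\tG)$), so $N_{\tG}(\tR')=N_{\tG}(R')$ where $R'=\tR'\cap G$, and hence $|\tG:GN_{\tG}(\tR')|$ can be computed inside the relevant factors. For part (1), the point is that by Proposition \ref{classify-wei-slsu} the pair $(R',\varphi)$ with $\varphi\in\Irr(N_G(R')\mid\tvarphi')$ is a weight of $G$; in particular $R'$ provides a weight, so $\nu(m_i)=0$ for each basic-subgroup factor $\tR_i$ of $\tR'$, and then $|\tG:GN_{\tG}(\tR')|$ divides $|\tG/G|/|\det N_{\tG}(\tR')|$ up to the contribution of the symmetric groups permuting equal factors (which have $\ell'$-order). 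Invoking Lemma \ref{det-radical-N-C-odd}(1), Lemma \ref{det-radical-N-C-2-linear}(1) and Lemma \ref{det-radical-N-C-2-unitary}(1) — according as $\ell$ is odd, $\ell=2$ with $4\mid q-\eta$, or $\ell=2$ with $4\mid q+\eta$ — gives $\mrO_{\ell'}(\det(\tN))=\mrO_{\ell'}(\fZ_{q-\eta})=\mrO_{\ell'}(\det(\tR'\tC'))$, whence $|\tG:GN_{\tG}(\tR')|$ is an $\ell$-number. (If $\ell\nmid\gcd(n,q-\eta)$ then $\tR'$ is automatically special and $|\tG:GN_{\tG}(\tR')|=1$, so there is nothing to prove.)

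For part (2), I would use the remark immediately preceding the statement: if none of the exceptional cases (\ref{special-case-wei-ell3}), (\ref{special-case-wei-ell2-linear}), (\ref{special-case-wei-ell2-uni}) holds for $\tR'$, then for every $t\in\mrO_\ell(\tN/\tR' N_G(R'))\le\mrO_\ell(\tG/G)$ one has $\hat t\,\tvarphi'=\tvarphi'$, by the explicit construction of $\tvarphi'$ carried out in \S\ref{sec-const-wei} (in the non-exceptional cases the local character $\tvarphi'$ was built from the Steinberg character of $\Sp_{2\gamma}(\ell)$ or $\GO^{\pm}_{2\gamma}(2)$ tensored with $\hat t$-fixed pieces, so it is fixed by the $\ell$-part of the stabilizer action). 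Consequently $\tN_{\tvarphi'}$ contains $\tN_{\ell'}$, i.e. the index $|\tN:\tN_{\ell'}N_G(R')_{\varphi}|$-type quantity controlling $\kappa^{N_{\tG}(\tR')}_{N_G(\tR')}(\tvarphi')$ is an $\ell'$-number; more precisely, by Clifford theory $\kappa^{N_{\tG}(\tR')}_{N_G(\tR')}(\tvarphi')$ equals the number of $\Lin(\tN/\tR' N_G(R'))$-characters fixing $\tvarphi'$, and since the $\ell$-part of that cyclic-by-abelian group of linear characters already fixes $\tvarphi'$, this number is prime to $\ell$.

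The main obstacle is bookkeeping rather than conceptual: one must make sure that in every one of the Cases (i)--(ix) defining $\Xi$ the replaced character $\tvarphi'$ really does have the claimed $\hat t$-invariance, and that no extra $\ell$-torsion is introduced by the symmetric groups appearing in the wreath-product description $\tN=\prod_i\tN_i\wr\fS(t_i)$ of $N_{\tG}(\tR')$ — but the determinants of those permutation matrices are $\pm1$, an $\ell'$-contribution for $\ell$ odd, and for $\ell=2$ the relevant factors were already arranged (Notation \ref{notation:section-Sn} and its $\ell=2$ analogue, together with the determinant computations in Propositions \ref{prop:cc-R-m,alpha,gamma,c-odd}, \ref{prop:cc-R-m,alpha,gamma,c-2-linear}, \ref{prop:cc-R-m,alpha,gamma,c-2-unitary}) so that the symmetric-group part contributes trivially to $\det\tN$. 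Once these points are checked case by case, both statements follow directly from the three determinant lemmas and the construction in \S\ref{sec-const-wei}.
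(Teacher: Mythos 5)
Your treatment of part (1) is correct and is the paper's argument: Lemmas \ref{det-radical-N-C-odd}, \ref{det-radical-N-C-2-linear} and \ref{det-radical-N-C-2-unitary} give $\mrO_{\ell'}(\det(N_{\tG}(\tR')))=\mrO_{\ell'}(\fZ_{q-\eta})$, and since $\tG/GN_{\tG}(\tR')\cong\fZ_{q-\eta}/\det(N_{\tG}(\tR'))$, the index is an $\ell$-number.

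For part (2) there is a logical inversion. You write that $\kappa:=\kappa^{N_{\tG}(\tR')}_{N_G(\tR')}(\tvarphi')$ equals the order of the stabilizer of $\tvarphi'$ in $\Lin(\tN/\tR'N)$ and that, since the $\ell$-part of that group of linear characters \emph{already fixes} $\tvarphi'$, this number is prime to $\ell$. But these two premises give the opposite conclusion: a nontrivial $\ell$-power-order character $\beta$ with $\beta\tvarphi'=\tvarphi'$ lies in the stabilizer and thereby forces $\ell\mid\kappa$. The paper itself uses exactly this implication inside \S\ref{sec-const-wei}, where the equality $\hat z\tilde\varphi=\tilde\varphi$ for a nontrivial $z\in\mrO_3(\fZ_{q-\eta})$ yields $3\mid\kappa^{\tN}_{N}(\tilde\varphi)$ and a contradiction with $\varphi$ having defect zero. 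What part (2) requires is the opposite: that \emph{no} nontrivial $\ell$-power-order character of $\tN/\tR'N$ fixes $\tvarphi'$. The mechanism that secures this, and that the construction in \S\ref{sec-const-wei} is built around, is the defect count: put $\tN'=N_{\tG}(\tR')$, $R'=\tR'\cap G$, $N'=N_G(R')=\tN'\cap G$ (equality by speciality of $\tR'$), $d:=\nu(\tN'/\tR'N')$, and take $\varphi\in\dz(N'/R'\mid\tvarphi')$. From $\tvarphi'(1)=\kappa\cdot\varphi(1)$, $\mathrm{def}(\varphi)=0$ and $\nu(\tN'/\tR')-\nu(N'/R')=d$ one obtains
\[
\nu(\kappa)=d-\mathrm{def}(\tvarphi'),
\]
so $\kappa$ is prime to $\ell$ exactly when $\tvarphi'$ has the maximal possible defect $d$ as a character of $\tN'/\tR'$. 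The case analysis of \S\ref{sec-const-wei}, driven by Proposition \ref{sepcial-defect}, shows that this maximum is achieved precisely when none of (\ref{special-case-wei-ell3}), (\ref{special-case-wei-ell2-linear}), (\ref{special-case-wei-ell2-uni}) occurs; in those exceptional cases the attainable defect drops below $d$ and $\ell$ divides $\kappa$. So your Clifford-theoretic setup is right, but the inference you draw from it must be reversed and anchored to the defect computation established in \S\ref{sec-const-wei}.
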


Now we determine the weights of $G$ covered by a given weight of $\tG$.

\begin{prop}\label{cover-case3}
Let $(\tR,\tvarphi)\in\Alp(\tG)$, $(\tR',\tvarphi')=\Xi (\tR,\tvarphi)$, $R=\tR'\cap G$ and $\varphi\in\Irr(N_G(R)\mid \tvarphi')$.
Then the weights of $G$ covered by $(\tR,\tvarphi)$ is just the $\tG$-conjugacy class of $(R,\varphi)$.
\end{prop}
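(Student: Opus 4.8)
The statement relates three operations on weights: the relabeling map $\Xi$ of \eqref{bij-wei}, the "cover" relation of \cite{BS19} recalled in \S\ref{subsec:general-criterion}, and the restriction of weight characters from $N_{\tG}(\tR')$ to $N_G(R)$ with $R=\tR'\cap G$. The plan is to reduce everything to the Dade--Glauberman--Nagao machinery underlying the definition of "cover" and to the explicit description of weight characters given in \S\ref{sec-const-wei} and \S\ref{Para-of-wei}. First I would observe that by Proposition~\ref{classify-wei-slsu}(1), $(R,\varphi)$ is indeed a weight of $G$, so the claim makes sense; and by construction $\tR\cap G$ and $\tR'\cap G=R$ are $\tG$-conjugate, so one may work with $(\tR',\tvarphi')$ in place of $(\tR,\tvarphi)$ when convenient (here one must be slightly careful: the cover relation is defined using $\tR$ itself, but since $\tvarphi$ and $\tvarphi'$ have the same restriction to $N_G$ by the constructions in \S\ref{sec-const-wei}, and $\tR$, $\tR'$ have conjugate intersections with $G$, the set of covered weights is the same).

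\textbf{Key steps.} The heart of the argument is to compute, for the weight $(\tR,\tvarphi)$ of $\tG$, the associated character $\bar\pi_{\tR/R_0}(\varphi_0)$ of the definition of "cover" (where $R_0=\tR\cap G$ and $\varphi_0\in\Irr(N_G(R_0)\mid\tvarphi)$) and show that the weights of $G$ covering it form exactly one $G$-conjugacy class, namely that of $(R,\varphi)$ after transport. Concretely: (i) unwind the DGN setup with $K=N_G(R_0)/R_0$, $M=N_{\tG}(R_0)_\varphi/R_0$ — wait, more precisely with the groups $N$, $M$, $\tR$ of \S\ref{subsec:general-criterion} — and identify the defect group $\tR/R$ appearing there; (ii) use Lemma~\ref{spec-rad-cover}: if one can arrange that $N_{\tG}(R_0)=N_{\tG}(\tR)$ for a suitable representative, the DGN correspondence is the identity and $\varphi\in\Irr(N_G(R)\mid\tvarphi)$ immediately, giving the covered weights as claimed. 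When $R_0$ is already special (so $\tR'=\tR$) and we are not in one of the exceptional cases, the special property $N_{\tG}(R_0)=N_{\tG}(\tR)$ holds by definition, and Lemma~\ref{spec-rad-cover} applies directly — this is the bulk of the cases. (iii) For the non-special cases, where $\Xi$ genuinely changes the radical subgroup (Cases (i)--(ix)), one must trace through the explicit replacement $(\tR,\tvarphi)\mapsto(\tR',\tvarphi')$ and check that the character $\tvarphi$ of $N_{\tG}(\tR)/\tR$ restricts to $N_{\tG}(R)\tR/\tR\cong N_{M/R}(\tR/R)/(\tR/R)$ — with $R$ here a defect group of the relevant block of $M/\tR_{\mathrm{old}}$ — in a way compatible with $\bar\pi$, using the concrete matrix descriptions of $\tN_{m,\alpha,\gamma}$, the characters $\tilde\theta_\Gamma^{(k)(t)}$, $\tilde\psi_{\Gamma,\gamma}^{(k)(t)}$ etc. and the fact (established in \S\ref{sec-const-wei}) that changing the parameter $t$ does not change the restriction to $N_G$. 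Finally (iv) count: the number of $G$-classes of weights covering $(\tR,\tvarphi)$ equals $|\tG:GN_{\tG}(\tR)_\varphi|$ times the number of constituents of the restriction, and one checks via Corollary~\ref{num-ell-ell'} and Theorem~\ref{action-z-onwei} that this matches the single $\tG$-orbit of $(R,\varphi)$.

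\textbf{Main obstacle.} I expect the hard part to be step (iii): verifying in each of the nine non-special cases that the DGN correspondence $\pi_{\tR/R}$ sends the relevant constituent of the restriction of $\tvarphi$ to the correct character, i.e.\ that the replacement rules defining $\Xi$ were chosen precisely so as to be compatible with "cover". This requires knowing that the blocks $\tfb_\Gamma^{(k)}$ and characters $\tilde\theta_\Gamma^{(k)}$ introduced at the start of \S\ref{sec-const-wei} are genuinely the DGN correspondents of $\tilde\theta_\Gamma$ along the chains $\tR_\Gamma^{(k)}\le\cdots\le\tR_\Gamma$, which is essentially the content of \cite[Thm.~5.2]{NS14} applied to the cyclic $\ell$-quotients $\tG_\Gamma/GZ$; and in the truly exceptional cases \eqref{special-case-wei-ell3}, \eqref{special-case-wei-ell2-linear}, \eqref{special-case-wei-ell2-uni} one must also handle the $\SL_2(3)$-, $\fS_3$-, $\fS_6$-, $\GO_4^+(2)$-level combinatorics recorded in Remarks~\ref{special-case-ell3}, \ref{special-case-ell2-linear}, \ref{exc-case-2uni}, tracking the action of $\hat z$ for $z$ a central $\ell$-element to see that the covering weight is unique up to $\tG$-conjugacy. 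Modulo this case analysis, the statement follows formally from Lemma~\ref{spec-rad-cover} together with the transitivity and bijectivity properties of the cover relation from \cite[\S2]{BS19}.
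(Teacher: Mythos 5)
Your reduction of the special case to Lemma~\ref{spec-rad-cover} matches the paper. For the non-special cases, however, the paper's argument differs from your sketch on the ``easy'' Cases (i), (ii), (v)--(vii): rather than explicitly unwinding the Dade--Glauberman--Nagao correspondence, it exploits the fact that the $\tG_\Gamma$-block $\tB_\Gamma$ has a \emph{unique} weight, so that coverage of weights reduces to coverage of blocks; it then pins down the appropriate block of $N$ via the subpair inclusions $(1,\tB_\Gamma)\le(\tR_\Gamma^{(k)},\tfb_\Gamma^{(k)})^g\le(\tR_\Gamma,\tfb_\Gamma)$ together with \cite[Lemma~2.3]{KS15}. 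This block-theoretic reduction sidesteps the explicit computation of $\bar\pi_{\tR/R}$ that your plan requires, and in particular avoids having to justify your claim that the characters $\tilde\theta_\Gamma^{(k)}$ are ``DGN correspondents'' of $\tilde\theta_\Gamma$ --- they are subpair correspondents, which is a subtly different relation than an application of \cite[Thm.~5.2]{NS14}, so you should be careful not to conflate subpair chains with DGN chains. For the genuinely exceptional Cases (iii), (iv), (viii), (ix), the paper does compute the DGN map directly, identifying it by comparing the degrees of the irreducible characters of $Q_8$ with those of $\GL_1(3)$ (and the analogous small groups in the remaining cases), which matches your sketch well. Finally, your ``counting'' step (iv) is unnecessary and absent from the paper: once one covered weight $(R,\varphi)$ is exhibited, the collection of all covered weights is automatically the full $\tG$-orbit of $(R,\varphi)$ by the general properties of the cover relation from \cite[\S2]{BS19}, so no count needs to be verified.
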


\begin{proof}
If $\tR$ is special, then this proposition follows by Lemma \ref{spec-rad-cover} easily.
Now we assume that $\tR$ is not special which means one of Cases (i)--(ix) occurs.
In particular, $\ell\mid q-\eta$.

Assume that it is Case (i).
 We consider a special case first.
Let $\tR=\tR_{\Gamma}$ and $\tR'=\tR_{\Gamma}^{(k)}$ for suitable $k$ such that the hypothesis of Case (i) holds.
Let $\ttheta=\ttheta_\Gamma$ and $\ttheta'=\ttheta_\Gamma^{(k)}$.
For convenience, we replace $\tR$ by some conjugate group so that $\tR'\le \tR$. 
We also denote 
$R=\tR\cap G$,
$\tN'=N_{\tG}(\tR')$,
$\tC'=C_{\tG}(\tR')$,
$\tN=N_{\tG}(\tR)$,
$\tC=C_{\tG}(\tR)$,
$N=N_{G}(R)$.
Then $\tN'_{\ttheta'}=\tC'$ and $\tN_{\ttheta}=\tC$.
We consider the weight $(\tR,\tvarphi)$ with $\tvarphi=\Ind^{\tN}_{\tC}\tilde\theta$ and $(\tR',\tvarphi')\in\Alp'(\tG)$ with $\tvarphi'=\Ind^{\tN}_{\tC}\tilde\theta'$.
Then $(\tR',\tvarphi')=\Xi(\tR,\tvarphi)$.
By  \cite[(3.2)]{Brou86}, $(\tR',\ttheta')$ is a $\tB_\Gamma$-pair.
On the other hand, the block $\tB_\Gamma$ has  a unique weight $(\tR,\tvarphi)$. 
Thus a weight of $G$ is covered by $(\tR,\tvarphi)$ if and only if it belongs to a block of $G$ covered by $\tB_\Gamma$.
Let $(R,\varphi)$ be a weight of $G$ covered by $(\tR,\tvarphi)$ and $\tvarphi''\in\Irr(N_{\tG}(R)/\tR'\mid \varphi)$.
Then $\varphi''$ belongs to a block of $N$ covered by $\tB_\Gamma$.
According to  \cite[Lemma 2.3]{KS15} and the construction in \S \ref{sec-const-wei},  $\tvarphi''$ lies over $\ttheta'$ and we may assume that
$\tvarphi''=\tvarphi'$. 
So $\varphi\in\Irr(N\mid \tvarphi')$ and this proves the assertion in this case.

For general case of  Case (i), we write
$\tR=\tR_1\times \prod_{i=1}^u R_i^{t_i}$ and $\tR'=\tR'_1\times \prod_{i=1}^u  R_i^{t_i}$.
Then the normalizers have the form $N_{\tG}(\tR)=\tN_1\times \tN_{\ge 2}$,
$N_{\tG}(\tR')=\tN'_1\times \tN_{\ge 2}$ with 
$\tN_1\le \tN_1'$.
Let $N_1$ and $N_1'$ be the subgroups of  $\tN_1$ and $\tN_1'$ consisting of matrices whose determinant is in $\det( \tN_{\ge 2})$.
Then $N_1\times \tN_{\ge 2} \le N_{G}(\tR)$ and $N'_1\times \tN_{\ge 2} \le N_{G}(\tR')$.
From this, we replace $N_G(\tR)$ and $N_G(\tR')$ by  $N_1\times \tN_{\ge 2}$ and $N'_1\times \tN_{\ge 2}$ respectively, then the difference only comes from the first component. Therefore we may assume $u=1$.
In addition, when $\gamma>0$, we assume $\ttheta_{\Gamma,\gamma}\in \Irr(C_{\tG}(\tR)\mid \tvarphi)$,
and $\ttheta_{\Gamma,\gamma}^{(k)}\in \Irr(C_{\tG}(\tR')\mid \tvarphi')$ for a suitable $k$.
Then
$N_{\tG}(\tR)_{\ttheta}/\tR=\tC_{\Gamma,\gamma}\tR_{\Gamma,\gamma} /\tR_{\Gamma,\gamma}  \times \tM$ and $N_{\tG}(\tR')_{\ttheta'}/\tR'=\tC_{\Gamma,\gamma}^{(k)}\tR_{\Gamma,\gamma}^{(k)} /\tR_{\Gamma,\gamma}^{(k)} \times \tM$ with $\tM\cong \Sp_{2\gamma}(\ell)$.
So the case now follows by reduction to the preceding case.
In fact, if $\tR=\tR_{\Gamma,\gamma}$ and $(\tR,\tvarphi)$ is a weight of $\tG$, then $(\tR,\tvarphi)$ is the unique weight in the block of $G$ containing it and then the proof is similar as the above paragraph.
This proposition can be deduced by an entirely analogous method for Case (ii).
Moreover, Case (v) is similar with Case (i), while Case (vi) and Case (vii) are analogous with Case (ii).

Now assume that  it is Case (iii). Then $\ell=3$, $a=1$.
As in the above paragraph, we may assume that $\tR=\tR_{\Gamma,0,\one}$ and $\tR'=\tR_{\Gamma,1}$
with $\alpha_\Gamma=0$. Let $R=\tR'\cap G$.
For convenience, we replace $\tR$ by some conjugate group such that $\tR'\le \tR$. 
Also we assume $\tilde\theta_{\Gamma,0,\one}\in \Irr(\tR_{\Gamma,0,\one}\tC_{\Gamma,0,\one}\mid \tvarphi)$
and $\tilde\theta_{\Gamma,1}\in \Irr(\tR_{\Gamma,1}\tC_{\Gamma,1}\mid \tvarphi')$.
By direct calculation, there exist a group $\tilde{\mathcal C}$ such that $N_{\tG}(\tR)/\tR \cong \tilde{\mathcal C}\times \GL_1(3)$ and
$N_{\tG}(\tR')/\tR' \cong \tilde{\mathcal C}\times \SL_2(3)$.
Also $N_{G}(\tR)/R \cong {\mathcal C}\times \GL_1(3)$ and
$N_{G}(\tR')/R \cong {\mathcal C}\times Q_8$.
Here $\tilde{\mathcal C}$ can be regarded as a subgroup of $\tC_{\Gamma,0,\one}$ or $ \tC_{\Gamma,1}$ and $\tilde\theta_{\Gamma,0,\one}$ coincides with $\tilde\theta_{\Gamma,1}$ as a character of $\tilde{\mathcal C}$ and $\mathcal C\le \tilde{\mathcal C}$.
From this we may write $\tilde\varphi=\tilde\theta\times \xi$ and $\tilde\varphi'=\tilde\theta\times \tilde\zeta$.
Here, $\tilde\zeta$ is $\chi_{1a}$ or $\chi_{2a}$ and $\xi\in\Irr(\GL_1(3))$.
Let $\varphi\in\Irr(N_G(\tR)\mid\tvarphi)$ and 
$\varphi'\in\Irr(N_G(\tR')\mid\tvarphi')$.
We can take $\varphi=\theta\times \xi$ and 
$\varphi'=\theta\times \zeta$, where $\theta\in\Irr(\mathcal C\mid \ttheta)$, $\zeta\in\Irr(Q_8\mid \tilde\zeta)$.
By the construction, $\zeta$ has degree 2 or is the trivial character.
On the other hand, when considering the Dade--Glauberman--Nagao correspondence between characters of $N_G(R)/R$ and $N_G(\tR)/\tR$,
we only need to consider the characters of $Q_8$ and $\GL_1(3)$.
By  \cite[Thm.~5.2]{NS14}, the Dade--Glauberman--Nagao correspondence can be given by considering the irreducible constituents of restrictions of characters from $Q_8$ to $\GL_1(3)$.
By comparing the degrees of irreducible characters of $Q_8$ and $\GL_1(3)$, one knows that
the images of the irreducible character of degree $2$ of $Q_8$ and $1_{Q_8}$ are the non-trivial irreducible character
and $1_{\GL_1(3)}$ of $\GL_1(3)$ respectively.
From this, we can choose suitable $\varphi$ and $\varphi'$ such that $\varphi'$ corresponds to $\varphi$ via the Dade--Glauberman--Nagao correspondence.
Case (iv), (viii) and (ix) are similar to Case (iii) and
this completes the proof.
\end{proof}


\section{An equivariant bijection}\label{An-equivariant-bijection}

For a partition $\mu=(m_1,m_2,\ldots,m_t)$, denote $|\mu|=m_1+m_2+\cdots+m_t$ and write $\mu'$ for the transposed partition.
Set $\Delta(\mu)=\mathrm{gcd}(m_1,m_2,\ldots,m_t)$.
Let $d$ be a positive integer, then $\mu$ is uniquely determined by its $d$-core and $d$-quotient~(for the definitions, see for instance \cite[\S 2.7]{JK81}).

Recall that we always assume $\ell$ is a prime different from $p$.
Note that we do not always assume that $\ell\mid q-\eta$ in this section.
By \cite[Thm. 14.4]{CE04}, $\cE(\tG,\ell'):=\cup_{s}\cE(\tG,s)$ is a basic set of $\tG$, where 
$s$ runs through the  semisimple $\ell'$-elements of $\tG$.
In addition, by \cite{Ge91}, the decomposition matrix with respect to $\cE(\tG,\ell')$ is unitriangular. 
Thus the irreducible Brauer characters of $\tG$ can be parametrized by the subset $i\IBr(\tG)$ of $i\Irr(\tG)$,
containing $\tG$-conjugacy classes of pairs $(s,\mu)$ with $s$ a semisimple $\ell'$-element; see \cite{LZ18}.
Then the group $\mrO_{\ell'}(\fZ_{q-\eta})$ acts on $i\IBr(\tG)$.
Denote the irreducible Brauer character corresponding to $(s,\mu)^{\tG}$ by  $\tpsi_{s,\mu}$ if no confusion exists.
We also denote by $\Delta(\mu')$ the greatest common divisor of the $\Delta(\mu_\Gamma')$.

\begin{thm}[Kleshchev--Tiep \cite{KT09}, Denoncin \cite{De17}]\label{thm:branch-IBr}
\begin{enumerate}[\rm(1)]
\item $\hat\zeta \tpsi_{s,\mu} = \tpsi_{\zeta s,\zeta.\mu}$ for $\zeta\in\mrO_{\ell'}(\fZ_{q-\eta})$.
\item $\IBr(G)$ is the disjoint union of $\IBr(G\mid\tpsi_{s,\mu})$, where $(s,\mu)^{\tG}$ runs over a set of representatives of $\mrO_{\ell'}(\fZ_{q-\eta})$-orbits on $i\IBr(\tG)$ and
	$$(\kappa_G^{\tG}\tpsi_{s,\mu})_{\ell'} = |\{ \zeta\in\mrO_{\ell'}(\fZ_{q-\eta}) \mid \zeta.(s,\mu)^{\tG}=(s,\mu)^{\tG} \}|$$
	and $(\kappa_G^{\tG}\tpsi_{s,\mu})_\ell = \gcd(q-\eta,\Delta(\mu'))_\ell$.
\end{enumerate}	
\end{thm}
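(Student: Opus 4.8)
\textbf{Proof plan for Theorem \ref{thm:branch-IBr}.}
The strategy is to transport, via the unitriangular decomposition matrix, the known branching rule for \emph{ordinary} characters of $\tG$ down to $G$, and then extract the $\ell'$- and $\ell$-parts of $\kappa_G^{\tG}\tpsi_{s,\mu}$ separately. First I would recall that by \cite[Thm.~14.4]{CE04} and \cite{Ge91} the set $\cE(\tG,\ell')$ is a basic set with unitriangular decomposition matrix, so there is a bijection $\cE(\tG,\ell')\to\IBr(\tG)$, $\tchi_{s,\mu}\mapsto\tpsi_{s,\mu}$, and this bijection is equivariant for the action of $\Lin_{\ell'}(\tG/G)$ (equivalently of $\mrO_{\ell'}(\fZ_{q-\eta})$ identified with $\mrO_{\ell'}(Z(\tG))$), because tensoring by a linear character of $\ell'$-order commutes with $\chi\mapsto\chi^0$ and permutes $\cE(\tG,\ell')$ by $\hat\zeta\tchi_{s,\mu}=\tchi_{\zeta s,\zeta.\mu}$ (the displayed formula from \cite[Thm.~7.1]{DM90} quoted in \S\ref{subsec:Linear and unitary groups}). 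This immediately gives part (1): $\hat\zeta\tpsi_{s,\mu}=\tpsi_{\zeta s,\zeta.\mu}$.

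For part (2), the partition of $\IBr(G)$ into the sets $\IBr(G\mid\tpsi_{s,\mu})$ and the identification of the indexing with $\mrO_{\ell'}(\fZ_{q-\eta})$-orbits on $i\IBr(\tG)$ is the Brauer-character analogue of the Clifford-theory statement for ordinary characters recalled at the end of \S\ref{subsec:Linear and unitary groups}: since $\tG/G$ is cyclic, $\Res^{\tG}_G\tpsi_{s,\mu}$ is multiplicity-free, its constituents form one $\tG$-orbit, and $\tpsi_{s,\mu}$, $\tpsi_{s',\mu'}$ restrict with a common constituent iff they differ by some $\hat\zeta$ with $\zeta\in\mrO_{\ell'}(\fZ_{q-\eta})$ --- this last point uses that $\tG_\psi\supseteq\tG_{\ell'}G$ for $\psi\in\IBr(G)$ because $\tG/\tG_{\ell'}G$ is an $\ell$-group while the stabilizer index divides $|\tG/G|$ and... no: more carefully, one shows $\kappa_G^{\tG}\tpsi_{s,\mu}=|\tG:\tG_{\tpsi_{s,\mu}}|$ and analyzes $\tG_{\tpsi_{s,\mu}}$, which by the $\Lin(\tG/G)$-equivariance just set up equals $\{\hat\nu\in\Lin(\tG/G)\mid\hat\nu\tpsi_{s,\mu}=\tpsi_{s,\mu}\}^{\perp}$-index. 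I would then split $\Lin(\tG/G)=\Lin_{\ell'}(\tG/G)\times\Lin_\ell(\tG/G)$ (as $\tG/G$ is cyclic of order $q-\eta$) and treat the two factors.

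The $\ell'$-factor is handled by the equivariance of the basic-set bijection: $\hat\zeta\tpsi_{s,\mu}=\tpsi_{s,\mu}$ for $\zeta\in\mrO_{\ell'}(\fZ_{q-\eta})$ iff $(s,\mu)^{\tG}=(\zeta s,\zeta.\mu)^{\tG}$, so the $\ell'$-part of the number of linear characters fixing $\tpsi_{s,\mu}$, and hence $(\kappa_G^{\tG}\tpsi_{s,\mu})_{\ell'}$, equals $|\{\zeta\in\mrO_{\ell'}(\fZ_{q-\eta})\mid\zeta.(s,\mu)^{\tG}=(s,\mu)^{\tG}\}|$. The $\ell$-factor, giving $(\kappa_G^{\tG}\tpsi_{s,\mu})_\ell=\gcd(q-\eta,\Delta(\mu'))_\ell$, is the substantive computation and the main obstacle: it requires knowing exactly which $\ell$-power linear characters of $\tG/G$ fix the Brauer character $\tpsi_{s,\mu}$, and here I expect to invoke Denoncin's analysis \cite{De17} of the action of $\fZ_{q-\eta}$ on $i\IBr(\tG)$ --- the stabilizer of $(s,\mu)^{\tG}$ under the \emph{full} $\fZ_{q-\eta}$ (not just the $\ell'$-part) is governed by the gcd of $q-\eta$ with the $\Delta(\mu_\Gamma')$, i.e.\ with $\Delta(\mu')$, because translating $s$ by a central element of order dividing $q-\eta$ permutes the elementary divisors and the corresponding $\mu_\Gamma$ while the combinatorics of columns of the partitions pins down the fixed points. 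Combining: $\kappa_G^{\tG}\tpsi_{s,\mu}$ equals the order of the stabilizer of $(s,\mu)^{\tG}$ in $\fZ_{q-\eta}$, whose $\ell'$-part is the orbit-stabilizer count under $\mrO_{\ell'}(\fZ_{q-\eta})$ and whose $\ell$-part is $\gcd(q-\eta,\Delta(\mu'))_\ell$, which is precisely the two assertions of (2). I would double-check the $\ell$-part against the known case $\ell\nmid\gcd(n,q-\eta)$ (where it should be trivial) and against the totally ramified case to make sure the gcd is taken inside $q-\eta$ and not inside $n$.
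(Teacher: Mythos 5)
This is a quoted result, attributed in the theorem heading to Kleshchev--Tiep \cite{KT09} and Denoncin \cite{De17}; the paper supplies no proof of it (the ensuing Remark~\ref{basic-set-sl} merely records the basic-set machinery from \cite{De17}), so there is no internal argument of the paper to measure your proposal against.

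On the merits of the proposal itself: part (1) and the $\ell'$-part of (2) are essentially sound. Equivariance of the unitriangular-basic-set bijection transports $\hat\zeta\tchi_{s,\mu}=\tchi_{\zeta s,\zeta.\mu}$ to Brauer characters, multiplicity-freeness of $\Res^{\tG}_G\tpsi_{s,\mu}$ is indeed formal for a cyclic quotient (the Clifford obstruction in $H^2$ of a cyclic group vanishes, so a $\tG_\psi$-invariant $\psi$ extends and a Mackey computation gives multiplicity one), and then $(\kappa_G^{\tG}\tpsi_{s,\mu})_{\ell'}=|\Lin_{\ell'}(\tG/G)_{\tpsi_{s,\mu}}|$ translates via equivariance into the stated orbit-stabilizer count. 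The gap is in the $\ell$-part, and it is not a deferrable detail: your plan to split $\Lin(\tG/G)$ into $\Lin_{\ell'}\times\Lin_\ell$ and count which $\hat\nu\in\Lin_\ell(\tG/G)$ fix $\tpsi_{s,\mu}$ cannot work, because every linear character of $\tG/G$ of $\ell$-power order is trivial on $\ell$-regular elements of $\tG$, hence $\hat\nu\tpsi_{s,\mu}=\tpsi_{s,\mu}$ \emph{for all} such $\hat\nu$. That count is always $(q-\eta)_\ell$ and carries no information, whereas $(\kappa_G^{\tG}\tpsi_{s,\mu})_\ell=\gcd(q-\eta,\Delta(\mu'))_\ell$ is generally a proper divisor of $(q-\eta)_\ell$. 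Relatedly, the ``stabilizer of $(s,\mu)^{\tG}$ under the full $\fZ_{q-\eta}$'' you invoke is not even defined on $i\IBr(\tG)$: translating a semisimple $\ell'$-element $s$ by a central $\ell$-element leaves the labelling set, so only $\mrO_{\ell'}(\fZ_{q-\eta})$ acts there. The $\ell$-part of $\kappa_G^{\tG}$ encodes $|\tG_\psi:G|_\ell$ for $\psi\in\IBr(G\mid\tpsi_{s,\mu})$, and determining this stabilizer on the $G$-side is precisely the genuinely modular branching computation of \cite{KT09} (complemented by \cite{De17} in the unitary case); it is not recoverable from the $\Lin(\tG/G)$-action on $\IBr(\tG)$, which is why the theorem has to be cited rather than re-derived this way.
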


\begin{rem}\label{basic-set-sl}
In \cite{De17}, the author constructs an $\Aut(G)$-stable basic set $\cE$ of $G$.
Using the notation of \cite[\S 4.4]{De17}, we have a basic set  $\tilde{\cE}'$, obtained from $\mathcal E(\tG,\ell')$ by replacing its character $\tilde\chi$ by a suitable $\tilde\chi'$ through a process described there.
This basic set $\tilde{\cE}'$ satisfies that the decomposition matrix with respect to $\tilde{\cE}'$ of $\tG$ is unitriangular and the irreducible constituents of the restrictions of characters in $\tilde{\cE}'$ to $G$ form a unitriangular basic set for $G$.
It can be checked directly from the construction of $\tilde\cE'$ in \cite[\S 4.4]{De17}
 that the bijection $\mathcal E(\tG,\ell')\to \tilde{\cE}'$, $\tilde\chi\mapsto \tilde\chi'$ is 
 $\Lin_{\ell'}(\tG/G)\rtimes D$-equivariant.
Therefore,  there is a $\Lin_{\ell'}(\tG/G)\rtimes D$-equivariant bijection between $\tilde{\cE}'$ and $\IBr(\tG)$ 
  by \cite[Lemma~7.5]{CS13} or 
\cite[Lemma~2.3]{De17}.
\end{rem}

We will recall the bijection between $\IBr(\tG)$ and $\Alp(\tG)$ constructed in  \cite{AF90,An92,An93,An94}.
This construction  depends on \cite[(1A)]{AF90} and the notion of the $\ell$-core tower is used there.
In order to make the bijection satisfy condition (ii) (b) of Theorem \ref{thm:criterion},
we strengthen \cite[(1A)]{AF90}.

For a  positive integer $m$, we denote by $\mathcal U_m$ the set of partitions of $m$.
Let $\mathcal U_m(\gamma)$ be the set of partitions $\mu\in\mathcal U_m$ such that $\nu( \Delta(\mu'))=\gamma $ for $\gamma\ge 0$.
Then $\mathcal U_m=\coprod\limits_{\gamma\ge 0}\mathcal U_m(\gamma)$.
It is well-known that the irreducible characters of the symmetric group $\mathfrak S_m$ are in bijection with $\mathcal U_m$ (cf. \cite[Thm.~2.1.11]{JK81}).
Denote $p(m):=|\mathcal U_m|$.

Let $\delta,\gamma$ be non-negative integers.
Denote $\mathcal M_\delta:=\{\ (\delta,j)\ |\ j\in\mathbb Z, 1\le j\le \ell^\delta\ \}$
and $\mathcal M_\delta(\gamma):=\{\ (\delta,j)\in \mathcal M_\delta\ |\  \ell^{\delta-\gamma-1}< j\le \ell^{\delta-\gamma}\ \}$ for $0\le\gamma\le \delta$.
Then $\mathcal M_\delta=\coprod\limits_{0\le\gamma\le \delta}\mathcal M_\delta(\gamma)$.
Let  $\mathcal M=\coprod\limits_{\delta\ge 0}\mathcal M_\delta$.

Let $\mathcal D_m$ be the set of assignments $f:\mathcal M \to \{ \ell\text{-cores} \}$ such that $\sum\limits_{\delta,j}\ell^\delta |f(\delta,j)|=m$.
For $f\in \mathcal D_m$, we call the \emph{degree}  of $f$ the minimal integer $\gamma$ satisfying that there exists $(\delta,j)\in\mathcal M_\delta(\gamma)$ such that $f(\delta,j)$ is not empty.
Write $\deg(f)$ for the degree of $f\in \mathcal D_m$.
Let $\mathcal D_m(\gamma)$ denote the set of all $f\in\mathcal D_m$ such that $\mathrm{deg}(f)=\gamma$ for $\gamma\ge 0$.

\begin{lem}\label{forthebijexction}
	With the notation above, there exists a bijection $\pi_m:\mathcal U_m\to \mathcal D_m$ such that $\pi_m(\mathcal U_m(\gamma))=\mathcal D_m(\gamma)$ for every $\gamma\ge 0$.
	In particular, for $\mu\in \mathcal U_m$, we have $\nu( \Delta(\mu'))=\deg(\pi_m(\mu))$.
\end{lem}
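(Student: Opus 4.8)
The plan is to reduce the statement to the classical $\ell$-core tower combinatorics of partitions, recalled from \cite[\S1]{AF90}, while keeping track of a single new numerical invariant, namely $\nu(\Delta(\mu'))$ on the $\mathcal U_m$-side and $\deg(f)$ on the $\mathcal D_m$-side. First I would recall the $\ell$-core tower: a partition $\mu\vdash m$ is determined by its $\ell$-core $\mu_{(0)}$, then the $\ell$-quotient of $\mu$ is an $\ell$-tuple of partitions whose sizes sum to $(m-|\mu_{(0)}|)/\ell$, and iterating produces for each level $\delta\ge 0$ a family of $\ell^\delta$ many $\ell$-cores indexed exactly by $\mathcal M_\delta$, with $\sum_{\delta,j}\ell^\delta|f(\delta,j)|=m$. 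This is precisely the content of \cite[(1A)]{AF90} and gives a canonical bijection $\mu\mapsto f_\mu$ from $\mathcal U_m$ to $\mathcal D_m$; I would take $\pi_m$ to be this bijection. The only thing left is to verify that $\pi_m$ matches the two gradings, i.e.\ $\nu(\Delta(\mu'))=\deg(f_\mu)$.

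For the grading compatibility, the key observation is that $\nu(\Delta(\mu'))$ — the $\ell$-adic valuation of $\gcd$ of the column lengths of $\mu$ — measures the largest power $\ell^\gamma$ such that $\mu$ is ``$\ell^\gamma$-divisible'' in the sense that every part of $\mu'$ is divisible by $\ell^\gamma$; equivalently $\mu$ lies in the image of the $\ell^\gamma$-fold ``inflation'' map $\lambda\mapsto$ (the partition whose Young diagram is $\lambda$ with each column repeated $\ell^\gamma$ times). The plan is then to translate this divisibility condition through the $\ell$-core tower. A partition $\mu$ has $\nu(\Delta(\mu'))\ge\gamma$ exactly when its $\ell$-core is empty and, recursively, the $\ell$-quotient consists of $\ell$ copies of a single partition each having $\nu(\Delta(\cdot'))\ge\gamma-1$ sitting in a prescribed block of positions; unwinding this down $\gamma$ levels shows that $\nu(\Delta(\mu'))\ge\gamma$ holds if and only if in the tower $f_\mu$ every nonempty entry $f_\mu(\delta,j)$ occurs at a position $(\delta,j)$ with $j\le\ell^{\delta-\gamma}$. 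Comparing with the definition $\mathcal M_\delta(\gamma)=\{(\delta,j)\mid \ell^{\delta-\gamma-1}<j\le\ell^{\delta-\gamma}\}$ and the definition of $\deg(f)$ as the minimal $\gamma$ with a nonempty entry in some $\mathcal M_\delta(\gamma)$, this says precisely $\nu(\Delta(\mu'))=\deg(f_\mu)$, hence $\pi_m(\mathcal U_m(\gamma))=\mathcal D_m(\gamma)$ for all $\gamma$.

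The main obstacle I anticipate is the bookkeeping in the recursive step: one must choose the indexing of the $\ell$ constituents of an $\ell$-quotient (i.e.\ the identification of level-$\delta$ positions inside level-$(\delta{+}1)$ positions) so that the ``$\mu'$ has all columns divisible by $\ell$'' condition corresponds exactly to the $j\le\ell^{\delta-\gamma}$ band rather than to some permuted set of positions, and one must check that this is consistent with the conventions already fixed in \cite{AF90}. Concretely, $\mu$ being $\ell$-divisible (all parts of $\mu'$ divisible by $\ell$) forces the $\ell$-core to be empty and the $\ell$-quotient to be of the form $(\lambda,\lambda,\dots,\lambda)$ for one partition $\lambda$ with $\mu' = \ell\cdot\lambda'$ entrywise after suitable ordering; I would verify this via the standard beta-set / abacus description of the $\ell$-quotient, where removing $\ell$-hooks corresponds to sliding beads on runners and $\ell$-divisibility of columns is the statement that all runners carry the same bead configuration. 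Once that lemma is in place, an induction on $m$ (or on the number of levels) closes the argument. I expect the verification itself to be routine once the indexing convention is pinned down, so the write-up will mostly consist of setting up notation carefully and then invoking \cite[(1A)]{AF90}.
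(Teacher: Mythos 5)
Your plan hinges on taking $\pi_m$ to be the canonical $\ell$-core tower bijection $\tau\colon\mu\mapsto f_\mu$ from \cite[(1A)]{AF90} and then verifying directly that $\nu(\Delta(\mu'))=\deg(f_\mu)$. That verification cannot succeed, because the identity $\nu(\Delta(\mu'))=\deg(\tau(\mu))$ is simply false for $\tau$. Take $\ell=2$ and $\mu=(2,2)$. Then $\mu'=(2,2)$, so $\Delta(\mu')=2$ and $\nu(\Delta(\mu'))=1$. On the other hand the $2$-core of $(2,2)$ is $\emptyset$ and its $2$-quotient is $\bigl((1),(1)\bigr)$, so the tower has $f_\mu(1,1)=f_\mu(1,2)=(1)$ and everything else empty. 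Since $(1,2)\in\mathcal M_1(0)$ one gets $\deg(f_\mu)=0\neq 1$, and no reordering of the entries inside a level can fix this: once two entries at level $1$ are both nonempty, a nonempty entry necessarily sits in $\mathcal M_1(0)$. Dually, $\tau^{-1}$ of the assignment with $f(2,2)=(1)$ (and all else empty) is $(3,1)$, which lies in $\mathcal U_4(0)$ even though $\deg(f)=1$; so $\tau$ really does mix the graded pieces.

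The intermediate combinatorial step in your plan is also false: $\nu(\Delta(\mu'))\ge 1$ does \emph{not} force the $\ell$-quotient of $\mu$ to be $\ell$ copies of a single partition. For $\ell=2$ and $\mu=(1,1)$ one has $\nu(\Delta(\mu'))=1$, yet the $2$-quotient is $\bigl((1),\emptyset\bigr)$; for $\mu=(3,3)$ one has $\nu(\Delta(\mu'))=1$, yet the $2$-quotient is $\bigl((2),(1)\bigr)$ up to order. The abacus picture you propose (``all runners carry the same bead configuration'') characterizes a much more restrictive property than divisibility of all column lengths, and in general the two are unrelated.

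The paper avoids this entirely. It does not claim that any particular map $\mathcal U_m\to\mathcal D_m$ respects the gradings; instead it just counts, proving $|\mathcal U_m(\gamma)|=|\mathcal D_m(\gamma)|$ for each $\gamma$. Concretely, with $a=\nu(m)$: on the $\mathcal U_m$-side the inflation map $\lambda\mapsto\mu$ with $\mu'=\ell^c\lambda'$ (entrywise) identifies $\coprod_{\gamma\ge c}\mathcal U_m(\gamma)$ with $\mathcal U_{m/\ell^c}$ whenever $\ell^c\mid m$; on the $\mathcal D_m$-side, an $f$ with $\deg(f)\ge c$ vanishes outside positions $(\delta,j)$ with $\delta\ge c$ and $j\le\ell^{\delta-c}$, which (via the core tower map, used only as an auxiliary device here) shows $f$ is determined by the single partition $\mu^c_1\vdash m/\ell^c$. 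Both sides are thus $p(m/\ell^c)$, giving $|\mathcal U_m(\gamma)|=|\mathcal D_m(\gamma)|=p(m/\ell^\gamma)-p(m/\ell^{\gamma+1})$ for $0\le\gamma<a$, $=p(m/\ell^a)$ for $\gamma=a$, and $=0$ otherwise. A bijection $\pi_m$ with $\pi_m(\mathcal U_m(\gamma))=\mathcal D_m(\gamma)$ then exists by picking, for each $\gamma$, \emph{any} bijection between the two graded pieces. If you want a constructive version, you would have to build such a bijection from scratch (e.g.\ by composing the two identifications with $\mathcal U_{m/\ell^\gamma}$ above and bookkeeping the complementary parts), not by invoking the unmodified core tower map.
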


\begin{proof}
	First note that	it suffices to show that $|\mathcal U_m(\gamma)|=|\mathcal D_m(\gamma)|$ for every $\gamma\ge 0$.
	Let $a=\nu( m)$. 
	If $c\ge 0$ satisfies that $\ell^c\mid m$,
	then $\coprod\limits_{\gamma\ge c}\mathcal U_m(\gamma)$ is in bijection with $\mathcal U_{m/\ell^c}$.
	This implies $\sum\limits_{\gamma\ge c}|\mathcal U_m(\gamma)|=p(m/\ell^c)$.
	Also note that the set $\mathcal U_m(\gamma)$ is empty when $\ell^\gamma\nmid m$.
	Hence $|\mathcal U_m(\gamma)|=p(m/\ell^\gamma)-p(m/\ell^{\gamma+1})$ if $0\le\gamma<a$, $|\mathcal U_m(a)|=p(m/\ell^a)$,
	and $|\mathcal U_m(\gamma)|=0$ if $\gamma>a$.

	Every $\mu\in\mathcal U_m(\gamma)$ is determined by its tower of $\ell$-cores $\{\lambda^\delta_j\}$ where the indices $(\delta,j)$ are such that $\delta\ge 0$, and for fixed $\delta$, $0< j\le \ell^\delta$.
	We give the definition of $\ell$-core towers as follows.
	The $\ell$-core $\lambda^\delta_j$ are recursively defined as follows.
	For $\delta=0$, let $\mu^0_1=\mu$, and let $\lambda^0_1$ and $(\mu^1_1,\mu^1_2,\dots,\mu^1_\ell)$ be respectively the $\ell$-core and the $\ell$-quotient of $\mu^0_1$.
	More generally, if $\lambda^{\delta-1}_j$ and $(\mu^\delta_{(j-1)\ell+1},\mu^\delta_{(j-1)\ell+2},\dots,\mu^\delta_{j\ell})$ are already defined for $1\le j\le \ell^{\delta-1}$, let $\lambda_j^\delta$ and $(\mu^{\delta+1}_{(j-1)\ell+1},\mu^{\delta+1}_{(j-1)\ell+2},\dots,\mu^{\delta+1}_{j\ell})$ be respectively  the $\ell$-core and the $\ell$-quotient of $\mu^\delta_j$ for $1\le j\le \ell^\delta$.
	Note that our definition of $\ell$-core towers here is slightly different from that in the proof of \cite[(1A)]{AF90} since for every $\delta$ the order of $\lambda_j^\delta$ ($1\le j\le \ell^\delta$) is changed.	
	Then the mapping $f:(\delta,j)\mapsto \lambda^\delta_j$ is an assignment in $\mathcal D_m$.
	The mapping $\tau:\mu\mapsto f$ is a bijection between $\mathcal U_m$ and $\mathcal D_m$
	since every partition is uniquely determined by its $\ell$-core and $\ell$-quotient, then $|\mathcal U_m|=|\mathcal D_m|$.
	
	Assume that integer $c\ge 0$ satisfies that $\ell^c\mid m$. Let $f\in \coprod\limits_{\gamma\ge c}\mathcal D_m(\gamma)$, then 
	$|f(\delta,j)|=0$ if $(\delta,j)\in\mathcal M_\delta(h)$ with $h<c$, i.e. $|f(\delta,j)|=0$ if $j>\ell^{\delta-c}$ or
	 $\delta< c$.
	Let $\mu=\pi_m^{-1}(f)$. Then $\lambda^\delta_j$ is empty if $j>\ell^{\delta-c}$ or $\delta< c$.
	So $\mu^\delta_j$ is empty with $j>\ell^{\delta-c}$ and $\delta\ge c$.
	From this $\mu$ is uniquely determined by $\mu^c_1$.
	Therefore $\sum\limits_{\gamma\ge c}|\mathcal D_m(\gamma)|=p(m/\ell^c)$ since $\mu^c_1$ is a partition of $m/\ell^c$.
	Also $\mathcal D_m(\gamma)$ is empty when $\ell^\gamma\nmid m$.
	Hence $|\mathcal D_m(\gamma)|=p(m/\ell^\gamma)-p(m/\ell^{\gamma+1})$ if $0\le\gamma<a$,  $|\mathcal D_m(\gamma)|=p(m/\ell^a)$ if $\gamma=a$, and $|\mathcal D_m(\gamma)|=0$ if $\gamma>a$.	
	So $|\mathcal U_m(\gamma)|=|\mathcal D_m(\gamma)|$ for every $\gamma\ge 0$.
From this there exists a bijection $\pi_m:\mathcal U_m\to \mathcal D_m$ such that $\pi_m(\mathcal U_m(\gamma))=\mathcal D_m(\gamma)$ for every $\gamma\ge 0$.
\end{proof}

Let $\tsC_{\Gamma,\delta}$ be defined as in (\ref{def-tsC-Gamma,delta}). Noting that $|\tsC_{\Gamma,\delta}|=e_\Gamma \ell^\delta$ by the proofs of \cite[(4C)]{AF90}, \cite[(3C), (3D)]{An92}, \cite[(3C), (3D)]{An93} and 
\cite[(4A), (4B)]{An94}.
Now we give some modification for the set $\tsC_{\Gamma,\delta}$ when $\ell\mid q-\eta$.
Let $$\tsC_{\Gamma,\delta}(\gamma)=\bigcup\limits_{\bc:\gamma+|\bc|=\delta} \dz(\tN_{\Gamma,\gamma,\bc}(\ttheta_{\Gamma,\gamma,\bc})/\tR_{\Gamma,\gamma,\bc}\mid\ttheta_{\Gamma,\gamma,\bc})$$
if $\ell$ is odd and $\ell\mid q-\eta$ or $\ell=2$, $4\mid q-\eta$, and
$$\tsC_{\Gamma,\delta}(\gamma)=\bigcup\limits_{\bc:\gamma+|\bc|=\delta} \dz(\tN_{\Gamma,\gamma,\bc}(\ttheta_{\Gamma,\gamma,\bc})/\tD_{\Gamma,\gamma,\bc}\mid\ttheta_{\Gamma,\gamma,\bc})$$
if $\ell=2$ and $4\mid q+\eta$.
Then $\tsC_{\Gamma,\delta}=\coprod\limits_{0\le \gamma\le \delta} \tsC_{\Gamma,\delta}(\gamma)$ and
$|\tsC_{\Gamma,\delta}(\gamma)|=\ell^{\delta-\gamma}-\ell^{\delta-\gamma-1}$ for $0\le \gamma\le \delta-1$ and $|\tsC_{\Gamma,\delta}(\delta)|=1$.
When  labelling the elements of $\tsC_{\Gamma,\delta}=\{\tilde\psi_{\Gamma,\delta,i}\mid 1\le i\le \ell^\delta\}$,
we set that $\tilde\psi=\tilde\psi_{\Gamma,\delta,i}$ for
$\ell^{\delta-\gamma-1}< i \le \ell^{\delta-\gamma}$ if and only if $\tilde\psi\in \tsC_{\Gamma,\delta}(\gamma)$.

If $\ell\mid q-\eta$, then we regard the sets
$\tsC_{\Gamma,\delta}$,
$\tsC_{\Gamma,\delta}(\gamma)$,
$\cup_\delta \tsC_{\Gamma,\delta}$ as $\mathcal M_\delta$,  $\mathcal M_\delta(\gamma)$, $\mathcal{M}$ respectively.
Thus for every $(s,\lambda,K)\in i \Alp(\tG)$ with $K=K_\Gamma$,
we can define $\mathrm{deg}(K_\Gamma)$ to be the degree of $K_\Gamma$ as an element of $\mathcal D_{m_\Gamma(s)}$.
Define $\mathrm{deg}(K):=\min_\Gamma\{ \mathrm{deg}(K_\Gamma) \}$, where $\Gamma$ runs over all elementary divisors of $s$.

\begin{lem}\label{num-conj-class}
Assume that $\ell\mid q-\eta$.
Let $(\tR,\tvarphi)\in\Alp(\tG)$ with label $(s,\lambda,K)$, $(\tR',\tvarphi')=\Xi(\tR,\tvarphi)$ and $R=\tR'\cap G$, where $\Xi$  is defined in (\ref{bij-wei}).
If it is not one of the cases (\ref{special-case-wei-ell3}), (\ref{special-case-wei-ell2-linear}) and (\ref{special-case-wei-ell2-uni}) for $\tR'$, then $|\tG:GN_{\tG}(R)|=\gcd( q-\eta,\ell^{\mathrm{def}(K)})$.
\end{lem}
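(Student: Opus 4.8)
The plan is to compute the index $|\tG:GN_{\tG}(R)|$ by combining the description of the special radical subgroup $\tR'$ (obtained through the cases (i)--(ix) of \S\ref{Para-of-wei}) with the two sources of contribution to this index: the determinant of $N_{\tG}(\tR')$, and the fact that $N_{\tG}(R)=N_{\tG}(\tR')\cap G$ together with $N_{\tG}(R)$ possibly being larger. First I would recall from Corollary \ref{num-ell-ell'}(1) and Lemmas \ref{det-radical-N-C-odd}, \ref{det-radical-N-C-2-linear}, \ref{det-radical-N-C-2-unitary} that $|\tG:GN_{\tG}(\tR')|$ is an $\ell$-number, more precisely $\mathrm{O}_{\ell'}(\det(\tN'))=\mathrm{O}_{\ell'}(\fZ_{q-\eta})$ where $\tN'=N_{\tG}(\tR')$; thus $|\tG:GN_{\tG}(\tR')|=\ell^{c}$ where $\ell^{c}=(q-\eta)_\ell/|\det(\tN')|_\ell$. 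Since we are excluding the exceptional cases (\ref{special-case-wei-ell3}), (\ref{special-case-wei-ell2-linear}), (\ref{special-case-wei-ell2-uni}), we have $\det(\tN')=\det(\tR'\tC')$ by those same lemmas, so $c = \nu(q-\eta)-\nu(\det(\tR'\tC'))$.

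Next I would identify $\nu(\det(\tR'\tC'))$ in terms of the label $(s,\lambda,K)$. The key point is that, by the constructions in \S\ref{sec-const-wei} and the parametrization $\mathrm{Alp}'(\tG)$, the value $\nu(\det(\tR'))$ is governed by the ``height'' in the radical-subgroup tree at which the contributing basic component sits, and this height is exactly $\mathrm{def}(K)$ (the minimum of $\mathrm{deg}(K_\Gamma)$ over elementary divisors $\Gamma$ of $s$). Concretely, using Propositions \ref{prop:cc-R-m,alpha,gamma-odd}, \ref{prop:cc-R-m,alpha,gamma,c-odd} (and their $\ell=2$ analogues) one reads off that the number of $G$-classes of radical subgroups in $\{G\cap{}^{\tg}\tR'\}$, which equals $|\tG:GN_{\tG}(\tR')|$ by Lemma \ref{lem:split-classes-radical} when $\tR'$ is special, is $\ell^{\min\{a,\gamma+|\bc|,\gamma+\nu(m)\}}$ for a single basic component and the minimum over components in the general case. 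Matching $\gamma+|\bc|=\delta$ with the tower index and using Lemma \ref{forthebijexction} (which transports $\nu(\Delta(\mu'))$ to $\deg$ of the corresponding assignment, hence the labeling convention for $\tsC_{\Gamma,\delta}(\gamma)$ chosen just before this lemma), one gets $\min\{\gamma+|\bc|,\gamma+\nu(m)\}=\mathrm{def}(K)$ along the optimal component, so $|\tG:GN_{\tG}(\tR')|=\ell^{\min\{a,\mathrm{def}(K)\}}=\gcd(q-\eta,\ell^{\mathrm{def}(K)})_{\ell}$, and since both sides are $\ell$-numbers this is $\gcd(q-\eta,\ell^{\mathrm{def}(K)})$.

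Finally I must pass from $N_{\tG}(\tR')$ to $N_{\tG}(R)$. Because $\tR'$ is special, $N_{\tG}(R)=N_{\tG}(\tR')$, so in fact $|\tG:GN_{\tG}(R)|=|\tG:GN_{\tG}(\tR')|$ directly, and the previous paragraph finishes the proof. I would also double-check that the $\det$-bookkeeping is consistent with the choice $(\tR',\tvarphi')=\Xi(\tR,\tvarphi)$: the operation $\Xi$ replaces the original (possibly non-special) $\tR$ by a special $\tR'$ with $\tR'\cap G$ conjugate to $\tR\cap G$ (by the Lemma immediately preceding Proposition \ref{classify-wei-slsu}), so the index is unchanged and it is legitimate to compute with $\tR'$. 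The main obstacle I anticipate is the careful identification of $\nu(\det(\tR'\tC'))$ with $\min\{a,\mathrm{def}(K)\}$ uniformly across the three regimes ($\ell$ odd; $\ell=2$, $4\mid q-\eta$; $\ell=2$, $4\mid q+\eta$) and across the various shapes of $\tR'$ produced by the cases (i)--(ix); this is a finite but delicate case-check, relying on the determinant formulas in Propositions \ref{prop:cc-R-m,alpha,gamma-odd}--\ref{prop:cc-R-m,alpha,gamma,c-2-unitary} and on the labeling conventions for $\tsC_{\Gamma,\delta}(\gamma)$ and the bijection $\pi_m$ of Lemma \ref{forthebijexction} matching ``$\deg$'' on the radical side with ``$\mathrm{def}(K)$'' on the label side.
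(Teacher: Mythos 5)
Your proposal is correct and follows essentially the same route as the paper's proof: both rest on the isomorphism $\tG/GN_{\tG}(R)\cong\fZ_{q-\eta}/\det(N_{\tG}(R))$ (equivalently, on Lemma \ref{lem:split-classes-radical}), use the equality $\det(\tN')=\det(\tR'\tC')$ from Lemmas \ref{det-radical-N-C-odd}--\ref{det-radical-N-C-2-unitary} outside the exceptional cases, and then reduce to the case-by-case determinant formulas from \S\ref{subsect:srs} to match $\min_i\{\min\{a,\gamma_i+|\bc_i|,\gamma_i+\nu(m_i)\}\}$ with $\deg(K)$. The one small difference is presentational: you decompose the special subgroup $\tR'$ directly (which cleanly justifies $N_{\tG}(R)=N_{\tG}(\tR')$), whereas the paper decomposes the original $\tR$ and leaves the passage through $\Xi$ implicit in the phrase ``direct calculation case by case''; the underlying computation is identical.
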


\begin{proof}
First note that $\tG/GN_{\tG}(R)\cong \fZ_{q-\eta}/\det(N_{\tG}(R))$.		
Up to conjugacy, we can suppose that $\tR$  can be expressible as 
$\tR=\tR_1\times\cdots\times \tR_u$, where $\tR_i=\tR_{m_i,\alpha_i,\gamma_i,\bc_i}$ when $\ell$ is odd or $\ell=2$, $4\mid q-\eta$ and $\tR_i=\tD_{m_i,\alpha_i,\gamma_i,\bc_i}$ when $\ell=2$ and $4\mid q+\eta$.
Then $\deg(K)=\min_i\{\gamma_i \}$ since it is not one of the cases (\ref{special-case-wei-ell3}), (\ref{special-case-wei-ell2-linear}) and (\ref{special-case-wei-ell2-uni}).
Thus direct calculation of $\det(N_{\tG}(R))$ case by case shows this lemma.
\end{proof}

The bijection between $\IBr(\tG)$ and $\Alp(\tG)$ constructed in  \cite{AF90,An92,An93,An94} comes from a bijection between $i\IBr(\tG)$ and $i\Alp(\tG)$; see also the proof of \cite[Thm.~1.1]{LZ18}.
Now we recall that construction as follows.

Given $(s,\mu)$ in $i\IBr(G)$, we let $\lambda=\prod_\Gamma\lambda_\Gamma$ where $\lambda_\Gamma$ is the $e_\Gamma$-core of $\mu_\Gamma$.
And we assume that $(\mu_{\Gamma,1},\cdots,\mu_{\Gamma,e_\Gamma})$ is the $e_\Gamma$-quotient of $\mu_\Gamma$.
Then $|\mu_{\Gamma,1}|+\cdots+|\mu_{\Gamma,e_\Gamma}|=w_\Gamma$ where $w_\Gamma$ is determined by $m_{\Gamma}(s)=|\lambda_\Gamma|+e_\Gamma w_\Gamma$.
Since $|\sC_{\Gamma,\delta}|=e_\Gamma \ell^\delta$, we can use \cite[(1A)]{AF90} to define $K_\Gamma$.
Thus, we get $(s,\lambda,K)$ in $i\Alp(G)$.
The map $(s,\mu)^G \mapsto (s,\lambda,K)^G$ is a bijection by  \cite[(1A)]{AF90}.

If $\ell\nmid q-\eta$, we use the bijections as above.
If $\ell\mid q-\eta$, we assume further that when using \cite[(1A)]{AF90} in the construction above,
the bijections between $\mathcal U_m$ and $\mathcal D_m$ are chosen as in Lemma \ref{forthebijexction}.
From this we have chosen a bijection $\tOmega$ between $\IBr(\tG)$ and $\Alp(\tG)$ for both cases.

It turns out that with above combinatorial technique
 this bijection can be taken as the bijection $\tOmega$ required in part (ii) of Theorem \ref{thm:criterion}.
Recall that we  identify $\IBr(\tG/G)$ with $\Lin_{\ell'}(\tG/G)$, which is in bijection with $\mrO_{\ell'}(Z(\tG))$.

\begin{thm}\label{thm-equ-bijection}
	The bijection $\tOmega$ between $\IBr(\tG)$ and $\Alp(\tG)$ as above is equivariant under the action of $\Lin_{\ell'}(\tG/G) \rtimes D$ and  can be chosen  to satisfy 
		\begin{itemize}\setlength{\itemsep}{0pt}
			\item $\tOmega(\IBr(\tG\mid\nu)) = \Alp(\tG\mid\nu)$
			for every $\nu \in \Lin_{\ell'}(Z(\tG))$,
			\item $\J_G(\tpsi) = \J_G(\tOmega(\tpsi))$ for every $\tpsi \in \IBr(\tG)$.
		\end{itemize}
In particular, $\tOmega$ satisfies the condition (ii) of Theorem \ref{thm:criterion}.
\end{thm}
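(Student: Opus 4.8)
The plan is to verify the three requirements on $\tOmega$ separately, reducing everything to the combinatorics of the parametrizing sets $i\IBr(\tG)$ and $i\Alp(\tG)$ together with the description of the actions of $D$ and $\mrO_{\ell'}(\fZ_{q-\eta})$ on these sets that has already been assembled. First I would record that by Theorem \ref{thm:branch-IBr}(1) the action of $\Lin_{\ell'}(\tG/G)$ on $i\IBr(\tG)$ is induced by $\hat\zeta\tpsi_{s,\mu}=\tpsi_{\zeta s,\zeta.\mu}$, and by Theorem \ref{action-z-onwei} (combined with the remark preceding it) the action on $i\Alp(\tG)$ is $\overline{(\tR,\hat\zeta\tvarphi)}\mapsto(\zeta s,\zeta.\lambda,\zeta.K)$. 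The action of $D=\grp{\sigma_{it},F_p}$ on both sides is the one transported through Lusztig's Jordan decomposition and through the explicit twisted constructions of \S\ref{sect:weights-general-gp}; here I would invoke Remarks \ref{M-is-stable-odd}, \ref{M-is-stable-2-1}, \ref{M-is-stable-2-2}, \ref{M-is-stable-2-3} that the twisted radical subgroups and their normalizers are stable under field and graph automorphisms, so that $D$ acts on the labels $(s,\lambda,K)$ only through its action on $s$ (and, compatibly, on the polynomials $\Gamma\in\cF'$ and on the associated combinatorial data). The core of the equivariance claim is then that the bijection $i\IBr(\tG)\to i\Alp(\tG)$, $(s,\mu)\mapsto(s,\lambda,K)$, fixes the semisimple part $s$ and transforms the remaining data only via the Alperin--Fong combinatorial bijection $\cU_m\to\cD_m$; the point of Lemma \ref{forthebijexction} is precisely that this combinatorial bijection can be chosen so that $\nu(\Delta(\mu'_\Gamma))=\deg(K_\Gamma)$ for each $\Gamma$, and that it is canonical enough to commute with the permutation action on the index sets $\cM$ (equivalently, on the $\cF'$-indexed families) induced by $D$ and by $\mrO_{\ell'}(\fZ_{q-\eta})$. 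So I would argue: the combinatorial bijection is defined uniformly in $m$ and intertwines the natural $\fZ_{q-\eta}$- and $D$-actions on the $\Gamma$-components because those actions merely permute the components $\Gamma$ (and act on roots), leaving each local piece $(m_\Gamma,\alpha_\Gamma)$ and hence each $\cU_{m_\Gamma(s)}\to\cD_{m_\Gamma(s)}$ untouched; therefore $\tOmega$ is $\Lin_{\ell'}(\tG/G)\rtimes D$-equivariant.

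For the first bullet, $\tOmega(\IBr(\tG\mid\nu))=\Alp(\tG\mid\nu)$: a character $\tpsi_{s,\mu}$ restricts to $\nu\in\Lin_{\ell'}(Z(\tG))$ exactly according to the central character of the Lusztig series $\cE_\ell(\tG,s)$, which is determined by $s$ alone; likewise a weight $(\tR,\tvarphi)$ with label $(s,\lambda,K)$ lies in $\Alp(\tG\mid\nu)$ according to the same central condition on $s$ (one sees this from the construction in \S\ref{subsect:weights-general}, where the canonical character $\ttheta_0=\tvarphi_0=\chi_{s_0,\lambda}$ carries the central information). Since $\tOmega$ preserves $s$, the two subsets correspond. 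I would spell this out by tracking $Z(\tG)=\fZ_{q-\eta}$ through the isomorphism $Z(\tbL)^F\to\Irr(\tbL^F/[\tbL^F,\tbL^F])$ and through $\widehat{z_\zeta}\tchi_{s,\mu}=\tchi_{\zeta s,\zeta.\mu}$.

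For the second bullet, $\J_G(\tpsi)=\J_G(\tOmega(\tpsi))$: by definition $\J_G(\tpsi_{s,\mu})=\tG_{\ell'}\tG_{\psi}$ for $\psi\in\IBr(G\mid\tpsi_{s,\mu})$, and by Theorem \ref{thm:branch-IBr}(2) the index $|\tG_{\tpsi}:\tG_\psi|$ is governed by $(\kappa_G^{\tG}\tpsi_{s,\mu})_{\ell'}=|\mathrm{Stab}_{\mrO_{\ell'}(\fZ_{q-\eta})}(s,\mu)|$ on the $\ell'$-part and by $\gcd(q-\eta,\Delta(\mu'))_\ell$ on the $\ell$-part. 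On the weight side, $\J_G(\overline{(\tR,\tvarphi)})=\tG_{\ell'}N_{\tG}(R)_\varphi$, and here I would combine Corollary \ref{num-ell-ell'} (the $\ell'$-part of $\kappa^{N_{\tG}(\tR')}_{N_G(\tR')}(\tvarphi')$ is controlled by the $\mrO_{\ell'}(\fZ_{q-\eta})$-stabilizer of the label, outside the exceptional cases) with Lemma \ref{num-conj-class} ($|\tG:GN_{\tG}(R)|=\gcd(q-\eta,\ell^{\deg(K)})$ outside the exceptional cases) to see that the $\ell$-part matches $\gcd(q-\eta,\Delta(\mu'))_\ell=\gcd(q-\eta,\ell^{\min_\Gamma\nu(\Delta(\mu'_\Gamma))})$ precisely because of the degree identity $\nu(\Delta(\mu'_\Gamma))=\deg(K_\Gamma)$ from Lemma \ref{forthebijexction}, and the $\ell'$-part matches because $\tOmega$ is $\mrO_{\ell'}(\fZ_{q-\eta})$-equivariant and so carries stabilizers to stabilizers. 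The three exceptional configurations \eqref{special-case-wei-ell3}, \eqref{special-case-wei-ell2-linear}, \eqref{special-case-wei-ell2-uni} I would dispatch by hand using the explicit character-theoretic analysis of Remarks \ref{special-case-ell3}, \ref{special-case-ell2-linear}, \ref{exc-case-2uni} and the detailed bookkeeping in Cases (iii), (iv), (viii), (ix) of \S\ref{Para-of-wei}: in each of these the defect of the weight character and the numbers $\kappa$ were computed outright, so one checks directly that the corresponding $(s,\mu)$ has $\Delta(\mu')$ divisible by exactly one power of $\ell$ and matching stabilizer.

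The main obstacle I anticipate is the second bullet in the exceptional cases: there $\tOmega$ was modified (via the bijection $\Xi$ of \eqref{bij-wei} and the replacements in Cases (iii)--(ix)) precisely so that radical subgroups and weight characters behave correctly, and the identity $\J_G(\tpsi)=\J_G(\tOmega(\tpsi))$ is not visible from the combinatorics alone but requires matching, on the Brauer side, the factor $\gcd(q-\eta,\Delta(\mu'))_\ell$ against, on the weight side, a product of a ``number of $G$-classes of radical subgroups'' factor and a ``$\kappa$'' factor each of which individually departs from the generic formula in these cases. Verifying the products agree is a finite but delicate case check; I would organize it by the type of the exceptional component $\tR_1$ ($\tR_{\Gamma,1}$ with $\ell=3$; $\tR_{\Gamma,\gamma}$ with $\ell=2$, $4\mid q-\eta$, $\gamma\in\{1,2\}$; $\tR^{+}_{\Gamma,2}$ or $\tR^{-}_{\Gamma,1}$ with $\ell=2$, $4\mid q+\eta$) and in each case read off the relevant defect-zero character counts from the $\SL_2(3)$, $\fS_3$, $\fS_6$, $\GO_4^{\pm}(2)$ tables cited in the Remarks, concluding that all discrepancies cancel. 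Everything else reduces, as indicated, to the stability statements for the twisted subgroups and to the equivariance of the Alperin--Fong combinatorial bijection, which Lemma \ref{forthebijexction} was built to supply.
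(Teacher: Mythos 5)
Your proposal is correct and follows essentially the same route as the paper: equivariance via \cite[Thm.~1.1]{LZ18} and \cite[\S 5]{Feng19} together with Theorems \ref{action-z-onwei} and \ref{thm:branch-IBr}, the central-character matching via the restrictions to $Z(\tG)$, and the $\J_G$-identity via Lemma \ref{forthebijexction}, Lemma \ref{num-conj-class}, Corollary \ref{num-ell-ell'}, and a hand-check of the exceptional configurations using Remarks \ref{special-case-ell3}, \ref{special-case-ell2-linear}, \ref{exc-case-2uni}. One small point of calibration: there is no separate ``$\ell'$-part'' of $|\tG:\J_G(\tpsi)|$ to reconcile with anything on the weight side, since $\J_G$ contains $\tG_{\ell'}$ by definition and both indices are automatically $\ell$-powers; the only thing to compare is the $\ell$-part, which you do handle correctly via the degree identity.
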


\begin{proof}
The equivariance under the action of $D$ is established in \cite[Thm.~1.1]{LZ18} already, while the equivariance under the action of $\Lin_{\ell'}(\tG/G)\cong \mrO_{\ell'}(\fZ_{q-\eta})$ are given in \cite[\S 5]{Feng19}. In fact, for a  weights $\overline{(\tR,\tvarphi)}$ of $\tG$ with label $(s,\lambda,K)$ and $\zeta\in\mrO_{\ell'}(\fZ_{q-\eta})$, by Theorem \ref{action-z-onwei}, $\overline{(\tR,\hat \zeta\tvarphi)}$ has label $(\zeta s,\zeta.\lambda,\zeta.K)$.
	By above observations, $\overline{(\tR,\tvarphi)}$ corresponds via $\tOmega^{-1}$ to an irreducible Brauer character $\tpsi_{s,\mu}$.
By Theorem \ref{thm:branch-IBr} (a), $\hat \zeta\tpsi_{s,\mu} = \tpsi_{\zeta s,\zeta.\mu}$.
	Then it is easy to see $\tOmega$ is equivariant under $\Lin_{\ell'}(\tG/G)$.
	Finally, by the construction of weights it follows that $\Res^{N_{\tG}(\tR)}_{Z(\tG)} \tvarphi = \Res_{Z(\tG)}\hs = \Res^{\tG}_{Z(\tG)} \tpsi_{s,\mu}$.
From this, it remains to prove that $\J_G(\tpsi) = \J_G(\tOmega(\tpsi))$ for every $\tpsi \in \IBr(\tG)$. Obviously, we can assume that $\ell\mid q-\eta$.

Since $\tG/G$ is cyclic,
it suffices to show $|\tG:\J_G(\tpsi)| = |\tG:\J_G(\tOmega(\tpsi))|$.
Note that $|\tG:\J_G(\tpsi)| =\kappa^{\tG}_{G}(\tilde\psi)_\ell$ for $\tpsi \in \IBr(\tG)$.
If $(R,\varphi)$ is a weight covered by $\tOmega(\tpsi)=(\tR,\tvarphi)$, then $|\tG:\J_G(\tOmega(\tpsi))|=|\tG:GN_{\tG}(R)|\cdot |N_{\tG}(R):N_{\tG}(R)_{\varphi}|_\ell$ 
since $|\tG:GN_{\tG}(R)|$ is an $\ell$-number 
by Lemma \ref{det-radical-N-C-odd}, \ref{det-radical-N-C-2-linear} and \ref{det-radical-N-C-2-unitary}.
Also note that $\tG/GN_{\tG}(R)\cong \fZ_{q-\eta}/\det(N_{\tG}(R))$.
We let $(\tR',\tvarphi')=\Xi(\tOmega(\tpsi))$.

First suppose that  it is not  one of the cases (\ref{special-case-wei-ell3}), (\ref{special-case-wei-ell2-linear}) and (\ref{special-case-wei-ell2-uni}) for $\tR'$.
Then by Corollary \ref{num-ell-ell'}, $|\tG:\J_G(\tOmega(\tpsi))|=|\tG:GN_{\tG}(\tR')|$.
Then this assertion follows from Theorem \ref{thm:branch-IBr} and
Lemma \ref{forthebijexction} and \ref{num-conj-class}.

Now suppose that  we are in the case (\ref{special-case-wei-ell3}) for $\tR'$.
Then $\ell=3$ and $a=1$ and $R=\tR'$.
We have either $(\tR',\tvarphi')=(\tR,\tvarphi)$
or we are in the Case (iii) of \S \ref{Para-of-wei} for $(\tR,\tvarphi)$.
Combine Lemma \ref{det-radical-N-C-odd}, \ref{spec-rad-cover} and Proposition \ref{cover-case3}, $|\tG:GN_{\tG}(\tR')|=1$ 
 and $|\tG:\J_G(\tOmega(\tpsi))|=\kappa^{N_{\tG}(\tR')}_{N_{G}(\tR')}(\tvarphi')_3$.
Also by \S \ref{Para-of-wei}, 
$\kappa^{N_{\tG}(\tR')}_{N_{G}(\tR')}(\tvarphi')_3$ is $0$ or $1$ if 
$\tvarphi'$ is of defect $1$ or $0$	when viewed as a character of $N_{\tG}(\tR')/\tR'$.

Assume that $\tvarphi'$ is of defect  $0$ first, which implies that  $(\tR',\tvarphi')=(\tR,\tvarphi)$.
Then it suffices to prove $\nu(\kappa^{\tG}_{G}(\tilde\psi))=1$, which
 follows from Theorem \ref{thm:branch-IBr} (b) and
the fact that $\nu(\Delta(\mu'))=\mathrm{def}(K)=1$
(by Lemma \ref{forthebijexction}).
Now let  $\tvarphi'$ is of defect  $1$.
Then we are in the Case (iii) of \S \ref{Para-of-wei} and the proof is completed  by showing that $3\nmid \kappa^{\tG}_{G}(\tilde\psi)$.
This holds because $\nu(\Delta(\mu'))=\mathrm{def}(K)=0$.

The proofs for the cases  (\ref{special-case-wei-ell2-linear}) and (\ref{special-case-wei-ell2-uni}) are entirely analogous with for the case (\ref{special-case-wei-ell3}) as above, using the construction in Cases (iv), (viii) and (ix) of \S \ref{Para-of-wei}.
\end{proof}


\section{Stabilizers and extensions of weight characters}
\label{sec-Stab-ext-wei}

In this section, we prove the following theorem, towards the condition (iv) of Theorem \ref{thm:criterion}.

\begin{thm}\label{local-condition}
	Let $(R',\varphi')$ be an weight of $G$.
	Then there exists some $x\in\tG$ with 
	\begin{enumerate}[\rm(1)]
		\item $(\tG D)_{R',\varphi'}=\tG_{R',\varphi'}(GD^x)_{R',\varphi'}$, and
		\item $\varphi'$ extends to $(GD^x)_{R',\varphi'}$.
	\end{enumerate}
\end{thm}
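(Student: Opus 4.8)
The strategy is to reduce the statement, via the classification of weights of $G$ in \S\ref{subsect:wos}, to the case where $(R',\varphi')$ comes from a weight $(\tR',\tvarphi')\in\Alp'(\tG)$ through the bijection $\Xi$ of \eqref{bij-wei} and Proposition \ref{classify-wei-slsu}; that is, $R'=\tR'\cap G$ and $\varphi'\in\Irr(N_G(R')\mid\tvarphi')$. Since every $\tG$-orbit on $\Alp(G)$ contains such a representative (Proposition \ref{classify-wei-slsu}(2)), and since Theorem \ref{thm:criterion}(iv) only requires one representative per $\tG$-orbit, it suffices to prove (1) and (2) for this distinguished $(R',\varphi')$. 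I would first handle the global-type reduction: using that $\tG D/Z(\tG)\cong\Aut(G)$ with $D=\grp{\sigma_{it},F_p}$ abelian and $\tG D=\tG\rtimes D$, and that the subgroups $\tR^{tw}$, $\tN^{tw}$, $\tC^{tw}$, $\tM^{tw}$ are stable under field and graph automorphisms (Remarks \ref{M-is-stable-odd}, \ref{M-is-stable-2-1}, \ref{M-is-stable-2-2}, \ref{M-is-stable-2-3}), one transfers the problem to the twisted radical subgroups, exactly as in the treatment of the local case of the inductive McKay condition in \cite{CS17} and \cite{MS16}. Concretely, one chooses $x\in\tG$ so that $R'^x$ is (conjugate to) one of the standard special radical subgroups of Proposition \ref{sepcial-defect}, whose normalizers were computed explicitly in \S\ref{subsect:srs}; this $x$ is the element in the statement.

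Next I would address the extendibility claim (2). The character $\tvarphi'$ of $N_{\tG}(\tR')/\tR'$ decomposes through the canonical-character parametrization recalled in \S\ref{subsect:weights-general}: along a base component it is a product of characters $\tpsi_{\Gamma,\delta,i,j}$ built from $\ttheta_\Gamma\otimes\mathrm{St}_{2\gamma}$ (or the exceptional-case characters of \S\ref{sec-const-wei}) twisted by $\overline{\prod_j\tpsi_{\Gamma,\delta,i,j}^{t}}\cdot\prod_j\phi_{\lambda_{ijk}}$. The restriction $\varphi'$ to $N_G(R')/R'$ is then a constituent of this. Extendibility of $\varphi'$ to $(G\rtimes D_{R',\varphi'})$ I would establish componentwise: the field/graph automorphisms act on each twisted basic subgroup in a controlled way, the canonical characters $\ttheta_\Gamma$ and the Steinberg components of $\Sp_{2\gamma}(\ell)$ (resp.\ $\GO^\pm_{2\gamma}(2)$, $\SL_2(3)$, $\fS_3$, $\GO^+_4(2)$) are $D$-stable up to the obvious permutation of elementary divisors, and the symmetric-group factors $\phi_{\lambda}$ extend since symmetric-group character theory is defined over $\Q$. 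One assembles these local extensions into an extension of $\varphi'$ using the fact that the relevant obstruction group (an abelian subquotient of $\Out(G)$, which is again essentially cyclic-by-small) is generated by field and graph automorphisms for which one can exhibit explicit extending characters. This is precisely the content of ``parametrization of certain characters of local subgroups and stabilizers/extendibility of weight characters'' announced in the introduction.

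For part (1), the statement $(\tG D)_{R',\varphi'}=\tG_{R',\varphi'}(GD^x)_{R',\varphi'}$ is a Clifford-theoretic splitting: one must show that the stabilizer in the graph/field automorphism part $D$ of the pair $(R',\varphi')$ is realized inside $\tG D$ by elements normalizing the distinguished representative, i.e.\ that no ``diagonal'' automorphism outside $\tG$ is forced when stabilizing $\varphi'$. Here the key input is the computation in \S\ref{subsect:srs} of $\det(N_{\tG}(R'))$ and the counting of $G$-classes inside a $\tG$-class (Propositions \ref{prop:cc-R-m,alpha,gamma-odd}, \ref{prop:cc-R-m,alpha,gamma,c-odd}, \ref{prop:cc-R-m,alpha,gamma-2-linear}, \ref{prop:cc-R-m,alpha,gamma-2-unitary}, \ref{prop:cc-R-m,alpha,gamma,c-2-unitary}, together with Lemmas \ref{det-radical-N-C-odd}, \ref{det-radical-N-C-2-linear}, \ref{det-radical-N-C-2-unitary}), which pin down $N_{\tG D}(R')$ and its action, combined with Corollary \ref{num-ell-ell'} describing $\kappa^{N_{\tG}(R')}_{N_G(R')}(\tvarphi')$. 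One then argues, as for the analogous global statement in condition (iii) and as in \cite[\S4]{CS17}, that the stabilizer relation follows from $\varphi'$ being $N_G(R')$-conjugate to a character whose $\tG$-stabilizer and $D$-stabilizer generate $(\tG D)_{R',\varphi'}$.

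\textbf{Main obstacle.} The serious difficulty is the extendibility in part (2) in the exceptional configurations \eqref{special-case-wei-ell3}, \eqref{special-case-wei-ell2-linear}, \eqref{special-case-2-uni-1} (and their combinations in \eqref{special-case-wei-ell2-uni}): there $N_{\tG}(R')/R'$ is no longer a central product over $Z(E)$ and the weight character $\tvarphi'$ involves genuine non-linear characters of $\SL_2(3)$, $\GO^+_4(2)$ or the semidihedral-type groups, whose restriction behaviour to $N_G(R')/R'$ was analysed only at the level of constituents (Remarks \ref{special-case-ell3}, \ref{special-case-ell2-linear}, \ref{exc-case-2uni}). Controlling the cohomological obstruction to extending $\varphi'$ to $(G\rtimes D_{R',\varphi'})$ in these cases — where the relevant classes in $\Out(G)$ can a priori have unbounded order, as flagged in the introduction — will require a careful case-by-case matching of the Dade--Glauberman--Nagao correspondence (used in Proposition \ref{cover-case3}) with the extension maps, and is where the bulk of the work lies.
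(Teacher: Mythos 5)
Your high-level plan — pick a good representative, transfer to the twisted picture as in \cite{CS17,MS16}, and work componentwise on the basic-subgroup factors — is indeed the route the paper takes. But the proposal stops at the plan: it does not build the machinery that actually carries the weight. The paper's argument is organized around a specific reduction (Proposition \ref{twist-thm}) asserting that it suffices to show, for every $\tilde\varphi\in\Irr(\tN/\tR\mid\mathrm{dz}(N/R))$ in the \emph{twisted} setting, the existence of a constituent $\varphi\in\Irr(N\mid\tilde\varphi)$ with $(\tN\rtimes E)_\varphi=\tN_\varphi\rtimes E_\varphi$ and an extension $\hat\varphi$ to $N\rtimes E_\varphi$ with $v\hat F\in\ker(\hat\varphi)$. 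This reduction, and the appearance of $v\hat F$ in the kernel condition, is what gives rise to the element $x$ in the statement: $x$ is the Lang--Steinberg element $g$ with $g^{-1}F(g)=v$, not — as you suggest — an element conjugating $R'$ to a standard radical subgroup (that conjugation is absorbed into "up to $\tG$-conjugacy" at the start). Your description of $x$'s role is therefore off, and you never cite the analogue of \ref{twist-thm}, so the logical bridge between the twisted data and the asserted conclusion about $GD^x$ is absent.

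Beyond that, the core of the paper's proof consists of: constructing a representative system $\mathcal G_0$ of characters of the "determinant-one pieces" $\mathcal R\mathcal C$ with good $E$-equivariance and extendibility (Lemma \ref{act-ext-C0}); building an $N\rtimes E$-equivariant extension map $\Lambda$ for a set $\mathfrak C$ of characters of $(\tR\tC\tM\cap N)/R\times T$ (Proposition \ref{extension-map}); and then parametrizing $\Irr(N/R\mid\mathfrak C)$ via $\Pi(\psi,\zeta)$ (Proposition \ref{clifford-N}) so that the stabilizer splitting and the kernel condition can be verified in Proposition \ref{tranfer-ver} through a CS17-style Clifford argument using Lemma \ref{quo-irr}. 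None of this appears in your proposal. Your suggested route to part (1) — "use determinant computations from \S4 together with Corollary \ref{num-ell-ell'} and argue as in condition (iii)" — is not what the paper does, and it is not clear it would work: the paper obtains the stabilizer equality by tracking explicitly how $\tC_\psi\tR\tM$, $V$, $T$, $\mathcal S$, and $E$ act through the parametrization $\Pi$, which is a finer piece of information than the determinant of $N_{\tG}(R')$ or the value of $\kappa$. Finally, you correctly flag the exceptional configurations \eqref{special-case-wei-ell3}, \eqref{special-case-wei-ell2-linear}, \eqref{special-case-wei-ell2-uni} as the hard point, but you leave them unresolved; the paper dispatches them inside Lemma \ref{act-ext-C0} by reducing to extendibility for the small groups $Q_8$, $C_3$, $\Omega_4^+(2)$ and $\fA_6\cong\PSL_2(9)$ (the last via \cite[Lemma 15.2]{IMN07}), together with the modified definitions of $\tM$ from Remarks \ref{not-cent} and \ref{M-for-not-central-product} so that the decomposition $\tR\tC\tM/\tR\cong(\tR\tC/\tR)\times(\tM/(\tM\cap\tR))$ still holds.
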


\subsection{General results on extensions of characters.}\label{gen-re-extension}

In order to consider the stabilizers and extensions of weight characters as in Theorem \ref{local-condition},
we first give some general results about extensions of characters.
Throughout this subsection \S \ref{gen-re-extension}, all groups considered are arbitrary finite groups.

\begin{lem}\label{pre-dir}
	Let $X$ be a finite group and $Y,H$ be two normal subgroups of $X$ such that $Y\leq H$.
	Assume $\chi\in\Irr(X)$ and $\xi\in\Irr(Y\mid\chi)$.
	If $\Res^X_Y\chi$ is multiplicity free, then $|\Irr(H\mid\chi)\cap\Irr(H\mid\xi)|=1$.
\end{lem}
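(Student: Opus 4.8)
\textbf{Proof proposal for Lemma \ref{pre-dir}.}

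The plan is to chase the restriction of $\chi$ down through the chain $Y \unlhd H \unlhd X$ and use the multiplicity-freeness hypothesis to control what happens at the top. First I would set $\theta$ to be an arbitrary member of $\Irr(H \mid \chi) \cap \Irr(H \mid \xi)$; I need to show there is only one such $\theta$. Since $Y \unlhd X$ and $\xi \in \Irr(Y \mid \chi)$, Clifford theory gives $\Res^X_Y \chi = e \sum_{i} \xi_i$ where the $\xi_i$ range over the $X$-orbit of $\xi$ and $e$ is the common multiplicity; the hypothesis that $\Res^X_Y \chi$ is multiplicity free forces $e = 1$. The same reasoning applied to $Y \unlhd H$ shows that for any $\theta \in \Irr(H \mid \xi)$ we have $\Res^H_Y \theta = \sum_{j} \xi_{i_j}$, again multiplicity free (as it is a subsum of $\Res^X_Y\chi$ after restricting through $H$), and in particular $\langle \Res^H_Y\theta, \xi\rangle = 1$.

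Next I would exploit that $\langle \Res^X_H \chi, \theta \rangle$ and $\langle \Res^H_Y \theta, \xi \rangle$ multiply up: writing $\Res^X_H \chi = \sum_{k} a_k \theta_k$ with the $\theta_k$ the distinct $H$-constituents, restricting further to $Y$ gives $\Res^X_Y \chi = \sum_k a_k \Res^H_Y \theta_k$. If two distinct constituents $\theta_{k_1}, \theta_{k_2}$ both lay over $\xi$, then $\xi$ would appear with multiplicity $a_{k_1}\langle\Res^H_Y\theta_{k_1},\xi\rangle + a_{k_2}\langle\Res^H_Y\theta_{k_2},\xi\rangle \geq 2$ in $\Res^X_Y\chi$, contradicting multiplicity-freeness. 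Hence at most one constituent of $\Res^X_H\chi$ lies over $\xi$; since by assumption $\Irr(H\mid\chi)\cap\Irr(H\mid\xi)$ is nonempty (because $\xi \in \Irr(Y\mid\chi)$ guarantees some constituent of $\Res^X_H\chi$ restricts to something involving $\xi$), there is exactly one. This gives $|\Irr(H\mid\chi)\cap\Irr(H\mid\xi)|=1$.

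The only subtlety I anticipate is being careful about the direction of the induced/restricted characters and making sure the "multiplicities multiply" step is stated correctly: $\langle \Res^X_Y \chi, \xi\rangle = \sum_k \langle \Res^X_H\chi,\theta_k\rangle \langle \Res^H_Y \theta_k, \xi\rangle$ by transitivity of restriction, and each inner product is a non-negative integer, so the sum being $1$ pins down a unique nonzero term with both factors equal to $1$. This is routine once set up, so I do not expect a genuine obstacle; the lemma is essentially a bookkeeping consequence of Clifford theory together with the multiplicity-free hypothesis.
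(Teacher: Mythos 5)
Your argument is correct and is exactly the routine Clifford-theoretic bookkeeping the paper has in mind when it says the lemma "follows from Clifford theory"; the key identity $\langle \Res^X_Y\chi,\xi\rangle = \sum_k \langle\Res^X_H\chi,\theta_k\rangle\langle\Res^H_Y\theta_k,\xi\rangle$ together with multiplicity-freeness pins down a unique nonzero term, and nonemptiness is guaranteed by $\xi\in\Irr(Y\mid\chi)$. No gap.
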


\begin{proof}
	This follows from Clifford theory.
\end{proof}

The following lemma is a generalisation of \cite[Lemma 5.8 (a)]{CS17}.

\begin{lem}\label{ext-char}
	Suppose that $X=YU$ is a finite group and $Y\unlhd X$, $U\le  X$ such that $Y\cap U\unlhd Y$. 
	Let $\chi\in\Irr(Y)$, $\xi\in\Irr(Y\cap U\mid \chi)$ and $\phi\in\Irr(Y_\xi\mid\xi)$  such that $\chi=\Ind_{Y_\xi}^{Y}\phi$.
	Assume further that $U_\chi\le U_\xi$ and $\xi=\Res^{Y_\xi}_{Y\cap U}\phi$.
	Then 
	\begin{enumerate}[\rm(1)]
		\item $U_\chi=U_\phi\le U_\xi$,
		\item The following are equivalent.
		\begin{enumerate}[\rm(a)]
			\item $\chi$ extends to $X_\chi$,
			\item $\xi$ extends to $U_\chi$,
			\item $\phi$ extends to $U_\chi Y_\xi$.
		\end{enumerate}
	\end{enumerate}
\end{lem}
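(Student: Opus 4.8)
\textbf{Proof plan for Lemma~\ref{ext-char}.}
The plan is to reduce everything to Clifford theory over the subgroup $Y\cap U$, exploiting the hypothesis $\xi=\Res^{Y_\xi}_{Y\cap U}\phi$, which forces $\phi$ to ``see'' $\xi$ without distortion. First I would prove (1). Since $U_\chi\le U_\xi$ by assumption, any $u\in U_\chi$ normalizes $Y_\xi$ (because $u$ fixes both $Y$ and $\xi$, and $Y_\xi$ is the stabilizer of $\xi$ in $Y$); thus it makes sense to ask whether $u$ fixes $\phi$. The containment $U_\phi\le U_\chi$ is immediate from $\chi=\Ind_{Y_\xi}^{Y}\phi$. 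For the reverse, take $u\in U_\chi$; then $\phi^u\in\Irr(Y_\xi)$ and $\Res^{Y_\xi}_{Y\cap U}\phi^u=\xi^u=\xi=\Res^{Y_\xi}_{Y\cap U}\phi$, and moreover $\phi^u$ still lies over $\xi$ and induces $\chi^u=\chi$. The point is that $\phi$ is the \emph{unique} member of $\Irr(Y_\xi\mid\xi)$ inducing $\chi$ irreducibly: by the Clifford correspondence between $\Irr(Y\mid\xi)$ and $\Irr(Y_\xi\mid\xi)$, $\phi$ is the Clifford correspondent of $\chi$ and hence uniquely determined by $\chi$; since $\chi^u=\chi$, we get $\phi^u=\phi$, i.e. $u\in U_\phi$. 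Hence $U_\chi=U_\phi$.

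Next I would set up (2). Let $\hat X=X_\chi$, and note $X_\chi=Y U_\chi$ with $Y\cap U_\chi=(Y\cap U)_\xi'$ where I mean $Y\cap U_\chi= U_\chi\cap Y\le Y\cap U$; also $U_\chi$ stabilizes $\xi$ (it lies in $U_\xi$), so $\xi$ is $U_\chi$-invariant, and $Y\cap U$ is normal in $Y\cap U_\chi\cdot(Y\cap U)$, so asking whether $\xi$ extends to $U_\chi$ is sensible once one checks $Y\cap U\unlhd U_\chi$; this follows since $Y\cap U\unlhd Y$ and $U_\chi$ normalizes $Y$. For (c)$\Rightarrow$(a): if $\phi$ extends to $\hat\phi\in\Irr(U_\chi Y_\xi)$, then $\Ind_{U_\chi Y_\xi}^{X_\chi}\hat\phi$ should restrict to $\chi$ on $Y$ — here I would verify, using Mackey and the equality $X_\chi=Y\cdot(U_\chi Y_\xi)$ together with $Y\cap U_\chi Y_\xi=Y_\xi$, that $\Res^{X_\chi}_Y\Ind_{U_\chi Y_\xi}^{X_\chi}\hat\phi=\Ind_{Y_\xi}^Y\phi=\chi$, and degrees match, so the induced character is the desired extension of $\chi$. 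For (a)$\Rightarrow$(b): if $\tilde\chi\in\Irr(X_\chi)$ extends $\chi$, restrict to $U_\chi$ and decompose; one extracts a constituent lying over $\xi$ whose degree equals $\xi(1)$ by a degree count ($\tilde\chi(1)=\chi(1)=|Y:Y_\xi|\,\phi(1)$ and $|U_\chi:Y\cap U_\chi|=|X_\chi:Y|$), giving an extension of $\xi$ to $U_\chi$. For (b)$\Leftrightarrow$(c): this is the ``going up the wreath-free tower'' step — $\xi$ extends to $U_\chi$ iff $\phi$ extends to $U_\chi Y_\xi$, which I would deduce from the hypothesis $\Res^{Y_\xi}_{Y\cap U}\phi=\xi$ by the standard fact (a relative version of \cite[Lemma~5.8]{CS17}, or Gallagher/character-triple manipulation) that extendibility is detected on the ``bottom'' normal subgroup when the restriction is without multiplicity and the relevant quotient groups are isomorphic, here $U_\chi/(Y\cap U_\chi)\cong U_\chi Y_\xi/Y_\xi$.

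The main obstacle I expect is the careful bookkeeping of the various stabilizer subgroups and the Mackey/double-coset computations that show the three groups $X_\chi$, $U_\chi Y_\xi$, $U_\chi$ interact as a ``pushout'' of $Y_\xi$ and $U_\chi$ over $Y\cap U_\chi$ — in particular verifying $X_\chi=Y U_\chi Y_\xi$ with all the intersections as claimed, and that no multiplicities are introduced when inducing. Once that combinatorial skeleton is in place, each of the implications in (2) is a short degree-count or Gallagher-type argument, and (1) is essentially the uniqueness of the Clifford correspondent. I would organize the write-up as: (i) prove $Y\cap U\unlhd U_\chi$ and $X_\chi=Y U_\chi$, $Y\cap U_\chi Y_\xi=Y_\xi$; (ii) prove (1); (iii) prove (c)$\Rightarrow$(a)$\Rightarrow$(b)$\Rightarrow$(c) using (i), closing the cycle.
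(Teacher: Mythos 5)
Your proof plan takes the same underlying approach as the paper, but the paper packages part~(2) much more compactly. For part~(1), your argument is essentially the paper's: the point is that $\phi$ is the unique element of $\Irr(Y_\xi\mid\xi)\cap\Irr(Y_\xi\mid\chi)$ (by the Clifford correspondence), hence $U_\chi$ fixes it. For part~(2), the paper appeals directly to Isaacs \cite[Cor.\ 4.2 and 4.3]{Is84} to establish that restriction $\Res^{Y_\xi U_\chi}_{U_\chi}:\Irr(Y_\xi U_\chi\mid\phi)\to\Irr(U_\chi\mid\xi)$ and induction $\Ind^{X_\chi}_{Y_\xi U_\chi}:\Irr(Y_\xi U_\chi\mid\phi)\to\Irr(X_\chi\mid\chi)$ are bijections; the equivalences (a)$\Leftrightarrow$(b)$\Leftrightarrow$(c) then follow by tracking degrees across these bijections. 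You instead attempt the three implications individually. Your (c)$\Rightarrow$(a) via Mackey is correct and complete (and is really the induction bijection applied to the extension of $\phi$). But your (a)$\Rightarrow$(b) (``restrict to $U_\chi$ and extract a constituent of degree $\xi(1)$'') and your (b)$\Leftrightarrow$(c) (appealing to an unnamed ``standard fact'' about extendibility being detected at the bottom under a multiplicity-free restriction) are outsourcing the actual content: a degree count by itself does not show that $\Res^{Y_\xi U_\chi}_{U_\chi}\hat\phi$ is irreducible, nor that $\Res^{X_\chi}_{U_\chi}\tilde\chi$ has an irreducible constituent of degree $\xi(1)$; this is exactly what the Isaacs restriction result provides, using the hypothesis $\xi=\Res^{Y_\xi}_{Y\cap U}\phi$. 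So the route is morally identical, but to turn your plan into a proof you would need to either cite Isaacs' bijections or reprove them — the Gallagher-style shortcut you gesture at does not fall out for free. One small logical slip: to see $Y\cap U\unlhd U_\chi$ you write that it follows from $Y\cap U\unlhd Y$ and $U_\chi$ normalizing $Y$; in fact the correct reason is that $U$ normalizes $Y$ (since $Y\unlhd X$) and normalizes itself, hence normalizes $Y\cap U$, which gives $Y\cap U\unlhd U\ge U_\chi$.
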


\begin{proof}
First, $U_\phi\le U_\xi$ and $U_\phi\le U_\chi$.
	By Clifford theory, $\Irr(Y_\xi\mid\xi)\cap \Irr(Y_\xi\mid\chi)=\{\phi\}$.
	Then $U_\chi$ stabilizes $\phi$ and then $U_\chi=U_\phi\le U_\xi$, which gives (1).
	Thus by \cite[Cor. 4.2 and 4.3]{Is84}, 
	the restriction $\Res^{Y_\xi U_\chi}_{U_\chi}:\Irr(Y_\xi U_\chi\mid \phi)\to \Irr(U_\chi\mid \xi)$
	and induction
	$\Ind^{X_\chi}_{Y_\xi U_\chi}:\Irr(Y_\xi U_\chi\mid\phi)\to \Irr(X_\chi\mid\chi)$
	are well-defined bijections.
	So (2) holds.
\end{proof}

\begin{rem}\label{ker-ext}
	\begin{enumerate}[(a)]
		\item We consider a special case. If $X=Y\rtimes U$ with abelian $Y$, then by Lemma \ref{ext-char},
		every character $\chi\in\Irr(Y)$ extends to $X_\chi$. This also follows from \cite[Prop.~19.12 (b)]{Hu98}.
		\item In Lemma \ref{ext-char}, if $\xi$ has an extension $\tilde\xi\in\Irr(U_\chi)$ with $u\in\ker(\tilde\xi)$ and $\grp{u}\unlhd X_\chi$,
		then $\chi$ has an extension $\tilde\chi$ to $\Irr(X_\chi)$ such that $u\in\ker(\tilde\chi)$. 
		\item We will make use of the following special situation.
			Suppose that $X=YU$ is a finite group, where $Y\unlhd X$, $U\le  X$,  $Y\cap U\unlhd Y$ such that $Y/Y\cap U$ is abelian. 
		Let $\chi\in\Irr(Y)$, $\xi\in\Irr(Y\cap U\mid \chi)$.
		Assume further that $X_\xi=Y_\xi U_\xi$ and $\Res^Y_{Y\cap U}\chi$ is multiplicity-free.
		Then $U_\chi\le U_\xi$ and
		$\chi$ extends to $X_\chi$ if and only if $\xi$ extends to $U_\chi$.
	\end{enumerate}
\end{rem}

The extensions for central products was considered in \cite[2.1]{Sp12} (see for example \cite[Lemma 3.9]{FLZ19a} for more details).
Now we give a generalisation to non-commutative case.

\begin{lem} \label{ext-semi}
	Let  $A=XY$ be a finite group where $X\unlhd A$ and $Z=X\cap Y\subseteq Z(X)$.
	Suppose that we have characters $\chi\in\Irr(X)$ and $\zeta\in\Irr(Y)$ such that $A_\chi=A$, $\zeta(1)=1$ and $\Irr(Z\mid\chi)=\Irr(Z\mid\zeta)$.
	If $\tilde\chi\in\Irr(X\rtimes (Y/Z))$ is an extension of $\chi$, then   $\tilde\chi\cdot \zeta\in\Irr(A)$ is an extension of $\chi$, where $\tilde\chi\cdot\zeta(xy)=\tilde\chi(x\cdot yZ)\zeta(y)$ for $x\in X$ and $y\in Y$.
\end{lem}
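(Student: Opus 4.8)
\textbf{Plan for the proof of Lemma \ref{ext-semi}.}

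The plan is to construct the claimed extension explicitly and verify the three required properties — that $\tilde\chi\cdot\zeta$ is a well-defined class function on $A$, that it is irreducible, and that it restricts to $\chi$ on $X$ — by reducing everything to computations inside the ``semidirect cover'' $\hat A := X\rtimes(Y/Z)$ of $A$. First I would observe that the map $A=XY\to \hat A/\!\!\sim$ is not literally a homomorphism, so the formula $\tilde\chi\cdot\zeta(xy)=\tilde\chi(x\cdot yZ)\zeta(y)$ needs to be checked for well-definedness: if $x_1y_1=x_2y_2$ with $x_i\in X$, $y_i\in Y$, then $x_2^{-1}x_1 = y_2y_1^{-1}\in Z\subseteq Z(X)$, say $=z$; since $\zeta(1)=1$, i.e. $\zeta$ is linear, and $\Irr(Z\mid\chi)=\Irr(Z\mid\zeta)$, the central character of $\tilde\chi$ on $Z$ (which agrees with that of $\chi$, hence with $\zeta|_Z$) gives $\tilde\chi(x_1\cdot y_1Z)=\tilde\chi(x_2z\cdot y_1Z)=\omega_\chi(z)\tilde\chi(x_2\cdot y_2Z)$ while $\zeta(y_1)=\zeta(z^{-1}y_2)=\omega_\chi(z)^{-1}\zeta(y_2)$, and the two factors cancel. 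This is the one genuinely delicate point and I expect it to be the main obstacle, precisely because $A$ is not a semidirect product and one must be careful that the two ``fudge factors'' coming from the ambiguity in writing an element as $xy$ always cancel; the hypotheses $\zeta(1)=1$ and $\Irr(Z\mid\chi)=\Irr(Z\mid\zeta)$ are exactly what is needed here and should be invoked through the common central character $\omega_\chi|_Z = \omega_\zeta|_Z$.

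Next I would check that $\tilde\chi\cdot\zeta$ is a class function on $A$: since $X\unlhd A$ and $A_\chi=A$, conjugation by any $a\in A$ fixes $\chi$, hence (as $\tilde\chi$ is a class function on $\hat A$ and the $\hat A$-action is compatible with the $A$-action on $X$ modulo the well-definedness just established) fixes the value of the first factor; and $\zeta$ being a linear character of $Y$ with $Y$ possibly non-normal requires a small argument — but $Z(X)\supseteq Z$ and the relevant conjugations only move $y\in Y$ by elements of $X$, which act on $Y/Z$ trivially in the relevant quotient because $[X,Y]\subseteq X$ and modulo the central character the $\zeta$-factor is unchanged. Concretely one writes $a=x_0y_0$ and computes $(\tilde\chi\cdot\zeta)(a^{-1}xy\,a)$, again using that the ambiguities are absorbed by $\omega_\chi|_Z$. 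Then degree: $(\tilde\chi\cdot\zeta)(1)=\tilde\chi(1)\zeta(1)=\chi(1)\cdot 1=\chi(1)$, and $\Res^A_X(\tilde\chi\cdot\zeta)=\tilde\chi|_X\cdot\zeta|_{\{1\}}=\chi$, so once we know $\tilde\chi\cdot\zeta$ is a genuine character it is automatically an extension of $\chi$.

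Finally, for irreducibility I would compute $\langle \tilde\chi\cdot\zeta,\tilde\chi\cdot\zeta\rangle_A$. Using $|A| = |X|\,|Y|/|Z|$ and summing $|(\tilde\chi\cdot\zeta)(xy)|^2 = |\tilde\chi(x\cdot yZ)|^2\,|\zeta(y)|^2 = |\tilde\chi(x\cdot yZ)|^2$ over a transversal, the sum collapses to $\langle\tilde\chi,\tilde\chi\rangle_{\hat A}=1$ after accounting for the $|Z|$-fold overcounting; alternatively, since $\Res^A_X(\tilde\chi\cdot\zeta)=\chi$ is irreducible, $\tilde\chi\cdot\zeta$ is automatically irreducible (a class function restricting irreducibly to a normal subgroup and of that degree is an irreducible character once it is shown to be a character — and being a $\mathbb{Z}$-combination of irreducibles with norm $1$ forces it to be $\pm$ an irreducible, the sign being $+$ by the value at $1$). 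I would cite \cite[2.1]{Sp12} and \cite[Lemma 3.9]{FLZ19a} for the abelian prototype and note that the present argument is the verbatim generalisation with $\omega_\chi|_Z$ playing the role of the common central character. No step beyond the well-definedness check is more than a routine manipulation of characters.
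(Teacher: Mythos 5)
Your plan is a genuinely different route from the paper, and it has a real gap. The paper takes a representation $\tilde Q$ of $X\rtimes(Y/Z)$ affording $\tilde\chi$ and directly defines a map $\mathcal D(xy)=\zeta(y)\tilde Q(x,\bar y)$ into matrices, then verifies (i) well-definedness (using, as you anticipated, that $\tilde Q(z,1)$ is scalar by Schur's lemma for $z\in Z\subseteq Z(X)$, with scalar $\zeta(z)$ because $\Irr(Z\mid\chi)=\Irr(Z\mid\zeta)$) and (ii) the homomorphism property. Once $\mathcal D$ is a representation of $A$, its trace is automatically a character, automatically a class function, and restricts to $\chi$ on $X$; nothing more needs to be argued.

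Your proposal works at the level of class functions throughout, and this is where the gap lies. You correctly handle well-definedness, the degree computation, and the norm computation, but you never establish that $\tilde\chi\cdot\zeta$ is actually a \emph{character} of $A$, i.e.\ the trace of a genuine representation or at least a nonnegative-integer combination of irreducibles. Your fallback — ``a $\mathbb{Z}$-combination of irreducibles with norm $1$ forces it to be $\pm$ an irreducible'' — presupposes exactly what is missing: you have no argument that $\tilde\chi\cdot\zeta$ is a virtual character in the first place. A well-defined class function of norm~$1$ and positive degree need not be a character. Your class-function verification is also only sketched (``the $\zeta$-factor is unchanged'' needs an actual computation, because writing $a=x_0y_0$ and conjugating $xy$ produces cross terms mixing $X$ and $Y$), though that part could probably be pushed through; the virtual-character issue is the essential one. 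The cleanest repair — and in effect what the paper does, just stated representation-theoretically — is to observe that $\tilde\chi\cdot\zeta$ is the descent to $A$ of the character $\bigl(\tilde\chi\circ\pi\bigr)\cdot\bigl(\zeta\circ\mathrm{pr}\bigr)$ of the semidirect product $X\rtimes Y$, where $\pi$ and $\mathrm{pr}$ are the obvious projections, and then to check that this product character is trivial on the anti-diagonal copy of $Z$ in $X\rtimes Y$ (this is precisely your well-definedness computation). That route never leaves the category of honest characters, and so avoids the problem entirely; so does the paper's, by constructing $\mathcal D$.
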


\begin{proof}
Take $\tilde Q$ a representation of $X\rtimes (Y/Z)$ yielding $\tchi$ and define a representation $\mathcal D$ of $A=XY$ as $\mathcal D(xy)=\zeta(y)\tilde Q(x,\bar y)$ for $x\in X$ and $y\in Y$.
Here we abbreviate $\tilde Q((x,\bar y))$ to $\tilde Q(x,\bar y)$.
Then this map $\mathcal D$ is well-defined. Indeed, if $xy=x'y'$ in $A$, with $x,x'\in X$ and $y,y'\in Y$, then take $z\in Z$ such that $x=x'z$ and $y=z^{-1}y'$.
Thus 
\begin{align*}
\mathcal D(xy)=&\zeta(y)\tilde Q(x,\bar y)=\zeta(z^{-1}y')\tilde Q(x'z,\bar{y'})\\
=&\zeta(z^{-1}y')\tilde Q(x',1)\tilde Q(z,1)\tilde Q(1,\bar{y'})\\
=&\zeta(z^{-1}y')\tilde Q(x',1)\tilde Q(1,\bar{y'})\tilde Q(z,1)\\
=&\zeta(y')\tilde Q(x',\bar{y'})=\mathcal D(x'y').
\end{align*}
We note that $\tilde Q(z,1)$ above must be a scalar matrix by Schur's lemma, since $z\in Z(X)$.
Also by assumption, $Q(z)=\zeta(z)\cdot I_{\chi(1)}$ as $Q$ is the restriction of $\tilde Q$ to $X$ and is irreducible.

The proof that it is a homomorphism is similar.
For $x,x'\in X$ and $y,y'\in Y$, one have $xyx'y'=xx'^yyy'=xx'^{\bar y}yy'$ and so
\begin{align*}
\mathcal D(xyx'y')=&\zeta(yy')\tilde Q(xx'^y,\overline{yy'})=\zeta(yy')\tilde Q(xx'^{\bar y},\overline{y}\bar{y'})\\=&\zeta(y)\zeta(y')\tilde Q(x,\bar y)\tilde Q(x',\bar{y'})=\zeta(y)
\tilde Q(x,\bar y)\zeta(y')\tilde Q(x',\bar{y'})\\=&\mathcal D(xy)\mathcal D(x'y').
\end{align*}

Therefore $\mathcal D$ defined as above is a representation of $A$ and affords the character $\tchi\cdot\zeta$ of $A$, which is an extension of $\chi$.
\end{proof}

Next, we give a lemma about extensions of characters of direct products, which is a generalisation of \cite[Lemma 2.2]{LZ19}.

\begin{lem}\label{ext-central-prod}
	Let  $A=XE$ be a finite group, where $X=X_1\times X_2$ is the direct product of two subgroups $X_1,X_2$ and $E$ stabilizes $X_1$ and $X_2$. Then $X\unlhd A$.
	Assume further  $X_1\cap E\unlhd X_1$, $X_2\cap E\unlhd X_2$ and
	$(E\cap X_1)(E\cap X_2)=E\cap X_1X_2$.
	Let $\chi_i\in\Irr(X_i)$ satisfy $E=E_{\chi_1}=E_{\chi_2}$, and let $\chi=\chi_1\times\chi_2\in\Irr(X)$.
	
	Let $v_i\in X_i$, $\sigma_0\in E$ and assume that there exists an extension $\tilde\chi_i$ of $\chi_i$ to $X_i E$ satisfying that $v_i\sigma_0\in\ker(\tilde\chi_i)$
	for $i=1,2$.
	Then there is an extension $\tilde\chi$ of
	$\chi$  to $A=X E$ such that $v_1v_2\sigma_0\in\ker(\tilde\chi)$.
\end{lem}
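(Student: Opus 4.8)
The plan is to reduce the statement about extensions of $\chi = \chi_1 \times \chi_2$ on $A = XE$ to the two hypothesised extensions $\tilde\chi_i$ of $\chi_i$ on $X_iE$ via an explicit construction of a representation. First I would set up notation: choose representations $\tilde Q_i$ of $X_iE$ affording $\tilde\chi_i$, with $v_i\sigma_0 \in \ker(\tilde Q_i)$, i.e. $\tilde Q_i(v_i\sigma_0)$ is the identity matrix. The natural candidate for an affording representation of $A$ is the ``tensor-like'' map $\mathcal D$ sending $x_1x_2\sigma \mapsto \tilde Q_1(x_1\sigma) \otimes \tilde Q_2(x_2\sigma)$ for $x_i \in X_i$, $\sigma \in E$; one checks this is well-defined using the condition $(E\cap X_1)(E\cap X_2) = E \cap X_1X_2$ (this controls the ambiguity in writing an element of $A$ in the form $x_1x_2\sigma$, exactly as in the well-definedness argument of Lemma~\ref{ext-semi}), and that it is a homomorphism because $E$ normalises each $X_i$ and $[X_1,X_2]=1$. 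Restricting $\mathcal D$ to $X = X_1 \times X_2$ gives $\tilde Q_1|_{X_1} \otimes \tilde Q_2|_{X_2}$, which affords $\chi_1 \times \chi_2 = \chi$; hence $\mathcal D$ affords an extension $\tilde\chi$ of $\chi$ to $A$.

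Next I would verify the kernel condition. Since $v_1 \in X_1$, $v_2 \in X_2$ and $\sigma_0 \in E$, the element $v_1v_2\sigma_0$ is already written in the required normal form, so $\mathcal D(v_1v_2\sigma_0) = \tilde Q_1(v_1\sigma_0) \otimes \tilde Q_2(v_2\sigma_0)$. By hypothesis each factor is the identity, so $\mathcal D(v_1v_2\sigma_0)$ is the identity matrix, i.e.\ $v_1v_2\sigma_0 \in \ker(\tilde\chi)$, as desired. One should also confirm $v_1v_2\sigma_0$ actually lies in $A_\chi$ (needed for the kernel statement to be meaningful): since $E = E_{\chi_1} = E_{\chi_2}$, the element $\sigma_0$ fixes $\chi$, and $v_1v_2 \in X$ acts trivially on $\Irr(X)$ by conjugation only insofar as needed — actually $v_1v_2\sigma_0$ normalises $X$ and fixes $\chi$ because $\sigma_0$ does and inner automorphisms of $X$ fix $\chi$; in any case $\ker(\tilde\chi) \cap A$ being well-defined is automatic once $\mathcal D$ is a representation of all of $A$.

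The main obstacle I anticipate is the well-definedness check for $\mathcal D$: an element $a \in A$ may have several expressions $a = x_1x_2\sigma = x_1'x_2'\sigma'$, and one must show $\tilde Q_1(x_1\sigma)\otimes\tilde Q_2(x_2\sigma) = \tilde Q_1(x_1'\sigma')\otimes\tilde Q_2(x_2'\sigma')$. Writing $\sigma' = z\sigma$ with $z \in E \cap X_1X_2$ and using $(E\cap X_1)(E\cap X_2) = E\cap X_1X_2$ to factor $z = z_1z_2$ with $z_i \in E\cap X_i$, one reduces to the commuting adjustments $x_i' = x_i z_i^{-1}$ (up to the central-type subtleties), and the argument parallels the one in Lemma~\ref{ext-semi} but must be carried out separately in each coordinate and then recombined; the key point is that $z_i \in X_i \cap E$ so $\tilde Q_i(z_i\sigma)$ and $\tilde Q_i(x_i'z_i)$ multiply correctly within $X_iE$. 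Once this bookkeeping is done, the homomorphism property and the kernel statement are routine, and the lemma follows.
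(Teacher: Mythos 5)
Your proof takes essentially the same route as the paper's. The paper regards each $X_iE$ as a quotient of the semidirect product $X_i\rtimes E$, inflates $\tilde\chi_i$, forms the external product on $(X_1\rtimes E)\times(X_2\rtimes E)$, restricts along the diagonal embedding $((x_1,x_2),\sigma)\mapsto((x_1,\sigma),(x_2,\sigma))$, asserts that $Z_1Z_2$ lies in the kernel of that restriction, and descends to $A$. Your explicit map $\mathcal D(x_1x_2\sigma)=\tilde Q_1(x_1\sigma)\otimes\tilde Q_2(x_2\sigma)$ is the representation-level form of exactly this construction, and your well-definedness concern corresponds precisely to the paper's kernel assertion; the difference is only one of abstraction.

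One caution, which cuts equally against both your write-up and the paper's: you flag but do not dispatch the ``central-type subtleties'' in the well-definedness check, and the paper likewise states $Z_1Z_2\subseteq\ker(\tilde\chi')$ without verification. This step is not automatic. Carrying your reduction through, the identity required is $\tilde\chi_1(z_2^{-1})\,\tilde\chi_2(z_1^{-1})=\chi_1(1)\chi_2(1)$ for all $z_1\in X_1\cap E$ and $z_2\in X_2\cap E$. Since $X_2\cap E$ centralizes $X_1$ (and symmetrically), $\tilde Q_1$ and $\tilde Q_2$ act on these subgroups by scalars (Schur's lemma), but nothing in the stated hypotheses forces those two scalars to cancel. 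So a fully rigorous account needs either an extra compatibility assumption on $\tilde\chi_1|_{X_2\cap E}$ and $\tilde\chi_2|_{X_1\cap E}$, or a check that in the applications (e.g.\ Lemma \ref{act-ext-C0}) these intersections are small enough for the cancellation to be forced. Your instinct that this point deserves scrutiny is correct, and you should not have waved it away as routine.
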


\begin{proof}
We regard $X_iE$ as a quotient group of $X_i\rtimes E$ for $i=1,2$, \emph{i.e.}, $X_iE=(X_i\rtimes E)/Z_i$, where $Z_i=\{ (z,z^{-1})\mid z\in X_i\cap E \}$. 
	So we may view $\tilde \chi_i$ as a character of $X_i\rtimes E$.
Similarly, the group $A=XE$ can be regarded as a quotient group of $(X_1\times X_2)\rtimes E$, \emph{i.e.}, $A=((X_1\times X_2)\rtimes E)/Z_1Z_2$.

	Note that we can view $(X_1\times X_2)\rtimes E$ as a subgroup of $(X_1\rtimes E)\times (X_2\rtimes E)$ via $(x_1,x_2)\sigma\mapsto (x_1\sigma,x_2\sigma)$.
	Let $\tilde\chi_i$ be an extension of $\chi_i$ to $X_i\rtimes E$ for $i=1,2$.
	Then $\tilde\chi'=\Res^{(X_1\rtimes E)\times (X_2\rtimes E)}_{(X_1\times X_2)\rtimes E}(\tilde\chi_1\times\tilde\chi_2)$ contains $Z_1Z_2$ in its kernel.
	Thus $\tchi'$ induces an extension $\tchi$ of $\chi$ to $A$, as required.
\end{proof}

Finally, we consider the extension of characters of groups closely related to wreath products.

\begin{lem}\label{ext-wre}
	Let $Y\unlhd X$ be finite groups.
	Let $X^t=X_1\times\cdots\times X_t$ with $X_i\cong X$ for $i=1,\ldots,t$.
	Suppose that  $A$ is a finite group such that $A=X^tH$ with $X^t\unlhd A$ and
	$H$ acts on $X^t$ by permuting $X_i$'s
	via an epimorphism $\pi:H\to\mathfrak S_t$ with $\ker(\pi)=X^t\cap H\subseteq Z(X^t)\cap Y$.
	Suppose that a finite abelian group $E$ acts on $A$ via automorphisms such that $E$ stabilizes $X$ and $Y$ and $E$	
	fixes every element of $H$.
	
	Let $\xi=\prod_i \xi_i^{t_i}$ with  $\xi_i\in\Irr(Y)$, $t=\sum_{i}t_i$ and $\xi_i\ne \xi_j$ if $i\ne j$. Suppose that $(A\rtimes E)_\xi=(X^t\rtimes E)_\xi H_\xi$.
	Assume further  the character $\zeta\in \Irr(\ker(\pi)\mid\xi)$ extends to $\tilde\zeta\in\Irr(H_\xi)$.
	Let $v_i\in X_i$ and $\tau\in E$ such that $v_i\tau\in (X_i\rtimes E)_{\xi_i}$ and $v=(v_1,\ldots,v_t)$.
	If every $\xi_i$ has an extension $\tilde\xi_i$  to $(X\rtimes E)_{\xi_i}$ with $v_i\tau\in \ker(\tilde \xi_i)$,
	then $\xi$ has an extension $\tilde\xi$ to $(A\rtimes E)_\xi$ with $v\tau\in \ker(\tilde\xi)$.
\end{lem}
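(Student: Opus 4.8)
The statement is a Clifford-theoretic bookkeeping lemma about wreath-product-like groups, parallel to the passage from a single factor to the wreath product that already appeared in \cite{CS17,LZ19}. The plan is to reduce the assertion to the case where the $\xi_i$ all coincide (the homogeneous case), then handle that case by combining the given extension $\tilde\zeta$ of the permutation part with the chosen extensions of the factor characters, and finally recombine the homogeneous blocks using Lemma~\ref{ext-central-prod}. More precisely, writing $X^t = \prod_i X_i^{t_i}$ according to the decomposition of $\xi$ into distinct $\xi_i$'s, and correspondingly $\ker(\pi) \le Z(X^t)\cap Y$ splitting compatibly, the group $(A\rtimes E)_\xi$ decomposes (using the hypothesis $(A\rtimes E)_\xi = (X^t\rtimes E)_\xi H_\xi$ and $E$ fixing $H$ pointwise) into a product of pieces $(X_i^{t_i}\rtimes E)_{\xi_i} H_{i}$ where $H_i$ is the corresponding ``symmetric'' part acting on the $t_i$ copies, with the $\ker\pi$-part and the extension $\tilde\zeta$ distributing over these pieces. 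So the first step is to set up this decomposition carefully and reduce to a single homogeneous block; the key input is that $E$ acts trivially on $H$, so the $E$-action does not mix the blocks.

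In the homogeneous case I would argue as follows. We have $A = X^t H$ with $H$ acting by permutations through $\pi\colon H\to \mathfrak S_t$, $\ker\pi \le Z(X^t)\cap Y$, all $\xi_i$ equal to a common $\xi_0\in\Irr(Y)$, and $\zeta\in\Irr(\ker\pi\mid \xi_0^t)$ extending to $\tilde\zeta\in\Irr(H_\xi)$. First I would note that $\xi_0^t$ extends canonically to $(X\wr\mathfrak S_t)$-type subgroups: the restriction $\Res$ of the chosen extension $\tilde\xi_0$ of $\xi_0$ to $(X\rtimes E)_{\xi_0}$ gives, via the canonical extension procedure for wreath products as in the proof of \cite[Prop.~2.3.1]{Bon99b}, an extension $\overline{\tilde\xi_0^{\,t}}$ of $\tilde\xi_0^{\,t}$ to $(X^t\rtimes E)_\xi \rtimes \mathfrak S_t$ (note $E$ commutes with the permutation action, which is exactly what makes the canonical extension $E$-equivariant). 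Pulling back along $\pi$ to $H_\xi$ and multiplying by $\tilde\zeta$ — in the spirit of Lemma~\ref{ext-semi}, since $\ker\pi$ is central in $X^t$ and the central characters match — produces an extension of $\xi$ to the whole of $(A\rtimes E)_\xi$. The $E$-part lands in the kernel in the right way because each $v_i\tau \in \ker(\tilde\xi_0)$ by hypothesis, so the permuted tuple $v\tau = (v_1,\dots,v_t)\tau$ lies in the kernel of the canonical wreath extension (the canonical extension of a character respects kernels of the underlying extension on the diagonal-type elements, and here $\tau$ is the common $E$-component), and multiplication by $\tilde\zeta$ does not disturb this since $\tilde\zeta$ is a character of $H_\xi$ which fixes $\tau$.

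\textbf{Main obstacle.} The delicate point is the interaction of three things simultaneously: the canonical extension to the wreath-product direction, the extension $\tilde\zeta$ of the ``diagonal central'' character $\zeta$ of $\ker\pi$, and the requirement that $v\tau$ sit in the kernel of the final extension. One has to check that the canonical extension $\overline{\tilde\xi_0^{\,t}}$ and $\tilde\zeta$ are compatible on $\ker\pi$ (their product is well-defined as a character of $(A\rtimes E)_\xi$), which amounts to verifying that the central character of $\overline{\tilde\xi_0^{\,t}}$ restricted to $\ker\pi \le Z(X^t)$ agrees with that of $\zeta$ — this follows from $\zeta\in\Irr(\ker\pi\mid\xi)$ and the construction, but needs the Schur's-lemma argument from Lemma~\ref{ext-semi} applied in the wreath setting. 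Once this compatibility and the kernel condition for $v\tau$ are nailed down, reassembling the homogeneous blocks via Lemma~\ref{ext-central-prod} (with the roles $X_1 = \prod_i X_i^{t_i}$, $E$ replaced by $H_\xi\rtimes E$ suitably) is routine. I would expect the bulk of the write-up to be the careful statement of the block decomposition and the verification of the central-character matching.
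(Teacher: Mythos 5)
Your proposal follows essentially the same route as the paper's proof: both hinge on the canonical wreath-product extension (the paper invokes \cite[Lemma~25.5]{Hu98}, you invoke the canonical extension from \cite{Bon99b} — these are the same construction) and on Lemma~\ref{ext-semi} to graft the linear character $\tilde\zeta$ (linear because extensions preserve degree and $\zeta$ is linear on $\ker\pi\subseteq Z(X^t)$) onto the non-split part, with the kernel condition $v\tau\in\ker(\tilde\xi)$ then read off from the explicit formula for the canonical extension. Your main-obstacle paragraph correctly identifies the central-character compatibility that Lemma~\ref{ext-semi} needs, and the hypothesis $\zeta\in\Irr(\ker\pi\mid\xi)$ is indeed what supplies it.

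The one organizational difference is that you first split $X^t$ into homogeneous blocks $\prod_i X_i^{t_i}$ and reassemble at the end via Lemma~\ref{ext-central-prod}, whereas the paper extends $\xi$ in one stroke to $\prod_i(X\rtimes E)_{\xi_i}\wr\mathfrak{S}(t_i)$ (using the observation $\pi(H_\xi)=\prod_i\mathfrak{S}(t_i)$), restricts to the subgroup $(\prod_i X\wr\mathfrak{S}(t_i))\rtimes E$, and then applies Lemma~\ref{ext-semi} once, globally, to pass from the split form $(X^t\rtimes H_\xi/\ker\pi)\rtimes E$ to $(X^t H_\xi)\rtimes E$. Your block-by-block route quietly assumes that $\ker\pi\le Z(X^t)$ decomposes as a product over the homogeneous blocks; the lemma's hypotheses do not force this (\emph{a priori} $\ker\pi$ could sit diagonally across blocks), so your reduction to the homogeneous case needs an extra word. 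The paper's one-shot version sidesteps this entirely, since Lemma~\ref{ext-semi} is applied to the whole of $\ker\pi$ at once. In the concrete applications in \S\ref{sec-Stab-ext-wei} the splitting does hold (there $\ker\pi=\mathcal S\cap\mathcal R\mathcal C$ with $\mathcal S$ already a direct product over the blocks), so for the paper's purposes your argument would go through once this assumption is recorded, but as a proof of the lemma in full generality it has a small unjustified step that the paper's global argument avoids.
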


\begin{proof}
	By the assumption
	one have  $\pi(H_\xi)=\prod_i \mathfrak{S}(t_i)$.	
	We transfer to wreath products.
			By \cite[Lemma~25.5]{Hu98}, $\xi$ extends to $\prod_i(X\rtimes E)_{\xi_i}\wr \mathfrak S(t_i)$.
			Note that we can view $(\prod_i X\wr \mathfrak S(t_i))\rtimes E$ as a subgroup of $\prod_i(X\rtimes E)\wr \mathfrak S(t_i)$.
Then $\xi$ extends to its stabilizer in $(\prod_i X\wr \mathfrak S(t_i))\rtimes E$,
	which implies that $\xi$ extends to its stabilizer in $(X^t H)\rtimes E$ 
by Lemma \ref{ext-semi} since $(X^t\rtimes H/\ker(\pi))\rtimes E\cong (X\wr \fS_t)\rtimes E$,
	and we denote by $\tilde \xi$ one extension.
	The property $v\tau\in \ker(\tilde\xi)$ holds from the construction above.
\end{proof}

\subsection{Transfer to twisted groups.}

We will use the methods and  technique in  the verification of the inductive McKay condition of some simple groups  in \cite[\S 5]{CS17} and \cite[\S 3]{MS16}.
We first transfer to twisted groups, then give a parametrization for certain weight  characters of $G$, and finally prove our Theorem \ref{local-condition}. Note that we don't always assume that $\ell\mid q-\eta$  any more.

Let $(R',\varphi')$ be a weight of $G=\SL_n(\eta q)$ as in Theorem \ref{local-condition}.
Up to $\tG$-conjugacy, we may assume that there is a special radical subgroup $\tR'$ of $\tG$ of the form
$\tR'=\tR'_0\times\prod_{i=1}^s (\tR'_i)^{t_i}$ such that $R'=\tR'\cap G$,
where $\tR_0'$ is the identity group and $\tR'_i$'s ($i>0$) are different twisted basic subgroups, and we let
 $\tR'_i=\tR_{m_i,\alpha_i,\gamma_i,\bc_i}$ or $\tR^\pm_{m_i,\alpha_i,\gamma_i,\bc_i}$.
Note that $\tR^\pm_{m_i,\alpha_i,\gamma_i,\bc_i}$  occurs only when $\ell=2$, $4\mid q+\eta$ and  $\alpha_i=0$. Recall that if $\ell=2$ and $4\mid q+\eta$, then $\tR_{m,0,\gamma,\bc}=\tS_{m,1,\gamma-1,\bc}$.

Let $v_{m,\alpha,\gamma,\bc}=v_{m,\alpha,\gamma}\otimes I_{\ell^{|\bc|}}$, where $v_{m,\alpha,\gamma}$ is defined as in \S \ref{sect:weights-general-gp}.
Let $v=I_0\times\prod_i v_{m_i,\alpha_i,\gamma_i,\bc_i}\otimes I_{t_i}$, where
$I_0$ is the identity of $\tR_0$.
By Lang--Steinberg theorem (cf. \cite[Thm.~21.7]{MT11}), there exists an element $g\in\bG$ such that $g^{-1}F(g)=v$. 
In fact, we can take $g=I_0\times\prod_i g_{m_i,\alpha_i,\gamma_i}\otimes I_{\bc_i}\otimes I_{t_i}$, where $g_{m_i,\alpha_i,\gamma_i}$ is defined as in \S \ref{sect:weights-general-gp}.
Denote by $\iota$ the map $x\mapsto gxg^{-1}$. Then 
similar as the proof of \cite[Prop. 5.3]{CS17},
$\iota$ induces an isomorphism
$$\iota:\quad \tbG^{vF}\rtimes E\ \to\ \tbG^F\rtimes E,$$
where $E$ is the group of graph and field automorphisms. We denote the field automorphism in $E$ as $\hat F_p$ and $\hat F=\hat F_p^f$.
Let
$\tG^{tw}=\tbG^{vF}$ and
$G^{tw}=\bG^{vF}$.

Let $\tR=\iota^{-1}(\tR')$.
Then $\tR=\tR_0\times\prod_{i=0}^s (\tR_i)^{t_i}$ with $\tR_0=\tR_0'$ and $\tR_i=\tR^{tw}_{m_i,\alpha_i,\gamma_i,\bc_i}$ or $\tR^{\pm,tw}_{m_i,\alpha_i,\gamma_i,\bc_i}$. 
Recall that $\tR^{tw}_{m_i,\alpha_i,\gamma_i,\bc_i}=\tR^{tw}_{m_i,\alpha_i,\gamma_i}\wr A_{\bc_i}$ and $\tR^{\pm,tw}_{m_i,\alpha_i,\gamma_i,\bc_i}=\tR^{\pm,tw}_{m_i,\alpha_i,\gamma_i}\wr A_{\bc_i}$.
Let $R=\tR\cap G^{tw}$.
In this section, we write $N=N_{G^{tw}}(R)$,
$C=C_{G^{tw}}(R)$,
$\tN=N_{\tG^{tw}}(R)$ and  $\tC=C_{\tG^{tw}}(R)$.
Then 
$\tC=\tC_0\times\prod_{i=1}^s \tC_i$ and
\begin{equation}\label{stru-tn}
\tN=\tN_0\times\prod_{i=1}^s\tN_i\wr \mathfrak S(t_i),
\addtocounter{thm}{1}\tag{\thethm}
\end{equation}
where
$\tC_i=\tC^{\pm,tw}_{m_i,\alpha_i,\gamma_i,\bc_i}$ 
and 
$\tN_i=\tN^{\pm,tw}_{m_i,\alpha_i,\gamma_i,\bc_i}$ for $1\le i\le s$.

Let $\tilde M=\tR_0\times\prod_{i=1}^s (\tM_i)^{t_i}$ and $M=\tM\cap G^{tw}$,
where $\tM_i=(\tM^{tw}_{m_i,\alpha_i,\gamma_i})^{\ell^{|\bc_i|}}$ or $(\tM^{\pm,tw}_{m_i,\alpha_i,\gamma_i})^{\ell^{|\bc_i|}}$.
Then $\det(\tM)=1$ 
and $\tM=M$
unless when 
\begin{equation}\label{special-case-7-1}
\begin{aligned}
&\textrm{there exists a component}\
\tR_{m_i,\alpha_i,\gamma_i,\bc_i}\  (\textrm{or}\ \tR^\pm_{m_i,\alpha_i,\gamma_i,\bc_i})\ \textrm{of}\ \tR'\ \textrm{which satisfies that} \
\bc_i=\zero\  \\ & \textrm{and}\
\textrm{one of the cases}\ 
(\ref{eq:special-case-3-1}), (\ref{eq:special-case-2-linear-1})\ \textrm{and}\ (\ref{special-case-2-uni-1})\ \textrm{holds for}\ \tR_{m_i,\alpha_i,\gamma_i}\ (\textrm{or}\ \tR^\pm_{m_i,\alpha_i,\gamma_i}).
\end{aligned}
\addtocounter{thm}{1}\tag{\thethm}
\end{equation}
Also, $\det(\tN_i)=\det(\tR_i\tC_i \tM_i)$, and $\tM_i\cap\tR_i\tC_i\subseteq \tR_i$ and $\tR_i\tC_i \tM_i/\tR_i\cong\tR_i\tC_i/\tR_i\times \tM_i/(\tM_i\cap\tR_i)$.
Note that we take $\tM_{m_i,\alpha_i,\gamma_i}$ as in Remark  \ref{not-cent} and \ref{M-for-not-central-product}
if we are in the case (\ref{eq:special-case-2-linear-0})
for $\tR_{m_i,\alpha_i,\gamma_i}$.
 In addition, $\tM_i/(\tM_i\cap\tR_i)\cong\Sp_{2\gamma_i}(\ell)$.

Let
$V=\tR_0\times\prod_{i=1}^s V_i^{t_i}$,
where $V_i=\langle v_{m_i,\alpha_i,\gamma_i,\bc_i} \rangle$. 
Then $\det(V)=1$.
Let $\tZ=\tR_0 \times\prod_{i=1}^{s} (\tZ_i)^{t_i}$ such that $\tZ_i\subseteq Z(\tC_i)$ and $|\tZ_i|=2$ for $i>0$.\label{def-Z}
Then we can check that $V\cap \tR\tC\tM=V\cap \tR\tC\subseteq \tZ$.
Precisely,
$V_i\cap \tC_i \tR_i\subseteq \tZ_i$ and $V_i/(V_i\cap \tC_i \tR_i)\cong V_{m_i,\alpha_i,\gamma_i}$.

Note that
$\tN_i/\tR_i\cong \tC_i\tR_i\tM_i V_i/\tR_i\times T_i$, where $T_i=N_{\mathfrak{S}(\ell^{\bc_i|})}(A_{\bc_i})/A_{\bc_i}$.	
Here, the group $\mathfrak{S}(\ell^{\bc_i|})$ consists of permutation matrices.
If $\bc_i=(c_{i1},\ldots,c_{ih})$, then 
$T_i\cong \prod_{j=1}^h \GL_{c_{ij}}(\ell)$.
Let $T=\tR_0\times\prod_{i=1}^s T_i^{t_i}$.

For a positive integer $d$, we let $I^d_{i,j}$ be the permutation matrix obtained from the identity matrix of $\GL_d(q)$ by exchanging the $i$-th row and $j$-th row, for $1\le i, j\le d$.
Let $I'^d_{1,j}$ be the matrix obtaining from $I^d_{1,j}$ by changing the $1$ in the first row to $-1$.

For $1\le i\le s$, the symmetric groups $\mathfrak S(t_i)$ in (\ref{stru-tn})
is generated by $I_{m_ie\ell^{\alpha_i+\gamma_i+|\bc_i|}}\otimes I_{1,l}^{t_i}$
where $l$ runs through $2,\ldots, t_i$.
Now for $1\le i\le s$, we let $\mathcal S_i$ be the identity matrix 
if $t_i=1$. If $\ell$ is odd and $t_i>1$, let
$\mathcal S_i$ be the group generated by
$I_{m_ie\ell^{\alpha_i+\gamma_i+|\bc_i|}}\otimes {I}_{1,l}^{t_i}$ if $e$ is even,
and generated by 
$I_{m_ie\ell^{\alpha_i+\gamma_i+|\bc_i|}}\otimes {I'}_{1,l}^{t_i}$ if $e$ is odd, where $l$ runs through $2,\ldots, t_i$.
If $\ell=2$ and $t_i>1$, let
$\mathcal S_i$ be the group generated by
$I_{m_i2^{\alpha_i+\gamma_i+|\bc_i|}}\otimes {I}_{1,l}^{t_i}$ if $\nu(m_i)+\alpha_i+\gamma_i+|\bc_i|>0$, and generated by
$I_{m_i2^{\alpha_i+\gamma_i+|\bc_i|}}\otimes {I'}_{1,l}^{t_i}$ if $\nu(m_i)=\alpha_i=\gamma_i=|\bc_i|=0$, where $l$ runs through $2,\ldots, t_i$.
Then $\mathcal S_i\cap (\tC_i\tR_i\tM_i)=\mathcal S_i\cap (\tC_i\tR_i)\subseteq \tZ_i$
and $\mathcal S_i/\mathcal S_i\cap (\tC_i\tR_i)\cong \mathfrak S(t_i)$.
Let 
$\mathcal S=\tR_0\times\prod_{i=1}^{u}\mathcal S_i$.
Then $\det(\mathcal S)=1$ and $\mathcal S\cap \tR\tC \tM V=\mathcal S\cap \tR\tC\subseteq \tZ$.

Therefore,
$\tN/\tR=((\tR\tC /\tR)(\tR\tM/\tR)V\times T)\mathcal S$ and $N/R=(((\tR\tC\tM\cap N)/R)V\times T)\mathcal S$.
By Remark \ref{M-is-stable-odd}, \ref{M-is-stable-2-1}, \ref{M-is-stable-2-2} and \ref{M-is-stable-2-3}, $\tM\unlhd \tN$ and the groups
$\tR$, $\tM$, $\tC$, $\tN$ are $E$-stable.
Further, $E$ commutes with $\tZ$, $V$, $T$ and $\mathcal S$.

\begin{prop}\label{twist-thm}
	Suppose that for every $\tilde\varphi\in\Irr(\tN/\tR\mid \mathrm{dz}(N/R))$
	there exists some $\varphi\in\Irr(N\mid \tilde\varphi)$ such that 
	\begin{enumerate}[\rm(1)]
		\item $(\tN\rtimes E)_{\varphi}=\tN_{\varphi}\rtimes E_{\varphi}$,
		\item $\varphi$ has an extension $\hat{\varphi}\in\Irr(N\rtimes E_{\varphi})$ with $v\hat F\in\ker(\hat \varphi)$.
	\end{enumerate}
	Then Theorem \ref{local-condition} holds.
\end{prop}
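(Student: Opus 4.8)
The statement asserts that the twisted-group reformulation (conditions (1) and (2) about $\tilde\varphi\in\Irr(\tN/\tR\mid\dz(N/R))$) suffices to deduce the intrinsic condition (iv) of Theorem~\ref{thm:criterion} for the weight $(R',\varphi')$ of $G$. The plan is to transport the hypothesis through the isomorphism $\iota:\tbG^{vF}\rtimes E\to\tbG^{F}\rtimes E$ constructed above and then match things up with $D$. First I would recall that $\iota$ carries $(\tR,\varphi)$-data on the twisted side to $(\tR',\varphi')$-data on the untwisted side: if $\varphi\in\Irr(N)$ is the weight character provided by the hypothesis and $\varphi':=\iota(\varphi)$, then $\varphi'\in\Irr(N_G(R'))$ is $G$-conjugate (indeed $\tG$-conjugate, via a suitable $x$) to the originally given $\varphi'$, because $\varphi$ and $\iota^{-1}(\varphi')$ lie over the same $\tilde\varphi$ up to the action of $\tN$, and $\dz(N/R)$ is a single $\tN$-orbit picture that matches $\Alp(G)$-orbits. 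So after replacing $(R',\varphi')$ by a $\tG$-conjugate I may assume $\varphi'=\iota(\varphi)$ for the good $\varphi$.

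Next I would translate condition (1). We have $\iota(\tG^{tw}\rtimes E)=\tG\rtimes E$ and, since $E$ is chosen to realize $\hat F_p$ and $\hat\sigma$, the group $\iota(N\rtimes E_\varphi)$ sits inside $N_{\tG}(R')\cdot\langle\iota(E_\varphi)\rangle$. The subtlety is that $D=\langle\sigma_{it},F_p\rangle$ (or $\langle F_p\rangle$) is \emph{not} literally $E$: it acts on $G$ but does not normalize $R'$ in general, only some $\tG$-conjugate of $R'$ does. This is precisely why the statement allows the conjugating element $x\in\tG$: I would choose $x$ so that $D^x$ normalizes $R'$ and so that, modulo $\tG_{R',\varphi'}$, the coset representatives of $D^x_{R',\varphi'}$ can be taken inside $\iota(E)$. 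Concretely, using $\tG D/Z(\tG)\cong\Aut(G)$ and $C_{\tG D}(G)=Z(\tG)$, every element of $(\tG D)_{R',\varphi'}$ is, modulo $\tG_{R',\varphi'}$, represented by an element of $\iota(E)$; condition (1) on the twisted side, $(\tN\rtimes E)_\varphi=\tN_\varphi\rtimes E_\varphi$, then says exactly that this representative can be chosen to actually stabilize $\varphi$. Pushing through $\iota$ and intersecting with the relevant normalizers yields $(\tG D)_{R',\varphi'}=\tG_{R',\varphi'}(GD^x)_{R',\varphi'}$, which is (1) of Theorem~\ref{local-condition}.

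For (2) of Theorem~\ref{local-condition} I would use condition (2) of the hypothesis: $\varphi$ has an extension $\hat\varphi\in\Irr(N\rtimes E_\varphi)$ with $v\hat F\in\ker(\hat\varphi)$. The element $v\hat F$ is precisely what $\iota$ sends to $\hat F=\hat F_p^{\,f}$, i.e.\ to the generator of the $\langle F_p\rangle$-part of $D$ of order dividing $f$ acting as the identity on $\F_q$; equivalently, killing $v\hat F$ is what is needed so that the extension $\hat\varphi$ descends along $\iota$ to an extension of $\varphi'$ to $(GD^x)_{R',\varphi'}$ rather than merely to a subgroup of $N_G(R')\rtimes E_{\varphi'}$. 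So I would apply $\iota$ to $\hat\varphi$, observe that $\iota(N\rtimes E_\varphi)/\langle\iota(v\hat F)\rangle$ contains $(GD^x)_{R',\varphi'}$ (here one uses that $GD^x\cap\tG^{tw}$-side data matches and that the kernel condition removes exactly the discrepancy between $E$ and $D$), and restrict the descended character to $(GD^x)_{R',\varphi'}$ to get the required extension of $\varphi'$.

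\textbf{Main obstacle.} The genuinely delicate point is the bookkeeping around the conjugating element $x$ and the passage from the abstract automorphism group $E$ (field and graph automorphisms acting on $\tbG^F$, and built to be compatible with $\iota$) to the concrete group $D$ from \S\ref{subsec:Linear and unitary groups}: one must verify that $D^x$ normalizes $R'$ for a suitable $x$, that the kernel condition $v\hat F\in\ker(\hat\varphi)$ is exactly the cocycle/kernel obstruction that makes the $\iota$-descended extension well-defined on $GD^x$ (and not just on an index-$f$ overgroup inside $E$), and that conditions (1)--(2) on the twisted side really do imply the stated equalities of stabilizers after conjugation. All of this is the twisted-group technology of \cite[\S5]{CS17} and \cite[\S3]{MS16} adapted from characters to weight characters; the calculations are routine once the dictionary is set up, but setting up the dictionary correctly — in particular tracking how $\sigma_{it}$ and $F_p$ move $R'$ and $\varphi'$ — is where the care is needed.
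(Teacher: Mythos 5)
Your proposal is correct and follows essentially the same route the paper takes: the paper's entire proof of Proposition~\ref{twist-thm} is the single sentence that it is ``entirely analogous with [CS17, Prop.~5.13],'' and your argument is exactly an unpacking of that analogy — transport the weight data through the isomorphism $\iota:\tG^{tw}\rtimes E\to\tG\rtimes E$, use the Lang--Steinberg element to produce the conjugating $x$ aligning $D^x$ with $\iota(E)$, and observe that $\iota(v\hat F)=\hat F$ acts trivially on $\tG=\tbG^F$, so the kernel condition $v\hat F\in\ker(\hat\varphi)$ is precisely what lets the extension of $\varphi$ descend from $N\rtimes E_\varphi$ to the genuine automorphism-group side $(GD^x)_{R',\varphi'}$. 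You have in fact supplied more detail than the paper does.
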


\begin{proof}
	The proof is entirely analogous with \cite[Prop.~5.13]{CS17}.
\end{proof}

\subsection{Characters of $C$.}

For $0\le i\le s$, let $R_i$, $C_i$, $M_i$ and $N_i$ be the subgroup of $\tR_i$, $\tC_i$, $\tM_i$ and $\tN_i$ respectively, consisting of matrices of determinant $1$.
We denote $\mathcal R=R_0\times\prod_{i=1}^s R_i^{t_i}$,
$\mathcal C=C_0\times\prod_{i=1}^s C_i^{t_i}$, 
$\mathcal M=R_0\times\prod_{i=1}^s M_i$
and
$\mathcal N=(N_0\times\prod_{i=1}^s N_i^{t_i})\mathcal S$.
Then $\mathcal R\le R$, $\mathcal C\le C$, $\mathcal N\le N$.
In addition, $\mathcal N/\mathcal R=(\mathcal R\mathcal C\cM V/\mathcal R\times T)\mathcal S$ and $(\tR\tC\tM\cap \mathcal N)/\tR=\mathcal R\mathcal C\mathcal M/\mathcal R$.

Let $\Irr'(\tC\tR/\tR)$ be the subset of $\Irr(\tC\tR/\tR)$ consisting of characters of defect $\le \nu(\tN/\tR N)$.
Then Proposition \ref{sepcial-defect} gives the possiblity for $\tR$ if $\Irr'(\tC\tR/\tR)$  is non-empty.
Let $\tilde{\mathcal G}$ be a representative system  of $(\tR\tC \tM VT\rtimes E)$-orbits on $\Irr'(\tC\tR/\tR)$ such that for every 
$\tilde\theta=\tilde\theta_0\times\prod_{i=1}^s \prod_{j=1}^{t_{i}}\tilde\theta_{ij}
\in\tilde{\mathcal G}$ with $\tilde\theta_{ij}\in\Irr(\tR_i\tC_i/\tR_i)$,
we have either $\tilde\theta_{ij_1}=\tilde\theta_{ij_2}$
or $\tilde\theta_{ij_1}$ and $\tilde\theta_{ij_2}$ are not $V_iT_iE$-conjugate for any $1\le i\le s$ and $1\le j_1\ne j_2\le t_i$.

\begin{lem}\label{act-ext-C0}
	Let $\vartheta\in\dz(\cR \cM/\cR)$.
	There exists a representative system $\mathcal G_0$ of $(\tN\rtimes E)$-orbits on $\Irr(\mathcal R\mathcal C/\mathcal R\mid\tilde{\mathcal G})$ such that every $\xi\in\mathcal G_0$ satisfies
	$(\tN/\tR\rtimes E )_{\xi}=(\tR\tC_{\xi}\tM/\tR) (VT\rtimes E)_{\xi}\mathcal S_\xi$
	and $\xi\times\vartheta$ 
	has an extension $\widetilde{\xi\times\vartheta}$
	to $(\mathcal R\mathcal C\cM/\mathcal R) (VT\mathcal S\rtimes E)_{\xi\times\vartheta}$ such that $v\hat F\in \ker(\widetilde{\xi\times\vartheta})$.
\end{lem}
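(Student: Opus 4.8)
The strategy is to reduce the statement, component by component, to the analogous extendibility facts about the individual factors $\tR_i\tC_i$, $\tM_i$, $\tC_i$ and the permutation groups $\mathcal S_i$, and then reassemble using the general extension lemmas of \S\ref{gen-re-extension}. First I would fix $\vartheta=\prod_i\vartheta_i\in\dz(\cR\cM/\cR)$ with $\vartheta_i$ a defect-zero character of the relevant $M_i/(M_i\cap R_i)\cong\Sp_{2\gamma_i}(\ell)$ (or $\GO^\pm$, $C_3$, $\fA_6$, $\Omega_4^+(2)$ in the exceptional cases (\ref{special-case-7-1})), and recall from Corollary~\ref{can-char-4} and Remark~\ref{M-is-stable-odd}--\ref{M-is-stable-2-3} that $\tM\unlhd\tN$ is $E$-stable with $\tM\cap\tR\tC\subseteq\tR$. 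Since $\Res$ to the factors is compatible with the direct-product-and-wreath shape (\ref{stru-tn}), the character $\xi\in\Irr(\mathcal R\mathcal C/\mathcal R)$ lying under a chosen $\tilde\theta\in\tilde{\mathcal G}$ decomposes as $\xi=\xi_0\times\prod_i\xi_i^{\text{(wreath data)}}$, and by the choice of $\tilde{\mathcal G}$ the wreath part splits into blocks on which $V_iT_iE$ acts either trivially (equal $\tilde\theta_{ij}$) or freely on the orbit. The stabilizer computation $(\tN/\tR\rtimes E)_\xi=(\tR\tC_\xi\tM/\tR)(VT\rtimes E)_\xi\mathcal S_\xi$ should then follow from: $\tM$ acts trivially on $\tR\tC/\tR$ (central product over $Z(E)$), $\mathcal S$ permutes the $t_i$ copies via $\mathfrak S(t_i)$ modulo something in $\tZ_i$, and the key Clifford-theoretic input that $\Res^{\tR\tC_i}_{\tR C_i}$ is multiplicity-free together with Lemma~\ref{pre-dir}, so that $V_iT_iE$-invariance of $\xi$ is detected on the $\tG$-level character $\tilde\theta$.

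Next I would handle the extendibility of $\xi\times\vartheta$. The group $(\mathcal R\mathcal C\cM/\mathcal R)$ is a central product of $\mathcal R\mathcal C/\mathcal R$ with $\cM/(\cM\cap\cR)$, and $(VT\mathcal S\rtimes E)_{\xi\times\vartheta}$ acts on it; I would first extend $\xi$ alone to $(\mathcal R\mathcal C/\mathcal R)(VT\mathcal S\rtimes E)_\xi$ and $\vartheta$ alone to $\cM(VT\mathcal S\rtimes E)_\vartheta$, then combine via Lemma~\ref{ext-semi} (central product case) and Lemma~\ref{ext-central-prod} (to control the element $v\hat F$ in the kernel, factoring $v=\prod v_{m_i,\alpha_i,\gamma_i,\bc_i}$ across the direct factors). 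For the $\xi$-part, one transfers to the twisted general linear groups $\tC_i^{tw}=\hbar(\hat C_i^0)$ and uses that $\xi_i$ corresponds to a character $\tilde\chi_{st}$ of $\GL$ over an extension field as in Remark~\ref{def-a-C}; such characters of linear/unitary groups extend to the stabilizer in $\GL\rtimes E$ with a prescribed element in the kernel by the now-standard arguments of Cabanes--Sp\"ath \cite{CS17} and Malle--Sp\"ath \cite{MS16} (parametrization of Lusztig series, extension of semisimple characters, the $\hat F$-action being read off from the Jordan decomposition label). The wreath-product bookkeeping is then Lemma~\ref{ext-wre}, applied with $X=\tC_i^{tw}$, $Y=C_i^{tw}$, $H$ the relevant permutation overgroup, and $E=\grp{\hat F}$; the hypothesis $(A\rtimes E)_\xi=(X^t\rtimes E)_\xi H_\xi$ of that lemma is exactly the stabilizer identity established in the previous step, and the extendibility of $\zeta\in\Irr(\ker\pi\mid\xi)$ to $H_\xi$ is automatic since $\ker\pi$ is central and $H_\xi/\ker\pi$ is a product of symmetric groups acting with the standard Schur-index-one extension (\cite[Lemma~25.5]{Hu98}).

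For the $\vartheta$-part, the factors $\Sp_{2\gamma}(\ell)$, $\GO^\pm_{2\gamma}(2)$ and the small groups in the exceptional cases are quasi-simple or close to it, their defect-zero characters (the Steinberg character, etc.) are rational and $E$-invariant, and since $E$ acts on $\tM_i$ through field/graph automorphisms that are realized inside $\tN_i$ up to inner automorphisms (Remarks~\ref{M-is-stable-odd}--\ref{M-is-stable-2-3}), $\vartheta_i$ extends to $\tM_i\rtimes E$ with $\hat F$ in the kernel by a direct character-table inspection (as already done in Remarks~\ref{special-case-ell3}, \ref{special-case-ell2-linear}, \ref{exc-case-2uni}). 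Finally the permutation pieces $\mathcal S_i$ and the $v_i$-translates contribute only signs/central elements, absorbed via Lemma~\ref{ext-central-prod} and Remark~\ref{ker-ext}(b). I expect the main obstacle to be the stabilizer identity $(\tN/\tR\rtimes E)_\xi=(\tR\tC_\xi\tM/\tR)(VT\rtimes E)_\xi\mathcal S_\xi$: one must show that any element of $\tN\rtimes E$ fixing $\xi$ can be written in the required factored form, which requires carefully propagating the multiplicity-freeness of $\Res^{\tG}_{G}$ through the wreath structure and through the twisting isomorphism $\iota$, and simultaneously tracking the action on the $\vartheta$-component so that the joint stabilizer of $\xi\times\vartheta$ still factors — the interaction between the $VT\mathcal S$-permutation action and the $E$-field action is where the bookkeeping is most delicate, and the exceptional cases (\ref{special-case-7-1}), where $\tM\ne M$ and an extra factor of $\ell$ appears in $\det\tN$, have to be checked separately.
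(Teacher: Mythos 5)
Your proposal follows essentially the same approach as the paper: reduce to the basic-subgroup factors, use the Cabanes--Sp\"ath/Malle--Sp\"ath extendibility results for the $\tC_i$-level (the paper's workhorse is precisely \cite[Rmk.~4.7]{CS17}), reassemble across the wreath and direct-product structure via Lemma~\ref{ext-wre} and Lemma~\ref{ext-central-prod}, and treat the $\vartheta$-part by noting that the relevant quotients are $\Sp_{2\gamma}(\ell)$ (or the small exceptional groups) with cyclic outer automorphism group (or $\fA_6$, handled via \cite[Lemma~15.2]{IMN07}). Two small points where you are vaguer than the actual argument: the statement is existential in $\mathcal G_0$, and the stabilizer identity $(\tN/\tR\rtimes E)_\xi=(\tR\tC_\xi\tM/\tR)(VT\rtimes E)_\xi\mathcal S_\xi$ is not automatic but is achieved by \emph{choosing} the components $\xi_{ij}\in\Irr(C_i\mid\tilde\theta_{ij})$ as in \cite[Rmk.~4.7]{CS17} (and taking them equal whenever $\Res^{\tC_i}_{C_i}\tilde\theta_{ij_1}=\Res^{\tC_i}_{C_i}\tilde\theta_{ij_2}$); and the reduction to working with $(\ldots)_\xi$ instead of $(\ldots)_{\xi\times\vartheta}$ uses explicitly that outside the cases~(\ref{special-case-7-1}) each $\vartheta_i$ is the \emph{unique} Steinberg character of $R_iM_i/R_i$, hence $\tN_i E$-invariant, giving $(\tN/\tR\rtimes E)_\xi=(\tN/\tR\rtimes E)_{\xi\times\vartheta}$ -- you gesture at rationality/$E$-invariance but don't use it to make this identification. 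Neither point is a real gap; the plan is sound and would fill in to the paper's proof.
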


\begin{proof}
	Note that 
	$\langle V_i,E \rangle$	acts on $\tC_i$ via field and graph automorphisms.	
	Let $\theta\in\tilde{\mathcal G}$ with $\tilde\theta=\tilde\theta_0\times\prod_{i=1}^s \prod_{j=1}^{t_{i}}\tilde\theta_{ij}$.
	By \cite[Rmk.~4.7]{CS17}, 
	there exists $\xi_{ij}\in\Irr(C_i\mid \tilde\theta_{ij})$	such that 
	$((\tR_i\tC_i \tM_i/\tR_i)V_i T_iE)_{\xi_{ij}}=(\tR_i\tC_i \tM_i/\tR_i)_{\xi_{ij}}(V_i T_i E)_{\xi_{ij}}$ and $\xi_{ij}$	extends to $(R_iC_i/R_i)(V_i E)_{\xi_{ij}}$.
	Also, there is $\xi_0\in\Irr(C_0\mid\tilde\theta_0)$ such that $(\tC_0\rtimes E)_{\xi_0}=\tC_0\rtimes E_{\xi_0}$ and $\xi_0$ extends to $C_0\rtimes E_{\xi_0}$.
	Let $\xi=\xi_0\times \prod_{i=1}^s\prod_{j=1}^{t_i}\xi_{ij}$.
	So $\xi_{ij_1}=\xi_{ij_2}$ or $\xi_{ij_1}$ and $\xi_{ij_2}$ are not $M_iV_iT_iE$-conjugate for any $1\le i\le s$ and $1\le j_1,j_2\le t_i$.
	Assume further if $\Res^{\tC_i}_{C_i}(\tilde\theta_{ij_1})=\Res^{\tC_i}_{C_i}(\tilde\theta_{ij_2})$, then we take $\xi_{ij_1}=\xi_{ij_2}$.
	Then it is easy to check that $\xi$  satisfies	
	$(\tN/\tR\rtimes E )_{\xi}=(\tR\tC_{\xi}\tM/\tR) (VT\rtimes E)_{\xi}\mathcal S_\xi$.	
	
	Now we prove the extendibility of $\xi\times\vartheta$.	
	First assume that we are not in the cases (\ref{special-case-7-1}).		If we write $\vartheta=\prod_{i=1}^{s} \vartheta_i^{t_i}$ with 
	$\vartheta_i\in\Irr( R_iM_i/R_i)$,
	then $\vartheta_i$ is the (unique) Steinberg character of $R_iM_i/ R_i\cong\Sp_{2\gamma_i}(\ell)$.	
	Thus $(\tN/\tR\rtimes E )_{\xi}=(\tN/\tR\rtimes E )_{\xi\times\vartheta}$.	
	On the other hand, $(\mathcal R\mathcal C\cM/\mathcal R) (VT\mathcal S\rtimes E)_{\xi\times\vartheta}=(\mathcal R\mathcal C\cM/\mathcal R) (VTE)_{\xi}\mathcal S_{\xi}$.
	Then by Lemma \ref{ext-wre},
	it suffices to show the following statements 
	\begin{enumerate}[(a)]	
		\item $\mu\in\Irr(Z\mid \xi)$ extends to $\mathcal S_\xi$ for $Z=\mathcal S\cap \mathcal R\mathcal C$ and
		\item $\theta_{ij}\times\vartheta_i$ has an extension $\widetilde{\theta_{ij}\times\vartheta_i}$
		to $( R_i C_iM_i/ R_i)(V_iT_i\rtimes E)_{\theta_{ij}}$ with $v_i\hat F\in\ker(\widetilde{\theta_{ij}\times\vartheta_i})$.
	\end{enumerate}
	
	We first prove (a). 
	It is trivial if $Z=1$ and
	so we assume that $Z\ne 1$.
	According to Lemma \ref{ext-semi}, when considering the extension of characters, we pay attention to the extendibility of $\mu$ to $Z\rtimes \prod_{i=1}^s \mathfrak{S}(t_i)$.
	Let $\tilde{\mathcal S'}= C_2\wr \prod_{i=1}^s \mathfrak{S}(t_i)$.
	Then $Z\rtimes \prod_{i=1}^s \mathfrak{S}(t_i)$ is the kernel of the linear character
	$\tilde{\mathcal S}'\to \mathbb{C}^\times$,
	$(x,\sigma)\mapsto \beta(x)\mathrm{sgn}(\sigma)$, where $\beta:(C_2)^{\sum_{i=1}^st_i}\to \mathbb{C}^\times$ is a linear character and faithful on each summand $C_2$. 
	Here we write $C_2$ for the cyclic group of order $2$.
	From this, the proof of \cite[Prop.~5.5~(b)]{CS17} applies here.

	Now we prove (b).
	Note that
	$( R_i C_iM_i/ R_i)(V_iT_i\rtimes E)_{\theta_{ij}}=( R_i C_iM_i/R_i)(V_iE)_{\theta_{ij}}\times T_i$.
	Then we may assume that $T_i=1$.
	By the first paragraph, $\theta_{ij}$ has an extension $\tilde{\theta}_{ij}$ to 	$(R_i C_i/ R_i)(V_iE)_{\theta_{ij}}$.
	In particular, we may assume $v_i\hat F\in \ker(\tilde{\theta}_{ij})$ since $v_i\hat F$ acts trivially on $ R_iC_i$.
	So it suffices to show that $\vartheta_i$ extends to $( R_iM_i/ R_i)V_iE/\langle v_i\hat F \rangle$.
	Now	$ R_iM_i/R_i\cong \Sp_{2\gamma_i}(\ell)$.
	Since the outer automorphism group of $\Sp_{2\gamma_i}(\ell)$ is cyclic,
	we have the extendibility of $\vartheta_i$ and then (b) follows by Lemma \ref{ext-central-prod}.
	The property $v\hat F\in\ker(\widetilde{\theta_{ij}\times\vartheta_i})$ also follows by the construction of extensions in the proof of Lemma \ref{ext-central-prod}.

	Now assume that  (\ref{special-case-7-1}) holds.
	Then there exists some $i$ such that $\bc_i=\zero$ and one of
	(\ref{eq:special-case-3-1}), (\ref{eq:special-case-2-linear-1}) and (\ref{special-case-2-uni-1}) holds for $\tR_{m_i,\alpha_i,\gamma_i}$ (or $\tR^\pm_{m_i,\alpha_i,\gamma_i}$).
	By the argument above, it also suffices to show (a) and (b).  
	The property (a) also holds analogously here and we only need to prove (b).	 In addition, the extendibility of $\theta_{ij}$ can also be obtained as above.
	
First we let $\ell=3$ and $a=1$ and case (\ref{eq:special-case-3-1}) occurs.	
The only case that should be considered here is that 
	$\vartheta_i$ extends to $( R_iM_i/ R_i)V_iE$ when $\tR_i=\tR_{m_i,0,1}$ with $3\nmid m_i$.
Then $\nu(\det( R_i C_i))=0$ and $|\det(\tM_i)|=3$.
	We have
	$V_i=1$
	and $M_i/(M_i\cap R_i)\cong Q_8$ by Proposition \ref{prop:CN_m,alpha,gamma-odd}.
	By direct calculation, $F_p$ acts trivially on $M_i/(M_i\cap R_i)$ 
	if $p\equiv 1\ (\mathrm{mod}\ 3)$
	and $F_p$ acts as graph automorphism on $M_i/(M_i\cap R_i)$ otherwise.
	Indeed, using an easy argument, which can be found in the proof of \cite[Prop.~5.10]{FLL17a},
	we know that $E$ acts trivially on $M_i/(M_i\cap R_i)$.
	Hence $\vartheta_i$ 
	has an extension $\tilde\vartheta_i$ to $( R_iM_i/ R_i)E$ such that $E\subseteq \ker(\tilde\vartheta_i)$.
	
Suppose that $\ell=2$ and   (\ref{eq:special-case-2-linear-1}) or (\ref{special-case-2-uni-1}) holds
for some $\tR_{m_i,\alpha_i,\gamma_i}$.
Then $V_i=1$ and $M_i/(M_i\cap R_i)$ is isomorphic to one of the groups $C_3$, $\fA_6$ and $\Omega_4^+(2)$ by Proposition \ref{prop:CN_m,alpha,gamma-2-linear}, Remark \ref{M-for-not-central-product} and Proposition \ref{prop:CN_m,alpha,gamma-2-unitary}.
Thus, as above, it suffices to show that for the group $P=C_3$, $\fA_6$ or $\Omega_4^+(2)$, if a group $Q\unrhd P$ induces all automorphisms on $P$ via conjugation, then every character $\vartheta\in\Irr(P)$ extends to the stabilizer $Q_\vartheta$.
If $P=C_3$ or $\Omega_4^+(2)$, then $\Out(P)$ is cyclic and then the assertion above holds immediately. Therefore, it remains to consider $P=\fA_6$, which case follows by \cite[Lemma 15.2]{IMN07} since	
$\fA_6\cong \PSL_2(9)$. The property $v\hat F\in\ker(\widetilde{\theta_{ij}\times\vartheta_i})$ also follows by the construction of extensions in the proof of Lemma \ref{ext-central-prod}.
This completes the proof.
\end{proof}

\subsection{Parametrization of certain characters of $N/R$.}

Let $Y\unlhd X$ be finite groups and $\mathcal Y\subseteq \Irr(Y)$. Following	\cite[Def.~3.5]{MS16}, we say that \emph{maximal extendibility holds for $\mathcal Y$ with respect to $Y\unlhd X$} if every $\chi\in\mathcal Y$ extends (as irreducible characters) to $X_\chi$.
Then, an \emph{extension map for $\mathcal Y$ with respect to $Y\unlhd X$} is a map $$\Lambda:\mathcal Y\to \bigcup\limits_{Y\le I\le X}\Irr(I),$$
such that for every $\chi\in\mathcal Y$ the character $\Lambda(\chi)\in\Irr(X_\chi\mid \chi)$ is an extension of $\chi$.

\vspace{2ex}

Let $\mathcal{G}_0'=\{\xi\times \vartheta \in \Irr(\mathcal R\mathcal C\mathcal M/\mathcal R)\mid \xi\in\mathcal G_0, \vartheta\in \dz(\mathcal R\mathcal M/\mathcal R)\}$ and $\tilde{\mathcal G}'=\{\ttheta\times\tilde\vartheta\in \Irr(\tR\tC\tM/\tR)  \mid \ttheta\in\tilde{\mathcal{G}}, \tilde\vartheta\in \Irr(\tR\tM/\tR\mid \dz(\mathcal R\mathcal M/\mathcal R))\}$.
Using Lemma \ref{pre-dir}, we have the following lemma immediately.

\begin{lem}\label{irr-c}
	Let $\xi'\in\mathcal G'_0$ and $\tilde\theta'\in\Irr(\tR\tC\tM/\tR\mid \xi')$.
	Then there is a unique character $\theta\in \Irr((\tR\tC\tM\cap N)/R\mid \xi')\cap\Irr((\tR\tC\tM\cap N)/R\mid \tilde\theta')$.
\end{lem}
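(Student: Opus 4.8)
\textbf{Plan for the proof of Lemma \ref{irr-c}.}
The statement asserts uniqueness of a character in $\Irr((\tR\tC\tM\cap N)/R)$ that simultaneously lies over the ``small'' character $\xi'\in\mathcal G'_0$ (a character of $\mathcal R\mathcal C\mathcal M/\mathcal R$) and under the ``large'' character $\tilde\theta'\in\Irr(\tR\tC\tM/\tR)$. The plan is to deduce this from Lemma \ref{pre-dir} applied with a carefully chosen triple of groups. Recall Lemma \ref{pre-dir}: if $Y\unlhd X$, $H\unlhd X$, $Y\le H$, $\chi\in\Irr(X)$, $\xi\in\Irr(Y\mid\chi)$, and $\Res^X_Y\chi$ is multiplicity free, then $|\Irr(H\mid\chi)\cap\Irr(H\mid\xi)|=1$. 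So the task is to identify $X=\tR\tC\tM/\tR$, $\chi=\tilde\theta'$, $Y=\mathcal R\mathcal C\mathcal M/\mathcal R$, $\xi=\xi'$, and $H=(\tR\tC\tM\cap N)/R$, and then to verify the three hypotheses: $Y\unlhd X$, $Y\le H\unlhd X$, and $\Res^X_Y\tilde\theta'$ is multiplicity free.

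First I would record the structural facts already assembled just before the lemma: $\tR\tC\tM/\tR\cong (\tR\tC/\tR)\times(\tM/(\tM\cap\tR))$ with $\tM/(\tM\cap\tR)\cong\prod_i\Sp_{2\gamma_i}(\ell)$ (using $\tM_i\cap\tR_i\tC_i\subseteq\tR_i$ and the central-product decompositions from Propositions \ref{prop:tN-m,alpha,gamma-odd}, \ref{prop:tN-m,alpha,gamma-2-linear}, \ref{prop:tN-m,alpha,gamma-2-unitary}), and similarly $(\tR\tC\tM\cap N)/R=((\tR\tC\tM\cap N)/R)$ contains $\mathcal R\mathcal C\mathcal M/\mathcal R$ as a subgroup, with $(\tR\tC\tM\cap N)/\tR\cong\mathcal R\mathcal C\mathcal M/\mathcal R$ noted explicitly in the text. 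The normality $\mathcal R\mathcal C\mathcal M/\mathcal R\unlhd(\tR\tC\tM\cap N)/R\unlhd\tR\tC\tM/\tR$ should follow because $\mathcal C\unlhd\tC$ and $\mathcal M\unlhd\tM$ (the determinant-one subgroups are normal, since determinant is a homomorphism to the abelian $\fZ_{q-\eta}$), and because $N=N_{G^{tw}}(R)$ is normal in $\tN=N_{\tG^{tw}}(R)$ so $\tR\tC\tM\cap N\unlhd\tR\tC\tM\cap\tN=\tR\tC\tM$. The quotients here are all by $\tR$ or $R=\tR\cap G^{tw}$, and since $\tR$ is already in the kernel of everything in sight this bookkeeping is routine.

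The main obstacle is the multiplicity-freeness of $\Res^{\tR\tC\tM/\tR}_{\mathcal R\mathcal C\mathcal M/\mathcal R}\tilde\theta'$. Using the direct-product decomposition above, $\tilde\theta'=\tilde\theta\times\tilde\vartheta$ and the restriction splits as (restriction of $\tilde\theta$ from $\tR\tC/\tR$ to $\mathcal R\mathcal C/\mathcal R$) times (restriction of $\tilde\vartheta$ from $\tR\tM/\tR$ to $\mathcal R\mathcal M/\mathcal R$). For the first factor, $\tR\tC/\mathcal R\mathcal C$ is a section of $\fZ_{q-\eta}$ hence cyclic, so restriction of any irreducible character along this cyclic-quotient extension is multiplicity free by Clifford theory; this is exactly the kind of fact used throughout \S\ref{subsec:Linear and unitary groups}. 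For the second factor, $\tM/\mathcal M$ is likewise a section of $\fZ_{q-\eta}$ (as $\det$ maps $\tM$ into $\fZ_{q-\eta}$ and $\mathcal M=\tM\cap G^{tw}$), hence cyclic, and again Clifford theory gives multiplicity freeness; one must be slightly careful in the exceptional configuration (\ref{special-case-7-1}) where $\tM\neq M$ and $\tM/(\tM\cap\tR)$ has a nontrivial outer part, but even there $\det(\tM)$ is cyclic of order dividing the relevant power of $\ell$, so the same argument applies. Putting the two factors together, $\Res^{\tR\tC\tM/\tR}_{\mathcal R\mathcal C\mathcal M/\mathcal R}\tilde\theta'$ is a product of two multiplicity-free restrictions, hence multiplicity free. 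With all hypotheses of Lemma \ref{pre-dir} verified, the conclusion $|\Irr(H\mid\tilde\theta')\cap\Irr(H\mid\xi')|=1$ is precisely the assertion, and we let $\theta$ be this unique character. Finally I would remark that $\xi'\in\Irr(\mathcal R\mathcal C\mathcal M/\mathcal R\mid\tilde\theta')$ is part of the standing setup ($\mathcal G'_0$ was built from $\mathcal G_0$ lying under $\tilde{\mathcal G}$, cf. Lemma \ref{act-ext-C0}), so the intersection is a priori nonempty and the count ``$1$'' indeed produces an honest character.
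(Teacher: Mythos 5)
Your plan is the paper's plan: the paper's entire proof is the sentence ``Using Lemma \ref{pre-dir}, we have the following lemma immediately,'' so spelling out the application of Lemma \ref{pre-dir} with $X=\tR\tC\tM/\tR$, $Y=\mathcal R\mathcal C\mathcal M\tR/\tR$, $H=(\tR\tC\tM\cap N)\tR/\tR$, $\chi=\tilde\theta'$, $\xi=\xi'$ is exactly what is intended. The reduction to multiplicity-freeness of $\Res^X_Y\tilde\theta'$ is therefore the crux.

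However, your justification of that multiplicity-freeness contains an incorrect step. You claim ``$\tR\tC/\mathcal R\mathcal C$ is a section of $\fZ_{q-\eta}$ hence cyclic'' and similarly that ``$\tM/\mathcal M$ is a section of $\fZ_{q-\eta}$ (as $\det$ maps $\tM$ into $\fZ_{q-\eta}$ and $\mathcal M=\tM\cap G^{tw}$), hence cyclic.'' Neither is right as stated. The subgroups $\mathcal R$, $\mathcal C$, $\mathcal M$ are built componentwise ($\mathcal C=C_0\times\prod_i C_i^{t_i}$, etc.) by taking determinant-one elements in each tensor factor, whereas $C$, $M$ are cut out by the single global determinant condition; in particular $\mathcal M\ne\tM\cap G^{tw}$ (that is $M$). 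Consequently $\tR\tC/\mathcal R\mathcal C$ and $\tM\tR/\mathcal M\tR$ are direct products of $\sum_i t_i$ (plus one) cyclic groups, one per tensor factor, and are not cyclic once $s>1$ or some $t_i>1$. Cyclicity is exactly what you invoke for multiplicity-freeness, so as written the argument does not go through.

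The fix is the decomposition you already record one line earlier: $\tR\tC\tM/\tR\cong\prod_{i,j}(\tR_i\tC_i\tM_i/\tR_i)$ and $\mathcal R\mathcal C\mathcal M\tR/\tR\cong\prod_{i,j}(R_iC_iM_i\tR_i/\tR_i)$, with $\tilde\theta'$ itself a direct product of characters $\tilde\theta_{ij}\times\tilde\vartheta_{ij}$. In each single factor the quotient $\tR_i\tC_i/C_i\tR_i$ (a quotient of $\tC_i/C_i$) and $\tM_i/M_i(\tM_i\cap\tR_i)$ (a quotient of $\tM_i/M_i$) are indeed cyclic, so each factorwise restriction is multiplicity-free, and a direct product of multiplicity-free restrictions is multiplicity-free. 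Replacing the two sentences asserting cyclicity of the global quotients by this componentwise argument makes your proof correct and fully aligned with the paper's.
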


Let $\mathcal G=\Irr((\tR\tC\tM\cap N)/R\mid \mathcal G'_0)\cap \Irr((\tR\tC\tM\cap N)/R\mid \tilde{\mathcal G}')$
and let $\mathfrak{C} = \mathcal{G} \times \Irr(T)$.

\begin{lem}\label{sta-psi}
	Let $\psi\in\mathfrak C$.
	Then $((\tR\tC\tM V/\tR\times T)\mathcal{S}\rtimes E)_\psi = (\tR\tC_\psi \tM/\tR \times T) (V\mathcal{S}\rtimes E)_\psi$.
\end{lem}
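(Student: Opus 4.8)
\textbf{Proof of Lemma \ref{sta-psi} (proposal).}

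The plan is to reduce the statement about the stabilizer of $\psi = \theta \times \tau \in \mathfrak{C}$ (with $\theta \in \mathcal{G}$, $\tau \in \Irr(T)$) to the already-established factorization property of $\xi$ from Lemma \ref{act-ext-C0}. First I would write $\theta$ as the unique constituent of $\Irr((\tR\tC\tM\cap N)/R)$ lying over both some $\xi' = \xi \times \vartheta \in \mathcal{G}_0'$ and some $\tilde\theta' \in \tilde{\mathcal{G}}'$ furnished by Lemma \ref{irr-c}. Since the labelling data $\xi$ and $\tilde\theta'$ are uniquely attached to $\theta$, any element of $(\tN/\tR \rtimes E)$ (or of the subgroup $((\tR\tC\tM V/\tR \times T)\mathcal{S}\rtimes E)$) stabilizing $\theta$ must stabilize $\xi$, stabilize $\vartheta$, and stabilize $\tilde\theta'$. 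Conversely, anything stabilizing $\xi'$ and $\tilde\theta'$ stabilizes $\theta$ by the uniqueness in Lemma \ref{irr-c}. This translates the question into: describe the stabilizer of $(\xi, \vartheta, \tilde\theta', \tau)$ inside $(\tR\tC\tM V/\tR \times T)\mathcal{S} \rtimes E$.

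Next I would use the orthogonal ``coordinate'' structure of the ambient group. Recall $\tN/\tR = ((\tR\tC/\tR)(\tR\tM/\tR)V \times T)\mathcal{S}$, that $\tM \unlhd \tN$ with $\tM/(\tM\cap\tR)$ a product of symplectic groups carrying only their Steinberg constituents in $\vartheta$ (outside the exceptional cases (\ref{special-case-7-1}), and in those cases $\tM = M$ so the issue disappears), and that $E$ commutes with $V$, $T$, $\mathcal{S}$ while normalizing $\tR$, $\tC$, $\tM$. The character $\vartheta$ is therefore automatically $\tN VTE$-invariant (the Steinberg character is stable under all automorphisms), and $\tilde\theta'$ is determined by $\xi$ together with $\vartheta$, so its stabilizer contains that of $\xi$ intersected with that of $\vartheta$. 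Thus the stabilizer of $(\xi,\vartheta,\tilde\theta',\tau)$ equals the stabilizer of $(\xi,\tau)$, and the factor $T$ only interacts with $\mathcal{S}$ (via the permutation action on the $t_i$ blocks). Here I would invoke Lemma \ref{act-ext-C0} directly: it gives $(\tN/\tR\rtimes E)_\xi = (\tR\tC_\xi\tM/\tR)(VT\rtimes E)_\xi \mathcal{S}_\xi$. Intersecting this identity with the subgroup $(\tR\tC\tM V/\tR \times T)\mathcal{S}\rtimes E$ (which contains $\tR\tC\tM V/\tR$, $T$, $\mathcal{S}$ and $E$, but not the full $\tN/\tR$ — we have dropped nothing essential since $\tM \le \tR\tC\tM V/\tR$ and the only extra generators of $\tN/\tR$ are already accounted for), one obtains $(\tR\tC_\psi\tM/\tR \times T)(V\mathcal{S}\rtimes E)_\psi$ after also imposing $\tau$-invariance on the $T$-coordinate; note $\psi = \theta\times\tau$ and $\tC_\psi = \tC_\xi$ since the $T$- and $\mathcal{S}$-parts act trivially on $\tC$.

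The main obstacle I anticipate is bookkeeping the precise interplay between $\mathcal{S}$ and the pair $(\xi,\tau)$: an element of $\mathcal{S}_\xi$ permutes the tensor factors $\tC_i^{t_i}$ and must simultaneously preserve $\xi$ (which, by the choice in Lemma \ref{act-ext-C0}, has the property that the $\xi_{ij}$ within a block are either equal or in distinct $M_iV_iT_iE$-orbits) and must act compatibly with $\tau \in \Irr(T) = \prod \Irr(T_i^{t_i})$. One has to check that the $\mathcal{S}$-component of the stabilizer of $\psi$ is exactly $(V\mathcal{S}\rtimes E)_\psi \cap \mathcal{S}$ and that no ``diagonal'' element mixing $V$, $\mathcal{S}$ and $E$ is lost or spuriously gained when passing between $\tN/\tR$ and the subgroup in the statement — this is where the identities $V \cap \tR\tC\tM = V\cap\tR\tC \subseteq \tZ$ and $\mathcal{S}\cap\tR\tC\tM V = \mathcal{S}\cap\tR\tC \subseteq \tZ$ (established just before Proposition \ref{twist-thm}) are used to guarantee that the decomposition $((\tR\tC/\tR)(\tR\tM/\tR)V \times T)\mathcal{S}$ is ``internally direct enough'' for the stabilizer to split along the displayed factors. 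Once these intersection facts are quoted, the argument is a routine, if slightly tedious, verification that the stabilizer respects the product decomposition; I would carry it out factor-by-factor ($\tC$-part, $\tM$-part, $V$-part, $T$-part, $\mathcal{S}$-part, $E$-part) and then reassemble.
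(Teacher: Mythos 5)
The key gap is in your first paragraph, where you assert that any element stabilizing $\theta$ ``must stabilize $\xi$''. This implication does not hold, and the uniqueness in Lemma~\ref{irr-c} goes in the opposite direction only: the pair $(\xi',\tilde\theta')$ determines a unique $\theta$, but $\theta$ does not determine $\xi'$. By Clifford theory, $\xi' = \xi\times\vartheta$ is just one constituent of $\Res^{(\tR\tC\tM\cap N)/R}_{\mathcal{R}\mathcal{C}\mathcal{M}/\mathcal{R}}\theta$, and the full set of constituents forms a $\tC$-orbit, so stabilizing $\theta$ only forces $(\xi')^x$ to be $\tC$-conjugate to $\xi'$, not equal to it. Your ``translation'' of the stabilizer of $\theta$ into the stabilizer of the tuple $(\xi,\vartheta,\tilde\theta',\tau)$ is therefore incorrect in the forward direction, and the subsequent step, where you intersect the identity from Lemma~\ref{act-ext-C0} with the subgroup, inherits this gap — in effect you presuppose the factorization you set out to prove.

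The paper's proof confronts precisely this issue and resolves it by contradiction. Writing a putative stabilizing element as $t g^{-1}$ with $t\in\tC$ (after absorbing the $\tM$-part, which does fix $\theta'$) and $g\in VT\mathcal{S}\rtimes E$, one has $\theta'^t=\theta'^g$. If this common character were different from $\theta'$, Clifford theory applied to its restriction would give $\xi^g=\xi^{t'}$ for some $t'\in\tC$, while Lemma~\ref{pre-dir} shows the fibres $\Irr(\mathcal{R}\mathcal{C}\mathcal{M}/\mathcal{R}\mid\theta'^t)$ and $\Irr(\mathcal{R}\mathcal{C}\mathcal{M}/\mathcal{R}\mid\theta')$ are disjoint, so $\xi^g\ne\xi$. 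The factorization in Lemma~\ref{act-ext-C0} then excludes the possibility of an element of $VT\mathcal{S}\rtimes E$ moving $\xi$ to a distinct $\tC$-conjugate of itself, giving the contradiction. It is this step — allowing for, and then explicitly ruling out, $\xi^g\ne\xi$ — that your proposal omits by assuming $\xi^x=\xi$ from the start.
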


\begin{proof}
	Write $\psi=\theta'\times \lambda$,
	where $\theta'\in \Irr((\tR\tC\tM\cap N)/R)$
	and $\lambda\in\Irr(T)$.
	We claim that $((\tR\tC\tM V/\tR\times T\mathcal S)\rtimes E)_{\theta'}=\tR\tC_{\theta'} \tM/\tR (VT\mathcal S\rtimes E)_{\theta'}$.
	Assume that $t\in\tC$, $g\in VT\mathcal S\rtimes E$ satisfies that $\theta'^t=\theta'^g\ne\theta'$.
	Let $\xi\times\vartheta\in\Irr(\mathcal R\mathcal C\cM/\mathcal R\mid\theta')$ with $\xi\in \mathcal G_0$ and $\vartheta\in \dz(\mathcal R\mathcal M/\mathcal R)$.
	Then $\xi^g=\xi^{t'}$ for some $t'\in\tC$.
	By Lemma \ref{pre-dir},
	$\Irr(\mathcal R\mathcal C\cM/\mathcal R\mid \theta'^t)$ and $\Irr(\mathcal R\mathcal C\cM/\mathcal R\mid \theta')$ are disjoint,
	and then $\xi^g=\xi^{t'}\ne \xi$, which contradicts with Lemma \ref{act-ext-C0}.
	Thus the claim holds.
	Then the assertion follows by the facts that $((\tR\tC\tM V/\tR\times T\mathcal S)\rtimes E)_\psi\le ((\tR\tC\tM V/\tR\times T\mathcal S)\rtimes E)_{\theta'}$ and 
	$\tC_{\theta'}=\tC_\psi$.
\end{proof}

We verify Proposition \ref{twist-thm} in steps  mimicking the strategy applied in \cite[\S 5]{CS17}. In fact, the groups $(\tR\tC \tM\cap N)/R\times T$ and $N/R$ in this section play the same roles as $C$ and $N$ in that paper. We first have

\begin{prop}\label{extension-map}
	Let $\mathfrak C$ be the set defined as above.
	There exists an extension map $\Lambda$ for $\mathfrak C$ with respect to $(\tR\tC \tM\cap N)/R\times T\unlhd N/R$ such that:
	\begin{enumerate}[\rm(1)]
		\item $\Lambda$ is $N\rtimes E$-equivariant,
		\item for every $\psi\in\mathfrak C$ the character $\Lambda(\psi)$ has an extension $\hat \psi\in\Irr((N/R\rtimes E)_\psi)$ with the property that $v\hat F\in \ker(\hat\psi)$,
		\item for every $\psi\in\mathfrak C$ and $t\in\tC_\psi\tR\tM/\tR$ there exists a linear character $\nu\in\Irr(N_\psi/R)$ with the property that $\Lambda(\psi)^t=\Lambda(\psi)\nu$, and $\nu$ is the lift of a faithful character of $N_\psi/N_{\tilde\psi}$ for any $\tilde\psi\in\Irr(\langle (\tR\tC \tM\cap N)/R\times T,t \rangle\mid\psi)$.
	\end{enumerate}
\end{prop}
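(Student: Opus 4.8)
The strategy is to build the extension map $\Lambda$ for $\mathfrak C = \mathcal G \times \Irr(T)$ with respect to $(\tR\tC\tM\cap N)/R \times T \unlhd N/R$ componentwise, exploiting the structure $N/R = ((\tR\tC\tM\cap N)/R \times T) V \mathcal S$ obtained in \S7.2--\S7.3, and to transfer the needed equivariance and kernel properties from the ``twisted'' local data already assembled in Lemma \ref{act-ext-C0} and Lemma \ref{sta-psi}. First I would recall that a character $\psi \in \mathfrak C$ has the form $\psi = \theta' \times \lambda$ with $\theta' \in \Irr((\tR\tC\tM\cap N)/R)$ lying over some $\xi \times \vartheta \in \mathcal G'_0$ (with $\xi \in \mathcal G_0$, $\vartheta \in \dz(\mathcal R\mathcal M/\mathcal R)$) and simultaneously over some $\tilde\theta \times \tilde\vartheta \in \tilde{\mathcal G}'$, and $\lambda \in \Irr(T)$. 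By Lemma \ref{sta-psi} the stabilizer splits as $(N/R \rtimes E)_\psi = (\tR\tC_\psi\tM/R \times T)(V\mathcal S \rtimes E)_\psi$, so it suffices to extend $\psi$ to $(\tR\tC\tM\cap N)/R \times T$ enlarged by the $V$- and $\mathcal S$-directions and the relevant field/graph automorphisms, then recombine.

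The key steps, in order, would be: (i) produce the extension of $\xi \times \vartheta$ to $(\mathcal R\mathcal C\mathcal M/\mathcal R)(VT\mathcal S \rtimes E)_{\xi\times\vartheta}$ with $v\hat F$ in its kernel --- this is exactly Lemma \ref{act-ext-C0}, where the extendibility along the $V$, $\mathcal S$ directions and along field/graph automorphisms was already handled, including the exceptional cases (\ref{special-case-7-1}) via $\Out$-cyclicity of $\Sp_{2\gamma}(\ell)$, $C_3$, $\Omega_4^+(2)$ and the $\PSL_2(9)$-argument from \cite{IMN07} for $\fA_6$; (ii) use Lemma \ref{irr-c} (i.e.\ Lemma \ref{pre-dir}) to pass from the $\tG^{tw}$-level character $\tilde\theta'$ and the $\mathcal G'_0$-level character $\xi'$ to the unique common constituent $\theta' \in \Irr((\tR\tC\tM\cap N)/R)$, and check that the extension built in (i) restricts/induces compatibly to give an extension of $\theta'$ to $(\tR\tC\tM\cap N)/R$ enlarged by $(V\mathcal S \rtimes E)_{\theta'}$; (iii) handle the $T$-factor trivially (since $E$ acts trivially on $T$ and $T$ is a direct factor, $\lambda$ extends canonically), then form $\Lambda(\psi)$ as the product of the two extensions via Lemma \ref{ext-central-prod} or \ref{ext-semi}; (iv) verify the three bulleted properties: $N\rtimes E$-equivariance follows by choosing $\mathcal G_0$, $\tilde{\mathcal G}$ as genuine orbit-representative systems and transporting $\Lambda$ by conjugation, the kernel condition $v\hat F \in \ker(\hat\psi)$ is inherited from Lemma \ref{act-ext-C0} and the explicit construction in Lemma \ref{ext-central-prod}, and property (3) on the linear character $\nu$ comes from Clifford theory for $(\tR\tC\tM\cap N)/R \times T \unlhd \langle (\tR\tC\tM\cap N)/R \times T, t\rangle$ together with the fact that $t \in \tC_\psi\tR\tM/\tR$ centralizes $\psi$ but may permute its extensions by a faithful linear character of the cyclic quotient $N_\psi/N_{\tilde\psi}$.

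The main obstacle I anticipate is property (3), the ``difference of extensions is a faithful linear character'' statement: one must show that when $t$ lies in the diagonal-determinant part $\tC_\psi\tR\tM/\tR$, conjugation by $t$ moves $\Lambda(\psi)$ by a character $\nu$ that is not merely linear but pulls back from a \emph{faithful} character of the small cyclic group $N_\psi/N_{\tilde\psi}$. This requires tracking precisely how the determinant map interacts with the chosen extension --- essentially a Gallagher-type computation relating $\Lambda(\psi)^t$ and $\Lambda(\psi)$ through the linear characters of $\tG^{tw}/G^{tw} \cong \fZ_{q-\eta}$ restricted to $N$ --- and using that, by construction of $\mathcal G_0$ and $\tilde{\mathcal G}$, the only ambiguity in choosing $\theta'$ over $\xi' = \xi\times\vartheta$ is governed by $\kappa^{\tR\tC\tM\cap N}_{\mathcal R\mathcal C\mathcal M}$, which is controlled by the determinant valuations computed in Propositions \ref{prop:cc-R-m,alpha,gamma-odd}--\ref{prop:cc-R-m,alpha,gamma,c-2-unitary}. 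The other steps are essentially bookkeeping with the Clifford-theoretic lemmas of \S7.1 and the structural decompositions of \S7.2--\S7.3, whereas (3) is where a genuinely new verification, parallel to \cite[Prop.~5.8(3)]{CS17} but for the local (radical-subgroup) situation, is needed.
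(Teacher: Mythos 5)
Your outline for parts (1) and (2) is essentially the paper's proof: reduce (1) to the existence of a suitable extension via a standard equivariant-choice argument (the paper invokes \cite[Lemma~3.6]{MS16} for this, which packages exactly what you describe), then apply Lemma \ref{ext-char} in the form of Remark \ref{ker-ext}(c) with $X = (N/R)\rtimes E$, $Y = (\tR\tC\tM\cap N)/R\times T$, $U = (\cN/\cR)\rtimes E$, pass to the level of $\cC\cR\cM/\cR \times T$, and then use Lemma \ref{act-ext-C0} and Lemma \ref{ext-wre} plus the trivial $T$-factor. Your kernel argument for $v\hat F$ is likewise the same as the paper's.

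However, you substantially misjudge part (3). You flag it as the main obstacle, requiring a ``genuinely new verification'' tracking the interaction of the determinant map with the chosen extension via a Gallagher-type computation. In fact (3) costs nothing: it holds for \emph{any} extension map for $\mathfrak C$ with respect to $(\tR\tC\tM\cap N)/R\times T \unlhd N/R$, and is therefore a free byproduct rather than a constraint on the construction. The mechanism is a general principle, which the paper cites as \cite[Lemma~5.8(b)]{CS17}: if $K \unlhd L$, $\chi \in \Irr(K)$ extends to $L_\chi$, and an element $t$ normalizes $L$, centralizes $K$, and acts trivially on $L/K$, then for any two extensions $\Lambda(\chi)$ and $\Lambda(\chi)^t$ the quotient $\Lambda(\chi)^t\Lambda(\chi)^{-1}$ is a linear character of $L_\chi/K$ inflated from a faithful character of $L_\chi/L_{\tchi}$. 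The structural input making this applicable here is simply that $\tR\tC\tM$ acts trivially on $(N/R)\big/\bigl((\tR\tC\tM\cap N)/R\times T\bigr) \cong (\tN/\tR)\big/(\tR\tC\tM T/\tR)$ --- a fact that is immediate from the decomposition $\tN/\tR = (\tR\tC\tM/\tR\cdot V \times T)\cdot\cS$ in \S7.2, since $V$, $T$, $\cS$ all commute with (or are normalized by) $\tR\tC\tM$ modulo the base. No determinant bookkeeping or valuation estimates from Propositions \ref{prop:cc-R-m,alpha,gamma-odd}--\ref{prop:cc-R-m,alpha,gamma,c-2-unitary} enter at this point. Your proposed approach would likely produce a much longer computation whose conclusion is exactly the content of \cite[Lemma~5.8(b)]{CS17}, so you should recognize that reference as the efficient route.

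One further small inaccuracy: you propose to pass through Lemma \ref{irr-c} / \ref{pre-dir} and then form $\Lambda(\psi)$ via Lemma \ref{ext-central-prod} or \ref{ext-semi}. The paper's route is instead to apply Lemma \ref{ext-char} directly with $U = (\cN/\cR)\rtimes E$, which collapses the two levels simultaneously; Lemma \ref{irr-c} is used elsewhere (in Proposition \ref{tranfer-ver}), not in the proof of Proposition \ref{extension-map}. Your route is not wrong but it is more convoluted than necessary.
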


\begin{proof}
	According to \cite[Lemma~5.8(b)]{CS17}, part (3) is satisfied for any extension map $\Lambda$ for $\mathfrak C$ with respect to $(\tR\tC \tM\cap N)T\unlhd N$, since $\tR\tC\tM$ acts trivially on $(N/R)/((\tR\tC \tM\cap N)/R\times T)\cong (\tN/\tR)/(\tR\tC MT/\tR)$.
	
	For the proof of (1) and (2), it suffices to show that every $\psi\in \mathfrak C$ extends to  $(N\rtimes E)_\psi/\langle v\hat F_1 \rangle$ by \cite[Lemma~3.6]{MS16}.
	We apply Lemma \ref{ext-char}
	(see Remark \ref{ker-ext} (c)) with $X:=(N/R)\rtimes E$,
	$Y:=(\tR\tC \tM\cap N)/R\times T$,
	$U:=(\mathcal N/\mathcal R)\rtimes E$,
	$\chi:=\psi$,
	$\xi\in \mathcal G_0$.
	Note that  $Y\cap U=\mathcal C\mathcal R\cM/\mathcal R\times T$, $Y/Y\cap U$ is abelian and $\Res^Y_{Y\cap U}(\chi)$ is multiplicity-free.
	From this, it suffices to show that every $\xi\times \vartheta\times \lambda\in \Irr(\mathcal C\mathcal R/\mathcal R\times \cM\mathcal R/\mathcal R \times T)$ extends to $((\mathcal N/\mathcal R)\rtimes E)_{\xi}/\langle v\hat F_1 \rangle$, where $\vartheta\in\dz(\cM\cR/\cR)$ and $\lambda\in\Irr(T)$.
	By Lemma \ref{act-ext-C0} and Lemma \ref{ext-wre},
	it suffices to show that $\lambda_i\in\Irr(T_i)$ extends to $T_iV_iE$,
	which is obvious since 
	$T_i$ commutes with $V_iE$.
	The property $v\hat F\in\ker(\hat \theta)$
	follows from Lemma \ref{act-ext-C0} and Remark \ref{ker-ext} (b).
\end{proof}

Let
$\mathfrak N=\Irr(N/R\mid\mathfrak C)$.
Then we have a parametrization of $\mathfrak N$ by standard Clifford theory.

\begin{prop}\label{clifford-N}
	Let $\Lambda$ be the extension map from Proposition \ref{extension-map} for $\mathfrak C$ with respect to $(\tR\tC\tM\cap N)/R\times T\unlhd N/R$.
	Let
	$\mathcal P$ be the set of pairs $(\psi,\zeta)$ with $\psi\in\mathfrak C$ and $\zeta\in\Irr(W_\psi)$, where $W_\psi:=(N_\psi/R)/((\tR\tC\tM\cap N)/R\times T)$.
	Then the map $$\Pi:\mathcal P\to \mathfrak N,\quad (\psi,\zeta)\mapsto \Ind_{N_\psi}^{N}(\Lambda(\psi)\zeta)$$
	is surjective and satisfies
	\begin{enumerate}[\rm(1)]
		\item $\Pi(\psi,\zeta)=\Pi(\psi',\zeta')$ if and only if there exists some $n\in N$ such that $\psi^n=\psi'$ and $\zeta^n=\zeta'$.
		\item $\Pi(\psi,\zeta)^\sigma=\Pi(\psi^\sigma,\zeta^\sigma)$ for every $\sigma\in E$.
		\item Let $t\in \tC_\psi\tR\tM$.
		Then $\Pi(\psi,\zeta)^t=\Pi(\psi,\zeta\nu)$, where $\nu\in\Irr(N_\psi/R)$ is given by $\Lambda(\psi)^t=\Lambda(\psi)\nu$ and is a linear character with $N_{\tilde\psi}/R=\ker(\nu)$ for any extension $\tilde\psi\in\Irr(\langle (\tR \tC\tM\cap N)/R\times T,t \rangle)$ of $\psi$.
	\end{enumerate}
\end{prop}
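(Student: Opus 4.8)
The plan is to prove Proposition~\ref{clifford-N} as a routine application of standard Clifford theory, once the right structural facts about $N/R$ have been assembled from the preceding propositions. The key point is that $(\tR\tC\tM\cap N)/R\times T$ is a normal subgroup of $N/R$, and that maximal extendibility holds for $\mathfrak C$ with respect to this inclusion by Proposition~\ref{extension-map}. Indeed, $\mathfrak N=\Irr(N/R\mid\mathfrak C)$ is by definition the set of irreducible characters of $N/R$ lying above some $\psi\in\mathfrak C$, so the classical description of $\Irr(N/R\mid\mathfrak C)$ via the Clifford correspondence and Gallagher's theorem applies: for each $\psi\in\mathfrak C$, the characters of $N_\psi/R$ lying above $\psi$ are exactly the $\Lambda(\psi)\zeta$ with $\zeta\in\Irr(W_\psi)$ (using that $\Lambda(\psi)$ is an extension of $\psi$ to $N_\psi/R$ and that $W_\psi=(N_\psi/R)/((\tR\tC\tM\cap N)/R\times T)$), and induction $\Ind_{N_\psi}^N$ gives a bijection onto $\Irr(N/R\mid\psi)$. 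Surjectivity of $\Pi$ and assertion~(1) are then immediate from this and the fact that $N$ permutes the pairs $(\psi,\zeta)$ transitively within each fibre.

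First I would record that $(\tR\tC\tM\cap N)/R\times T\unlhd N/R$; this follows from the structural description $\tN/\tR=((\tR\tC/\tR)(\tR\tM/\tR)V\times T)\mathcal S$ established before Proposition~\ref{twist-thm}, together with $\tM\unlhd\tN$ (Remarks~\ref{M-is-stable-odd}, \ref{M-is-stable-2-1}, \ref{M-is-stable-2-2}, \ref{M-is-stable-2-3}) and the identification $(\tR\tC\tM\cap\mathcal N)/\tR=\mathcal R\mathcal C\mathcal M/\mathcal R$ from the start of \S\ref{sec-Stab-ext-wei}. Next I would invoke Proposition~\ref{extension-map} to obtain the extension map $\Lambda$; combined with Gallagher's theorem (\cite[Cor.~6.17]{NT89} or the analogue for ordinary characters), this yields that $\psi\mapsto\{\Lambda(\psi)\zeta\mid\zeta\in\Irr(W_\psi)\}$ parametrizes $\Irr(N_\psi/R\mid\psi)$, and then the Clifford correspondence $\Ind_{N_\psi}^N$ gives $\mathfrak N$. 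Assertion~(2) is then a direct consequence of the $E$-equivariance of $\Lambda$ (Proposition~\ref{extension-map}(1)): $\Ind_{N_\psi}^N(\Lambda(\psi)\zeta)^\sigma=\Ind_{N_{\psi^\sigma}}^N(\Lambda(\psi^\sigma)\zeta^\sigma)$ since $N$ is $\sigma$-stable and $\Lambda(\psi)^\sigma=\Lambda(\psi^\sigma)$. Assertion~(3) is where the conjugation by $t\in\tC_\psi\tR\tM$ enters: using Proposition~\ref{extension-map}(3), $\Lambda(\psi)^t=\Lambda(\psi)\nu$ for a linear character $\nu\in\Irr(N_\psi/R)$ which is the lift of a faithful character of $N_\psi/N_{\tilde\psi}$; since $t$ normalizes $N_\psi$ (as it centralizes $\psi$ up to the automorphism it induces, and $\tC_\psi\tR\tM$ acts trivially on $W_\psi$ by \cite[Lemma~5.8]{CS17}), we get $\Pi(\psi,\zeta)^t=\Ind_{N_\psi}^N(\Lambda(\psi)\nu\zeta)=\Pi(\psi,\zeta\nu)$, and the description of $\ker(\nu)$ is exactly what Proposition~\ref{extension-map}(3) supplies.

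The main obstacle is not any single deep step but rather the bookkeeping needed to see that all hypotheses of the Clifford-theoretic machine are genuinely in place: one must check that $W_\psi$ is well-defined (i.e.\ that $(\tR\tC\tM\cap N)/R\times T\unlhd N_\psi/R$, which needs $t\in\tC_\psi\tR\tM$ to normalize both factors), that $\Lambda(\psi)$ really extends $\psi$ all the way to $N_\psi/R$ and not merely to an intermediate subgroup, and that the action of $\tC_\psi\tR\tM$ on $W_\psi$ is trivial so that Proposition~\ref{extension-map}(3) can be applied verbatim. For the last point I would cite \cite[Lemma~5.8(b)]{CS17} exactly as done in the proof of Proposition~\ref{extension-map}, noting that $\tR\tC\tM$ acts trivially on $(\tN/\tR)/(\tR\tC\tM T/\tR)$ and hence on the relevant quotient. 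Once these structural verifications are dispatched, the proof of Proposition~\ref{clifford-N} is a formal consequence of Gallagher's theorem, the Clifford correspondence, and the three properties of $\Lambda$ recorded in Proposition~\ref{extension-map}, and the argument parallels \cite[Prop.~5.9]{CS17} almost word for word.
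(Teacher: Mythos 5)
Your proposal is correct and takes essentially the same route as the paper, which simply invokes Clifford theory, Gallagher's theorem, and Proposition~\ref{extension-map}, referring to \cite[Prop.~5.10]{CS17} (note: not 5.9) and \cite[Prop.~3.15]{MS16} for the analogous argument; your more detailed bookkeeping about normality and the triviality of the $\tR\tC_\psi\tM$-action on $W_\psi$ matches what those references establish.
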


\begin{proof}
	This follows from Clifford theory and Proposition \ref{extension-map} and the proof is entirely analogous with \cite[Prop.~5.10]{CS17} or \cite[Prop. 3.15]{MS16}.
\end{proof}

\begin{rem}\label{conj-cla}
	The property (3) of Proposition \ref{clifford-N} implies that for $\psi\in\mathfrak C$, $\tilde\psi\in\Irr(\tR\tC\tM/\tR\times T\mid \psi)$ and $\zeta_0\in W_{\tilde\psi}$ (where $W_{\tilde\psi}$ is defined as in Lemma \ref{quo-irr}), the set 
	$\{(\psi,\zeta')\mid\zeta'\in\Irr(W_\psi\mid\zeta_0)  \}$
	corresponds via $\Pi$ to an $\tR\tC_{\psi}\tM$-orbit.
	Because of $\tN_\psi=N\tR\tC_\psi \tM$ this is then also an $\tN_\psi$-orbit.
\end{rem}

\subsection{Stabilizers and extendibility of characters of $N$.}

\begin{lem}\label{quo-irr}
	Let $\psi\in\mathfrak C$, $\tilde \psi\in\Irr(\tR\tilde C \tilde MT/\tR\mid \psi)$,
	\begin{align*}
	&W:=(N/R)/((\tR\tC\tM\cap N)/R\times T),\\
	&W_\psi:=(N_\psi/R)/((\tR\tC\tM\cap N)/R\times T),\\
	&W_{\tilde\psi}=(N_{\tilde \psi}/R)/((\tR\tC\tM\cap N)/R\times T)
	\end{align*}
	and $\zeta_0\in\Irr(W_{\tilde\psi})$. Let
	$$K:=N_W(W_\psi)\cap N_W(W_{\tilde\psi}).$$
	Then there exists a character $\zeta\in\Irr(W_\psi\mid\zeta_0)$ such that the following hold:
	\begin{enumerate}[\rm(1)]
		\item $\{ \zeta^w \mid w\in K\}\cap \Irr(W_\psi\mid\zeta_0)=\{\zeta\}$,
		\item $\zeta$ extend to $K_\zeta$,
		\item $\zeta$ has an extension $\hat \zeta\in\Irr(K_\zeta\times E)$ with $v\hat F\in\ker(\hat\zeta)$.
	\end{enumerate}
\end{lem}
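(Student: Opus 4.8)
The plan is to reduce the claim about characters of the quotient group $W_\psi$ to known facts about the structure of $W$ and about the extendibility of characters of (extensions of) symmetric and general linear groups over $\mathbb{F}_\ell$. First I would analyse $W$ itself: by the structural description preceding Proposition~\ref{twist-thm}, $\tN/\tR$ has the shape $((\tR\tC/\tR)(\tR\tM/\tR)V\times T)\mathcal{S}$, and once we pass to the quotient by $(\tR\tC\tM\cap N)/R\times T$ the factors $\tR\tC\tM$ and $T$ die, so $W$ is a quotient of $(V/(V\cap\tR\tC))\times\mathcal{S}/(\mathcal{S}\cap\tR\tC)$ times the contribution from $\tN/N$; concretely $W$ is built from a product of $\fS(t_i)$'s (from $\mathcal{S}_i$), the cyclic groups $V_{m_i,\alpha_i,\gamma_i}$, and the $T_i\cong\prod_j\GL_{c_{ij}}(\ell)$ — all groups with well-understood character theory and cyclic-or-small outer automorphism groups. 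The point of introducing $W_{\tilde\psi}\le W_\psi\le W$ and $K=N_W(W_\psi)\cap N_W(W_{\tilde\psi})$ is that $K$ is exactly the obstruction group controlling how the $\tG$-action and the $E$-action move characters lying over $\zeta_0$, so the three assertions are the ``$W$-level'' analogues of conditions (i)(a), (i)(b) and the $v\hat F$-triviality built into Theorem~\ref{thm:criterion}.

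The key steps, in order, are: (a) write down $W$, $W_\psi$, $W_{\tilde\psi}$ explicitly as products indexed by the basic-subgroup types $\tR_i$, using the decompositions $\tN/\tR=((\tR\tC/\tR)(\tR\tM/\tR)V\times T)\mathcal{S}$ and $N/R=(((\tR\tC\tM\cap N)/R)V\times T)\mathcal{S}$ from \S\ref{sec-Stab-ext-wei}; (b) observe that $W$ decomposes into a direct product of factors, one per $\tG$-orbit of indices, each factor being of the form $(\text{cyclic})\times\fS(t_i)\times\GL_{c}(\ell)$-type, and that $\zeta_0$ decomposes accordingly, so the problem splits into finitely many independent factor problems; (c) for each factor invoke maximal extendibility results — for symmetric groups this is classical (characters of $\fS_m$ extend to $\fS_m\times C$ trivially since $\Out(\fS_m)$ is trivial, or one uses \cite[Lemma~25.5]{Hu98} / the wreath-product machinery of Lemma~\ref{ext-wre}), for $\GL_c(\ell)$ and the symplectic-type Steinberg pieces one uses that the relevant outer automorphism groups are cyclic so \cite[Lemma~3.6]{MS16} or \cite[Lemma~15.2]{IMN07} applies, and for the cyclic $V$-factors extendibility is automatic; (d) assemble the factorwise extensions into $\hat\zeta\in\Irr(K_\zeta\times E)$ using Lemma~\ref{ext-central-prod} and Lemma~\ref{ext-semi}, tracking the $v\hat F$-kernel condition through the construction exactly as in the proof of Lemma~\ref{act-ext-C0}; (e) deduce (1) from Clifford theory: among the $K$-conjugates of a suitably chosen $\zeta$ lying over $\zeta_0$ there is a canonical one, because $W_\psi/W_{\tilde\psi}$ is abelian and the stabilizer calculation of Lemma~\ref{sta-psi} forces the relevant stabilizers to factor.

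The main obstacle I expect is step (c)–(d) in the exceptional configurations (\ref{eq:special-case-3-1}), (\ref{eq:special-case-2-linear-1}), (\ref{special-case-2-uni-1}) — equivalently when (\ref{special-case-7-1}) holds — where $\tM_i/(\tM_i\cap\tR_i)$ is not merely $\Sp_{2\gamma}(\ell)$ but one of $Q_8$, $C_3$, $\fA_6$, $\Omega_4^+(2)$ and the determinant of $\tM_i$ is nontrivial, so the clean central-product splitting breaks down and one must argue, as in Lemma~\ref{act-ext-C0}, that $E$ (the field/graph automorphisms) actually acts trivially (or through a cyclic group) on these small exceptional quotients; the $\fA_6\cong\PSL_2(9)$ case again goes through \cite[Lemma~15.2]{IMN07}. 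A secondary, more bookkeeping-style difficulty is ensuring the chosen $\zeta$ is simultaneously compatible with $K_\zeta$ (for (2)), with $E$ (for (3)), and canonical over $\zeta_0$ (for (1)); this requires choosing the factorwise data coherently, and I would handle it the same way \cite[Prop.~5.10]{CS17} and \cite[Prop.~3.15]{MS16} do, namely by fixing an $E$-equivariant extension map on each factor first and only then taking products. Once these ingredients are in place the proof of Lemma~\ref{quo-irr} is a direct assembly, and it feeds straight into the parametrization $\Pi$ of Proposition~\ref{clifford-N} and thence into the verification of Proposition~\ref{twist-thm}.
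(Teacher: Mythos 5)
Your proposal contains a genuine structural error: you have mis\-identified what the group $W$ actually is. You begin correctly by observing that ``the factors $\tR\tC\tM$ and $T$ die'' under the quotient by $(\tR\tC\tM\cap N)/R\times T$, but then you contradict yourself by listing ``the $T_i\cong\prod_j\GL_{c_{ij}}(\ell)$'' among the building blocks of $W$, and further on by invoking extendibility results for $\GL_c(\ell)$, for ``symplectic-type Steinberg pieces'', and for the exceptional quotients $Q_8$, $C_3$, $\fA_6$, $\Omega_4^+(2)$. None of these survive in $W$: the $T_i$ are killed explicitly by the quotient, and the symplectic-type quotients $\tM_i/(\tM_i\cap\tR_i)$ live inside $\tR\tC\tM\cap N$, which is also killed. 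From the decomposition $N/R=(((\tR\tC\tM\cap N)/R)V\times T)\mathcal{S}$, together with $V\cap\tR\tC\tM\subseteq\tZ$ and $\mathcal{S}\cap\tR\tC\tM V\subseteq\tZ$, one gets $W=V\mathcal{S}/(V\mathcal{S}\cap\tZ)\cong\prod_{i=1}^{s}C_{e\ell^{\alpha_i}}\wr\fS(t_i)$, a direct product of wreath products of \emph{cyclic} groups by symmetric groups, and nothing more.

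This error is not cosmetic: it changes what machinery is needed. Once $W$ is recognised as $\prod_i C_{e\ell^{\alpha_i}}\wr\fS(t_i)$, the lemma reduces (by the direct-product decomposition) to the single wreath-product case $s=1$, where the three claimed properties are exactly the content of \cite[Prop.~5.12]{CS17}, proved there for cyclic-by-symmetric wreath products. Your apparatus for the exceptional cases (\ref{eq:special-case-3-1}), (\ref{eq:special-case-2-linear-1}), (\ref{special-case-2-uni-1}) and the $\PSL_2(9)$ argument via \cite[Lemma~15.2]{IMN07} is the right toolkit for Lemma~\ref{act-ext-C0}, where $\tM$ and $T$ genuinely appear, but it is out of place here. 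Also, property (3) of the lemma needs no construction at all: $v\hat F$ is central in $K_\zeta\times E$ and its order divides the order of $\hat F$, so an extension of $\zeta$ trivial on $v\hat F$ is immediate.
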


\begin{proof}
	The property claimed in (3) follows from the fact that the order of $v$ divides the order of $\hat F$ and both are central in the group $K\times E$.	
	
	By the structure of $N$, we have
	$W=V\mathcal S/(V\mathcal S\cap \tZ)$ and then $W\cong\prod_{i=1}^s C_{e\ell^{\alpha_i}}\wr \mathfrak S(t_i)$.	
	Here 	$C_{e\ell^{\alpha_i}}$ means the cyclic group of order $e\ell^{\alpha_i}$.
	In this way we only need to consider the case $s=1$ and then $W$ is a wreath product and
	the proof is entirely analogous with \cite[Prop.~5.12]{CS17}.
\end{proof}

Finally we prove the assumption made in Proposition \ref{twist-thm} holds.

\begin{prop}\label{tranfer-ver}
	Suppose that $\tilde\varphi\in\Irr(\tN/\tR\mid \mathrm{dz}(N/R))$.
	Then there exists some $\varphi\in\Irr(N\mid \tilde\varphi)$ such that 
	\begin{enumerate}[\rm(1)]
		\item $(\tN\rtimes E)_{\varphi}=\tN_{\varphi}\rtimes E_{\varphi}$,
		\item $\varphi$ has an extension $\hat{\varphi}\in\Irr(N\rtimes E_{\varphi})$ with $v\hat F\in\ker(\hat \varphi)$.
	\end{enumerate}
\end{prop}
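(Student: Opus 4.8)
\textbf{Proof plan for Proposition \ref{tranfer-ver}.}
The plan is to deduce this statement from the parametrization of $\mathfrak N$ in Proposition \ref{clifford-N} together with the choices of representatives made in Lemmas \ref{act-ext-C0}, \ref{quo-irr} and the extension map of Proposition \ref{extension-map}. First I would observe that since $\tilde\varphi$ lies over a defect-zero character of $N/R$, its constituents over $(\tR\tC\tM\cap N)/R\times T$ have defect $\le\nu(\tN/\tR N)$; hence any character $\tilde\theta'\in\Irr(\tR\tC\tM/\tR\mid\tilde\varphi)$ restricts to some $\xi\times\vartheta$ with $\vartheta\in\dz(\mathcal R\mathcal M/\mathcal R)$ and $\xi\in\Irr(\mathcal R\mathcal C/\mathcal R)$, up to $(\tN\rtimes E)$-conjugacy. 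So replacing $\tilde\varphi$ by an $(\tN\rtimes E)$-conjugate I may assume $\tilde\theta'\in\tilde{\mathcal G}'$ and $\xi\times\vartheta\in\mathcal G'_0$, using the representative systems $\tilde{\mathcal G}$, $\mathcal G_0$ from Lemma \ref{act-ext-C0}. By Lemma \ref{irr-c} there is a unique $\theta\in\Irr((\tR\tC\tM\cap N)/R\mid\xi\times\vartheta)\cap\Irr((\tR\tC\tM\cap N)/R\mid\tilde\theta')$, and then any $\lambda\in\Irr(T\mid\tilde\varphi)$ gives $\psi:=\theta\times\lambda\in\mathfrak C$ with $\tilde\psi:=\tilde\theta'\times\lambda\in\Irr(\tR\tC\tM T/\tR\mid\psi)$ lying under $\tilde\varphi$.

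Next I would run the Clifford-theoretic bookkeeping. Write $\tilde\varphi$ as a constituent of $\Ind$ from $\Irr(\tR\tC\tM T/\tR\mid\tilde\psi)$; this introduces a character $\tilde\zeta_0$ of the relative inertia group inside $\tN/\tR$ over $\tR\tC\tM T/\tR$. Restricting to $W$, take $\zeta_0\in\Irr(W_{\tilde\psi})$ the corresponding piece, and apply Lemma \ref{quo-irr} to obtain $\zeta\in\Irr(W_\psi\mid\zeta_0)$ with the stated stabilizer, extendibility and kernel properties. Then set $\varphi:=\Pi(\psi,\zeta)=\Ind_{N_\psi}^N(\Lambda(\psi)\zeta)$. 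That $\varphi\in\Irr(N\mid\tilde\varphi)$ follows because, by Remark \ref{conj-cla}, the pairs $\{(\psi,\zeta')\mid\zeta'\in\Irr(W_\psi\mid\zeta_0)\}$ form a single $\tN_\psi$-orbit under $\Pi$, and this orbit is exactly the set of constituents of $\Res^{\tN}_N\tilde\varphi$ (standard Clifford theory between $N\unlhd\tN$, the quotient being abelian cyclic by construction of $V,\mathcal S,T$).

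The verification of (1): using Proposition \ref{clifford-N}(2),(3) and Lemma \ref{sta-psi}, the stabilizer $(\tN\rtimes E)_\varphi$ decomposes. Concretely, $\tN_\varphi$ acts on the pair $(\psi,\zeta)$ modulo $N$ through $\tC_\psi\tR\tM$ (since $\tN_\psi=N\tR\tC_\psi\tM$), and by part (3) of Proposition \ref{clifford-N} this action twists $\zeta$ by the linear characters $\nu$ with kernel $N_{\tilde\psi}/R$; combining with the choice of $\zeta$ in Lemma \ref{quo-irr}(1) and the $E$-equivariance of $\Lambda$ (Proposition \ref{extension-map}(1)) and of $\Pi$ (Proposition \ref{clifford-N}(2)), together with the $E$-equivariant choice of $\xi$ (Lemma \ref{act-ext-C0}), one gets $(\tN\rtimes E)_\varphi=\tN_\varphi\rtimes E_\varphi$. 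For (2): $\Lambda(\psi)$ extends $\psi$ to $(N/R\rtimes E)_\psi$ with $v\hat F$ in the kernel by Proposition \ref{extension-map}(2); $\zeta$ extends to $K_\zeta\times E$ with $v\hat F$ in the kernel by Lemma \ref{quo-irr}(2),(3); multiplying these extensions and inducing (as in the proof of \cite[Prop.~5.13]{CS17}) yields $\hat\varphi\in\Irr((N\rtimes E)_\varphi)$ extending $\varphi$ with $v\hat F\in\ker(\hat\varphi)$.

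The main obstacle I anticipate is \textbf{the bookkeeping of stabilizers in the quotient $W$}: ensuring that the three layers of Clifford theory (below $(\tR\tC\tM\cap N)/R\times T$, the intermediate layer $\tR\tC\tM T/\tR$, and the top layer $\tN/\tR$) are compatible, i.e. that $N_W(W_\psi)\cap N_W(W_{\tilde\psi})$ really governs the $\tN$-action and that the twist characters $\nu$ from Proposition \ref{clifford-N}(3) interact correctly with $\zeta_0$ so that Lemma \ref{quo-irr}(1) applies to separate the $\tN_\varphi$-orbit of $\zeta$. This is precisely the point where the exceptional cases collected in (\ref{special-case-7-1}) (where $\tM\ne M$ and the relevant quotient is $C_3$, $\fA_6$ or $\Omega_4^+(2)$ rather than $\Sp_{2\gamma}(\ell)$) must be checked separately, but those were already handled inside Lemma \ref{act-ext-C0}, so the remaining work is assembling the pieces rather than new computation.
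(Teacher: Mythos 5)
Your proposal follows essentially the same route as the paper's proof: choose $\psi\in\mathfrak C$ via Lemma \ref{irr-c} and the representative systems of Lemma \ref{act-ext-C0}, invoke Lemma \ref{quo-irr} to pin down $\zeta$, set $\varphi=\Pi(\psi,\zeta)$, and then verify the two conditions through Propositions \ref{extension-map} and \ref{clifford-N} together with the construction of \cite[Prop.~5.13]{CS17}. The construction of $\varphi$ and the verification of part (2) are exactly as in the paper.

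However, you are right to flag the stabilizer verification as the main obstacle, and I would push back on the claim that what remains is ``assembling the pieces rather than new computation.'' The point is precisely the claim $w\in K=N_W(W_\psi)\cap N_W(W_{\tilde\psi})$: after replacing $x\in(\tN\rtimes E)_\varphi$ by an $N$-multiple so that $\psi^x=\psi$, and writing $x=nte$ with $n\in N$, $t\in\tR\tC_\psi\tM$, $e\in E$ (Lemma \ref{sta-psi}), one must show that the image $w$ of $n$ in $W$ normalizes not just $W_\psi$ (which is immediate from $\psi^x=\psi$) but also $W_{\tilde\psi}$. This does \emph{not} follow from $\psi^x=\psi$ alone, since $\tilde\psi^x$ need not equal $\tilde\psi$. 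The paper resolves it by a specific observation: $\tilde\psi^x$ and $\tilde\psi$ differ by a linear character $\delta\in\Irr(\tR\tC\tM/\tR\times T)$ with $(\tR\tC\tM\cap N)/R\times T\subseteq\ker(\delta)$; because $[\tN/\tR,\tR\tC\tM/\tR\times T]\le(\tR\tC\tM\cap N)/R\times T$, this $\delta$ extends to $\tN$ and is therefore $\tN$-invariant, giving $W_{\tilde\psi}=W_{\tilde\psi\delta}=W_{\tilde\psi^x}=W_{\tilde\psi}^w$. Only then does the separation property of Lemma \ref{quo-irr}(1) apply (one also needs that $\zeta\nu^{-1}$ still lies over $\zeta_0$, which holds because $W_{\tilde\psi}\le\ker(\nu)$ by Proposition \ref{clifford-N}(3)). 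Your proposal would need to supply this commutator-and-extension argument; as written it is an unfilled gap, not a routine check, and it is independent of the exceptional cases in (\ref{special-case-7-1}) that you correctly note were already handled inside Lemma \ref{act-ext-C0}.
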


\begin{proof}
	It suffices to prove the conclusion for all $\tilde\varphi$ in a given $E$-transversal in $\Irr(\tN/\tR\mid \mathrm{dz}(N/R))$.
	Then we may assume that there exists $\tilde\psi\in\Irr(\tR\tC\tM/\tR\times T\mid \tilde\varphi)$ such that
	$\tilde\psi=\tilde\theta\times\tilde\vartheta\times\lambda$ where $\tilde\theta\in\tilde{\mathcal G}$,
	$\tilde\vartheta\in\Irr(\tR\tM/\tR)$ and $\lambda\in\dz(T)$ and
	there is a character $\xi'\in\Irr(\mathcal C\mathcal R\mathcal M/\mathcal R\mid \ttheta\times \tilde\vartheta)\cap \mathcal G'_0$.
	Then by Lemma \ref{irr-c},
	there exists a character $\theta'\in\Irr((\tR\tC\tM\cap N)/R\mid \tilde\theta')\cap\Irr((\tR\tC\tM\cap N)/R\mid \xi')$,
	where $\ttheta' = \ttheta \times \tilde{\vartheta}$.
	Let $\psi=\theta'\times \lambda\in\mathfrak C$.
	
	For any $\varphi''\in\Irr(N\mid\tilde\varphi)$,
	there is a character $\varphi'$ of $N$ which is $\tN$-conjugate to $\varphi''$
	such that $\psi\in\Irr((\tR\tC\tM\cap N)/R\times T\mid \varphi')$ by Clifford theory.	
	Keep the notation of Proposition \ref{clifford-N} and Lemma \ref{quo-irr}.
	With the map $\Pi$,
	we have $\varphi'=\Pi(\psi,\zeta')$ where $\zeta'\in\Irr(W_\psi)$.
	Let $\tilde\psi\in\Irr(\tR\tC\tM/\tR\times T\mid \psi)$ and $\zeta_0\in\Irr(W_{\tilde\psi}\mid\zeta')$.
	Then by Lemma \ref{quo-irr},
	there exists some $\zeta\in\Irr(W_\psi\mid \zeta_0)$ such that $\{ \zeta^w\mid w\in K  \}\cap \Irr(W_\psi\mid\zeta_0)=\{\zeta\}$
	and $\zeta$ extends to some character $\hat\zeta\in\Irr(K_\zeta\times E)$ with $v\hat F\in\ker(\hat\zeta)$.
	Let $\varphi=\Pi(\psi,\zeta)$.
	Then Remark \ref{conj-cla} implies that $\varphi$ and $\varphi'$ are $\tN$-conjugate.
	
	Now we verify that $\varphi$ satisfies the  equation $(\tN\rtimes E)_{\varphi}=\tN_{\varphi}\rtimes E_{\varphi}$.
	Let $x\in(\tN\rtimes E)_{\varphi}$.
	After suitable $N$-multiplication we can assume that $\psi^x=\psi$.
	By Lemma \ref{sta-psi},
	we can write $x=nte$, where $n\in N$, $t\in\tR\tC_\psi\tM$ and $e\in E$.
	Denote by $w$ the image of $n$ in $W$.
	According to Proposition \ref{clifford-N},
	$\varphi^x=\Pi(\psi^{ne},\zeta^w\nu)$, where $\nu$ is a linear character of $W_{\psi^n}$ such that $W_{\tilde\psi}\in\ker(\nu)$.
	
	The equation $\varphi^x=\varphi$ is equivalent to $\zeta=\zeta^w\nu$ since $\psi^x=\psi$.
	We claim that $w\in K$, \emph{i.e.}, $w$ normalizes $W_\psi$ and $W_{\tilde\psi}$.
	First we have $W_\psi^x=W_{\psi^x}=W_{\psi}$ from $\psi^x=\psi$.
	Since $t$ and $e$ acts trivially on $W_\psi$, this implies that $W_\psi^w=W_{\psi}$.
	On the other hand for $W_{\tilde\psi}$ we have $W_{\tilde\psi}^w=W_{\tilde\psi^x}$.
	Since $\tpsi^x,\tpsi\in\Irr(\tR\tC\tM/\tR\times T\mid\psi)$,
	there exists some linear character $\delta\in\Irr(\tR\tC\tM/\tR\times T)$
	such that $(\tR\tC\tM\cap N)/R\times T\subseteq \ker(\delta)$.
	Since $[\tN/\tR,\tR\tC\tM/\tR\times T]\le (\tR\tC\tM\cap N)/R \times T$,
	the character $\delta$ extends to some character of $\tN$, especially $\delta$ is $\tN$-invariant.
	Thus $W_{\tilde\psi}=W_{\tilde\psi\delta}$ and this proves $w\in N_{W}(W_{\tilde\psi})$. 
	So $w\in K$ and the claim holds.
	
	Since $\zeta\nu^{-1}\in\Irr(W_\psi\mid\zeta_0)$ and $\{ \zeta^w\mid w\in K  \}\cap \Irr(W_\psi\mid\zeta_0)=\{\zeta\}$,
	this implies $\zeta^w=\zeta=\zeta\nu$.
	By Proposition \ref{clifford-N} we see
	$\varphi^t=\Pi(\psi,\zeta\nu)=\Pi(\psi,\zeta)=\varphi$.
	Then $t\in \tN_\varphi$ and hence $(\tN\rtimes E)_{\varphi}=\tN_{\varphi}\rtimes E_{\varphi}$.
	
	Based on the observation on $\varphi$,
	the construction of the proof of \cite[Prop.~5.13~(ii)]{CS17}  
	(or \cite[Thm.~3.18~(ii)]{MS16}) 
	gives an extension of $\varphi$ to $N\rtimes E_\varphi/\langle v\hat F\rangle$, and
	this completes the proof.
\end{proof}

Thanks to Proposition \ref{tranfer-ver}, we can apply Proposition \ref{twist-thm} now and this
implies Theorem \ref{local-condition}.


\section{Proof of the main theorem}
\label{proof-main-thm}

We first prove the following.

\begin{thm}\label{split-brauer}
	Let $\tilde\psi\in\IBr(\tG)$, then there exists $\psi\in\IBr(G\mid \tilde\psi)$ such that
	\begin{enumerate}[\rm(1)]
		\item $(\tG\rtimes D)_\psi=\tG_\psi\rtimes D_\psi$,
		\item $\psi$ extends to $G\rtimes D_\psi$.
	\end{enumerate}
\end{thm}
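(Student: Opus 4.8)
The plan is to reduce Theorem \ref{split-brauer} to the corresponding statement for ordinary characters, which was established by Cabanes and Sp\"ath in \cite{CS17}, by exploiting the unitriangular shape of the decomposition matrix of $\tG$ with respect to the basic set $\tilde{\cE}'$ of Remark \ref{basic-set-sl}. First I would recall from Theorem \ref{thm:branch-IBr} and Remark \ref{basic-set-sl} that there is a $\Lin_{\ell'}(\tG/G)\rtimes D$-equivariant bijection between $\tilde{\cE}'$ and $\IBr(\tG)$, and that the irreducible constituents of the restrictions to $G$ of characters in $\tilde{\cE}'$ form a basic set of $G$ with unitriangular decomposition matrix. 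Since this bijection $\tilde{\cE}' \to \IBr(\tG)$, say $\tchi' \mapsto \tpsi$, is $\Lin_{\ell'}(\tG/G)\rtimes D$-equivariant, it preserves stabilizers in $\tG D$; moreover for $\tchi' \in \tilde{\cE}'$ coming from $\tchi \in \cE(\tG,\ell')$ we have $\tG D_{\tchi'} = \tG D_{\tchi}$ by the construction in \cite[\S 4.4]{De17}. So given $\tpsi \in \IBr(\tG)$, I take the associated $\tchi \in \cE(\tG,s)$ for a semisimple $\ell'$-element $s$, and apply the result of \cite{CS17} (their inductive McKay verification, specifically the statement that for each $\tchi \in \Irr(\tG)$ there is a constituent $\chi_0$ of $\Res^{\tG}_G \tchi$ with $(\tG\rtimes D)_{\chi_0} = \tG_{\chi_0}\rtimes D_{\chi_0}$ and $\chi_0$ extending to $G\rtimes D_{\chi_0}$) to obtain such a $\chi_0 \in \Irr(G \mid \tchi)$.

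Next I would set $\psi := \chi_0^0$, or rather the irreducible Brauer constituent of $\chi_0^0$ that corresponds to $\tpsi$ under the unitriangular correspondence. The key points to check are: (a) $\psi \in \IBr(G \mid \tpsi)$; (b) $(\tG\rtimes D)_\psi = \tG_\psi \rtimes D_\psi$; and (c) $\psi$ extends to $G\rtimes D_\psi$. For (a), I use that $\Res^{\tG}_G \tchi'$ and the corresponding restriction of $\tpsi$ are compatible with the unitriangular matrices over $G$ and $\tG$: concretely, the constituents of $\Res^{\tG}_G \tpsi$ are parametrized the same way as those of $\Res^{\tG}_G\tchi'$ via Clifford theory, because $\tG/G$ is cyclic and acts on $i\Irr(\tG)$ resp. $i\IBr(\tG)$ through the same action of $\mrO_{\ell'}(\fZ_{q-\eta})$ (compare Theorem \ref{thm:branch-IBr} with the analogous statement for ordinary characters in \S\ref{subsec:Linear and unitary groups}). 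For (b), since $D$ permutes the constituents of $\Res^{\tG}_G \tpsi$ compatibly with how it permutes those of $\Res^{\tG}_G \tchi'$ — because the labelling $(s,\mu)$ is $D$-equivariant on both sides — the stabilizer statement transfers from $\chi_0$ to $\psi$. For (c), extendibility of $\psi$ to $G\rtimes D_\psi$ follows from extendibility of $\chi_0$ to $G\rtimes D_{\chi_0}$ together with $D_\psi = D_{\chi_0}$ and the fact that reduction mod $\ell$ sends an extension of $\chi_0$ to an extension of $\chi_0^0$, whence of $\psi$ after passing to the appropriate Brauer constituent (using again unitriangularity to control which constituent it is).

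The main obstacle I anticipate is bookkeeping around the unitriangular correspondence and making the Clifford-theoretic transfer between $\Irr$ and $\IBr$ genuinely compatible with the $D$- and $\Lin_{\ell'}(\tG/G)$-actions. One has to be careful that the constituent $\chi_0$ chosen by \cite{CS17} and its reduction behave well: a priori the basic set element $\tchi'$ rather than $\tchi$ is what reduces to $\tpsi$, so I would need the commutative diagram relating $\tchi \mapsto \tchi'$ (which is $\Lin_{\ell'}(\tG/G)\rtimes D$-equivariant by Remark \ref{basic-set-sl}) to the restriction maps $\Irr(\tG) \to \Irr(G)$ and $\IBr(\tG) \to \IBr(G)$ and the unitriangular matrices. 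A clean way to handle this is to work entirely with the parametrizations: both $\IBr(G \mid \tpsi)$ and $\Irr(G \mid \tchi)$ are described by the same combinatorial data (orbits of $\mrO_{\ell'}(\fZ_{q-\eta})$ together with the $\gcd$-type splitting from Theorem \ref{thm:branch-IBr}(2) and its ordinary analogue), and $D$ acts on these data in the same way in both cases; this lets me literally transport the constituent chosen in \cite{CS17} to the Brauer side. The extendibility statement (c) is then a routine consequence of the lifting of extensions along reduction mod $\ell$, using that $\chi_0$ extends to $G \rtimes D_{\chi_0}$ and that $(G\rtimes D_{\chi_0})/G$ is abelian so that Gallagher's theorem controls the constituents.
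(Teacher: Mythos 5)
Your overall strategy — reduce to Cabanes--Sp\"ath's ordinary-character result via Denoncin's unitriangular basic sets — is exactly the paper's, but you introduce a complication and then leave it open, and your extendibility transfer is not carried through.

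First, you apply \cite[Thm.~4.1]{CS17} to $\tchi\in\cE(\tG,s)$ rather than to the basic-set element $\tchi'\in\tilde{\cE}'$. As you yourself note, this forces you to relate the constituents of $\Res^{\tG}_G\tchi$ and $\Res^{\tG}_G\tchi'$ via the bijection $\tchi\mapsto\tchi'$, which you do not actually establish. The paper sidesteps this entirely: since \cite[Thm.~4.1]{CS17} applies to \emph{every} irreducible character of $\tG$, one applies it directly to $\tchi'$, so the resulting constituent $\chi_0$ automatically lies in $\cE$. Then \cite[Lemma~2.3]{De17} (which you do not explicitly cite in this role) gives a $\tG\rtimes D$-equivariant bijection $\cE\to\IBr(G)$, and equivariance immediately hands you both $\psi\in\IBr(G\mid\tpsi)$ and $(\tG\rtimes D)_\psi=\tG_\psi\rtimes D_\psi$. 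Your ``work entirely with parametrizations'' fix would likely lead there, but as written it is a gesture, not an argument.

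Second, your extendibility step — reduce $\hat\chi_0$ mod $\ell$ and invoke Gallagher — is incomplete. After reduction, $\hat\chi_0^0$ is a (generally reducible) Brauer character of $G\rtimes D_{\chi_0}$; Gallagher applied to the abelian quotient tells you nothing unless you already know $\psi$ extends, which is what you are trying to prove (and $D_{\chi_0}$ need not be cyclic, so $H^2$-vanishing is not automatic). The missing step is the unitriangularity argument: since $\chi_0^0=\psi+\sum_{\phi<\psi}d_\phi\phi$ with $\psi$ appearing with multiplicity exactly $1$, and the constituents of $\hat\chi_0^0$ lying over $\psi$ restrict to nonnegative integer multiples of $\psi$, multiplicity $1$ forces a unique such constituent restricting to $\psi$ itself, i.e.\ an extension. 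The paper packages precisely this as \cite[Lemma~2.9]{FLZ19a} and cites it; you should either cite that lemma or supply the multiplicity argument explicitly.
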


\begin{proof}
	By \cite[Thm.~4.1]{CS17}, for any $\tilde\chi\in\Irr(\tG)$, there is a $\chi\in\Irr(G\mid\tilde\chi)$ such that $(\tG\rtimes D)_\chi=\tG_\chi\rtimes D_\chi$ and $\chi$ extends to $G\rtimes D_\chi$.
	
	By \cite{De17} (see Remark \ref{basic-set-sl}), there is a unitriangular basic set $\tilde{\cE}'$ of $\tG$ such that its set $\cE$ of irreducible constituents upon restriction to $G$ is a unitriangular basic set for $G$.
	Then by \cite[Lemma~2.3]{De17}, there is a $\Lin_{\ell'}(\tG/G)\rtimes D$-equivariant bijection between $\tilde{\cE}'$ and $\IBr(\tG)$ and there is a $\tilde G\rtimes D$-equivariant bijection between $\cE$ and $\IBr(G)$.
	Also by \cite[Lemma~2.9]{FLZ19a}, the extendibility of irreducible Brauer characters can be deduced from the extendibility of irreducible ordinary characters.
	Thus this assertion holds by the above paragraph.
\end{proof}

Let $S=\PSL_n(\eta q)$ be a simple group of type $\mathsf A$ and $G$ be the universal covering group of $S$.
We  consider the exceptional covering cases, namely, the universal covering group of $S$ is bigger than $\SL_n(\eta q)$.  
Explicitly, according to \cite[Table~6.1.3]{GLS98}, $S$ is one of the groups $\PSL_2(4)\cong \mathfrak A_5$, $\PSL_2(9)\cong\mathfrak A_6$, $\PSL_3(2)$,  $\PSL_3(4)$, $\PSL_4(2)\cong \mathfrak A_8$, $\PSU_4(2)\cong\PSp_4(3)$, $\PSU_4(3)$ and $\PSU_6(2)$.
Thanks to \cite[Thm.~1.1]{Ma14},
the alternating groups $\mathfrak A_5$, $\mathfrak A_6$ and $\mathfrak A_8$ satisfy the (iBAW) condition.
The simple group $\PSp_4(3)$ was checked in \cite{BSF19}  or \cite{LL19}.
If $S=\PSL_3(2)$, then $|G|=2^4\cdot 3\cdot 7$, and then the only prime $\ell$ such that the Sylow $\ell$-subgroups of $G$ are non-cyclic is $\ell=2$, which is the defining characteristic.
Note that the case for cyclic  Sylow subgroups was settled in \cite{KS16a,KS16b}, while the defining characteristic case was checked in \cite[Thm.~C]{Sp13}.
Analogously, except the cyclic  Sylow $\ell$-subgroups cases and the defining characteristic cases,
the only prime we need to consider for the  simple group $\PSL_3(4)$, $\PSU_4(3)$, $\PSU_6(2)$ is just $3$, $2$, $3$, respectively.
Thus by \cite{Du19, Du20},
the exceptional cases are all solved.
Note that the paper \cite{Du20} dealt with the irreducible Brauer characters and weight characters of the universal covering groups of $\PSU_4(3)$ and $\PSU_6(2)$ which are not the inflation of the irreducible Brauer characters and weight characters of $\SU_4(3)$ and $\SU_6(2)$, while the irreducible Brauer characters and weight characters of special unitary groups are considered in this paper.

Now we are able to combine our observations and results obtained to verify our main assertion Theorem \ref{mainthm-1}.

\begin{proof}[Proof of Theorem \ref{mainthm-1}]
By \cite[Thm.~C]{NT11}, we only need to consider the non-defining characteristic.	
According to the above arguments, we may assume that the group $G=\SL_n(\eta q)$ is the universal covering group of the simple group $S=\PSL_n(\eta q)$.
Let $\tG$, $D$ be as in \S \ref{subsec:Linear and unitary groups}.
We check the conditions of Theorem
	\ref{thm:criterion}.
For (i), conditions (a) and (b) are obvious and (c) and (d) follow by the fact that $\tG/G$ is cyclic.
Then this theorem follows by combining
Theorem \ref{thm-equ-bijection}, \ref{local-condition} and  \ref{split-brauer}.
\end{proof}


\subsection*{Acknowledgement}
The authors would like to express their deep thanks to Gunter Malle for useful remarks and suggestions on the manuscript of this paper.
They are also indebted to the anonymous referee for many helpful comments that have helped improve this paper.


\addcontentsline{toc}{section}{References}

\end{document}